\newcommand\redsout{\bgroup\markoverwith{\textcolor{red}{\rule[0.5ex]{2pt}{0.4pt}}}\ULon}
\newcommand\gsout{\bgroup\markoverwith{\textcolor{green}{\rule[0.5ex]{2pt}{0.4pt}}}\ULon}
\newcommand{\PreserveBackslash}[1]{\let\temp=\\#1\let\\=\temp}
\newcolumntype{C}[1]{>{\PreserveBackslash\centering}p{#1}}
\newcolumntype{R}[1]{>{\PreserveBackslash\raggedleft}p{#1}}
\newcolumntype{L}[1]{>{\PreserveBackslash\raggedright}p{#1}}
\newenvironment{itemize*}%
  {\begin{itemize}%
    \setlength{\itemsep}{0.5em}%
    \setlength{\parskip}{0pt}}%
  {\end{itemize}}
\newcommand{\Prb}{\mathbb{P}}  % Fancy probaiblity symbol
\newcommand{\Exp}{\mathbb{E}}
\let\N\Natural
\let\R\Real
\newcommand{\ie}{{\it{i.e.}}, }
\newcommand{\eg}{{\it{e.g.}}, }
\let\save@mathaccent\mathaccent
\newcommand*\if@single[3]{%
  \setbox0\hbox{${\mathaccent"0362{#1}}^H$}%
  \setbox2\hbox{${\mathaccent"0362{\kern0pt#1}}^H$}%
  \ifdim\ht0=\ht2 #3\else #2\fi
  }
\newcommand*\rel@kern[1]{\kern#1\dimexpr\macc@kerna}
\newcommand*\widebar[1]{\@ifnextchar^{{\wide@bar{#1}{0}}}{\wide@bar{#1}{1}}}
\newcommand*\wide@bar[2]{\if@single{#1}{\wide@bar@{#1}{#2}{1}}{\wide@bar@{#1}{#2}{2}}}
\newcommand*\wide@bar@[3]{%
  \begingroup
  \def\mathaccent##1##2{%
%Enable nesting of accents:
    \let\mathaccent\save@mathaccent
%If there's more than a single symbol, use the first character instead (see below):
    \if#32 \let\macc@nucleus\first@char \fi
%Determine the italic correction:
    \setbox\z@\hbox{$\macc@style{\macc@nucleus}_{}$}%
    \setbox\tw@\hbox{$\macc@style{\macc@nucleus}{}_{}$}%
    \dimen@\wd\tw@
    \advance\dimen@-\wd\z@
%Now \dimen@ is the italic correction of the symbol.
    \divide\dimen@ 3
    \@tempdima\wd\tw@
    \advance\@tempdima-\scriptspace
%Now \@tempdima is the width of the symbol.
    \divide\@tempdima 10
    \advance\dimen@-\@tempdima
%Now \dimen@ = (italic correction / 3) - (Breite / 10)
    \ifdim\dimen@>\z@ \dimen@0pt\fi
%The bar will be shortened in the case \dimen@<0 !
    \rel@kern{0.6}\kern-\dimen@
    \if#31
      \overline{\rel@kern{-0.6}\kern\dimen@\macc@nucleus\rel@kern{0.4}\kern\dimen@}%
      \advance\dimen@0.4\dimexpr\macc@kerna
%Place the combined final kern (-\dimen@) if it is >0 or if a superscript follows:
      \let\final@kern#2%
      \ifdim\dimen@<\z@ \let\final@kern1\fi
      \if\final@kern1 \kern-\dimen@\fi
    \else
      \overline{\rel@kern{-0.6}\kern\dimen@#1}%
    \fi
  }%
  \macc@depth\@ne
  \let\math@bgroup\@empty \let\math@egroup\macc@set@skewchar
  \mathsurround\z@ \frozen@everymath{\mathgroup\macc@group\relax}%
  \macc@set@skewchar\relax
  \let\mathaccentV\macc@nested@a
%The following initialises \macc@kerna and calls \mathaccent:
  \if#31
    \macc@nested@a\relax111{#1}%
  \else
%If the argument consists of more than one symbol, and if the first token is
%a letter, use that letter for the computations:
    \def\gobble@till@marker##1\endmarker{}%
    \futurelet\first@char\gobble@till@marker#1\endmarker
    \ifcat\noexpand\first@char A\else
      \def\first@char{}%
    \fi
    \macc@nested@a\relax111{\first@char}%
  \fi
  \endgroup
}
\begin{document}
%%%%%%%%%%%%%%%%

% Outcomment only when entries are known. Otherwise leave as is and
%   default values will be used.
%\setcounter{page}{1}
%\VOLUME{00}%
%\NO{0}%
%\MONTH{Xxxxx}% (month or a similar seasonal id)
%\YEAR{0000}% e.g., 2005./
%\FIRSTPAGE{000}%
%\LASTPAGE{000}%
%\SHORTYEAR{00}% shortened year (two-digit)
%\ISSUE{0000} %
%\LONGFIRSTPAGE{0001} %
%\DOI{10.1287/xxxx.0000.0000}%
\defcitealias{bertsimasdata}{BSS22}
% Author's names for the running heads
% Sample depending on the number of authors;
% \RUNAUTHOR{Jones}
% \RUNAUTHOR{Jones and Wilson}
 \RUNAUTHOR{Sturt}
 % \RUNAUTHOR{Bertsimas, McCord, and Sturt}

% \RUNAUTHOR{Jones et al.} % for four or more authors
% Enter authors following the given pattern:
%\RUNAUTHOR{}

% Title or shortened title suitable for running heads. Sample:
% \RUNTITLE{Bundling Information Goods of Decreasing Value}
% Enter the (shortened) title:
%\RUNTITLE{Sample Robust Optimization with Side Information}
%\RUNTITLE{Nonparametric algorithms for optimal stopping based on robust optimization}
\RUNTITLE{A nonparametric algorithm for optimal stopping based on robust optimization}

% Full title. Sample:
% \TITLE{Bundling Information Goods of Decreasing Value}
% Enter the full title:
%\TITLE{Sample Robust Optimization with Covariates}
%\TITLE{Nonparametric algorithms for optimal stopping based on robust optimization}
\TITLE{A nonparametric algorithm for optimal stopping based on robust optimization}

% Block of authors and their affiliations starts here:
% NOTE: Authors with same affiliation, if the order of authors allows,
%   should be entered in ONE field, separated by a comma.
%   \EMAIL field can be repeated if more than one author
\ARTICLEAUTHORS{%
\AUTHOR{Bradley Sturt}
\AFF{Department of Information and Decision Sciences\\
University of Illinois at Chicago, \EMAIL{bsturt@uic.edu}}
% Enter all authors
} % end of the block
%\SingleSpacedXI
%\OneAndAHalfSpacedXI

\ABSTRACT{%
{%\small
{\color{black}Optimal stopping is a fundamental class of stochastic dynamic optimization problems with numerous applications in finance and operations management. 
We introduce a new approach for solving computationally-demanding stochastic optimal stopping problems with known probability distributions.  
%We introduce a new approach for solving computationally-demanding stochastic optimal stopping problems that can be applied when probability distributions are known. % and solving the stochastic optimal stopping problem is computationally demanding. 
%The technique consists of solving a robust optimization problem, which is constructed using Monte-Carlo {simulation}, that serves as a {proxy} for the stochastic optimal stopping problem to any arbitrary  level of accuracy. % following Bertsimas, Shtern, and Sturt (2021). %We establish that our combination of simulation and robust optimization  can approximate the stochastic problem to any arbitrary accuracy under mild and easily verifiable conditions,   following recent advances from [Bertsimas D, Shtern S, Sturt B (2021) A data-driven approach to multi-stage stochastic linear optimization. \emph{Mgmt. Sci}, forthcoming].  
%The approach uses  {simulation} to construct a {robust optimization} problem that approximates the stochastic optimal stopping problem to any arbitrary accuracy; we then solve  the robust optimization problem to obtain near-optimal Markovian stopping rules for the stochastic optimal stopping problem.  
The approach uses  {simulation} to construct a {robust optimization} problem that approximates the stochastic optimal stopping problem to any arbitrary accuracy; we then solve the robust optimization problem to obtain near-optimal Markovian stopping rules for the stochastic optimal stopping problem.  % following Bertsimas, Shtern, and Sturt (2021). %We establish that our combination of simulation and robust optimization  can approximate the stochastic problem to any arbitrary accuracy under mild and easily verifiable conditions,   following recent advances from [Bertsimas D, Shtern S, Sturt B (2021) A data-driven approach to multi-stage stochastic linear optimization. \emph{Mgmt. Sci}, forthcoming].  
\vspace{0.5em}

 In this paper, we focus on designing algorithms for solving the robust optimization problems that approximate the stochastic optimal stopping problems. % in order to find near-optimal stopping rules for stochastic optimal stopping problems. 
% The key challenge with solving these robust optimization problems is that they require  optimizing over the infinite-dimensional  space of all Markovian stopping rules.
 These robust optimization problems are challenging to solve because they require optimizing over the infinite-dimensional  space of all Markovian stopping rules.
%We overcome this challenge by establishing a simple and finite-dimensional characterization of the structure of optimal Markovian stopping   
We overcome this challenge by characterizing the structure of optimal Markovian stopping rules for the robust optimization problems.
%We contend with the  challenge of optimizing over an infinite-dimensional policy space by establishing the existence and characterizing the structure of optimal Markovian stopping rules for these robust optimization problems. 
In particular,  we show that optimal Markovian stopping rules for the robust optimization problems have a structure that is {surprisingly simple} and {finite-dimensional}.  
We leverage this structure  to develop an exact reformulation of the robust optimization problem as a  zero-one bilinear program over totally unimodular constraints. % with size that is polynomial in the number of simulated sample paths and time horizon. 
%We use this structure of optimal Markovian stopping rules to reformulate the robust optimization problem as a compact zero-one bilinear program over totally unimodular constraints. % with size that is polynomial in the number of simulated sample paths and time horizon. 
 We show that the bilinear program can be solved in polynomial time in special cases, establish computational complexity results for general cases, and develop polynomial-time heuristics by relating the bilinear program to the maximal closure problem from graph theory. %Numerical experiments on demonstrate that our exact and heuristic algorithms for solving the robust optimization problems can outperform state-of-the-art simulation-based algorithms for finding stopping rules for widely-studied stochastic optimal stopping problems in applications of high-dimensional and non-Markovian option pricing. 
 Numerical experiments demonstrate that our algorithms for solving the robust optimization problems are practical and can outperform %produce stopping rules with average performance that outperforms  those obtained by 
 state-of-the-art simulation-based algorithms in the context of widely-studied stochastic optimal stopping problems from high-dimensional option pricing.
 %To the best of our knowledge, this is the first work in any application to use robust optimization as a tool for building concrete algorithms for finding $\epsilon$-optimal policies for a class of stochastic dynamic optimization problems with known probability distributions. 
}
%Optimal stopping is a class of stochastic dynamic optimization problems with applications in finance and operations management. 
%In this paper, we introduce a new method for solving stochastic optimal stopping problems with known probability distributions. First,  we use simulation to construct a robust optimization problem that approximates the stochastic optimal stopping problem to any arbitrary accuracy.  Second, we characterize the structure of optimal policies for the robust optimization problem, which turn out to be simple and finite-dimensional. Harnessing this characterization, we develop exact and approximation algorithms for solving the robust optimization problem, which in turn yield policies for the stochastic optimal stopping
%problem. 
% Numerical experiments show that this combination of robust optimization and simulation can find policies that match, and in some cases significantly outperform, those from state-of-the-art algorithms on low-dimensional,  non-Markovian optimal stopping problems from options pricing. 
 }
}%

\KEYWORDS{robust optimization; optimal stopping; options pricing.}

% Fill in data. If unknown, outcomment the field
%\KEYWORDS{Multi-stage optimization under uncertainty, data-driven decision making, stochastic optimization, robust optimization, distributionally robust optimization.} 
\HISTORY{{\color{black}First version: March 4, 2021. {\color{black}Revisions submitted on June 17, 2022 and  December 21, 2022}. Accepted for publication on March 16, 2023.  }}

\maketitle
%%%%%%%%%%%%%%%%%%%%%%%%%%%%%%%%%%%%%%%%%%%%%%%%%%%%%%%%%%%%%%%%%%%%%%
%\OneAndAHalfSpacedXI
%\DoubleSpacedXI

%\SingleSpacedXI

%\begin{comment}

%\begin{itemize}
%\item Introduce the problem
%\item Their practical significance and history
%\item They are hard to solve exactly
%\item Approximate dynamic programming is one technique for solving them
%\item Projection based methods are another approach which can sometimes work better. 
%\item Unfortunately, these methods can be non-Markovian and thus not solvable using dynamic programming.
%\item To get around this, existing methods add another layer of approximation by restricting the space of approximation rules. 
%\end{itemize}

%\vspace{-1em}
\section{Introduction} \label{sec:introduction}
%\textbf{Zero-one vs quadratic}

Consider the following class of stochastic dynamic optimization problems{\color{black}:} A sequence of random states 
are incrementally revealed to a decision maker. After observing the state in each period, the decision maker chooses whether to continue to the next period or stop and receive a reward that depends on the current state. %Each choice is irrevocable and can depend only on states %$x_1,\ldots,x_t$ 
%revealed up through the current period. % current period. 
The problem is to find a control policy, called a {stopping rule}, for selecting when to stop the process to maximize the expected reward. % task is to find a stopping rule for the decision maker which maximizes their expected reward. %Such a problem is widely known as \emph{optimal stopping} %Our goal in such problems is to find a policy that prescribes a selection to make in each period, called a {stopping rule}, which maximizes the expected reward. % In this paper, we are interested in the setting where the sequence of random states is drawn from a \emph{known} but \emph{non-Markovian} probability distribution. 

Such {optimal stopping} problems are widely studied and arise in a variety of domains like finance, promotion planning \citep{feng1995optimal}, and organ transplantation \citep{david1985time}. 
In particular, optimal stopping has considerable importance to industry for the pricing of financial derivatives. %Firms like hedge funds and banks commonly buy and sell derivatives like American-style options, 
With a record trading volume that exceeded seven billion contracts in 2020, equity options are among the most widely-traded type of financial derivative %\footnote{  \url{https://www.reuters.com/article/us-usa-stocks-options-idUSKBN29K2OI}.}%\emph{U.S. equity options set new volume record},},
 \citep{reuters2021}, 
and financial  firms depend on solving optimal stopping problems to determine accurate prices for American-style options, the most common type of equity option.

In this paper, we study a general class of optimal stopping problems in which the sequence of random states is driven by a {non-Markovian} probability distribution. {We recall that a sequence of random states is non-Markovian if the state in the next time period (\eg a stock's price tomorrow) has a probability distribution which depends both on the state in the current time period (\eg the stock's price today) as well as the states in the past periods (\eg the stock's price yesterday). This  class of optimal stopping problems has witnessed a surge of interest as financial firms increasingly use non-Markovian probability distributions to accurately model the volatility patterns of stocks \citep{gatheral2018volatility,leao2019discrete,becker2019deep,bezerra2020discrete,goudenege2020machine,bayer2020pricing}. %Distributions like the rough Borgonni to accurately  match the empirical volatility patterns of underlying stocks. 
Optimal stopping problems with non-Markovian probability distributions  also occur when using  popular dimensionality-reduction techniques for pricing high-dimensional basket options \citep[p. 372]{bayer2019implied} and pricing options when the probability distribution of underlying assets is accessed  via a black-box simulator constructed from historical data \citep[\S 5.5]{ciocan2020interpretable}. % in the Black-Scholes framework 
%\cite{redmann2020low}. % and black-box simulators  constructed directly from historical data \citep{ciocan2020interpretable}. % to partially-observable sequences. %, 
%A primary challenge when using dimensionality reduction is ensuring that the resulting stochastic process is non-Markovian \citep{redmann2020low,burke1958markovian}, and this challenge is particularly acute when using black-box simulators or transfer learning to generate sample paths.
%This class of optimal stopping problems also appear in other settings;  for example, a popular technique for pricing high-dimensional American basket options is to reduce the s and low-dimensional projections for baskets of assets  ``is rarely Markovian"  % in the Black-Scholes framework 
%\citep{bayer2019implied}.

%To solve non-Markovian optimal stopping (NMOS) problems, the classical method 
Despite their importance in practice, non-Markovian optimal stopping (NMOS) problems are ``not easy to solve" \cite[p. 982]{leao2019discrete}.   %the aforementioned approach to addressing non-Markovian optimal stopping problems is notoriously  augmenting 
The difficulty of these problems arises because the optimal decision in each period may depend on the entire history of the state process. 
%The primary obstacle to solving these problems is that the optimal stopping rules may depend in each period on the entire history of the state process. 
In principle, an NMOS problem can be transformed into an equivalent Markovian optimal stopping problem by converting the original sequence of random states $x_1,\ldots,x_T \in \mathcal{X}$ into a Markovian stochastic process $
X_1,\ldots,X_T \in \mathcal{X}^T, %\label{line:markov}
$
where the new state $X_t \coloneqq (x_1,\ldots,x_t,0,\ldots,0) \in \mathcal{X}^T$ in each period $t$ includes the entire state history of the original process. Unfortunately, the enlarged state space $\mathcal{X}^T$ will be high-dimensional when the optimal stopping problem has many periods, and the difficulty of solving a Markovian optimal stopping problem  explodes in the dimensionality of the state space.
To contend with the curse-of-dimensionality that arises in NMOS problems, a natural approximation technique is to search only for stopping rules that are {Markovian}. 
%To contend with the `curse-of-dimensionality' in the classical method, a dramatic simplification to NMOS problems can be obtained by searching only over stopping rules that are \emph{Markovian}. 
%an alternative strain of research reduces the complexity of NMOS problems by searching only over stopping rules that are Markovian. 
Rather than depending on the entire history of the original sequence, a Markovian stopping rule makes a decision in each period $t$ based only on the current original state $x_t$ and knowledge that the sequence of random states was not stopped in any of the previous time periods. 
\begin{comment}
Specifically, a Markovian stopping rule is represented by a collection of exercise policies, $$ \mu_1,\ldots,\mu_T: \mathcal{X} \to \{ \textsc{Stop}, \textsc{Continue} \},$$
which specify what decision to make in each period $t$ based on the original state $x_t$. From the collection of exercise policies, the corresponding Markovian stopping rule  is given by
$$\tau_\mu(x) \coloneqq \min \{t : \mu_t(x_t) = \textsc{Stop} \}.$$
\end{comment}
In general, the best Markovian stopping rule for an NMOS problem is not guaranteed to be an optimal stopping rule for the NMOS problem. 
%In general, the optimal stopping rule for an NMOS problem is not guaranteed to be a Markovian stopping rule. 
However, recent numerical evidence demonstrates that Markovian stopping rules can lead to highly accurate approximations of optimal stopping rules in   NMOS problems from options pricing; see % \citet[\S 3.5]{bayer2020pricing},
\citet[\S9.2]{goudenege2020machine}, \citet[\S 5.5]{ciocan2020interpretable}, \citet[\S 3.5]{bayer2020pricing}.

%On the other hand, non-Markovian optimal stopping problems are hard because the  Surprisingly, recent literature have suggested that  many NMOS problems which arise in practice  which may have near-optimal stopping rules which are significantly simpler than those found by the classical method. % arise in industry may be substantially simpler 

%In settings in which sample paths of the original stochastic process are generated using historical data (and thus cannot be verified to be Markovian), 

%Motivated by the computational tractability of the classical method, recent literature have taken a drastic simplification to addressing NMOS problems by makes a dramatic simplification to the NMOS problem by restricting to stopping rules which are \emph{Markovian}. 
%Motivated by the shortcomings of the classical method, prior literature have taken an alternative approach which makes a dramatic simplification to the NMOS problem by restricting to stopping rules which are \emph{Markovian}. 

%This paper studies the problem of finding the best Markovian 
%Approximating NMOS problems by restricting to Markovian stopping rules has several attractive properties compared to alterantive methods. 

%In this paper, we study the problem of finding Markovian stopping rules to NMOS problems with known probability distributions.  
As far as we are aware, only two papers until now %, \cite{belomestny2011rates} and \cite{ciocan2020interpretable},  
have suggested methods that are theoretically capable  of finding the best Markovian stopping rules to NMOS problems. %\footnote{A technical challenge in NMOS problems is that the best Markovian stopping rules cannot be found recursively using dynamic programming; 
 %see Appendix~\ref{appx:dp_limit} or \citet[p. 429]{glasserman2013monte}.}  
 \cite{belomestny2011rates}  analyzes simulation-based methods which optimize directly over spaces of Markovian stopping rules and suggests a nonparametric space of Markovian stopping rules  based on  $\epsilon$-nets; however, he does not propose any concrete algorithms for optimizing over this  nonparametric space of Markovian stopping rules. \cite{ciocan2020interpretable} propose optimizing over Markovian stopping rules which are restricted to decision trees with fixed depth%which turn out to yield simple and near-optimal policies for several practical problems. The authors show
. Due to the computational intractability of optimizing over all decision trees of fixed depth, the authors develop greedy heuristics which are shown to limit the range of attainable decision trees, and overcoming this limitation of the heuristics ``is not obvious, especially in light of the structure of the optimal stopping problem that is leveraged to efficiently optimize split points in our construction" \cite[p. 22]{ciocan2020interpretable}.

%\subsection{{\color{black}Contributions}}

We take a different approach to the aforementioned literature, and in doing so make our contributions to optimal stopping, by drawing on the traditionally unrelated field of \emph{robust optimization}. 
%In this paper, we make new developments to optimal stopping  by drawing on the traditionally unrelated field of \emph{robust optimization}.
%We make our aforementioned developments to optimal stopping by  drawing on the traditionally unrelated field of \emph{Robust Optimization}. 
%Our second contribution is a new perspective on  stochastic optimal stopping through the lens of \emph{robust optimization}. 
%Second, we introduce a new method for solving stochastic dynamic programming problems with known probability distributions using \emph{robust optimization}. 
%The second purpose of this paper is to offer a new perspective on stochastic optimal stopping through the lens of {robust optimization}. 
Over the past two decades, robust optimization  has emerged as  a leading tool in operations research for dynamic decision-making when uncertainty is driven by {unknown} or {ambiguous} probability distributions \citep{ben2009robust,delage2015robust}. %In those settings, a variety of advances have been made for characterizing and computing optimal control policies. 
%The goal in robust optimization is to find control policies which perform best when evaluated on adversarially-chosen scenarios, and 
%a vast literature on these problems has resulted in a variety of algorithmic techniques for computing optimal control policies for dynamic robust optimization problems. 
In this paper, we show  that robust optimization can be combined with {simulation} to develop algorithms for finding Markovian stopping rules to NMOS problems with known probability distributions. % the best Markovian stopping rules for NMOS problems with known probability distributions.
 Compared to \cite{belomestny2011rates} and \cite{ciocan2020interpretable}, our approach for optimal stopping does not restrict the space of Markovian stopping rules to {any} parametric class, and we develop concrete algorithms that are guaranteed to yield $\epsilon$-optimal Markovian stopping rules for general classes of NMOS problems.  
 
{\color{black}

{\color{black}

%While the fundamental idea of using robust optimization to develop approximations of stochastic dynamic optimization problems  has appeared in the literature, our work is the first to \emph{operationalize} this idea in a way that yields new algorithms with practical or theoretical advantages in a class of stochastic dynamic optimization problems with known probability distributions. To explain the novel contributions that underpin our algorithms, we %is known. % known probability distributions. % that offer practical or theoretical advantages over state-of-the-art algorithms for a class of stochastic dynamic optimization problems with known probability distributions. % the probability distributions are known. 

%\subsection{Contributions}
%In greater detail, the present work develops a new approach for computing Markovian stopping rules for NMOS problems with known probability distributions by combining Monte-Carlo simulation and robust optimization. 

%\subsubsection{Overview of Approach / Contributions to Robust Optimization}\ 

%\subsubsection{Overview of Approach / Novelty in Robust Optimization Literature}\ 

%\subsection{A Combination of Robust Optimization and Monte-Carlo Simulation}
 In greater detail, this paper introduces a new approach for computing Markovian stopping rules for NMOS problems with known probability distributions that is based on a combination of simulation and robust optimization.  
At a high level, our approach is  comprised of the following steps:

\begin{description}
\item[{Step 1}:] We use Monte-Carlo simulation to generate sample paths of the sequence of random states. 
\item[Step 2:] From those sample paths, we %apply techniques from \citet*[henceforth referred to as \citetalias{bertsimasdata}]{bertsimasdata} to
 construct a robust optimization problem {\color{black}that approximates the NMOS problem}. 
\item[Step 3:] We solve the robust optimization problem to obtain stopping rules for the NMOS problem. 
\end{description}
%The goal of Steps 1 and 2 is to construct a robust optimization problem that provably approximates the NMOS problem to any arbitrary accuracy. %Step 2 draws on the recent work of  \citet*[henceforth referred to as \citetalias{bertsimasdata}]{bertsimasdata}. In contrast to \citetalias{bertsimasdata}, 
%Once the robust optimization problem is constructed, Step 3 requires solving the robust optimization problem to obtain near-optimal Markovian stopping rules for the NMOS problem. 
%The intuition behind our approach is that if the number of simulated sample paths in Step 1 is chosen to be sufficiently large, then every Markovian stopping rule that is optimal for the robust optimization problem constructed in Step 2 is guaranteed to be an $\epsilon$-optimal Markovian stopping rule for the NMOS problem with high probability. Thus, our approach allows the task of computing near-optimal Markovian stopping rules for NMOS problems to be reduced to the task of solving robust optimization problems. 
The robust optimization problem constructed in Step 2 can be interpreted as a \emph{proxy} or \emph{surrogate} for the NMOS problem. 
Indeed, if the number of simulated sample paths in Step 1 is sufficiently large, then every Markovian stopping rule that is optimal for the robust optimization problem constructed in Step 2 is guaranteed with high probability %\footnote{{\color{black}With high probability and under relatively mild assumptions on the NMOS problem.}} 
to be an $\epsilon$-optimal Markovian stopping rule for the NMOS problem; see \S\ref{sec:opt}. Thus, the approach comprised of the above steps enables  the task of computing near-optimal Markovian stopping rules for an NMOS problem to be reduced to  the task of solving a robust optimization problem.

 With respect to the robust optimization literature, Step 2 of our approach  follows   a recent paper by \citet*[henceforth referred to as \citetalias{bertsimasdata}]{bertsimasdata}. 
%Within the robust optimization literature, the closest related prior work to ours is that of  \citetalias{bertsimasdata}. 
%With respect to prior literature, Step 2 of our approach draws on the recent work by \citet*[henceforth referred to as \citetalias{bertsimasdata}]{bertsimasdata}. 
In that paper, the authors showed that a general class of stochastic dynamic optimization problems with unknown probability distributions can in principle be approximated to arbitrary accuracy by a robust optimization problem constructed from historical data. In particular, our work draws on \citetalias{bertsimasdata} in two specific ways. 
First, our robust optimization problem constructed in Step 2 is a variant of a robust optimization formulation that is proposed in  \citetalias{bertsimasdata}. Second, by focusing on the application of optimal stopping, we strengthen  theoretical developments from \citetalias{bertsimasdata} to prove under relatively {mild} and {verifiable} assumptions that the optimal objective value and optimal Markovian stopping rules of the robust optimization problem from Step 2 converge almost surely to those of the NMOS problem as the number of simulated sample paths in Step 1 grows to infinity (Theorems~\ref{thm:conv:asympt}-\ref{thm:conv:ub} in \S\ref{sec:opt}). {\color{black}For the interested reader, a discussion of our improvements to the theoretical convergence guarantees from \citetalias{bertsimasdata} is provided at the beginning of Appendix~\ref{appx:proof_conv}. }%Hence, our approach is related to  \citetalias{bertsimasdata} in the sense that the a robust optimization problem that approximations that NMOS problem to any arbitrar relates to \citetalias{bertsimasdata}  theoretical guarantees provides the theoretical foundation for Steps 2. %Because the joint probability distribution of the sequence of random states in the NMOS problem is known, we can simulate any desired number of sample paths in Step 1 when constructing the robust optimization problem. %However, for technical reasons that we will explain momentarily, no previous  %Thus, by solving the robust optimization problem constructed in Step 2, we can obtain stopping rules for the NMOS problem. %Hence, the combination of Steps 1 and 2 allows us construct a robust optimization problem that approximates the NMOS problem to any desired level of accuracy. % to obtain any arbitrary approximation of the NMOS problem. %By focusing on the specific application of NMOS problems, an overarching aim in this paper is to show that the above three steps can be used to develop theoretically-justified algorithms for solving challenging stochastic dynamic optimization problems with known probability distributions.

In contrast to \citetalias{bertsimasdata}, the key novelty of the present paper lies \emph{not} in showing that robust optimization can be used to construct approximations of stochastic dynamic optimization problems; rather, the main contributions of the present paper are \emph{algorithmic}. 
 Indeed, in order for a combination of simulation and robust optimization to yield near-optimal algorithms for a class of stochastic dynamic optimization problems with known probability distributions, it is not sufficient to construct a robust optimization problem that approximates the stochastic problem to arbitrary accuracy: one must also have exact or provably near-optimal algorithms for solving the robust optimization problem.  %one must also be able to actually solve the constructed robust optimization problems. 
  However, up to this point, there have been no exact algorithms in the literature for solving the type of robust optimization problems proposed by \citetalias{bertsimasdata} in any  class of stochastic dynamic optimization problems where uncertainty unfolds over two or more periods.  In particular, because solving the type of robust optimization problems from \citetalias{bertsimasdata} requires optimizing over {infinite-dimensional}  spaces of  control policies, it has been unknown in any application whether optimal  control policies for these robust optimization problems even \emph{exist}, let alone whether they can ever be tractably computed. These  hurdles have, practically speaking, prevented the robust optimization techniques from  \citetalias{bertsimasdata} from being combined with simulation to develop algorithms for computing near-optimal control policies for any class of stochastic dynamic optimization problems with known probability distributions until now.

In this paper, we resolve the  aforementioned gaps in the robust optimization literature by establishing the first characterization of optimal control policies for the type of robust optimization problems proposed by  \citetalias{bertsimasdata} in an application where uncertainty unfolds over two or more periods (Theorem~\ref{thm:characterization} in \S\ref{sec:main}). 
Specifically, we consider the task of solving the robust optimization problems constructed in Step 2 over the infinite-dimensional space of all Markovian stopping rules.
%Specifically, we consider the task of solving the robust optimization problems constructed in Step 2 that serve as proxies for NMOS problems. % over the infinite-dimensional space of Markovian stopping rules.
%The robust optimization problems constructed in Step 2 require solving over an infinite-dimensional space of all Markovian stopping rules. 
Using a novel pruning technique, we prove  in Theorem~\ref{thm:characterization} that optimal Markovian stopping rules for these robust optimization problems not only {exist}, but also have a structure that is  simple and {finite-dimensional}. In fact, % \emph{establish the existence} and \emph{characterize the structure} of optimal Markovian stopping rules for the robust optimization problem constructed in Step 2. 
our characterization reveals that optimal Markovian stopping rules for these robust optimization problems %is surprisingly simple, finite-dimensional,  and
 can be compactly parameterized  by integer variables $\sigma^1,\ldots,\sigma^N \in \{1,\ldots,T\}$, 
where $N$ is the number of simulated sample paths chosen in Step 1 and $T$ is the number of periods in the NMOS problem. %Our characterization thus implies that solving the robust optimization problem constructed in Step 2 over the space of all Markovian stopping rules can be reduced to searching over a set of $T^N$ 

%By establishing that optimal Markovian stopping rules for the robust optimization problems constructed in Step 2 can represented by a finite number of integer variables, our characterization allows us to develop the first exact algorithms for  the type of robust optimization problems proposed by  \citetalias{bertsimasdata}.  %fcharacterization is important from a practical perspective,  as the characterization makes it possible to develop the first exact algorithms for  the type of robust optimization problems proposed by  \citetalias{bertsimasdata} in an application where uncertainty unfolds over two or more periods. 

Leveraging the structure of optimal Markovian stopping rules for the robust optimization problems  constructed in Step 2, we develop exact and heuristic algorithms for solving  the robust optimization problem in Step 3. %The exact algorithms are significant  from a methodological standpoint, as they provide the first concrete algorithms for computing optimal control policies for the type of robust optimization problems proposed by  \citetalias{bertsimasdata} in an application where uncertainty unfolds over two or more periods. 
Specifically, we make the following algorithmic contributions: % for solving the robust optimization problem: % for the robust optimization problem include the following:
\begin{comment}
Our characterization of optimal Markovian stopping rules for the robust optimization problems  constructed in Step 2 allows us to transform the infinite-dimensional robust optimization problem into a finite-dimensional combinatorial optimization problem (Theorem~\ref{thm:reform} in \S\ref{sec:algorithms:prelim}). %  the best of our knowledge, this is the first result which characterizes the structure of optimal policies for the robust optimization approach of \citetalias{bertsimasdata}  in an application where uncertainty is revealed over two or more periods. 
%Our characterization of optimal Markovian stopping rules for the robust optimization problem enables us to develop exact and heuristic approximation algorithms for solving the robust optimization problem using tools from combinatorial optimization. 
 Equipped with our finite-dimensional reformulation of the robust optimization problem, 
%Equipped with our finite-dimensional characterization of optimal Markovian stopping rules for the robust optimization problem constructed in Step 2, 
we develop exact algorithm, heuristic algorithms, and computational complexity results for solving the robust optimization problem in Step 3. In greater detail, we make the following key algorithmic contributions:% \emph{exact algorithms}, \emph{computational complexity} results, and \emph{efficient heuristics}  for solving the robust optimization problem. In greater detail, using characterizing the structure of optimal policies for the robust optimization, we make the following specific algorithmic developments: %our findings are summarized as follows:
\end{comment}
\begin{enumerate}[(a)]
\item We use our characterization of optimal Markovian stopping rules for the robust optimization problems  constructed in Step 2 to transform the robust optimization problem into a finite-dimensional optimization problem  over integers $\sigma^1,\ldots,\sigma^N \in \{1,\ldots,T\}$ (Theorem~\ref{thm:reform} in \S\ref{sec:algorithms:prelim}).%\\[-0.5em]
 \item For NMOS problems with two periods, we show that optimal Markovian stopping rules for the robust optimization problem can be computed in $\mathcal{O}(N^3)$ time (Theorem~\ref{thm:twoperiods} in \S\ref{sec:exact}).%\\[-0.5em]

%\item We develop a polynomial-time exact algorithm for computing optimal Markovian stopping rules for the robust optimization problem when the number of periods in the NMOS problem is equal to two (Theorem~\ref{thm:twoperiods} in \S\ref{sec:exact}). \\[-0.5em]
 \item For NMOS problems with three or more periods, we prove that finding optimal Markovian stopping rules for the robust optimization problem is   NP-hard (Theorem~\ref{thm:hard} in \S\ref{sec:exact}).%\\[-0.5em]

\item We develop a general-purpose {exact algorithm} for computing optimal Markovian stopping rules for the robust optimization problem that consists of solving a zero-one bilinear program over totally unimodular constraints (Theorem~\ref{thm:bp_main} in \S\ref{sec:exact}).%\\[-0.5em] % and develop solution approaches for this reformulation.  %of polynomial size. 

\item  We design and analyze a polynomial-time heuristic algorithm for approximately solving the robust optimization problem by relating the bilinear program to the maximal closure problem from graph theory  (Propositions~\ref{prop:similarity_of_formulations}-\ref{prop:lb_on_h2:N} in \S\ref{sec:approx}). 
% \item We prove that the optimality gap of the heuristic approximation algorithm is equal to zero for any optimal stopping problem with two periods. %  optimal stopping problems with two periods, we show that the optimal policies for the robust optimization problem can be computed in strongly polynomial time.  % $T=2$.   
% \item For optimal stopping problems with three or more periods, we show that finding the optimal Markovian stopping rules for the robust optimization problem is strongly  NP-hard. 

%  \item We develop a general upper bound on the optimality gap of our heuristic approximation algorithm with respect to an intermediary problem. 
 \end{enumerate}
 In summary, our exact and heuristic algorithms allow us to solve the robust optimization problem constructed in Step 2. Since the robust optimization problem constructed via Steps 1 and 2 serves as an approximation of the NMOS problem to any arbitrary accuracy, our algorithms for solving the robust optimization problem in turn allow us to compute near-optimal Markovian stopping rules for the NMOS problem. %This work thus offers the first concrete use of robust optimization to develop simulation-based algorithms for a class of stochastic dynamic optimization problems with known probability distributions. 

}}

We conclude with numerical experiments that demonstrate the value of our robust optimization-based algorithms in several settings. First, we consider a simple one-dimensional non-Markovian optimal stopping problem with fifty periods, and we compare the robust optimization algorithm to existing methods based on approximate dynamic programming \citep{longstaff2001valuing} and parametric stopping rules \citep{ciocan2020interpretable}. The experiments show that our method can find stopping rules that significantly outperform those found by the other techniques, while maintaining a comparable computational cost. %which is attributed due to the complex structure of the optimal Markovian stopping rules. 
 In particular, %the example shows the limitations of methods based on backwards recursion when finding Markovian stopping rules in the non-Markovian setting; In particular,
  the experiments reveal that our method can strictly outperform alternative algorithms for finding Markovian stopping rules to NMOS problems that are based on backwards recursion. %even when the backwards recursion-based algorithms use a fine-grained approximation of the value function.  
  Second, we consider a widely-studied and important problem of pricing high-dimensional Bermudan barrier options with over fifty periods. %near real-world  and imporrtant problem of pricing a high-dimensional barrier option with a time-varying threshold. 
 %We use the robust optimization method to find Markovian stopping rules after applying dimensionality reduction to the stochastic process. A primary challenge when using dimensionality reduction is ensuring that the resulting stochastic process is non-Markovian \citep{redmann2020low,burke1958markovian}, and this challenge is particularly acute when using black-box simulators or transfer learning to generate sample paths \citep[\S 5.5]{ciocan2020interpretable}.
 %A primary challenge when using dimensionality reduction is ensuring that the resulting stochastic process is non-Markovian \citep{redmann2020low,burke1958markovian}, and this challenge is particularly acute when using black-box simulators or transfer learning to generate sample paths.
Across several variants of this problem, we demonstrate that our combination of robust optimization and simulation can find stopping rules that match, and in some cases significantly outperform, those from state-of-the-art algorithms {\color{black}by \cite{longstaff2001valuing,ciocan2020interpretable} as well as the duality-based pathwise optimization method of  \cite{desai2012pathwise}.  }

%\subsection{Paper Organization}
The rest of our paper has the following organization.  \S\ref{sec:lit} provides a review of methods for solving optimal stopping problems with Markovian and non-Markovian probability distributions. % and robust optimization. 
\S\ref{sec:setting}  formalizes the problem setting and introduces our robust optimization-based method.  \S\ref{sec:main} characterizes the structure of optimal policies for the robust optimization problem. \S\ref{sec:algorithms} develops tractable algorithms and computational complexity results for the robust optimization problem. \S\ref{sec:experiments} illustrates the performance of our algorithms in numerical experiments.  
 {\color{black}Unless stated otherwise, all  technical proofs can be found in the appendices.  }

%Taking the above developments all together, the overall algorithms for solving optimal stopping problems becomes lucid. Stated succinctly, our algorithm consists of (i) constructing the robust optimization.

\subsection{Other Related Literature} \label{sec:lit}
\begin{comment}
Optimal stopping has a long and rich history spanning many domains such as marketing, health care, finance, and operations management. %The modern formulation of these problems is typically attributed to \cite{wald1945sequential}, but their origin dates back much  earlier to at least the writings of \cite{cayley1875mathematical}.  
An introduction to the foundations of these problems are found in the textbooks of \cite{siegmund1971great} and \cite{shiryaev2007optimal}. 
%Motivated by its applications to fields such as finance and operations management, stochastic optimal stopping has been a central topic of research for many years \cite{siegmund1971great,shiryaev2007optimal}. %In the high-dimensional or non-Markovian setting,  
To solve optimal stopping problems with Markovian stochastic processes, a vast literature over many years has focused on finding the closed-form structure of optimal policies for specific stochastic optimal stopping problems, which in turn lead to efficient computational algorithms. However, the {`curse-of-dimensionality'} arises in problems with high-dimensional Markovian stochastic processes, and this has curtailed the hope of characterizing and computing optimal stopping rules in many applications.  %In this section, we provide a brief review of tractable approximations for on solving high-dimensional optimal stopping problems  has elicited significant research to develop tractable approximations. % which we summarize below. 
\end{comment}

%\subsubsection*{Algorithms for Markovian Optimal Stopping Problems:} 
Many methods based on approximate dynamic programming (ADP) have been developed for optimal stopping problems with high-dimensional Markovian stochastic processes. The most popular ADP methods for these optimal stopping problems are based on Monte-Carlo simulation and regression, which originate with %and the earliest work on simulation approaches in the financial derivatives was by \cite{boyle1977options}, who showed that European options can be priced using Monte Carlo simulation. Subsequent work on simulation in pricing financial derivatives using regression was employed in  optimal stopping  by  
\cite{carriere1996valuation,longstaff2001valuing} and \cite{tsitsiklis2001regression}. Given sample paths of the entire stochastic process, these methods use backwards recursion and regression to obtain approximations of the value function, and exercise policies are then obtained by proceeding greedily with respect to the approximate value functions.  %A central challenge in these approaches is selecting a good parametrization of the continuation function, which strikes a balance between approximation quality and sample complexity. 
  The efficacy of regression-based methods hinges on selecting a parametrization of basis functions for the value function that strikes a balance between approximation quality and sample complexity.    Nonparametric choices for the basis functions, \eg Lagurerre polynomials, are discussed in the aforementioned works and subsequently analyzed in works such as \cite{clement2002analysis,glasserman2004number,egloff2005monte,belomestny2011pricing} and \cite{zanger2020general}. In \S\ref{sec:experiments}, we provide numerical   comparisons of our proposed algorithms to ADP techniques  in the context of NMOS problems.
  
A variety of other nonparametric methods have been developed for solving optimal stopping problems with Markovian probability distributions, such as quantization-based approximations of value functions \citep{bally2003quantization} and scenario tree discretizations of the sequence of random states \citep{broadie1997pricing}. Recent works have also considered using deep learning to learn the continuation function, including  \cite{becker2019deep} and \cite{goudenege2020machine}. {\color{black}The efficacy of deep learning algorithms for optimal stopping hinges on carefully selecting the topology of the neural network and choosing the right  tuning algorithm, and performing these tasks effectively  in  the context of optimal stopping is an ongoing area of research \citep{fathan2021deep}. %Implementing deep learning-based methods for optimal stopping problems is, however, a nontrivial and nascent endeavor. 
}  
  %Nonparametric methods which approximate the continuation function are proposed by \cite{bally2003quantization,bally2005quantization}, and nonparametric methods which discretize the stochastic process by a  scenario tree originate with \cite{broadie1997pricing}. 
%  The aforementioned techniques provide lower bounds on the optimal objective value of the optimal stopping problem. 
Methods to compute upper bounds on optimal stopping problems grew in interest due to the  independent works of \cite{haugh2004pricing} and \cite{rogers2002monte}, and duality-based algorithms to obtain upper bounds which combine simulation and suboptimal stopping rules were first proposed by  \cite{andersen2004primal}. Other works that harness dual representations to solve optimal stopping {and other stochastic dynamic optimization} problems include {\color{black}\cite{brown2010information}}, \cite{desai2012pathwise,belomestny2013solving}, and \cite{goldberg2018beating}, among many others.  {\color{black} In \S\ref{sec:experiments}, we provide numerical comparisons of our proposed algorithms to the duality-based pathwise optimization method of  \cite{desai2012pathwise}.}  %As of now, the performance of these deep learning methods to optimal stopping appear to depend intricately on the network topology and choices of tuning parameters, and training can require a large sample complexity.
  
% The efficacy of deep learning-based methods to optimal stopping seems to intricately depend on the network topology and other learning parameters and training can require a large sample complexity.
 %   In contrast to the above literature, our work focuses on solving optimal stopping problems with low-dimensional but non-Markovian probability distributions, and  

 %An overview of approximate dynamic programming (ADP) techniques for high-dimensional Markovian optimal stopping problems is found in \cite{glasserman2013monte}.   %exact and approximation algorithms for (Markovian) optimal stopping problems  grows with the dimension of the (Markovian) state space,   
%

%  A variety of alternatives, complements,  and improvements to the least-squares regression method have been developed to tackle the myriad of regimes of stochastic optimal stopping problems. When knowledge of the conditional distribution is available,  the stochastic mesh method of \cite{broadie2004stochastic}. Various ways of approximating the continuation function have been recently investigated based on machine learning, such as \cite{becker2019deep}%becker2019solving}
%Theoretical analysis of the least-squares method is %The central challenge with nonparametric state space is the curse-of-dimensionality. 
%\subsubsection{Stochastic Optimal Stopping.} 

In the context of non-Markovian optimal stopping, methods have been developed which address settings that are different from ours in non-trivial ways.  \cite{leao2019discrete} and \cite{bezerra2020discrete} develop discretization schemes for  NMOS problems over continuous time and restrict the class of probability distributions to those based on the Brownian motion.  In contrast to these works, our paper develops algorithms for finding Markovian stopping rules for discrete-time optimal stopping problems, and we do not require any parametric assumptions on the probability distributions of the underlying stochastic processes.  NMOS problems can also be addressed by the scenario tree method of  \cite{broadie1997pricing} and the recursive-dual algorithm of \cite{goldberg2018beating}, provided that one can perform Monte-Carlo simulation on the conditional probability distribution of the stochastic process in each time period. In contrast to these methods, the algorithms in this paper require only the ability to simulate sample paths of the  entire stochastic process and are shown in numerical experiments to be practically tractable in low-dimensional NMOS problems with dozens of time periods. %Closest related to our approach are parametric 
Within the optimal stopping literature, our method is most closely related to a class of simulation-based methods which optimize directly over spaces of  deterministic  stopping rules, as explored by \cite{garcia2003convergence,andersen1999simple,belomestny2011rates,gemmrich2012,ciocan2020interpretable}, and  \citet[\S 8.2]{glasserman2013monte}. We discuss  connections between our {\color{black}approach} and this stream of literature in \S\ref{sec:relatedwork}, and a discussion of the challenges of using dynamic programming to find the best Markovian stopping rules for NMOS problems can be found in Appendix~\ref{appx:dp_limit}. % technical challenge in NMOS problems is that the best Markovian stopping rules cannot be found recursively using dynamic programming; see Appendix~\ref{appx:dp_limit} or \citet[p. 429]{glasserman2013monte}. 

{\color{black}Finally, we note that prior research in operations research and economics have studied robust optimal stopping problems in which the goal is to find stopping rules that perform well under worst-case probability distributions \citep{bayraktar2014robust,riedel2009optimal} or under worst-case state trajectories \citep{iancu2021monitoring}. That stream of research differs significantly from ours, as that stream of research does not consider robust optimization problems that are approximations of stochastic optimal stopping problems with known probability distributions.  %algorithms developed in that stream of research are not designed to find stopping rules with near-optimal average performance, but rather are designed for finding stopping rules different from that considered in this paper which the decision makers is unable  probability distributions  optimal stopping problem is challenging to estimate or where a decision-maker is risk-averse.  In contrast, our work focuses on solving stochastic optimal stopping problems. Our work differs significantly from the stream of research, as the  stopping rules obtained using algorithms from that stream of research  are not designed to perform well in stochastic optimal stopping problems. 
  }

\section{Robust Optimization for Stochastic Optimal Stopping} \label{sec:setting}

\subsection{Problem Setting} \label{sec:setting:notation}
We consider stochastic optimal stopping problems defined by the following components:

\vspace{-1em}
\paragraph{\textbf{States:}} Let $x \equiv (x_1,\ldots,x_T)$ denote a sequence of random states, where the state $x_t \in \mathcal{X}\; {\color{black}\equiv}\; \R^d$ in each period $t$ is a random vector of dimension $d$.  For example, the state in each period may represent the prices of multiple assets at that point in time.  The joint probability distribution of this 
 stochastic process is  assumed to be known and accessible through a simulator which generates independent sample paths of the entire stochastic process. %Our focus in on cases in which the stochastic process is non-Markovian, but all of our subsequent developments can be applied to Markovian stochastic processes as well.  
 
 \vspace{-1em}
 \begin{comment}
\paragraph{\textbf{Policies:}} Let $\mu \equiv (\mu_1,\ldots,\mu_T)$ represent a collection of exercise policies, where the exercise policy in each period $t$ is a function of the form $\mu_t: \mathcal{X} \to \{\textsc{Stop}, \textsc{Continue} \}$. Intuitively, the exercise policy in each period is a partitioning of the state space into regions for stopping and continuing. From the exercise policies, the corresponding Markovian stopping rule $\tau_\mu: \mathcal{X}^T \to \{1,\ldots,T\} \cup \{\infty \}$ is a function that maps a realization of the stochastic process to stopping period:
\begin{align*}
\tau_\mu(x) \triangleq \min \{t \in \{1,\ldots, T\}:\; \mu_t(x_t) = \textsc{Stop} \}.
\end{align*}
{Throughout this paper, a minimization problem with no feasible solutions is defined equal to $\infty$. } 
\end{comment}
\paragraph{\textbf{Policies:}} Let $\mu \equiv (\mu_1,\ldots,\mu_T)$ represent a collection of exercise policies, where the exercise policy in each period $t$ is a {\color{black}measurable} function of the form $\mu_t: \mathcal{X} \to \{\textsc{Stop}, \textsc{Continue} \}$. Speaking intuitively, each exercise policy is a partitioning of the state space into regions for stopping and continuing. %A Markovian stopping rule  $\tau_\mu: \mathcal{X}^T \to \{1,\ldots,T\} \cup \{\infty \}$ is a function defined by a collection of exercise policies that maps a realization of the stochastic process to a stopping period given by %We say that a function is a Markovian stopping rule if and only if the function has the form $\tau_\mu: \mathcal{X}^T \to \{1,\ldots,T\} \cup \{\infty \}$ for some collection of exercise policies $\mu \equiv (\mu_1,\ldots,\mu_T)$, where % 
From the exercise policies, the corresponding Markovian stopping rule $\tau_\mu: \mathcal{X}^T \to \{1,\ldots,T\} \cup \{\infty \}$ is a function that maps a realization of the stochastic process to {\color{black}a }stopping period:
\begin{align*}
\tau_\mu(x) \triangleq \min \{t \in \{1,\ldots, T\}:\; \mu_t(x_t) = \textsc{Stop} \}.
\end{align*}
%In other words, a Markovian stopping rule outp to be a function that maps a realization of the stochastic process $x \equiv (x_1,\ldots,x_T)$ to the first period $t$ in which the state $x_t$ falls into the stopping region $\{y_t \in \mathcal{X}: \mu_t(y_t) = \textsc{Stop} \}$ defined by the exercise policy $\mu_t$.  stopping period. 
 Throughout this paper, a minimization problem with no feasible solutions is defined equal to $\infty$.\footnote{{\color{black}We remark that the Markovian stopping rule $\tau_\mu$ is a non-anticipative control policy, meaning that the event $\{\tau_\mu(x) = t\}$ does not depend on the future states $x_{t+1},\ldots,x_T$ for each period $t \in \{1,\ldots,T\}$.  To see why this is the case, consider any two realizations of the sequence of random states $x \equiv (x_1,\ldots,x_T)$ and $x' \equiv (x'_1,\ldots,x'_T)$, and suppose for a given period $t \in \{1,\ldots,T\}$ that the two realizations satisfy $x_s = x_s'$ for all $s \in \{1,\ldots,t\}$. Then we readily observe from our definition of Markovian stopping rules and from algebra that the Markovian stopping rule $\tau_\mu$ satisfies $\tau_\mu(x) = t$ if and only if $\tau_\mu(x') = t$ is satisfied. }} 

\vspace{-1em}
\paragraph{\textbf{Rewards:}}  Let $g: \{1,\ldots,T\} \cup \{\infty \}\times \mathcal{X}^T \to \R_+$ be a known and deterministic function that maps a stopping period and a realization of the entire stochastic process to a reward. The assumption that the reward function is nonnegative is common in many applications of optimal stopping, and we assume throughout the paper that a stochastic process that is never stopped yields a reward of zero: $g(\infty,x) \equiv 0$. It follows from the definition of the reward function that the reward from stopping on any period $t$ may in general depend on the states of the stochastic process in previous or future time periods. % all methodology and theory developed in this paper hold in this general setting.   % and we remark that any discount factors can without loss of generality be included into this cost function.
% \paragraph*{\textbf{Problem:}}

\vspace{-1em}
\paragraph{\textbf{Problem:}}    With the above notation and  inputs, the goal of this paper is to solve stochastic optimal stopping problems of the form 
\begin{align} \label{prob:main} \tag{OPT}
\begin{aligned}
&\sup_{\mu} &&\Exp \left[g(\tau_\mu(x),x) \right],
\end{aligned}
\end{align}
where the optimization is taken over the space of all Markovian stopping rules. % is to find the optimal Markovian stopping rules is given by the optimization problem% paper focuses on solving stochastic optimal stopping problems over Markovian stopping rules,
 In the following sections, we introduce and analyze a new simulation-based method for solving this class of stochastic dynamic optimization problems. % stopping problems. 

\subsection{The Robust Optimization {\color{black}Approach}} \label{sec:rob}
Our proposed {\color{black}approach} for solving stochastic optimal stopping problems  of the form \eqref{prob:main} consists of the following steps. %of the form
%To motivate our algorithm, we will consider a running simple example with four periods ($T=4$) and a one-dimensional state space ($d=1$). The example has a discount factor $\alpha = 1$ and a reward function $g(x_t) = x_t$.
We first simulate sample paths of the stochastic process  $x \equiv (x_1,\ldots,x_T)$. Let $N$ denote the number of sample paths{\color{black},} and {\color{black}let} the values of the sample paths be denoted by $$x^i \equiv (x^i_1,\ldots,x^i_T) \text{ for each }i=1,\ldots,N.$$ %Hence, the sample paths  are independent and identically distributed realizations of the Markov process $x \triangleq (x_1,\ldots,x_T)$.  
%In Figure~\ref{fig:example_no_sets}, we visualize sample paths obtained during this step in a simple one-dimensional problem.
 We assume that the sample paths are independent and identically distributed realizations of the entire (possibly non-Markovian) stochastic process. % of one-dimensional example with $T=54$ periods, a visualization of three sample paths ($N=3$) obtained using this step is given in Figure~\ref{fig:example_no_sets}. %We denote the values of the sample paths by $x^i \triangleq (x^i_1,x^i_2,x^i_3,x^i_4)$ for each $i=1,2,3$. %Our goal is to find the stopping rule which maximizes the expected reward with respect to the underlying random process. %Consider a single-dimensional optimal stopping problem over five periods. 
We next choose the following robustness parameter: $$\epsilon \ge 0.$$ The purpose of the robustness parameter will become clear momentarily, and a discussion on how to choose the number of sample paths and the robustness parameter is deferred until  \S\ref{sec:params}.  %As we will see shortly, the parameter will be interpreted similarly as a regularization parameter in machine learning. 
With these parameters, let the uncertainty set around sample path $i$ on period $t$ be defined as
\begin{align*}
\mathcal{U}^i_t \triangleq \left \{ y_t \in \mathcal{X}: \; \| y_t - x_t^i \|_\infty \le \epsilon  \right \}.
\end{align*}
%where $\| \cdot \|$ represents any norm in $\mathcal{X}$.\footnote{We assume for now that $\| \cdot \|$ is any function over $\mathcal{X} \subseteq \R^d$ that satisfies the normal axioms of a norm. The norm presents a practical parameter, which is discussed in detail in Section~\ref{}. }
For notational convenience, denote the uncertainty set around sample path $i$ across all periods by
\begin{align*}
\mathcal{U}^i &\triangleq \mathcal{U}^i_1 \times \cdots \times \mathcal{U}^i_T.%\equiv\left\{ y \in \mathcal{X}^T: \| y - x^i \|_\infty \le \epsilon \right \}.
\end{align*}
%A visualization of these sets is shown in Figure~\ref{}. Intuitively speaking, the above set contains all perturbations of the $i$-th sample path.    
Hence, we observe that the role of the robustness parameter is to control the size of these sets. %  the set $\mathcal{U}^i$ contains all of the sample paths $y \triangleq (y_1,\ldots,y_T)$ which  satisfy $\| y_t - x_t^i \| \le \epsilon_N$ for each period. 
%\\
%&= \left \{ y \triangleq (y_1,\ldots,y_d): y_1 \in \mathcal{U}^i_1,\ldots,y_d \in \mathcal{U}^i_d \right \}. 
%\end{align*}
Given the sample paths and choice of the robustness parameter, 
our approach obtains an approximation of \eqref{prob:main} by solving the following robust optimization problem:
\begin{comment}
{\color{black}
\begin{align} \label{prob:sro_orig} \tag{RO'}
\begin{aligned}
&\sup_{\mu} &&\frac{1}{N} \sum_{i=1}^N \inf_{y \in \mathcal{U}^i}  g(\tau_\mu(y), x^i).
\end{aligned}
\end{align}
}
\end{comment}
\begin{align} \label{prob:sro} \tag{RO}
\begin{aligned}
&\sup_{\mu} &&\frac{1}{N} \sum_{i=1}^N \inf_{y \in \mathcal{U}^i}  g(\tau_\mu(y), x^i).
\end{aligned}
\end{align}
By solving the above robust optimization problem, we obtain {\color{black}exercise} policies $\hat{\mu} \equiv (\hat{\mu}_1,\ldots,\hat{\mu}_T)$. These {\color{black}exercise} policies constitute our approximate solution to the stochastic optimal stopping problem \eqref{prob:main}. %We can then use these policies to compute a lower bound on the optimal value of \eqref{prob:main}. 
%{\color{black}
%\begin{remark}
%The robust optimization problem~\eqref{prob:sro} can be interpreted as a game played against $N$ adversaries, each of which chooses the 
%\end{remark}
%}
{\color{black}
%In the remainder of \S\ref{sec:setting}, we expand on this combination of simulation and robust optimization for solving stochastic optimal stopping problems with known probability distribution.  In  \S\ref{sec:relatedwork}, we develop intuition and motivate this new approach within the context of the optimal stopping literature. In \S\ref{sec:opt}, we provide theoretical guarantees which establish our combination of simulation and robust optimization can be used to approximate \eqref{prob:main}  to arbitrary accuracy. As discussed in the introduction, the formulation of the robust optimization problem~\eqref{prob:sro_orig} that serves as a proxy for the stochastic optimal stopping problem follows from \citetalias{bertsimasdata}; similarly, the convergence results in \S\ref{sec:opt} closely parallel those of \citetalias{bertsimasdata}, which are included to keep this paper self-contained. Our discussion and results diverge significantly from  \citetalias{bertsimasdata} beginning in \S\ref{sec:params}, where we provide implementation details for our combination of simulation with robust optimization, and throughout the remainder of the paper. % These discussions lay the groundwork for the primary contributions of the paper which are found in the subsequent sections.  %As described in the introduction, the robust optimization problem described above. As discussed in the introduction, three sections closely parallel   

\begin{remark}
%For the sake of simplifying our developments in later sections, we will henceforth make a deviation throughout the paper from \citetalias{bertsimasdata} in our formulation of the robust optimization problem.
 In the above formulation of the robust optimization problem \eqref{prob:sro}, the sample paths in the objective function are allowed to be perturbed by an adversary in the evaluation of the Markovian stopping rule, $\tau_\mu(y)$,   but not in the evaluation of the reward, $g(\cdot,x^i)$. This is a deviation from the robust optimization {\color{black}formulation} from   \citetalias{bertsimasdata}, presented below as \eqref{prob:sro_orig}, in which the worst-case reward over each uncertainty set has the form $ \inf_{y \in \mathcal{U}^i}  g(\tau_\mu(y), y)$: 
 \begin{align} \label{prob:sro_orig} \tag{RO'}
\begin{aligned}
&\sup_{\mu} &&\frac{1}{N} \sum_{i=1}^N \inf_{y \in \mathcal{U}^i}  g(\tau_\mu(y), y).
\end{aligned}
\end{align}
  The difference between the objective functions of \eqref{prob:sro} and \eqref{prob:sro_orig} turns out to be inconsequential from the perspective of establishing convergence guarantees (see \S\ref{sec:opt}) or characterizing the structure of optimal Markovian stopping rules (see \S\ref{sec:main}). However, \eqref{prob:sro} is significantly simpler from the perspective of algorithm design. % the notation for designing our algorithms. %With this motivation, we will use the following alternative robust optimization formulation throughout the remainder of the paper: % use a slightly different formulation of  Throughout this paper, we will use the following simple example to visualize our proposed method. our combination of simulation and robust optimization is understood to 
For the interested reader, an extended discussion on the similarities and differences between   \eqref{prob:sro} and the alternative robust optimization formulation \eqref{prob:sro_orig} can be found in Appendix~\ref{appx:motivation_for_sro}.
%emphasize that the differences between the robust optimization problems \eqref{prob:sro} and \eqref{prob:sro_orig} is primarily cosmetic when the probability distribution is known. To see why this is the case, it is instructive to consider the 
\end{remark}
}

%By solving the above robust optimization problem, we obtain the policies $\hat{\mu} \equiv (\hat{\mu}_1,\ldots,\hat{\mu}_T)$. These policies constitute our approximate solution to the stochastic optimal stopping problem \eqref{prob:main}. %We can then use these policies to compute a lower bound on the optimal value of \eqref{prob:main}. 
%The following section discusses the approach in detail. 

% are  and  as a surrogate for \eqref{prob:main} in the following sense. We solve
%\begin{remark}
%The optimization problem~\eqref{prob:sro} is well-defined even if the exercise policies are always equal to $\textsc{Continue}$. Indeed, in this case we have $ \inf_{y \in \mathcal{U}^i}   \alpha^{\tau_\mu(y)} g(y_{\tau_\mu(y)}) = \inf_{y \in \mathcal{U}^i}   \alpha^{\infty} g(y_{\tau_\mu(y)})$
%\end{remark}
%\begin{figure} 
%\centering
%\includegraphics[width=0.5\textwidth]{figures/example_no_sets.png}
%\caption{\textbf{Sample paths.} Visualization of three sample paths for a simple optimal stopping problem with one-dimensional state variables ($x_1,x_2,x_3,x_4 \in \R^1$), discount factor $\alpha = 1$, and reward function $g(x_t) = x_t$.   }\label{fig:example_no_sets}
%\end{figure}
\subsection{Background and Motivation} \label{sec:relatedwork}
In contrast to traditional robust optimization or distributionally robust optimization, our motivation behind adding adversarial noise to the sample paths in \eqref{prob:sro} is {not} to find stopping rules which have worst-case performance guarantees, are attractive in risk-averse settings, or perform well the presence of an ambiguous probability distribution. Rather, this paper proposes using robust optimization purely as an {{algorithmic tool}} for solving stochastic optimal stopping problems  of the form \eqref{prob:main} when the joint probability distributions are known. The present section %Section~\ref{sec:relatedwork} 
elaborates on this motivation and positions our use of robust optimization within the optimal stopping literature. % in the remainder of this section. % and formalize our results in Section~\ref{}. %The following section establishes rigorous justification on the asymptotic optimality of \eqref{prob:sro}. 
%In the remainder of Section~\ref{}, we focus on developing intuition behind our proposed approach to solving \eqref{prob:main}. In Section~\ref{} we provide an informal motivation for the proposed approach for solving \eqref{prob:main}.  

%\paragraph{The role of robustness.}
%Before proceeding to analyze \eqref{prob:sro} from statistical and computational perspectives, let us first motivate the approach informally. In contrast to %\eqref{prob:sro} might in fact provide a good approximation of the optimal stopping problem. 

 For the sake of developing intuition, let us suppose for the moment that the robustness parameter of the uncertainty sets in \eqref{prob:sro} was set equal to zero. In this case, for any fixed exercise policies $\mu = (\mu_1,\ldots,\mu_T)$, the expected reward of those exercise policies,
  $$J^*(\mu) \triangleq \Exp \left[g(\tau_\mu(x),x) \right],$$ would be approximated in \eqref{prob:sro} by the sample average approximation: 
 \begin{align*}
 \widehat{J}_{N,0}(\mu) \triangleq \frac{1}{N} \sum_{i=1}^N \inf_{y \in \mathcal{X}^T: \| y - x^i \|_\infty \le 0}  g(\tau_\mu(y), x^i) = \frac{1}{N} \sum_{i=1}^N g(\tau_\mu(x^i),x^i).
 \end{align*}
 For these fixed exercise policies, we observe that the sample average approximation is a \emph{consistent} estimator of the expected reward. In other words, for the fixed exercise policies $\mu$, it follows from the strong law of large numbers under relatively mild assumptions\footnote{For example, Assumptions~\ref{ass:lighttail} and \ref{ass:bound} in \S\ref{sec:opt}.} that $\widehat{J}_{N,0}(\mu)$ will converge almost surely to $J^*(\mu)$ as the number of simulated sample paths is taken to infinity.  %Therefore, when considering fixed policies, there is not much left to be desired (at least in an asymptotic sense) by setting the robustness parameter equal to zero. %each fixed exercise policies, the sample average estimator will converge asymptotically to their expected reward as $N \to \infty$. In other words, $\widehat{J}(\mu)$ is both an unbiased and consistent estimator of $J(\mu)$. 

However, it is well known in the optimal stopping literature that these desirable asymptotic properties of $\widehat{J}_{N,0}(\mu)$ are generally not retained when considering the  problem of optimizing over the space of all exercise policies.
% Indeed, let 
%\begin{align}
%\sup_\mu \widehat{J}_{N,0}(\mu) \text{ and }\hat{\mu}_{N,0} \in \argmax_\mu \widehat{J}_{N,0}(\mu)
%\end{align}
%$\sup_\mu \widehat{J}_{N,0}(\mu)$ and  $\hat{\mu}_{N,0} \in \argmax_\mu \widehat{J}_{N,0}(\mu)$ denote the exercise policies which are obtained when maximizing the sample average approximation over the space of all exercise policies. 
For instance, \citet[EC.1.2]{ciocan2020interpretable} provide simple examples in which the following statements hold almost surely: % More precisely, the following statement holds almost surely:
\begin{align}
\lim_{N \to \infty} \sup_\mu \widehat{J}_{N,0}(\mu) &\gg \sup_\mu {J^*}(\mu);\quad \lim_{N \to \infty} J^*(\hat{\mu}_{N,0}) \ll \sup_\mu {J^*}(\mu).  % ; &&\lim_{N \to \infty} \sup_{\mu} \left |  \widehat{J}_0(\mu) - J^*(\mu) \right|  > 0  
\label{line:subopt}
\end{align}% optimizing over the space of exercise policies. 
The asymptotic suboptimality of the optimal objective value  and optimal policies for the problem $ \sup_\mu \widehat{J}_{N,0}(\mu)$ can be intuitively understood as a type of {overfitting}. %This phenomenon can be understood informally by the following reasoning: for any number of sample paths $N \in \N$, there exists with high probability an exercise policy which satisfies $\widehat{J}_0(\mu) \gg J^*(\mu)$. The optimization problem $\sup_{\mu} \widehat{J}_0(\mu)$ will thus be biased towards choosing those exercise policies, which in general will be suboptimal for the optimization problem $\sup_\mu J^*(\mu)$. Because the set of all $\mu$ is an infinite-dimensional space, this suboptimality does not disappear uniformly over the policies as the number of sample paths tends to infinity. 
To see why line~\eqref{line:subopt} occurs, we recall  for any fixed choice of exercise policies $\mu$ that the sample average approximation $\widehat{J}_{N,0}(\mu)$ is an unbiased estimate of the expected reward $J^*(\mu)$. However, when simultaneously considering the space of all exercise policies, there exists  for each $N \in \N$ with high probability  a collection of exercise policies that satisfies $\widehat{J}_{N,0}(\mu) \gg J^*(\mu)$. The  problem $\sup_{\mu} \widehat{J}_{N,0}(\mu)$ will thus be biased towards choosing those exercise policies, which in general will be suboptimal for the problem $\sup_\mu J^*(\mu)$. Because the set of all $\mu$ is an infinite-dimensional space, the gap between the objective values $\widehat{J}_{N,0}(\mu)$ and $J^*(\mu)$ does not converge to zero uniformly over the set of all $\mu$ as the number of sample paths tends to infinity. %More details on this overfitting phenomenon  in the context of optimal stopping are described in \citet[EC.1.2]{ciocan2020interpretable} and  \cite{belomestny2011rates}. 

To circumvent this overfitting in the context of optimal stopping in line~\eqref{line:subopt}, a vast literature has focused on restricting the functional form of exercise policies to a finite-dimensional space, such as %  This is accomplished explicitly by parameterizing the functional form of the exercise policies 
\cite{garcia2003convergence,andersen1999simple,belomestny2011rates,gemmrich2012,ciocan2020interpretable}. % implicitly by parameterizing the approximate continuation functions in each period (see \cite{glasserman2013monte}). 
In this approach, the choice of the parameterization for the space of exercise policies must be made very carefully. 
\begin{comment}
On the other hand, the effective dimension of the restricted space of exercise policies must scale slowly relative to the number of simulated sample paths.
\end{comment}
On one hand, the effective dimension of the restricted space of exercise policies must be small relative to the number of simulated sample paths to ensure that the sample average approximation problem finds the parametric exercise policies that are `best-in-class' with respect to the stochastic optimal stopping problem \citep[\S 3]{belomestny2011rates}. 
On the other hand, the parameterization must be chosen appropriately in order for the sample average approximation problem to obtain a good approximation of \eqref{prob:main}. 
 %In principle the number of parameters may grow with the number of sample paths to achieve finer grained approximation. In all cases, optimizing over the parameterized space of %However, the number of sample paths can grow exponentially in the number of basis functions . Motivated by the nonparametric setting where our information on the probability distribution comes from simulated sample paths,  we focus our discussion on nonparametric approaches. %Given the considerable computational cost of simulating sample paths, %  For example, the number of basis functions $T=2$, the number of basis functions 
Choosing such an appropriate parameterization ``may be counterfactual in some cases", as explained by \citet[p. 1859]{garcia2003convergence},  %`The methodology of this paper requires the existence of a parametric representation
%of the relevant stopping times. 
``since we may not have a good understanding of what the early exercise rule should depend on." 

\begin{comment}
To circumvent this overfitting in the context of optimal stopping, a vast literature has focused on restricting the functional form of exercise policies to a finite-dimensional space.  This is accomplished explicitly by parameterizing the functional form of the exercise policies \citep{garcia2003convergence,andersen1999simple,belomestny2011rates,ciocan2020interpretable,bayer2020pricing}. % implicitly by parameterizing the approximate continuation functions in each period (see \cite{glasserman2013monte}). 
In all cases, the choice of the parameterization must be made very carefully. On one hand, the parameterization must be chosen appropriately to lead to a good approximation of \eqref{prob:main}. On the other hand, the effective dimension of the restricted space of exercise policies must scale slowly relative to the number of simulated sample paths. In principle the number of parameters may grow with the number of sample paths to achieve finer grained approximation. In all cases, optimizing over the parameterized space of 
\end{comment}

Our approach, in view of the above discussion, provides an alternative means to  circumvent overfitting. The proposed robust optimization problem allows the space of exercise policies to remain {general}, and thus relieves the decision maker from the need to select and impose a parametric structure on the  exercise policies. %Instead, our approach adds adversarial noise to each of the sample paths. Intuitively speaking, the role of robustness in our approach is to penalize policies that perform well on the simulated training sample paths but perform poorly when the sample path are perturbed within a small neighborhood. 
Moreover, we show in the following section that our use of robust optimization provably overcomes the asymptotic overfitting  described in line~\eqref{line:subopt}.  %

\subsection{Optimality Guarantees} \label{sec:opt}
{\color{black}In this subsection, we establish theoretical justification for our combination of robust optimization and simulation that is presented in \S\ref{sec:rob}. Specifically, we strengthen  convergence guarantees from \citetalias{bertsimasdata} to the specific problem of optimal stopping to prove that the optimal objective value and optimal exercise policies of \eqref{prob:sro} converge almost surely to those of the stochastic optimal stopping problem \eqref{prob:main} under mild and verifiable conditions. Establishing these convergence guarantees in the context of optimal stopping is necessary to ensure that \eqref{prob:sro} will provide a high-quality approximation of the stochastic optimal stopping problem \eqref{prob:main} when the robustness parameter is sufficiently small and the number of simulated sample paths is sufficiently large. %Said another way, the convergence guarantees from this subsection ensure that the combination of robust optimization and simulation can be justifiably used to  approximate stochastic optimal stopping problems of the form~\eqref{prob:main}.  
 } 

{\color{black}
To establish our theoretical results, we make four relatively mild assumptions on the stochastic optimal stopping problem \eqref{prob:main}. Our first assumption, denoted below by Assumption~\ref{ass:weird}, concerns the structure of the reward functions in the optimal stopping problem, and can be roughly interpreted as a requirement that the reward function changes continuously as a function of the states: % in the stochastic optimal stopping problem \eqref{prob:main}: %Intuitively speaking, the guarantee shows that the suboptimality from line~\eqref{line:subopt} is thwarted by adding robustness to the historical data.
 %This theoretical guarantee makes the following assumptions on the stochastic optimal stopping problem. 
\begin{assumption} \label{ass:weird}
 $\lim \limits_{\epsilon \to 0} \Delta_\epsilon(x) = 0 $ almost surely, where 
 \begin{align*}
 \Delta_\epsilon(x) \triangleq  \min_{t \in \{1,\ldots,T\}} \left \{ \inf \limits_{y \in \mathcal{X}^T: \| y - x \|_\infty \le \epsilon} g(t,y) - g(t,x)\right \}.
 \end{align*}
 \end{assumption}
%  \begin{assumption} \label{ass:bound}
% The reward function satisfies $0 \le g(t,y) \le U$ for all $t \in \{1,\ldots,T\}$ and $y \in \mathcal{X}^T$. 
% \end{assumption}
\noindent From a practical standpoint, it is easy to see that Assumption~\ref{ass:weird} holds whenever the functions $g(1, \cdot),\ldots,g(T,\cdot)$ are continuous, and it can also hold in  important  stochastic optimal stopping problems with discontinuous reward functions.\footnote{To illustrate, consider Robbin's problem \citep{bruss2005known}, in which the reward functions $g(t,x) = \text{rank}(x_t ; x_1,\ldots,x_T)$ are discontinuous and the probability distribution is $x_1,\ldots,x_T \overset{\text{iid}}{\sim} \text{Uniform}[0,1]$. To show that Assumption~\ref{ass:weird} is satisfied, we observe that the random variable $\bar{\epsilon} \triangleq \min_{ s < t} |x_s - x_t|$ is strictly positive with probability one, which implies that $\Delta_\epsilon(x) = 0$ for all $\epsilon < \bar{\epsilon}$. } %Formally, this assumption is equivalent to saying that the reward function is lower-semicontinuous almost surely, and is currently used to show that \eqref{prob:sro} is asymptotically a lower bound on \eqref{prob:sro_orig} (although this is also a step towards proving the lower bound).  %which is a common assumption in stochastic and robust optimization literature. 
 
 Our second and third assumptions concern the structure of the probability distribution in \eqref{prob:main}. Specifically,  Assumption~\ref{ass:lighttail} enforces that the stochastic process has a light tail, and Assumption~\ref{ass:continuous} says the stochastic process is drawn from a continuous probability distribution. 
 {\color{black}
   \begin{assumption}\label{ass:lighttail}
 The stochastic process satisfies $\Exp[\textnormal{exp}(\| x \|_\infty^a)] < \infty$ for some $a > 1$. 
 \end{assumption}}
   \begin{assumption}\label{ass:continuous}
%Let $\mathcal{L}$ be the Lebesgue measure on $\R^d$. For all Borel sets $A \subseteq \R^d$, if $\mathcal{L}(A) = 0$, then $\Prb(x_t \in A)  =  0$. 
The stochastic process $x \equiv (x_1,\ldots,x_T)$ has a probability density function. % absolutely continuous on the sample space $\mathcal{X}^T$. %The stochastic process has a continuous probability distribution, \emph{i.e.}, there exists a density $\rho: \R^{Td} \to [0,\infty)$ such that $\Prb(x \in A) = \int_{\R^{Td}} \mathbb{I} \left \{ y \in A \right \} \rho(y) dy$ for all Borel sets $A \subseteq \R^{Td}$. 
 \end{assumption}
  Let us reflect on the practical restrictiveness of these two assumptions. The second assumption, 
Assumption~\ref{ass:lighttail}, is a standard light-tail assumption on the stochastic process which is satisfied, for example, if the stochastic process is bounded or has a multivariate normal distribution. This assumption greatly simplifies our analysis, as it allows us to invoke a convergence result by \citetalias{bertsimasdata} in our proofs (see Appendix~\ref{appx:proof_conv}).  We impose the third assumption, Assumption~\ref{ass:continuous},   to ensure that there exist arbitrarily near-optimal Markovian stopping rules for \eqref{prob:main} that satisfy a certain technical continuity structure that we can exploit in our proof. As far as we can tell, these assumptions on the probability distribution are relatively mild and routinely satisfied in applications of optimal stopping in the context of the options pricing literature.  %On the positive side, we note that Assumption~\ref{ass:lighttail} is considerably less restrictive than those in related literature that develop convergence guarantees for optimal stopping problems in options pricing; see discussion in \citet[\S3.4]{ciocan2020interpretable}.  
Nonetheless, we do not preclude the possibility that these assumptions on the probability distribution can be weakened while still establishing convergence guarantees.

Our fourth and final assumption imposes boundedness on the reward function. This assumption, presented below as Assumption~\ref{ass:bound}, leads to a considerably simpler proof and statement of the results, but can generally be relaxed to reward functions bounded above by an integrable, Lipschitz-continuous function. 
\begin{assumption} \label{ass:bound}
 The reward function satisfies $0 \le g(t,y) \le U$ for all $t \in \{1,\ldots,T\}$ and $y \in \mathcal{X}^T$. 
 \end{assumption}

We emphasize that each of the aforementioned four assumptions  on the stochastic optimal stopping problem~\eqref{prob:main} can be verified \emph{a priori}. In particular, they do not require any knowledge of the structure of optimal exercise policies for the stochastic optimal stopping problem~\eqref{prob:main}. As a result, each of these assumptions can be verified using the information typically available in practice. We note that these assumptions are considerably weaker than those in \citetalias{bertsimasdata}, which require knowledge of the structure of optimal control policies to establish convergence results. 
 }
  %\subsubsection{Convergence results.}\ }
 \begin{comment}
 \begin{assumption} \label{ass:lsc}
 There is an optimal $\mu^*$ to \eqref{prob:main} such that $\lim_{y \to x} \tau_{\mu^*}(y) = \tau_{\mu^*}(x)$ almost surely. %$\lim \limits_{\epsilon \to 0} \Gamma_\epsilon^{\mu^*}(x) = 0$ almost surely, where 
% \begin{align*}
%\lim_{\epsilon \to 0} \inf \limits_{y \in \mathcal{X}^T: \| y - x \| \le \epsilon} g(\tau_\mu(y),x) = g(\tau_\mu(x),x) \quad \text{almost surely}.
% \end{align*}
\end{assumption}
\end{comment}

Under the above conditions, the following theorems %{\color{black}refine results from \citetalias{bertsimasdata} to} 
provide justification for using the robust optimization problem as a proxy for the stochastic optimal stopping problem.  In a nutshell, the following Theorems~\ref{thm:conv:asympt}-\ref{thm:conv:ub} show that \eqref{prob:sro} will, for all sufficiently small choices of the robustness parameter and all sufficiently large choices of the number of simulated sample paths,  yield a near-optimal approximation of \eqref{prob:main}. Stated another way, the following theorems show that our use of robust optimization provably overcomes the asymptotic overfitting  described in line~\eqref{line:subopt} {\color{black}of \S\ref{sec:relatedwork}}. While the following theorems do not specify how to choose the robustness parameter and number of simulated sample paths for any particular optimal stopping problem, we provide guidance (\S \ref{sec:params}) and numerical evidence (\S \ref{sec:experiments}) which suggest that these parameters can be found effectively in practice. {\color{black}A discussion of the technical innovations as well the proofs of the following theorems in this subsection can be found in Appendix~\ref{appx:proof_conv}.}

% problem will, optimality guarantees for the optimal objective value and optimal policies of the robust optimization problem \eqref{prob:sro} with respect to the stochastic optimal stopping problem~\eqref{prob:main}. 
Our first theorem shows that the optimal objective value of the robust optimization problem~\eqref{prob:sro} will converge almost surely to that of the stochastic problem \eqref{prob:main} as the robustness parameter tends to zero and the number of sample paths tends to infinity.  In the following result, we use the  notation $\widehat{J}_{N,\epsilon}(\mu)$  to denote the objective value of the robust optimization problem~\eqref{prob:sro} corresponding to exercise policies $\mu$. %{\color{black}We further use the notation $\ubar{J}^*(\mu) = \Exp [\liminf_{y \to x} g(\tau_\mu(y),x)]$ to denote the expectation of the lower semicontinuous envelope of the reward function corresponding to the exercise policies $\mu$.}%In the following Theorem~\ref{thm:conv:asympt}, we use the  notation $\widehat{J}_{N,\epsilon}(\mu)$  to denote the robust objective value corresponding to $\mu$. In the following Theorem~\ref{thm:conv:policy}, we let $\hat{\mu}_{N,\epsilon}$ denote optimal exercise policies for \eqref{prob:sro}, and we remark that existence of optimal exercise policies fo the robust optimization problem is established in \S\ref{sec:main}.  % and the proof of the result is based on an intermediary theorem of \cite{bertsimasdata} and can be found in Appendix~\ref{appx:proof_conv}. 

 \begin{theorem}[Consistency of optimal objective value] \label{thm:conv:asympt}
Under {\color{black}Assumptions~\ref{ass:weird}, \ref{ass:lighttail}, \ref{ass:continuous}, and  \ref{ass:bound}}, %the following guarantees hold: 
%\begin{enumerate}[label={\alph*.},ref=\thetheorem\alph*]
$$
 \lim \limits_{ \epsilon \to 0} \lim \limits_{N \to \infty} \sup \limits_{\mu} \widehat{J}_{N,\epsilon}(\mu) = \sup \limits_{\mu} J^*(\mu) \quad \textnormal{almost surely}.
$$ %\\[-1em]
\end{theorem}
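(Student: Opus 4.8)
The plan is to prove the equality by sandwiching the iterated limit between a matching upper and lower bound, after first rewriting the robust objective as an honest i.i.d.\ sample average. For fixed exercise policies $\mu$ and fixed $\epsilon \ge 0$, set
$h_{\mu,\epsilon}(x) \triangleq \inf_{y \in \mathcal{X}^T:\, \| y - x \|_\infty \le \epsilon} g(\tau_\mu(y), x)$, so that $\widehat{J}_{N,\epsilon}(\mu) = \frac{1}{N}\sum_{i=1}^N h_{\mu,\epsilon}(x^i)$ and $J^*(\mu) = \mathbb{E}[h_{\mu,0}(x)]$. The key structural feature of \eqref{prob:sro} is that the adversary perturbs only the stopping time and not the reward, so $h_{\mu,\epsilon}$ is a deterministic function of $x$ alone; moreover $h_{\mu,\epsilon} \le h_{\mu,0}$ pointwise and $h_{\mu,\epsilon}$ is nonincreasing in $\epsilon$. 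With this notation, the theorem reduces to showing $\limsup_{\epsilon \to 0}\limsup_{N\to\infty}\sup_\mu \widehat{J}_{N,\epsilon}(\mu) \le \sup_\mu J^*(\mu)$ and $\liminf_{\epsilon\to0}\liminf_{N\to\infty}\sup_\mu \widehat{J}_{N,\epsilon}(\mu) \ge \sup_\mu J^*(\mu)$, almost surely.

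For the upper bound I would invoke the consistency result of \citetalias{bertsimasdata}. Conceptually, this is the direction asserting that robustness defeats the overfitting of line~\eqref{line:subopt}: for each fixed $\epsilon > 0$, the uncertainty sets $\mathcal{U}^i$ asymptotically cover an $\epsilon$-neighborhood of the support as $N\to\infty$, so any policy scoring well against every $\mathcal{U}^i$ is forced to behave consistently across overlapping boxes and cannot be tuned to individual sample paths. Assumption~\ref{ass:lighttail} is exactly what controls the contribution of sample paths of large norm, Assumption~\ref{ass:weird} supplies the continuity of the reward in the state that the covering argument needs, and Assumption~\ref{ass:bound} provides integrability; together these let me apply the \citetalias{bertsimasdata} machinery to conclude $\limsup_N \sup_\mu \widehat{J}_{N,\epsilon}(\mu) \le \sup_\mu J^*(\mu)$ almost surely for each $\epsilon$, and taking $\epsilon \to 0$ finishes this inequality.

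For the lower bound I would argue by exhibiting a single good policy. Fix $\delta>0$ and pick $\mu$ with $J^*(\mu) \ge \sup_{\mu'}J^*(\mu') - \delta$; crucially, using Assumption~\ref{ass:continuous} I would first upgrade $\mu$ to a near-optimal policy whose exercise-region boundaries carry zero probability, so that the induced stopping map satisfies $\tau_\mu(y) = \tau_\mu(x)$ for all $y$ in some path-dependent neighborhood of $x$, for almost every $x$ (this is the ``technical continuity structure'' the density assumption buys, in place of directly assuming $\lim_{y\to x}\tau_{\mu}(y)=\tau_{\mu}(x)$ almost surely). For such a $\mu$ and all $\epsilon$ smaller than that neighborhood radius, the adversary cannot alter the stopping time, so $h_{\mu,\epsilon}(x) = g(\tau_\mu(x),x)$; hence $h_{\mu,\epsilon}(x)\to g(\tau_\mu(x),x)$ almost everywhere, and with the uniform bound $U$ of Assumption~\ref{ass:bound} the dominated convergence theorem yields $\mathbb{E}[h_{\mu,\epsilon}(x)] \to J^*(\mu)$ as $\epsilon\to0$. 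Since $\mu$ is fixed, the strong law of large numbers gives $\widehat{J}_{N,\epsilon}(\mu) \to \mathbb{E}[h_{\mu,\epsilon}(x)]$ almost surely, so $\liminf_N \sup_{\mu'}\widehat{J}_{N,\epsilon}(\mu') \ge \liminf_N \widehat{J}_{N,\epsilon}(\mu) = \mathbb{E}[h_{\mu,\epsilon}(x)]$; letting $\epsilon\to0$ and then $\delta\to0$ completes the bound.

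I expect the main obstacle to be the lower bound, and specifically the construction of near-optimal policies with almost-surely continuous stopping maps. The conceptual heart of the theorem is the upper bound, but that direction is delegated to \citetalias{bertsimasdata} via the light-tail assumption. The genuinely new difficulty, and the reason Assumption~\ref{ass:continuous} is imposed, is that the achievability argument must rule out policies whose stopping decisions are flipped by arbitrarily small perturbations on a set of positive probability; guaranteeing that \emph{near-optimal} such policies exist without assuming anything about the unknown true optimum—which is precisely where \citetalias{bertsimasdata} required knowledge of the optimal policy's structure—will require using the density of $x$ to mollify the exercise boundaries so that they (and hence, by absolute continuity, the union over the finitely many periods of their preimages) carry zero probability. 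A secondary technical point is verifying measurability of $h_{\mu,\epsilon}$ and confirming that the decoupled iterated-limit form of the statement legitimately separates the $N\to\infty$ and $\epsilon\to0$ analyses.
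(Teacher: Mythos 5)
Your two bounds follow essentially the same route as the paper. The upper bound is the paper's Theorem~\ref{thm:conv:ub}, proved from the uniform convergence result of \citetalias{bertsimasdata} under Assumptions~\ref{ass:weird}, \ref{ass:lighttail}, and \ref{ass:bound} (one caveat: for \emph{fixed} $\epsilon>0$ that machinery only yields $\limsup_N \sup_\mu \widehat{J}_{N,\epsilon}(\mu) \le \sup_\mu J^*(\mu) - \Exp[\Delta_\epsilon(x)]$, with a nonnegative slack $-\Exp[\Delta_\epsilon(x)]$ that vanishes only as $\epsilon \to 0$; your claim of the slack-free inequality at fixed $\epsilon$ is not what is available, though this does not damage the plan since you take $\epsilon \to 0$ anyway). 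Your lower bound --- mollify a near-optimal policy so that its exercise-region boundaries carry zero probability, apply the SLLN to that single fixed policy, and pass to the limit $\epsilon \to 0$ by dominated convergence --- is precisely the content of the paper's Lemma~\ref{lem:bradsmart}, which is proved exactly as you sketch: approximate Borel exercise regions from outside by open sets, then by finite unions of open balls, and use Assumption~\ref{ass:continuous} to make the ball boundaries null sets. You also correctly located this mollification as the step that removes \citetalias{bertsimasdata}'s need to know the structure of optimal policies.

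There is, however, one genuine gap: your opening reduction of the theorem to the two statements $\limsup_{\epsilon\to0}\limsup_{N\to\infty}\sup_\mu\widehat{J}_{N,\epsilon}(\mu) \le \sup_\mu J^*(\mu)$ and $\liminf_{\epsilon\to0}\liminf_{N\to\infty}\sup_\mu\widehat{J}_{N,\epsilon}(\mu) \ge \sup_\mu J^*(\mu)$ does not prove the theorem as stated, because the theorem asserts a genuine iterated limit: the inner limit $\lim_{N\to\infty}\sup_\mu\widehat{J}_{N,\epsilon}(\mu)$ must \emph{exist} almost surely for each small $\epsilon>0$. The limsup/liminf sandwich is compatible with nonexistence of that limit; for instance, the array $a_{N,\epsilon} = \epsilon(-1)^N$ satisfies $\limsup_{\epsilon\to0}\limsup_N a_{N,\epsilon} = 0 = \liminf_{\epsilon\to0}\liminf_N a_{N,\epsilon}$, yet $\lim_N a_{N,\epsilon}$ exists for no $\epsilon>0$. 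Nor can you close the gap at fixed $\epsilon$: your upper and lower estimates at fixed $\epsilon$ differ by terms that only vanish as $\epsilon\to0$. You flag ``confirming that the decoupled iterated-limit form legitimately separates the analyses'' as a secondary technical point, but it is in fact one of the two difficulties the paper singles out as new relative to \citetalias{bertsimasdata}, and it requires a separate idea: the paper's Lemma~\ref{lem:mcdiarmid} shows that $(x^1,\ldots,x^N)\mapsto\sup_\mu\widehat{J}_{N,\epsilon}(\mu)$ has bounded differences $U/N$ by Assumption~\ref{ass:bound}, so McDiarmid's inequality plus Borel--Cantelli gives $\sup_\mu\widehat{J}_{N,\epsilon}(\mu) - \Exp[\sup_\mu\widehat{J}_{N,\epsilon}(\mu)] \to 0$ almost surely, while the expectations $\Exp[\sup_\mu\widehat{J}_{N,\epsilon}(\mu)]$ are monotonically decreasing in $N$ (the standard SAA argument) and hence convergent; together these yield almost-sure existence of the inner limit. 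With that concentration lemma added, your argument goes through and coincides with the paper's proof.
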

%In practice, we are primarily interested in the expected reward of the Markovian stopping rules that are found by solving the robust optimization problem. To this end, 

Our second theorem shows that the expected reward of the optimal exercise policies for the robust optimization problem~\eqref{prob:sro} will converge almost surely to the optimal objective value of the stochastic problem~\eqref{prob:main}.   We let $\hat{\mu}_{N,\epsilon}$ denote optimal exercise policies for \eqref{prob:sro}, and we remark that the existence of optimal exercise policies for the robust optimization problem will be established in \S\ref{sec:main}. 
\begin{theorem}[Consistency of optimal policies]  Under {\color{black}Assumptions~\ref{ass:weird}, \ref{ass:lighttail}, \ref{ass:continuous}, and  \ref{ass:bound}},  \label{thm:conv:policy}
\begin{align*}
\lim_{ \epsilon \to 0} \liminf_{N \to \infty} J^*(\hat{\mu}_{N,\epsilon})  = \lim_{ \epsilon \to 0} \limsup_{N \to \infty} J^*(\hat{\mu}_{N,\epsilon}) &=\sup_{\mu} J^*(\mu) \quad \textnormal{almost surely}.
\end{align*}
\end{theorem}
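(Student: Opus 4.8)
The plan is to prove Theorem~\ref{thm:conv:policy} by a sandwich argument that pairs an elementary upper bound with a non-trivial out-of-sample lower bound, using Theorem~\ref{thm:conv:asympt} as the bridge between the robust objective and the stochastic optimum. For the \textbf{upper bound}, note that for every $N$ and every $\epsilon \ge 0$ the optimal robust exercise policies $\hat{\mu}_{N,\epsilon}$ are one particular collection of Markovian exercise policies, so by definition of the supremum $J^*(\hat{\mu}_{N,\epsilon}) \le \sup_\mu J^*(\mu)$ holds deterministically. Taking $\limsup_{N\to\infty}$ and then $\lim_{\epsilon\to 0}$ yields $\lim_{\epsilon\to 0}\limsup_{N\to\infty} J^*(\hat{\mu}_{N,\epsilon}) \le \sup_\mu J^*(\mu)$, and since $\liminf \le \limsup$ always, the same bound holds with $\liminf$ in place of $\limsup$.

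The \textbf{lower bound} is the crux. The key intermediate step I would establish is an out-of-sample guarantee: a deterministic function $\delta(\epsilon)$ with $\delta(\epsilon)\downarrow 0$ as $\epsilon \to 0$ such that, almost surely,
$$\liminf_{N\to\infty}\Big( J^*(\hat{\mu}_{N,\epsilon}) - \widehat{J}_{N,\epsilon}(\hat{\mu}_{N,\epsilon})\Big) \ge -\delta(\epsilon).$$
Because $\hat{\mu}_{N,\epsilon}$ is optimal for \eqref{prob:sro}, we have $\widehat{J}_{N,\epsilon}(\hat{\mu}_{N,\epsilon}) = \sup_\mu \widehat{J}_{N,\epsilon}(\mu)$, so this inequality combined with Theorem~\ref{thm:conv:asympt} gives
$$\lim_{\epsilon\to 0}\liminf_{N\to\infty} J^*(\hat{\mu}_{N,\epsilon}) \ge \lim_{\epsilon\to 0}\Big(\lim_{N\to\infty}\sup_\mu \widehat{J}_{N,\epsilon}(\mu) - \delta(\epsilon)\Big) = \sup_\mu J^*(\mu).$$
Together with the upper bound, this squeezes both $\lim_{\epsilon\to 0}\liminf_N$ and $\lim_{\epsilon\to 0}\limsup_N$ of $J^*(\hat{\mu}_{N,\epsilon})$ to equal $\sup_\mu J^*(\mu)$, which is exactly the claim.

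\textbf{Establishing the out-of-sample guarantee} is where the real work lies and is the main obstacle, because the inequality must hold \emph{uniformly} over the data-dependent, $N$-varying policies $\hat{\mu}_{N,\epsilon}$; otherwise an overfitting-type gap of the kind in line~\eqref{line:subopt} could reappear. The intuition is that for large $N$ the samples $\{x^i\}$ become dense, so a fresh draw $x$ lies within $\ell_\infty$-distance $\epsilon$ of some sample and is therefore an admissible perturbation in the corresponding uncertainty set $\mathcal{U}^i$; the adversarial minimization over $\mathcal{U}^i$ then ``pre-pays'' for the behavior of any policy near the true distribution, which is precisely what prevents the robust objective from overfitting upward. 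To make this rigorous and uniform I would not use a naive nearest-neighbor assignment, whose induced weights on the samples fail to match the uniform $\tfrac{1}{N}$ weights in $\widehat{J}_{N,\epsilon}$, but instead a measure-preserving $\epsilon$-matching between the empirical distribution $\widehat{\Prb}_N$ of the samples and the true distribution $\Prb$ — equivalently, control of the $\infty$-Wasserstein distance between $\widehat{\Prb}_N$ and $\Prb$.

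In this final step the four assumptions play complementary roles. The light-tail Assumption~\ref{ass:lighttail} is what lets me invoke the covering/convergence result of \citetalias{bertsimasdata} guaranteeing that such a measure-preserving $\epsilon$-matching exists for every fixed $\epsilon>0$ once $N$ is large; Assumption~\ref{ass:weird} controls the residual discrepancy between $g(\cdot,x^i)$ and $g(\cdot,x)$ at matched points, producing the continuity error that becomes $\delta(\epsilon)$; and Assumptions~\ref{ass:continuous} and~\ref{ass:bound} supply the continuity and boundedness needed to pass expectations through the matching via dominated convergence. The ordering of limits in the theorem ($N\to\infty$ first, then $\epsilon\to 0$) is exactly what this two-scale argument requires: for each fixed $\epsilon$ the matching succeeds as $N\to\infty$, leaving only the continuity error $\delta(\epsilon)$, which then vanishes in the outer limit. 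The detailed matching argument, which adapts and strengthens the machinery of \citetalias{bertsimasdata} to the optimal stopping setting, would be carried out in Appendix~\ref{appx:proof_conv}.
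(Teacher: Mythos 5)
Your proposal is correct and takes essentially the same route as the paper: the paper proves Theorem~\ref{thm:conv:policy} by exactly this sandwich, combining the feasibility upper bound $J^*(\hat{\mu}_{N,\epsilon}) \le \sup_\mu J^*(\mu)$ with Theorem~\ref{thm:conv:asympt} and with your ``out-of-sample guarantee,'' which is precisely the paper's Theorem~\ref{thm:conv:ub} (asymptotic low-bias, with error $-\Exp[\Delta_\epsilon(x)]$ playing the role of your $\delta(\epsilon)$), invoked there as a black box rather than re-derived. Your sketch of how to establish that guarantee---the covering result of \citetalias{bertsimasdata} under Assumption~\ref{ass:lighttail}, Assumption~\ref{ass:weird} supplying the vanishing continuity error, and Assumption~\ref{ass:bound} enabling dominated convergence---also mirrors the paper's appendix proof of Theorem~\ref{thm:conv:ub}, the only slight misattribution being that Assumption~\ref{ass:continuous} is not needed for that step (it enters only through Theorem~\ref{thm:conv:asympt}).
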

% \begin{theorem} \label{thm:conv}
%Under the above assumptions, the following guarantees hold: 
%\begin{enumerate}[label={\alph*.},ref=\thetheorem\alph*]
%\item \textbf{Asymptotic optimality}: \label{thm:conv:asympt}%\footnote{The `$`\liminf_{N \to \infty}$"  side of the equation can be equivalently replaced }
%\begin{align*}
%\lim_{ \epsilon \to 0} \liminf_{N \to \infty} \sup_{\mu} \widehat{J}_{N,\epsilon}(\mu) = \lim_{ \epsilon \to 0} \limsup_{N \to \infty} \sup_{\mu} \widehat{J}_{N,\epsilon}(\mu)&=\sup_{\mu} J^*(\mu) \quad \textnormal{almost surely}.
%\end{align*}
%\item \textbf{Lower-bound guarantee}: \label{thm:conv:ub}
%\begin{align*}
%\lim_{ \epsilon \to 0}\liminf_{N \to \infty} \inf_{\mu} \left \{J^*(\mu)  -  \widehat{J}_{N,\epsilon}(\mu) \right \} \ge 0 \quad \textnormal{almost surely}.
%\end{align*}
%\end{enumerate}
%\end{theorem}
%The  practical implication of the above theorem is that the robust optimization problem is guaranteed to  yield an approximation of the stochastic optimal stopping problem to any arbitrary accuracy. In other words, this result implies, for any arbitrary $\eta > 0$, that the optimal  objective value of the robust optimization problem \eqref{prob:sro} will lie within a distance of $ \eta$ from the optimal objective value of the stochastic optimal stopping problem \eqref{prob:main} for all sufficiently small choices of $\epsilon > 0$ and all sufficiently large choices of $N \in \N$.   % a cross-validation technique . 

 Because we will develop algorithms that solve the robust optimization problem {approximately} as well as exactly, it is imperative for us to have theoretical guarantees {\color{black}that} hold for any Markovian stopping rule that can be found by the robust optimization problem. %serve as estimates for the optimal exercise policies for the stochastic optimal stopping problem. In particular, we develop algorithms in the subsequent sections for solving the robust optimization problem exactly as well as approximately.  algorithms developed in this paper solve the robust optimization only approximately. The following corollary of the above theorem establishes analogous consistency guarantees for the optimal (and near-optimal) policies to the robust optimization problem. 
To this end, our third and final theorem of this section shows that the (in-sample) robust objective value will asymptotically provide a low-bias estimate of the expected reward, and this bound holds uniformly over all exercise policies.  The result yields theoretical assurance, provided that the robustness parameter is sufficiently small and the number of sample paths is sufficiently large, that searching for exercise policies with high robust objective values $\widehat{J}_{N,\epsilon}(\mu)$ will typically result in exercise policies with high expected rewards $J^*(\mu)$. % with respect to the robust optimization problem will have higher  Markovian stopping rules with respect to the stochastic problem will be obtained by solving the robust optimization problem as accurately as computationally feasible. 

\begin{theorem}[Asymptotic low-bias] \label{thm:conv:ub}
Under {\color{black}Assumptions~\ref{ass:weird}, \ref{ass:lighttail}, and \ref{ass:bound}},
$$\lim \limits_{ \epsilon \to 0}\liminf \limits_{N \to \infty} \inf \limits_{\mu} \left \{J^*(\mu)  -  \widehat{J}_{N,\epsilon}(\mu) \right \} \ge 0 \quad \textnormal{almost surely}.$$ 
\end{theorem}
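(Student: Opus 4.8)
The plan is to establish the equivalent statement that, for each fixed robustness parameter $\epsilon > 0$,
\[
\limsup_{N\to\infty}\;\sup_{\mu}\left\{\widehat{J}_{N,\epsilon}(\mu) - J^*(\mu)\right\} \;\le\; \Exp\big[|\Delta_\epsilon(x)|\big] \qquad \textnormal{almost surely},
\]
and then to send $\epsilon \to 0$, at which point the right-hand side vanishes. The whole difficulty lies in the supremum over the infinite-dimensional space of all measurable exercise policies $\mu$: pointwise in $\mu$ the claim is immediate from the strong law of large numbers, but this is precisely the regime in which the unregularized sample average overfits, as recorded in line~\eqref{line:subopt}, so the robustification must be used essentially. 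My strategy has two stages. First I would reduce the objective of \eqref{prob:sro} to that of the fully-robustified formulation \eqref{prob:sro_orig}, paying an error governed by $\Delta_\epsilon$ from Assumption~\ref{ass:weird}. Then I would establish the uniform low-bias for \eqref{prob:sro_orig} by invoking the convergence machinery of \citetalias{bertsimasdata}, whose applicability is exactly what the light-tail Assumption~\ref{ass:lighttail} is designed to guarantee.

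For the reduction, fix $\mu$ and a sample index $i$, and note that every $y \in \mathcal{U}^i$ satisfies $\|y - x^i\|_\infty \le \epsilon$. The definition of $\Delta_\epsilon$ gives, for every period $t$, the pointwise bound $g(t,x^i) \le g(t,y) + |\Delta_\epsilon(x^i)|$ (here using $\Delta_\epsilon \le 0$). Applying this with $t = \tau_\mu(y)$ and taking the infimum over $y \in \mathcal{U}^i$ yields
\[
\inf_{y\in\mathcal{U}^i} g(\tau_\mu(y),x^i) \;\le\; \inf_{y\in\mathcal{U}^i} g(\tau_\mu(y),y) + |\Delta_\epsilon(x^i)|
\]
uniformly in $\mu$. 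Averaging over $i$ gives $\widehat{J}_{N,\epsilon}(\mu) \le \widehat{J}^{\textnormal{orig}}_{N,\epsilon}(\mu) + \tfrac1N\sum_{i=1}^N |\Delta_\epsilon(x^i)|$, where $\widehat{J}^{\textnormal{orig}}_{N,\epsilon}$ is the objective of \eqref{prob:sro_orig}. Since $|\Delta_\epsilon| \le U$ by Assumption~\ref{ass:bound}, the strong law gives $\tfrac1N\sum_i |\Delta_\epsilon(x^i)| \to \Exp[|\Delta_\epsilon(x)|]$ almost surely, and dominated convergence together with Assumption~\ref{ass:weird} gives $\Exp[|\Delta_\epsilon(x)|] \to 0$ as $\epsilon \downarrow 0$. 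Thus it suffices to prove the theorem with $\widehat{J}_{N,\epsilon}$ replaced by $\widehat{J}^{\textnormal{orig}}_{N,\epsilon}$.

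For the core step I would write $\widehat{J}^{\textnormal{orig}}_{N,\epsilon}(\mu) = \tfrac1N\sum_i \underline{\phi}_{\mu,\epsilon}(x^i)$ for the robustified reward $\underline{\phi}_{\mu,\epsilon}(x) \triangleq \inf_{\|y - x\|_\infty \le \epsilon} g(\tau_\mu(y),y)$. Taking $y = x$ feasible shows $\underline{\phi}_{\mu,\epsilon}(x) \le g(\tau_\mu(x),x)$, hence $\Exp[\underline{\phi}_{\mu,\epsilon}(x)] \le J^*(\mu)$ for every $\mu$, so it remains to control the one-sided deviation $\sup_\mu\{\tfrac1N\sum_i \underline{\phi}_{\mu,\epsilon}(x^i) - \Exp[\underline{\phi}_{\mu,\epsilon}(x)]\}$. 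The robustification supplies the key structural handle: for any radius $\epsilon' \in (0,\epsilon)$ and any $z$ with $\|z - x^i\|_\infty \le \epsilon'$, the box nesting $\{y : \|y - z\|_\infty \le \epsilon - \epsilon'\} \subseteq \{y : \|y - x^i\|_\infty \le \epsilon\}$ gives $\underline{\phi}_{\mu,\epsilon}(x^i) \le \underline{\phi}_{\mu,\epsilon-\epsilon'}(z)$, so the empirical value at each sample point lower-bounds a slightly-less-robust reward on an entire neighborhood of that point. This is exactly the property that \citetalias{bertsimasdata} exploit, together with the light tail, to show by a covering-and-concentration argument that the $\epsilon'$-neighborhoods of the sample points eventually cover all but a negligible-probability region and that the resulting comparison to the true expectation holds uniformly in $\mu$. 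Invoking that result (its hypotheses are met by Assumption~\ref{ass:lighttail}) yields $\limsup_N \sup_\mu\{\widehat{J}^{\textnormal{orig}}_{N,\epsilon}(\mu) - J^*(\mu)\} \le 0$ almost surely for each fixed $\epsilon$, which combines with the reduction to give the displayed bound.

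The main obstacle is the uniformity over all $\mu$ in this core step; everything else is bookkeeping. The entire force of the argument is that the $\epsilon$-robustification, via the box-nesting inequality above, converts each per-sample value into a lower bound valid on a full neighborhood, which is what makes a uniform one-sided law of large numbers possible with no parametric restriction on $\mu$. The light-tail Assumption~\ref{ass:lighttail} is what lets this comparison be carried to the true expectation over the unbounded state space, and Assumption~\ref{ass:weird} is what makes the reduction to \eqref{prob:sro_orig} lossless in the limit; note that Assumption~\ref{ass:continuous} is not needed here. I expect the covering-and-concentration accounting to be the most delicate part, and I would handle it by citing the corresponding convergence theorem of \citetalias{bertsimasdata} rather than reproving it. Finally, since $\liminf_N \inf_\mu\{J^*(\mu)-\widehat{J}_{N,\epsilon}(\mu)\}$ is nondecreasing in $\epsilon$, the per-$\epsilon$ almost-sure statements transfer to the double limit along a countable sequence $\epsilon_k \downarrow 0$, completing the proof.
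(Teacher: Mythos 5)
Your proposal is correct and follows essentially the same route as the paper's proof: the paper likewise decomposes the bias into the gap between the objectives of \eqref{prob:sro} and \eqref{prob:sro_orig}, controlled by $\Delta_\epsilon$ under Assumption~\ref{ass:weird}, plus the uniform one-sided deviation of the fully-robustified empirical objective from $J^*(\mu)$, which it handles by invoking the covering/concentration result of \citetalias{bertsimasdata} (restated as Lemma~\ref{lem:bertsimas}) under Assumption~\ref{ass:lighttail}. The only difference is the order in which the two comparisons are chained; the concluding steps (strong law of large numbers, dominated convergence via Assumption~\ref{ass:weird}, and monotonicity in $\epsilon$) are identical.
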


%Our final result refines the above theorem and proposition to establish consistency of the optimal exercise policies for the robust optimization problem. % particular case where the robust optimization is solved exactly. % the above theorem and proposition refines the above proposition for the particular 

%In summary, the above theorems provide justification for using the robust optimization problem~\eqref{prob:sro} as a proxy for the stochastic optimal stopping problem~\eqref{prob:main}. %In other words, the robust optimization problem~\eqref{prob:sro}  robust optimization  for all sufficiently small choices of the robustness parameter and all sufficiently large choices of the number of simulated sample paths. 

\afterpage{%
\null
\vfill
\begin{algorithm}[H] 
\begin{center}
\fbox{\begin{minipage}{\linewidth}
%\OneAndAHalfSpacedXI
\begin{center}
\textbf{\underline{The Robust Optimization {\color{black}Approach} for Stochastic Optimal Stopping}}\\
\end{center}
\vspace{1em}
\textbf{Inputs}: 
\begin{itemize}
\item \emph{Sizes of Training Sets}: A collection of integers $\mathcal{N} \triangleq \{N_1,\ldots,N_K \}$ sorted in ascending order.
\item \emph{Size of Validation Set}: An integer $\bar{N} \in \N$.
\item \emph{Size of Testing Set}: An integer $\tilde{N} \in \N$.
\item \emph{Robustness Parameters}: A collection of nonnegative real numbers $\mathcal{E} \triangleq \{\epsilon_1,\ldots,\epsilon_L \}$. 
\end{itemize}
\vspace{1em}
\textbf{Outputs}:
\begin{itemize}
\item Exercise policies $\hat{\mu} \equiv (\hat{\mu}_1,\ldots,\hat{\mu}_T)$ for the stochastic optimal stopping problem \eqref{prob:main}. 
\item An (unbiased) estimate of the expected reward $J^*(\hat{\mu})$ for the exercise policies. % $\hat{\mu} \equiv (\hat{\mu}_1,\ldots,\hat{\mu}_T)$. 
\end{itemize} 
\vspace{1em}
\textbf{Procedure}: 
\begin{enumerate}
\item Simulate the {validation set} $\bar{\mathcal{S}} \triangleq \{\bar{x}^1,\ldots,\bar{x}^{\bar{N}}\}$ and {testing set} $\tilde{\mathcal{S}} \triangleq \{\tilde{x}^1,\ldots,\tilde{x}^{\tilde{N}}\}$. 
\item For each $N \in \mathcal{N}$:
\begin{enumerate}
\item Simulate a {training set} $\mathcal{S} \triangleq \{ x^1,\ldots,x^N \}$.  \label{line:simulate}
\item For each $\epsilon \in \mathcal{E}$: \label{line:chooseeps}
\begin{enumerate}
\item Obtain exercise policies $\hat{\mu}_{N,\epsilon}$ by solving the robust optimization problem \eqref{prob:sro} constructed from the training set  $\mathcal{S}$ and robustness parameter $\epsilon$. 
\item Estimate the expected reward for these exercise policies using the validation set: 
\begin{align*}
 \bar{J}(\hat{\mu}_{N,\epsilon}) \triangleq \frac{1}{\bar{N}} \sum_{i=1}^{\bar{N}} g(\tau_{\hat{\mu}_{N,\epsilon}}(\bar{x}^i), \bar{x}^i ). % \label{line:best_so_far}
\end{align*}
\end{enumerate}
\item If the total computation time of steps \eqref{line:simulate}-\eqref{line:chooseeps} reached the allowed budget (or if $N = N_K$), go to step~\eqref{line:finalstep} with the current value of $N$.  % $N \in \N$ and the $\epsilon \in \mathcal{E}$ which 
%maximized the estimated expected reward. 
\end{enumerate}
\item Output the exercise policies which maximized the estimated expected reward:  $$\hat{\mu} \leftarrow \argmax_{\epsilon \in \mathcal{E}} \bar{J}(\hat{\mu}_{N,\epsilon}),$$ 
and output an unbiased estimate of their expected reward using the testing set:
\begin{align*}
 J^*(\hat{\mu}) \approx \frac{1}{\tilde{N}} \sum_{i=1}^{\tilde{N}} g(\tau_{\hat{\mu}}(\tilde{x}^i), \tilde{x}^i ). % \label{line:best_so_far}
\end{align*} \label{line:finalstep}
% Output $\hat{\mu}$ and the above estimate of $J^*(\hat{\mu})$.
\end{enumerate}
\vspace{1em}
\end{minipage}}
\vspace{1em}
\end{center}
\caption{A procedure for selecting the robustness parameter and number of sample paths.}\label{fig:heuristic}
\end{algorithm}
\null
\vfill
\clearpage
}

\subsection{Implementation Details} \label{sec:params}
In anticipation of algorithmic techniques for solving the robust optimization problem \eqref{prob:sro} in the remainder of the paper, it remains to be specified   how the parameters of the robust optimization problem (the number of simulated sample paths $N \in \N$ and the robustness parameter $\epsilon \ge 0$) should be selected in practice. % the parameters of the proposed approach:  the number of simulated sample paths $N \in \N$ and the radius of the uncertainty sets $ \epsilon > 0$. 
For the sake of concreteness, we conclude \S\ref{sec:setting} by briefly providing guidance for choosing these parameters  and applying the robust optimization {\color{black}approach} in practice. The procedures described below are formalized in Algorithm~\ref{fig:heuristic} and implemented in our numerical experiments in \S\ref{sec:experiments}. 

As described previously, 
%In contrast to traditional literature in robust and distributionally robust optimization, 
this paper addresses  stochastic optimal stopping problems in which the probability distributions are {known}. Consequently, the decision-maker is granted flexibility in choosing the number of sample paths $N$ to simulate. On one hand,  we have established in the previous section that larger choices of the number of simulated sample paths will generally lead to tighter approximations of the stochastic optimal stopping problem. On the other hand, larger choices of $N$  require a greater computation cost in performing the Monte-Carlo simulation and creates a robust optimization problem of a larger size. To balance these tradeoffs in particular applications, we recommend using a straightforward procedure of starting out with a small choice of $N$ and iteratively increasing the number of simulated sample paths until the total computational cost meets the allocated computational budget.  

Given  a fixed number of sample paths, the choice of the robustness parameter $\epsilon \ge 0$ can have a significant impact on the policies produced by the robust optimization problem. To this end, we recommend solving \eqref{prob:sro} over a grid of possible choices for the robustness parameter. Because the probability distribution is known, we can generate a second set of `validation' sample paths to select the best choice of the robustness parameter. Specifically, for each choice of the robustness parameter, one solves the robust optimization problem to obtain exercise policies. The expected reward of the exercise policies is then estimated using the validation set of sample paths. Finally, we select the value of the robustness parameter (and the corresponding exercise policies) which maximizes the average reward with respect to the validation set. %To ensure that the results are statistically valid, we recommend choosing % may find that the number of sample paths can be chosen experimentally % the resulting robust optimization problem.  

In summary, we have described straightforward and easy-to-implement heuristics for choosing the parameters of the robust optimization problem. Applying the heuristics and solving the robust optimization problem yields exercise policies for the stochastic optimal stopping problem, and an unbiased estimate of the expected reward of these exercise policies can similarly be obtained by simulating a set of `testing' sample paths  (see {\color{black}Algorithm}~\ref{fig:heuristic}). Because the exercise policies obtained from the robust optimization problem are feasible for the stochastic optimal stopping problem, the expected reward of these exercise policies is thus a {lower bound} on the optimal objective value of the stochastic optimal stopping problem. Finally, we remark that under a stronger assumption in which one has the ability to  perform conditional Monte Carlo simulation,  the exercise policies obtained from solving the robust optimization {\color{black}problem} can be combined with the method of  \cite{andersen2004primal} to obtain an {upper bound} on the optimal objective value  of the stochastic optimal stopping problem.

%%%%%%%%%%%%%%%%%%%%%%%%%%%%%%%%%%%

\section{Characterization of Optimal {\color{black}Markovian Stopping Rules}} \label{sec:main}
%We now proceed to investigate the proposed approach to optimal stopping from the lens of computational tractability, which will be our primary focus for the remaining sections.  
{\color{black}
%In the remainder of this work, we  turn our focus to the computational task of solving the robust optimization problem~\eqref{prob:sro}.  
In \S\ref{sec:setting}, we showed that our combination of robust optimization and simulation (\S\ref{sec:rob}) can yield an arbitrarily close approximation of the stochastic optimal stopping problem (\S\ref{sec:opt}-\S\ref{sec:params}). %For this combination of robust optimization and simulation to be practically valuable,  it is imperative for us to have algorithms which can efficiently solve the robust optimization problem to high accuracy. However,  the robust optimization problem~\eqref{prob:sro} requires solving over the \emph{infinite-dimensional} space of all Markovian stopping rules. This is problematic because infinite-dimensional optimization problems (i.e., optimization problems with infinitely-many decision variables) are notoriously  challenging to solve to high accuracy, both practically and theoretically. To illustrate, it is easy to see that even storing an optimal solution to an infinite-dimensional optimization problem can theoretically require an infinite amount of computer memory. % readily solvable, and, as discussed in the introduction, this presents the primary computational hurdle. 
 In this section, we develop the key technical result of this paper, Theorem~\ref{thm:characterization}, which will enable us to design exact and heuristic algorithms for solving the robust optimization problem. Specifically, Theorem~\ref{thm:characterization} {establishes the existence} and {characterizes the structure} of optimal Markovian stopping rules for the robust optimization problem~\eqref{prob:sro}. {\color{black}In \S\ref{sec:main:statement} and \S\ref{sec:main:proof}, we present the statement of Theorem~\ref{thm:characterization} and provide a sketch of its proof.  In \S\ref{sec:algorithms:prelim}, we use the characterization of optimal Markovian stopping rules to transform \eqref{prob:sro} from an optimization problem over an {infinite-dimensional} space of exercise policies into a {finite-dimensional} optimization problem over integer decision variables $\sigma^1,\ldots,\sigma^N \in \{1,\ldots,T \}$ (Theorem~\ref{thm:reform}).} %Our analysis reveals that the structure of these optimal policies is simple, finite-dimensional, and can be compactly parameterized using $N$ integers, where $N$ is the number of simulated sample paths. 
 
 {\color{black}
 \subsection{Statement of Theorem~\ref{thm:characterization}} \label{sec:main:statement}
 We begin by introducing the notation that will be used in our characterization of the structure of optimal Markovian stopping rules for the robust optimization problem~\eqref{prob:sro}. 
% To present our characterization of the structure of optimal Markovian stopping rules for the robust optimization problem~\eqref{prob:sro}, we introduce the following notation. 
 % The statement of Theorem~\ref{thm:characterization} requires the following additional notation. 
Consider any instance of the robust optimization problem~\eqref{prob:sro}. For any choice of integers $\sigma^1,\ldots,\sigma^N \in \{1,\ldots,T\}$, we define  $\mu^{\sigma^1 \cdots \sigma^N} \equiv ( \mu^{\sigma^1 \cdots \sigma^N}_1,\ldots,\mu^{\sigma^1 \cdots \sigma^N}_T )$ as the exercise policy that satisfies the following equality for each period $t \in \{1,\ldots,T\}$ and each state $y_t \in \mathcal{X}$: 
   \begin{align}
\mu_t^{ \sigma^1 \cdots \sigma^N} (y_t) \triangleq \begin{cases}
 \textsc{Stop},& \text{ if }  y_t \in \bigcup \limits_{i:\; \sigma^i = t} \mathcal{U}^i_t,\\
 \textsc{Continue},& \text{ if }  y_t \notin \bigcup \limits_{i:\; \sigma^i = t} \mathcal{U}^i_t.
 \end{cases} \label{line:equality_for_policies}
 \end{align} 
 
To develop intuition of the above exercise policy, we remark for each sample path $i \in \{1,\ldots,N\}$ that the integer $\sigma^i \in \{1,\ldots,T\}$ can be interpreted as a selection of one of the uncertainty sets $\mathcal{U}^i_1,\ldots,\mathcal{U}^i_T$. Specifically, if $\sigma^i = t$, then we observe from the above definition that the exercise policy  will satisfy  $\mu_t^{\sigma^1 \cdots \sigma^N}(y_t) = \textsc{Stop}$ for all $y_t \in \mathcal{U}^i_t$. 
More generally,  we observe that the exercise policy will satisfy    $\mu_t^{\sigma^1 \cdots \sigma^N}(y_t) = \textsc{Stop}$ for period $t \in \{1,\ldots,T\}$ and state $y_t \in \mathcal{X}$  
 if and only if there exists an uncertainty set $\mathcal{U}^i_t$ such that the state is contained in the uncertainty set, $y_t \in \mathcal{U}^i_t$, and the integer corresponding to the $i$th sample path is equal to the current period, $\sigma^i = t$. 
A visualization of the exercise policy $\mu^{\sigma^1 \cdots \sigma^N}$ is found in Figure~\ref{fig:simple_1}. 

%if and only if there exists a sample path $i$ such that $\sigma^i = t$ and uncertainty set $\mathcal{U}^i_t$ such that the state is contained in the uncertainty set, $y_t \in \mathcal{U}^i_t$, and the integer corresponding to the $i$th sample path is equal to the current period, $\sigma^i = t$. 

\begin{figure}[t]
\centering
\subfloat[Sample paths and their corresponding uncertainty sets]{%
\includegraphics[width=0.5\linewidth]{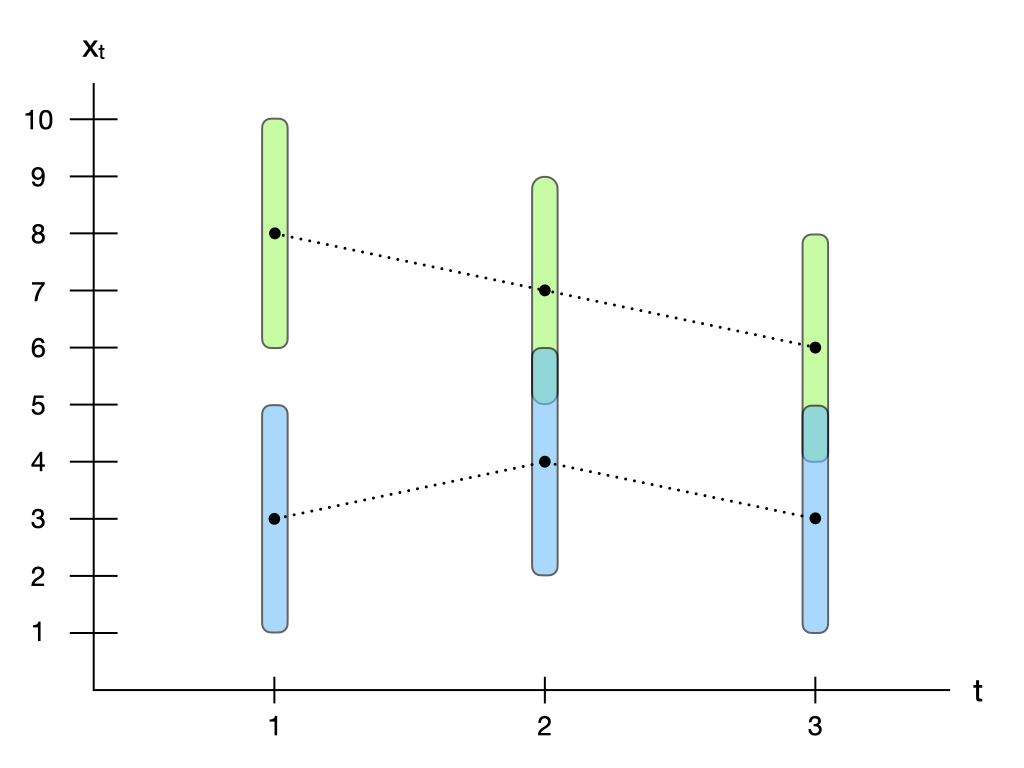}\label{fig:sub1}%
}
\subfloat[Stopping regions of $\mu^{2 3} \equiv (\mu^{2 3}_1,\mu^{2 3}_2,\mu^{2 3}_3 )$ ]{%
\includegraphics[width=0.5\linewidth]{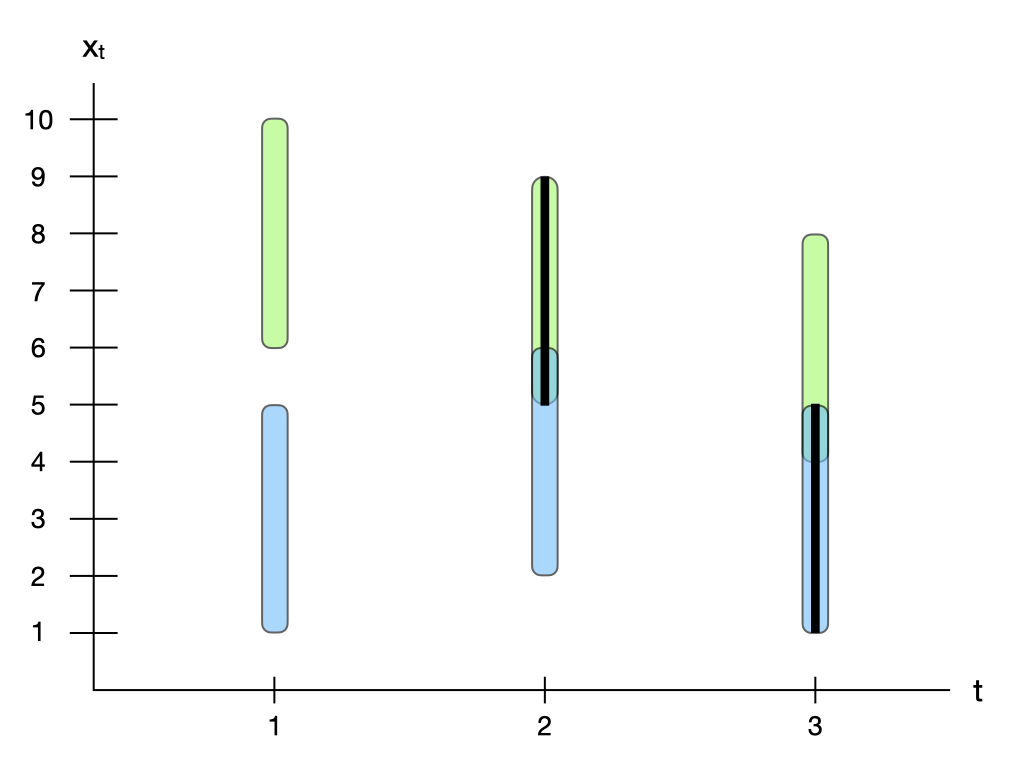} \label{fig:sub2}
%\caption{New exercise policies $\mu' = (\mu'_1,\mu'_2,\mu'_3)$}. \label{fig:final}}
}
\medskip
\caption{{\color{black}Visualization of a robust optimization problem with $N = 2$ sample paths  and $T = 3$ periods. The values of the sample paths in the robust optimization problem are given by $x^1 = (x^1_1,x^1_2,x^1_3) = (8,7,6)$ and $x^2 = (x^2_1,x^2_2,x^2_3) = (3,4,3)$, the state space is the real numbers ($\mathcal{X} = \R^1$) and the robustness parameter is  $\epsilon = 2.0$.  The green intervals correspond to the uncertainty sets $\mathcal{U}^1_1 = [6,10]$, $\mathcal{U}^1_2 = [5,9]$, and $\mathcal{U}^1_3 = [4,8]$ around the first sample path, and the blue intervals correspond to the uncertainty sets $\mathcal{U}^2_1 = [1,5]$, $\mathcal{U}^2_2 = [2,6]$, and $\mathcal{U}^2_3 = [1,5]$ around the second sample path.  In Figure~\ref{fig:sub1}, we  visualize the two sample paths using dashed lines.  In Figure~\ref{fig:sub2}, we   visualize  the stopping regions induced by the exercise policy  $\mu^{\sigma^1 \sigma^2} \equiv ( \mu^{\sigma^1 \sigma^2}_1,  \mu^{\sigma^1 \sigma^2}_2, \mu^{\sigma^1 \sigma^2}_3 )$  in the case where the integers $\sigma^1,\sigma^2 \in \{1,2,3\}$ satisfy $\sigma^1 = 2$ and $\sigma^2 = 3$.  Specifically, the  black vertical lines in Figure~\ref{fig:sub2} show the stopping regions  $\{y_1 \in \R: \mu^{23}_1(y_1) = \textsc{Stop} \}$, $\{y_2 \in \R: \mu^{23}_2(y_2) = \textsc{Stop} \}$, and $\{y_3 \in \R: \mu^{23}_3(y_3) = \textsc{Stop} \}$ induced by $\mu^{23} \equiv (\mu^{23}_1,\mu^{23}_2,\mu^{23}_3)$.  }   } \label{fig:simple_1}
\end{figure}

%More generally, 
%we observe for every period $t \in \{1,\ldots,T\}$ and state $y_t \in \mathcal{X}$  that the exercise policy will satisfy  $\mu_t^{\sigma^1 \cdots \sigma^N}(y_t) = \textsc{Stop}$ if and only if there exists a sample path $i$ such that $\sigma^i = t$ and uncertainty set $\mathcal{U}^i_t$ such that the state is contained in the uncertainty set, $y_t \in \mathcal{U}^i_t$, and the integer corresponding to the $i$th sample path is equal to the current period, $\sigma^i = t$. 

%More generally, 
%we observe for every period $t \in \{1,\ldots,T\}$ and state $y_t \in \mathcal{X}$  that the equality  $\mu_t^{\sigma^1 \cdots \sigma^N}(y_t) = \textsc{Stop}$ holds if and only if there exists an uncertainty set $\mathcal{U}^i_t$ such that the state is contained in the uncertainty set, $y_t \in \mathcal{U}^i_t$, and the integer corresponding to the $i$th sample path is equal to the current period, $\sigma^i = t$. 

Let the set of all exercise polices generated by integers $\sigma^1,\ldots,\sigma^N \in \{1,\ldots,T\}$ be denoted by 
\begin{align*}
%\mathcal{M} \triangleq \left \{\mu^{\sigma^1 \cdots \sigma^N} \equiv \left( \mu^{\sigma^1 \cdots \sigma^N}_1,\ldots,\mu^{\sigma^1 \cdots \sigma^N}_T \right): \;  \sigma^1,\ldots,\sigma^N \in \left \{1,\ldots,T \right \}  \right \}. 
\mathcal{M} \triangleq \left \{ \mu \equiv (\mu_1,\ldots,\mu_T): \textnormal{ there exist } \sigma^1,\ldots,\sigma^N \in \{1,\ldots,T\} \textnormal{ such that } \mu = \mu^{\sigma^1 \cdots \sigma^N}  \right \}. 
\end{align*}
%We present a visualization in Figure~\ref{fig:exhaustive} of the set of exercise policies $\mathcal{M}$ for the example that was discussed in \S\ref{sec:illustrate}. 
%Let us make two observations about the above set of exercise policies. First, it is clear from the above definition that each of the exercise policies $\mu \in \mathcal{M}$ is parameterized by integers  $\sigma^1,\ldots,\sigma^N \in \{1,\ldots,T\}$; thus, we readily observe that the cardinality of $\mathcal{M}$ is always finite and  upper bounded by $T^N$. Second, we observe that the definition of the set of exercise policies is {sample-path dependent}, in the sense that $\mathcal{M}$ depends on the number and realizations of the simulated sample paths  $x^1 \equiv (x^1_1,\ldots,x^1_T),\ldots, x^N \equiv (x^N_1,\ldots,x^N_T)$ and on the choice of the robustness parameter $\epsilon \ge 0$. %Third, we observe that the set of exercise policies $\mathcal{M}$ does not depend on the reward function $g(1,\cdot),\ldots,g(T,\cdot)$.  
It is clear from the above definition that each of the exercise policies $\mu \in \mathcal{M}$ is parameterized by integers  $\sigma^1,\ldots,\sigma^N \in \{1,\ldots,T\}$; thus, we readily observe that the cardinality of $\mathcal{M}$ is always finite and  upper bounded by $T^N$. Moreover, we observe that the definition of the set of exercise policies is {sample-path dependent}, in the sense that $\mathcal{M}$ depends on the number and realizations of the simulated sample paths  $x^1 \equiv (x^1_1,\ldots,x^1_T),\ldots, x^N \equiv (x^N_1,\ldots,x^N_T)$ and on the choice of the robustness parameter $\epsilon \ge 0$. %Third, we observe that the set of exercise policies $\mathcal{M}$ does not depend on the reward function $g(1,\cdot),\ldots,g(T,\cdot)$.  
In Figure~\ref{fig:exhaustive}, we present a visualization of the set of exercise policies $\mathcal{M}$. % that was discussed in \S\ref{sec:illustrate}.  

   \afterpage{%
\null
\vfill
\begin{figure}[H]

\centering

\subfloat[$\sigma^1 = 1,\; \sigma^2 = 1$]{%
  \includegraphics[width=0.333\textwidth]{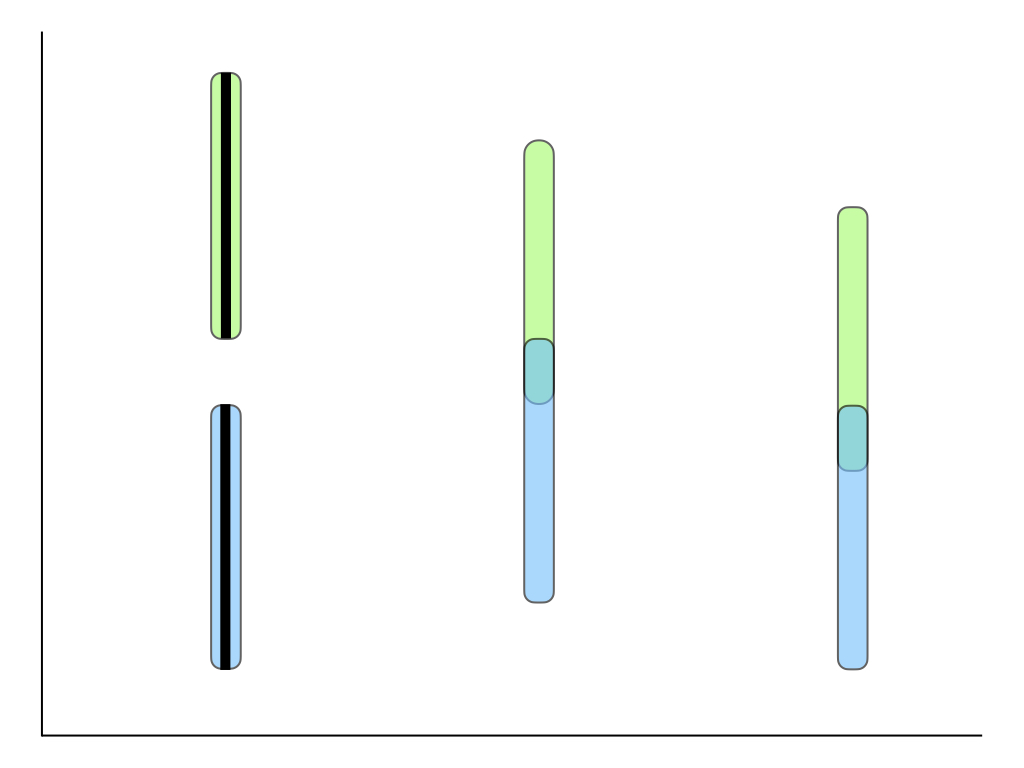}\label{fig:exhaustive:a}%
}
\subfloat[$\sigma^1 = 1,\; \sigma^2 = 2$]{%
  \includegraphics[width=0.333\textwidth]{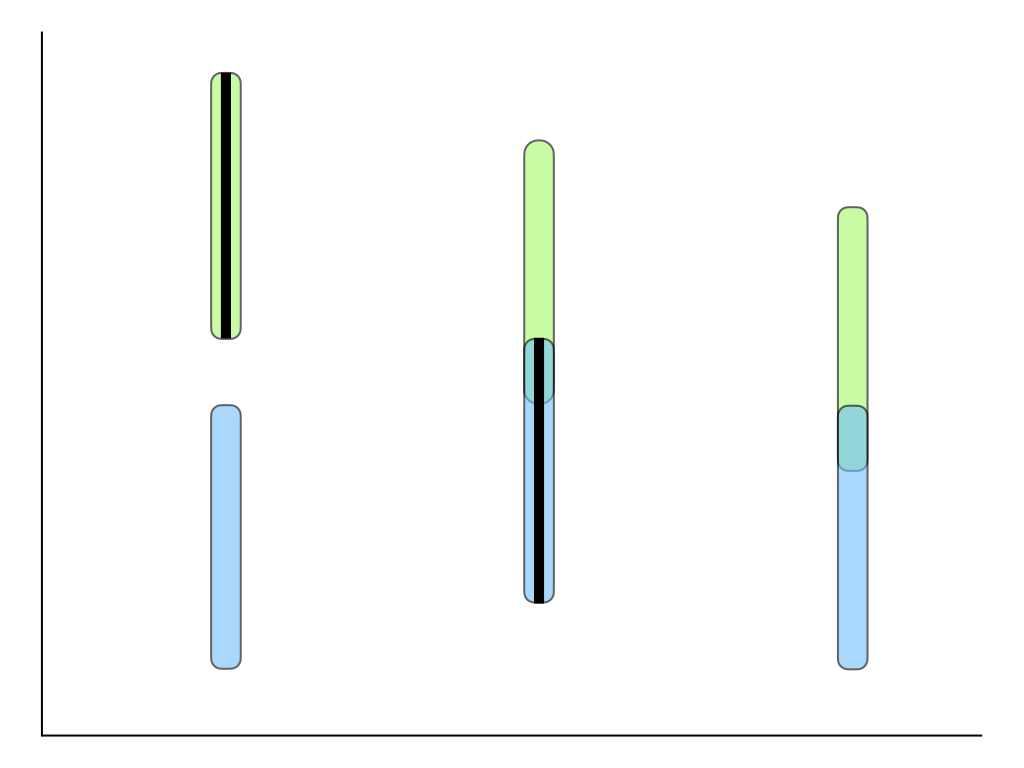}%
}
\subfloat[$\sigma^1 = 1,\; \sigma^2 = 3$]{%
  \includegraphics[width=0.333\textwidth]{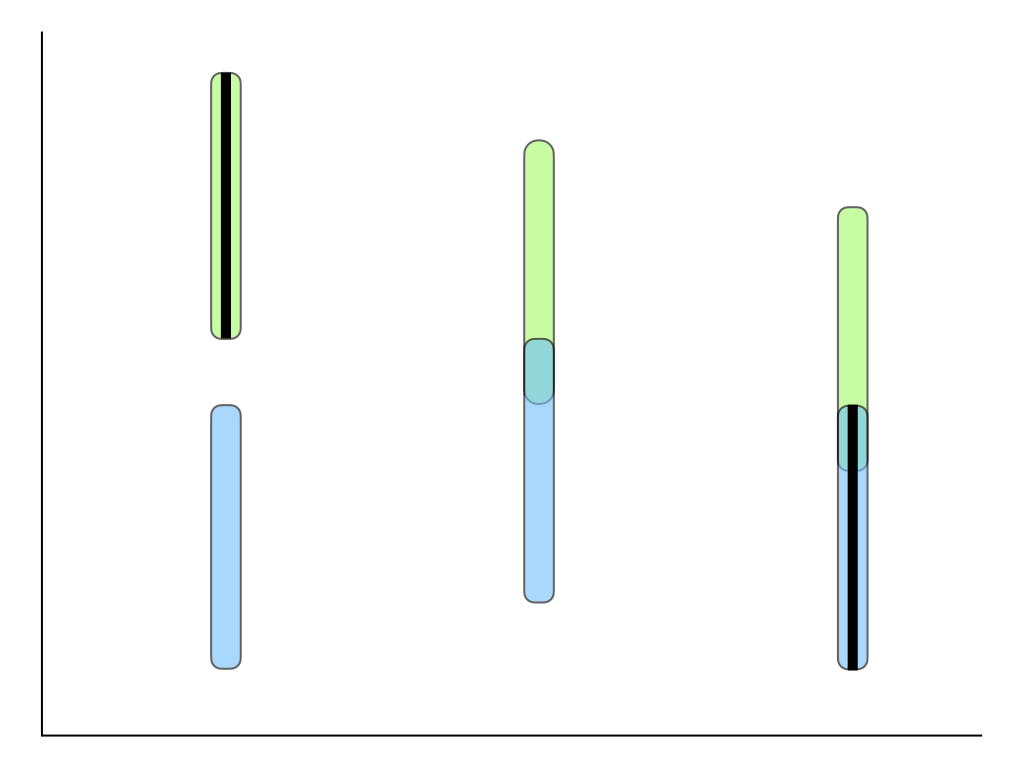}%
}\\
\subfloat[$\sigma^1 = 2,\; \sigma^2 = 1$]{%
  \includegraphics[width=0.333\textwidth]{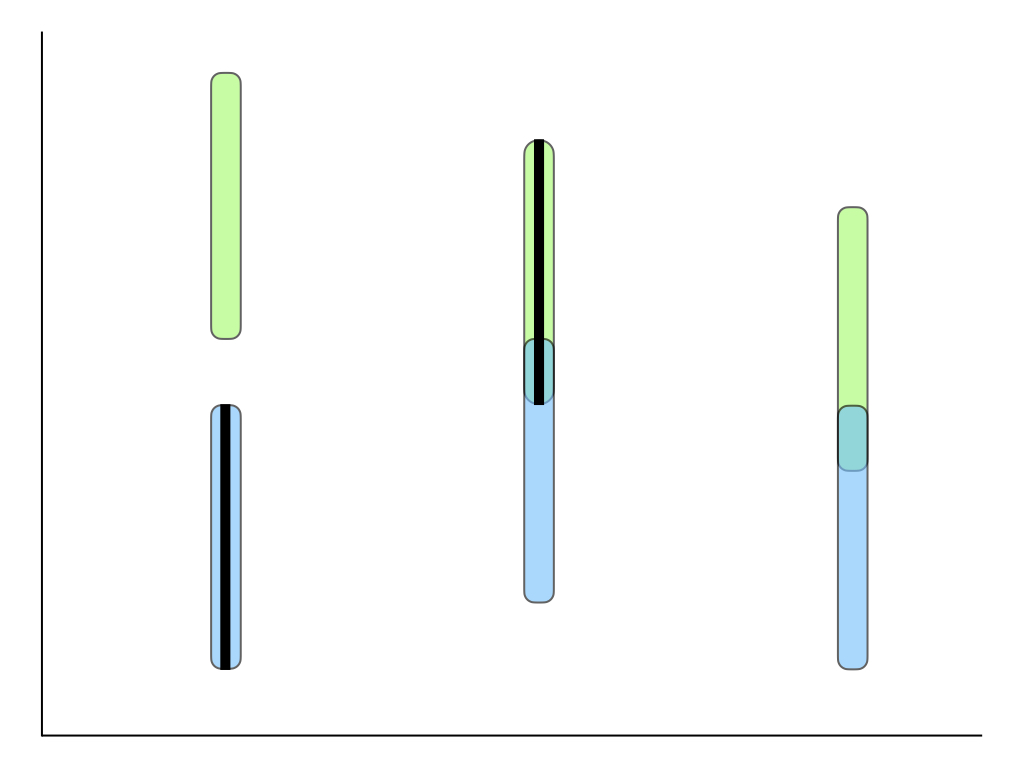}%
}
\subfloat[$\sigma^1 = 2,\; \sigma^2 = 2$]{%
  \includegraphics[width=0.333\textwidth]{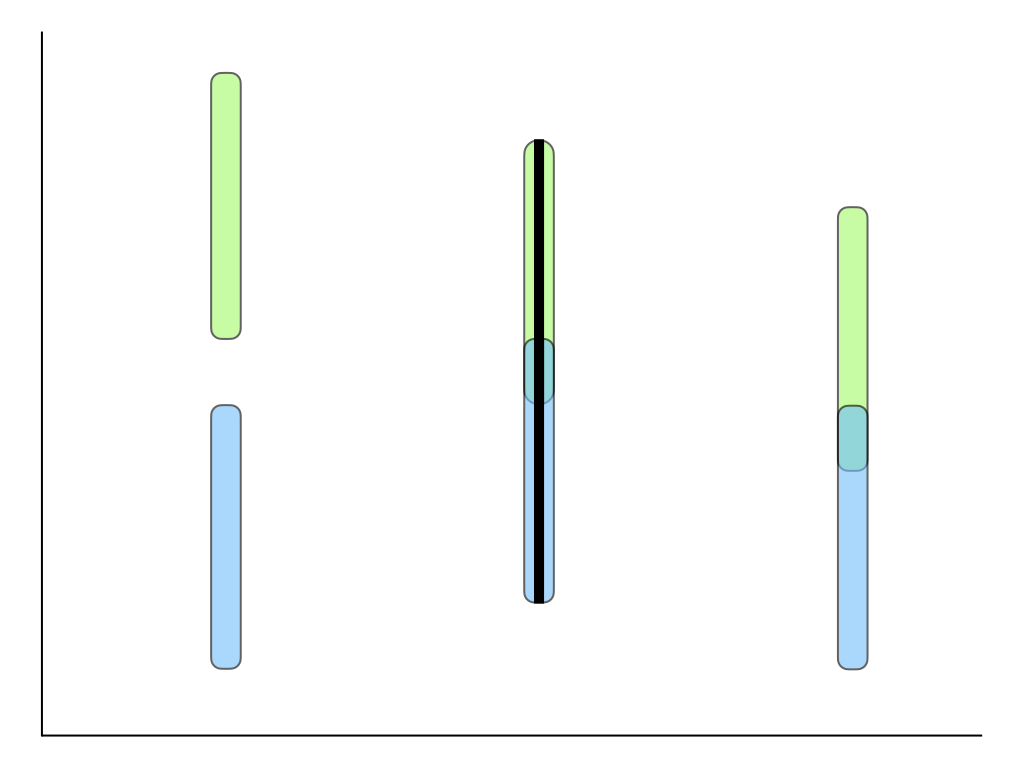}%
}
\subfloat[$\sigma^1 = 2,\; \sigma^2 = 3$]{%
  \includegraphics[width=0.333\textwidth]{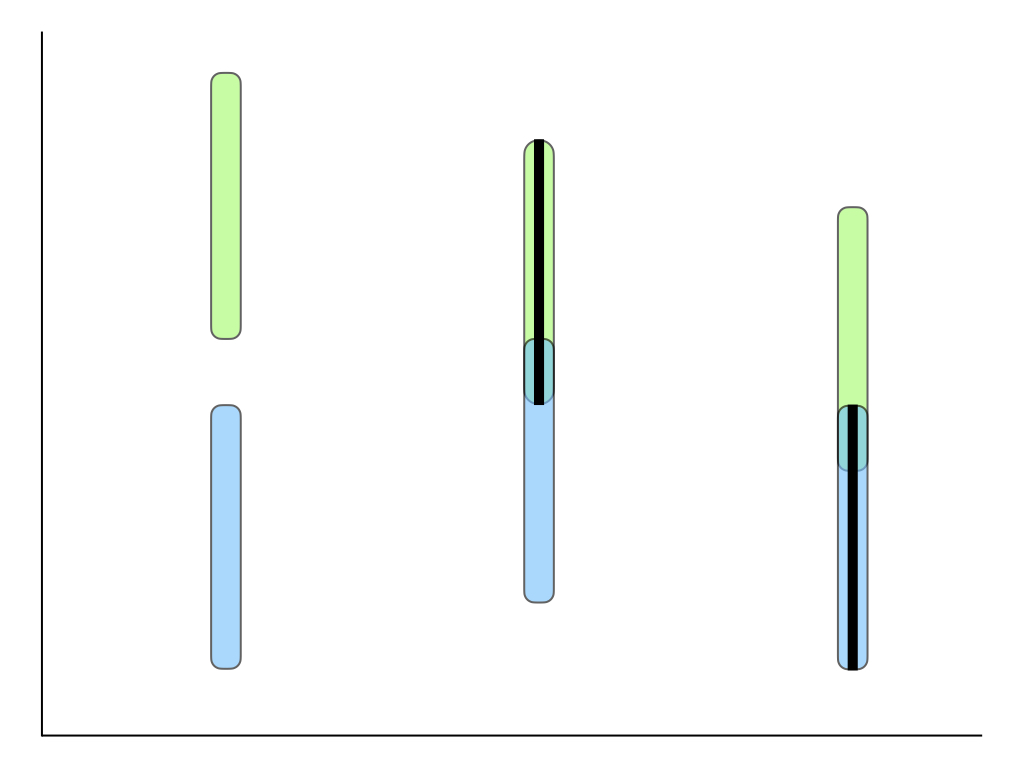}%
}\\
\subfloat[$\sigma^1 = 3,\; \sigma^2 = 1$]{%
  \includegraphics[width=0.333\textwidth]{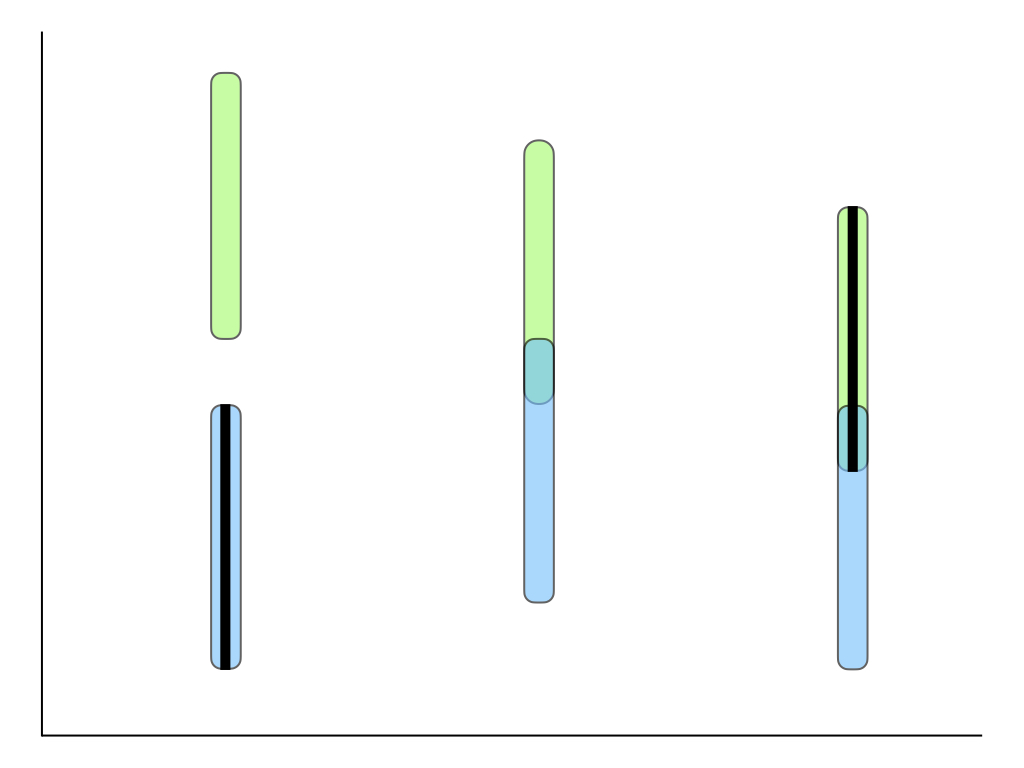}%
}
\subfloat[$\sigma^1 = 3,\; \sigma^2 = 2$]{%
  \includegraphics[width=0.333\textwidth]{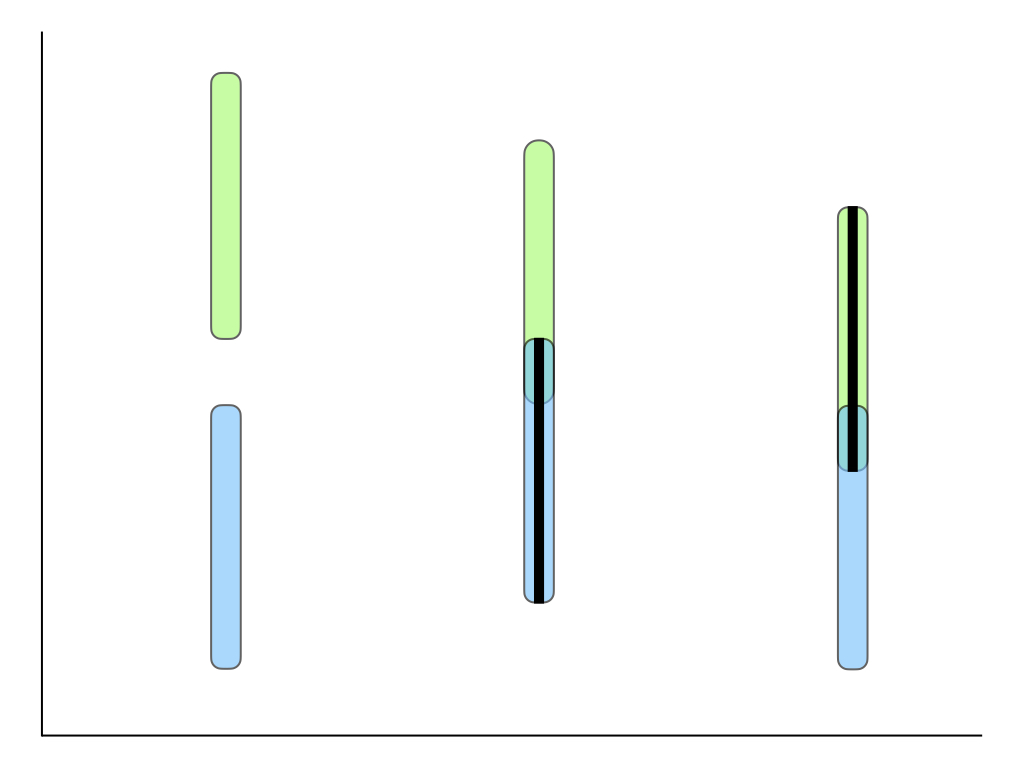}%
}
\subfloat[$\sigma^1 = 3,\; \sigma^2 = 3$]{%
  \includegraphics[width=0.333\textwidth]{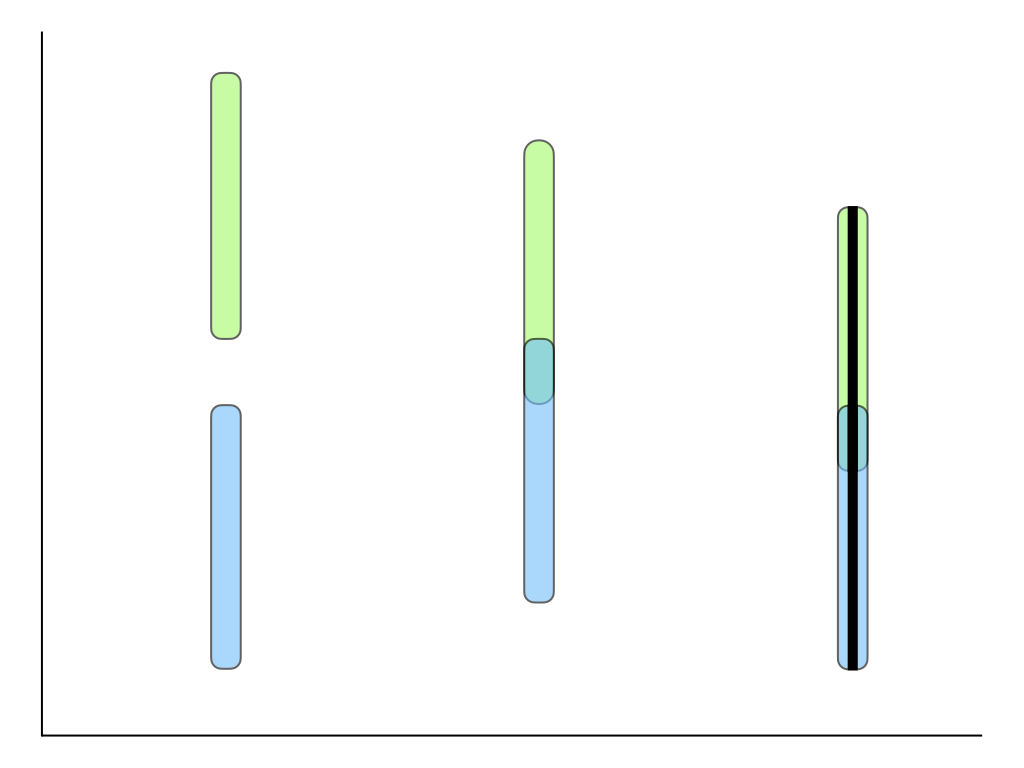}\label{fig:exhaustive:i}%
}
\medskip
\caption{The {\color{black}set} of exercise policies $\mathcal{M}$ for the robust optimization problem {\color{black}that is visualized in Figure~\ref{fig:sub1}. Specifically, each of the Figures~\ref{fig:exhaustive:a}-\ref{fig:exhaustive:i} visualizes the stopping regions induced by the exercise policy $\mu^{\sigma^1 \sigma^2} \equiv (\mu^{\sigma^1 \sigma^2}_1,\mu^{\sigma^1 \sigma^2}_2,\mu^{\sigma^1 \sigma^2}_3)$ overlayed on top of the uncertainty set around the first sample path $\mathcal{U}^1 \equiv \mathcal{U}^1_1 \times \mathcal{U}^1_2 \times \mathcal{U}^1_3 \equiv [6,10] \times [5,9] \times [4,8]$ and the uncertainty set around the second sample path $\mathcal{U}^2 \equiv \mathcal{U}^2_1 \times \mathcal{U}^2_2 \times \mathcal{U}^2_3 \equiv [1,5] \times [2,6] \times [1,5]$. We observe that none of the exercise policies in the Figures~\ref{fig:exhaustive:a}-\ref{fig:exhaustive:i} induce identical stopping regions; hence,  for this robust optimization problem, the cardinality of the set of exercise policies $\mathcal{M}$ is equal to $3^2 = 9$. %The  black vertical lines in the right figure show the states in which the exercise policy $\mu^{\sigma^1 \sigma^2}$ outputs \textsc{Stop}}.
}} \label{fig:exhaustive}
\end{figure}
\null
\vfill
\clearpage
}

In view of the above notation, we now present our main result: 
 \begin{theorem}%[Optimal policies] 
\label{thm:characterization}
There exists $\mu \in \mathcal{M}$ that is optimal for \eqref{prob:sro}.
\end{theorem}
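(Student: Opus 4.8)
The plan is to prove the stronger statement that the optimal value of \eqref{prob:sro} taken over \emph{all} Markovian stopping rules is already attained within the finite family $\mathcal{M}$. Since $|\mathcal{M}| \le T^N < \infty$, the maximum over $\mathcal{M}$ is certainly attained, so it suffices to show that for every collection of exercise policies $\mu$ there exists some $\mu' \in \mathcal{M}$ whose robust objective is at least that of $\mu$. Throughout I write $W_i(\mu) \triangleq \inf_{y \in \mathcal{U}^i} g(\tau_\mu(y), x^i)$ for the $i$th worst-case term, so the objective is $\frac{1}{N}\sum_i W_i(\mu)$.

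First I would simplify the inner adversarial minimization. Fix $\mu$ and a sample path $i$. Because the reward $g(\cdot, x^i)$ depends on $y$ only through the stopping period $\tau_\mu(y)$, and because $\mathcal{U}^i = \mathcal{U}^i_1 \times \cdots \times \mathcal{U}^i_T$ has product structure, the adversary can choose each coordinate independently, so $W_i(\mu) = \min\{g(t, x^i) : t \in \mathcal{T}_i(\mu)\}$, where $\mathcal{T}_i(\mu) \subseteq \{1,\ldots,T\} \cup \{\infty\}$ is the set of stopping periods realizable inside $\mathcal{U}^i$. A short argument shows $t \in \mathcal{T}_i(\mu)$ iff $\mathcal{U}^i_t$ contains a \textsc{Stop} point of $\mu_t$ while each earlier $\mathcal{U}^i_s$ ($s<t$) contains a \textsc{Continue} point, and that $\infty \in \mathcal{T}_i(\mu)$ iff every $\mathcal{U}^i_s$ contains a \textsc{Continue} point. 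Two consequences are central: by nonnegativity of $g$ and $g(\infty, x^i) = 0$, the worst case equals $0$ exactly when the adversary can continue forever; and the whole objective depends on $\mu$ only through the finitely many indicators ``$\mathcal{U}^i_t$ meets the stopping region'' and ``$\mathcal{U}^i_t$ lies inside the stopping region.''

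The core step is a pruning construction that rounds $\mu$ into $\mathcal{M}$. For each sample path $i$ on which the adversary cannot continue forever, let $\sigma^i$ be a period realizing the worst case, so that $g(\sigma^i, x^i) = W_i(\mu)$; such a $\sigma^i$ is automatically no later than the first period at which $\mu$ forces a stop on $\mathcal{U}^i$, and for the remaining paths (worst case $0$) any period is admissible a priori. Passing to $\mu^{\sigma^1 \cdots \sigma^N} \in \mathcal{M}$ replaces each stopping region by $\bigcup_{i : \sigma^i = t} \mathcal{U}^i_t$; since $\mathcal{U}^i_{\sigma^i}$ is then entirely inside the region, the adversary can no longer continue forever on any path, so no term degrades to its $\infty$-value. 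Because $\sigma^i$ minimizes $g(\cdot, x^i)$ over $\mathcal{T}_i(\mu)$, it would suffice to verify that rounding creates \emph{no new} realizable stopping period for any path, i.e. $\mathcal{T}_i(\mu^{\sigma^1\cdots\sigma^N}) \subseteq \mathcal{T}_i(\mu)$; this inclusion immediately gives $W_i(\mu^{\sigma^1\cdots\sigma^N}) \ge W_i(\mu)$ for all $i$ and hence the desired domination.

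The main obstacle is precisely the coupling introduced by taking \emph{full} unions of uncertainty sets. Even if $\mathcal{U}^i_t$ met no \textsc{Stop} point of $\mu$ at an early period $t < \sigma^i$, after rounding it may overlap some $\mathcal{U}^j_t$ with $\sigma^j = t$, handing the adversary a spurious early stopping time whose reward $g(t, x^i)$ can fall below $W_i(\mu)$; this is exactly a violation of the inclusion above. Ruling out such harmful overlaps is the heart of the proof and cannot be done by a path-by-path choice of $\sigma^i$ alone, since the damage to path $i$ is caused by the assignment of \emph{other} paths. I would resolve it with the pruning technique: iteratively detect a path whose value is degraded by a spurious early stop and repair the global assignment (reassigning an offending path, or using the free choice of $\sigma^i$ for the worst-case-$0$ paths) so as to eliminate the conflict, and I would argue via a monotone potential --- for instance the number of remaining conflicts or $\sum_i \sigma^i$ --- that the procedure terminates at an assignment whose induced policy lies in $\mathcal{M}$ and dominates $\mu$. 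Taking the maximum over the finite set $\mathcal{M}$ then yields an optimal, and in particular existent, Markovian stopping rule of the prescribed form.
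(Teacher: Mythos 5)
Your setup is sound: the reduction of each worst-case term to $W_i(\mu) = \min\{g(t,x^i) : t \in \mathcal{T}_i(\mu)\}$, the role of nonnegativity and $g(\infty,\cdot)=0$, and the overall plan of rounding an arbitrary $\mu$ into the finite set $\mathcal{M}$ all match the structure of the paper's argument. The gap is in your choice of $\sigma^i$. You take $\sigma^i$ to be a period \emph{realizing the worst case}, i.e.\ one at which $\mathcal{U}^i_{\sigma^i}$ merely \emph{intersects} the stopping region of $\mu$; as you yourself observe, the induced policy $\mu^{\sigma^1\cdots\sigma^N}$ then stops on the whole box $\mathcal{U}^i_{\sigma^i}$, including points where $\mu$ continued, so its stopping regions are not contained in those of $\mu$, and spurious early stopping periods for other paths can appear. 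The iterative ``repair'' you sketch to remove these conflicts is not a proof: reassigning one offending path can create new conflicts elsewhere, and neither of the potentials you name (the number of conflicts, or $\sum_i \sigma^i$) is shown to be monotone under any concrete repair rule, so termination at a dominating assignment is unsubstantiated.

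The fix is a different path-by-path choice, which also shows that your claim that no path-by-path rule can succeed is incorrect. First normalize $\mu$ so that every path has a forced stop: redefine $\mu_T \equiv \textsc{Stop}$, which cannot decrease any $W_i$ because $g \ge 0 = g(\infty,\cdot)$ (this is the paper's Lemma~\ref{lem:restrict_to_completion}). Then set $\sigma^i \triangleq \min\{t : \mu_t(y_t) = \textsc{Stop} \ \forall y_t \in \mathcal{U}^i_t\}$, the \emph{first period at which $\mathcal{U}^i_t$ lies entirely inside the stopping region}, rather than a worst-case-realizing period. Now every new stopping region $\bigcup_{j:\sigma^j = t}\mathcal{U}^j_t$ is a subset of the corresponding old one, so (i) no new realizable stopping period is created for any path --- the inclusion $\mathcal{T}_i(\mu^{\sigma^1\cdots\sigma^N}) \subseteq \mathcal{T}_i(\mu)$ you wanted holds automatically, with no overlaps to rule out; and (ii) the forced-stop time of each path is preserved, since $\mathcal{U}^i_{\sigma^i}$ is fully covered by the new region while for $t < \sigma^i$ the set $\mathcal{U}^i_t$ was not even contained in the old region, hence not in the new one. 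Domination $W_i(\mu^{\sigma^1\cdots\sigma^N}) \ge W_i(\mu)$ then follows for every $i$ in one line from your own min-over-realizable-periods formula (the paper's Lemmas~\ref{lem:obj_sigma}--\ref{lem:M_is_pruned}), and maximizing over the finite set $\mathcal{M}$ finishes the proof. The subtle point your proposal misses is that the period achieving the worst case and the period at which the policy commits to stopping on the whole uncertainty box are different objects in general, and it is the latter, not the former, that must index the rounding.
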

The above theorem shows that there always exists an exercise policy in the set $\mathcal{M}$ that is optimal for the robust optimization problem. 
The result is significant because it will allow us to transform \eqref{prob:sro} from an optimization problem over an {infinite-dimensional} space of exercise policies into a {finite-dimensional} combinatorial optimization problem over the set $\mathcal{M}$. Moreover, Theorem~\ref{thm:characterization} is important because it establishes the existence of optimal Markovian stopping rules for the robust optimization problem~\eqref{prob:sro}. 

% For the particular example that was discussed in \S\ref{sec:illustrate}, we observe from Figure~\ref{fig:exhaustive} that the cardinality of the set $\mathcal{M}$ is exactly equal to $T^N = 3^2$. That is, for the example from  \S\ref{sec:illustrate}, each choice of integers $\sigma^1,\ldots,\sigma^N \in \{1,\ldots,T\}$ generates a unique exercise policy that satisfies condition~\eqref{line:equality_for_policies}.  Later on, in Example~\ref{example:nonunique} in \S\ref{sec:algorithms:prelim}, we will see an instance of the robust optimization problem~\eqref{prob:sro} for which the cardinality of the set $\mathcal{M}$ is strictly less than $T^N$. 

\subsection{Proof  Sketch of Theorem~\ref{thm:characterization}} \label{sec:main:proof}
%We conclude \S\ref{sec:main} by discussing the overarching strategy for our proof of Theorem~\ref{thm:characterization}. 
Our proof of Theorem~\ref{thm:characterization} follows an exchange argument that is rooted in a technique that we refer to as \emph{pruning}.  The  technique consists of starting with an initial exercise policy $\mu \equiv (\mu_1,\ldots,\mu_T)$ and then modifying the exercise policy to reduce the size of the stopping regions $\{y_t \in \mathcal{X}: \mu_t(y_t) =  \textsc{Stop} \}$. By carefully pruning (\emph{i.e.}, reducing the size of) the stopping regions of the initial exercise policy $\mu \equiv (\mu_1,\ldots,\mu_T)$, we will show that any initial exercise policy can be transformed into a new exercise policy $\mu' \in \mathcal{M}$ with the same or better objective value in the robust optimization problem~\eqref{prob:sro}. % on each period in such a way that removes from the adversary the opportunity to choose trajectories $y \in \mathcal{U}^i$ that lead to small worst-case reward.  The key steps of our proof are captured by the following two lemmas.

%\subsubsection{Intermediary Steps.}
To discuss the proof in greater detail, we begin by stating two preliminary lemmas:

\begin{lemma} \label{lem:restrict_to_completion}
The optimal objective value of \eqref{prob:sro} is equal to the optimal objective value of 
\begin{align} \label{prob:sro_2} \tag{RO$_T$}
\begin{aligned}
&\sup_{\mu} &&\frac{1}{N} \sum_{i=1}^N \inf_{y \in \mathcal{U}^i}  g(\tau_\mu(y), x^i)\\
&\textnormal{subject to}&& \textnormal{for each } i \in \{1,\ldots,N\}, \; \textnormal{ there exists } t \in \{1,\ldots,T\} \\
&&& \textnormal{such that } \mu_t(y_t) = \textsc{Stop} \textnormal{ for all } y_t \in \mathcal{U}^i_t .
\end{aligned}
\end{align}
\end{lemma}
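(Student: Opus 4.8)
The plan is to prove the two inequalities between the optimal values of \eqref{prob:sro} and \eqref{prob:sro_2} separately. One direction is immediate: since \eqref{prob:sro_2} is obtained from \eqref{prob:sro} by adding a constraint on the feasible exercise policies while retaining the same objective, its feasible region is a subset of that of \eqref{prob:sro}, and hence the optimal value of \eqref{prob:sro_2} is at most that of \eqref{prob:sro}. The entire content of the lemma is therefore the reverse inequality, which I would establish by exhibiting, for an arbitrary collection of exercise policies $\mu \equiv (\mu_1,\ldots,\mu_T)$ feasible for \eqref{prob:sro}, a collection $\mu' \equiv (\mu'_1,\ldots,\mu'_T)$ that is feasible for \eqref{prob:sro_2} and whose objective value is no smaller than that of $\mu$.

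The construction I would use leaves all but the last period untouched and forces stopping everywhere in the final period: set $\mu'_t \triangleq \mu_t$ for each $t \in \{1,\ldots,T-1\}$, and define $\mu'_T(y_T) \triangleq \textsc{Stop}$ for every $y_T \in \mathcal{X}$. This $\mu'$ is feasible for \eqref{prob:sro_2}, because for each sample path $i$ the constraint is witnessed by the period $t = T$: indeed $\mu'_T(y_T) = \textsc{Stop}$ holds for all $y_T \in \mathcal{X} \supseteq \mathcal{U}^i_T$.

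The key step is to compare the objective values termwise in $i$. I would argue that for every realization $y$ one has $\tau_{\mu'}(y) = \tau_\mu(y)$ whenever $\tau_\mu(y) \in \{1,\ldots,T\}$ — since $\mu'$ and $\mu$ agree on periods $1,\ldots,T-1$, any stop that $\mu$ induces on or before period $T$ is preserved — while $\tau_{\mu'}(y) = T$ on the event $\{\tau_\mu(y) = \infty\}$, because $\mu'_T$ stops unconditionally. Using $g(\infty, x^i) \equiv 0$ together with the nonnegativity of the reward, so that $g(T, x^i) \ge 0 = g(\infty, x^i)$, this yields the pointwise bound $g(\tau_{\mu'}(y), x^i) \ge g(\tau_\mu(y), x^i)$ for every $y \in \mathcal{U}^i$. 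Taking the infimum over $y \in \mathcal{U}^i$ (infima preserve pointwise inequalities) and summing over $i$ shows that the objective of $\mu'$ is at least the objective of $\mu$. Taking the supremum over all feasible $\mu$ then delivers the reverse inequality and completes the proof.

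I expect the only point requiring care — rather than a genuine obstacle — to be the short case analysis behind the pointwise reward comparison, in particular confirming that replacing $\mu_T$ by an unconditional stop alters $\tau_\mu$ only on the never-stop event and nowhere else. The argument rests entirely on the two structural properties of the reward already imposed in \S\ref{sec:setting:notation}, namely $g(\infty, \cdot) \equiv 0$ and $g \ge 0$; without nonnegativity of the reward, forcing a terminal stop could strictly lower the objective and the lemma would fail.
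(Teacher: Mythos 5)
Your proof is correct and follows essentially the same argument as the paper: both modify an arbitrary policy $\mu$ by forcing $\mu'_T \equiv \textsc{Stop}$ in the final period, observe that $\tau_{\mu'}$ agrees with $\tau_\mu$ except on the never-stop event where it equals $T$, and then use $g(\infty,\cdot) \equiv 0$ together with $g \ge 0$ to obtain the pointwise reward comparison that survives the infimum over each $\mathcal{U}^i$. The only cosmetic difference is that the paper first restates the constraint of \eqref{prob:sro_2} as $\tau_\mu(y) < \infty$ on each $\mathcal{U}^i$, a step your argument absorbs implicitly.
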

% \begin{lemma} \label{lem:obj_sigma}
%If $\mu \equiv (\mu_1,\ldots,\mu_T)$ is feasible for \eqref{prob:sro_2}, then the following holds for each $i \in \{1,\ldots,N\}$:
%\begin{align*}
% \inf_{y \in \mathcal{U}^i} g(\tau_{\mu}(y),x^i) =  \min \limits_{t \in \left \{1,\ldots,\max_{y \in \mathcal{U}^i} \tau_\mu(y) \right\}} \left \{ g(t,x^i) :  \textnormal{there exists }  y_t \in \mathcal{U}^i_t \textnormal{ such that } \mu_t(y_t) = \textsc{Stop} \right \}.
%\end{align*}
%\end{lemma}
 \begin{lemma} \label{lem:obj_sigma}
Consider any $\mu \equiv (\mu_1,\ldots,\mu_T)$ that satisfies the constraints of \eqref{prob:sro_2}, and define
\begin{align*}
\sigma^i \triangleq %\max_{y \in \mathcal{U}^i} \tau_\mu(y) = 
\min  \left \{ t \in \{1,\ldots,T\}: \; \mu_t(y_t) = \textsc{Stop} \textnormal{ for all } y_t \in \mathcal{U}^i_t \right \}  \quad \forall i \in \{1,\ldots,N\}. 
\end{align*}
Then the following equality holds for each $i \in \{1,\ldots,N\}$: 
\begin{align*}
& \inf_{y \in \mathcal{U}^i} g(\tau_{\mu}(y),x^i) = \min \limits_{t \in \{1,\ldots,\sigma^i\}} \left \{ g(t,x^i) :  \textnormal{there exists }  y_t \in \mathcal{U}^i_t \textnormal{ such that } \mu_t(y_t) = \textsc{Stop} \right \}.
\end{align*}
\end{lemma}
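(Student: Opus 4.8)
The plan is to reduce the inner infimum over $y \in \mathcal{U}^i$ to a minimization over the finite set of stopping times that $\tau_\mu$ can attain on the uncertainty set $\mathcal{U}^i$. The crucial structural feature I would exploit is that, in the objective of \eqref{prob:sro_2}, the reward $g(\tau_\mu(y),x^i)$ depends on $y$ only through the stopping time $\tau_\mu(y)$, since its second argument is pinned to the sample path $x^i$. Writing $\mathcal{T}^i \triangleq \{\tau_\mu(y): y \in \mathcal{U}^i\}$ for the set of attainable stopping times, it then follows that $\inf_{y \in \mathcal{U}^i} g(\tau_\mu(y),x^i) = \min_{t \in \mathcal{T}^i} g(t,x^i)$, because $\mathcal{T}^i \subseteq \{1,\ldots,T\}$ is finite. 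The whole lemma thus reduces to the combinatorial identity
\[
\mathcal{T}^i = \left\{ t \in \{1,\ldots,\sigma^i\}: \textnormal{ there exists } y_t \in \mathcal{U}^i_t \textnormal{ such that } \mu_t(y_t) = \textsc{Stop} \right\}.
\]

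First I would record that $\sigma^i$ is well defined: the constraint of \eqref{prob:sro_2} guarantees that the set over which the minimum defining $\sigma^i$ is taken is nonempty. Next I would establish the upper bound $\tau_\mu(y) \le \sigma^i$ for every $y \in \mathcal{U}^i$: by the definition of $\sigma^i$ we have $\mu_{\sigma^i}(y_{\sigma^i}) = \textsc{Stop}$ for every $y_{\sigma^i} \in \mathcal{U}^i_{\sigma^i}$, so the period $\sigma^i$ always lies in the set $\{t: \mu_t(y_t) = \textsc{Stop}\}$ whose minimum is $\tau_\mu(y)$. This in particular rules out $\tau_\mu(y) = \infty$, giving $\mathcal{T}^i \subseteq \{1,\ldots,\sigma^i\}$.

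The heart of the argument is characterizing which periods $t \le \sigma^i$ are actually attainable, and here I would lean on the product structure $\mathcal{U}^i = \mathcal{U}^i_1 \times \cdots \times \mathcal{U}^i_T$, which permits choosing each coordinate $y_s$ independently. For the forward inclusion, if $\tau_\mu(y) = t$ then $y_t \in \mathcal{U}^i_t$ satisfies $\mu_t(y_t) = \textsc{Stop}$, which witnesses the existence condition. For the reverse inclusion, given $t \le \sigma^i$ with some $\bar{y}_t \in \mathcal{U}^i_t$ at which $\mu_t$ stops, I would construct $y$ coordinate by coordinate: set $y_t = \bar{y}_t$; for $s < t$, observe that $s < \sigma^i$, so by the minimality of $\sigma^i$ not all of $\mathcal{U}^i_s$ lies in the stop region, hence there is a point $\bar{y}_s \in \mathcal{U}^i_s$ with $\mu_s(\bar{y}_s) = \textsc{Continue}$, which I assign to $y_s$; and for $s > t$ I pick $y_s \in \mathcal{U}^i_s$ arbitrarily. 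By construction $\mu_s(y_s) = \textsc{Continue}$ for all $s < t$ and $\mu_t(y_t) = \textsc{Stop}$, so $\tau_\mu(y) = t$.

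I expect the main obstacle to be bookkeeping rather than any deep difficulty: one must keep straight that the minimality of $\sigma^i$ is exactly what supplies the \textsc{Continue} points needed in the earlier periods, while the defining property of $\sigma^i$ supplies the uniform \textsc{Stop} at period $\sigma^i$ that caps the stopping time from above. Combining the two inclusions yields the identity for $\mathcal{T}^i$, and substituting it into $\min_{t \in \mathcal{T}^i} g(t,x^i)$ produces exactly the right-hand side of the lemma.
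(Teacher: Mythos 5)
Your proposal is correct and follows essentially the same route as the paper's proof: both establish $\tau_\mu(y) \le \sigma^i$ via the uniform \textsc{Stop} at period $\sigma^i$, characterize the attainable stopping times $\{\tau_\mu(y): y \in \mathcal{U}^i\}$ as exactly those $t \le \sigma^i$ for which $\mathcal{U}^i_t$ meets the stopping region, and use the minimality of $\sigma^i$ (together with the product structure of $\mathcal{U}^i$) to supply the \textsc{Continue} witnesses in earlier periods. Your write-up merely makes the coordinate-by-coordinate construction of the witness trajectory more explicit than the paper does.
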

%\begin{comment}
% \begin{lemma} \label{lem:obj_sigma_old}
%Consider any $\mu \equiv (\mu_1,\ldots,\mu_T)$ that satisfies the constraints of \eqref{prob:sro_2}, and define
%\begin{align*}
%\sigma^i &\triangleq  \min  \left \{ t \in \{1,\ldots,T\}: \; \mu_t(y_t) = \textsc{Stop} \textnormal{ for all } y_t \in \mathcal{U}^i_t \right \},  \quad \forall i \in \{1,\ldots,N\}. 
%\end{align*}
%Then  $\sigma^1,\ldots,\sigma^N \in \{1,\ldots,T\}$ and the following equality holds for each $i \in \{1,\ldots,N\}$: 
%\begin{align*}
%& \inf_{y \in \mathcal{U}^i} g(\tau_{\mu}(y),x^i) = \min \limits_{t \in \{1,\ldots,\sigma^i\}} \left \{ g(t,x^i) :  \textnormal{there exists }  y_t \in \mathcal{U}^i_t \textnormal{ such that } \mu_t(y_t) = \textsc{Stop} \right \}.
%\end{align*}
%\end{lemma}
%\end{comment}
%Let us offer interpretations of these two intermediary lemmas. 
The first preliminary lemma  shows that a structural constraint can  be imposed onto the exercise policies in the robust optimization problem~\eqref{prob:sro} without any loss of optimality. In particular, Lemma~\ref{lem:restrict_to_completion} says that we can restrict to exercise policies in which the resulting Markovian stopping rule satisfies $\tau_\mu(y) < \infty$ for each trajectory $y$ in each of the uncertainty sets $\mathcal{U}^1,\ldots,\mathcal{U}^N$.\footnote{\color{black}Recall that the definition of a Markovian stopping rule is $\tau_\mu(y) \triangleq \min \{ t \in \{1,\ldots,T\}: \mu_t(y_t) = \textsc{Stop} \}$. Therefore, we readily observe  for each sample path $i$ that an exercise policy $\mu \equiv (\mu_1,\ldots,\mu_T)$ satisfies $\tau_\mu(y) < \infty$ for all $y \in \mathcal{U}^i \equiv \mathcal{U}^i_1 \times \cdots \times \mathcal{U}^i_T$ if and only if there exists a period $t \in \{1,\ldots,T\}$ that satisfies $\mu_t(y_t) = \textsc{Stop}$ for all $y_t \in \mathcal{U}^i_t$.  }
 The second preliminary lemma develops a convenient representation of the objective function of \eqref{prob:sro_2}. Specifically, Lemma~\ref{lem:obj_sigma} shows for each sample path $i$ that the quantity $\inf_{y \in \mathcal{U}^i} g(\tau_\mu(y),x^i)$ in the objective function of \eqref{prob:sro_2} is equal to the the minimum $g(t,x^i)$ among all % using %only knowledge of the value of $ \max_{y \in \mathcal{U}^i} \tau_\mu(y)$ and knowledge of the 
periods $t \in \{1,\ldots, \max_{y \in \mathcal{U}^i} \tau_\mu(y)\}$ for which the uncertainty set $\mathcal{U}^i_t$ has a nonempty intersection with the stopping region $\{ y_t \in \mathcal{X}_t: \mu_t(y_t) = \textsc{Stop} \}$.

%In view of the above equality,  Lemma~\ref{lem:obj_sigma} can be interpreted as showing that each of the quantities $\inf_{y \in \mathcal{U}^i} g(\tau_\mu(y),x^i)$ in the objective function of \eqref{prob:sro_2} is equal to the worst-case value of $g(t,x^i)$ among all % using %only knowledge of the value of $ \max_{y \in \mathcal{U}^i} \tau_\mu(y)$ and knowledge of the 
%periods $t \in \{1,\ldots, \max_{y \in \mathcal{U}^i} \tau_\mu(y)\}$ that satisfy $\mathcal{U}^i_t \cap \{ y_t \in \mathcal{X}_t: \mu_t(y_t) = \textsc{Stop} \} \neq \emptyset$. 

%In view of the above equality,  Lemma~\ref{lem:obj_sigma} can be interpreted as showing that the quantity $\inf_{y \in \mathcal{U}^i} g(\tau_\mu(y),x^i)$ for each sample path $i$ and each exercise policy $\mu$ that is feasible for \eqref{prob:sro_2} be determined using %only knowledge of the value of $ \max_{y \in \mathcal{U}^i} \tau_\mu(y)$ and knowledge of the 
%periods $t \in \{1,\ldots, \max_{y \in \mathcal{U}^i} \tau_\mu(y)\}$ that satisfy $\mathcal{U}^i_t \cap \{ y_t \in \mathcal{X}_t: \mu_t(y_t) = \textsc{Stop} \} \neq \emptyset$. 

% that the objective value associated with every exercise policy $\mu$ that is feasible for the robust optimization problem~\eqref{prob:sro_2} satisfies the following equality for each sample path $i$:
%\begin{align*}
%\ \inf_{y \in \mathcal{U}^i} g(\tau_{\mu}(y),x^i) = \min \limits_{t \in \left \{1,\ldots,\max_{y \in \mathcal{U}^i} \tau_\mu(y) \right\}} \left \{ g(t,x^i) :  \textnormal{there exists }  y_t \in \mathcal{U}^i_t \textnormal{ such that } \mu_t(y_t) = \textsc{Stop} \right \}.
%\end{align*}
 
 Equipped with the above preliminary lemmas, we now formally define the {pruning technique} that underpins our proof of Theorem~\ref{thm:characterization}. %As discussed previously, our proof of Theorem~\ref{thm:characterization} consists of an exchange argument, whereby we show that any arbitrary exercise policy $\mu \equiv (\mu_1,\ldots,\mu_T)$ that is feasible for \eqref{prob:sro_2} can be transformed into a new exercise policy $\mu' \in \mathcal{M}$  that is feasible for \eqref{prob:sro_2} and has the same or better objective value. To perform the transformation of an initial exercise policy into a new exercise policy, we make use of a technique that we refer to as {pruning}: 
 \begin{definition} \label{defn:pruning}
\emph{ Let $\mu$ be an exercise policy that is feasible for \eqref{prob:sro_2}. We say that $\mu'$ is a \emph{pruned version} of $\mu$ if the stopping regions of $\mu'$ are a subset of the stopping regions of $\mu$, \emph{i.e.},
 \begin{align*}
 &\left \{y_t \in \mathcal{X}: \mu_t'(y_t) = \textsc{Stop} \right \} \subseteq  \left \{y_t \in \mathcal{X}: \mu_t(y_t) = \textsc{Stop} \right \} \quad \forall t \in \{1,\ldots,T\},
 \end{align*}
 and if the following equality holds for each sample path $i \in \{1,\ldots,N\}$: % exist integers $\sigma^1,\ldots,\sigma^N \in \{1,\ldots,T\}$ that satisfy 
 \begin{align*}
\min \left \{ t \in \{1,\ldots,T\}: \; \mu_t'(y_t) = \textsc{Stop}\; \forall y_t \in \mathcal{U}^i_t \right \}= \min \left \{ t \in \{1,\ldots,T\}: \; \mu_t(y_t) = \textsc{Stop} \;\forall y_t \in \mathcal{U}^i_t \right \}.
 \end{align*}
 }
  \end{definition}
    Speaking intuitively, an exercise policy $\mu'$ is a pruned version of an exercise policy $\mu$ if the stopping regions of $\mu'$ are a subset of the stopping regions of $\mu$ and if $\max_{y \in \mathcal{U}^i} \tau_\mu(y) = \max_{y \in \mathcal{U}^i} \tau_{\mu'}(y)$ for each sample path $i$. %In Figure~\ref{}, we show that Definition~\ref{defn:pruning} can be naturally visualized  using the robust optimization problem from Example~\ref{ex:new}.  
 %  The above definition of pruning has an intimate connection with the robust optimization problem~\eqref{prob:sro_2}. %Specifically, the following Lemma~\ref{lem:pruning} is a direct consequence of  Lemma~\ref{lem:obj_sigma} and Definition~\ref{defn:pruning}: %
   The significance of Definition~\ref{defn:pruning} in combination with Lemma~\ref{lem:obj_sigma} is established by the following Lemma~\ref{lem:pruning}; specifically, the following Lemma~\ref{lem:pruning}  shows that if $\mu'$ is a pruned version of $\mu$, then the robust objective value associated with $\mu'$ is greater than or equal to the robust objective value associated with $\mu$. %The proof of the following Lemma~\ref{lem:pruning} follows directly from Lemma~\ref{lem:obj_sigma} and Definition~\ref{defn:pruning}. 
%\begin{lemma} \label{lem:pruning}
%If $\mu'$ is a pruned version of $\mu$, then $\frac{1}{N} \sum_{i=1}^N \inf \limits_{y \in \mathcal{U}^i} g(\tau_{\mu'}(y),x^i)  \ge \frac{1}{N} \sum_{i=1}^N \inf \limits_{y \in \mathcal{U}^i} g(\tau_{\mu}(y),x^i)$. 
%\end{lemma}
\begin{lemma} \label{lem:pruning}
If $\mu'$ is a pruned version of $\mu$, then $ \inf \limits_{y \in \mathcal{U}^i} g(\tau_{\mu'}(y),x^i)  \ge \inf \limits_{y \in \mathcal{U}^i} g(\tau_{\mu}(y),x^i)$ $\forall i \in \{1,\ldots,N\}$. 
\end{lemma}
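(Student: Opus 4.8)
The plan is to reduce the inequality to an elementary comparison of two minima by invoking the representation of the robust objective provided by Lemma~\ref{lem:obj_sigma}. The first thing I would do is record that, by the second defining property of a pruned version in Definition~\ref{defn:pruning}, the two exercise policies share a common threshold
\[
\sigma^i \triangleq \min \left \{ t \in \{1,\ldots,T\} : \mu_t(y_t) = \textsc{Stop} \; \forall y_t \in \mathcal{U}^i_t \right \} = \min \left \{ t \in \{1,\ldots,T\} : \mu_t'(y_t) = \textsc{Stop} \; \forall y_t \in \mathcal{U}^i_t \right \}
\]
for each sample path $i$. Since $\mu$ is feasible for \eqref{prob:sro_2}, the set on the left is nonempty, so the set on the right is nonempty as well; this shows in passing that $\mu'$ is itself feasible for \eqref{prob:sro_2}. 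This preliminary step is crucial, since it permits me to apply Lemma~\ref{lem:obj_sigma} to \emph{both} $\mu$ and $\mu'$ with the \emph{same} value of $\sigma^i$.

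Next I would apply Lemma~\ref{lem:obj_sigma} twice to obtain
\[
\inf_{y \in \mathcal{U}^i} g(\tau_{\mu}(y), x^i) = \min_{t \in \{1,\ldots,\sigma^i\}} \left \{ g(t,x^i) : \exists\, y_t \in \mathcal{U}^i_t \textnormal{ with } \mu_t(y_t) = \textsc{Stop} \right \},
\]
\[
\inf_{y \in \mathcal{U}^i} g(\tau_{\mu'}(y), x^i) = \min_{t \in \{1,\ldots,\sigma^i\}} \left \{ g(t,x^i) : \exists\, y_t \in \mathcal{U}^i_t \textnormal{ with } \mu'_t(y_t) = \textsc{Stop} \right \}.
\]
The remaining work is then purely set-theoretic. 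Using the first defining property of a pruned version, namely that the stopping region of $\mu'$ is contained in the stopping region of $\mu$ in every period, I would argue that whenever some $y_t \in \mathcal{U}^i_t$ satisfies $\mu'_t(y_t) = \textsc{Stop}$, the same $y_t$ satisfies $\mu_t(y_t) = \textsc{Stop}$. Consequently the index set over which the minimum for $\mu'$ is taken is a \emph{subset} of the index set over which the minimum for $\mu$ is taken.

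Finally, I would conclude by the elementary fact that the minimum of a fixed function (here $g(\cdot, x^i)$) over a smaller index set is no smaller than its minimum over a larger index set; both index sets are nonempty because $\sigma^i$ belongs to each of them. This yields $\inf_{y \in \mathcal{U}^i} g(\tau_{\mu'}(y), x^i) \ge \inf_{y \in \mathcal{U}^i} g(\tau_{\mu}(y), x^i)$, as required. I do not anticipate a genuine obstacle here, since the analytic content of the lemma is entirely front-loaded into Lemma~\ref{lem:obj_sigma}; the only points requiring care are the verification that $\mu'$ is feasible and that the two minima are taken over the \emph{same} range $\{1,\ldots,\sigma^i\}$, both of which follow directly from the two clauses of Definition~\ref{defn:pruning}.
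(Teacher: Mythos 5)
Your proposal is correct and follows essentially the same route as the paper's own proof: fix the common threshold $\sigma^i$ from the second clause of Definition~\ref{defn:pruning}, apply Lemma~\ref{lem:obj_sigma} to both $\mu$ and $\mu'$ over the range $\{1,\ldots,\sigma^i\}$, and compare the two minima via the inclusion of stopping regions from the first clause. Your explicit verification that $\mu'$ is feasible for \eqref{prob:sro_2} (so that Lemma~\ref{lem:obj_sigma} applies to it) is a small point of care the paper leaves implicit, but it does not change the argument.
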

%\vspace{-1em}
%The proof of Lemma~\ref{lem:pruning} follows immediately from Lemma~\ref{lem:obj_sigma} and Definition~\ref{defn:pruning}; see Appendix~\ref{} for additional details. 

In summary, we have shown in Lemma~\ref{lem:restrict_to_completion} that the robust optimization problem~\eqref{prob:sro} is equivalent to the robust optimization problem~\eqref{prob:sro_2}. Moreover, for every initial  exercise policy $\mu$ that is feasible for \eqref{prob:sro_2}, Lemma~\ref{lem:pruning} shows that the objective value associated with $\mu$ is less than or equal to the objective value associated with every exercise policy $\mu'$ that is a pruned version of $\mu$. The final step of our proof of Theorem~\ref{thm:characterization} is given by the following lemma:
\begin{lemma} \label{lem:M_is_pruned}
Consider any $\mu \equiv (\mu_1,\ldots,\mu_T)$ that satisfies the constraints of \eqref{prob:sro_2}, and define
\begin{align*}
\sigma^i \triangleq \min  \left \{ t \in \{1,\ldots,T\}: \; \mu_t(y_t) = \textsc{Stop} \textnormal{ for all } y_t \in \mathcal{U}^i_t \right \}  \quad \forall i \in \{1,\ldots,N\}. 
\end{align*}
Then $\mu^{\sigma^1 \cdots \sigma^N}$ is a pruned version of $\mu$. 
\end{lemma}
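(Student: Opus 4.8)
The plan is to verify directly that $\mu^{\sigma^1\cdots\sigma^N}$ meets the two defining requirements of a pruned version of $\mu$ in Definition~\ref{defn:pruning}. Throughout I would write $\nu \equiv \mu^{\sigma^1 \cdots \sigma^N}$ for brevity and use the fact, immediate from~\eqref{line:equality_for_policies}, that the stopping region of $\nu$ in each period is $\{y_t \in \mathcal{X}: \nu_t(y_t) = \textsc{Stop}\} = \bigcup_{i:\, \sigma^i = t} \mathcal{U}^i_t$. I would also record at the outset that, because $\mu$ is feasible for \eqref{prob:sro_2}, the set over which each $\sigma^i$ is minimized is nonempty, so every $\sigma^i$ is a well-defined element of $\{1,\ldots,T\}$ and the construction of $\nu$ makes sense.

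First I would establish the subset condition. Fix a period $t$ and a state $y_t \in \bigcup_{i:\, \sigma^i = t} \mathcal{U}^i_t$, so there is a sample path $i$ with $\sigma^i = t$ and $y_t \in \mathcal{U}^i_t$. By the definition of $\sigma^i$ as the minimal period on which $\mu$ stops over the \emph{entire} uncertainty set, the policy $\mu_{\sigma^i}$ stops on all of $\mathcal{U}^i_{\sigma^i}$; since $t = \sigma^i$ and $y_t \in \mathcal{U}^i_t$, this gives $\mu_t(y_t) = \textsc{Stop}$. Hence $\{y_t : \nu_t(y_t) = \textsc{Stop}\} \subseteq \{y_t : \mu_t(y_t) = \textsc{Stop}\}$ for every $t$, which is the first requirement of Definition~\ref{defn:pruning}.

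Next I would verify the equal-stopping-period condition, namely that $\min\{t : \nu_t(y_t) = \textsc{Stop}\ \forall y_t \in \mathcal{U}^i_t\} = \sigma^i$ for each $i$ (the right-hand side of the displayed equality in Definition~\ref{defn:pruning} being exactly $\sigma^i$ by its definition in the lemma statement). The inequality $\le$ is direct: for any $y \in \mathcal{U}^i_{\sigma^i}$ we may take the index $i$ itself in the union defining $\nu_{\sigma^i}$ (trivially $\sigma^i = \sigma^i$), so $y \in \bigcup_{j:\, \sigma^j = \sigma^i} \mathcal{U}^j_{\sigma^i}$ and thus $\nu_{\sigma^i}(y) = \textsc{Stop}$; hence $\nu$ stops on all of $\mathcal{U}^i_{\sigma^i}$. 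For the reverse inequality I would argue by contradiction: if some period $t < \sigma^i$ satisfied $\mathcal{U}^i_t \subseteq \{y_t : \nu_t(y_t) = \textsc{Stop}\}$, then combining this with the subset condition just proved yields $\mathcal{U}^i_t \subseteq \{y_t : \mu_t(y_t) = \textsc{Stop}\}$, so $\mu_t$ stops on all of $\mathcal{U}^i_t$ for a period $t < \sigma^i$, contradicting the minimality of $\sigma^i$. Therefore no such $t$ exists, the minimum on the left equals $\sigma^i$, and both conditions of Definition~\ref{defn:pruning} hold, completing the argument.

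I do not anticipate a genuine obstacle here: the structure of the proof is that, once the subset condition is secured, the equal-period condition follows purely from the minimality built into the definition of $\sigma^i$. The only points demanding care are bookkeeping ones---confirming via feasibility for \eqref{prob:sro_2} that each $\sigma^i$ is well-defined, keeping the direction of the inclusion straight (the stopping region of $\nu$ sits \emph{inside} that of $\mu$), and noting that the right-hand side of the Definition~\ref{defn:pruning} equality is $\sigma^i$ by construction, which is what lets the contradiction step close.
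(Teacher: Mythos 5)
Your proof is correct and follows essentially the same route as the paper's: the paper likewise splits the argument into the subset condition (its Claim on stopping regions, proved exactly by your observation that each $\mathcal{U}^i_t$ with $\sigma^i = t$ lies inside the stopping region of $\mu_t$) and the equal-stopping-period condition (the $\le$ direction from $\nu$ stopping on all of $\mathcal{U}^i_{\sigma^i}$ by construction, and the $\ge$ direction as an immediate consequence of the subset condition, which you phrase equivalently as a contradiction). The only cosmetic difference is that the paper argues the inclusion set-wise rather than element-wise; the substance is identical.
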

The above lemma implies that if $\mu$ is a feasible exercise policy for \eqref{prob:sro_2}, then there always exists an exercise policy $\mu' \in \mathcal{M}$ that is a pruned version of $\mu$. Hence, we conclude from Lemmas~\ref{lem:restrict_to_completion}, \ref{lem:pruning}, and \ref{lem:M_is_pruned} that there always exists an exercise policy $\mu \in \mathcal{M}$ that is optimal for \eqref{prob:sro}. The omitted details of the proofs of Theorem~\ref{thm:characterization} and Lemmas~\ref{lem:restrict_to_completion}, \ref{lem:obj_sigma}, \ref{lem:pruning}, and \ref{lem:M_is_pruned} are found in Appendix~\ref{appx:characterization}.

\subsection{Reformulation of \eqref{prob:sro} as a Finite-Dimensional Optimization Problem}  \label{sec:algorithms:prelim}
We conclude \S\ref{sec:main} by using our characterization of optimal Markovian stopping rules (Theorem~\ref{thm:characterization})  to develop a reformulation of \eqref{prob:sro} as a finite-dimensional   optimization problem over integer decision variables $\sigma^1,\ldots,\sigma^N \in \{1,\ldots,T \}$.  %The reformulation is denoted by and can be found in Proposition at the end of this section.
% is {not} in a form that is solvable by standard optimization software. %, for several reasons which will be discussed at the end of \S\ref{sec:algorithms:prelim}. 
The reformulation from this subsection is important because it establishes a natural combinatorial interpretation of the robust optimization problem  \eqref{prob:sro}, which will provide the foundation for our algorithmic developments in \S\ref{sec:exact} and \S\ref{sec:approx}. 

 Our finite-dimensional reformulation is presented as the following optimization problem~\eqref{prob:mip}:
 {\color{black}
 \begin{align} \label{prob:mip} \tag{IP}
\underset{ \sigma^1,\ldots,\sigma^N \in \{1,\ldots,T\}}{\textnormal{maximize}}\;  \frac{1}{N} \sum_{i=1}^N \min_{t \in \{1,\ldots,\sigma^i \}} \left \{g(t,x^i): \textnormal{ there exists } j \textnormal{ such that } \mathcal{U}^i_t \cap \mathcal{U}^j_t \neq \emptyset \textnormal{ and } \sigma^j = t\right \}. 
\end{align}
The decision variables in the above optimization problem are the integers $\sigma^1,\ldots,\sigma^N \in \{1,\ldots,T\}$. We further observe that the inner minimization problems in \eqref{prob:mip} involves constraints that depend on whether the intersection $\mathcal{U}^i_t \cap \mathcal{U}^j_t$ is nonempty for each pair of sample paths $i,j$ and each period $t$. An important insight is that these intersections can be {precomputed}; that is, given the construction of the uncertainty sets from \S\ref{sec:rob}, we can  efficiently precompute  the set of all pairs of sample paths and periods that satisfy  $\mathcal{U}^i_t \cap \mathcal{U}^j_t \neq \emptyset$.\footnote{{\color{black}We recall from \S\ref{sec:rob} that each uncertainty set $\mathcal{U}^i_t \subseteq \R^d$ is a hypercube. Thus, determining whether $\mathcal{U}^i_t \cap \mathcal{U}^j_t$ is nonempty for each pair of sample paths $i,j \in \{1,\ldots,N\}$ and each period $t \in \{1,\ldots,T\}$ can be precomputed in a total of $\mathcal{O}(N^2T d)$ computation time.}} 

The equivalence of \eqref{prob:mip} and \eqref{prob:sro} is formalized by the following theorem.
%Hence, we conclude that the constraints involving the uncertainty sets in the minimization problems over the integer decision variables can be efficiently precomputed. 

  \begin{theorem}\label{thm:reform}
The optimal objective value of \eqref{prob:mip} is equal to the optimal objective value of \eqref{prob:sro}. Furthermore, for any choice of integers $\sigma^1,\ldots,\sigma^N \in \{1,\ldots,T\}$, %the corresponding exercise policy $\mu^{\sigma^1 \cdots \sigma^N} \equiv (\mu_1^{\sigma^1 \cdots \sigma^N},\ldots,\mu_T^{\sigma^1 \cdots \sigma^N})$ satisfies  % 
 \begin{align*}
 &\frac{1}{N} \sum_{i=1}^N \inf_{y \in \mathcal{U}^i}  g \left(\tau_{\mu^{\sigma^1 \cdots \sigma^N}}(y), x^i \right) \\
 &\ge \frac{1}{N} \sum_{i=1}^N \min_{t \in \{1,\ldots,\sigma^i \}} \left \{g(t,x^i): \textnormal{ there exists } j \textnormal{ such that } \mathcal{U}^i_t \cap \mathcal{U}^j_t \neq \emptyset \textnormal{ and } \sigma^j = t\right \}.
  \end{align*}
\end{theorem}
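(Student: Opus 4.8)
The plan is to establish the displayed inequality first and then to deduce the equality of optimal objective values from it, using Theorem~\ref{thm:characterization} for the harder direction. To prove the inequality, fix arbitrary integers $\sigma^1,\ldots,\sigma^N \in \{1,\ldots,T\}$ and recall that the stopping region of $\mu^{\sigma^1 \cdots \sigma^N}$ on period $t$ is $\bigcup_{j:\, \sigma^j = t} \mathcal{U}^j_t$. Taking $j = i$ shows that $\mathcal{U}^i_{\sigma^i}$ lies entirely in the stopping region on period $\sigma^i$, so $\mu^{\sigma^1 \cdots \sigma^N}$ is feasible for \eqref{prob:sro_2} and Lemma~\ref{lem:obj_sigma} applies. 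Writing $\tilde{\sigma}^i \triangleq \min\{t : \mu^{\sigma^1 \cdots \sigma^N}_t(y_t) = \textsc{Stop}\ \forall y_t \in \mathcal{U}^i_t\}$, I would use two observations: (i) since period $\sigma^i$ already places all of $\mathcal{U}^i_{\sigma^i}$ in the stopping region, $\tilde{\sigma}^i \le \sigma^i$; and (ii) for each $t$ the condition ``there exists $y_t \in \mathcal{U}^i_t$ with $\mu^{\sigma^1 \cdots \sigma^N}_t(y_t) = \textsc{Stop}$'' appearing in Lemma~\ref{lem:obj_sigma} is equivalent to the intersection condition ``there exists $j$ with $\sigma^j = t$ and $\mathcal{U}^i_t \cap \mathcal{U}^j_t \neq \emptyset$'' appearing in \eqref{prob:mip}, precisely because the stopping region on period $t$ equals $\bigcup_{j:\, \sigma^j = t} \mathcal{U}^j_t$. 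Lemma~\ref{lem:obj_sigma} then expresses $\inf_{y \in \mathcal{U}^i} g(\tau_{\mu^{\sigma^1 \cdots \sigma^N}}(y),x^i)$ as the minimum of $g(t,x^i)$ over the qualifying periods $t \in \{1,\ldots,\tilde{\sigma}^i\}$, whereas the $i$-th term of \eqref{prob:mip} is the same minimum taken over $t \in \{1,\ldots,\sigma^i\}$. Since $\{1,\ldots,\tilde{\sigma}^i\} \subseteq \{1,\ldots,\sigma^i\}$ and enlarging the index set cannot raise a minimum, the $i$-th term of \eqref{prob:mip} is no larger than $\inf_{y \in \mathcal{U}^i} g(\tau_{\mu^{\sigma^1 \cdots \sigma^N}}(y),x^i)$; summing over $i$ and dividing by $N$ yields the displayed inequality (both minima are over nonempty index sets, as $t = \sigma^i$ always qualifies by taking $j = i$).

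The inequality immediately gives that the optimal value of \eqref{prob:mip} is at most that of \eqref{prob:sro}: every $\mu^{\sigma^1 \cdots \sigma^N}$ is feasible for \eqref{prob:sro}, so the $\sigma$-objective of \eqref{prob:mip} is bounded above by $\widehat{J}_{N,\epsilon}(\mu^{\sigma^1 \cdots \sigma^N}) \le \sup_\mu \widehat{J}_{N,\epsilon}(\mu)$, and taking the supremum over $\sigma$ finishes this direction. For the reverse inequality I would invoke Theorem~\ref{thm:characterization} to obtain an optimal $\mu^\star \in \mathcal{M}$ for \eqref{prob:sro} and set $\sigma^i \triangleq \min\{t : \mu^\star_t(y_t) = \textsc{Stop}\ \forall y_t \in \mathcal{U}^i_t\}$. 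By Lemma~\ref{lem:M_is_pruned}, $\mu^{\sigma^1 \cdots \sigma^N}$ is a pruned version of $\mu^\star$, so Lemma~\ref{lem:pruning} gives $\widehat{J}_{N,\epsilon}(\mu^{\sigma^1 \cdots \sigma^N}) \ge \widehat{J}_{N,\epsilon}(\mu^\star)$, the latter being the optimal value of \eqref{prob:sro}. Because pruning preserves the min-periods, the quantity $\tilde{\sigma}^i$ associated with $\mu^{\sigma^1 \cdots \sigma^N}$ now equals $\sigma^i$ exactly, and because the stopping region of $\mu^{\sigma^1 \cdots \sigma^N}$ on period $t$ is exactly $\bigcup_{j:\, \sigma^j = t} \mathcal{U}^j_t$, the two conditions in observation (ii) are now equivalent in both directions. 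Applying Lemma~\ref{lem:obj_sigma} to $\mu^{\sigma^1 \cdots \sigma^N}$ therefore shows that $\widehat{J}_{N,\epsilon}(\mu^{\sigma^1 \cdots \sigma^N})$ equals the $\sigma$-objective of \eqref{prob:mip}; chaining this with the inequality $\widehat{J}_{N,\epsilon}(\mu^{\sigma^1 \cdots \sigma^N}) \ge \widehat{J}_{N,\epsilon}(\mu^\star)$ shows the optimal value of \eqref{prob:mip} is at least that of \eqref{prob:sro}, completing the equality.

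The delicate step is the reverse direction, and the reason a pruning argument is needed there is worth isolating: one cannot simply apply Lemma~\ref{lem:obj_sigma} to $\mu^\star$ itself, since the stopping region of $\mu^\star$ on a period $t$ may strictly contain $\bigcup_{j:\, \sigma^j = t} \mathcal{U}^j_t$. In that case the ``$\Rightarrow$'' implication in observation (ii) fails, and the $\sigma$-objective of \eqref{prob:mip} need not coincide with $\widehat{J}_{N,\epsilon}(\mu^\star)$. Replacing $\mu^\star$ by the pruned policy $\mu^{\sigma^1 \cdots \sigma^N}$, whose stopping regions are by construction exactly unions of uncertainty sets, restores the exact equivalence, while Lemma~\ref{lem:pruning} guarantees that this replacement does not lower the robust objective, so no optimality is lost.
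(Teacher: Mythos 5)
Your proof is correct and takes essentially the same route as the paper's: both directions rest on Theorem~\ref{thm:characterization} combined with Lemmas~\ref{lem:obj_sigma}, \ref{lem:pruning}, and \ref{lem:M_is_pruned}, exploiting that the stopping regions of $\mu^{\sigma^1 \cdots \sigma^N}$ are exactly $\bigcup_{j:\,\sigma^j = t} \mathcal{U}^j_t$ and that pruning preserves the minimal all-stop periods. The only cosmetic difference is that you prune an optimal $\mu^\star \in \mathcal{M}$ directly, whereas the paper prunes an arbitrary $\mu \in \mathcal{M}$ and invokes Theorem~\ref{thm:characterization} only at the end of that direction.
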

The proof of Theorem~\ref{thm:reform}, which is found in Appendix~\ref{appx:characterization}, follows readily from Lemmas~\ref{lem:restrict_to_completion}, \ref{lem:obj_sigma}, \ref{lem:pruning}, and \ref{lem:M_is_pruned}. % and \ref{lem:reformulation_2} and Theorem~\ref{thm:characterization}. 
Stated in words, the above theorem shows, for any given feasible solution $\sigma^1,\ldots,\sigma^N \in \{1,\ldots,T\}$ for  the optimization problem~\eqref{prob:mip}, that the objective value associated with the exercise policy $\mu^{\sigma^1 \cdots \sigma^N}$ in the optimization problem~\eqref{prob:sro} is greater than or equal to the objective value associated with $\sigma^1,\ldots,\sigma^N$ in the optimization problem~\eqref{prob:mip}. Because the above theorem also shows that optimal objective value of \eqref{prob:sro} is equal to the optimal objective value of \eqref{prob:mip}, it follows immediately from  Theorem~\ref{thm:reform} that any optimal solution for \eqref{prob:mip} can be transformed into an optimal solution for \eqref{prob:sro}.

}}

\section{Computation of Optimal {\color{black}Markovian Stopping Rules}} \label{sec:algorithms}

{\color{black} In this section, we use our characterization of optimal Markovian stopping rules from \S\ref{sec:main} to develop exact and heuristic algorithms for solving the robust optimization problem  \eqref{prob:sro}.  In \S\ref{sec:exact}, we use this finite-dimensional reformulation of  \eqref{prob:sro} to design exact algorithms and hardness results for solving the robust optimization problem. In \S\ref{sec:approx}, we propose and analyze an efficient heuristic algorithm for approximately solving the robust optimization problem. We emphasize that the algorithms and analysis in this section are general and do not require any of the assumptions that were made in \S\ref{sec:opt}. %The impact of the proposed algorithms is explored via numerical experiments in the subsequent \S\ref{sec:experiments}. % that robust optimization problem can be solved in polynomial time in problems with two periods and is computationally intractable for general cases for fixed $T \ge 3$.  and we develop an exact algorithm for solving the robust optimization problem which consists of solving a zero-one bilinear program over totally unimodular constraints. 
}

\subsection{Exact Algorithm} \label{sec:exact}

%{\color{red}Check that $t < s$ for all intermediary formulations}

%{\color{red}Check that we don't need to enforce $j \neq i$ anywhere}

{\color{black} In {\color{black}\S\ref{sec:algorithms:prelim}}, we developed a finite-dimensional optimization problem \eqref{prob:mip} with  integer decision variables $\sigma^1,\ldots,\sigma^N \in \{1,\ldots,T\}$ that is equivalent to the infinite-dimensional  robust optimization problem~\eqref{prob:sro}. Specifically, we showed that the optimal objective values of the optimization problems \eqref{prob:mip} and \eqref{prob:sro} are  always equal. Moreover, we readily observe from Theorem~\ref{thm:reform} that any optimal solution for \eqref{prob:mip} can further be transformed into an optimal exercise policy for \eqref{prob:sro} through the transformation described by {\color{black}line}~\eqref{line:equality_for_policies} in \S\ref{sec:main}. 
Harnessing this optimization problem~\eqref{prob:mip}, we now design exact algorithms for solving the robust optimization problem~\eqref{prob:sro}. 

}

We begin our discussion by considering the case of \eqref{prob:mip} when there are two periods. Optimal stopping problems with two periods has been studied in the literature as a testbed for understanding the complexity of solving optimal stopping problems, \eg \cite{glasserman2004number}. We defer the proof of the following result to \S\ref{sec:approx:theory}. % a polynomial-time algorithm when $T=2$. 
{\color{black}
\begin{theorem} \label{thm:twoperiods}
If $T=2$, then \eqref{prob:mip} can be solved in $\mathcal{O}(N^3)$ time. 
\end{theorem}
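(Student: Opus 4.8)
The plan is to show that, for $T=2$, the finite-dimensional problem \eqref{prob:mip} collapses to a single minimum-cut (equivalently, maximum-weight closure) computation on an auxiliary graph with $\mathcal{O}(N)$ nodes and $\mathcal{O}(N^2)$ edges, which a standard maximum-flow routine solves in $\mathcal{O}(N^3)$ time.

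First I would rewrite \eqref{prob:mip} combinatorially. Since $\sigma^i \in \{1,2\}$, let $A \triangleq \{i : \sigma^i = 1\}$ and let $\Gamma_i \triangleq \{j : \mathcal{U}^i_1 \cap \mathcal{U}^j_1 \neq \emptyset\}$ denote the period-$1$ neighborhood of sample path $i$. The choice $j = i$ always certifies feasibility of $t = \sigma^i$ in the inner minimization, so each $i \in A$ contributes exactly $g(1,x^i)$, whereas each $i \notin A$ contributes $g(2,x^i)$, reduced to $\min\{g(1,x^i), g(2,x^i)\}$ precisely when $\Gamma_i \cap A \neq \emptyset$. Writing $c_i \triangleq (g(2,x^i) - g(1,x^i))^+$, the objective of \eqref{prob:mip} (times $N$) becomes
\begin{align*}
F(A) = \sum_{i \in A} g(1,x^i) + \sum_{i \notin A} \Big[ g(2,x^i) - c_i\, \I\{\Gamma_i \cap A \neq \emptyset\} \Big].
\end{align*}

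Second, I would expose the closure structure through algebra. Using $i \in \Gamma_i$ to write $\I\{i \notin A,\ \Gamma_i \cap A \neq \emptyset\} = \I\{i \notin A\} - \prod_{j \in \Gamma_i} \I\{j \notin A\}$, collecting the resulting modular terms, and observing that $g(1,x^i) - g(2,x^i) + c_i = (g(1,x^i) - g(2,x^i))^+$, maximizing $F$ is equivalent, up to an additive constant, to
\begin{align*}
\max_{z \in \{0,1\}^N}\ \sum_{i=1}^N (g(1,x^i) - g(2,x^i))^+ z_i + \sum_{i=1}^N c_i \prod_{j \in \Gamma_i} (1 - z_j),
\end{align*}
where $z_i \triangleq \I\{i \in A\}$. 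Because every $c_i \ge 0$, I would linearize the product rewards with auxiliary binaries $u_i$ constrained by $u_i + z_j \le 1$ for all $j \in \Gamma_i$: at optimality $u_i = \prod_{j \in \Gamma_i}(1 - z_j)$, so the two problems coincide. The linearized problem is a maximum-weight independent-set problem on the bipartite conflict graph with left vertices $\{z_i\}$, right vertices $\{u_i\}$, and an edge $\{z_j, u_i\}$ whenever $j \in \Gamma_i$; all vertex weights are nonnegative.

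Third, I would solve this via minimum cut. Bipartite maximum-weight independent set is totally unimodular and dual to minimum-weight vertex cover, which reduces to an $s$--$r$ minimum cut: add a source $s$ with an edge to each $z_i$ of capacity $(g(1,x^i)-g(2,x^i))^+$, an edge from each $u_i$ to a sink $r$ of capacity $c_i$, and an infinite-capacity edge $z_j \to u_i$ for each $j \in \Gamma_i$. The graph has $2N + 2 = \mathcal{O}(N)$ nodes and $\mathcal{O}(N^2)$ edges (the conflict edges dominate), and the neighborhoods $\Gamma_i$ are precomputable in $\mathcal{O}(N^2 d)$ time as noted in \S\ref{sec:rob}. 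A strongly polynomial maximum-flow computation then runs in $\mathcal{O}(|V||E|) = \mathcal{O}(N^3)$ time; recovering the source side of the cut yields $A$, hence an optimal $(\sigma^1,\ldots,\sigma^N)$ for \eqref{prob:mip}, which Theorem~\ref{thm:reform} converts into an optimal solution of \eqref{prob:sro}.

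I expect the main obstacle to be the coverage term $\I\{\Gamma_i \cap A \neq \emptyset\}$, which is an ``OR'' over all of $i$'s period-$1$ neighbors and is therefore not a pairwise (graph-representable) penalty. The crux of the argument is the algebraic rewriting into the product-of-negations rewards together with the verification that the auxiliary linearization is \emph{exact} precisely because $c_i \ge 0$; everything downstream is then a routine appeal to the integrality of bipartite closure / minimum cut. I would also flag that this is exactly the place where the general maximal-closure relaxation developed in \S\ref{sec:approx} becomes tight, which is why the $T=2$ case admits an exact polynomial-time algorithm rather than merely a heuristic.
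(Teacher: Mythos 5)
Your proof is correct, and it reaches the $\mathcal{O}(N^3)$ bound by a genuinely different route than the paper. The paper never works directly with \eqref{prob:mip} at $T=2$: it first reformulates \eqref{prob:mip} as the zero-one bilinear program \eqref{prob:bp} (Theorem~\ref{thm:bp_main}, which itself rests on Lemmas~\ref{lem:exact:bp_1} and \ref{lem:submodular_reform}), then proves by a chain of variable-fixing and variable-elimination reformulations that the linear-objective heuristic \eqref{prob:h_bar} is \emph{exact} when $T=2$ (Proposition~\ref{prop:equivalence_2}), and finally invokes the general tractability result for that heuristic (Proposition~\ref{prop:approx:tract}), which uses Picard's reduction of monotone closure constraints to maximum flow together with Orlin's $\mathcal{O}(|V||E|)$ algorithm. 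You instead stay with \eqref{prob:mip} itself: the observation that $j=i$ always certifies the period-$\sigma^i$ term in the inner minimum gives the set-function form $F(A)$; the identity $\I\{i \notin A,\ \Gamma_i \cap A \neq \emptyset\} = \I\{i\notin A\} - \prod_{j \in \Gamma_i}\I\{j \notin A\}$ (valid precisely because $i \in \Gamma_i$) exposes product-of-negation rewards with nonnegative coefficients $c_i$, which is exactly what makes the auxiliary linearization $u_i + z_j \le 1$ lossless; and bipartite maximum-weight independent set then reduces to a minimum cut on an $\mathcal{O}(N)$-node, $\mathcal{O}(N^2)$-edge network. Both arguments terminate in essentially the same max-flow computation — your closing remark correctly identifies that $T=2$ is exactly where the paper's closure relaxation becomes tight — but the derivations differ in substance. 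What the paper's route buys is uniformity: \eqref{prob:h_bar} and its closure algorithm are defined for every $T$, so Theorem~\ref{thm:twoperiods} falls out of the same machinery that yields the approximation guarantees in Propositions~\ref{prop:lb_on_h2:T} and \ref{prop:lb_on_h2:N}. What your route buys is a self-contained and more transparent explanation of \emph{why} two periods are easy: exactness reduces to a two-line algebraic identity plus nonnegativity of the $c_i$, rather than the longer reformulation chain in the paper's proof of Proposition~\ref{prop:equivalence_2}, and it requires none of the \eqref{prob:bp}/\eqref{prob:h_bar} apparatus.
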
}
Continuing with our discussion on the theoretical tractability of \eqref{prob:mip}, we next consider  problems with three or more periods. The following negative result shows that the computational tractability of \eqref{prob:mip} in the case of two periods {\color{black} established in Theorem~\ref{thm:twoperiods}} does not generally extend to optimal stopping problems with three or more periods. The proof of the following result{\color{black}, which is found in Appendix~\ref{appx:hard},} consists of a reduction from MIN-2-SAT, which is shown to be strongly NP-hard by \cite{ kohli1994minimum}.  %We establish the computational complexity of this general setting by a reduction from   Before proceeding to discuss algorithmic approaches for \eqref{prob:sro}, we first establish its complexity. 

\begin{theorem} \label{thm:hard}
 \eqref{prob:mip} is strongly NP-hard for any fixed $T \ge 3$. 
\end{theorem}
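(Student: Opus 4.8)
The plan is to establish Theorem~\ref{thm:hard} through a polynomial-time reduction from MIN-2-SAT, the strongly NP-hard problem of finding a truth assignment that minimizes the number of satisfied clauses of a 2-CNF formula. I would first prove hardness for $T=3$ and then handle any fixed $T\ge 4$ by appending $T-3$ ``dummy'' periods whose reward is identically zero and whose uncertainty sets are disjoint from all others, so that stopping in a dummy period is never part of an optimal solution. The conceptual crux is that, since \eqref{prob:mip} is a \emph{maximization}, I want high reward to encode \emph{unsatisfied} clauses: maximizing the number of unsatisfied clauses is equivalent to minimizing the number of satisfied clauses, which is exactly MIN-2-SAT. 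By Theorem~\ref{thm:reform} it suffices to prove that \eqref{prob:mip} is hard.

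Given a 2-CNF formula with variables $v_1,\ldots,v_n$ and clauses $C_1,\ldots,C_m$, I would build an instance of \eqref{prob:mip} with $T=3$ and $N=n+m$ sample paths: one \emph{variable path} $P_k$ per variable and one \emph{clause path} $c_\ell$ per clause. I interpret period $1$ as a ``true'' signal, period $2$ as a ``false'' signal, and period $3$ as a baseline. The rewards are set to $g(1,x^{P_k})=g(2,x^{P_k})=m+1$ and $g(3,x^{P_k})=0$ for variable paths, and to $g(1,x^{c_\ell})=g(2,x^{c_\ell})=0$ and $g(3,x^{c_\ell})=1$ for clause paths (extending $g$ by zero off the sample paths, which is legitimate since \eqref{prob:mip} only evaluates $g$ at the $x^i$). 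The state vectors are placed so that, among distinct paths, the \emph{only} nonempty intersections are $\mathcal{U}^{P_k}_1\cap \mathcal{U}^{c_\ell}_1\neq\emptyset$ exactly when $v_k$ occurs positively in $C_\ell$, and $\mathcal{U}^{P_k}_2\cap \mathcal{U}^{c_\ell}_2\neq\emptyset$ exactly when $\neg v_k$ occurs in $C_\ell$. A truth assignment is encoded by $\sigma^{P_k}=1$ (meaning $v_k$ true) or $\sigma^{P_k}=2$ (meaning $v_k$ false), with clause paths taking $\sigma^{c_\ell}=3$. The inner minimization of \eqref{prob:mip} then realizes clause satisfaction as a logical OR: since $t=\sigma^{c_\ell}=3$ always contributes the baseline $g(3,x^{c_\ell})=1$ via $j=c_\ell$, the contribution of $c_\ell$ equals $0$ if and only if some overlapping neighbor stops at a matching earlier period, i.e. if and only if some literal of $C_\ell$ is satisfied, and equals $1$ otherwise.

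To finish the reduction I would argue, via an exchange argument in the spirit of the pruning lemmas of \S\ref{sec:main}, that some optimal solution of \eqref{prob:mip} has the intended form. First, any clause path may be moved to $\sigma^{c_\ell}=3$ without decreasing the objective: this can only raise its own contribution (from $0$ to at most $1$), and since every variable path has equal reward $m+1$ at periods $1$ and $2$, the presence or absence of a clause path as an early-stopping neighbor never changes a variable path's contribution, while there are no clause--clause intersections to disturb. Second, with the clause paths fixed at period $3$, setting $\sigma^{P_k}=3$ costs the variable path its reward $m+1$ while gaining at most $m$ in clause contributions; as $m+1>m$, this forces $\sigma^{P_k}\in\{1,2\}$, and both choices give the variable path the same reward $m+1$, so variable paths are indifferent and the optimization is driven purely by the clauses. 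Writing $v^\star$ for the optimal value of \eqref{prob:mip} and $s^\star$ for the minimum number of satisfied clauses, one then obtains $N v^\star = n(m+1) + (m - s^\star)$, so $v^\star$ determines $s^\star$. Because all rewards and coordinates are bounded by a polynomial in $n$ and $m$, the reduction preserves \emph{strong} NP-hardness.

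I expect the main obstacle to be the geometric realization of the prescribed intersection pattern: I must choose the states $x^i_t\in\mathcal{X}=\R^d$ so that the hypercubes $\mathcal{U}^i_t$ intersect in \emph{exactly} the specified clause--variable pairs at periods $1$ and $2$ and nowhere else, with \emph{no} spurious intersections that would manufacture false satisfaction signals. This is delicate because a clause whose two literals share a polarity requires a single clause path to meet two variable paths that do not meet each other, and a variable may occur in many clauses. The key simplification is that the periods are independent, since the state $x^i_t$ at each period is a separate vector; it therefore suffices to realize, \emph{separately for each period}, a prescribed bipartite intersection graph by equal-sized hypercubes. I would discharge this using the fact that every graph on $N$ vertices is the intersection graph of equal-sized hypercubes in $\R^d$ for some $d$ polynomial in $N$ (its \emph{cubicity}), with such a representation computable in polynomial time; applying this once per period produces the required sample paths. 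The residual work — verifying the exchange argument rigorously and confirming the absence of spurious intersections — is routine once the representation is fixed.
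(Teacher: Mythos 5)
Your proposal is correct, and at its core it is the same reduction as the paper's: MIN-2-SAT is encoded by making each variable a sample path whose stopping period ($1$ or $2$) records its truth value, making each clause a sample path that stops at period $3$ and collects a strictly larger reward exactly when the clause is unsatisfied, letting uncertainty-set intersections at periods $1$ and $2$ encode positive/negative literal occurrences, and using an exchange argument to show some optimal solution has variables in $\{1,2\}$ and clauses at $3$ (the paper's two intermediary claims do precisely this, with rewards $K,K,0$ on variable paths and $K,K,K+1$ on clause paths playing the role of your $m+1,m+1,0$ and $0,0,1$). Two execution details differ, and both favor your write-up. First, the paper realizes the incidence pattern explicitly, placing variable path $\ell$ at $e_\ell$ and clause path $k$ at $\tfrac{1}{2}\sum_{\ell \in I^{\pm}_k} e_\ell$ in $\R^{L+1}$, whereas you invoke a generic equal-sized-hypercube intersection-graph representation (cubicity-type, or simply one coordinate per non-edge) in polynomially many dimensions; your route is less self-contained but immune to constant-checking errors, and in fact the paper's stated radius $\epsilon = 2/3$ appears to be a typo, since two $\ell_\infty$-balls of radius $2/3$ whose centers are at distance $1$ do intersect ($2\epsilon = 4/3 \ge 1$), so the explicit construction needs $\epsilon \in [1/4, 1/2)$, e.g.\ $\epsilon = 1/3$. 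Second, you explicitly handle every fixed $T \ge 4$ by appending zero-reward dummy periods with pairwise disjoint uncertainty sets, a step the theorem statement requires but the paper's proof, which only constructs $T = 3$, leaves implicit.
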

%\begin{proof}{Proof.}
%The proof of Theorem~\ref{thm:hard} is found in Appendix~\ref{appx:hard}.
%\halmos \end{proof}
%While this complexity result may seem restricting, there is reason to question whether it iw

{\color{black}
Motivated by the above hardness result, we proceed to develop an exact algorithm for solving \eqref{prob:mip} by reformulating it as a zero-one bilinear program {\color{black}over totally unimodular constraints. This reformulation, presented below as \eqref{prob:bp}, is valuable for three primary reasons. First, it is well known that bilinear programs can generally be transformed into mixed-integer linear optimization problems using linearization techniques; see Appendix~\ref{appx:reform} for details. Thus,  for robust optimization problems with small values of $T$ and $N$, the exact reformulation from this section can directly solved by off-the-shelf commercial optimization software. Second,  the exact reformulation~\eqref{prob:bp} will provide the foundation for our heuristic algorithm for the robust optimization problem in the following \S\ref{sec:approx}. Third, the exact reformulation~\eqref{prob:bp} is relatively compact, requiring only $\mathcal{O}(N T^2)$ decision variables and $\mathcal{O}(N T (T + N))$ constraints. In particular, the mild dependence of the size of the exact reformulation on the number of simulated sample paths is attractive in practical settings where the number of simulated sample paths is much larger than the number of time periods. % in order to obtain a good approximation of the stochastic optimal stopping problem.  %and thus provides a useful benchmark for numerically evaluating the suboptimality of the lower bound % perspective due to the low memory usage. % computer memory required to store an instance of \eqref{prob:mip} has the potential to be more %(\emph{iii}) We will use the exact reformulation at the end of this section to show a positive theoretical complexity result on the robust optimization problem, namely, that \eqref{prob:sro} is amenable to polynomial-time constant-factor approximation algorithms. 
}

{\color{black}Our  reformulation of \eqref{prob:mip} as a zero-one bilinear program over totally unimodular constraints~\eqref{prob:bp} %proceeds as follows. We begin by presenting the exact reformulation, after which we will discuss its derivation and provide interpretations for its decision variables and constraints. To present the exact reformulation, we 
requires the following additional notation. For each sample path $i \in \{1,\ldots,N\}$, we define the following set: %  $\min_{j:\; \sigma^j \le \sigma^i} v^{ij}_{\sigma^j}$ be denoted by
\begin{align*}
{\mathcal{K}}^i &\triangleq \left \{ \kappa: \text{ there exists } t  \in \{1,\ldots,T\}\text{ such that } g(t,x^i) = \kappa \right \} \cup \{ 0\}.
\end{align*} 
The above set can be interpreted as the set of distinct values among $g(1,x^i),\ldots,g(T,x^i)$ and $0$. %  $\min_{j: \sigma^j \le \sigma^i} v^{ij}_{\sigma^j}$ can take in the optimization problem~\eqref{prob:mip}. 
 For notational convenience, let the elements of $\mathcal{K}^i$ be indexed in ascending order, $\kappa^i_1 < \cdots < \kappa^i_{|\mathcal{K}^i|}$, and 
 let $L^i_t \in \{1,\ldots,|\mathcal{K}^i| \}$ be defined for each period $t \in \{1,\ldots,T\}$ as the unique index that satisfies the equality $g(t,x^i) = \kappa^i_{L^i_t}$. We readily observe that the quantities $L^i_t$ and $\kappa^i_1,\ldots,\kappa^i_{L^i_t}$ can be efficiently precomputed for each sample path $i$ and period $t$.   With the above notation,  our exact reformulation of \eqref{prob:mip} is stated as follows: %\eqref{prob:sro} as a zero-one bilinear program over totally unimodular constraints: % the following optimization problem:
 \begin{equation} \label{prob:bp} \tag{BP}
\begin{aligned}
 &\underset{ b,w}{\textnormal{maximize}}&& \frac{1}{N} \sum_{i=1}^N \sum_{t=1}^T \sum_{\ell=1}^{L^i_t-1}( \kappa^i_{\ell+1} - \kappa^i_\ell)b^i_t (1 - w^i_{t\ell}) \\
&\textnormal{subject to}&& \begin{aligned}[t]
&w^{i}_{t\ell} \le  w^i_{t+1,\ell} && \textnormal{for all } i \in \{1,\ldots,N\}, \;t \in \{1,\ldots,T-1\},\; \ell \in \{1,\ldots,|\mathcal{K}^i|\}\\
&w^{i}_{t\ell} \le w^i_{t,\ell+1} && \textnormal{for all } i \in \{1,\ldots,N\},\; t \in \{1,\ldots,T\},\;\ell \in \{1,\ldots,|\mathcal{K}^i|-1\} \\
&b^i_t \le w^i_{t+1,1}&&\textnormal{for all }i \in \{1,\ldots,N\},\; t \in \{1,\ldots,T-1\}\\
&b^j_t \le w^i_{t\ell}&& \textnormal{for all } i, j \in \{1,\ldots,N\} \textnormal{ and } t \in \{1,\dots,T\} \textnormal{ such that } g(t,x^i) = \kappa^i_\ell\\
&&& \textnormal{and } \mathcal{U}^i_t \cap \mathcal{U}^j_t \neq \emptyset\\
&b^i_t \in \{0,1\} && \textnormal{for all } i \in \{1,\ldots,N\}, \; t \in \{1,\dots,T\}\\
&w^i_{t\ell} \in \R  && \textnormal{for all } i \in \{1,\ldots,N\}, \; t \in \{1,\ldots,T\}, \; \ell \in \{1,\ldots,| \mathcal{K}^i|\}.
\end{aligned}
\end{aligned}
\end{equation}

%Before establishing the equivalence of the bilinear program \eqref{prob:bp}  and the robust optimization problem~\eqref{prob:sro}, 
Let us reflect on the structure of the above optimization problem. First, we observe that \eqref{prob:bp} is a {bilinear program} because its objective function is the sum of products of decision variables $b^i_t$ and $w^i_{t\ell}$. Second, we observe that the bilinear program~\eqref{prob:bp} is comprised of binary decision variables $b^i_t \in \{0,1\}$ as well as continuous decision variables $w^i_{t \ell} \in \R$. %Hence, \eqref{prob:bp} does not explicitly require that each of the decision variables $w^i_{t\ell}$ be equal to zero or one. 
In particular, we readily observe from the structure of the constraints of \eqref{prob:bp} and from the fact that each  quantity $\kappa^i_{\ell+1} - \kappa^i_\ell$ is strictly positive that there always exists an optimal solution for \eqref{prob:bp} in which each decision variable $w^i_{t\ell}$ is equal to zero or one. For this reason, \eqref{prob:bp} will be henceforth referred to as a {zero-one bilinear program} without any ambiguity in terminology.  Finally, %We also note that \eqref{prob:bp} can be reformulated as a mixed-integer linear optimization problem by introducing auxiliary decision variables; see Remark~\ref{remark:linearization} in Appendix~\ref{appx:additionalproofs} for more details.
%As discussed previously, \eqref{prob:mip} provided a formulation for finding an optimal solution to the robust optimization problem as an optimization problem over integer decision variables, as enabled by the characterization of optimal policies from \S\ref{sec:main}.  However, \eqref{prob:mip} was not computationally viable from the perspective of standard optimization solvers, as its objective function involved decision variables in the subscripts of vectors. To remedy this,  Proposition~\ref{prop:bp_reform} shows that \eqref{prob:mip} can be reformulated as a bilinear program with $\mathcal{O}(NT)$ binary decision variables, $\mathcal{O}(N T^2)$ continuous decision variables, and $\mathcal{O}(N T (N+ T))$ constraints. %and $\mathcal{O}(N^2 Td)$ preprocessing time to construct the necessary constants. 
 we remark that the constraints of  \eqref{prob:bp} are totally unimodular,  which implies that every extreme point of  the polyhedron defined by the constraints of \eqref{prob:bp} is integral  \citep[\S 4.2]{conforti2014integer}. We will utilize the total unimodularity of \eqref{prob:bp}  in  \S\ref{sec:approx} when designing our heuristic algorithm for the robust optimization problem. % where we will develop a polynomial-time heuristic for the robust optimization problem~\eqref{prob:sro}. 

We next show that \eqref{prob:bp} is indeed equivalent to \eqref{prob:mip}. % and we provide interpretations for the decision variables and constraints of \eqref{prob:bp}. 
In the following Theorem~\ref{thm:bp_main}, we establish this equivalence and show that any feasible solution for \eqref{prob:bp} can be transformed into a feasible solution for \eqref{prob:mip} with the same or greater objective value. %~\ref{thm:bp_main},
The proof of Theorem~\ref{thm:bp_main}, as well as the proofs of the subsequent lemmas in \S\ref{sec:exact}, are found in Appendix~\ref{appx:exact}. %A sketch of the proof  of the following proposition is found after we state the proposition, while the complete proof is found in Appendix~\ref{}. % and is based on the observation that \eqref{prob:mip} can be reformulated as a sum of supermodular functions multiplied by binary variables. 
\begin{comment}
\begin{theorem} \label{thm:bp_main}%\vspace{-1em}
%\begin{enumerate}[label={(\alph*)},ref=\thetheorem(\alph*)]
%\item
 The optimal objective value of \eqref{prob:mip} is equal to the optimal objective value of \eqref{prob:bp}. Furthermore, if
%\item
  $b,w$ is an optimal solution for \eqref{prob:bp} and if $\sigma^i \triangleq \min \{\min \{t \in \{1,\ldots,T\}:  b^i_t = 1 \}, T \}$ for each $i \in \{1,\ldots,N\}$, then $\sigma^1,\ldots,\sigma^N \in \{1,\ldots,T\}$ is an optimal solution for \eqref{prob:mip}. 
%\end{enumerate}
\end{theorem}
\end{comment}
\begin{theorem} \label{thm:bp_main}%\vspace{-1em}
%\begin{enumerate}[label={(\alph*)},ref=\thetheorem(\alph*)]
%\item
 The optimal objective value of \eqref{prob:mip} is equal to the optimal objective value of \eqref{prob:bp}. Furthermore, if
%\item
  $b,w$ is a feasible solution for \eqref{prob:bp} and if $\sigma^i \triangleq \min \{\min \{t \in \{1,\ldots,T\}:  b^i_t = 1 \}, T \}$ for each $i \in \{1,\ldots,N\}$, then %the objective value associated with $\sigma^1,\ldots,\sigma^N \in \{1,\ldots,T\}$ is greater than or equal to the objective value associated with $b,w$; that is,
  \begin{align*}
&\frac{1}{N} \sum_{i=1}^N \min_{t \in \{1,\ldots,\sigma^i \}} \left \{g(t,x^i): \textnormal{ there exists } j \textnormal{ such that } \mathcal{U}^i_t \cap \mathcal{U}^j_t \neq \emptyset \textnormal{ and } \sigma^j = t\right \} \\ %\qquad \qquad \qquad \qquad \qquad& \\
 &\ge  \frac{1}{N} \sum_{i=1}^N \sum_{t=1}^T \sum_{\ell=1}^{L^i_t-1}( \kappa^i_{\ell+1} - \kappa^i_\ell)b^i_t (1 - w^i_{t\ell}). &
  \end{align*}
  \end{theorem}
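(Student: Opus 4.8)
The plan is to establish the equality of optimal values by proving the two inequalities separately, obtaining the displayed inequality as the substance of one of them. Both objective functions decompose additively across the sample paths $i \in \{1,\ldots,N\}$, so I would argue path by path. The one algebraic fact I would use repeatedly is that $\kappa^i_1 = 0$ (since $g \ge 0$ places $0$ at the bottom of $\mathcal{K}^i$), so that for any index $m \le L^i_t$ the telescoping identity $\sum_{\ell=1}^{m-1}(\kappa^i_{\ell+1} - \kappa^i_\ell) = \kappa^i_m$ holds; this is what converts the weighted increments in the objective of \eqref{prob:bp} back into reward values.

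To prove the displayed inequality (which also shows that the optimal value of~\eqref{prob:mip} is at least that of~\eqref{prob:bp}), I would fix a feasible $(b,w)$ for \eqref{prob:bp} and the induced $\sigma^i \triangleq \min\{\min\{t : b^i_t = 1\}, T\}$, and bound the per-path objective of \eqref{prob:bp} by the per-path value of \eqref{prob:mip}. First, the monotonicity constraints make each $w^i_{t\cdot}$ nondecreasing in both indices, and the constraint $b^i_t \le w^i_{t+1,1}$ then forces $w^i_{t\ell} \ge 1$ for all $t > \sigma^i$ and all $\ell$; consequently every period $t > \sigma^i$ contributes a nonpositive amount to the objective, while periods $t < \sigma^i$ contribute nothing because $b^i_t = 0$. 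This reduces the bound to the single $t = \sigma^i$ term. Second, letting $t^\star \le \sigma^i$ be an active period attaining the per-path value $V^i \triangleq \min\{g(t,x^i) : t \le \sigma^i \text{ and } \exists\, j \text{ with } \sigma^j = t,\ \mathcal{U}^i_t \cap \mathcal{U}^j_t \neq \emptyset\}$, the coupling constraint $b^j_t \le w^i_{t\ell}$ applied at $t = t^\star$ and $\ell = L^i_{t^\star}$ (for the witnessing $j$, using that $\sigma^j = t^\star$ forces $b^j_{t^\star} = 1$) yields $w^i_{t^\star, L^i_{t^\star}} \ge 1$; propagating this through both monotonicities gives $w^i_{\sigma^i, \ell} \ge 1$ for every $\ell \ge L^i_{t^\star}$. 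Substituting into the $t = \sigma^i$ term and telescoping caps it at $\kappa^i_{L^i_{t^\star}} = g(t^\star,x^i) = V^i$, which is exactly the per-path value of \eqref{prob:mip}. Summing over $i$ yields the displayed inequality.

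For the reverse inequality, that the optimal value of~\eqref{prob:bp} is at least that of~\eqref{prob:mip}, I would take any feasible $\sigma^1,\ldots,\sigma^N$ for \eqref{prob:mip} and exhibit a matching feasible point of \eqref{prob:bp}. I would set $b^i_t \triangleq \mathds{1}[t = \sigma^i]$, so that the event $b^j_t = 1$ coincides exactly with the activation condition $\sigma^j = t$ of \eqref{prob:mip}, and take $w^i_{t\ell}$ to be the minimal feasible choice: equal to $1$ precisely when either $\sigma^i < t$ or some period $s \le t$ that is active for $i$ (i.e.\ matched by some $\sigma^j = s$ with $\mathcal{U}^i_s \cap \mathcal{U}^j_s \neq \emptyset$) satisfies $g(s,x^i) \le \kappa^i_\ell$, and equal to $0$ otherwise. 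Feasibility of this point is a direct check of the four constraint families against the definition. Computing the objective, only the $t = \sigma^i$ term survives; on that term the surviving variables reduce to $w^i_{\sigma^i,\ell} = \mathds{1}[\ell \ge L^\star]$, where $\kappa^i_{L^\star} = V^i$, so the telescoping identity collapses the term to $\kappa^i_{L^\star} = V^i$. Hence the objective of \eqref{prob:bp} at this point equals the objective of \eqref{prob:mip} at $\sigma$; choosing $\sigma$ optimal gives the reverse inequality, and combining the two halves establishes both the equality of optimal values and the statement about feasible solutions.

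The step I expect to be the main obstacle is the bookkeeping in the first inequality, namely showing the $t = \sigma^i$ term is bounded by $V^i$ rather than merely by $g(\sigma^i,x^i)$. This needs both the propagation of the single activated entry to all indices $\ell \ge L^i_{t^\star}$ through the two monotonicities, and control of the indices $\ell < L^i_{t^\star}$, for which I would use $w^i_{\sigma^i,\ell} \ge 0$; this nonnegativity follows from $b^i_{\sigma^i-1} \le w^i_{\sigma^i,1}$ and monotonicity when $\sigma^i \ge 2$, and is the one place in the boundary case $\sigma^i = 1$ where I would appeal to nonnegativity of the $w$ variables in \eqref{prob:bp}. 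A secondary point is verifying that the $w$ built in the second half is genuinely minimal, so that no constraint forces a larger value that would shrink the objective strictly below $V^i$.
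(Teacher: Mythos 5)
Your proposal is correct in substance and establishes both assertions of the theorem, but it is organized differently from the paper's proof. The paper factors the argument through an intermediate problem: Lemma~\ref{lem:exact:bp_1} relates \eqref{prob:mip} to the nonlinear binary program \eqref{prob:bp_1}, whose objective is $\frac{1}{N}\sum_i \psi^i(b)$, and Lemma~\ref{lem:submodular_reform} shows that each $\psi^i(b)$ equals the optimal value of the inner linear program \eqref{line:partial_reform} in the variables $w^i$; the theorem is then quoted as an immediate consequence of the two lemmas. You fuse these steps and argue directly between \eqref{prob:mip} and \eqref{prob:bp}: one direction is a constraint-propagation bound valid for \emph{every} feasible $w$ (monotonicity plus $b^i_t \le w^i_{t+1,1}$ neutralizes all periods $t \neq \sigma^i$, and the coupling constraint plus telescoping caps the $t=\sigma^i$ term at the per-path value of \eqref{prob:mip}), while the other direction is your explicit $0/1$ construction of $w$ from $\sigma$, which is essentially the closed form the paper writes in \eqref{line:closed_form_w}. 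The underlying mechanics (the telescoping identity $\sum_{\ell<m}(\kappa^i_{\ell+1}-\kappa^i_\ell)=\kappa^i_m$ with $\kappa^i_1=0$, the correspondence $b^i_t=\mathbb{I}\{t=\sigma^i\}$) coincide; what your organization buys is that the displayed inequality for an \emph{arbitrary} feasible $(b,w)$ is proved directly, rather than through the chain ``objective at $(b,w)$ $\le$ $\max_w$ objective $=\psi^i(b)$ $\le$ per-path value of \eqref{prob:mip}.''

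Two points need attention. First, your claim that ``$\sigma^j = t^\star$ forces $b^j_{t^\star}=1$'' is false when $t^\star = T$: under the convention $\sigma^j = \min\{\min\{t: b^j_t=1\},T\}$, a path with $b^j \equiv 0$ has $\sigma^j = T$ but $b^j_T=0$, and then the coupling constraint gives you nothing. The fix is immediate: $t^\star = T$ forces $\sigma^i = T$, and you only need the bound in the sub-case $b^i_{\sigma^i}=1$, so you may take $j=i$ itself as the witness; the paper sidesteps the same wrinkle in Lemma~\ref{lem:exact:bp_1} by first replacing $b$ with $\bar{b}$ satisfying $\bar{b}^i_T=1$. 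Second, your appeal to ``nonnegativity of the $w$ variables in \eqref{prob:bp}'' when $\sigma^i=1$: as printed, \eqref{prob:bp} declares $w^i_{t\ell}\in\R$ with no lower bound, so this is not a stated constraint. You are right, however, that it is indispensable: without it \eqref{prob:bp} is in general unbounded (whenever $g(1,x^i)>0$ for some $i$, take $b^i_1=1$ and send $w^i_{1\ell}\to-\infty$ for $\ell<L^i_1$, which violates no constraint), so the theorem itself would fail, and the paper's own proof of Lemma~\ref{lem:submodular_reform} implicitly assumes the same thing when it asserts that an optimal $w^i$ exists and is $\{0,1\}$-valued. On this point your proof therefore stands on exactly the same footing as the paper's; you have merely made the dependence explicit, and you correctly isolate it to the single case $\sigma^i=1$, where it cannot be derived from the constraints $b^i_t \le w^i_{t+1,1}$.
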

%{\color{red}For the purpose of discussing the heuristic later on, it might be useful to revise the above theorem to show that the objective value associated with $\sigma^1,\ldots,\sigma^N$ is greater than or equal to the objective value associated with $b,w$.}
%\begin{proof}{Proof.}
%The proof of Proposition~\ref{prop:bp_main} is found in Appendix~\ref{appx:exact}. 
%\halmos \end{proof}
The above theorem shows for each sample path $i$ that the decision variables $b^i_1,\ldots,b^i_T \in \{0,1\}$ in \eqref{prob:bp} can be interpreted as an encoding of an integer $\sigma^i \in \{1,\ldots,T\}$ for \eqref{prob:mip}. In other words, given any feasible solution $b,w$ for \eqref{prob:bp}, Theorem~\ref{thm:bp_main} shows how to construct a feasible solution $\sigma^1,\ldots,\sigma^N \in \{1,\ldots,T\}$ for \eqref{prob:mip} such that the objective value associated with $\sigma^1,\ldots,\sigma^N$ in the optimization problem~\eqref{prob:mip} is greater than or equal to the objective value associated with $b,w$ in the optimization problem~\eqref{prob:bp}. Because the above theorem also shows that optimal objective value of \eqref{prob:mip} is equal to the optimal objective value of \eqref{prob:bp}, it follows immediately from  Theorem~\ref{thm:bp_main} that any optimal solution for \eqref{prob:bp} can be transformed into an optimal solution for \eqref{prob:mip}. %Theorem~\ref{thm:bp_main} ultimately confirms that solving the bilinear program~\eqref{prob:bp} is equivalent to solving~\eqref{prob:mip}.  % to an exercise policy in the set $\mathcal{M}$ which is optimal for the robust optimization problem. 

% Proposition~\ref{prop:bp_main:w} characterizes the values of the decision variables $w^i_{s\ell}$ at optimality. More precisely,   Proposition~\ref{prop:bp_main:w} says that if we define $\sigma^i \triangleq \sum_{t=1}^T t b^i_t$ for each $i \in \{1,\ldots,N\}$, then each optimal decision variable $w^i_{s \ell}$ satisfies the equality $w^i_{s\ell} = 1$ holds if and only if either $\sigma^i < s$ or if there exists $j \in \{1,\ldots,N\}$ such that $\mathcal{U}^i_t \cap \mathcal{U}^j_t \neq \emptyset$, $t = \sigma^j$, and $g(t,x^i) \le \kappa^i_\ell$. 
 
We conclude \S\ref{sec:exact} by providing intuition for the role of the decision variables $w^i_{t \ell}$ in the optimization problem \eqref{prob:bp}. To do this,  we outline the two key steps of our derivation of \eqref{prob:bp}. % of Theorem~\ref{thm:bp_main}. 
Indeed, recall that the optimization problem~\eqref{prob:mip} is comprised of integer decision variables $\sigma^1,\ldots,\sigma^N \in \{1,\ldots,T \}$. 
%For that finite-dimensional optimization problem, 
The first step in our derivation of \eqref{prob:bp} is to introduce binary decision variables $b^i_1,\ldots,b^i_T\in  \{0,1\}$ for each sample path $i$ to represent each integer decision variable $\sigma^i$. Specifically, consider the following nonlinear binary optimization problem:
\begin{equation} \label{prob:bp_1} \tag{BP-1}
\begin{aligned}
&\underset{b}{\textnormal{maximize}}&& \frac{1}{N} \sum_{i=1}^N   \psi^i \left(b \right) \\
&\textnormal{subject to}&&  b^i_t \in \{0,1\} \quad \textnormal{for all } i \in \{1,\ldots,N\} \text{ and } t \in \{1,\ldots,T\},
\end{aligned}
\end{equation}
where the function $\psi^i \left(b \right)$ is defined for each sample path $ i \in \{1,\ldots,N\}$  as 
\begin{align*}
\psi^i \left(b \right) &\triangleq \sum_{t=1}^T  b^i_t \left( \prod_{s=1}^{t-1}(1 - b^i_s)  \right) \min_{s \in \{1,\ldots,t \}} \left \{g(s,x^i): \textnormal{there exists } j \textnormal{ such that } \mathcal{U}^i_s \cap \mathcal{U}^j_s \neq \emptyset \textnormal{ and } b^j_s = 1\right \}.
\end{align*}
To make sense of the function $\psi^i(b)$, we remark that each quantity $ b^i_t \prod_{s=1}^{t-1}(1 - b^i_s) $ will be equal to one if and only if $t$ is the earliest period for sample path $i$ that satisfies the equality  $b^i_t = 1$. 
\begin{comment}
Hence, we observe that $\psi^i(b)$ is equal to
\begin{align*}
 \begin{cases}
\min \limits_{s \in \{1,\ldots,t \}} \left \{g(s,x^i): \textnormal{there exists } j \textnormal{ such that } \mathcal{U}^i_s \cap \mathcal{U}^j_s \neq \emptyset \textnormal{ and } b^j_s = 1\right \},&\textnormal{if there exists } t \in \{1,\ldots,T\}\\
&\textnormal{such that } b^i_1=\cdots=b^i_{t-1} = 0 \\
&\textnormal{and } b^i_t = 1,\\
0,&\textnormal{otherwise}. 
\end{cases}
\end{align*}
\end{comment}
 With this observation, the first step in our derivation  of \eqref{prob:bp} is comprised of the following lemma:
\begin{lemma} \label{lem:exact:bp_1}
 The optimal objective value of \eqref{prob:mip} is equal to the optimal objective value of \eqref{prob:bp_1}. Furthermore, if
%\item
  $b$ is a feasible solution for \eqref{prob:bp_1} and if $\sigma^i \triangleq \min \{\min \{t \in \{1,\ldots,T\}:  b^i_t = 1 \}, T \}$ for each $i \in \{1,\ldots,N\}$, then
  \begin{align*}
&\frac{1}{N} \sum_{i=1}^N \min_{t \in \{1,\ldots,\sigma^i \}} \left \{g(t,x^i): \textnormal{ there exists } j \textnormal{ such that } \mathcal{U}^i_t \cap \mathcal{U}^j_t \neq \emptyset \textnormal{ and } \sigma^j = t\right \} \ge  \frac{1}{N} \sum_{i=1}^N\psi^i(b). 
  \end{align*}
\end{lemma}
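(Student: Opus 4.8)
The plan is to prove the two parts of Lemma~\ref{lem:exact:bp_1} together, establishing the equality of optimal values through two opposite inequalities while obtaining the ``furthermore'' claim as a byproduct of one of them. The workhorse throughout is the observation recorded just before the lemma: for each sample path $i$ the coefficient $b^i_t \prod_{s=1}^{t-1}(1-b^i_s)$ equals one exactly when $t$ is the earliest period with $b^i_t = 1$ and is zero otherwise. Writing $t^i_\star \triangleq \min\{t : b^i_t = 1\}$ (equal to $\infty$ when no such $t$ exists), this collapses $\psi^i(b)$ to a single surviving term: it equals $0$ when $b^i$ is identically zero, and otherwise equals $\min_{s \in \{1,\ldots,t^i_\star\}}\{g(s,x^i): \exists\, j \textnormal{ with } \mathcal{U}^i_s \cap \mathcal{U}^j_s \neq \emptyset \textnormal{ and } b^j_s = 1\}$. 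Note that with $\sigma^i \triangleq \min\{t^i_\star, T\}$ as in the lemma, $\sigma^i = t^i_\star$ whenever $b^i$ is not identically zero, and $\sigma^i = T$ otherwise.

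For the inequality ``optimal value of \eqref{prob:mip} $\le$ optimal value of \eqref{prob:bp_1},'' I would take any feasible $\sigma^1,\ldots,\sigma^N$ for \eqref{prob:mip} and use the one-hot encoding $b^i_t \triangleq 1$ if $t = \sigma^i$ and $b^i_t \triangleq 0$ otherwise. For this $b$ the earliest one is exactly $\sigma^i$, and the selection condition ``$b^j_s = 1$'' becomes literally equivalent to ``$\sigma^j = s$.'' Substituting into the reduced form of $\psi^i$ above shows that $\psi^i(b)$ equals, term by term, the $i$-th summand of the \eqref{prob:mip} objective, so the two objective values coincide; taking the supremum over $\sigma$ gives the inequality.

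The heart of the argument is the reverse direction, which also yields the ``furthermore'' claim. Given any feasible $b$ for \eqref{prob:bp_1} and the induced $\sigma^i = \min\{t^i_\star, T\}$, I would show $\phi^i(\sigma) \ge \psi^i(b)$ for every $i$, where $\phi^i(\sigma)$ denotes the $i$-th summand of the \eqref{prob:mip} objective. Both quantities are minima of $g(\cdot,x^i)$ over index sets inside $\{1,\ldots,\sigma^i\}$: let $S_\psi$ be the set of $t$ admitting some $j$ with $\mathcal{U}^i_t \cap \mathcal{U}^j_t \neq \emptyset$ and $b^j_t = 1$, and let $S_\phi$ be the set of $t$ admitting some $j$ with $\mathcal{U}^i_t \cap \mathcal{U}^j_t \neq \emptyset$ and $\sigma^j = t$. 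Since $g \ge 0$, the case $b^i \equiv 0$ is immediate, as then $\psi^i(b) = 0 \le \phi^i(\sigma)$. Otherwise it suffices to show $S_\phi \subseteq S_\psi$, because a minimum over a smaller set is at least as large. Taking $t \in S_\phi$ with witness $j$ satisfying $\sigma^j = t$, I would split on whether $t < T$ or $t = T$: if $t < T$, then $\sigma^j = t$ forces $b^j_t = 1$, so the same $j$ witnesses $t \in S_\psi$; if $t = T$, then $t \le \sigma^i \le T$ forces $\sigma^i = T$, and since $b^i \not\equiv 0$ this gives $t^i_\star = T$, i.e.\ $b^i_T = 1$, so $i$ itself witnesses $T \in S_\psi$ via $\mathcal{U}^i_T \cap \mathcal{U}^i_T \neq \emptyset$. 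This proves $\phi^i(\sigma) \ge \psi^i(b)$, which is exactly the ``furthermore'' inequality after summing over $i$ and dividing by $N$; taking the supremum over $b$ then gives ``optimal value of \eqref{prob:bp_1} $\le$ optimal value of \eqref{prob:mip},'' completing the equality.

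The main obstacle I anticipate is the mismatch between the two selection conditions ``$b^j_s = 1$'' (in \eqref{prob:bp_1}) and ``$\sigma^j = s$'' (in \eqref{prob:mip}), which fail to coincide precisely at the boundary period $t = T$, since there $\sigma^j = T$ can arise from $b^j$ being identically zero rather than from $b^j_T = 1$. The delicate point is to confirm this never deletes the minimizing index from $S_\psi$, which the \emph{self-witness} $j = i$ argument resolves using that each uncertainty set $\mathcal{U}^i_t$ is a nonempty hypercube, so $\mathcal{U}^i_t \cap \mathcal{U}^i_t \neq \emptyset$ always holds. The same self-witness observation (applied with $t = \sigma^i$) guarantees that every inner minimum in \eqref{prob:mip} and \eqref{prob:bp_1} is over a nonempty index set and hence finite, which is what legitimizes reducing the comparison to the inclusion $S_\phi \subseteq S_\psi$.
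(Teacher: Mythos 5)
Your proof is correct, and its first half (the one-hot encoding $b^i_t = \mathbb{I}\{\sigma^i = t\}$, under which the conditions ``$b^j_s = 1$'' and ``$\sigma^j = s$'' coincide term by term) is exactly the paper's first step. Where you diverge is the reverse direction. The paper pads the given $b$ into an auxiliary solution $\bar{b}$ with $\bar{b}^i_T = 1$ for every $i$, checks $\psi^i(\bar{b}) \ge \psi^i(b)$ via nonnegativity of $g$, and then appeals to ``identical reasoning'' as in the forward step to identify $\frac{1}{N}\sum_i \psi^i(\bar{b})$ with the \eqref{prob:mip} objective at the induced $\sigma$. You instead keep $b$ fixed and prove the per-path inequality directly through the inclusion $S_\phi \subseteq S_\psi$ of the inner feasible index sets, with the case split $t < T$ versus $t = T$ and the self-witness $j = i$ resolving the boundary period. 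The two mechanisms encode the same idea---paths with $b^i \equiv 0$ are mapped to $\sigma^i = T$ and cost nothing because rewards are nonnegative---but your inclusion argument is tighter at the one loose joint in the paper's write-up: when $b$ has several ones on a single path, ``$\bar{b}^j_s = 1$'' is strictly weaker than ``$\sigma^j = s$'', so the forward-step reasoning does not transfer verbatim, and the paper's asserted equality between $\sum_i \psi^i(\bar{b})$ and the \eqref{prob:mip} objective at $\sigma$ holds in general only as the inequality $\psi^i(\bar{b}) \le \phi^i(\sigma)$ (which is all the lemma needs). Your argument claims only that inequality and proves it explicitly, so nothing is lost, and the ``furthermore'' statement falls out of the same computation rather than by analogy to the one-hot case.
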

The above lemma establishes the equivalence of \eqref{prob:mip} of \eqref{prob:bp_1}, and it shows that any feasible solution for \eqref{prob:bp_1} can be transformed into a feasible solution for \eqref{prob:mip} with the same or greater objective value.  In the second and final step in our derivation of \eqref{prob:bp}, denoted below by Lemma~\ref{lem:submodular_reform}, we show that each function $\psi^i(b)$ can be represented as the optimal objective function of a linear optimization problem.  % over decision variables $w^i_{s1},\ldots,w^i_{s | \mathcal{K}^i|}$. This second step is formalized by the following lemma:
 \begin{lemma} \label{lem:submodular_reform}
 For each sample path $i \in \{1,\ldots,N\}$, $\psi^i(b)$ is equal to the optimal objective value of the following linear optimization problem:
   \begin{equation} \label{line:partial_reform}
 \begin{aligned}
  &\underset{w^i}{\textnormal{maximize}}&& \sum_{t=1}^T  \sum_{\ell=1}^{L^i_t-1}( \kappa^i_{\ell+1} - \kappa^i_\ell) b^i_t (1 - w^i_{t\ell}) \\
  &\textnormal{subject to}&& \begin{aligned}[t]
  &w^{i}_{t \ell} \le  w^i_{t+1,\ell} && \textnormal{for all } t \in \{1,\ldots,T-1\},\; \ell \in \{1,\ldots,|\mathcal{K}^i|\}\\
&w^{i}_{t\ell} \le w^i_{t,\ell+1} && \textnormal{for all } t \in \{1,\ldots,T\},\; \ell \in \{1,\ldots,|\mathcal{K}^i|-1\} \\
&b^j_t \le w^i_{t\ell}&& \textnormal{for all } j \in \{1,\ldots,N\} \textnormal{ and } t \in \{1,\dots,T\} \\
&&& \textnormal{such that }g(t,x^i)  = \kappa^i_\ell \textnormal{ and }  \mathcal{U}^i_t \cap \mathcal{U}^j_t \neq \emptyset\\
& b^i_t \le w^i_{t+1,1}  && \textnormal{for all }  \; t \in \{1,\ldots,T-1\}\\
&w^i_{t \ell} \in \R&& \textnormal{for all }\; t \in \{1,\ldots,T\}, \; \ell \in \{1,\ldots,|\mathcal{K}^i| \}.
  \end{aligned}
  \end{aligned} 
 \end{equation}
 \end{lemma}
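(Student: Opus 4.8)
The plan is to prove the lemma by exhibiting an explicit optimizer of the linear program in \eqref{line:partial_reform} and evaluating its objective, rather than by appealing to duality. The starting observation is that every objective coefficient $(\kappa^i_{\ell+1}-\kappa^i_\ell)b^i_t$ is nonnegative, so the objective is coordinatewise nonincreasing in the variables $w^i_{t\ell}$; meanwhile every constraint is of ``lower-bound'' or monotonicity type (each is $b^j_t\le w^i_{t\ell}$, $b^i_t\le w^i_{t+1,1}$, $w^i_{t\ell}\le w^i_{t+1,\ell}$, or $w^i_{t\ell}\le w^i_{t,\ell+1}$). Hence the feasible region admits a pointwise-smallest feasible point $w^{i,\star}$, and since the objective is coordinatewise nonincreasing, this $w^{i,\star}$ is optimal. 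The whole proof then reduces to (a) writing $w^{i,\star}$ in closed form and (b) checking that its objective value equals $\psi^i(b)$.

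For step (a) I would propagate the two families of hard lower bounds through the monotonicity constraints. Taking $j=i$ in the third family gives $b^i_t\le w^i_{t,L^i_t}$, and more generally the third family forces $w^i_{t,L^i_t}\ge 1$ exactly when period $t$ is \emph{active}, meaning there exists $j$ with $\mathcal{U}^i_t\cap\mathcal{U}^j_t\neq\emptyset$ and $b^j_t=1$; the fourth family forces $w^i_{t+1,1}\ge 1$ whenever $b^i_t=1$. Because $w$ is nondecreasing in both $t$ and $\ell$, a lower bound of $1$ placed at $(t_0,\ell_0)$ propagates to every $(t,\ell)$ with $t\ge t_0$ and $\ell\ge\ell_0$. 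I therefore claim that the minimal feasible solution is the $\{0,1\}$-valued point with $w^{i,\star}_{t\ell}=1$ precisely when either (A) some active period $t_0\le t$ satisfies $L^i_{t_0}\le \ell$, or (B) some period $t_0<t$ satisfies $b^i_{t_0}=1$. I would verify feasibility directly (monotonicity is immediate because the ``$1$''-region is up-closed, and each hard lower bound is met by construction) and verify that $w^{i,\star}$ is dominated by every feasible $w$ by running the same propagation argument in reverse.

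For step (b) I would evaluate the objective at $w^{i,\star}$ using a telescoping identity: since $\kappa^i_1=0$ and $\sum_{\ell=1}^{L^i_t-1}(\kappa^i_{\ell+1}-\kappa^i_\ell)=g(t,x^i)$, for any fixed $t$ with $b^i_t=1$ the inner sum $\sum_{\ell=1}^{L^i_t-1}(\kappa^i_{\ell+1}-\kappa^i_\ell)(1-w^{i,\star}_{t\ell})$ collapses to $\kappa^i_{p_t}$, where $p_t$ is the smallest index $\ell$ at which $w^{i,\star}_{t\ell}=1$. Condition (B) is then decisive: for every period $t$ strictly after the first stopping period $t^\star\triangleq\min\{t:b^i_t=1\}$, condition (B) already holds at $\ell=1$, so $p_t=1$ and the term contributes $\kappa^i_1=0$; periods before $t^\star$ contribute $0$ because $b^i_t=0$. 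Thus only $t^\star$ survives, and there condition (B) is vacuous, so $p_{t^\star}$ is governed solely by condition (A), giving $p_{t^\star}=\min\{L^i_{t_0}:t_0\le t^\star,\ t_0\text{ active}\}$. Since $\kappa^i_{L^i_{t_0}}=g(t_0,x^i)$ and the $\kappa^i_\ell$ increase, this equals $\min\{g(t_0,x^i):t_0\le t^\star,\ t_0\text{ active}\}=\psi^i(b)$; the degenerate case in which no $b^i_t$ equals $1$ makes both the objective and $\psi^i(b)$ equal to $0$.

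The step I expect to be the main obstacle is the bookkeeping in step (b): establishing that the two lower-bound families interact so that only the first active period up to $t^\star$ controls the telescoping sum while all later periods are annihilated by condition (B). A secondary technical point that must be settled before invoking the telescoping computation is boundedness and attainment: one must justify that it is without loss to restrict the $w^i_{t\ell}$ to be nonnegative (equivalently, to lie in $[0,1]$), so that the pointwise-smallest feasible point $w^{i,\star}$ genuinely exists and attains the optimum.
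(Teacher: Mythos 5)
Your plan is, in substance, the paper's own proof: the paper also proceeds by exhibiting a $\{0,1\}$-valued optimizer of \eqref{line:partial_reform} whose closed form (displayed as \eqref{line:closed_form_w} in Appendix~\ref{appx:exact}) is exactly your conditions (A) and (B), and it then evaluates the objective by the same telescoping computation, so that only the first period $t^\star$ with $b^i_{t^\star}=1$ survives and contributes $\kappa^i_{p_{t^\star}}=\psi^i(b)$, every later period being annihilated because its $w$-row is identically one. Your step (b) bookkeeping, including the observation that taking $j=i$ makes period $t^\star$ active so the relevant minimum is over a nonempty set, is correct and matches the paper line by line.

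The genuine problem is the point you called secondary: it cannot be "settled," because restricting the $w^i_{t\ell}$ to be nonnegative is \emph{not} without loss of generality for \eqref{line:partial_reform} as written. The variables $w^i_{1,\ell}$ with $\ell < L^i_1$ appear in no lower-bound constraint: the third constraint family touches $t=1$ only at $\ell = L^i_1$ (since $g(1,x^i)=\kappa^i_{L^i_1}$), the fourth family only bounds $w^i_{t,1}$ for $t\ge 2$, and the monotonicity constraints place these variables only on the smaller side of each inequality. Consequently, if $b^i_1=1$ and $g(1,x^i)>0$ (equivalently $L^i_1\ge 2$), the point with $w^i_{1,\ell}=-M$ for $\ell<L^i_1$ and all other coordinates equal to $1$ is feasible, and its objective value is $(1+M)\,g(1,x^i)\to\infty$: the LP is unbounded, no pointwise-smallest feasible point exists, and the lemma as literally stated fails in this case, so your proof cannot be completed without amending the statement. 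You should know that this gap is inherited from the paper rather than introduced by you: the paper's proof silently assumes attainment by asserting that an optimal solution of the form \eqref{line:closed_form_w} "always exists," and the same unbounded ray afflicts \eqref{prob:bp} itself. The repair is exactly the constraint you anticipated: adjoin $w^i_{t\ell}\ge 0$ (it suffices to impose $w^i_{1,1}\ge 0$, which the paper's MILP linearization in Appendix~\ref{appx:reform} effectively does via $w^i_{1,1}=0$, and which then propagates to all coordinates through the monotonicity and fourth-family constraints); with that constraint added, your lattice argument, the closed-form minimal point, and the telescoping evaluation all go through verbatim.
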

Lemma~\ref{lem:submodular_reform} thus shows that the purpose of the decision variables $w^i_s$ and the constraints of \eqref{prob:bp} is to encode the value of $\psi^i(b)$ for each sample path $i \in \{1,\ldots,N\}$. Finally, we readily observe that  the combination of Lemmas~\ref{lem:exact:bp_1} and \ref{lem:submodular_reform} immediately yields the proof of Theorem~\ref{thm:bp_main}.

 }}

\subsection{{\color{black}Heuristic Algorithm}} \label{sec:approx}
{\color{black}
In \S\ref{sec:exact}, we developed an exact reformulation of the robust optimization problem \eqref{prob:sro} as a zero-one bilinear program over totally unimodular constraints \eqref{prob:bp}. That bilinear program can be solved by off-the-shelf software for mixed-integer linear optimization; see Appendix~\ref{appx:reform}. Thus, the previous subsection can be viewed as a concrete and easily implementable exact algorithm for solving the robust optimization problem. 

%The previous subsection discussed algorithms for  solving the robust optimization problem~\eqref{prob:sro}. 
Building upon \eqref{prob:bp}, we now turn to the task of developing efficient algorithms that can approximately solve the robust optimization problem. Specifically, the main contribution of \S\ref{sec:approx} is a practical heuristic for solving the robust optimization problem with computation time that is polynomial in both the number of sample paths $N$ and the number of periods $T$. The proposed heuristic is thus significantly more computational tractable than solving \eqref{prob:bp}, which was shown in Theorem~\ref{thm:hard} to be {NP}-hard. Moreover, we will provide theoretical and empirical evidence that the proposed heuristic can perform surprisingly well, both with respect to approximation quality and computational tractability. All omitted proofs of results from \S\ref{sec:approx} can be found in Appendix~\ref{appx:approx}. 

This subsection is organized as follows. In \S\ref{sec:approx:prelim}, we discuss the high-level motivation and intuition behind our proposed heuristic for the robust optimization problem. In \S\ref{sec:approx:main} and \S\ref{sec:approx:computation}, we formalize the heuristic and offer a strongly polynomial-time algorithm for implementing it. In \S\ref{sec:approx:theory}, we provide theoretical guarantees which show that the approximation gap of the heuristic cannot be arbitrarily bad and, in some cases, is guaranteed to be equal to zero. We study the empirical performance of the heuristic in the subsequent \S\ref{sec:experiments}. 

\subsubsection{Preliminaries.} \label{sec:approx:prelim}
%We begin by discussing the overarching strategy behind our proposed heuristic for obtaining approximate solutions for the robust optimization problem.
 Our heuristic for approximately solving the robust optimization problem is motivated by the structure of the exact reformulation~\eqref{prob:bp}  from \S\ref{sec:exact} for the robust optimization problem. Recall that \eqref{prob:bp} is a zero-one optimization problem with a bilinear objective function and totally unimodular constraints. Because the constraints of \eqref{prob:bp} are totally unimodular, the computational difficulty of solving~\eqref{prob:bp} can thus be attributed to the nonlinearity of its objective function. The proposed heuristic, which is formalized in \S\ref{sec:approx:main},   %, denoted below by \eqref{prob:h}, 
aims to contend with this difficulty by approximating the nonlinear objective function of \eqref{prob:bp} with a linear function. % the objective function of \eqref{prob:bp}.  heuristic for approximately solving the robust optimization problem is presented at the end of this subsection as the optimization problem~\eqref{prob:lb}. To motivate our heuristic, we begin by considering the following variation of the optimization problem \eqref{prob:bp}: 

To describe the motivation behind our heuristic in greater detail, consider any function $f(b,w)$ that is linear in the vectors $b,w$ and is a lower bound on the objective function of the optimization problem~\eqref{prob:bp}  for all $b,w$ that satisfy the constraints of \eqref{prob:bp}. That is, let $f(b,w)$ be a linear function that satisfies the following condition:
\begin{align}
f(b,w) \le \frac{1}{N} \sum_{i=1}^N \sum_{t=1}^T \sum_{\ell=1}^{L^i_t-1}( \kappa^i_{\ell+1} - \kappa^i_\ell)b^i_t (1 - w^i_{t\ell})\; \; \; \; \textnormal{for all } b,w \textnormal{ that are feasible for \eqref{prob:bp}.} \label{line:f_lb}
\end{align}
Then it follows immediately from the condition on line~\eqref{line:f_lb} that a conservative, lower-bound approximation of the optimization problem~\eqref{prob:bp} is given by the following optimization problem:
%
%The idea behind our linear approximation of the objective function of  \eqref{prob:bp}, we first observe that every feasible solution $b,w$ for \eqref{prob:bp} will satisfy the following equality for each  sample path $i \in \{1,\ldots,N\}$, period $t \in \{1,\ldots,T\}$, and $\ell \in \{1,\ldots,| \mathcal{K}^i|\}$: 
%\begin{align*}
%b^i_t(1 - w^i_{t \ell}) &= \max \left \{ b^i_t - w^i_{t\ell}, 0 \right \}. %&& \textnormal{for all } i \in \{1,\ldots,N\}, \; t \in \{1,\ldots,T\}, \; \ell \in \{1,\ldots,| \mathcal{K}^i|\}. 
%\end{align*}
%%It follows immediately from the above equalities that
%%\begin{align*}
%%b^i_t(1 - w^i_{t \ell}) &\ge b^i_t - w^i_{t\ell} \textnormal{ and }\\
%% b^i_t(1 - w^i_{t \ell}) &\ge 0. % && \textnormal{for all } i \in \{1,\ldots,N\}, \; t \in \{1,\ldots,T\}, \; \ell \in \{1,\ldots,| \mathcal{K}^i|\}. 
%%\end{align*}
%Therefore, for any fixed choice of subsets $\mathcal{T}^1,\ldots,\mathcal{T}^N \subseteq \{1,\ldots,T\}$, we can obtain a lower-bound approximation of the optimization problem~\eqref{prob:bp} by modifying the objective function of~\eqref{prob:bp} such that each term $b^i_t(1 - w^i_{t \ell})$ is replaced with $b^i_t - w^i_{t\ell}$ if $t \in \mathcal{T}^i$ and is replaced with $0$ if $t \notin \mathcal{T}^i$. This lower-bound approximation of  \eqref{prob:bp} is denoted by the following optimization problem: % following lower bound on the optimization problem~\eqref{prob:bp}:
 \begin{equation} \label{prob:h} \tag{H}
\begin{aligned}
 &\underset{ b,w}{\textnormal{maximize}}&&f(b,w) \\
&\textnormal{subject to}&& \begin{aligned}[t]
&w^{i}_{t\ell} \le  w^i_{t+1,\ell} && \textnormal{for all } i \in \{1,\ldots,N\}, \;t \in \{1,\ldots,T-1\},\; \ell \in \{1,\ldots,|\mathcal{K}^i|\}\\
&w^{i}_{t\ell} \le w^i_{t,\ell+1} && \textnormal{for all } i \in \{1,\ldots,N\},\; t \in \{1,\ldots,T\},\;\ell \in \{1,\ldots,|\mathcal{K}^i|-1\} \\
&b^i_t \le w^i_{t+1,1}&&\textnormal{for all }i \in \{1,\ldots,N\},\; t \in \{1,\ldots,T-1\}\\
&b^j_t \le w^i_{t\ell}&& \textnormal{for all } i, j \in \{1,\ldots,N\} \textnormal{ and } t \in \{1,\dots,T\} \textnormal{ such that } g(t,x^i) = \kappa^i_\ell\\
&&& \textnormal{and } \mathcal{U}^i_t \cap \mathcal{U}^j_t \neq \emptyset\\
&b^i_t \in \{0,1\} && \textnormal{for all } i \in \{1,\ldots,N\}, \; t \in \{1,\dots,T\}\\
&w^i_{t\ell} \in \R  && \textnormal{for all } i \in \{1,\ldots,N\}, \; t \in \{1,\ldots,T\}, \; \ell \in \{1,\ldots,| \mathcal{K}^i|\}.
\end{aligned}
\end{aligned}
\end{equation}
Indeed, we observe that the constraints of \eqref{prob:h} are identical to the constraints of \eqref{prob:bp}. The only difference between  these two optimization problems is that the nonlinear objective function of  \eqref{prob:bp} has been replaced with the linear function $f(b,w)$ that satisfies the condition from line~\eqref{line:f_lb}. %Therefore, we conclude that the optimal objective value of \eqref{prob:h} is always a lower-bound (conservative) approximation of the optimal objective value of \eqref{prob:bp}. 
Therefore, % choice of $\mathcal{T}^1,\ldots,\mathcal{T}^N \subseteq \{1,\ldots,T\}$,
 we conclude that every optimal solution for the optimization problem~\eqref{prob:h} will be a feasible solution for the optimization problem~\eqref{prob:bp}, and the optimal objective value of \eqref{prob:h} will always be less than or equal to the optimal objective value of \eqref{prob:bp}.

%Let us reflect on the utility of  the optimization problem~\eqref{prob:h}.  %The purpose of the above optimization problem is to obtain approximate solutions for the optimization problem~\eqref{prob:bp}.  it can be used to efficiently   
Given any linear function $f(b,w)$ that satisfies the condition from line~\eqref{line:f_lb}, % choice of $\mathcal{T}^1,\ldots,\mathcal{T}^N \subseteq \{1,\ldots,T\}$,
 the above optimization problem~\eqref{prob:h} is useful because it provides a means to computing approximate solutions for \eqref{prob:bp}. Indeed, the fact that \eqref{prob:h} and \eqref{prob:bp} have the same constraints implies that any optimal solution for \eqref{prob:h} is a feasible solution for \eqref{prob:bp}. Moreover, it is always theoretically possible to choose the function $f(b,w)$ such that every optimal solution for \eqref{prob:h} is an optimal solution for \eqref{prob:bp}, as shown by the following Proposition~\ref{prop:similarity_of_formulations}. We thus conclude that solving the optimization problem~\eqref{prob:h} has the potential to yield high-quality approximate solutions for \eqref{prob:bp}. 

\begin{proposition} \label{prop:similarity_of_formulations}
%Let $b,w$ be an optimal solution for \eqref{prob:bp}, and let $\mathcal{T}^1,\ldots,\mathcal{T}^N \subseteq \{1,\ldots,T\}$ satisfy $\mathcal{T}^i \triangleq \{t: \; b^i_t = 1 \}$ for each $i$. 
For any instance of \eqref{prob:bp}, there exists a linear function $f(b,w)$ satisfying the condition from line~\eqref{line:f_lb} such that  the optimal objective value of \eqref{prob:h} is equal to the optimal objective value of \eqref{prob:bp}, and every optimal solution for \eqref{prob:h} is also an optimal solution for \eqref{prob:bp}.
\end{proposition}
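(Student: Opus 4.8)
The plan is to prove the proposition by constructing an explicit linear \emph{minorant} of the objective of \eqref{prob:bp} that is tight at an optimal solution, and then to close the argument with a short sandwich inequality. Concretely, suppose we can exhibit a linear function $f$ satisfying the condition on line~\eqref{line:f_lb} together with $f(b^*,w^*)=V^*$, where $(b^*,w^*)$ is an optimal solution of \eqref{prob:bp} and $V^*$ is its optimal value. Since \eqref{prob:h} and \eqref{prob:bp} share the same feasible region and $(b^*,w^*)$ is feasible, the optimal value of \eqref{prob:h} is at least $f(b^*,w^*)=V^*$; conversely, line~\eqref{line:f_lb} forces $f\le V^*$ over the feasible region, so the optimal value of \eqref{prob:h} is at most $V^*$. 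Hence the two optimal values coincide. Moreover, for any optimal $(\hat b,\hat w)$ of \eqref{prob:h} we obtain $V^* = f(\hat b,\hat w) \le \frac{1}{N}\sum_{i,t,\ell}(\kappa^i_{\ell+1}-\kappa^i_\ell)\hat b^i_t(1-\hat w^i_{t\ell}) \le V^*$, and the resulting chain of equalities shows that $(\hat b,\hat w)$ attains the optimal value of \eqref{prob:bp}. Thus the whole proposition reduces to constructing one tight linear minorant $f$.

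To build $f$, I would first fix an optimal solution $(b^*,w^*)$ of \eqref{prob:bp} in which every $w^{*i}_{t\ell}$ is zero or one and, among all such optima, in which $w^*$ is the coordinatewise-minimal feasible choice for the given $b^*$ (the first property holds by the discussion following \eqref{prob:bp}, and the second by the monotone structure of the $w$-constraints). I would then linearize each bilinear term separately: for each triple $(i,t,\ell)$ appearing in the objective, I select a binary ``certificate'' variable $z^i_{t\ell}\in\{b^1_1,\ldots,b^N_T\}$ and replace the term $(\kappa^i_{\ell+1}-\kappa^i_\ell)\,b^i_t(1-w^i_{t\ell})$ by the linear surrogate $(\kappa^i_{\ell+1}-\kappa^i_\ell)\,(z^i_{t\ell}-w^i_{t\ell})$, defining $f$ as $\frac{1}{N}$ times the sum of these surrogates. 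A direct case split on $b^i_t\in\{0,1\}$ (using $w^i_{t\ell}\ge 0$ on the feasible region) shows that this surrogate lies below its term whenever either $z^i_{t\ell}=b^i_t$, or $z^i_{t\ell}$ is binary and $z^i_{t\ell}\le w^i_{t\ell}$ holds throughout the feasible region; a one-line computation shows that it is tight at $(b^*,w^*)$ exactly when $(z^i_{t\ell})^*=b^{*i}_t\vee w^{*i}_{t\ell}$. When $b^{*i}_t=1$ or $w^{*i}_{t\ell}=0$, taking $z^i_{t\ell}=b^i_t$ meets both demands, so the only terms that require care are the ``problematic'' ones with $b^{*i}_t=0$ and $w^{*i}_{t\ell}=1$, for which we must supply a genuinely different certificate.

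The main obstacle, and the technical heart of the argument, is to produce a valid certificate for every problematic term: a binary variable $z^i_{t\ell}$ with optimal value $1$ that satisfies $z^i_{t\ell}\le w^i_{t\ell}$ as a \emph{consequence} of the constraints of \eqref{prob:bp}. Here I would exploit the minimality of $w^*$: because $w^*$ is the smallest feasible $w$ compatible with $b^*$, the equality $w^{*i}_{t\ell}=1$ must be forced by an active lower-bounding constraint (of the form $b^i_s\le w^i_{s+1,1}$, or $b^j_s\le w^i_{s\ell'}$ with $\mathcal{U}^i_s\cap\mathcal{U}^j_s\neq\emptyset$) that is propagated up to coordinate $(t,\ell)$ through the monotonicity chains $w^i_{s\ell}\le w^i_{s+1,\ell}$ and $w^i_{s\ell}\le w^i_{s,\ell+1}$. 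The binary variable at the root of such a chain has optimal value $1$, and, by transitivity of those same monotone inequalities, satisfies $z^i_{t\ell}\le w^i_{t\ell}$ on the entire feasible region, exactly as needed. The crux is to prove that such a root always exists, i.e.\ that the minimal feasible $w$ is precisely the monotone closure of the binary lower bounds, so that no coordinate is driven to $1$ without a binary cause; I expect to establish this by describing the minimal $w$ as a reachability/closure in the digraph induced by the monotonicity and lower-bound constraints, which is the same graph-theoretic structure later exploited in the maximal-closure heuristic. Once every problematic term is equipped with a certificate, summing the per-term surrogates produces a linear $f$ that satisfies line~\eqref{line:f_lb} and is tight at $(b^*,w^*)$, and the sandwich step above then completes the proof.
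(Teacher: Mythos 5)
Your proposal is correct in substance and shares its closing step with the paper, but it takes a noticeably heavier route on the one step that matters: the construction of the tight linear minorant. Your first paragraph (a linear $f$ satisfying \eqref{line:f_lb} that is tight at an optimum of \eqref{prob:bp} forces equality of the two optimal values and transfers optimality of solutions) is exactly the paper's sandwich argument. Where you diverge is in building $f$: the paper fixes an optimal solution $\hat{b},\hat{w}$ of \eqref{prob:bp}, sets $\widehat{\mathcal{T}}^i \triangleq \{t:\hat{b}^i_t=1\}$, and takes $\hat{f}(b,w) = \frac{1}{N}\sum_{i}\sum_{t\in\widehat{\mathcal{T}}^i}\sum_{\ell}(\kappa^i_{\ell+1}-\kappa^i_\ell)(b^i_t-w^i_{t\ell})$, i.e.\ every term with $\hat{b}^i_t=0$ simply receives coefficient zero. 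A zero surrogate is already a valid lower bound on such a term (the bilinear terms are nonnegative over the feasible region) and is automatically tight at $\hat{b},\hat{w}$ (those terms vanish there), so the ``problematic'' terms with $\hat{b}^i_t=0$ and $\hat{w}^i_{t\ell}=1$ that drive the second half of your argument never need certificates at all. By insisting on a surrogate of the form $z^i_{t\ell}-w^i_{t\ell}$ for \emph{every} term, you force yourself to prove the closure/reachability lemma (that the minimal binary feasible $w$ for a fixed $b$ is the monotone closure of the binary lower bounds). That lemma is true --- it is essentially the explicit formula \eqref{line:closed_form_w} in the paper's proof of Lemma~\ref{lem:submodular_reform}, and your digraph description of it is sound --- but it is machinery this proposition does not require, and it is the one piece of your argument you leave as a sketch rather than a proof. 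What your route buys is a minorant that is nontrivial on more terms and an explicit link to the closure structure exploited in \S\ref{sec:approx}; what the paper's route buys is a construction whose validity and tightness are verified in a few lines of algebra. One final remark: your validity check for the $z^i_{t\ell}=b^i_t$ case invokes $w^i_{t\ell}\ge 0$ on the feasible region, and the paper's footnote likewise uses $b^i_t(1-w^i_{t\ell})=\max\{b^i_t-w^i_{t\ell},0\}$, which needs $0\le w^i_{t\ell}\le 1$; since \eqref{prob:bp} only declares $w^i_{t\ell}\in\R$, this reliance is implicit in both arguments and is therefore not a gap specific to yours (and, notably, your certificate surrogates $z^i_{t\ell}-w^i_{t\ell}$ with $z^i_{t\ell}\le w^i_{t\ell}$ valid on the feasible region are correct lower bounds with no sign condition on $w$ at all).
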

%\begin{proof}{Proof.}
%The proof of Proposition~\ref{prop:similarity_of_formulations} is found in Appendix~\ref{appx:approx}.
%\halmos \end{proof}
%\vspace{-1em}

%we observe that the optimal objective value of \eqref{prob:h} is a lower bound on the optimal objective value of \eqref{prob:bp}. 

The optimization problem~\eqref{prob:h} is ultimately attractive compared to \eqref{prob:bp} from the perspective of computational tractability. Indeed,  we observe that \eqref{prob:h} is a zero-one linear optimization problem over totally unimodular constraints. This implies that the integrality constraints on the decision variables can be relaxed without loss of generality; that is, \eqref{prob:h} can be solved as a linear optimization problem in which each constraints $b^i_t \in \{0,1\}$ are replaced with $0 \le b^i_t \le 1$  \citep[\S 4.2]{conforti2014integer}. The optimization problem~\eqref{prob:h} is thus particularly convenient from an implementation standpoint: as a linear optimization problem, \eqref{prob:h}  can be easily formulated and solved directly by commercial linear optimization solvers such as CPLEX or Gurobi.%We note that a number of decision variables can constraints can be removed to obtain a more compact optimization problem; see Appedix~\ref{} for more details. 
 
  In fact, the optimization problem \eqref{prob:h} can be solved \emph{very} efficiently by exploiting the relationship between \eqref{prob:h} %is equivalent to a problem of computing the maximal closure of a directed graph  that  \eqref{prob:h1} 
and the {maximal closure problem}. %\footnote{The proof that \eqref{prob:h} is equivalent to a  maximal closure problem follows immediately from \citet[\S 3]{picard1976maximal}.} 
 The maximal closure problem is a problem from combinatorial optimization that has been widely studied in the operations research literature dating back to  \cite{rhys1970selection} and \cite{picard1976maximal}, with applications ranging from  project scheduling to open-pit mining.\footnote{{\color{black}For further background  on the maximal closure problem, we refer the interested reader to \cite{hochbaum200450th}.}} The goal in the maximal closure problem is to find a maximum-weight closure in a directed graph, where a closure is defined as a subset of vertices without edges that leave the subset.   In particular, it follows immediately from  \citet[\S 3]{picard1976maximal} that the linear optimization problem \eqref{prob:h} with $\mathcal{O}(N T^2)$ decision variables and $\mathcal{O}(N T^2)$ constraints is equivalent to computing the maximal closure of a directed graph with $\mathcal{O}(N T^2)$ nodes and $\mathcal{O}(NT(T+N))$ edges.  
 An important algorithmic property is that maximal closure problems can be solved by computing the maximum flow in the directed graph \citep[\S 4]{picard1976maximal}.  Thus, as we will formalize in \S\ref{sec:approx:computation}, the optimization problem \eqref{prob:h} can be solved with running time that is {strongly} polynomial with respect to both the number of simulated sample paths $N$ and the number of time periods $T$.\footnote{{\color{black}We note that Proposition~\ref{prop:similarity_of_formulations} does not contradict our hardness result for the robust optimization problem~\eqref{prob:sro} from Theorem~\ref{thm:hard}, as the right choice of the linear function $f(b,w)$ in Proposition~\ref{prop:similarity_of_formulations} may be difficult to identify.}}

 \subsubsection{Description of Heuristic. } \label{sec:approx:main}  In \S\ref{sec:approx:prelim}, we discussed the high-level motivation and intuition behind our proposed heuristic. In view of that motivation, we now formally describe our proposed heuristic for approximately solving the optimization problem~\eqref{prob:bp}. Specifically, our proposed heuristic consists of solving the optimization problem~\eqref{prob:h} with a particular linear objective function, denoted below by $\bar{f}(b,w)$. 

 To formally describe our proposed heuristic, we require the following notation.  Recall that $T$ is the number of periods in the optimal stopping problem. For each sample path $i$, let $T^i \triangleq  \argmax_{t \in \{1,\ldots,T\}} \; {\color{black} g(t,x^i)}$ be defined as the period in which the sample path $i$ achieves its maximum reward, and if there are multiple optimal solutions, we choose the optimal solution that is smallest. % the smallest value, \ie $T^i \triangleq \min \{ t': t' \in \argmax_{t \in \{1,\ldots,T\}} \; v^{ii}_t\}$. 
 To simplify our notation, let us define $\mathcal{T}^i \triangleq \{ T^i \} \cup \{T\}$ as the set that contains the period in which sample path $i$ achieves its maximum reward as well as the last period of the optimal stopping period. 
 
 With this additional notation, the proposed heuristic obtains approximate solutions $b,w$ for the optimization problem~\eqref{prob:bp} by solving the optimization problem~\eqref{prob:h} with the objective function %given by
 \begin{align*}
\bar{f}(b,w) \triangleq \frac{1}{N} \sum_{i=1}^N \sum_{t \in \mathcal{T}^i} \sum_{\ell=1}^{L^i_t-1}( \kappa^i_{\ell+1} - \kappa^i_\ell) \left( b^i_t  - w^i_{t\ell} \right). 
 \end{align*}
 In other words, our heuristic for solving the optimization problem~\eqref{prob:bp} consists of solving the optimization problem~\eqref{prob:h} with objective function given by $f(b,w) = \bar{f}(b,w)$. The optimal solution for \eqref{prob:h} with objective function  $\bar{f}(b,w)$ constitutes a feasible solution for \eqref{prob:bp}, since the constraints of \eqref{prob:h} and \eqref{prob:bp} are identical. This solution can thus be transformed into a Markovian stopping rule using the transformations described in Theorem~\ref{thm:reform} and \ref{thm:bp_main}. 
 
 \begin{remark}\label{rem:f_bar}

 We readily observe that the function $\bar{f}(b,w)$ is linear in $b$ and $w$. Moreover, it follows from algebra that $\bar{f}(b,w)$ is less than or equal to the objective function of \eqref{prob:bp} for all feasible solutions of \eqref{prob:bp}; that is, the linear function $\bar{f}(b,w)$ satisfies the condition from line~\eqref{line:f_lb}.\footnote{{\color{black}To see why $\bar{f}(b,w)$ is a lower bound on the objective function of \eqref{prob:bp}, consider any arbitrary vectors $b,w$ that satisfy the constraints of \eqref{prob:bp}. Since feasibility for the optimization problem~\eqref{prob:bp} implies that $b$ is a binary vector, we observe that the equality  $b^i_t(1 - w^i_{t \ell}) = \max \left \{ b^i_t - w^i_{t\ell}, 0 \right \}$ holds for each $i \in \{1,\ldots,N\}$, $t \in \{1,\ldots,T\}$, and $\ell \in \{1,\ldots,| \mathcal{K}^i|\}$. 
%\begin{align}
%b^i_t(1 - w^i_{t \ell}) &= \max \left \{ b^i_t - w^i_{t\ell}, 0 \right \}.  \label{line:algebra_duh}%&& \textnormal{for all } i \in \{1,\ldots,N\}, \; t \in \{1,\ldots,T\}, \; \ell \in \{1,\ldots,| \mathcal{K}^i|\}. 
%\end{align}
Therefore, a lower-bound approximation of the objective function of the optimization problem~\eqref{prob:bp} can be obtained by replacing each term $b^i_t(1 - w^i_{t \ell})$ with $b^i_t - w^i_{t\ell}$ if $t \in \mathcal{T}^i$ and with $0$ if $t \notin \mathcal{T}^i$.} } Thus, we conclude that solving the optimization problem~\eqref{prob:h} with objective function $f(b,w) = \bar{f}(b,w)$ will provide a lower-bound approximation of the optimization problem~\eqref{prob:bp}, and any optimal solution for \eqref{prob:h} will be a feasible solution for \eqref{prob:bp}.
\end{remark}

Our motivations for using the heuristic outlined above are two-fold.  First, we find that the optimization problem~\eqref{prob:h} with the objective function $\bar{f}(b,w)$ is highly tractable from both a theoretical and empirical standpoint on realistic sizes of robust optimization problems. Second, we provide theoretical and empirical evidence that the above heuristic can find high-quality and in some cases optimal solutions for \eqref{prob:bp}. We elaborate on the theoretical aspects of these two motivations in \S\ref{sec:approx:computation} and \S\ref{sec:approx:theory}, and we explore the empirical performance of the heuristic in the subsequent \S\ref{sec:experiments}.

\subsubsection{Algorithms for Implementing Heuristic.} \label{sec:approx:computation}  In \S\ref{sec:approx:prelim},  we argued  generally that the optimization problem~\eqref{prob:h} with any linear objective function $f(b,w)$ can be solved in strongly polynomial time by reducing \eqref{prob:h} to a maximal closure problem. We now formalize those earlier arguments to derive an explicit, strongly polynomial-time algorithm for solving the optimization problem~\eqref{prob:h} in the particular case where $f(b,w) = \bar{f}(b,w)$. Specifically, the main contribution of \S\ref{sec:approx:computation} is the development of an algorithm that achieves the running time that is specified in the following proposition: % demonstrate the following:
\begin{proposition} \label{prop:approx:tract}
If $f(b,w) = \bar{f}(b,w)$, then \eqref{prob:h} can be solved in $\mathcal{O}(N^2 T(N + T))$ time. 
\end{proposition}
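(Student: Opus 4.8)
The plan is to realize \eqref{prob:h} with objective $\bar f$ as a maximum-weight closure problem and to solve it by a single maximum-flow computation, taking care to exploit the sparsity of $\bar f$ so that the associated network is small enough to hit the claimed bound. As noted in \S\ref{sec:approx:prelim}, the constraints of \eqref{prob:h} are totally unimodular, so I may restrict attention to $0/1$ solutions and read each constraint $x \le y$ as the implication ``$x = 1 \Rightarrow y = 1$''. Writing the objective coefficient of $b^i_t$ as $\tfrac{1}{N}(\kappa^i_{L^i_t} - \kappa^i_1) = \tfrac{1}{N}g(t,x^i) \ge 0$ (using $\kappa^i_1 = 0$) and that of $w^i_{t\ell}$ as $-\tfrac{1}{N}(\kappa^i_{\ell+1} - \kappa^i_\ell) \le 0$, the problem becomes a maximum-weight closure on a directed graph whose vertices are the decision variables, with source/sink arcs of capacity equal to the node weights and infinite-capacity arcs for the implications (Picard's reduction, invoked generally in \S\ref{sec:approx:prelim}). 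The structural feature I will exploit is that $\bar f$ has nonzero coefficients only for $t \in \mathcal{T}^i$, where $|\mathcal{T}^i| \le 2$.

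The first reduction fixes $b^j_t = 0$ for all $t \notin \mathcal{T}^j$: since each $b^j_t$ appears only on the left of $\le$ constraints and has objective coefficient $0$ when $t \notin \mathcal{T}^j$, this preserves feasibility and the optimal value, so I may delete these $\mathcal{O}(NT)$ nodes. What remains are $\mathcal{O}(N)$ ``free'' $b$-nodes (positive weight) and, as penalized nodes, only the $w^i_{t\ell}$ with $t \in \mathcal{T}^i$ and $\ell \le L^i_t - 1$, of which there are $\mathcal{O}(NT)$. The second reduction contracts the zero-weight $w$-variables, which serve only to transmit implications. Using the monotone lattice structure of the $w$'s, every activation of a free $b^j_s$ (with $s \in \mathcal{T}^j$ and $\mathcal{U}^i_s \cap \mathcal{U}^j_s \neq \emptyset$) enters path $i$'s lattice at $w^i_{s,L^i_s}$ and, by $t$- and $\ell$-monotonicity, forces exactly the penalized nodes $w^i_{t\ell}$ with $t \in \mathcal{T}^i$, $t \ge s$, and $\ell \ge L^i_s$. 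I will encode this by keeping one shared \emph{entry} node $w^i_{s,L^i_s}$ per period $s$ relevant to a neighbor of $i$ ($\mathcal{O}(NT)$ such nodes), routing each cross-path implication $b^j_s \to w^i_{s,L^i_s}$ to it ($\mathcal{O}(N^2)$ arcs, since $\sum_j \sum_{s \in \mathcal{T}^j} |\{i : \mathcal{U}^i_s \cap \mathcal{U}^j_s \neq \emptyset\}| = \mathcal{O}(N^2)$), and joining each entry node to the at most two penalized rows via single arcs $w^i_{s,L^i_s} \to w^i_{t,L^i_s}$ together with the intra-row chains $w^i_{t,\ell} \to w^i_{t,\ell+1}$. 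Treating the self-implications $b^i_s \le w^i_{s+1,1}$ and the inter-row arcs $w^i_{T^i,\ell} \to w^i_{T,\ell}$ analogously adds a further $\mathcal{O}(NT)$ arcs. The resulting network has $V = \mathcal{O}(NT)$ vertices and $E = \mathcal{O}(N^2 + NT) = \mathcal{O}(N(N+T))$ arcs.

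Finally, since a maximum-weight closure is recovered from a minimum $s$--$t$ cut, I will compute it with a strongly polynomial maximum-flow algorithm running in $O(VE)$ time (for instance Orlin's algorithm). With $V = \mathcal{O}(NT)$ and $E = \mathcal{O}(N(N+T))$ this yields $O(VE) = \mathcal{O}\!\big(NT \cdot N(N+T)\big) = \mathcal{O}(N^2 T(N+T))$, as claimed, and an optimal closure is converted back to an optimal $(b,w)$ for \eqref{prob:h} (hence to integers $\sigma^1,\ldots,\sigma^N$) within the same time budget. The main obstacle is the contraction step: I must verify precisely that replacing the full $w$-lattice by the entry nodes, the two penalized rows, and the above arcs preserves the exact family of maximum-weight closures — in particular that the shared entry nodes charge a penalty triggered by several sample paths at once only once, and that no spurious forcing is introduced — and that the arc bookkeeping genuinely gives $\mathcal{O}(N(N+T))$ rather than the naive $\mathcal{O}(N^2 T)$ that a direct translation of the cross-path constraints would produce.
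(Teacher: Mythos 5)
Your proposal is correct and follows essentially the same route as the paper: exploit the sparsity of $\bar f$ to shrink the problem to $\mathcal{O}(NT)$ variables with $\mathcal{O}(N^2+NT)$ monotone constraints of the form $\lambda_i \ge \lambda_j$, invoke Picard's closure-to-max-flow reduction, and apply Orlin's $O(VE)$ algorithm to obtain $\mathcal{O}(N^2T(N+T))$. The only difference is organizational: the contraction/entry-node bookkeeping you flag as the main obstacle is exactly what the paper discharges algebraically, \emph{before} invoking Picard, by deriving the compact reformulation \eqref{prob:h_bar} via Lemmas~\ref{lem:optimal_h_bar} and \ref{lem:equivalence_of_h_bar} (fixing $b^i_T=1$ and $b^i_t=0$ for $t\notin\mathcal{T}^i$, then eliminating the $w$-variables with zero objective coefficient), so that no graph-level contraction argument ever needs to be carried out.
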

%\vspace{-1em}
The above proposition establishes the computational tractability of our heuristic by demonstrating that an optimal solution for the optimization problem~\eqref{prob:h} with objective function $f(b,w) = \bar{f}(b,w)$ can be computed with running  time that is polynomial in both the number of simulated sample paths as well as the number of time periods. In particular, if the number of periods $T$ is held constant,  then the running time in Proposition~\ref{prop:approx:tract} scales cubically in the number of simulated sample paths $N$. Such a tractability guarantee is ultimately important from a practical perspective: indeed, in the following  \S\ref{sec:experiments}, we will present numerical experiments which show that our heuristic can run in seconds on realistic problem sizes with over fifty periods and thousands of sample paths.  An outline of the proof of Proposition~\ref{prop:approx:tract} is found throughout the rest of  \S\ref{sec:approx:computation}.

%Indeed, consider the optimization problem~\eqref{prob:h} with objective function $f(b,w) = \bar{f}(b,w)$.
To develop an algorithm with the computation time that is specified in Proposition~\ref{prop:approx:tract}, we begin by deriving a compact reformulation of \eqref{prob:h} when the objective function satisfies $f(b,w) = \bar{f}(b,w)$. The compact reformulation is denoted below as the optimization problem~\eqref{prob:h_bar}. To derive this compact reformulation,  we first show that many of the decision variables in the optimization problem~\eqref{prob:h}  can, without loss of generality, be removed from~\eqref{prob:h}  when the objective function satisfies $f(b,w) = \bar{f}(b,w)$. In particular, our compact reformulation will utilize the following lemma: %This observation is formalized by the following lemma. % underpins our ability to remove many of the decision variables is that always exists optimal solutions for \eqref{prob:h} with an exploitable structure in the case where  $f(b,w) = \bar{f}(b,w)$, as shown in the following lemma. 
 \begin{lemma} \label{lem:optimal_h_bar}
 If $f(b,w) = \bar{f}(b,w)$, then there exists an optimal solution $b,w$ for \eqref{prob:h} that satisfies  $b^1_T = \cdots = b^N_T = 1$ and satisfies $b^i_t = 0$ for each sample path $i \in \{1,\ldots,N\}$ and period $t \in \{1,\ldots,T\} \setminus \mathcal{T}^i$. 
 \end{lemma}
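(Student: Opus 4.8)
The plan is to start from an arbitrary optimal solution $(b,w)$ of \eqref{prob:h} with objective $f=\bar{f}$ — one exists because the feasible region is nonempty and the objective is bounded above by the (finite) optimal value of \eqref{prob:bp} — and to transform it, through two monotone modifications, into an optimal solution $(\hat{b},\hat{w})$ with the claimed structure. The guiding observation is that in \eqref{prob:h} each variable $b^i_t$ appears only on the left-hand sides of inequality constraints (namely $b^i_t \le w^i_{t+1,1}$ and $b^j_t \le w^i_{t\ell}$), so \emph{decreasing} any $b^i_t$ never destroys feasibility; moreover, in the objective $\bar{f}$ the variable $b^i_t$ appears only for periods $t \in \mathcal{T}^i$, and there with a nonnegative aggregate coefficient.

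First I would prune the off-$\mathcal{T}^i$ entries: set $\hat{b}^i_t = 0$ for $t \notin \mathcal{T}^i$ and leave every other coordinate of $(b,w)$ unchanged for now. Because these entries are only decreased, and because $b$ occurs solely on the left of $\le$-constraints, the point stays feasible; because these entries never enter $\bar{f}$, the objective is unchanged. This already establishes the second half of the claim. Next I would force $\hat{b}^i_T = 1$ for every $i$. The aggregate coefficient of $b^i_T$ in $\bar{f}$ equals $\tfrac1N\sum_{\ell=1}^{L^i_T-1}(\kappa^i_{\ell+1}-\kappa^i_\ell) = \tfrac1N(\kappa^i_{L^i_T}-\kappa^i_1) = \tfrac1N\,g(T,x^i) \ge 0$, using $\kappa^i_1=0$ (the rewards are nonnegative and $0 \in \mathcal{K}^i$), so raising $b^i_T$ to $1$ can only help the objective \emph{directly}. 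The subtlety is indirect: $b^i_T$ sits inside the constraints $b^i_T \le w^{i'}_{T,L^{i'}_T}$ for intersecting sample paths, and it does \emph{not} appear in any $b^i_t \le w^i_{t+1,1}$ constraint, since those exist only for $t \le T-1$. Taking the self-intersection $\mathcal{U}^i_T\cap\mathcal{U}^i_T\neq\emptyset$, setting all $\hat{b}^i_T = 1$ forces $w^i_{T,L^i_T}\ge 1$ for every $i$, and monotonicity in $\ell$ then forces the entries $w^i_{T,\ell}$ with $\ell>L^i_T$ up as well. Accordingly I would set $\hat{w}^i_{T,\ell} = \max(w^i_{T,\ell},1)$ for $\ell \ge L^i_T$ and keep all remaining coordinates of $w$ at their original (optimal) values.

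The crux — and the step I expect to require the most care — is verifying that this $w$-adjustment preserves both feasibility and the objective. For the objective, the decisive index bookkeeping is that $\bar{f}$ involves $w^i_{t\ell}$ only for $\ell \le L^i_t - 1$, whereas every coordinate I raise has $\ell \ge L^i_T$; hence none of the raised coordinates appears in $\bar{f}$, and the total change of the objective collapses to $\tfrac1N\sum_{i}(1-b^i_T)\,g(T,x^i)\ge 0$. For feasibility I would check the four constraint families in turn: the two monotonicity families survive because $\max(\cdot,1)$ is order-preserving and because only top-period ($t=T$) entries were raised, so each $t$-monotonicity inequality $w^i_{T-1,\ell}\le \hat{w}^i_{T,\ell}$ only gains slack on the right; the family $b^i_t\le w^i_{t+1,1}$ survives because its right-hand side is only weakly raised while its left-hand side was only lowered in the first modification; and the family $b^j_t\le w^i_{t,L^i_t}$ survives at $t<T$ (unchanged data, with each $b^j_t$ only decreased) and at $t=T$ exactly because $\hat{w}^i_{T,L^i_T}\ge 1 = \hat{b}^j_T$.

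Since $(\hat{b},\hat{w})$ is feasible and its objective is at least that of the optimal $(b,w)$, it is itself optimal and has the required structure, which completes the argument. The only genuinely delicate point, worth isolating as the technical heart of the proof, is the claim that raising $b^i_T$ to one can be absorbed entirely by coordinates $w^i_{T,\ell}$ with $\ell \ge L^i_T$ that lie outside the support of $\bar{f}$; everything else is a routine monotonicity check on the totally unimodular constraint system.
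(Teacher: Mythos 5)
Your proof is correct, and it reaches the same structural conclusion as the paper through a recognizably similar exchange argument, but the execution differs in a way worth noting. The paper's proof first replaces $w$ by a canonical ``minimal'' closed-form solution (the paper asserts that, for any optimal binary $b$, there is an optimal $w$ given explicitly as an indicator of which constraints force it up), then redefines $b$ and $w$ \emph{wholesale} -- pruning and raising $b$ in one shot and recomputing $\bar{w}$ from the closed form -- and finishes by proving the componentwise inequality $\bar{w}^i_{t\ell} \le w^i_{t\ell}$ restricted to the indices $\ell \le L^i_t-1$ appearing in $\bar{f}$. You instead keep the given optimal $(b,w)$ untouched and perform a minimal local patch: lower the off-$\mathcal{T}^i$ entries of $b$, raise the period-$T$ entries of $b$, and absorb the induced constraint violations by setting $\hat{w}^i_{T,\ell} = \max(w^i_{T,\ell},1)$ only for $\ell \ge L^i_T$. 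This avoids having to justify the closed-form characterization of optimal $w$ given $b$ (which the paper states as ``readily observed''), at the cost of a slightly longer constraint-by-constraint feasibility check. Both arguments ultimately pivot on the same two observations: the coordinates of $w$ that must increase lie outside the support of $\bar{f}$ (levels $\ell \ge L^i_T$ at period $T$, versus $\ell \le L^i_T - 1$ in the objective), and the aggregate coefficient of $b^i_T$ telescopes to $\tfrac{1}{N}g(T,x^i) \ge 0$.

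One caveat, which is not a gap relative to the paper: your opening existence claim (boundedness of \eqref{prob:h} by the optimal value of \eqref{prob:bp} via the condition in line~\eqref{line:f_lb}) tacitly uses the paper's implicit convention that $w$ is effectively confined to $[0,1]$; with $w$ literally free in $\R$, the inequality $\bar{f}(b,w) \le \frac{1}{N}\sum_{i,t,\ell}(\kappa^i_{\ell+1}-\kappa^i_\ell)b^i_t(1-w^i_{t\ell})$ fails when $b^i_t = 0$ and $w^i_{t\ell} < 0$, and both problems would be unbounded. The paper's own proof presupposes attainment (and a $\{0,1\}$-valued optimal $w$) without comment, so you and the paper are working under the same implicit normalization; your version at least makes the attainment step explicit.
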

 %\vspace{-1em}
 The above lemma demonstrates that the optimal values for many of the decision variables of \eqref{prob:h} can be known in advance when the  objective function satisfies $f(b,w) = \bar{f}(b,w)$. Hence, 
Lemma~\ref{lem:optimal_h_bar} implies that each decision variable $b^i_t$ can be fixed to its optimal value and removed from \eqref{prob:h} when $f(b,w) = \bar{f}(b,w)$, except for the decision variables $b^i_t$ in which the sample path $i \in \{1,\ldots,N\}$ and period $t \in \{1,\ldots,T\}$ satisfy $t = T^i$ and $T^i < T$. %  $i \in \{1,\ldots,T\}$ each 
In view of Lemma~\ref{lem:optimal_h_bar}, we now consider the following optimization problem:
 \begin{equation} \label{prob:h_bar} \tag{$\bar{\textnormal{H}}$}
\begin{aligned}
 %&\underset{ b,w}{\textnormal{maximize}}&& \frac{1}{N} \sum_{i=1}^N\left( \sum_{\ell=1}^{L^i_{T^i}-1}( \kappa^i_{\ell+1} - \kappa^i_\ell) \left( b^i_{T^i}  - w^i_{T^i \ell} \right)  + \sum_{\ell=1}^{L^i_T - 1} ( \kappa^i_{\ell+1} - \kappa^i_\ell)  \left(1 - w^i_{T \ell} \right) \right)  \\
  &\underset{ b,w}{\textnormal{maximize}}&& \frac{1}{N} \sum_{i \in \{1,\ldots,N\}: T^i < T} \sum_{\ell=1}^{L^i_{T^i}-1}( \kappa^i_{\ell+1} - \kappa^i_\ell) \left( b^i_{T^i}  - w^i_{T^i \ell} \right)  + \frac{1}{N} \sum_{i=1}^N \sum_{\ell=1}^{L^i_T - 1} ( \kappa^i_{\ell+1} - \kappa^i_\ell)  \left(1 - w^i_{T \ell} \right)  \\
&\textnormal{subject to}&& \begin{aligned}[t]
%&w^{i}_{T^i\ell} \le  w^i_{T \ell} && \textnormal{for all } i \in \{1,\ldots,N\}, \;\ell \in \{1,\ldots,|\mathcal{K}^i|\} \\
%&w^{i}_{T^i\ell} \le  w^i_{T \ell} && \textnormal{for all } i \in \{1,\ldots,N\} \textnormal{ and } \ell \in \{1,\ldots,|\mathcal{K}^i|\} \textnormal{ such that } T^i < T\\
%&w^{i}_{T^i \ell} \le w^i_{T^i,\ell+1} && \textnormal{for all } i \in \{1,\ldots,N\},\;\ell \in \{1,\ldots,|\mathcal{K}^i|-1\} \\
%&w^{i}_{T \ell} \le w^i_{T,\ell+1} && \textnormal{for all } i \in \{1,\ldots,N\},\;\ell \in \{1,\ldots,|\mathcal{K}^i|-1\} \\
&w^{i}_{t\ell} \le w^i_{t,\ell+1} && \textnormal{for all } i \in \{1,\ldots,N\},\; t \in \mathcal{T}^i,\;\ell \in \{1,\ldots,|\mathcal{K}^i|-1\} \\
&b^i_{T^i} \le w^i_{T1}&&\textnormal{for all }i \in \{1,\ldots,N\} \textnormal{ such that }  T^i < T\\
&b^j_{T^j} \le w^i_{t \ell}&& \textnormal{for all } i, j \in \{1,\ldots,N\} \textnormal{ and } t \in \mathcal{T}^i \textnormal{ such that } g(T^j,x^i) = \kappa^i_\ell,\\
&&&T^j \le t, \;  T^j < T, \; \textnormal{and } \mathcal{U}^i_{T^j} \cap \mathcal{U}^j_{T^j} \neq \emptyset\\
%&b^j_{T^j} \le w^i_{T^i \ell}&& \textnormal{for all } i, j \in \{1,\ldots,N\} \textnormal{ such that } g(T^j,x^i) = \kappa^i_\ell,\;T^j \le T^i, \; \\
%&&&\textnormal{and } \mathcal{U}^i_{T^j} \cap \mathcal{U}^j_{T^j} \neq \emptyset\\
%&b^j_{T^j} \le w^i_{T \ell}&& \textnormal{for all } i, j \in \{1,\ldots,N\} \textnormal{ such that } g(T^j,x^i) = \kappa^i_\ell,\;T^j > T^i, \; \\
%&&&\textnormal{and } \mathcal{U}^i_{T^j} \cap \mathcal{U}^j_{T^j} \neq \emptyset\\
&b^i_{T^i} \in \{0,1\} && \textnormal{for all } i \in \{1,\ldots,N\} \textnormal{ such that } T^i < T\\
&w^i_{t\ell} \in \R  && \textnormal{for all } i \in \{1,\ldots,N\}, \; t \in \mathcal{T}^i, \; \ell \in \{1,\ldots,| \mathcal{K}^i|\}.
\end{aligned}
\end{aligned}
\end{equation}
It can be shown using straightforward algebra that the above optimization problem~\eqref{prob:h_bar} is equivalent to the optimization problem~\eqref{prob:h} when the objective function of \eqref{prob:h} satisfies $f(b,w) = \bar{f}(b,w)$ and when the decision variables in  \eqref{prob:h} are restricted to satisfy $b^1_T = \cdots = b^N_T = 1$ and $b^i_t = 0$ for all $i \in \{1,\ldots,N\}$ and $t \in \{1,\ldots,T\} \setminus \mathcal{T}^i$; see proof of the following Lemma~\ref{lem:equivalence_of_h_bar} for details. Hence, it follows from Lemma~\ref{lem:optimal_h_bar} that any optimal solution for \eqref{prob:h_bar} can be transformed into an optimal solution for \eqref{prob:h} when the objective function satisfies $f(b,w) = \bar{f}(b,w)$. We formalize the equivalence of \eqref{prob:h} and \eqref{prob:h_bar} in the following Lemma~\ref{lem:equivalence_of_h_bar}: %This is  provides a compact reformulation of \eqref{prob:h} in the case where $f(b,w) = \bar{f}(b,w)$. The optimization problem~\eqref{prob:h_bar} is attractive from a computational perspective because it is compact; the number of decision variables in \eqref{prob:h_bar} is  
\begin{lemma}\label{lem:equivalence_of_h_bar}
Let $f(b,w) = \bar{f}(b,w)$. Then, the optimal objective value of \eqref{prob:h_bar} is equal to the optimal objective value of \eqref{prob:h}. Moreover, let  $\bar{b},\bar{w}$  denote an optimal solution for \eqref{prob:h_bar}. Then there exists an optimal solution $b,w$ for \eqref{prob:h} that satisfies the following equality for each sample path $i \in \{1,\ldots,N\}$ and period $t \in \{1,\ldots,T\}$:
\begin{align*}
{b}^i_t &= \begin{cases}
\bar{b}^i_t,&\textnormal{if } t = T^i \textnormal{ and } T^i < T,\\
1,&\textnormal{if } t = T,\\
0,&\textnormal{otherwise}. 
\end{cases}
\end{align*}
\end{lemma}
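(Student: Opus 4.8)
The plan is to prove Lemma~\ref{lem:equivalence_of_h_bar} as a direct algebraic reduction that eliminates the decision variables of \eqref{prob:h} whose optimal values are already pinned down by Lemma~\ref{lem:optimal_h_bar}. By that lemma, when $f(b,w) = \bar{f}(b,w)$ there is an optimal solution of \eqref{prob:h} in which $b^i_T = 1$ for every $i$ and $b^i_t = 0$ for every $i$ and every $t \in \{1,\ldots,T\} \setminus \mathcal{T}^i$. Consequently, the optimal value of \eqref{prob:h} equals the optimal value of the restricted problem obtained from \eqref{prob:h} by imposing this structure, whose only free binary variables are $\{b^i_{T^i} : T^i < T\}$. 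It therefore suffices to show that this restricted problem is equivalent to \eqref{prob:h_bar}, both in optimal value and via an explicit correspondence of solutions; the stated formula for $b$ then merely records the $b$-structure fixed by Lemma~\ref{lem:optimal_h_bar} together with the free coordinates $b^i_{T^i}$ inherited from \eqref{prob:h_bar}.

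First I would substitute the fixed values of $b$ into $\bar{f}(b,w)$. Writing $\mathcal{T}^i = \{T^i\} \cup \{T\}$ and separating the cases $T^i < T$ and $T^i = T$, the term for each sample path $i$ with $T^i < T$ splits into a $t = T^i$ contribution $\sum_{\ell=1}^{L^i_{T^i}-1}(\kappa^i_{\ell+1}-\kappa^i_\ell)(b^i_{T^i} - w^i_{T^i\ell})$ and a $t = T$ contribution that, using $b^i_T = 1$, becomes $\sum_{\ell=1}^{L^i_T-1}(\kappa^i_{\ell+1}-\kappa^i_\ell)(1 - w^i_{T\ell})$, while each sample path $i$ with $T^i = T$ contributes only the latter; summing over $i$ reproduces the objective of \eqref{prob:h_bar} verbatim. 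This step is pure bookkeeping and presents no difficulty.

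The substantive work is the constraint reduction, which I would carry out by eliminating the variables $w^i_{t\ell}$ with $t \notin \mathcal{T}^i$. The crucial observation is that, once $b$ is fixed, every remaining constraint of \eqref{prob:h} is an order relation of the form $w^i_{t\ell} \le w^i_{t+1,\ell}$ or $w^i_{t\ell} \le w^i_{t,\ell+1}$, or a one-sided bound $\beta \le w^i_{t\ell}$ with $\beta \in \{0,1\}$ a fixed value of some $b$. Eliminating the intermediate $t$-variables by transitivity (Fourier--Motzkin) collapses the chain $b^i_{T^i} \le w^i_{T^i+1,1} \le \cdots \le w^i_{T,1}$ into $b^i_{T^i} \le w^i_{T1}$, and collapses $b^j_{T^j} \le w^i_{T^j,\ell} \le \cdots \le w^i_{t,\ell}$ into $b^j_{T^j} \le w^i_{t\ell}$ for $t \in \mathcal{T}^i$ with $T^j \le t$; these are precisely the constraints of \eqref{prob:h_bar}. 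The lower bounds $b^j_T \le w^i_{T,\ell}$ at $\ell = L^i_T$ coming from $b^j_T = 1$ are dropped in \eqref{prob:h_bar}, which is harmless because $w^i_{T,L^i_T}$ does not appear in the objective and, by monotonicity in $\ell$, bounding it below by $1$ never constrains any objective-relevant $w^i_{T,\ell}$ with $\ell < L^i_T$.

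I expect the main obstacle to be the extension direction: given an optimal $(\bar{b},\bar{w})$ of \eqref{prob:h_bar}, I must assign the eliminated variables $w^i_{t\ell}$ with $t \notin \mathcal{T}^i$ so as to recover a feasible solution of the restricted \eqref{prob:h} with the same objective, which requires restoring the $t$-monotonicity constraints that \eqref{prob:h_bar} omits, in particular $w^i_{T^i,\ell} \le w^i_{T,\ell}$. The key point I would establish is that the set of indices $j$ generating lower bounds on $w^i_{T^i,\ell}$ (those with $T^j \le T^i$) is contained in the set generating lower bounds on $w^i_{T,\ell}$ (those with $T^j \le T$), so that replacing $\bar{w}$ by the coordinatewise-minimal feasible $w$ for the fixed $\bar{b}$, which is optimal since each $w$ appears only through a term with nonnegative coefficient on $-w$, automatically yields $w^i_{T^i,\ell} \le w^i_{T,\ell}$; here I would use that the feasible region for fixed $b$ is closed under coordinatewise minimum, since each constraint is a pairwise inequality or a lower bound. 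The eliminated variables can then be filled in monotonically along $t$ by the running maximum of their active lower bounds, and set to values $\ge 1$ at $t = T$, $\ell \ge L^i_T$, producing a feasible point of the restricted \eqref{prob:h} with unchanged objective. Since this reconstruction leaves $b$ equal to the formula in the statement, the two optimal values coincide and the claimed solution correspondence follows.
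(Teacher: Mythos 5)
Your proposal is correct and follows essentially the same route as the paper's proof: fix the $b$-structure via Lemma~\ref{lem:optimal_h_bar}, substitute it into $\bar{f}(b,w)$ to recover the objective of \eqref{prob:h_bar}, and then argue that restricting to the variables $w^i_{t\ell}$ with $t \in \mathcal{T}^i$ loses nothing. Where the paper compresses the constraint equivalence into an explicit $0$--$1$ closed form for an optimal $w$ of the reduced problem (its ``three observations''), you make the same point via Fourier--Motzkin projection together with the coordinatewise-minimal feasible $w$, whose nested lower-bound index sets automatically restore the omitted monotonicity constraint $w^i_{T^i \ell} \le w^i_{T \ell}$ --- a more explicit rendering of the step the paper leaves implicit.
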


%Let us make a brief remark about Lemma~\ref{lem:equivalence_of_h_bar} shows that any optimal solution for \eqref{prob:h_bar} can be transformed into an optimal solution. 

To summarize, we have shown in Lemmas~\ref{lem:optimal_h_bar} and \ref{lem:equivalence_of_h_bar} that solving the optimization problem~\eqref{prob:h} can be reduced to solving the optimization problem~\eqref{prob:h_bar} when the objective function of \eqref{prob:h} satisfies $f(b,w) = \bar{f}(b,w)$. More precisely, we have shown that any optimal solution for \eqref{prob:h_bar} can be transformed into an optimal choice for the decision variables ${b}$ in \eqref{prob:h} when the objective function of \eqref{prob:h} satisfies $f(b,w) = \bar{f}(b,w)$. By applying the transformations described in Theorems~\ref{thm:reform} and \ref{thm:bp_main}, we can construct exercise policies $\mu \equiv (\mu_1,\ldots,\mu_T)$ from the decision variables $b$ whose robust objective value in the robust optimization problem $\widehat{J}_{N,\epsilon}(\mu)$ is greater than or equal to the optimal objective value of \eqref{prob:h_bar}.  

From a computational tractability standpoint, the optimization problem~\eqref{prob:h_bar} is ultimately attractive  because it has significantly fewer decision variables and fewer constraints than the optimization problem~\eqref{prob:h}. Indeed, we observe from inspection that \eqref{prob:h_bar} has $\mathcal{O}(N T)$ decision variables and $\mathcal{O}(NT + N^2)$ constraints, whereas \eqref{prob:h} has $\mathcal{O}(N T^2)$ decision variables and $\mathcal{O}(NT^2 + N^2 T )$ constraints. Because of the same reasoning as given in \S\ref{sec:approx:prelim},  the integrality constraints on the decision variables can also be relaxed without loss of generality; that is, \eqref{prob:h} can be solved as a linear optimization problem in which each constraints $b^i_t \in \{0,1\}$ are replaced with $0 \le b^i_t \le 1$. Thus,   \eqref{prob:h_bar}  can be easily formulated and solved directly by commercial linear optimization solvers.  %Because the optimization problem~\eqref{prob:h_bar} preserves the same constraint structure as the optimization problem~\eqref{prob:h}, we note that optimal solutions for 

We conclude \S\ref{sec:approx:computation} by using the above optimization problem~\eqref{prob:h_bar} to outline the proof of Proposition~\ref{prop:approx:tract}. Specifically, our proof of Proposition~\ref{prop:approx:tract} consists of reformulating \eqref{prob:h_bar} as a maximal closure problem in a directed graph. We then use the efficient maximum-flow algorithm of  \cite{orlin2013max} to solve the maximal closure problem. The remaining details for the proof of Proposition~\ref{prop:approx:tract} are found in Appendix~\ref{appx:approx}.

\subsubsection{Approximation Guarantees.} \label{sec:approx:theory} 
 We conclude \S\ref{sec:approx} by performing a theoretical analysis of the approximation quality of the proposed heuristic~\eqref{prob:h_bar}  for the optimization problem~\eqref{prob:bp}. 
 Our motivation here is to understand whether the proposed heuristic is ever guaranteed to find optimal solutions for \eqref{prob:bp} and, conversely, whether it is ever possible for the  gap between the optimal objective values of \eqref{prob:h_bar} and \eqref{prob:bp} to be arbitrarily large. Our answers to these theoretical questions  are presented below in Propositions~\ref{prop:equivalence_2}, \ref{prop:lb_on_h2:T}, and \ref{prop:lb_on_h2:N}.  %shed light onto whether the proposed heuristic has the capability of providing high-quality Markovian stopping rules in realistic and possibly complex NMOS problems.  % in `worst-case' settings. % the proposed heuristic can be arbitrarily bad. As we will show below, the answer to the first question is \emph{yes}, and the answer to the second question is \emph{no}.  % (Theorem~\ref{}) and in some cases is guaranteed to find optimal solutions for \eqref{prob:bp}. In \S\ref{sec:experiments}, we demonstrate   that the proposed heuristic.

We begin by focusing on optimal stopping problems in which the number of periods is equal to two.  We recall from Theorem~\ref{thm:twoperiods} in \S\ref{sec:exact} that any instance of the robust optimization problem with two periods can be solved exactly in strongly polynomial-time. We will now prove  Theorem~\ref{thm:twoperiods} by establishing that the approximation gap between \eqref{prob:h_bar}  and \eqref{prob:bp} is always equal to zero when the number of periods is equal to two: % the following Lemma~\ref{lem:}, the algorithm that 
\begin{proposition} \label{prop:equivalence_2}
If $T=2$, then the optimal objective values of \eqref{prob:h_bar} and \eqref{prob:bp} are equal.
\end{proposition}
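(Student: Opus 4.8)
The plan is to prove the two inequalities between the optimal values of \eqref{prob:h_bar} and \eqref{prob:bp} separately. The direction ``$\le$'' is immediate: by Lemma~\ref{lem:equivalence_of_h_bar} the optimal value of \eqref{prob:h_bar} equals that of \eqref{prob:h} with $f=\bar f$, and since $\bar f(b,w)$ is a lower bound on the objective of \eqref{prob:bp} over the common feasible region (Remark~\ref{rem:f_bar}), maximizing $\bar f$ cannot exceed maximizing the objective of \eqref{prob:bp}. The real work is the reverse inequality, and I would establish it by exhibiting an \emph{optimal} solution $(b^\star,w^\star)$ of \eqref{prob:bp} at which $\bar f(b^\star,w^\star)$ already equals the objective of \eqref{prob:bp}; since such a point is feasible for \eqref{prob:h}, this forces the optimal value of \eqref{prob:h_bar} to be at least that of \eqref{prob:bp}.

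The starting point is a pointwise decomposition of the gap. For any feasible $(b,w)$, using $b^i_t\in\{0,1\}$ one has $b^i_t(1-w^i_{t\ell})-(b^i_t-w^i_{t\ell})=w^i_{t\ell}(1-b^i_t)$, so the difference between the objective of \eqref{prob:bp} and $\bar f(b,w)$ equals
\[
\frac{1}{N}\sum_{i=1}^N\Bigg[\sum_{t \notin \mathcal{T}^i}\sum_{\ell=1}^{L^i_t-1}(\kappa^i_{\ell+1}-\kappa^i_\ell)\,b^i_t(1-w^i_{t\ell})
+\sum_{t \in \mathcal{T}^i}\sum_{\ell=1}^{L^i_t-1}(\kappa^i_{\ell+1}-\kappa^i_\ell)\,w^i_{t\ell}(1-b^i_t)\Bigg],
\]
a sum of nonnegative terms. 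For $T=2$ this collapses: since $\mathcal{T}^i=\{2\}$ when $g(2,x^i)>g(1,x^i)$ and $\mathcal{T}^i=\{1,2\}$ otherwise, the only index with $t\notin\mathcal{T}^i$ is $t=1$ for paths with $g(2,x^i)>g(1,x^i)$. Hence it suffices to produce an optimal $(b^\star,w^\star)$ satisfying \textbf{(A)} $b^i_1=0$ whenever $g(2,x^i)>g(1,x^i)$, and \textbf{(B)} $w^i_{t\ell}=0$ for every $t\in\mathcal{T}^i$ and $\ell<L^i_t$ with $b^i_t=0$.

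To secure \textbf{(A)} I would pass to the combinatorial picture \eqref{prob:mip}. By an exchange argument there is an optimal $\sigma^1,\dots,\sigma^N$ with $\sigma^i=2$ for every path with $g(2,x^i)>g(1,x^i)$: starting from any optimal solution, switch such a path from $\sigma^i=1$ to $\sigma^i=2$; its own contribution moves from $g(1,x^i)$ to either $g(2,x^i)$ (when no other path still stops at period $1$ near it) or $\min\{g(1,x^i),g(2,x^i)\}=g(1,x^i)$ (otherwise), hence does not decrease, while removing it from the period-$1$ stoppers only relaxes the capping of the remaining paths' contributions (its new presence among period-$2$ stoppers is harmless, since period $2$ is already active for any path with $\sigma^j=2$). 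Encoding this $\sigma$ by $b^i_t\triangleq \mathds{1}[t\ge\sigma^i]$ gives $b^i_2=1$ for all $i$, and by Lemma~\ref{lem:exact:bp_1} the value $\frac1N\sum_i\psi^i(b)$ equals the optimal value of \eqref{prob:mip}, so this $b$ is optimal for \eqref{prob:bp_1}; choosing for each $i$ a maximizer $w^i$ of the linear program of Lemma~\ref{lem:submodular_reform} then makes the objective of \eqref{prob:bp} equal to $\frac1N\sum_i\psi^i(b)$, which by Theorem~\ref{thm:bp_main} equals the optimal value of \eqref{prob:bp}, so $(b^\star,w^\star)$ is optimal. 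Condition \textbf{(B)} then comes for free: at $t=2$ the factor $1-b^i_2=0$ kills those terms, while for $t=1$ with $b^i_1=0$ the variables $w^i_{1\ell}$ with $\ell<L^i_1$ neither appear in that linear program's objective nor are bounded below (the only lower bound on $w^i_{1\cdot}$ is at $\ell=L^i_1$), so they may be set to $0$ without affecting feasibility or optimality. With both conditions met the entire gap vanishes, giving $\bar f(b^\star,w^\star)$ equal to the optimal value of \eqref{prob:bp} and hence the reverse inequality.

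The main obstacle is the exchange argument behind \textbf{(A)}, which is exactly where $T=2$ is essential: it relies on the facts that a path whose reward peaks in the final period loses nothing by refusing to stop early, and that declining to stop early can only help the other paths. For $T\ge 3$ this reasoning fails, because stopping at an intermediate period may be forced to realize a capped reward that genuinely beats waiting, so dropping the non-peak, non-terminal periods in $\bar f$ is no longer without loss --- consistent with the hardness recorded in Theorem~\ref{thm:hard}.
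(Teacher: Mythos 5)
Your overall architecture is sound and your route is genuinely different from the paper's. The paper proves the nontrivial inequality (optimal value of \eqref{prob:bp} $\le$ optimal value of \eqref{prob:h_bar}) by a chain of problem transformations: it asserts that one may fix $b^i_2=1$ for all $i$ and $b^i_1=0$ for paths with $T^i=2$, substitutes these variables out, notes that one may additionally take $w^i_{1\ell}=0$ for $\ell<L^i_1$, replaces the bilinear terms $b^i_1(1-w^i_{1\ell})$ by $b^i_1-w^i_{1\ell}$ under that restriction, and finally relaxes the restriction away to land exactly on \eqref{prob:h_bar}. You instead work at the level of solutions: you decompose the gap between the \eqref{prob:bp} objective and $\bar f$ pointwise and exhibit an optimal point of \eqref{prob:bp} at which every gap term vanishes, obtaining the period-$2$ structure through an exchange argument on \eqref{prob:mip}. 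Both arguments rest on the same structural fact (for $T=2$, every path whose reward peaks in the final period may be made to stop in period $2$ at no cost), and your exchange argument is a correct proof of the claim that the paper only asserts ``from inspection.''

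There is, however, one step whose justification fails as written. You encode the optimal $\sigma$ by $b^i_t \triangleq \mathbb{I}\{t \ge \sigma^i\}$ and assert that ``by Lemma~\ref{lem:exact:bp_1} the value $\frac1N\sum_i\psi^i(b)$ equals the optimal value of \eqref{prob:mip}.'' Lemma~\ref{lem:exact:bp_1} does not deliver this: the identity between $\psi^i(b)$ and the contribution of $\sigma$ in \eqref{prob:mip} is established in its proof only for the encoding $b^i_t=\mathbb{I}\{\sigma^i=t\}$, and the inequality actually stated in the lemma runs the wrong way for your purpose (it bounds the \eqref{prob:bp_1} value of a given $b$ \emph{above} by the \eqref{prob:mip} value of the derived $\sigma$, hence above by the optimum). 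The distinction is not cosmetic, because you genuinely need the cumulative encoding: under the equality encoding, a path with $\sigma^i=1$ has $b^i_2=0$ while the constraint $b^i_1\le w^i_{2,1}$ together with monotonicity forces $w^i_{2\ell}\ge 1$, so your condition \textbf{(B)} fails at $t=2$ and the gap is strictly positive there. You must therefore verify directly that $\psi^i(b)$ under the cumulative encoding still equals the \eqref{prob:mip} contribution of $\sigma$. This is true when $T=2$ --- the only pairs additionally activated by the cumulative encoding occur at $s=2$, where they contribute the candidate value $g(2,x^i)$, which is already available through $j=i$ whenever $\sigma^i=2$ and is irrelevant when $\sigma^i=1$ --- but it is false for $T\ge 3$, where a path with $\sigma^j=1$ would spuriously activate period $2$ in the minimum of a path with $\sigma^i=3$. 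So $T=2$ enters your proof in two places, not one: in the exchange argument, as you note, and in this encoding identity, which your write-up leaves unproven and mis-attributes to Lemma~\ref{lem:exact:bp_1}. With that short verification added, the proof is complete.
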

Our takeaways from the above proposition are two-fold. First, Proposition~\ref{prop:equivalence_2} shows that there indeed exist settings in which the proposed heuristic is guaranteed to find optimal solutions for \eqref{prob:bp}. Second, Proposition~\ref{prop:equivalence_2} in combination with   Theorems~\ref{thm:reform} and \ref{thm:bp_main} implies that solving the robust optimization problem~\eqref{prob:sro} can reduced to solving the optimization problem~\eqref{prob:h_bar} when $T = 2$. Therefore, we observe that the proof of  Theorem~\ref{thm:twoperiods}  follows immediately from Propositions~\ref{prop:approx:tract} and \ref{prop:equivalence_2}. 

\begin{comment}  is obtained by simply solving. Indeed, the proof of Theorem~\ref{thm:twoperiods} can be stated succinctly:
\begin{proof}{Proof of Theorem~\ref{thm:twoperiods}.}
Suppose that $T = 2$, and consider the optimization problem~\eqref{prob:h_bar}. Proposition~\ref{prop:equivalence_2} shows that the optimal objective value of \eqref{prob:h_bar} is equal to the optimal objective value of \eqref{prob:bp}. Moreover, since $T = 2$, Proposition~\ref{prop:approx:tract} implies that \eqref{prob:h_bar} can be solved in $\mathcal{O}(N^2 T (N+T)) = \mathcal{O}(N^3)$ time. This concludes the proof of  Theorem~\ref{thm:twoperiods}. \halmos 
\end{proof} 
\end{comment}

We next develop a general bound on the {gap} between the optimal objective values of \eqref{prob:h_bar} and \eqref{prob:bp} that holds for any fixed number of periods $T$. % and any number of  simulated sample paths $N$. 
%We conclude \S\ref{sec:approx:theory} by developing general bounds on the {gap} between the optimal objective values of \eqref{prob:h_bar} and \eqref{prob:bp} that hold for any number of periods $T$ and any number of  simulated sample paths $N$. 
For notational convenience, we let $J^{\textnormal{\ref{prob:bp}}} $ denote the optimal objective value of \eqref{prob:bp} and $J^{\textnormal{\ref{prob:h_bar}}}$  denote the optimal objective value of \eqref{prob:h_bar}. 
\begin{comment}
To motivate our subsequent results, we recall that the optimal objective value of \eqref{prob:bp} is always greater than or equal to the optimal objective value of \eqref{prob:h_bar}. The aim of the subsequent results, denoted by  Propositions~\ref{prop:lb_on_h2:T} and \ref{prop:lb_on_h2:N}, is to complement  establish that the optimal objective value of \eqref{prob:h_bar}  cannnot be arbitrarily less than the optimal objective value of \eqref{prob:bp}.   
\end{comment}
\begin{proposition} \label{prop:lb_on_h2:T}
$J^{\textnormal{\ref{prob:h_bar}}} \ge \frac{1}{T} J^{\textnormal{\ref{prob:bp}}} $.
\end{proposition}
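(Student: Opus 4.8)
The plan is to prove the inequality by exhibiting a single explicit feasible solution $(b,w)$ of \eqref{prob:h_bar} whose objective value is at least $\frac1T J^{\textnormal{\ref{prob:bp}}}$; since \eqref{prob:h_bar} is a maximization, this lower-bounds $J^{\textnormal{\ref{prob:h_bar}}}$ and finishes the proof. (By Lemma~\ref{lem:equivalence_of_h_bar} I may equivalently build a feasible solution of \eqref{prob:h} with objective $\bar{f}$.) First I would establish a pigeonhole reduction. By Theorems~\ref{thm:reform} and \ref{thm:bp_main}, $J^{\textnormal{\ref{prob:bp}}}$ equals the optimal value of \eqref{prob:mip}, whose objective assigns to each sample path $i$ a reward that is a minimum of terms $g(t,x^i)$ and is therefore at most $g(T^i,x^i)=\max_t g(t,x^i)$. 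Hence $\sum_{i=1}^N g(T^i,x^i)\ge N\,J^{\textnormal{\ref{prob:bp}}}$. Partitioning the sample paths into the at most $T$ groups $G_\tau\triangleq\{i: T^i=\tau\}$ and averaging, there is a period $\tau^*$ with $\sum_{i\in G_{\tau^*}} g(T^i,x^i)\ge \frac1T\sum_i g(T^i,x^i)\ge \frac{N}{T} J^{\textnormal{\ref{prob:bp}}}$.

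Next I would specify the construction: set $b^i_t=1$ exactly when $t=T$, or when $t=T^i$ and $T^i\ge\tau^*$; set all other $b^i_t=0$; and take $w$ to be the componentwise-smallest feasible choice, which exists because the $w$-constraints of \eqref{prob:h} form an upper set (a lattice closed under componentwise minimum). Intuitively, every sample path whose best period occurs at or after $\tau^*$ is stopped at its own best period $T^i$, while the remaining sample paths are stopped at the terminal period $T$. The remaining work is to lower-bound $\bar{f}(b,w)$ on this solution by verifying the per–sample-path contributions. For $i\in G_{\tau^*}$ (stopped at $\tau^*$), no sample path stops strictly before $\tau^*$, and the paths stopping at $\tau^*$ are exactly the other members of $G_{\tau^*}$; these force $w^i_{\tau^*\ell}$ only at the index $\ell$ with $\kappa^i_\ell=g(\tau^*,x^i)=g(T^i,x^i)$, i.e.\ the top index $L^i_{\tau^*}$, which lies outside the objective's summation range $\{1,\ldots,L^i_{\tau^*}-1\}$. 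Thus $w^i_{\tau^*\ell}=0$ throughout that range, the telescoping sum contributes exactly $g(T^i,x^i)$, and the terminal ($t=T$) term vanishes because $b^i_{T^i}=1$ forces $w^i_{T1}=1$. For an activated path with $T^i>\tau^*$ the term $b^i_t-w^i_{t\ell}=1-w^i_{t\ell}\ge 0$ at $t=T^i$ and the terminal term again vanishes, so its contribution is nonnegative; and for a path with $T^i<\tau^*$ no sample path stops at any period $\le T^i$, so $w^i_{T^i\ell}=0$ and its terminal term is nonnegative. Summing these yields $\bar{f}(b,w)\ge \frac1N\sum_{i\in G_{\tau^*}} g(T^i,x^i)\ge \frac1T J^{\textnormal{\ref{prob:bp}}}$.

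The main obstacle is the linearization artifact in $\bar{f}$: it replaces each product $b^i_t(1-w^i_{t\ell})$ by $b^i_t-w^i_{t\ell}$, which becomes \emph{negative} whenever $b^i_t=0$ but interference from overlapping uncertainty sets forces $w^i_{t\ell}=1$. The naive construction that stops only $G_{\tau^*}$ at $\tau^*$ and everyone else at $T$ incurs exactly these penalties: a path $i$ left at $T$ but satisfying $\mathcal U^i_{\tau^*}\cap\mathcal U^j_{\tau^*}\neq\emptyset$ for some $j\in G_{\tau^*}$ with $\tau^*\le T^i$ picks up a term of size $-(g(T^i,x^i)-g(\tau^*,x^i))$, which can be arbitrarily large and can dominate the pigeonhole gain. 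The decisive idea that overcomes this is to additionally stop \emph{every} sample path whose best period is at or after $\tau^*$ at its own best period: setting $b^i_{T^i}=1$ for these paths makes each of their contributions nonnegative, while—because such paths stop strictly after $\tau^*$—it never constrains the variables $w^i_{\tau^*\ell}$ of the $G_{\tau^*}$ paths and hence leaves their guaranteed contribution $g(T^i,x^i)$ intact. Verifying precisely this non-interference between the later-stopping activated paths and the $G_{\tau^*}$ paths, together with checking that the induced $w$ truly forces only the out-of-range top index for the $G_{\tau^*}$ paths, is the crux of the argument.
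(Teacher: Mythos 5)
Your proof is correct and follows essentially the same route as the paper's: the same pigeonhole choice of period (your $\tau^*$ is the paper's $t^*$), the same upper bound $J^{\textnormal{\ref{prob:bp}}} \le \frac{1}{N}\sum_{i=1}^N g(T^i,x^i)$, and the same decisive construction that activates every sample path with $T^i \ge \tau^*$ at its own best period, so that the $T^i=\tau^*$ paths contribute their full reward while all other paths contribute nonnegatively. The only difference is bookkeeping: the paper first passes to the reduced problem \eqref{prob:h_bar_lb} (its Lemma~\ref{lem:lb_on_h2:lb}), which eliminates the terminal-period variables and makes the feasibility and objective verification immediate, whereas you verify the same solution directly in \eqref{prob:h} with the componentwise-minimal $w$.
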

The above proposition can be viewed as valuable from a theoretical perspective,  as it shows that  the gap between the optimal objective values of \eqref{prob:h_bar} and \eqref{prob:bp} can never be arbitrarily large. Indeed, we recall from Remark~\ref{rem:f_bar} and Lemma~\ref{lem:equivalence_of_h_bar} that the optimal objective value of \eqref{prob:h_bar} is always less than or equal to the optimal objective value of \eqref{prob:bp}.  Thus, Proposition~\ref{prop:lb_on_h2:T} establishes that there exist bounds on the gap between the optimal objective values  of \eqref{prob:h_bar} and \eqref{prob:bp}  that are independent of the choice of the number of  simulated sample paths $N$,  the choice of the robustness parameter $\epsilon$ that controls the size of the uncertainty sets, the reward function, etc.

 Our third theoretical guarantee in  \S\ref{sec:approx:theory}, stated below as Proposition~\ref{prop:lb_on_h2:N}, provides a bound on the {gap} between the optimal objective values of \eqref{prob:h_bar} and \eqref{prob:bp} that does not depend on the number of periods $T$. % but depends on .  \S\ref{sec:approx:theory}, stated below as Proposition~\ref{prop:lb_on_h2}, shows that the answer to the second question is \textbf{no}. Namely, there exist nontrivial theoretical bounds on the gap between the  optimal objective values of \eqref{prob:h_bar} and \eqref{prob:bp} that hold for robust optimization problems with any given number of sample paths $N$, number of periods $T$, and robustness parameter $\epsilon$.  %can never be arbitrarily less than the optimal objective value of  \eqref{prob:bp}. More precisely, we show in Proposition~\ref{prop:lb_on_h2} that there exists lower bounds on the gap between the optimal objective values of \eqref{prob:h_bar} and \eqref{prob:bp} that holds in all settings, 
 Our theoretical bound in Proposition~\ref{prop:lb_on_h2:N} will require that the reward function of the NMOS problem satisfies the following regularity condition: % The theoretical lower bound is stated below as Proposition~\ref{prop:lb_on_h2}. % (Assumption~\ref{ass:lipschitz_local}). 

\begin{assumption} \label{ass:lipschitz_local}
There exists a constant $L > 0$ such that, for each period $t \in \{1,\ldots,T\}$, the reward function satisfies $|g(t,y) - g(t,y')| \le L \| y_t - y_t' \|_\infty$ for all $y \equiv (y_1,\ldots,y_T),y' \equiv (y'_1,\ldots,y'_T) \in \mathcal{X}^T$. 
\end{assumption}
The above assumption stipulates for each period $t \in \{1,\ldots,T\}$ that the reward function $y \mapsto g(t,y)$ is Lipschitz-continuous with respect to the state $y_t \in \mathcal{X}$. The above assumption is relatively mild in applications such as options pricing when the reward function depends only on the current state in each period.\footnote{{\color{black}For example, we observe that Assumption~\ref{ass:lipschitz_local} is satisfied by the reward function in the multi-dimensional barrier option pricing problem from \S\ref{sec:barrier} when the state space in the optimal stopping problem is augmented as $(x_t,q_t) \equiv (\max_{a \in \{1,\ldots,d\}} \xi_{t,a}, 1 - \mathbb{I}  \{ \max_{a \in \{1,\ldots,d\}} \xi_{s,a} \le B(s) \text{ for all } s \in \{1,\ldots,t\}  \})$. }} We will impose Assumption~\ref{ass:lipschitz_local} in the following  Proposition~\ref{prop:lb_on_h2:N} to eliminate pathological situations in which slight perturbations of sample paths lead to drastic changes in reward.  %In the appendices, we provide an example that shows that the gap between the optimal objective values of \eqref{prob:h_bar} and \eqref{prob:bp} can be arbitrarily large in the absence of Assumption~\ref{ass:lipschitz_local} (Example~\ref{} in Appendix~\ref{}). 
Once again, for notational convenience, we let  $J^{\textnormal{\ref{prob:h_bar}}}$ and $J^{\textnormal{\ref{prob:bp}}}$ denote the optimal objective values of the optimization problems~\eqref{prob:h_bar} and \eqref{prob:bp}, respectively.
\begin{proposition} \label{prop:lb_on_h2:N} If Assumption~\ref{ass:lipschitz_local} holds, then $J^{\textnormal{\ref{prob:h_bar}}} \ge \frac{1}{\log N + 1}J^{\textnormal{\ref{prob:bp}}} -  2\epsilon L$.
\end{proposition}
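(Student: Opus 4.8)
The plan is to exhibit, for each rank $m \in \{1,\ldots,N\}$, an explicit feasible solution to \eqref{prob:h_bar} whose objective value is at least $\tfrac{m\,v_m}{N} - 2\epsilon L$, and then to extract the stated bound from a harmonic-sum inequality. First I would fix an optimal solution $b^\star$ of \eqref{prob:mip} (equivalently of \eqref{prob:bp}, via Theorem~\ref{thm:bp_main}) and set $v_i \triangleq \psi^i(b^\star)$, so that by Lemma~\ref{lem:exact:bp_1} we have $\tfrac{1}{N}\sum_{i=1}^N v_i = J^{\textnormal{\ref{prob:bp}}}$. Relabelling the sample paths so that $v_1 \ge v_2 \ge \cdots \ge v_N \ge 0$, I would record two elementary facts: every covered period contributing to $\psi^i(b^\star)$ satisfies $v_i \le g(T^i,x^i)$ (since stopping period $\sigma^i$ is always self-covered and $T^i$ is the argmax), and Assumption~\ref{ass:lipschitz_local} yields the crucial consequence that $\mathcal{U}^i_t \cap \mathcal{U}^j_t \neq \emptyset$ forces $\|x^i_t - x^j_t\|_\infty \le 2\epsilon$, hence $|g(t,x^i) - g(t,x^j)| \le 2\epsilon L$.

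Next I would, for each $m$, construct the ``threshold'' solution to \eqref{prob:h_bar} that activates exactly the paths with $g(T^i,x^i)\ge v_m$; that is, $b^i_{T^i} = 1$ whenever $g(T^i,x^i)\ge v_m$ and $b^i_{T^i}=0$ otherwise (with $b^i_T=1$ as always). To lower-bound $J^{\textnormal{\ref{prob:h_bar}}}$ it suffices to evaluate $\bar f$ at this $b$ together with \emph{any} feasible $w$, for which I would take the minimal $0/1$ vector forced by the coverage and monotonicity constraints of \eqref{prob:h_bar}. The main obstacle, and the technical heart of the argument, is the per-path bookkeeping of $\bar f(b,w)$. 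Grouping the objective by sample path and period $t\in\mathcal{T}^i$, I expect to show: (i) the term at $t=T^i$ of an \emph{activated} path telescopes to the smallest reward value $\kappa^i_\ell$ forced by a covering activated path, and since every such covering path $j$ has $g(T^j,x^j)\ge v_m$, the Lipschitz consequence gives a contribution of at least $v_m - 2\epsilon L$; (ii) every $t=T$ term is nonnegative; and (iii) the only possibly negative terms are the $t=T^i$ terms of \emph{inactive} paths, each of which telescopes to $-(g(T^i,x^i)-\kappa^i_{\ell})$ with $\kappa^i_\ell \ge v_m-2\epsilon L$ and $g(T^i,x^i) < v_m$, hence is at least $-2\epsilon L$. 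Summing over the at least $m$ activated paths and the remaining inactive paths then yields
\[
J^{\textnormal{\ref{prob:h_bar}}} \;\ge\; \bar f(b,w) \;\ge\; \frac{(\#\text{activated})\,v_m}{N} - 2\epsilon L \;\ge\; \frac{m\,v_m}{N} - 2\epsilon L .
\]

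Finally I would combine these $N$ bounds. Writing $M \triangleq \max_{m} m\,v_m$, the bound $v_m \le M/m$ gives $N\,J^{\textnormal{\ref{prob:bp}}} = \sum_{m=1}^N v_m \le M\sum_{m=1}^N \tfrac{1}{m} = M\,H_N \le M(\log N + 1)$, where $H_N$ is the $N$th harmonic number and $\log$ denotes the natural logarithm. Hence $\tfrac{M}{N} \ge \tfrac{1}{\log N + 1}\,J^{\textnormal{\ref{prob:bp}}}$, and choosing $m$ to attain $M$ in the per-rank inequality delivers $J^{\textnormal{\ref{prob:h_bar}}} \ge \tfrac{M}{N} - 2\epsilon L \ge \tfrac{1}{\log N + 1}\,J^{\textnormal{\ref{prob:bp}}} - 2\epsilon L$, as claimed. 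I anticipate that verifying the telescoping identities in step (i)--(iii) above — in particular confirming that the forced $w$ is feasible and that the self-coverage of an activated path occurs only at the out-of-range top index $L^i_{T^i}$ — will require the most care, whereas the harmonic-sum step is routine.
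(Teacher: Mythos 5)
Your proposal is correct, and it shares the paper's core mechanism — a threshold-activated feasible solution to \eqref{prob:h_bar}, the $2\epsilon L$ slack extracted from Assumption~\ref{ass:lipschitz_local} via overlapping uncertainty sets, and a harmonic-number summation — but it is organized around a genuinely different decomposition. The paper never touches the optimal per-path values of \eqref{prob:bp}: instead it first proves Lemma~\ref{lem:lb_on_h2:ub}, the upper bound $J^{\textnormal{\ref{prob:bp}}} \le \frac{1}{N}\sum_{i=1}^N g(T^i,x^i)$, then reduces \eqref{prob:h_bar} to the smaller problem \eqref{prob:h_bar_lb} (Lemma~\ref{lem:lb_on_h2:lb}, which discards the $t=T$ terms entirely), sorts the sample paths by the maximum rewards $g(T^i,x^i)$, and for each rank $i^*$ activates the top $i^*$ paths with a \emph{uniform} threshold choice $w^i_{T^i\ell} = \mathbb{I}\{\kappa^i_\ell \ge g(T^{i^*},x^{i^*}) - 2L\epsilon\}$ to get $J^{\textnormal{\ref{prob:h_bar}}} \ge \max_i \frac{i}{N}g(T^i,x^i) - 2\epsilon L$ (Claim~\ref{claim:lb_on_h2:N}), finishing with the harmonic bound applied to $\frac{1}{N}\sum_i g(T^i,x^i)$ (Claim~\ref{claim:lb_on_h2:N2}). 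You instead rank by $v_i = \psi^i(b^\star)$, which buys you the exact identity $\frac{1}{N}\sum_i v_i = J^{\textnormal{\ref{prob:bp}}}$ from Lemma~\ref{lem:exact:bp_1} and thereby makes Lemma~\ref{lem:lb_on_h2:ub} unnecessary; the price is that you must do the per-path bookkeeping directly inside \eqref{prob:h_bar} (verifying that $t=T$ terms are nonnegative, that activated $t=T^i$ terms telescope to at least $v_m - 2\epsilon L$, and that inactive $t=T^i$ terms lose at most $2\epsilon L$ each), work the paper avoids by passing through \eqref{prob:h_bar_lb} first. Both accountings are sound — your inactive-path bound $-(g(T^i,x^i)-\kappa^i_{\ell^*}) \ge -2\epsilon L$ uses $g(T^i,x^i) < v_m$ and $\kappa^i_{\ell^*} \ge v_m - 2\epsilon L$ exactly as the paper's lines use the analogous inequalities with $g(T^{i^*},x^{i^*})$ in place of $v_m$ — and your observation about self-coverage landing only at the out-of-range index $L^i_{T^i}$ is precisely the point the paper handles in its footnote on $L^i_{T^i} = |\mathcal{K}^i|$. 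Net assessment: same engine, different chassis; yours is marginally more self-contained (two fewer auxiliary lemmas), the paper's is marginally cleaner per step.
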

Compared to the bound from Proposition~\ref{prop:lb_on_h2:T}, we observe that the bound from Proposition~\ref{prop:lb_on_h2:N} is attractive when the robustness parameter $\epsilon$ is relatively small and when the number of simulated sample paths $N$ is relatively small compared to the number of periods $T$. For example, we observe that $ \frac{1}{\log N + 1} \ll \frac{1}{T}$ will hold whenever the number of sample paths $N$ is subexponential in the number of periods $T$. 

In summary, Propositions~\ref{prop:equivalence_2}, \ref{prop:lb_on_h2:T}, and \ref{prop:lb_on_h2:N} establish under mild and verifiable conditions that the approximation gap between the proposed heuristic~\eqref{prob:h_bar} and the robust optimization problem~\eqref{prob:sro} cannot be arbitrarily large and, in some cases, is guaranteed to be equal to zero. Ultimately, the practical value of the proposed heuristic~\eqref{prob:h_bar} lies in its performance in the context of optimal stopping applications. In the following \S\ref{sec:experiments}, we provide numerical evidence that \eqref{prob:h_bar} can indeed find high-quality stopping rules for realistic stochastic optimal stopping problems in practical computation times.

%Ultimately, the practical value of the proposed heuristic~\eqref{prob:h_bar} lies in its performance in the context of optimal stopping
%applications. In the following section, we provide numerical evidence that \eqref{prob:h_bar} can find high-quality stopping rules for the stochastic optimal stopping problem~\eqref{prob:main}.  
}

\section{Numerical Experiments} \label{sec:experiments}

In this section, we perform numerical experiments to compare our robust optimization {\color{black}approach} and {\color{black}three} state-of-the-art benchmarks from the literature \citep{longstaff2001valuing,ciocan2020interpretable,desai2012pathwise}. The {\color{black}first two} benchmarks serve as representatives of two classes of approximation methods for stochastic optimal stopping problems (approximate dynamic programming  and parametric exercise policies) which, similarly as the robust optimization {\color{black}approach}, only require the ability to simulate sample paths of the entire sequence of random states.  {\color{black}The third benchmark is representative of state-of-the-art duality-based methods for stochastic optimal stopping problems.} %The two methods from the literature require selection of basis functions, and these details are discussed for each numerical experiment in the sections.  
All experiments were conducted on  a 2.6 GHz 6-Core Intel Core i7 processor with 16 GB of memory. All methods are implemented in the Julia programming language and solved using the JuMP library and Gurobi  optimization software.

\subsection{A  Simple Non-Markovian Problem} \label{sec:toy}
% Stochastic optimal stopping problems with non-Markovian stochastic processes arise in many practical applications and have recently attracted significant interest in the optimal stopping literature % finance, operations research, and machine learning literature 
%\citep{leao2019discrete,becker2019deep,goldberg2018beating,bezerra2020discrete,goudenege2020machine,bayer2020pricing}. % A central challenge with solving non-Markovian optimal stopping problems is that the state space of an equivalent Markovian optimal stopping problem is high-dimensional.
% A central challenge with solving non-Markovian optimal stopping problems is that the optimal stopping rules in general can be fully history-dependent, and thus an equivalent representation as a Markovian optimal stopping problem will typically be high-dimensional. On the other hand, there is substantial numerical evidence that Markovian stopping rules can perform well on non-Markovian optimal stopping problems, such as  \cite{garcia2003convergence}, \citet[\S 5]{ciocan2020interpretable}, and \citet[\S 3.5]{bayer2020pricing}, among many others. Up to this point, to the best of our knowledge, there have been no nonparametric approaches for finding near-optimal Markovian stopping rules for non-Markovian optimal stopping problems. 

To demonstrate the value of our robust optimization {\color{black}approach}, we begin by investigating a simple, one-dimensional stochastic optimal stopping problem with a non-Markovian probability distribution. % to investigate the potential merits of our nonparametric algorithms.  To demonstrate the value of robust optimization method in this context, 
%In view of the above, 
%we consider a simple one-dimensional stochastic optimal stopping problem with a non-Markovian stochastic process. %We begin by presenting and analyzing a non-Markovian example in which the robust optimization method significantly outperforms the alternative nonparametric approaches from the literature. % optimal stopping that is based on backwards recursion.  % We begin by considering a stylized non-Markovian optimal stopping problem. 
The optimal stopping problem of consideration involves a state space which is equal to the real numbers ($\mathcal{X} = \R^1$) and a reward function of $g(t,x) = x_t$ in each period $t \in \{1,\ldots,T\}$.  For any fixed duration $\Delta \in \N$, the joint probability distribution of the stochastic process is given by
\begin{align*}
%\theta &\sim \text{Uniform}\{1,2,\ldots,T-\Delta \} \text{ and }\\
%x_t &\sim \begin{cases}
%\text{Uniform}[0,1], &\text{if } t \le \theta \text{ or } t > \theta + \Delta,\\
%\text{Uniform}[ \frac{2 \theta }{T}, \frac{2 \theta }{T} + 1], &\text{otherwise}
%\end{cases}\\
x_t &\sim \text{Uniform}[0,1] +  \frac{2 \theta }{T} \mathbb{I} \left \{ \theta \le t \le \theta + \Delta \right \} \quad \text{for all }t=1,\ldots,T,
\end{align*}
where the random parameter $\theta \sim \text{Uniform}\{1,2,\ldots,T-\Delta \} $ is selected once per sample path and is unobserved.  Simulated sample paths of this non-Markovian stochastic process $x = (x_1,\ldots,x_T)$ are visualized  in Figure~\ref{fig:nonmarkov_trajectories}. 
%\subsubsection{Experiment Details.}
We perform numerical experiments on the following methods:

 \begin{itemize*}
 \item \textbf{Robust Optimization (RO)}: The robust optimization {\color{black}approach} is used here to approximate \eqref{prob:main} over the non-Markovian stochastic process $x \equiv (x_1,\ldots,x_T)$, and thus  aims to find the best Markovian stopping rules for the stochastic  optimal stopping problem. The method was run with robustness parameters {\color{black}$\epsilon \in \{0,0.01,\ldots,0.2\}$ for $N < 1000$ and $\epsilon \in \{0,0.01,\ldots,0.1\}$ otherwise, and the robust optimization problem is solved approximately using our proposed heuristic \eqref{prob:h_bar} from \S\ref{sec:approx:computation}}. %An out-of-sample comparison of the policies obtained from the exact solution and approximate solution for the robust optimization problem in a similar example is found in Appendix~\ref{appx:nonmarkov_exact_approx_robust}. 
 
 \item \textbf{Least-Squares Regression (LS)}: We implement the method of Longstaff and Schwartz, which employs least-squares regression to  approximate the continuation value function (\ie the expected reward from not stopping) in each period using backwards recursion. To apply this method, we first transform each non-Markovian sample path $x^i \equiv (x^i_1,\ldots,x^i_T)$ into an augmented Markovian sample path of the form $X^i \equiv (X^i_1,\ldots,X^i_T)$ by adding the full history into the state in each period: $X^i_t \triangleq (x^i_1,\ldots,x^i_t,0,\ldots,0)$. %; see \citet[Footnote 6]{longstaff2001valuing}. 
The regression step requires a specification of basis functions, and we consider the following categories of basis functions.\\[-1em]

 \paragraph{Full-History:} This category of basis functions uses the entire vector $X^i_t$ of states observed up to that point. The basis functions that we consider in this category are \textsc{One} (the constant function 1), \textsc{Prices} (the states observed up to that point, $X^i_t \in \R^T$), and  \textsc{Prices2} (the product of each pair of states observed up to that point, $X^i_t (X^i_t)^\intercal \in \R^{T \times T}$). \\[-1em]
%  \begin{itemize}
%  \item \textsc{One}: The constant function, 1.
%  \item \textsc{Prices}: The states observed up to that point, $X^i_t \in \R^T$. 
%  \item \textsc{Prices2}: The product of each pair of states observed up to that point, $X^i_t (X^i_t)^\intercal \in \R^{T \times T}$. 
% \end{itemize}
 \paragraph{Markovian:} This category of basis functions  uses only the current state $x^i_t$ in each period. The purpose of considering these basis functions is to analyze the performance when, like the robust optimization {\color{black}approach},  the method of Longstaff-Schwartz is restricted to stopping rules which depend only on the current state in each period. We consider basis functions based on the Laguerre polynomials, where \textsc{Laguerre-k}  is the polynomial $\sum_{\ell=0}^{k} \binom{k}{\ell} \frac{(-1)^\ell}{\ell!} (x^i_t)^\ell$.  
%   \begin{itemize}
%  \item \textsc{Laguerre0}: 1.
%  \item \textsc{Laguerre1}: $ 1 - x^i_t$.
%  \item \textsc{Laguerre2}: $1 - 2 x^i_t + \frac{(x^i_t)^2}{2}$.
%    \item \textsc{Laguerrek}: $\sum_{\ell=0}^k \binom{k}{\ell} \frac{(-1)^\ell}{\ell!} (x^i_t)^\ell$, for $k = 3,4$. 
% \end{itemize}
 % a Markovian stopping rule. 
 
 \item \textbf{Tree Method (Tree)}: The method of Ciocan and Mi{\v{s}}i{\'c} approximates the stochastic optimal stopping problem \eqref{prob:main} by restricting the space of exercise policies to decision trees. Like the robust optimization {\color{black}approach}, the tree approach is used to find Markovian stopping rules for the non-Markovian optimal stopping problem \eqref{prob:main}. We apply the method with the same information as the robust optimization {\color{black}approach} at each time period (the current state and the time period) and with a splitting parameter of 0.005. 
 \end{itemize*}

In our computational experiments, we consider a time horizon of $T = 50$ stopping periods with a duration  parameter of $\Delta = 5$. 
%With the exceptions of ``{RO}" and ``{LS Full-History (one, prices, prices2)}", 
{\color{black}A}ll methods were run using simulated training datasets of sizes  $N \in \{10^{2},10^{2.1},\ldots,10^{{\color{black}3.9}},10^{{\color{black}4}}\}$. The  robustness parameters in the ``RO" method were selected using a validation set of size $\bar{N} = 10^3$; see \S\ref{sec:params} for more details.  All methods were evaluated on a common and independent testing dataset of $\tilde{N} = 10^5$ sample paths, and experiments were repeated over 10 replications. 
 
   \afterpage{%
\null
\vfill
\begin{figure}[H]

%\vspace{12em}

\centering

\subfloat{%
  \includegraphics[width=0.5\textwidth]{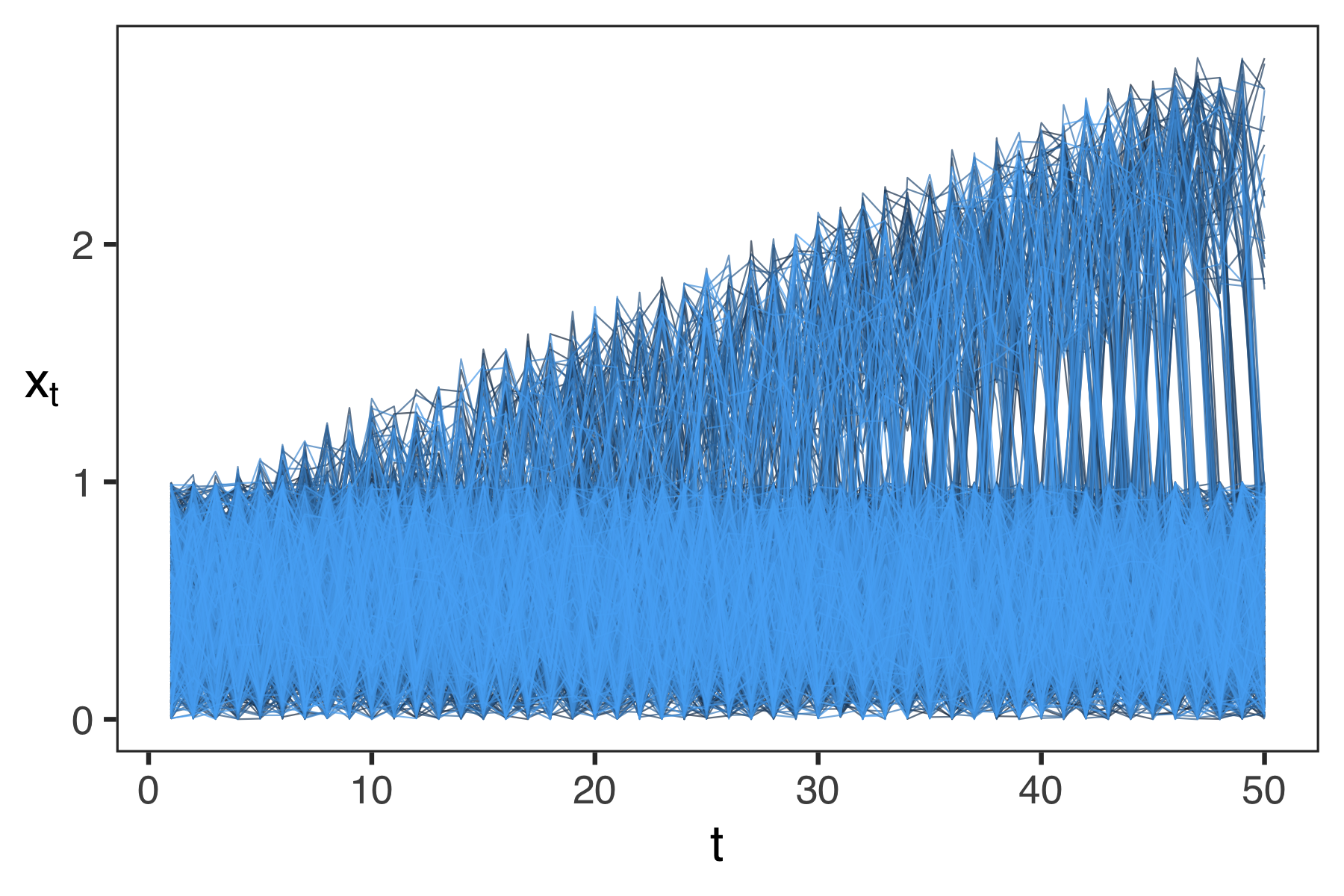}%
}
%\subfloat{%
%  \includegraphics[width=0.7\textwidth]{}%
%}
\subfloat{%
  \includegraphics[width=0.5\textwidth]{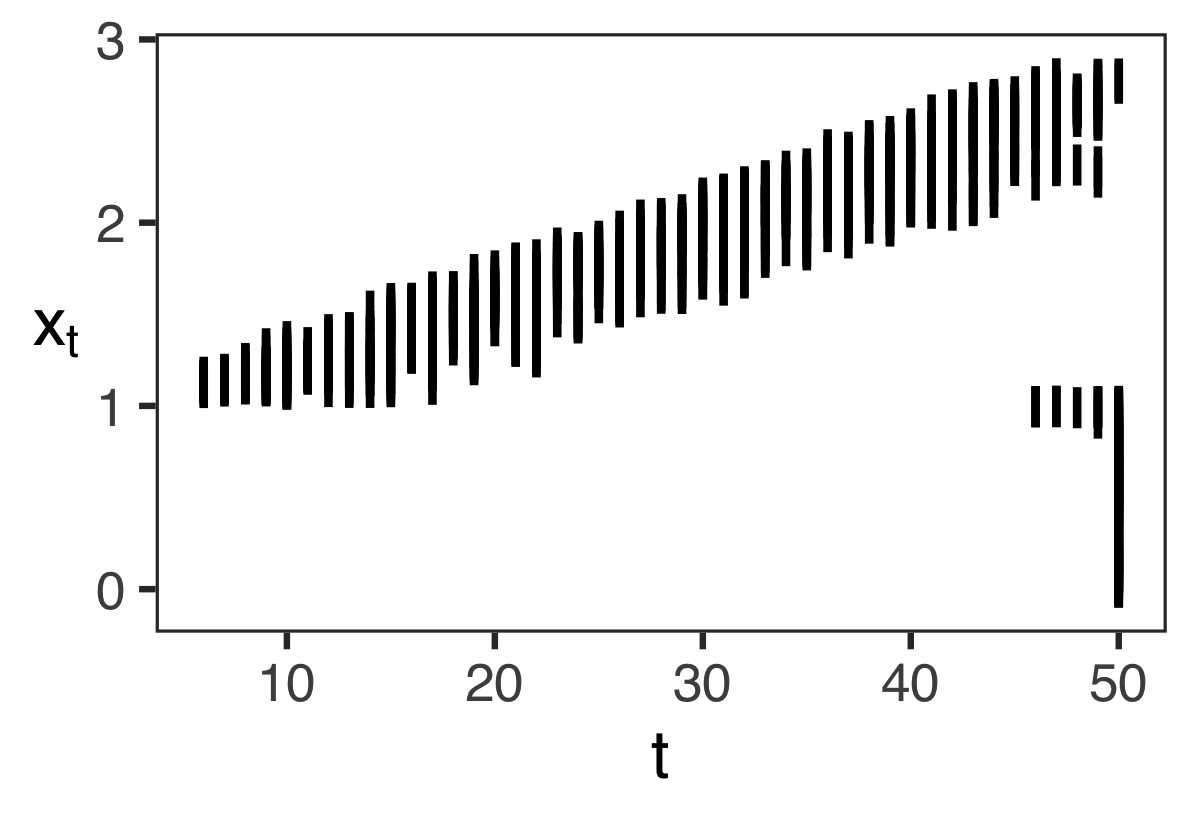}%
}

\smallskip
\caption{The left figure shows  simulated sample paths of the non-Markovian stochastic process with $T = 50$ and $\Delta = 5$.  The right figure shows the exercise policies obtained from solving the robust optimization problem constructed from a training dataset of $N = 10^3$ simulated sample paths and with robustness parameter $\epsilon = 0.1$.}\label{fig:nonmarkov_trajectories}

\subfloat{%
\includegraphics[width=1.0\textwidth]{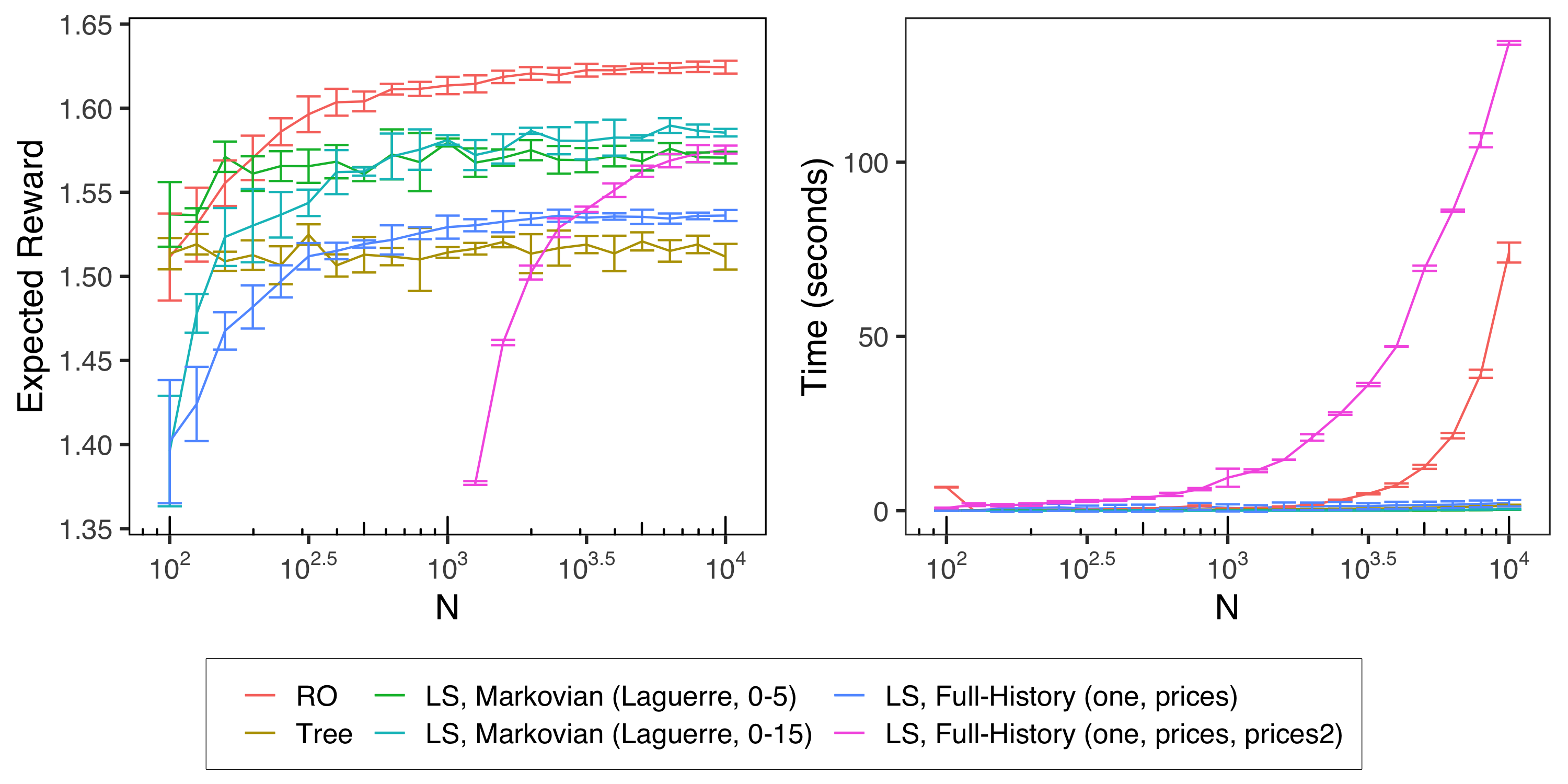}%
}

\smallskip

\caption{The left figure shows the expected reward of stopping rules obtained by the various methods, and the right figure shows the total computation time. %The expected reward is estimated on a common testing dataset of size $\tilde{N} = 10,000$. 
The results for the robust optimization {\color{black}approach} are shown using the {\color{black}heuristic \eqref{prob:h_bar} from \S\ref{sec:approx:computation}}, the displayed computation time is the total time to construct and approximately solve the robust optimization problems over all choices of the robustness parameter, and the expected rewards are displayed for the robustness parameter that is chosen using a validation set of size $\bar{N} = 10^3$ for each choice of $N$. %The expected rewards for ``{LS, Full-History (one,prices,prices2)}" were below 1.20 for all replications and $N \in \{10^{2},10^{2.1},\ldots,10^{2.9},10^{3}\}$, and  this method is omitted from the left figure for clarity.  
All results for all methods are averaged over 10 replications. {\color{black}The results in the left figure are truncated at 1.35 for visual clarity.} }\label{fig:nonmarkov_results}
\end{figure}%
\null
\vfill
\clearpage
}

 Figures~\ref{fig:nonmarkov_trajectories} and \ref{fig:nonmarkov_results} visualize the {\color{black}Markovian stopping rules} found by the robust optimization {\color{black}approach}, the expected rewards of the stopping rules obtained by the various methods, and the computation times of the various methods. The results of this experiment show that the robust optimization {\color{black}approach} outperforms the other approaches, producing stopping rules with an expected reward of approximately 1.62 from training datasets of $N = 10^3$ simulated sample paths {\color{black} and with a computation time of approximately 0.713 seconds}. We reflect below on the main differences between our robust optimization {\color{black}approach} and the benchmark methods.

For the least-squares regression method with full-history,  selecting a good choice of basis functions is found to be a first-order challenge. 
For example, the basis functions (\textsc{One}, \textsc{Prices}) turn out to be insufficiently rich to provide an accurate approximation of the continuation  function with the full state history. When the number of sample paths is sufficiently large, we expect that the basis functions (\textsc{One}, \textsc{Prices}, \textsc{Prices2}) should provide a better  approximation of the stochastic optimal stopping problem. However, the computational cost resulting from this rich class of basis functions precluded its practicality on sufficiently large training datasets.  In contrast, the robust optimization {\color{black}approach} only searches for Markovian stopping rules, and in doing so, has a reduction in sample complexity. In addition to achieving a significantly better expected reward in this example, the Markovian stopping rules found by the robust optimization {\color{black}approach} are considerably more interpretable than those which have full-history dependance, as illustrated in Figure~\ref{fig:nonmarkov_trajectories}.  

Compared to the tree method, the complex structure of the Markovian stopping rules found by the robust optimization {\color{black}approach} demonstrates the value of algorithms that do not impose parametric restrictions on the exercise policies. % to be essential for approximating the stochastic problem. % and, at least in this example, the Markovian stopping rules found by the robust optimization problem. %More broadly, these findings demonstrate the central challenge in selecting full-history basis functions for non-Markovian optimal stopping problems. %The lack of simple exercise policies for this example demonstrates the value of the nonparametric approach 
In theory, decision trees  with sufficient depth are capable of approximating the best Markovian stopping rules to the NMOS problem to arbitrary accuracy  \citep[Theorem 2]{ciocan2020interpretable}. %\footnote{See the proof of \citet[Theorem 2]{ciocan2020interpretable} does not require a Markovian stochastic process in the optimal stopping problem.  } 
However, the greedy heuristic that is proposed by \cite{ciocan2020interpretable} to efficiently optimize over decision trees is not able to find a good approximation of the optimal Markovian stopping rule in this example, even as the number of sample paths is large. 
%We note that this issue is not unique to the decision trees \emph{per se}; though a restricted space of Markovian stopping rules may be theoretically rich enough to approximate the optimal Markovian stopping rules to NMOS problems to any arbitrary accuracy, the computational intractibility of optimizing over them can lead to heuristic algorithms which terminate at Markovian stopping rules that are not the best in class. 
We note that this issue is not unique to the decision trees \emph{per se}; the computational intractability of optimizing over parametric spaces of Markovian stopping rules has resulted in heuristics for other settings as well \cite[\S 8.2]{glasserman2013monte}. This example provides evidence that our robust optimization {\color{black}approach},  in conjunction with the proposed approximation from \S\ref{sec:approx},  can yield high-quality Markovian stopping rules to complex (low-dimensional) non-Markovian optimal stopping problems. 

Like the robust optimization and tree methods, ``LS, Markovian" aims to find Markovian stopping rules for the non-Markovian optimal stopping problem. In particular, we observe that ``LS, Markovian" was implemented with basis functions of Laguerre polynomials with a rather large degree.\footnote{We were unable to run the  ``LS, Markovian (Laguerre 0-K)" for degrees $K > 15$ due to the numerical precision required to encode the coefficients in the basis functions.} The expected reward of ``LS, Markovian (Laguerre 0-15)" may thus be interpreted as an estimate for the best Markovian stopping rule that one could achieve {using dynamic programming}. However, the optimality of backwards induction no longer holds for exercise policies that depend on the current state when the stochastic process is non-Markovian. %  The suboptimal performance of ``LS, Markovian" thus demonstrates. 
This explains the inferior performance of this method in this example and motivates the use of methods for addressing non-Markovian optimal stopping problems that optimize directly over the exercise policies in all periods simultaneously. A further discussion on the limitations of dynamic programming in finding Markovian stopping rules for NMOS problems can be found in Appendix~\ref{appx:dp_limit}. %However, the LS, Markovian computes the exercise policies using backwards recursion, whereas RO optimizes the exercise policies simultaneously. As a result, the robust optimization {\color{black}approach} finds the exercise policies which adapt to both the current state as well as the information that the exercise policies have not stopped up to that point. 

{\color{black}In Figure~\ref{fig:nonmarkov_best_epsilon}, we present the relationship between the best choice of the robustness parameter for our heuristic \eqref{prob:h_bar} and the number of simulated sample paths $N$. The results in this figure show that the best choice of the robustness parameter $\epsilon$ decreases as the number of simulated sample paths $N$ increases. Although a formal analysis of the relationship between these two quantities is outside the scope of this paper, Figure~\ref{fig:nonmarkov_best_epsilon} provides empirical evidence that only smaller choices of the robustness parameter  need to be considered when using larger numbers of simulated sample paths.   }
% \textbf{Comparison with Tree}: The outperformance of RO in this example is purely the result of the stochastic optimal topping problem not exhibiting near-optimal exercise rules that have a simple structure. This example highlights the benefits of a nonparametric over parametric approaches when optimizing over exercise policies.

%In the experiments performed above, the robust optimization problem was solved using the lower-bound approximation from Theorem~\ref{thm:lb}.  
In Figure~\ref{fig:nonmarkov_epsilon}, we compare the (in-sample) robust objective values and the expected rewards of stopping rules obtained by our {\color{black}heuristic \eqref{prob:h_bar}} and our exact reformulation \eqref{prob:bp} of the robust optimization problem. The comparison in Figure~\ref{fig:nonmarkov_epsilon} is performed on a smaller instance where $T=20$, $\Delta = 2$, and the robust optimization problem is constructed from a training set of $N = 100$ simulated sample paths. We observe the gap in objective values between the two algorithms is relatively small across choices of the robustness parameter, and the gap is equal to zero when the robustness parameter is set equal to zero. Moreover, the expected rewards of stopping rules obtained from these two algorithms were similarly relatively close across choices of the robustness parameter. We view these results as  promising as they suggest, at least for the present example, that the significantly more tractable optimization problem~{\color{black}\eqref{prob:h_bar}} can provide a close approximation of the robust optimization problem \eqref{prob:sro}.

In summary, the experiments from this subsection reveal our first setting in which the robust optimization {\color{black}approach} is attractive. In non-Markovian optimal stopping problems, there can exist high-quality stopping rules which are Markovian. However, finding such stopping rules is a non-trivial challenge, as the best Markovian stopping rules will not necessarily exhibit a simple structure, and cannot in general be found using dynamic programming. The robust optimization {\color{black}approach} thus provides a practical nonparametric approach for finding them. In view of these observations, the next subsection considers a benchmark optimal stopping problem from the options pricing literature.  %To the best of our knowledge, the robust optimization {\color{black}approach} provides the first practical nonparametric approach for finding Markovian stopping rules in this setting. %A natural question is to assess how close the stopping rules produced by robust optimization are to the optimal (fully history-dependent) stopping rules for the stochastic problem. However, due to the complexity of the dynamic programming problem, a closed-form analysis was not forthcoming. 

    \afterpage{%
\null
\vfill
   \begin{figure}[H]
\centering
\subfloat{%
\includegraphics[width=0.5\textwidth]{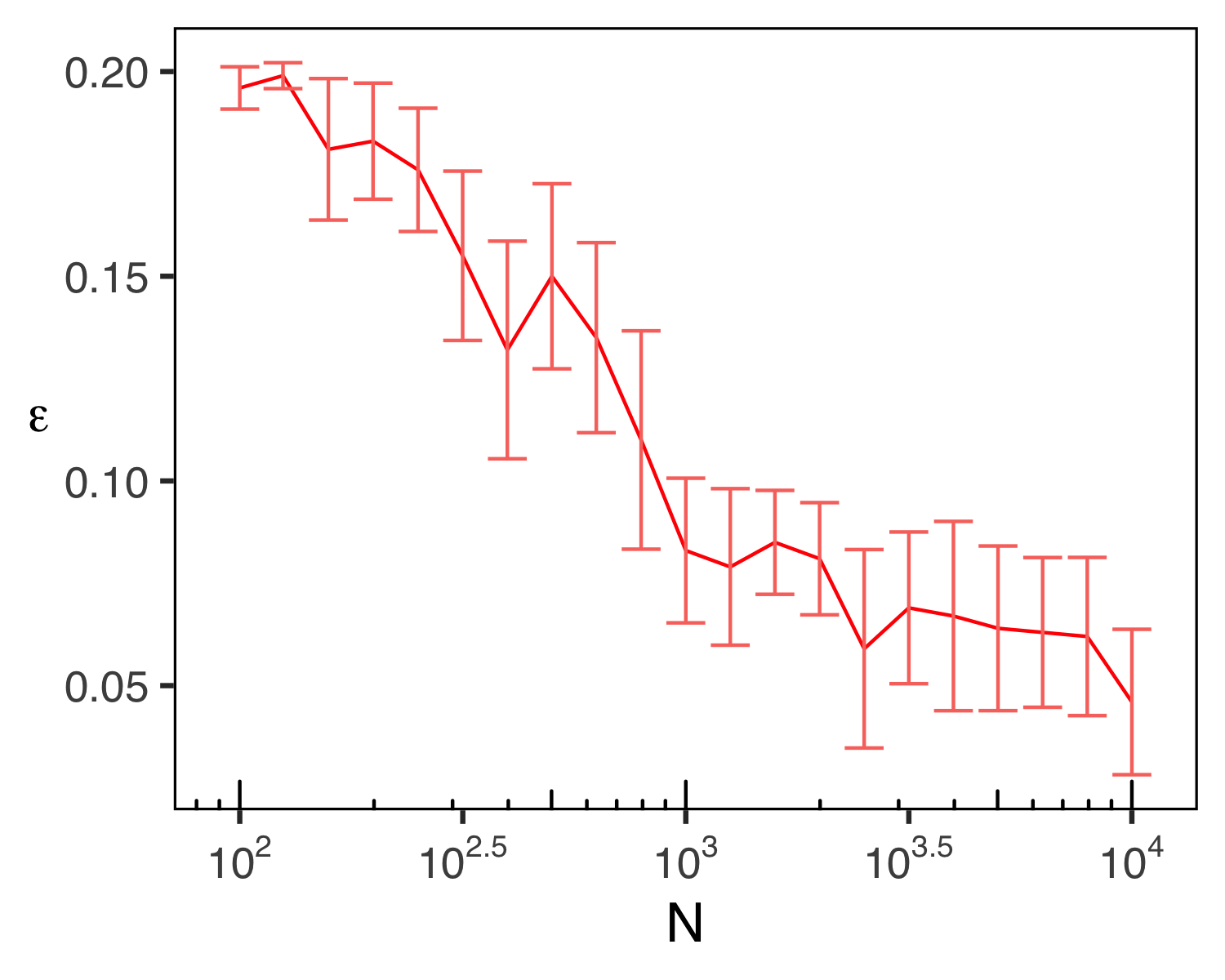}%
}
\medskip
\caption{{\color{black}The figure shows the best choice of the robustness parameter $\epsilon$ for the heuristic~\eqref{prob:h_bar} as a function of the number of training sample paths $N$.  The optimal robustness parameter is chosen using a validation set of size $\bar{N} = 10^3$ for each choice of $N$. %The expected rewards for ``{LS, Full-History (one,prices,prices2)}" were below 1.20 for all replications and $N \in \{10^{2},10^{2.1},\ldots,10^{2.9},10^{3}\}$, and  this method is omitted from the left figure for clarity.  
All results are averaged over 10 replications.  }}\label{fig:nonmarkov_best_epsilon}
\end{figure}

  \begin{figure}[H]
\centering
\subfloat{%
\includegraphics[width=1.0\textwidth]{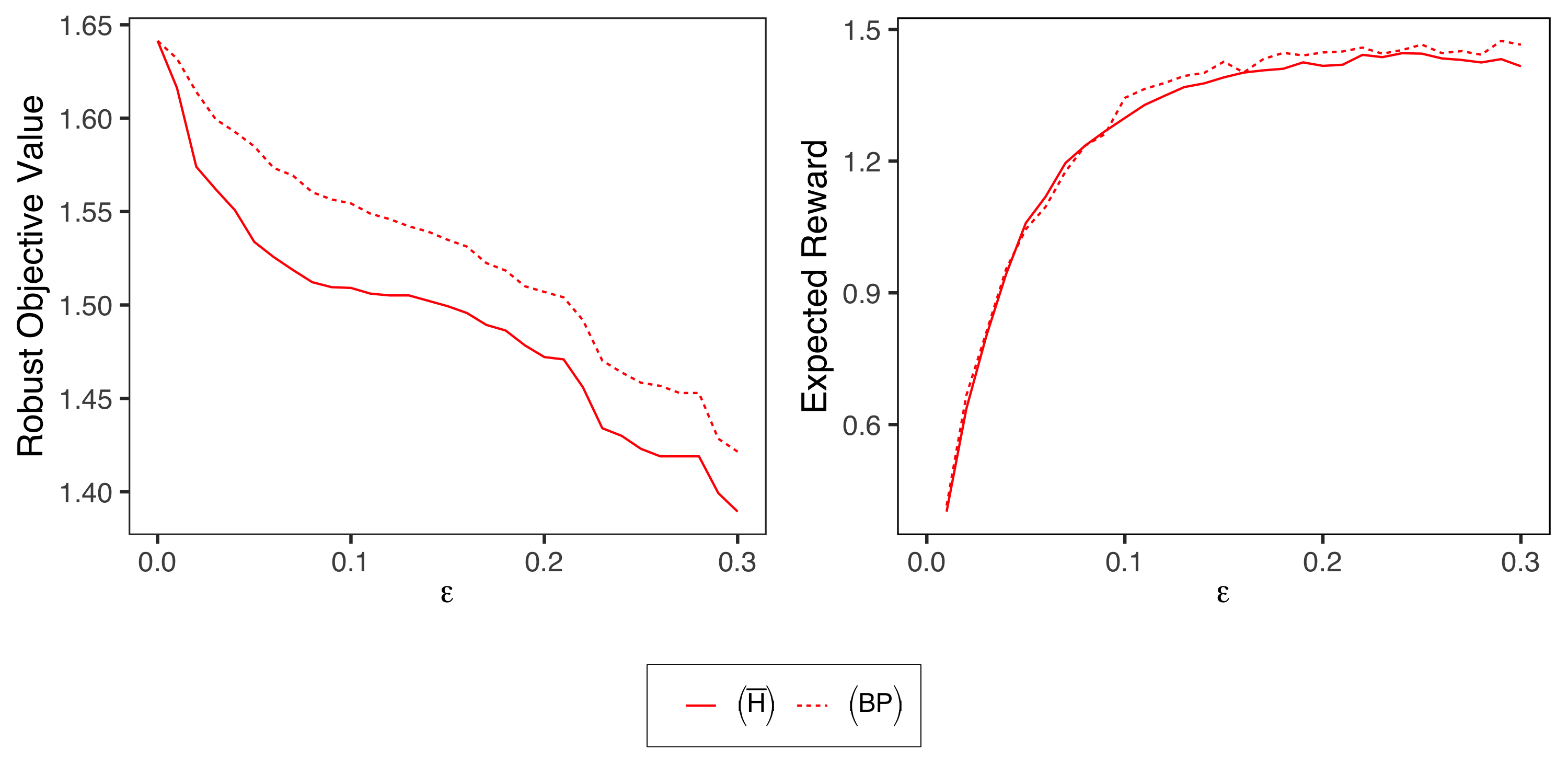}%
}
\medskip
\caption{The left figure shows the robust objective value of stopping rules obtained from solving the {\color{black}heuristic~\eqref{prob:h_bar}} and the exact reformulation~\eqref{prob:bp} of the robust optimization problem \eqref{prob:sro} {\color{black} on an instance with $T = 20$ periods}. The right figure shows the expected rewards of the stopping rules obtained by these two algorithms. The expected rewards are estimated by evaluating the stopping rules on a testing dataset of $10^5$ simulated sample paths. }\label{fig:nonmarkov_epsilon}
\end{figure}
\null
\vfill
\clearpage
}

\subsection{Pricing Multi-Dimensional Barrier Options}  \label{sec:barrier}
Building upon the previous section, we next consider the well-studied problem  of pricing discretely-monitored barrier call options over multiple assets. These are stochastic optimal stopping problems in which the random state  at each period,  $\xi_t \in \Xi \equiv \R^{d}$, is comprised of $d$  non-dividend paying assets. Provided that the option has not been `knocked-out', the reward from stopping on period $t$ is essentially an increasing function of the maximum value of the current assets, $\max_{a \in \{1,\ldots,d\}} \xi_{t,a}$. However, if the maximum value of the assets exceeds a prespecified barrier, the option becomes `knocked-out' and the reward becomes equal to zero for the remainder of the time horizon. %The difficulty thus lies in deciding whether to balance between waiting for maximum value of the assets to increase (and thus increase the reward function) %Due to the significant financial value of accurately computing the pricing of these types of financial derviatives, this class of stochastic optimal stopping problem has become a benchmark in the optimal stopping literature.
In this section, we compare various methods on several instances of these stochastic optimal stopping problems in which the barrier threshold  changes over time \citep{kunitomo1992pricing}. %Our motivation for considering a time-varying threshold is to illustrate 

%the optimal stopping literature.
%In this section, we consider a particularly difficult variant in the  pricing of multi-dimensional barrier call option where the barrier threshold changes over time \citep{kunitomo1992pricing}. 

%We consider the pricing of multi-dimensional barrier call option where the barrier threshold changes over time \citep{kunitomo1992pricing}. 
To formalize the problem setting, let the components of the state vector $\xi_t \in \Xi \equiv \R^{d}$ be referenced through the notation
$\xi_t \equiv (\xi_{t,1},\ldots,\xi_{t,d}),$ 
where $\xi_{t,a} \in \R$ represents the value of asset $a$ at exercise opportunity $t \in \{1,\ldots,T\}$. The exercise opportunities are evenly spaced over a calendar of $Y$ years and thus, defining $\lambda \triangleq Y / T$, it follows that exercise opportunity $t$ occurs at the calendar time $\lambda t$. 
% The exercise opportunities are evenly spaced over a calendar of $Y$ years , which means that exercise opportunity $t$ occurs at the calendar time $\lambda t$, where $\lambda \triangleq Y / T$.
 Let $r \in [0,1)$ be the annualized discount rate, $K \ge 0$ be the {strike price}, $B(t) \triangleq B_0 e^{\delta \lambda t}$ be the {barrier} threshold on exercise opportunity $t$, and $\xi \equiv (\xi_1,\ldots,\xi_T)$ be the sequence of random states. With this notation, the reward function of this stochastic optimal stopping problem is defined as
%\begin{align*}
%g(x_t) & % (1-x_{td})*\max_{i \in \mathcal{I}} x_{ti} 
%= \begin{cases}
%\max \left \{0,\max \limits_{i \in \mathcal{I}} x_{t,i} - K \right \},&\text{if } x_{t,d} = 0,\\
%0,&\text{if } x_{t,d} = 1.
%\end{cases}
%\end{align*}
\begin{align*}
g(t,\xi) & % (1-x_{td})*\max_{i \in \mathcal{I}} x_{ti} 
= \begin{cases}
e^{- r \lambda t}
 \max \left \{0,\max \limits_{a \in \{1,\ldots,d\}} \xi_{t,a} - K \right \},&\text{if } \max \limits_{a \in \{1,\ldots,d\}} \xi_{s,a} \le B(s) \text{ for all } s \in \{1,\ldots,t\},\\[10pt]
0,&\text{otherwise}.
\end{cases}
\end{align*}
%We observe the reward is equal to zero at exercise period $t$ if the option has been ``knocked out", meaning that there existed an asset $i \in \{1,\ldots,d\}$ which exceeded the barrier $x_{s,i} > B(s)$ in any period $s \in \{1,\ldots,t\}$ up through the current period. 
%Following  the setting of \cite{kunitomo1992pricing}, we focus on curved barriers of the form $$B(t) \triangleq B_0 e^{\delta \lambda t}.$$ 
Under the standard Black-Scholes setup, the sequence of random states $\xi \equiv (\xi_1,\ldots,\xi_T)$  obeys a multidimensional geometric Brownian motion where each asset $a \in \{1,\ldots,d\}$ has an initial value of $\bar{x}$, drift equal to the risk-free rate $r$, and annualized volatility equal to $\sigma_a$. The value of asset $a$ at exercise opportunity $t$ is thus given by % and annualized vol- atility σ ? 20%, starting at an initial price ¯p
$\xi_{t,a} \triangleq \bar{x} e^{\left(r - \sigma_a^2/2 \right)\lambda t + \sigma_a \lambda W_{t,a} }$,
where each $W_{t,a}$ is a standard Brownian motion  process and the instantaneous correlation of $W_{\cdot,a}$ and $W_{\cdot,a'}$ is equal to $\rho_{a,a'}$. 
%\subsubsection{Experiment Details.}
We perform numerical experiments on the following methods: 
  
 \begin{itemize*}
 \item \textbf{Robust Optimization (RO)}: The robust optimization {\color{black}approach} is used here to approximate \eqref{prob:main} over the projected  stochastic process $x \equiv (x_1,\ldots,x_T)$, where $x_t \triangleq \max_{a \in \{1,\ldots,d\}} \xi_{t,a}$ denotes the maximum value of the assets in each exercise opportunity $t$. Note that this stochastic process does not include information on whether the option has been knocked-out. The robust optimization {\color{black}approach} is run with robustness parameters $\epsilon \in \{0\} \cup \{0.01,\ldots,0.09\} \cup \{0.1,\ldots,0.9\} \cup \{1,\ldots,10\}$ and solved approximately using {\color{black}our heuristic \eqref{prob:h_bar} from \S\ref{sec:approx:computation}}.

 \item \textbf{Least-Squares Regression (LS)}: The method of Longstaff and Schwartz is applied to stochastic optimal stopping problem~\eqref{prob:main} over the Markovian stochastic process $X \equiv (X_1,\ldots,X_T)$, where the state at each exercise opportunity $X_t \triangleq (\xi_{t,1},\ldots,\xi_{t,d},q_t)$ consists of the values of the assets  at the current exercise opportunity as well as an indicator variable which equals one if the option has been knocked out: $q_t \triangleq 1 - \mathbb{I}  \{ \max_{a \in \{1,\ldots,d\}} \xi_{s,a} \le B(s) \text{ for all } s \in \{1,\ldots,t\}  \}$. We consider the following basis functions:
  \begin{itemize}
\item \textsc{One}: The constant function, 1.
\item \textsc{KOind}: An indicator variable for whether the option was knocked out, $q_t $.
\item \textsc{Prices}: The values of the assets, $\xi_{t,1},\ldots,\xi_{t,d}$.
\item \textsc{PricesKO}: The assets multiplied by the indicator variable, $\xi_{t,1} (1-q_t),\ldots,\xi_{t,d}(1-q_t)$.
\item \textsc{MaxPrice}: The maximum asset value, $x_t \triangleq \max_{a \in \{1,\ldots,d\}} \xi_{t,a}$. 
\item \textsc{Payoff}: The reward of exercising the option, $(1-q_t)e^{- r \lambda t} \max \left \{0,x_t - K \right \}$. 
%\item \textsc{MaxPriceLaguerre, 0-k}: The
%\item \textsc{MaxPriceLaguerre, 0-k}: The
\end{itemize}

 {\color{black}
 \item \textbf{Pathwise Optimization (PO)}: The duality-based method of Desai et al. is applied to the stochastic optimal stopping problem~\eqref{prob:main} over the Markovian stochastic process $X \equiv (X_1,\ldots,X_T)$, where the state at each exercise opportunity $X_t \triangleq (\xi_{t,1},\ldots,\xi_{t,d},q_t)$ consists of the values of the assets  at the current exercise opportunity as well as an indicator variable which equals one if the option has been knocked out: $q_t \triangleq 1 - \mathbb{I}  \{ \max_{a \in \{1,\ldots,d\}} \xi_{s,a} \le B(s) \text{ for all } s \in \{1,\ldots,t\}  \}$. We consider the same possible basis functions for PO as were considered for LS. 
  }
 
 \item \textbf{Tree Method (Tree)}: The method of Ciocan and Mi{\v{s}}i{\'c} approximates the stochastic optimal stopping problem \eqref{prob:main} by restricting the space of exercise policies to decision trees. Like the robust optimization {\color{black}approach}, the tree approach is used to find Markovian stopping rules for the optimal stopping problem over the projected stochastic process $x \equiv (x_1,\ldots,x_T)$. We apply the method with the same information as the robust optimization {\color{black}approach} at each time period (the current state  $x_t$ and the time period $t$) and with a splitting parameter of 0.005. 
 \end{itemize*}

Our experiments and parameter settings closely parallel those of \citet[\S 5]{ciocan2020interpretable}, albeit with two primary differences.  First, our experiments consider a barrier threshold that changes as a function of time. Second, we perform experiments in which the underlying assets have symmetrical as well as  asymmetrical annualized volatilities (\ie $\sigma_a \neq \sigma_{a'}$ for $a \neq a'$). Our motivations behind these differences are to compare the various methods in settings in which the best Markovian stopping rules may not have a simple structure and in which the low-dimensional projections $x_1,\ldots,x_T$ are not sufficient statistics for the optimal stopping rules \citep[\S 7]{broadie1997valuation}. In other words, our experiments aim to evaluate the performance of the proposed robust optimization {\color{black}approach} in settings where the best Markovian stopping rules are not guaranteed to be optimal stopping rules for the optimal stopping problem.   Similarly as \citet[\S 5]{ciocan2020interpretable}, we perform experiments on various numbers of assets ($d \in \{8,16,32\}$) and various initial prices for the assets ($\bar{x} \in \{90,100,110\}$).

{\color{black}In our implementation, we experimented with a variety of combinations of basis functions, and, for the sake of brevity, we report results only for a subset of combinations with the best performance.} The expected rewards of the stopping rules obtained by the various methods were estimated using a common and independent testing dataset of $\tilde{N} = 10^5$ sample paths, and all experiments were repeated over 10 replications. {{\color{black} The RO method is run in each experiment with $N = 10^3$ training sample paths and $\bar{N} = 10^3$ validation sample paths (see \S\ref{sec:params} for additional details), the LS and Tree methods are run in each experiment with $10^5$ training sample paths, and the PO method is run in each experiment with $10^3$ outer sample paths and $500$ inner sample paths. We use smaller numbers of training and outer sample paths for RO and PO because the computation times of these methods scales relatively quickly with the number of sample paths.  }
% Explain that I have chosen smaller numbers of sample paths for RO and smaller numbers of outer sample paths for PO because the computation time of these methods scales more quickly with the number of sample paths (see table below)
%

\begin{figure}[t]
\centering
%\subfloat[Symmetric]{%
  \includegraphics[width=0.65\textwidth]{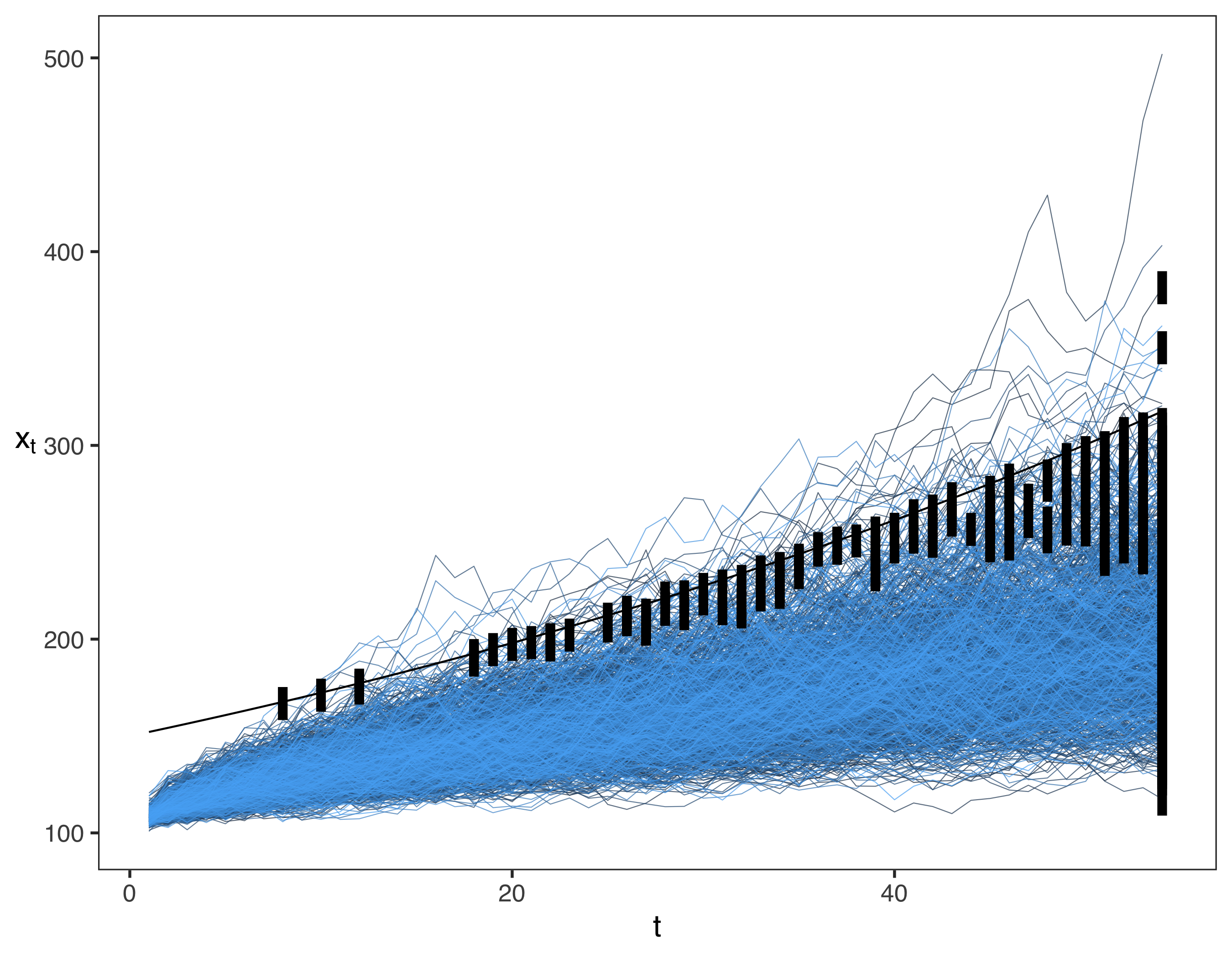}\label{fig:policies_symmetric_single}%
%} 
%
%\subfloat[Asymmetric]{%
%  \includegraphics[width=0.69\textwidth]{} \label{fig:policies_asymmetric_single}%
%}

%\smallskip
\caption{ Plot shows exercise policies obtained from solving a robust optimization problem constructed from a training dataset of  size $N = 10^3$ and with the robustness parameter selected using a validation set of size $\tilde{N} = 10^3$. The problem parameters are $\bar{x} = 100$ and $d = 16$. The remaining parameters are the same as those shown in Table~\ref{tbl:barrier_symmetric}.  The thick black rectangles are the stopping regions and the thin black line shows the barrier threshold.   }\label{fig:barrier_policies_single}
\end{figure}

%     \afterpage{%
%	\null
%	\vfill
    \begin{table}[t]
\TABLE{Barrier Option (Symmetric) - Expected Reward. \label{tbl:barrier_symmetric}}
{\centering \small % latex table generated in R 4.1.0 by xtable 1.8-4 package
% Mon Jun 13 13:13:43 2022
\begin{tabular}{cclcccc}
  \hline
  &&& \multicolumn{3}{c}{Initial Price}& \\
 $d$ & Method & Basis functions & $\bar{x} = 90$ & $\bar{x} = 100$ & $\bar{x} = 110$ & \# of Sample Paths \\
 \hline
  8 & RO & maxprice & \textbf{54.88 (0.26)} & \textbf{68.35 (0.13)} & \textbf{75.93 (0.40)} & $10^3$ training, $10^3$ validation \\ 
    8 & LS & one, pricesKO, KOind, payoff & 54.71 (0.09) & 67.22 (0.10) & 73.52 (0.11) & $10^5$ \\ 
    8 & LS & one, pricesKO, payoff & 54.80 (0.09) & 67.29 (0.11) & 73.77 (0.12) & $10^5$ \\ 
    8 & LS & payoff, KOind, pricesKO & 54.71 (0.09) & 67.22 (0.10) & 73.52 (0.11) & $10^5$ \\ 
    8 & LS & pricesKO, payoff & 54.78 (0.09) & 67.28 (0.11) & 73.78 (0.12) & $10^5$ \\ 
    8 & LS & one, prices, payoff & 54.74 (0.09) & 65.57 (0.08) & 68.05 (0.14) & $10^5$ \\ 
    8 & LS & one & 44.96 (0.04) & 57.13 (0.10) & 60.41 (0.09) & $10^5$ \\ 
    8 & LS & one, KOind, prices & 49.38 (0.04) & 62.87 (0.10) & 69.86 (0.08) & $10^5$ \\ 
    8 & LS & one, prices & 48.90 (0.05) & 59.96 (0.12) & 59.53 (0.08) & $10^5$ \\ 
    8 & LS & one, pricesKO & 48.99 (0.05) & 62.19 (0.07) & 69.68 (0.08) & $10^5$ \\ 
    8 & LS & maxprice, KOind, pricesKO & 51.16 (0.07) & 63.37 (0.11) & 69.92 (0.08) & $10^5$ \\ 
    8 & PO & payoff, KOind, pricesKO & 54.47 (0.11) & 65.73 (0.18) & 67.90 (0.20) & $2 \times 10^3$ outer, $500$ inner \\ 
    8 & PO & prices & 52.19 (0.17) & 64.80 (0.16) & 68.11 (0.33) & $2 \times 10^3$ outer, $500$ inner \\ 
    8 & Tree & payoff, time & 54.73 (0.08) & 66.82 (0.12) & 71.30 (0.27) & $10^5$ \\ 
    8 & Tree & maxprice, time & 54.73 (0.08) & 66.82 (0.12) & 71.30 (0.27) & $10^5$ \\ 
   \hline
 16 & RO & maxprice & \textbf{71.00 (0.21)} & \textbf{83.11 (0.36)} & \textbf{82.77 (0.36)} & $10^3$ training, $10^3$ validation \\ 
   16 & LS & one, pricesKO, KOind, payoff & 70.65 (0.07) & 81.25 (0.14) & 81.38 (0.15) & $10^5$ \\ 
   16 & LS & one, pricesKO, payoff & 70.68 (0.08) & 81.40 (0.15) & 81.05 (0.16) & $10^5$ \\ 
   16 & LS & payoff, KOind, pricesKO & 70.65 (0.07) & 81.25 (0.14) & 81.38 (0.15) & $10^5$ \\ 
   16 & LS & pricesKO, payoff & 70.67 (0.08) & 81.41 (0.15) & 81.05 (0.16) & $10^5$ \\ 
   16 & LS & one, prices, payoff & 70.04 (0.10) & 76.50 (0.17) & 70.00 (0.15) & $10^5$ \\ 
   16 & LS & one & 61.48 (0.06) & 70.28 (0.07) & 57.58 (0.09) & $10^5$ \\ 
   16 & LS & one, KOind, prices & 65.28 (0.08) & 77.36 (0.09) & 74.91 (0.12) & $10^5$ \\ 
   16 & LS & one, prices & 64.03 (0.08) & 70.34 (0.08) & 59.75 (0.09) & $10^5$ \\ 
   16 & LS & one, pricesKO & 64.67 (0.07) & 77.15 (0.09) & 74.56 (0.14) & $10^5$ \\ 
   16 & LS & maxprice, KOind, pricesKO & 66.45 (0.08) & 77.62 (0.10) & 74.93 (0.14) & $10^5$ \\ 
   16 & PO & payoff, KOind, pricesKO & 69.85 (0.15) & 77.85 (0.16) & 68.30 (0.17) & $2 \times 10^3$ outer, $500$ inner \\ 
   16 & PO & prices & 68.16 (0.18) & 77.48 (0.16) & 68.44 (0.30) & $2 \times 10^3$ outer, $500$ inner \\ 
   16 & Tree & payoff, time & 70.44 (0.10) & 79.88 (0.12) & 74.71 (0.54) & $10^5$ \\ 
   16 & Tree & maxprice, time & 70.44 (0.10) & 79.88 (0.12) & 74.71 (0.54) & $10^5$ \\ 
   \hline
 32 & RO & maxprice & \textbf{86.15 (0.27)} & \textbf{93.17 (0.29)} & 78.63 (0.48) & $10^3$ training, $10^3$ validation \\ 
   32 & LS & one, pricesKO, KOind, payoff & 85.00 (0.10) & 90.89 (0.15) & \textbf{79.72 (0.16)} & $10^5$ \\ 
   32 & LS & one, pricesKO, payoff & 85.03 (0.09) & 91.03 (0.16) & 78.49 (0.14) & $10^5$ \\ 
   32 & LS & payoff, KOind, pricesKO & 85.00 (0.10) & 90.89 (0.15) & \textbf{79.72 (0.16)} & $10^5$ \\ 
   32 & LS & pricesKO, payoff & 85.03 (0.10) & 91.04 (0.15) & 78.48 (0.14) & $10^5$ \\ 
   32 & LS & one, prices, payoff & 82.21 (0.08) & 81.15 (0.15) & 60.94 (0.16) & $10^5$ \\ 
   32 & LS & one & 76.08 (0.08) & 72.07 (0.19) & 52.26 (0.08) & $10^5$ \\ 
   32 & LS & one, KOind, prices & 80.33 (0.07) & 86.28 (0.12) & 68.40 (0.13) & $10^5$ \\ 
   32 & LS & one, prices & 77.16 (0.11) & 71.90 (0.17) & 54.46 (0.05) & $10^5$ \\ 
   32 & LS & one, pricesKO & 80.00 (0.06) & 86.22 (0.12) & 65.40 (0.20) & $10^5$ \\ 
   32 & LS & maxprice, KOind, pricesKO & 80.91 (0.07) & 86.33 (0.11) & 68.81 (0.15) & $10^5$ \\ 
   32 & PO & payoff, KOind, pricesKO & 83.06 (0.20) & 82.22 (0.32) & 56.67 (0.44) & $2 \times 10^3$ outer, $500$ inner \\ 
   32 & PO & prices & 82.12 (0.29) & 82.50 (0.27) & 56.85 (0.17) & $2 \times 10^3$ outer, $500$ inner \\ 
   32 & Tree & payoff, time & 84.36 (0.16) & 86.68 (0.13) & 61.09 (0.13) & $10^5$ \\ 
   32 & Tree & maxprice, time & 84.36 (0.16) & 86.68 (0.13) & 61.09 (0.13) & $10^5$ \\ 
   \hline
\end{tabular}
}
{Optimal is indicated in bold for each number of assets $d \in \{8,16,32\}$ and initial price $\bar{x} \in \{90,100,110\}$.  %The number of sample paths for the robust optimization {\color{black}approach} \eqref{prob:sro} is equal to the number of training sample paths ($N = 10^3$) plus the number of validation sample paths ($\bar{N} = 10^3$), see \S\ref{sec:params} for additional details. The robust optimization {\color{black}approach} was run with robustness parameters $\epsilon \in \{k \times 10^p : k \in \{1,\ldots,10\}, p \in \{0.01,0.1,1\} \}$ and solved approximately using the algorithm from Theorem~\ref{thm:lb}.  
Problem parameters are $T = 54$, $Y = 3$, $r = 0.05$, $K = 100$, $B_0 = 150$, $\delta = 0.25$, $\sigma_a = 0.2$, $\rho_{a,a'} = 0$ for all $a \neq a'$.  }
\end{table}
%\null
%\vfill
%\clearpage
%}

In Table~\ref{tbl:barrier_symmetric}, we present the expected rewards of stopping rules obtained from the various methods in experiments with symmetrical annualized volatilities.\footnote{{\color{black}Expected rewards and computation times for experiments with asymmetrical annualized volatility can be found in Tables~\ref{tbl:barrier_asymmetric} and \ref{tbl:barrier_time_asymmetric} in Appendix~\ref{appx:numerics}.}} Across the parameter settings, the RO method yields stopping rules with either the best or close-to-best expected reward relative to the alternative state-of-the-art methods. %We find the results to be particularly encouraging because (\emph{i}) the robust optimization {\color{black}approach} required 2\% of the sample paths of the alternative approaches to 
{\color{black}The improvements of the robust optimization approach over the state-of-the-art benchmarks} are viewed as particularly encouraging given the practical significance of this class of  options pricing problems. 
In Figure~\ref{fig:barrier_policies_single}, we present visualizations of the Markovian stopping rules produced by the RO method. These visualizations provide interpretability to the stopping rules found by the RO method, which is not possible for the LS method due to the high-dimensional state space. Further visualizations of the stopping rules obtained by the RO method under additional problem parameters are provided in Appendix~\ref{appx:numerics}.

%    \afterpage{%
%\null
%\vfill
\begin{table}[t]
\TABLE{Barrier Option (Symmetric) -  Computation Times. \label{tbl:barrier_time_symmetric}}
{\centering \small
% latex table generated in R 4.1.0 by xtable 1.8-4 package
% Mon Jun 13 13:13:46 2022
\begin{tabular}{cclcccc}
  \hline
  &&& \multicolumn{3}{c}{Initial Price}& \\
 $d$ & Method & Basis functions & $\bar{x} = 90$ & $\bar{x} = 100$ & $\bar{x} = 110$ & \# of Sample Paths \\
 \hline
  8 & RO & maxprice &   7.73 (0.33) &   8.63 (1.36) &  13.93 (2.74) & $10^3$ training, $10^3$ validation \\ 
    8 & LS & one, pricesKO, KOind, payoff &   3.35 (0.21) &   3.53 (0.5) &   3.14 (0.22) & $10^5$ \\ 
    8 & LS & one, pricesKO, payoff &   3.24 (0.2) &   2.97 (0.29) &   2.94 (0.23) & $10^5$ \\ 
    8 & LS & payoff, KOind, pricesKO &   3.52 (0.83) &   2.97 (0.28) &   3.03 (0.44) & $10^5$ \\ 
    8 & LS & pricesKO, payoff &   3.04 (0.25) &   2.81 (0.22) &   2.84 (0.19) & $10^5$ \\ 
    8 & LS & one, prices, payoff &   3.08 (0.18) &   2.97 (0.25) &   2.99 (0.1) & $10^5$ \\ 
    8 & LS & one &   0.86 (0.07) &   0.88 (0.03) &   0.93 (0.05) & $10^5$ \\ 
    8 & LS & one, KOind, prices &   3.13 (0.29) &   2.99 (0.32) &   2.92 (0.28) & $10^5$ \\ 
    8 & LS & one, prices &   2.64 (0.19) &   2.63 (0.15) &   2.62 (0.13) & $10^5$ \\ 
    8 & LS & one, pricesKO &   2.72 (0.34) &   2.77 (0.17) &   2.57 (0.17) & $10^5$ \\ 
    8 & LS & maxprice, KOind, pricesKO &   3.02 (0.15) &   2.99 (0.32) &   2.87 (0.08) & $10^5$ \\ 
    8 & PO & payoff, KOind, pricesKO &  20.40 (0.54) &  19.29 (0.69) &  17.58 (0.92) & $2 \times 10^3$ outer, $500$ inner \\ 
    8 & PO & prices &  32.61 (7.86) &  29.01 (8.21) &  23.70 (0.58) & $2 \times 10^3$ outer, $500$ inner \\ 
    8 & Tree & payoff, time &  14.18 (0.43) &  14.21 (0.18) &  19.06 (3.12) & $10^5$ \\ 
    8 & Tree & maxprice, time &  13.72 (0.3) &  13.84 (0.3) &  18.28 (3.13) & $10^5$ \\ 
   \hline
 16 & RO & maxprice &   6.82 (0.19) &  10.07 (1.17) &  28.96 (8.34) & $10^3$ training, $10^3$ validation \\ 
   16 & LS & one, pricesKO, KOind, payoff &   5.49 (0.62) &   5.15 (0.17) &   5.20 (0.24) & $10^5$ \\ 
   16 & LS & one, pricesKO, payoff &   5.29 (1.02) &   4.84 (0.43) &   5.10 (0.33) & $10^5$ \\ 
   16 & LS & payoff, KOind, pricesKO &   4.71 (0.44) &   4.64 (0.3) &   4.61 (0.32) & $10^5$ \\ 
   16 & LS & pricesKO, payoff &   5.17 (0.71) &   4.82 (0.15) &   4.74 (0.25) & $10^5$ \\ 
   16 & LS & one, prices, payoff &   4.86 (0.81) &   4.69 (0.41) &   4.97 (0.1) & $10^5$ \\ 
   16 & LS & one &   1.27 (0.15) &   1.21 (0.08) &   1.22 (0.08) & $10^5$ \\ 
   16 & LS & one, KOind, prices &   6.24 (1.34) &   5.69 (1.28) &   5.50 (0.69) & $10^5$ \\ 
   16 & LS & one, prices &   5.97 (1.17) &   5.51 (1.55) &   5.01 (0.87) & $10^5$ \\ 
   16 & LS & one, pricesKO &   4.30 (0.3) &   4.17 (0.29) &   4.45 (0.31) & $10^5$ \\ 
   16 & LS & maxprice, KOind, pricesKO &   5.12 (0.61) &   4.68 (0.27) &   4.63 (0.28) & $10^5$ \\ 
   16 & PO & payoff, KOind, pricesKO &  40.18 (1.59) &  32.82 (1.38) &  28.44 (1.41) & $2 \times 10^3$ outer, $500$ inner \\ 
   16 & PO & prices &  57.17 (24.52) &  57.76 (25.68) &  44.87 (0.49) & $2 \times 10^3$ outer, $500$ inner \\ 
   16 & Tree & payoff, time &  14.80 (0.28) &  14.19 (0.48) &  25.00 (7.02) & $10^5$ \\ 
   16 & Tree & maxprice, time &  13.88 (0.17) &  13.74 (0.21) &  23.81 (6.46) & $10^5$ \\ 
   \hline
 32 & RO & maxprice &   8.48 (1.28) &  14.93 (3.58) &  91.80 (15.54) & $10^3$ training, $10^3$ validation \\ 
   32 & LS & one, pricesKO, KOind, payoff &  13.73 (1.39) &  13.32 (2.22) &  12.00 (1.35) & $10^5$ \\ 
   32 & LS & one, pricesKO, payoff &  15.42 (3.9) &  15.33 (2.99) &  12.89 (2.17) & $10^5$ \\ 
   32 & LS & payoff, KOind, pricesKO &  13.05 (3.69) &  11.33 (1.27) &  11.66 (2.39) & $10^5$ \\ 
   32 & LS & pricesKO, payoff &  14.63 (2.31) &  14.01 (3.91) &  11.35 (1.54) & $10^5$ \\ 
   32 & LS & one, prices, payoff &  15.18 (5.56) &  14.17 (3.53) &  11.73 (1.72) & $10^5$ \\ 
   32 & LS & one &   2.13 (1.23) &   1.78 (0.93) &   1.67 (0.73) & $10^5$ \\ 
   32 & LS & one, KOind, prices &  13.25 (2.33) &  12.38 (2.44) &  11.95 (1.75) & $10^5$ \\ 
   32 & LS & one, prices &  12.23 (2.71) &  10.83 (1.02) &  10.91 (0.8) & $10^5$ \\ 
   32 & LS & one, pricesKO &  14.55 (4.76) &  11.12 (1.1) &  10.56 (0.6) & $10^5$ \\ 
   32 & LS & maxprice, KOind, pricesKO &  12.05 (1.72) &  10.92 (0.83) &  10.93 (1.42) & $10^5$ \\ 
   32 & PO & payoff, KOind, pricesKO &  91.66 (5.6) &  62.49 (5.23) &  53.57 (1.55) & $2 \times 10^3$ outer, $500$ inner \\ 
   32 & PO & prices & 120.46 (3.27) & 120.76 (1.98) & 113.86 (2.31) & $2 \times 10^3$ outer, $500$ inner \\ 
   32 & Tree & payoff, time &  17.53 (0.23) &  17.42 (0.28) &  10.42 (0.18) & $10^5$ \\ 
   32 & Tree & maxprice, time &  17.01 (0.18) &  16.97 (0.27) &  10.90 (0.31) & $10^5$ \\ 
   \hline
\end{tabular}

}
{Problem parameters are $T = 54$, $Y = 3$, $r = 0.05$, $K = 100$, $B_0 = 150$, $\delta = 0.25$, $\sigma_a = 0.2$, $\rho_{a,a'} = 0$ for all $a \neq a'$.  }
\end{table}
%\null
%\vfill
%\clearpage
%}

\begin{figure}[t]
\centering
%\subfloat{%
%  \includegraphics[width=0.95\textwidth]{}%
%}
%
%\medskip
%
%
%\caption{ The plots show the expected rewards (left) and computation times (right) for various methods as a function of the number of simulated training sample paths.    The problem parameters are $\bar{x} = 100$ and $d = 16$, and the remaining parameters are the same as those shown in Table~\ref{tbl:barrier_asymmetric}.  }\label{fig:barrier_others_asymmetric}
\subfloat{%
  \includegraphics[width=0.5\textwidth]{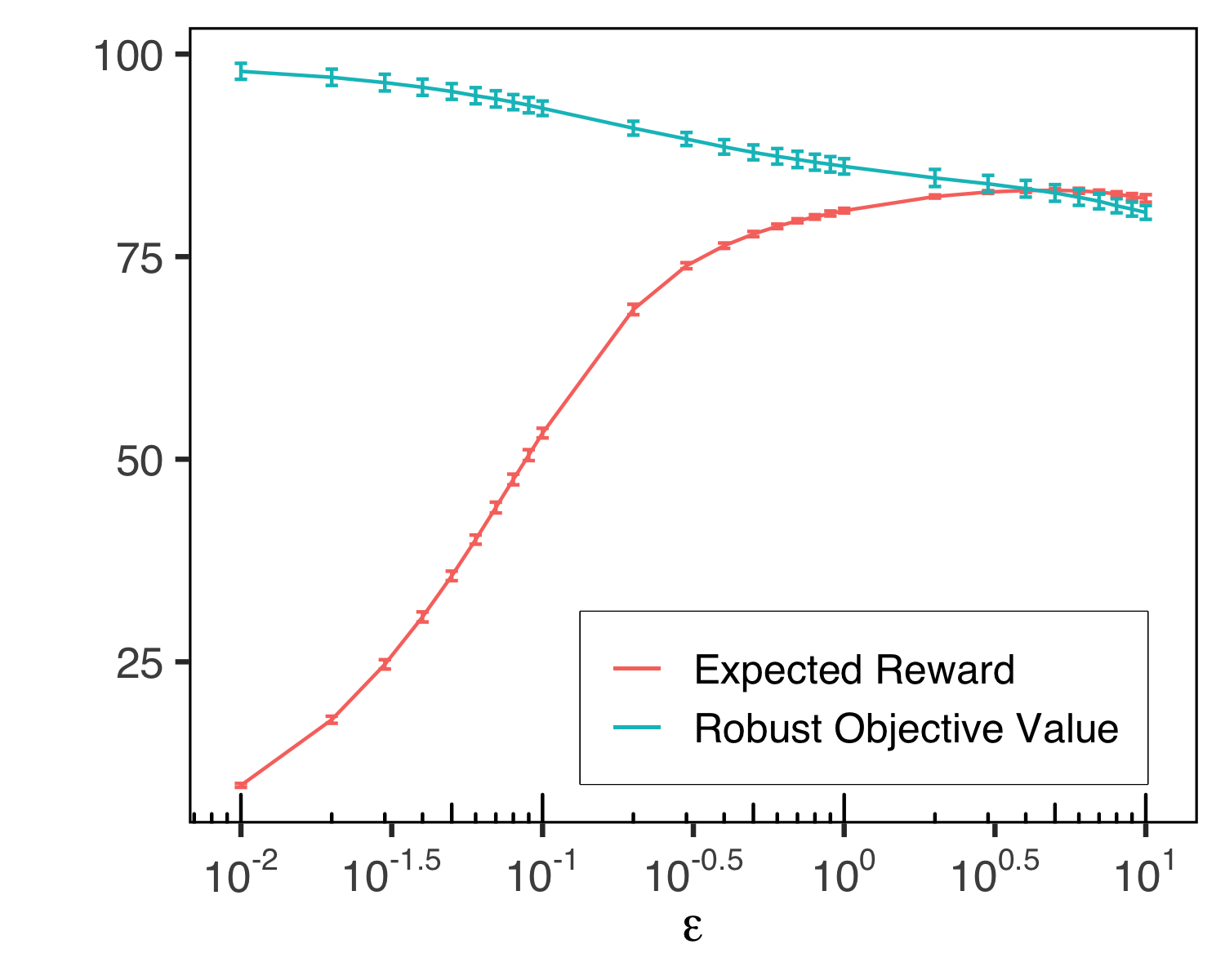}%
}
\subfloat{%
  \includegraphics[width=0.5\textwidth]{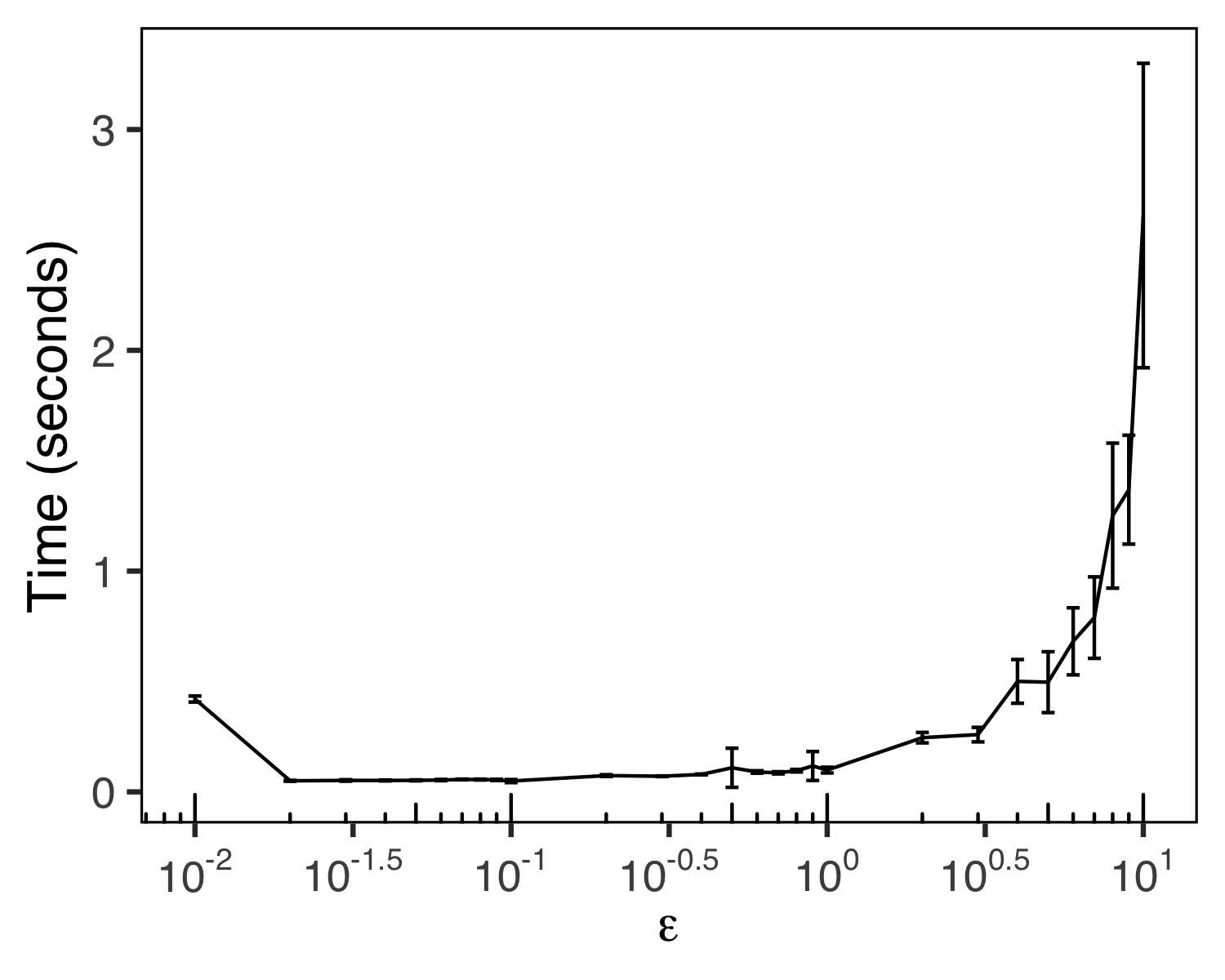}%
}
%\smallskip

\caption{ Each plot shows performance metrics of the robust optimization problems constructed from a training dataset of  size $N = 10^3$ as a function of the robustness parameter $\epsilon$.  The left plot shows the robust objective value and expected reward of policies obtained by solving the robust optimization problem. The right plot shows the computation time for solving the robust optimization problem. In both plots, the robust optimization problem is solved approximately using {\color{black}our heuristic algorithm~\eqref{prob:h_bar} from \S\ref{sec:approx:computation}}. The problem parameters are $\bar{x} = 100$ and $d = 16$, and the remaining parameters are the same as those shown in Table~\ref{tbl:barrier_symmetric}.  }\label{fig:barrier_symmetric_epsilon_single}

\end{figure}

In Table~\ref{tbl:barrier_time_symmetric}, we present the computation times of the various methods. We observe from Table~\ref{tbl:barrier_time_symmetric} that the computation times of the RO method are highly competitive compared with the computation times of the alternative methods when the initial price is $\bar{x} = 90$ or $\bar{x} = 100$. Interestingly, the computation time of the RO method increases significantly when the initial price is $\bar{x} = 110$. To understand why this is the case, we recall that the computation time for the RO method is equal to the sum of the computation times for solving the heuristic~\eqref{prob:h_bar} over all considered choices of the robustness parameter.  Moreover,  in Figure~\ref{fig:barrier_symmetric_epsilon_single}, we show that the computation time of solving \eqref{prob:h_bar} is highly dependent on the robustness parameter; indeed, most of the computation time for the RO method comes from  solving the robust optimization problem with unnecessarily large choices of the robustness parameter. Finally, we show in Table~\ref{tbl:barrier_epsilon_symmetric} that the best choice of the robustness parameter is smallest in the numerical experiments with $\bar{x} = 110$, even though the set of robustness parameters remains constant across all of the numerical experiments. From these results, we conclude that a dynamic search over the space of robustness parameters has the potential to significantly reduce the computation times of the RO method, particularly the case where $\bar{x} = 110$. Additional numerical experiments which show the relationship between the robustness parameter and the robust optimization {\color{black}approach} are provided in Appendix~\ref{appx:numerics}.

%Note in the robust optimization method that the states could be redefined using the discount rate, which would then lead to different sized uncertainty sets in each period. We have not done this implementation. 
\begin{table}[t]
\TABLE{Barrier Option (Symmetric) - Best Choice of Robustness Parameter. \label{tbl:barrier_epsilon_symmetric}}
{\centering \small % latex table generated in R 4.1.0 by xtable 1.8-4 package
% Wed Jun 15 12:09:06 2022
\begin{tabular}{cccc}
  \hline
  & \multicolumn{3}{c}{Initial Price} \\
 $d$ & $\bar{x} = 90$ & $\bar{x} = 100$ & $\bar{x} = 110$\\
 \hline
  8 & 8.0 (2.45) & 6.7 (1.25) & 3.7 (1.06) \\ 
   16 & 8.3 (1.89) & 5.5 (1.27) & 2.8 (1.03) \\ 
   32 & 6.8 (1.32) & 3.4 (0.84) & 2.6 (0.70) \\ 
   \hline
\end{tabular}
}
{Best choice of robustness parameter $\epsilon$ found using the validation method from \S\ref{sec:params} in the robust optimization problems constructed from a training dataset of  size $N = 10^3$ and validation set of size $\bar{N} = 10^3$.  The remaining parameters are the same as those shown in Table~\ref{tbl:barrier_symmetric}. }
\end{table}

%In the numerical experiments in the previous sections, we have considered stochastic optimal stopping problems in which the high-dimensional (Markovian) state processes which are reducible to the problem of a one-dimensional but possibly non-Markovian random state process. We emphasize that our method takes the assumption that a. However, in applications a one-dimensional state space. To study the applicability of our method in problems which reduce to a higher dimensional state space, we consider the following variation of , we consider a variation

\section{Conclusion}
Over the past two decades, dynamic robust optimization has experienced a surge of algorithmic advances.  Until now, these advances from robust optimization have not been harnessed to develop algorithms for stochastic dynamic optimization problems with known probability distributions. In this paper, we showed the  value of bridging these traditionally separate fields of research, in application to the classical and widely-studied problem of optimal stopping. In this context, we devised new and theoretically-justified algorithms for non-Markovian optimal stopping problems and highlighted the performance of these new algorithms on stylized and well-studied problems from options pricing. Along the way, we also developed novel theoretical and computational results for solving dynamic robust optimization problems which average over multiple uncertainty sets. We believe this work takes a meaningful step towards broadening the impact of robust optimization to address stochastic dynamic optimization problems of importance to industry. 

\ACKNOWLEDGMENT{The author thanks Vivek Farias for his feedback during the early stages of this project, and Velibor Mi\v{s}i\'{c}  and Selvaprabu Nadarajah for comments on early versions of the paper.  The author also thanks the anonymous associate editor and two referees for constructive comments and suggestions
that helped improve the paper. Finally, the author is grateful to the authors of the paper \cite{ciocan2020interpretable} for making their high-quality code publicly available.     }
\bibliographystyle{informs2014} % outcomment this and next line in Case 1
\bibliography{optimal_stopping_robust_optimization} 
%% Here starts the e-companion (EC)
%%%%%%%%%%%%%%%%%%%%%%%%%%%%%%%%%%%%%%%%%%%%%%%%%%%%%%%%%%
\ECSwitch

%\ECDisclaimer
%%%%%%%%%%%%%%%%%%%%%%%%%%%%%%%%%%%%%%%%%%%%%%%%%%%%%%%%%%

%\ECHead{Technical Proofs}
\AppendixTitle{Technical Proofs and Additional Results}

%%% Main head for the e-companion
\begin{APPENDICES}

\vspace{1em}

 \setlength{\parskip}{1em}

\SingleSpacedXI
\section{Limitations of DP for Finding Markovian Stopping Rules in Non-Markovian Optimal Stopping Problems} \label{appx:dp_limit}

In the context of non-Markovian optimal stopping problems, a subtle but important challenge is that the exercise policies  which define the best Markovian stopping rule cannot be found in general using backwards recursion. Intuitively, this problem arises because Bellman's dynamic programming equations no longer hold when the stochastic process is non-Markovian. This fact is illustrated numerically in \S\ref{sec:toy} and motivates the development of methods which optimize over the exercise policies in all time periods simultaneously.  For the sake of completeness, we provide the following example in which there is a Markovian stopping rule that is optimal for the non-Markovian stopping problem but is not obtained using backwards recursion.  
\begin{example}\label{ex:dp}
Consider a three-period optimal stopping problem with a one-dimensional non-Markovian stopping process that obeys the following probability distribution:
\begin{align*}
\Prb \left(x_1 = 3, x_2 = 2, x_3 = 1 \right) = \frac{2}{3}; \quad \Prb \left(x_1 = 1, x_2 = 2, x_3 = 3 \right) = \frac{1}{3}.
\end{align*}
Let the reward of stopping on each period $t$ be equal to the current state $x_t$, and recall that our goal is to find a stopping rule which maximizes the expected reward. We observe that there is an optimal stopping rule for this non-Markovian stopping problem that is a Markovian stopping rule, defined by exercise policies $\mu_1^*(x_1) = \textsc{Stop}$ if and only if $x_1 = 3$, $\mu_2^*(x_2) = \textsc{Continue}$ for all $x_2$, and $\mu_3^*(x_3) = \textsc{Stop}$ for all $x_3$. Applying this stopping rule yields an expected reward of $\sfrac{2}{3} \times 3 + \sfrac{1}{3} \times 3 = 3$.  

We now show that the best Markovian stopping rule which is obtained using backwards recursion will have a strictly lower expected reward. Indeed, assume that the reward from not stopping on any period is equal to zero. Then, starting on the last period, it is clear from the dynamic programming principle that the optimal exercise policy for the last period is $\mu_3^{\textnormal{DP}}(x_3) = \textsc{Stop}$. Hence, conditioned on $x_2 = 2$, the expected reward from stopping on the third period is $1 \times \Prb(x_3 = 1 \mid x_2 = 2)  + 3 \times \Prb(x_3 = 3 \mid x_2 = 2) = 1 \times \sfrac{2}{3} + 3 \times \sfrac{1}{3} = \sfrac{5}{3}$. Because the reward from stopping on the second period (2) is greater than the conditional expected reward of not stopping on the second period ($\sfrac{5}{3}$), the dynamic programming principle says that the exercise policy  in the second period should be chosen to satisfy $\mu_2^{\textnormal{DP}}(2) = \textsc{Stop}$. Finally, unfolding to the first period, we conclude that the exercise policy obtained from dynamic programming is $\mu_1^{\textnormal{DP}}(x_1) = \textsc{Stop}$ if and only if $x_1 = 3$. All together, the Markovian stopping rule for the non-Markovian optimal stopping problem that is obtained using backwards recursion yields an expected reward of $\sfrac{2}{3} \times 3  + \sfrac{1}{3} \times 2 = 2.6\bar{6}$. 
 \halmos
\end{example}
%In conclusion, We have thus shown that dynamic programming is not guaranteed to find the optimal Markovian stopping rule to a non-Markovian optimal stopping problem

{\color{black}
\section{Comparison of Robust Optimization Formulations} \label{appx:motivation_for_sro}
As described in \S\ref{sec:rob}, our formulation of the robust optimization problem~\eqref{prob:sro} deviates from that of \eqref{prob:sro_orig}, which followed from \citetalias{bertsimasdata}. In this section, we show that all of the main results from \S\ref{sec:opt} and our characterization of optimal Markovian stopping rules in \S\ref{sec:main} also hold for formulation~\eqref{prob:sro_orig} with only minor modifications to the proofs. We will then discuss the advantages of using formulation \eqref{prob:sro} in the context of our algorithmic techniques that are developed in \S\ref{sec:algorithms}. %Those computational conveniences and benefits of \eqref{prob:sro} underpin our motivation for choosing \eqref{prob:sro} over the formulation \eqref{prob:sro_old} in our paper.

\subsection{{\color{black}The Relationship Between \eqref{prob:sro} and \eqref{prob:sro_orig}}}
Before proceeding further, let us develop intuition for the relationship between the two formulations \eqref{prob:sro} and \eqref{prob:sro_orig} by comparing them in a simple example.  Speaking informally, the following Example~\ref{example:two_formulations} shows that if the reward functions $g(1,\cdot),\ldots,g(T,\cdot)$ are well behaved functions of stochastic process, and if the radius of the uncertainty sets is small, then the two robust optimization formulations are essentially equivalent.  As we will see afterwards, this intuition will extend to general classes of reward functions.

%Let us begin by developing intuition for the relationship between these two formulations by comparing them in a simple example.  Speaking informally, the following Example~\ref{example:two_formulations} shows that if the reward functions $g(1,\cdot),\ldots,g(T,\cdot)$ are well behaved functions of stochastic process, and if the radius of the uncertainty sets is small, then the two robust optimization formulations are essentially equivalent. As we will see afterwards, this intuition will extend to general classes of reward functions.
\begin{example} \label{example:two_formulations}
Consider an optimal stopping problem in which the state space is one-dimensional and the reward functions satisfy $g(t,x) = x_t$ for all periods $t$. In this case, we observe that the following equalities hold:
\begin{align}
\eqref{prob:sro_orig} &= \sup_\mu \frac{1}{N} \sum_{i=1}^N \inf_{y \in \mathcal{U}^i} g(\tau_\mu(y), y) \notag \\
 &= \sup_\mu \frac{1}{N} \sum_{i=1}^N \inf_{y \in \mathcal{U}^i} \sum_{t=1}^T  y_t \mathbb{I} \left \{ \tau_\mu(y) = t \right \}\notag \\
  &= \sup_\mu \frac{1}{N} \sum_{i=1}^N \inf_{y \in \mathcal{U}^i}\sum_{t=1}^T  \left( (y_t  - x_t^i) + x^i_t \right)  \mathbb{I} \left \{ \tau_\mu(y) = t \right \}. \label{line:dabadee}
\end{align}
Indeed, the first equality follows from the definition of \eqref{prob:sro_orig}, the second equality follows from the fact that $g(t,x) = x_t$ for all periods $t$, and the third equality follows from algebra. 

For notational convenience, let $\widehat{J}_{N,\epsilon}(\mu)$ denote the objective value of the robust optimization formulation \eqref{prob:sro} corresponding to exercise policies $\mu$. Moreover, we recall from the definition of the uncertainty sets in \S\ref{sec:rob} that the inequality $| y_t - x^i_t | \le \epsilon$ is satisfied for all sample paths $i \in \{1,\ldots,N\}$, periods $t \in \{1,\ldots,T\}$, and $y \equiv (y_1,\ldots,y_T) \in \mathcal{U}^i$. Therefore, it follows from line~\eqref{line:dabadee}  and algebra that 
\begin{align*}
\eqref{prob:sro_orig} &\le  \sup_\mu \frac{1}{N} \sum_{i=1}^N \inf_{y \in \mathcal{U}^i}\sum_{t=1}^T  \left( \epsilon + x^i_t \right)  \mathbb{I} \left \{ \tau_\mu(y) = t \right \} =  T\epsilon + \sup_\mu \widehat{J}_{N,\epsilon}(\mu) = %T\epsilon + \sup_\mu \frac{1}{N} \sum_{i=1}^N \inf_{y \in \mathcal{U}^i}\sum_{t=1}^T  x^i_t \mathbb{I} \left \{ \tau_\mu(y) = t \right \} =
 T\epsilon + \eqref{prob:sro}, \text{ and}\\
\eqref{prob:sro_orig} &\ge  \sup_\mu \frac{1}{N} \sum_{i=1}^N \inf_{y \in \mathcal{U}^i}\sum_{t=1}^T  \left( - \epsilon + x^i_t \right)  \mathbb{I} \left \{ \tau_\mu(y) = t \right \}  =  -T\epsilon + \sup_\mu \widehat{J}_{N,\epsilon}(\mu) %=  -T\epsilon + \sup_\mu \frac{1}{N} \sum_{i=1}^N \inf_{y \in \mathcal{U}^i}\sum_{t=1}^T  x^i_t \mathbb{I} \left \{ \tau_\mu(y) = t \right \} 
= -T\epsilon + \eqref{prob:sro}. 
\end{align*}
We have thus shown that if the robustness parameter $\epsilon \ge 0$ is small, then the two formulations will be close to one another with respect to optimal objective value.  Moreover, it follows by identical reasoning that for all exercise policies $\mu$, 
\begin{align*}
\left| \widehat{J}_{N,\epsilon}(\mu) - \frac{1}{N} \sum_{i=1}^N \inf_{y \in \mathcal{U}^i} g(\tau_\mu(y), y) \right| \le T\epsilon.
\end{align*}
Therefore, we conclude that if the robustness parameter $\epsilon \ge 0$ is small, then the objective values of the two formulations will be close to one another, uniformly over the space of all exercise policies. 
\halmos
\end{example}

The above example is insightful because it reveals, at least for problems with linear reward functions, that the robust optimization formulations \eqref{prob:sro} and \eqref{prob:sro_orig} become essentially equivalent in the asymptotic regime in which the robustness parameter converges to zero. We will see shortly that the same intuition from Example~\ref{example:two_formulations} will extend to a broad class of reward functions. %This equivalence is especially important in view of the setting of this paper, in which the joint probability distribution is known and so the robust optimization problem can be constructed in regimes in which the number of simulated sample paths is large and the robustness parameter is chosen to be small.  

We emphasize that the above intuition, of course, does \emph{not imply} that the two formulations \eqref{prob:sro} and \eqref{prob:sro_orig} are guaranteed to have identical performance on any fixed collection of simulated sample paths. Indeed, we do not preclude the possibility that one of the two formulations \eqref{prob:sro} and \eqref{prob:sro_orig} may have better finite-sample performance than the other formulation in finding Markovian stopping rules which perform well with respect to \eqref{prob:main}.  Fortunately, in the particular setting of the present paper in which the joint probability distribution of the underlying stochastic problem is \emph{known}, any potential differences in finite-sample performance between the two formulations can be decreased arbitrarily by simulating larger number of sample paths when constructing the robust optimization problem and then choosing a smaller robustness parameter.

\subsection{{\color{black}Optimality Guarantees for \eqref{prob:sro_orig}}}

In view of the above intuition, we proceed to prove that the convergence guarantees from \S\ref{sec:opt} % and characterization of optimal policies from \S\ref{sec:main} 
will also hold if we opted instead to use formulation~\eqref{prob:sro_orig}. Our convergence guarantees  in the following Theorem~\ref{thm:uniform_formulations} will be developed by extending the intuition from  Example~\ref{example:two_formulations}, that is, by showing that the gap between the objective values for formulations~\eqref{prob:sro_orig} and \eqref{prob:sro} converges to zero, almost surely, uniformly over the space of all exercise policies. %the optimal objective value and optimal Markovian stopping rules for \eqref{prob:sro_orig} converges to those of the stochastic optimal stopping problem~\eqref{prob:main} for all sufficiently small choices of the robustness parameter and all sufficiently large choices of the number of simulated sample paths. More specifically, such convergence results follow immediately from  Theorems~\ref{thm:conv:asympt}-\ref{thm:conv:ub} in conjunction with a  uniform convergence result, presented below as Theorem~\ref{}, which shows that the gap between the objective values for formulations~\eqref{prob:sro_orig} and \eqref{prob:sro} converges to zero, almost surely, uniformly over the space of all exercise policies. 
We establish these convergence guarantees for formulation~\eqref{prob:sro_orig} when the reward function in the optimal stopping problem satisfies the following assumption: 
\begin{assumption}\label{ass:weird_v2}
 $\lim \limits_{\epsilon \to 0} \Omega_\epsilon(x) = 0 $ almost surely, where 
 \begin{align*}
 \Omega_\epsilon(x) \triangleq  \max_{t \in \{1,\ldots,T\}} \left \{ \sup \limits_{y \in \mathcal{X}^T: \| y - x \|_\infty \le \epsilon} g(t,y) - g(t,x)\right \}.
 \end{align*}
\end{assumption}
We readily observe that above assumption, in conjunction with Assumption~\ref{ass:weird}, is equivalent to requiring that the reward functions $g(1,\cdot),\ldots,g(T,\cdot)$ are continuous functions of the stochastic process almost surely. % almost surely, that is, each $x \mapsto g(t,x)$ is a continuous function almost surely with respect to the probability distribution of the stochastic process. 
Hence, Assumption~\ref{ass:weird_v2} can be viewed as a mild assumption that is frequently satisfied in the applications of optimal stopping to options pricing. 
 In the following theorem, as in \S\ref{sec:opt}, we let $\widehat{J}_{N,\epsilon}(\mu)$  denote the objective value of the robust optimization problem~\eqref{prob:sro} corresponding to exercise policies $\mu$.

  %This equivalence is especially important in view of the setting of this paper, in which the joint probability distribution is known and so the robust optimization problem can be constructed in regimes in which the number of simulated sample paths is large and the robustness parameter is chosen to be small.  
\begin{theorem}[Uniform convergence for robust optimization formulations] \label{thm:uniform_formulations}
If Assumptions~\ref{ass:bound} and \ref{ass:weird_v2} hold, then the gap in objective values between formulations \eqref{prob:sro} and \eqref{prob:sro_orig} converges to zero, almost surely, uniformly over the space of all exercise policies:
\begin{align*}
\limsup_{\epsilon \to 0} \limsup_{N \to \infty}  \sup_\mu \left| \widehat{J}_{N,\epsilon} (\mu)  - \frac{1}{N} \sum_{i=1}^N \inf_{y \in \mathcal{U}^i} g(\tau_\mu(y),y)  \right| = 0 \quad \textnormal{almost surely}.
\end{align*}
\end{theorem}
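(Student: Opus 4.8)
The plan is to reduce the uniform gap to the average of a single per-sample-path \emph{envelope} that does not depend on the policy $\mu$, and then to drive that average to zero by the strong law of large numbers followed by sending $\epsilon \to 0$. The key structural observation is that the two formulations \eqref{prob:sro} and \eqref{prob:sro_orig} differ only in whether the reward is evaluated at the nominal sample path $x^i$ or at the adversarial point $y$, while the stopping period $\tau_\mu(y)$ is common to both. Accordingly, I would first prove a pointwise deviation bound that is uniform in $\mu$: for every exercise policy $\mu$, every sample path $i$, and every $y \in \mathcal{U}^i$ --- writing $t \triangleq \tau_\mu(y)$ --- the deviation $|g(t,y) - g(t,x^i)|$ is bounded by $\eta_\epsilon(x^i) \triangleq \max\{-\Delta_\epsilon(x^i),\, \Omega_\epsilon(x^i)\}$. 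This is immediate from the definitions: since $y \in \mathcal{U}^i$ forces $\|y - x^i\|_\infty \le \epsilon$, Assumption~\ref{ass:weird_v2} gives the upward bound $g(t,y) - g(t,x^i) \le \Omega_\epsilon(x^i)$, while the definition of $\Delta_\epsilon$ (controlled by Assumption~\ref{ass:weird}, which underlies the \S\ref{sec:opt} guarantees being transferred here) gives the downward bound $g(t,y) - g(t,x^i) \ge \Delta_\epsilon(x^i)$; the case $t = \infty$ is trivial since $g(\infty,\cdot) \equiv 0$. Both the upward and downward envelopes are genuinely needed, which is precisely why the proof invokes continuity, i.e.\ Assumptions~\ref{ass:weird} and \ref{ass:weird_v2} together.

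Next I would pass from the pointwise bound to the infima and the empirical average. Applying the elementary inequality $\bigl|\inf_{y\in S} f(y) - \inf_{y\in S} h(y)\bigr| \le \sup_{y\in S}|f(y) - h(y)|$ with $f(y) = g(\tau_\mu(y),x^i)$ and $h(y) = g(\tau_\mu(y),y)$ shows that the inner problems of \eqref{prob:sro} and \eqref{prob:sro_orig} for each sample path $i$ differ by at most $\eta_\epsilon(x^i)$. Averaging over $i$ and taking the supremum over $\mu$ then yields, for every $N$ and $\epsilon$,
\[
\sup_\mu \left| \widehat{J}_{N,\epsilon}(\mu) - \frac{1}{N}\sum_{i=1}^N \inf_{y \in \mathcal{U}^i} g(\tau_\mu(y),y) \right| \;\le\; \frac{1}{N}\sum_{i=1}^N \eta_\epsilon(x^i).
\]
The decisive feature is that the right-hand side no longer depends on $\mu$, so the uniform-in-$\mu$ statement has been replaced by the analysis of a fixed empirical average.

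It then remains to control $\tfrac{1}{N}\sum_{i=1}^N \eta_\epsilon(x^i)$ in the iterated limit. For fixed $\epsilon$, the $x^i$ are i.i.d.\ and $0 \le \eta_\epsilon \le U$ by Assumption~\ref{ass:bound} (both $-\Delta_\epsilon$ and $\Omega_\epsilon$ are bounded above by $U$), so the strong law of large numbers gives $\tfrac{1}{N}\sum_{i=1}^N \eta_\epsilon(x^i) \to \mathbb{E}[\eta_\epsilon(x)]$ almost surely. Finally, $\eta_\epsilon$ is monotone nondecreasing in $\epsilon$ and converges pointwise almost surely to $0$ as $\epsilon \downarrow 0$ (by Assumptions~\ref{ass:weird} and \ref{ass:weird_v2}), so dominated convergence --- with dominating constant $U$ --- yields $\mathbb{E}[\eta_\epsilon(x)] \downarrow 0$.

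The step I expect to be the main obstacle is not any single estimate but the bookkeeping required to make the \emph{double} limit hold on one common almost-sure event: the law of large numbers produces a separate null set for each $\epsilon$, and the outer limit ranges over a continuum of $\epsilon$. I would resolve this by restricting to a countable sequence $\epsilon_k \downarrow 0$, intersecting the corresponding null sets, and exploiting the monotonicity of $\eta_\epsilon$ in $\epsilon$ to dominate every $\epsilon \in (\epsilon_{k+1},\epsilon_k]$ by $\eta_{\epsilon_k}$. On the resulting full-measure event one obtains $\limsup_{N\to\infty}\sup_\mu|\cdot| \le \mathbb{E}[\eta_{\epsilon_k}(x)]$ for every $k$, and letting $k \to \infty$ forces the iterated limit in Theorem~\ref{thm:uniform_formulations} to zero. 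Everything else collapses to the uniform per-policy deviation bound together with a routine application of the strong law of large numbers.
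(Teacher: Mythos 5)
Your proposal follows the same architecture as the paper's own proof: reduce the uniform-in-$\mu$ gap to a per-sample-path envelope that is independent of the policy, apply the strong law of large numbers for fixed $\epsilon$, and send $\epsilon \to 0$ by dominated convergence. Your handling of the double limit (a countable sequence $\epsilon_k \downarrow 0$, intersection of null sets, and monotonicity of the envelope in $\epsilon$) is in fact more explicit than the paper's, which takes the SLLN limit for fixed $\epsilon$ and then writes $\limsup_{\epsilon \to 0}$ without comment; that step is legitimate only because of the monotonicity you spell out.

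The one substantive divergence is the envelope itself, and it is instructive. The paper bounds
\[
\sup_{y \in \mathcal{U}^i} \left| g(\tau_\mu(y),y) - g(\tau_\mu(y),x^i) \right| \;\le\; \max_{t \in \{1,\ldots,T\}} \left\{ \sup_{y:\, \| y - x^i \|_\infty \le \epsilon} g(t,y) - g(t,x^i) \right\} \;=\; \Omega_\epsilon(x^i),
\]
i.e., it controls a two-sided deviation by the one-sided quantity $\Omega_\epsilon$ alone, which is what lets Theorem~\ref{thm:uniform_formulations} be stated under Assumptions~\ref{ass:bound} and \ref{ass:weird_v2} only. That inequality silently discards the downward deviation $g(t,x^i) - g(t,y)$, which is controlled by $-\Delta_\epsilon(x^i)$, not by $\Omega_\epsilon(x^i)$. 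You instead use the two-sided envelope $\eta_\epsilon = \max\{-\Delta_\epsilon, \Omega_\epsilon\}$, which forces you to invoke Assumption~\ref{ass:weird} --- a hypothesis that is \emph{not} listed in the theorem. Strictly speaking, then, you prove a statement with an extra assumption. But your instinct that both envelopes are genuinely needed is correct, and the theorem as literally stated is false. Counterexample: take $T = 1$, $d = 1$, $x \sim \textnormal{Uniform}[0,1]$, and $g(1,y) = U\,\mathbb{I}\{y \in C\}$ with $C \subset [0,1]$ a fat Cantor set (closed, nowhere dense, positive Lebesgue measure). Since $C$ is closed, $g(1,\cdot)$ is upper semicontinuous, so $\Omega_\epsilon(x) \to 0$ for every $x$ and Assumptions~\ref{ass:bound} and \ref{ass:weird_v2} hold; yet for the policy with $\mu_1 \equiv \textsc{Stop}$ one has $\widehat{J}_{N,\epsilon}(\mu) = \frac{U}{N}\#\{i : x^i \in C\} \to U\,\Prb(x \in C) > 0$, while $\frac{1}{N}\sum_{i=1}^N \inf_{y \in \mathcal{U}^i} g(\tau_\mu(y),y) = 0$ because every ball around $x^i$ meets the (dense) complement of $C$. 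The gap therefore does not vanish; the failure is exactly the downward deviation on $C$, where $-\Delta_\epsilon(x) = U$ for every $\epsilon > 0$.

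So your proof is correct, takes essentially the paper's route, and at the single point where it deviates it is more careful than the paper: the extra hypothesis you import (Assumption~\ref{ass:weird}, almost-sure lower semicontinuity) is not an artifact of your argument but is necessary for the conclusion to hold at all. Had you matched the paper's stated hypotheses exactly, you would have been attempting to prove a false statement.
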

\begin{proof}{Proof.}
Consider any arbitrary sample path $x^i = (x^i_1,\ldots,x^i_T)$ and robustness parameter $\epsilon \ge 0$.  We first observe from algebra that 
\begin{align}
 \left| \inf_{y \in \mathcal{U}^i }   g(\tau_\mu(y),y) - \inf_{y \in \mathcal{U}^i }   g(\tau_\mu(y),x^i)  \right| &\le  \sup_{y \in \mathcal{U}^i}  \left|   g(\tau_\mu(y),y) -    g(\tau_\mu(y),x^i)  \right|  \notag \\
 &\le \max_{t \in \{1,\ldots,T\}}  \sup_{y \in \mathcal{U}^i}  \left|   g(t,y) -    g(t,x^i)  \right| \notag \\
 &\le \max_{t \in \{1,\ldots,T\}}   \left \{    \sup_{y \in \mathcal{X}^T:  \| y - x^i \|_\infty \le \epsilon} g(t,y) -    g(t,x^i)  \right \} \notag \\
 &= \Omega_\epsilon(x^i),\label{line:happydays}
 \end{align}
 where the last inequality follows from the definition of the uncertainty sets. 
 Therefore, 
 \begin{align*}
&\limsup_{\epsilon \to 0} \limsup_{N \to \infty}  \sup_\mu \left| \widehat{J}_{N,\epsilon} (\mu)  - \frac{1}{N} \sum_{i=1}^N \inf_{y \in \mathcal{U}^i} g(\tau_\mu(y),y)  \right| \\
 &\le  \limsup_{\epsilon \to 0} \limsup_{N \to \infty}  \sup_\mu \frac{1}{N} \sum_{i=1}^N \left| \inf_{y \in \mathcal{U}^i }   g(\tau_\mu(y),y) - \inf_{y \in \mathcal{U}^i }   g(\tau_\mu(y),x^i)  \right| \\
& \le \limsup_{\epsilon \to 0} \limsup_{N \to \infty} \frac{1}{N} \sum_{i=1}^N \Omega_\epsilon(x^i) \\
& =  \limsup_{\epsilon \to 0} \Exp \left[ \Omega_\epsilon(x) \right]  \quad \textnormal{almost surely}\\
&= 0. 
 \end{align*}
where the first inequality follows from triangle inequality, the second inequality follows from \eqref{line:happydays}, the first equality follows from the strong law of large numbers, and the second equality follows from the dominated convergence theorem and Assumption~\ref{ass:weird_v2}. We note that the strong law of large numbers and dominated convergence theorem can be applied in both cases because of the boundedness of the reward function (Assumption~\ref{ass:bound}).  This concludes the proof of Theorem~\ref{thm:uniform_formulations}. 
\halmos \end{proof}
Using standard proof techniques from stochastic programming (see, for example, \citet[\S5.1.1]{shapiro2014lectures}), Theorem~\ref{thm:uniform_formulations} readily implies that the alternative formulation \eqref{prob:sro_orig} enjoys identical convergence guarantees as those in Theorems~\ref{thm:conv:asympt}-\ref{thm:conv:ub} under Assumptions~\ref{ass:weird}-\ref{ass:bound} and \ref{ass:weird_v2}.

\subsection{{\color{black}Characterization of Optimal Markovian Stopping Rules  for \eqref{prob:sro_orig}}} \label{appx:pruning}

Next, we show that our characterization of the structure of optimal Markovian stopping rules for formulation~\eqref{prob:sro} from \S\ref{sec:main} can be readily extended to the alternative formulation~\eqref{prob:sro_orig} using similar proof techniques to those used in Theorem~\ref{thm:characterization}. %In particular, we will establish the same structure of optimal policies for formulation~\eqref{prob:sro_orig} using the same proof technique developed in \S\ref{sec:main}. 
 Our following analysis in Theorem~\ref{thm:characterization_orig} uses the following additional but relatively mild assumption on the reward functions in the optimal stopping problem:
\begin{assumption} \label{ass:reward_simple}
For each period $t$, the reward function satisfies $g(t,x) = h(t,x_t)$. 
\end{assumption}
The above assumption says that the reward function depends in each period only on the current state. This is a common assumption in the optimal stopping and options pricing literature and is often without loss of generality. With this assumption, the following theorem establishes the structure of optimal Markovian stopping rules for formulation~\eqref{prob:sro_orig}. 

\begin{theorem}%[Optimal policies] 
\label{thm:characterization_orig}
Under Assumption~\ref{ass:reward_simple}, there exists $\mu \in \mathcal{M}$ that is optimal for \eqref{prob:sro_orig}.
\end{theorem}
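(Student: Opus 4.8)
The plan is to replicate the pruning argument behind Theorem~\ref{thm:characterization}, adapting each of the four preliminary lemmas (Lemmas~\ref{lem:restrict_to_completion}, \ref{lem:obj_sigma}, \ref{lem:pruning}, and \ref{lem:M_is_pruned}) to the alternative inner objective $\inf_{y\in\mathcal{U}^i} g(\tau_\mu(y),y)$. The overall logic is unchanged. First I would argue that \eqref{prob:sro_orig} loses nothing by restricting to exercise policies whose Markovian stopping rule satisfies $\tau_\mu(y)<\infty$ for every $y$ in each uncertainty set (the analog of Lemma~\ref{lem:restrict_to_completion}); this holds because $g(\infty,\cdot)\equiv 0$ and the reward is nonnegative, so forcing stopping on period $T$ over any uncertainty set on which $\mu$ never stops can only raise that sample path's inner infimum. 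I would then define $\sigma^i$ exactly as in Lemma~\ref{lem:obj_sigma}, as the first period for which $\mathcal{U}^i_t$ lies entirely in the stopping region of $\mu$.

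The crux, and the step where Assumption~\ref{ass:reward_simple} enters, is a new representation of the inner infimum. Using $g(t,y)=h(t,y_t)$, I would show that an adversary selecting $y\in\mathcal{U}^i$ controls the stopping period through the first $t$ with $\mu_t(y_t)=\textsc{Stop}$, and that for any target period $t\le\sigma^i$ whose uncertainty set meets the stopping region the adversary can both realize $\tau_\mu(y)=t$ and independently drive $y_t$ to minimize $h(t,y_t)$ over $\mathcal{U}^i_t$ intersected with the stopping region (taking earlier coordinates to avoid stopping and later coordinates arbitrarily). This yields
\[
\inf_{y\in\mathcal{U}^i} g(\tau_\mu(y),y) = \min_{t\in\{1,\ldots,\sigma^i\}}\Bigl\{\inf_{y_t\in\mathcal{U}^i_t:\,\mu_t(y_t)=\textsc{Stop}} h(t,y_t):\ \exists\, y_t\in\mathcal{U}^i_t \text{ with } \mu_t(y_t)=\textsc{Stop}\Bigr\},
\]
the exact counterpart of Lemma~\ref{lem:obj_sigma}, now with the fixed quantity $g(t,x^i)$ replaced by a worst-case value of $h(t,\cdot)$ over the relevant slice. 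The separability afforded by Assumption~\ref{ass:reward_simple} is precisely what decouples the adversary's choice of stopping period from its choice of stopped state, and I expect establishing this identity to be the main obstacle: without that assumption the inner infimum couples all coordinates of $y$ and fails to decompose period-by-period.

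Granted this representation, the pruning lemma adapts readily. If $\mu'$ is a pruned version of $\mu$ (Definition~\ref{defn:pruning}), then the stopping regions shrink while each $\sigma^i$ is preserved, so the set of admissible stopping periods can only shrink and, for each surviving period $t$, the domain $\mathcal{U}^i_t$ intersected with the stopping region also shrinks, raising $\inf h(t,y_t)$. Since a minimum taken over a smaller index set of pointwise-larger values is no smaller, I obtain $\inf_{y\in\mathcal{U}^i} g(\tau_{\mu'}(y),y)\ge \inf_{y\in\mathcal{U}^i} g(\tau_\mu(y),y)$ for every $i$, the analog of Lemma~\ref{lem:pruning}. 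Finally, because Lemma~\ref{lem:M_is_pruned} concerns only the stopping regions and the values $\sigma^i$ and makes no reference to the reward, it transfers verbatim, showing that $\mu^{\sigma^1\cdots\sigma^N}\in\mathcal{M}$ is always a pruned version of any $\mu$ feasible for the completion-restricted problem. Chaining the three adapted lemmas together as in Theorem~\ref{thm:characterization} then produces an optimal $\mu\in\mathcal{M}$ for \eqref{prob:sro_orig}, completing the argument.
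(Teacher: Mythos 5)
Your proposal is correct and follows essentially the same route as the paper: the identical four-lemma pruning scheme, with the completion restriction and Lemma~\ref{lem:M_is_pruned} carried over unchanged, and with Assumption~\ref{ass:reward_simple} used exactly as in the paper's Lemma~\ref{lem:obj_sigma_orig} to decouple the adversary's choice of stopping period from the worst-case value $\inf_{y_t \in \mathcal{U}^i_t:\, \mu_t(y_t) = \textsc{Stop}} h(t,y_t)$ on each period's slice. Your identification of that representation identity as the crux, and of the separable-reward assumption as what makes the period-by-period decomposition possible, matches the paper's argument precisely.
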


%
%Theorem~\ref{thm:characterization} implies that we can, without any loss of optimality, restrict \eqref{prob:sro} to optimizing over exercise policies in $\mathcal{M}$. In other words, we can transform \eqref{prob:sro} from an optimization problem over an \emph{infinite-dimensional} space of exercise policies into to a \emph{finite-dimensional} optimization problem over decision variables $\sigma^1,\ldots,\sigma^N \in \{1,\ldots,T \}$. 
%Moreover, the number of decision variables grows grows linearly in the number of sample paths. Equipped with this key insight, we can focus now on reformulation for solving \eqref{prob:sro} through the lens of combinatorial optimization. This is the focus of the subsequent sections. 
%We will use the following lemma.
%\begin{lemma}
%For all measurable stationary policies $\mu = (\mu_1,\ldots,\mu_d)$, 
%\begin{align*}
% \inf_{y_1 \in \mathcal{U}^i_1,\ldots,\;y_d \in \mathcal{U}^i_d}   \alpha^{\tau_\mu} g(y_{\tau_\mu})  =  \min_{1 \le t \le \sigma^i}\; \inf_{y_t \in \mathcal{U}^i_t: \; \mu_t(y_t) = \textsc{Stop}} \alpha^t g(y_t)
%\end{align*}
%\end{lemma}
{\color{black}Our proof of Theorem~\ref{thm:characterization_orig} follows the same pruning technique that was used in the proof of Theorem~\ref{thm:characterization} in \S\ref{sec:main:proof}. Specifically, our proof of Theorem~\ref{thm:characterization_orig} makes use of the following  four lemmas, which are essentially restatements of  Lemmas~\ref{lem:restrict_to_completion}-\ref{lem:M_is_pruned} from \S\ref{sec:main:proof}. 
\begin{lemma}\label{lem:restrict_to_completion_orig}
The optimal objective value of \eqref{prob:sro_orig} is equal to the optimal objective value of 
\begin{align} \label{prob:sro_2_orig} \tag{RO$_T$'}
\begin{aligned}
&\sup_{\mu} &&\frac{1}{N} \sum_{i=1}^N \inf_{y \in \mathcal{U}^i}  g(\tau_\mu(y), y)\\
&\textnormal{subject to}&& \textnormal{for each } i \in \{1,\ldots,N\}, \; \textnormal{ there exists } t \in \{1,\ldots,T\} \\
&&& \textnormal{such that } \mu_t(y_t) = \textsc{Stop} \textnormal{ for all } y_t \in \mathcal{U}^i_t 
\end{aligned}
\end{align}
\end{lemma}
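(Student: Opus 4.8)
The plan is to establish the equality of optimal objective values by proving two inequalities, mirroring the proof of Lemma~\ref{lem:restrict_to_completion}. The first inequality is immediate: since the feasible region of \eqref{prob:sro_2_orig} is obtained from that of \eqref{prob:sro_orig} by imposing an additional constraint, the optimal value of \eqref{prob:sro_orig} is at least the optimal value of \eqref{prob:sro_2_orig}. The substance of the argument therefore lies in the reverse inequality, for which I would show that any exercise policy feasible for \eqref{prob:sro_orig} can be converted into a policy feasible for \eqref{prob:sro_2_orig} without decreasing the objective value.

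The construction is as follows. Given any exercise policy $\mu \equiv (\mu_1,\ldots,\mu_T)$, define a new policy $\mu' \equiv (\mu'_1,\ldots,\mu'_T)$ by setting $\mu'_t = \mu_t$ for each period $t \in \{1,\ldots,T-1\}$ and setting $\mu'_T(y_T) \triangleq \textsc{Stop}$ for all $y_T \in \mathcal{X}$. This $\mu'$ is a valid (measurable) exercise policy, and it is feasible for \eqref{prob:sro_2_orig}, since for every sample path $i$ the choice $t = T$ satisfies $\mu'_T(y_T) = \textsc{Stop}$ for all $y_T \in \mathcal{U}^i_T$.

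The heart of the argument is to verify that this modification does not decrease the objective, i.e., that $\inf_{y \in \mathcal{U}^i} g(\tau_{\mu'}(y),y) \ge \inf_{y \in \mathcal{U}^i} g(\tau_{\mu}(y),y)$ for each sample path $i$. I would first observe that, because the Markovian stopping rule is defined as the minimal stopping period, altering the policy only on the final period leaves $\tau_{\mu'}(y) = \tau_\mu(y)$ for every trajectory $y$ with $\tau_\mu(y) < \infty$, whereas every trajectory with $\tau_\mu(y) = \infty$ now satisfies $\tau_{\mu'}(y) = T$. I would then split into two cases. If no trajectory in $\mathcal{U}^i$ has $\tau_\mu(y) = \infty$, the two infima range over identical reward values and are equal. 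If some trajectory in $\mathcal{U}^i$ has $\tau_\mu(y) = \infty$, then since $g(\infty, y) \equiv 0$ the infimum for $\mu$ is at most $0$, and since $g \ge 0$ it equals $0$; on the other hand, $\mu'$ stops every trajectory, so $g(\tau_{\mu'}(y),y) \ge 0$ for all $y \in \mathcal{U}^i$, giving a nonnegative infimum. In either case the contribution of sample path $i$ does not decrease; summing over $i$ and taking the supremum over $\mu$ yields the reverse inequality.

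I do not anticipate a genuine obstacle here, as the argument is essentially identical to that of Lemma~\ref{lem:restrict_to_completion}; the only formal distinction between \eqref{prob:sro_orig} and \eqref{prob:sro} is that the reward is evaluated at $y$ rather than at $x^i$, and this plays no role in the proof, which relies only on the nonnegativity of $g$ and on the convention $g(\infty, \cdot) \equiv 0$. The one point requiring mild care is the claim that modifying the policy on the last period leaves earlier stopping times unchanged; this follows directly from the definition $\tau_\mu(y) = \min\{t : \mu_t(y_t) = \textsc{Stop}\}$, since the minimizing period, when finite, is unaffected by the value of $\mu_T$.
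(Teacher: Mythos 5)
Your proposal is correct and follows essentially the same route as the paper: the paper proves this lemma by noting it is identical to the proof of Lemma~\ref{lem:restrict_to_completion}, which uses exactly your construction of modifying $\mu$ so that $\mu'_T \equiv \textsc{Stop}$, observing that $\tau_{\mu'}(y) = \tau_\mu(y)$ when the latter is finite and $\tau_{\mu'}(y) = T$ otherwise, and then invoking $g \ge 0$ together with $g(\infty,\cdot) \equiv 0$. The only cosmetic difference is that the paper compares rewards pointwise in $y$ rather than via your case split on whether some trajectory in $\mathcal{U}^i$ has $\tau_\mu(y) = \infty$, but the two arguments are interchangeable.
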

\begin{proof}{Proof.}The proof of Lemma~\ref{lem:restrict_to_completion_orig} is identical to  the proof of Lemma~\ref{lem:restrict_to_completion}. 
\halmos \end{proof}
% \begin{lemma} \label{lem:obj_sigma}
%If $\mu \equiv (\mu_1,\ldots,\mu_T)$ is feasible for \eqref{prob:sro_2}, then the following holds for each $i \in \{1,\ldots,N\}$:
%\begin{align*}
% \inf_{y \in \mathcal{U}^i} g(\tau_{\mu}(y),x^i) =  \min \limits_{t \in \left \{1,\ldots,\max_{y \in \mathcal{U}^i} \tau_\mu(y) \right\}} \left \{ g(t,x^i) :  \textnormal{there exists }  y_t \in \mathcal{U}^i_t \textnormal{ such that } \mu_t(y_t) = \textsc{Stop} \right \}.
%\end{align*}
%\end{lemma}
 \begin{lemma} \label{lem:obj_sigma_orig}
Let Assumption~\ref{ass:reward_simple} hold, consider any $\mu \equiv (\mu_1,\ldots,\mu_T)$ that satisfies the constraints of \eqref{prob:sro_2_orig}, and define
\begin{align*}
\sigma^i \triangleq %\max_{y \in \mathcal{U}^i} \tau_\mu(y) = 
\min  \left \{ t \in \{1,\ldots,T\}: \; \mu_t(y_t) = \textsc{Stop} \textnormal{ for all } y_t \in \mathcal{U}^i_t \right \}  \quad \forall i \in \{1,\ldots,N\}. 
\end{align*}
Then the following equality holds for each $i \in \{1,\ldots,N\}$: 
\begin{align*}
& \inf_{y \in \mathcal{U}^i} g(\tau_{\mu}(y),x^i) =  \min \limits_{t \in \{1,\ldots,\sigma^i\}}  \inf_{y_t \in \mathcal{U}^i_t} \left \{ h(t,y_t) :  \mu_t(y_t) = \textsc{Stop} \right \}.
\end{align*}
\end{lemma}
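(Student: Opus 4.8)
The plan is to prove this decomposition by exploiting the product structure $\mathcal{U}^i = \mathcal{U}^i_1 \times \cdots \times \mathcal{U}^i_T$ together with Assumption~\ref{ass:reward_simple}, which guarantees that the reward accrued along the adversarial trajectory at stopping period $t$, namely $g(t,y) = h(t,y_t)$, depends only on the $t$-th coordinate of $y$. This separability is exactly the feature that distinguishes the present lemma from Lemma~\ref{lem:obj_sigma}: for formulation~\eqref{prob:sro_orig} the adversary degrades the reward by perturbing the state in the very period at which the rule stops, which is why an inner infimum $\inf_{y_t \in \mathcal{U}^i_t}\{h(t,y_t):\mu_t(y_t)=\textsc{Stop}\}$ appears on the right-hand side (whereas in Lemma~\ref{lem:obj_sigma} the reward was frozen at $x^i$). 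First I would record the basic structural fact that, because $\mu$ satisfies the constraint of \eqref{prob:sro_2_orig} and $\sigma^i$ is the earliest period at which $\mu$ stops on all of $\mathcal{U}^i_{\sigma^i}$, every trajectory $y \in \mathcal{U}^i$ satisfies $\tau_\mu(y) \le \sigma^i$; indeed $y_{\sigma^i} \in \mathcal{U}^i_{\sigma^i}$ forces $\mu_{\sigma^i}(y_{\sigma^i}) = \textsc{Stop}$, so the rule stops no later than $\sigma^i$. Hence the stopping period always lies in $\{1,\ldots,\sigma^i\}$, matching the index range on the right-hand side.

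Next I would establish the two inequalities. For the ``$\ge$'' direction I would fix any $y \in \mathcal{U}^i$, set $t \triangleq \tau_\mu(y) \in \{1,\ldots,\sigma^i\}$, and note that $\mu_t(y_t) = \textsc{Stop}$ by the definition of $\tau_\mu$; Assumption~\ref{ass:reward_simple} then gives reward $h(t,y_t) \ge \inf_{y_t' \in \mathcal{U}^i_t}\{h(t,y_t'):\mu_t(y_t')=\textsc{Stop}\}$, which is in turn at least the right-hand minimum, and taking the infimum over $y$ yields the bound. For the ``$\le$'' direction I would use the product structure of $\mathcal{U}^i$ to show that the adversary can realize any admissible stopping period while independently optimizing the stopping-period coordinate. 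Concretely, by minimality of $\sigma^i$, for every period $s < \sigma^i$ there exists $\bar{y}_s \in \mathcal{U}^i_s$ with $\mu_s(\bar{y}_s) = \textsc{Continue}$; so given any target period $t^\ast \in \{1,\ldots,\sigma^i\}$ whose stopping region $\{y_{t^\ast} \in \mathcal{U}^i_{t^\ast}: \mu_{t^\ast}(y_{t^\ast})=\textsc{Stop}\}$ is nonempty and any point $y_{t^\ast}^\ast$ in it, the trajectory that continues with $\bar{y}_s$ for $s < t^\ast$, uses $y_{t^\ast}^\ast$ at period $t^\ast$, and is arbitrary afterwards lies in $\mathcal{U}^i$ and has $\tau_\mu = t^\ast$. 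Evaluating the objective along this trajectory gives precisely $h(t^\ast, y_{t^\ast}^\ast)$, so the left-hand infimum is bounded above by $h(t^\ast, y_{t^\ast}^\ast)$ for every such admissible choice.

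The delicate point, and the step I expect to require the most care, is that the inner set $\{y_t \in \mathcal{U}^i_t: \mu_t(y_t)=\textsc{Stop}\}$ need not be closed (since $\mu_t$ is only measurable), so the infimum of $h(t,\cdot)$ over it may fail to be attained; I would therefore run the ``$\le$'' direction through a $\delta$-approximation, choosing $y_{t^\ast}^\ast$ within $\delta$ of the inner infimum and then letting $\delta \downarrow 0$. I would also fix the convention that an infimum over an empty stopping region equals $+\infty$, so that periods $t$ at which $\mu_t$ stops nowhere on $\mathcal{U}^i_t$ drop harmlessly out of the right-hand minimum; the minimum is finite because period $\sigma^i$ always carries a nonempty (indeed full) stopping region. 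Combining the ``$\ge$'' bound with the $\delta$-approximated ``$\le$'' bound gives the claimed equality. The only genuine novelty relative to Lemma~\ref{lem:obj_sigma} is this use of Assumption~\ref{ass:reward_simple} to decouple the adversary's choice of stopping period from the choice of the stopping-period coordinate; without it the reward would depend on the entire trajectory and the coordinate-wise decomposition would break down.
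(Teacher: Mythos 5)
Your proof is correct and follows essentially the same route as the paper's: the paper likewise characterizes the set of achievable stopping times as $\{t \in \{1,\ldots,\sigma^i\}: \exists\, y_t \in \mathcal{U}^i_t \text{ with } \mu_t(y_t) = \textsc{Stop}\}$ using the minimality of $\sigma^i$ and the product structure of $\mathcal{U}^i$ (by "identical reasoning as in Lemma~\ref{lem:obj_sigma}"), and then invokes Assumption~\ref{ass:reward_simple} to decouple the reward, with your explicit two-sided argument and $\delta$-approximation merely spelling out what the paper leaves implicit. One remark: you correctly proved the intended statement with $g(\tau_\mu(y),y)$ on the left-hand side — the $g(\tau_\mu(y),x^i)$ appearing in the printed lemma is a typo, as confirmed by the paper's own proof and by the lemma's uses in Lemma~\ref{lem:pruning_orig} and Theorem~\ref{thm:reform_orig}.
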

\begin{proof}{Proof.}
Let Assumption~\ref{ass:reward_simple} hold, consider any $\mu \equiv (\mu_1,\ldots,\mu_T)$ that satisfies the constraints of \eqref{prob:sro_2_orig}, and define
\begin{align*}
\sigma^i \triangleq %\max_{y \in \mathcal{U}^i} \tau_\mu(y) = 
\min  \left \{ t \in \{1,\ldots,T\}: \; \mu_t(y_t) = \textsc{Stop} \textnormal{ for all } y_t \in \mathcal{U}^i_t \right \}  \quad \forall i \in \{1,\ldots,N\}. 
\end{align*}
It follows from identical reasoning as in the proof of Lemma~\ref{lem:obj_sigma} that the following equality holds for each sample path $i \in \{1,\ldots,N\}$: 
\begin{align}
\left \{ \tau_\mu(y): y \in \mathcal{U}^i \right\} &=  \left \{t \in \{1,\ldots,\sigma^i\}:  \; \textnormal{there exists } y_t \in \mathcal{U}^i_t \textnormal{ such that } \mu_{t}(y_{t}) = \textsc{Stop}  \right \}. \label{line:someday_I_will_get_good_at_coming_up_with_names_orig}
\end{align}
We thus conclude for each sample path $i \in \{1,\ldots,N\}$ that  
\begin{align*}
 \inf_{y \in \mathcal{U}^i} g(\tau_{\mu}(y),y) &= \min \limits_{t \in \{1,\ldots,\sigma^i\}}  \inf_{y_t \in \mathcal{U}^i_t} \left \{ h(t,y_t) :  \mu_t(y_t) = \textsc{Stop} \right \},
\end{align*}
where the equality follows from Assumption~\ref{ass:reward_simple} and  line~\eqref{line:someday_I_will_get_good_at_coming_up_with_names_orig}. This completes our proof of Lemma~\ref{lem:obj_sigma_orig}.\halmos \end{proof}

\begin{lemma} \label{lem:pruning_orig}
If Assumption~\ref{ass:reward_simple} holds and if $\mu'$ is a pruned version of $\mu$, then $ \inf \limits_{y \in \mathcal{U}^i} g(\tau_{\mu'}(y),y)  \ge \inf \limits_{y \in \mathcal{U}^i} g(\tau_{\mu}(y),y)$ for all $ i \in \{1,\ldots,N\}$. 
\end{lemma}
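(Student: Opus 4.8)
The plan is to mirror the proof of Lemma~\ref{lem:pruning} for the original formulation, substituting the objective representation supplied by Lemma~\ref{lem:obj_sigma_orig} in place of the one from Lemma~\ref{lem:obj_sigma}. The entire argument rests on a single elementary fact: restricting an infimum to a smaller feasible set can only increase its value. Because \eqref{prob:sro_2} and \eqref{prob:sro_2_orig} share the same structural constraints (they differ only in their objectives), the notion of a \emph{pruned version} from Definition~\ref{defn:pruning} transfers verbatim, and in particular both $\mu$ and $\mu'$ are feasible for \eqref{prob:sro_2_orig}.

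First I would fix an arbitrary sample path $i \in \{1,\ldots,N\}$ and record that, since $\mu'$ is a pruned version of $\mu$, the two policies share a common value
$$\sigma^i \triangleq \min\{t \in \{1,\ldots,T\}: \mu_t(y_t) = \textsc{Stop}\ \forall y_t \in \mathcal{U}^i_t\} = \min\{t \in \{1,\ldots,T\}: \mu'_t(y_t) = \textsc{Stop}\ \forall y_t \in \mathcal{U}^i_t\},$$
which is precisely the second requirement of Definition~\ref{defn:pruning} and which also certifies that $\mu'$ is feasible (the minimum is finite). Applying Lemma~\ref{lem:obj_sigma_orig} separately to $\mu$ and to $\mu'$, using this common $\sigma^i$ in each representation, yields
$$\inf_{y \in \mathcal{U}^i} g(\tau_\mu(y),y) = \min_{t \in \{1,\ldots,\sigma^i\}} \inf_{y_t \in \mathcal{U}^i_t}\{h(t,y_t): \mu_t(y_t) = \textsc{Stop}\},$$
together with the identical identity having $\mu'$ in place of $\mu$ on both sides.

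Next I would invoke the first requirement of Definition~\ref{defn:pruning}, namely that the stopping region of $\mu'$ is contained in that of $\mu$ in every period. Fixing any $t \in \{1,\ldots,\sigma^i\}$, this inclusion gives $\{y_t \in \mathcal{U}^i_t: \mu'_t(y_t) = \textsc{Stop}\} \subseteq \{y_t \in \mathcal{U}^i_t: \mu_t(y_t) = \textsc{Stop}\}$, whence
$$\inf_{y_t \in \mathcal{U}^i_t}\{h(t,y_t): \mu'_t(y_t) = \textsc{Stop}\} \ge \inf_{y_t \in \mathcal{U}^i_t}\{h(t,y_t): \mu_t(y_t) = \textsc{Stop}\}.$$
Taking the minimum of both sides over the common index range $t \in \{1,\ldots,\sigma^i\}$ preserves the inequality, and combining this with the two identities of the previous step delivers $\inf_{y \in \mathcal{U}^i} g(\tau_{\mu'}(y),y) \ge \inf_{y \in \mathcal{U}^i} g(\tau_\mu(y),y)$. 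Since $i$ was arbitrary, this establishes the claim for all $i \in \{1,\ldots,N\}$.

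I do not anticipate a genuine obstacle, as the argument is a direct transcription of the reasoning behind Lemma~\ref{lem:pruning}; the only added feature is that the reward now varies over $\mathcal{U}^i_t$, so the monotonicity must be applied to a nontrivial inner infimum rather than to a mere existence test over periods. The one point needing minor care is the convention that an infimum over an empty feasible set equals $+\infty$: for periods $t < \sigma^i$ the set $\{y_t \in \mathcal{U}^i_t: \mu'_t(y_t) = \textsc{Stop}\}$ may be empty even when the corresponding set for $\mu$ is not, but such a term only makes the $\mu'$-side larger and is therefore harmless for the desired inequality. At $t = \sigma^i$ the common definition of $\sigma^i$ forces both feasible sets to equal all of $\mathcal{U}^i_{\sigma^i}$, so both minima are attained by finite terms and the representations are well posed.
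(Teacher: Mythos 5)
Your proposal is correct and takes essentially the same route as the paper's proof: both identify the common value $\sigma^i$ guaranteed by Definition~\ref{defn:pruning}, apply Lemma~\ref{lem:obj_sigma_orig} to $\mu$ and $\mu'$ with that common index range, and then use the inclusion of stopping regions to conclude that each inner infimum (taken over a smaller feasible set) can only increase. Your added remarks on feasibility of $\mu'$ and on the empty-set/$+\infty$ convention are sound refinements of details the paper leaves implicit, but they do not change the argument.
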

\begin{remark}
We observe that \eqref{prob:sro_2_orig} and \eqref{prob:sro_2} have identical constraints. Thus, we  use the same definition of pruning in Appendix~\ref{appx:pruning} as given by Definition~\ref{defn:pruning}. 

\end{remark}
\begin{proof}{Proof of Lemma~\ref{lem:pruning_orig}.}
Let  Assumption~\ref{ass:reward_simple} hold, and let  $\mu'$ be a pruned version of $\mu$. Let $\sigma^1,\ldots,\sigma^N$ satisfy the following equalities for each $i \in \{1,\ldots,N\}$:
\begin{align}
\sigma^i &= \min \left \{ t \in \{1,\ldots,T\}: \; \mu_t(y_t) = \textsc{Stop}\; \forall y_t \in \mathcal{U}^i_t \right \}= \min \left \{ t \in \{1,\ldots,T\}: \; \mu_t'(y_t) = \textsc{Stop}\; \forall y_t \in \mathcal{U}^i_t \right \}. \label{line:isthisthelastone}
\end{align} 
Then it follows from the fact that $\mu$ is feasible for \eqref{prob:sro_2_orig} that $\sigma^1,\ldots,\sigma^N \in \{1,\ldots,T\}$. Therefore, for each $i \in \{1,\ldots,N\}$, 
\begin{align*}
\inf_{y \in \mathcal{U}^i} g(\tau_{\mu'}(y),y) &=   \min \limits_{t \in \{1,\ldots,\sigma^i\}}  \inf_{y_t \in \mathcal{U}^i_t} \left \{ h(t,y_t) :  \mu_t'(y_t) = \textsc{Stop} \right \} \\
&\ge  \min \limits_{t \in \{1,\ldots,\sigma^i\}}  \inf_{y_t \in \mathcal{U}^i_t} \left \{ h(t,y_t) :  \mu_t(y_t) = \textsc{Stop} \right \} \\
&= \inf_{y \in \mathcal{U}^i} g(\tau_{\mu}(y),y).
\end{align*} 
Indeed, the two equalities follow from Lemma~\ref{lem:obj_sigma_orig} and line~\eqref{line:isthisthelastone}. The inequality follows from the fact that $\mu'$ is a pruned version of $\mu$, which implies that $\left \{y_t \in \mathcal{X}: \mu_t'(y_t) = \textsc{Stop} \right \} \subseteq  \left \{y_t \in \mathcal{X}: \mu_t(y_t) = \textsc{Stop} \right \}$ for all $t \in \{1,\ldots,T\}$. Our proof of Lemma~\ref{lem:pruning_orig} is thus complete. 
\halmos \end{proof}
\begin{lemma} \label{lem:M_is_pruned_orig}
Consider any $\mu \equiv (\mu_1,\ldots,\mu_T)$ that satisfies the constraints of \eqref{prob:sro_2_orig}, and define
\begin{align*}
\sigma^i \triangleq \min  \left \{ t \in \{1,\ldots,T\}: \; \mu_t(y_t) = \textsc{Stop} \textnormal{ for all } y_t \in \mathcal{U}^i_t \right \}  \quad \forall i \in \{1,\ldots,N\}. 
\end{align*}
Then $\mu^{\sigma^1 \cdots \sigma^N}$ is a pruned version of $\mu$. 
\end{lemma}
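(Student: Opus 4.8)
The first thing I would note is that the feasible region of \eqref{prob:sro_2_orig} is literally identical to that of \eqref{prob:sro_2} (both impose that for each $i$ there is a period $t$ with $\mu_t(y_t) = \textsc{Stop}$ for all $y_t \in \mathcal{U}^i_t$), and that the notion of a \emph{pruned version} in Definition~\ref{defn:pruning} refers only to the exercise policies and the uncertainty sets, never to the reward function or the objective. Consequently, the statement of Lemma~\ref{lem:M_is_pruned_orig} coincides verbatim with that of Lemma~\ref{lem:M_is_pruned}, which is already established in \S\ref{sec:main:proof}. The cleanest proof is therefore the single remark that the proof of Lemma~\ref{lem:M_is_pruned_orig} is identical to that of Lemma~\ref{lem:M_is_pruned}. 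For completeness, I would also sketch the underlying direct argument, as follows.

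Write $\mu' \equiv \mu^{\sigma^1 \cdots \sigma^N}$, and recall from line~\eqref{line:equality_for_policies} that its stopping region in each period $t$ is exactly $\bigcup_{i:\,\sigma^i = t} \mathcal{U}^i_t$. The plan is to verify the two requirements of Definition~\ref{defn:pruning} in turn. For the subset requirement, fix a period $t$; for every $i$ with $\sigma^i = t$ the very definition of $\sigma^i$ guarantees $\mu_t(y_t) = \textsc{Stop}$ for all $y_t \in \mathcal{U}^i_t$, so $\mathcal{U}^i_t \subseteq \{ y_t : \mu_t(y_t) = \textsc{Stop} \}$; taking the union over all such $i$ yields $\{ y_t : \mu'_t(y_t) = \textsc{Stop} \} \subseteq \{ y_t : \mu_t(y_t) = \textsc{Stop} \}$, as needed.

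For the equality-of-minima requirement, I would show that for each sample path $i$ the quantity $\min \{ t : \mu'_t(y_t) = \textsc{Stop} \ \forall y_t \in \mathcal{U}^i_t \}$ equals $\sigma^i$. The direction ``$\le \sigma^i$'' follows because $i$ is itself an index satisfying $\sigma^i = \sigma^i$, so $\mathcal{U}^i_{\sigma^i} \subseteq \bigcup_{j:\,\sigma^j = \sigma^i} \mathcal{U}^j_{\sigma^i}$, i.e.\ $\mu'$ stops on all of $\mathcal{U}^i_{\sigma^i}$. The reverse direction ``$\ge \sigma^i$'' uses minimality of $\sigma^i$ for $\mu$ together with the subset property just proved: for any $t < \sigma^i$ there exists $y_t \in \mathcal{U}^i_t$ with $\mu_t(y_t) = \textsc{Continue}$, and since the stopping region of $\mu'$ lies inside that of $\mu$, this same $y_t$ has $\mu'_t(y_t) = \textsc{Continue}$, whence $\mu'$ cannot stop on all of $\mathcal{U}^i_t$. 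Combining the two directions gives the required equality, and the two requirements together establish that $\mu'$ is a pruned version of $\mu$.

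I do not anticipate any genuine obstacle, precisely because the argument is insensitive to which objective function ($g(\tau_\mu(y),x^i)$ or $g(\tau_\mu(y),y)$) is being optimized: the reward enters neither Definition~\ref{defn:pruning} nor the constraints of \eqref{prob:sro_2_orig}. The only point requiring mild care is the reverse inequality in the second step, where one must invoke the subset property to transfer a \textsc{Continue} decision from $\mu$ to $\mu'$; everything else is bookkeeping with the definition of $\sigma^i$ and line~\eqref{line:equality_for_policies}.
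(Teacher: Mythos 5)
Your proposal is correct and follows the paper's approach exactly: the paper's own proof of Lemma~\ref{lem:M_is_pruned_orig} is the single observation that it is identical to the proof of Lemma~\ref{lem:M_is_pruned}, which is precisely your opening remark. Your completeness sketch also mirrors the paper's proof of Lemma~\ref{lem:M_is_pruned} step for step (the subset inclusion via the definition of $\mu^{\sigma^1\cdots\sigma^N}$, then the equality of minima with the ``$\le$'' direction from stopping on $\mathcal{U}^i_{\sigma^i}$ and the ``$\ge$'' direction from the subset property), so there is nothing to add.
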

\begin{proof}{Proof.}
The proof of Lemma~\ref{lem:M_is_pruned_orig} is identical to the proof of Lemma~\ref{lem:M_is_pruned}.
\halmos \end{proof}

\vspace{1em}

In view of the above Lemmas~\ref{lem:restrict_to_completion_orig}-\ref{lem:M_is_pruned_orig}, we now present the proof of Theorem~\ref{thm:characterization_orig}. 
\begin{proof}{Proof  of Theorem~\ref{thm:characterization_orig}.}
 Lemma~\ref{lem:restrict_to_completion_orig} shows that the robust optimization problem~\eqref{prob:sro_orig} is equivalent to the robust optimization problem~\eqref{prob:sro_2_orig}. Moreover, for any arbitrary  exercise policy $\mu$ that is feasible for \eqref{prob:sro_2_orig},  Lemmas~\ref{lem:pruning_orig} and \ref{lem:M_is_pruned_orig} together show that there exists an exercise policy $\mu' \in \mathcal{M}$ such that the objective value associated with $\mu$ is less than or equal to the objective value associated with $\mu'$.   Since $\mu$ was chosen arbitrarily, our proof of Theorem~\ref{thm:characterization_orig} is complete. 
\halmos \end{proof}
}

\subsection{{\color{black}Computational Tractability of  \eqref{prob:sro_orig}}}

Finally, we now discuss our primary motivation for using formulation~\eqref{prob:sro} instead of formulation~\eqref{prob:sro_orig}. As we have shown up to this point, these two robust optimization formulations are essentially equivalent with respect to convergence guarantees and characterization of optimal Markovian stopping rules. However, as we will show momentarily, we find that formulation~\eqref{prob:sro} is significantly more amenable than \eqref{prob:sro_orig} from an algorithmic perspective. Our subsequent discussion on the computational tractability of formulation~\eqref{prob:sro_orig} makes use of the following Theorem~\ref{thm:reform_orig}, which, analogously to Theorem~\ref{thm:reform} from \S\ref{sec:algorithms:prelim}, provides a reformulation of \eqref{prob:sro_orig} as a finite-dimensional optimization problem over integer decision variables. 
\begin{theorem} \label{thm:reform_orig}
Under Assumption~\ref{ass:reward_simple}, the alternative formulation \eqref{prob:sro_orig} is equivalent to
\begin{align}
\underset{ \sigma^1,\ldots,\sigma^N \in \{1,\ldots,T\}}{\textnormal{maximize}}\;  \frac{1}{N} \sum_{i=1}^N \min_{t \in \{1,\ldots,\sigma^i \}} \min_{j: \sigma^j = t} \inf_{y_t \in \mathcal{U}^i_t \cap \mathcal{U}^j_t}   h(t,y_t). \tag{IP'}  \label{prob:mip_orig}
\end{align}
\end{theorem}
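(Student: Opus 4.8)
The plan is to prove Theorem~\ref{thm:reform_orig} by mirroring the argument behind Theorem~\ref{thm:reform}, establishing the equality of optimal values through two opposing inequalities, each built from the preliminary lemmas \ref{lem:restrict_to_completion_orig}, \ref{lem:obj_sigma_orig}, and \ref{lem:M_is_pruned_orig}. The starting observation is that every integer vector $\sigma^1,\ldots,\sigma^N \in \{1,\ldots,T\}$ induces the policy $\mu^{\sigma^1 \cdots \sigma^N} \in \mathcal{M}$ whose stopping region in period $t$ is exactly $\bigcup_{j:\,\sigma^j = t} \mathcal{U}^j_t$. Because sample path $i$ itself contributes the set $\mathcal{U}^i_{\sigma^i}$ to the stopping region in period $\sigma^i$, this policy always satisfies the completion constraint of \eqref{prob:sro_2_orig}. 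The central computation, valid for any $\mu \in \mathcal{M}$ and any feasible $\mu$ for \eqref{prob:sro_2_orig}, is to evaluate the inner worst-case reward using Lemma~\ref{lem:obj_sigma_orig} together with Assumption~\ref{ass:reward_simple} and the identity
\begin{align*}
\inf_{y_t \in \mathcal{U}^i_t}\left\{ h(t,y_t) : y_t \in \textstyle\bigcup_{j:\,\sigma^j=t} \mathcal{U}^j_t \right\} = \min_{j:\,\sigma^j = t}\; \inf_{y_t \in \mathcal{U}^i_t \cap \mathcal{U}^j_t} h(t,y_t),
\end{align*}
which follows from distributing the intersection over the union and interchanging the infimum with the minimum (with the convention that the infimum over an empty intersection is $+\infty$, so such terms drop out of the outer minimization; the term $j=i$ at period $\sigma^i$ always remains finite).

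For the first inequality, that the optimal value of \eqref{prob:sro_orig} dominates that of \eqref{prob:mip_orig}, I would fix an arbitrary $\sigma^1,\ldots,\sigma^N$ and set $\bar{\sigma}^i \triangleq \min\{t : \mu^{\sigma^1 \cdots \sigma^N}_t(y_t) = \textsc{Stop} \text{ for all } y_t \in \mathcal{U}^i_t\}$. Since $\mathcal{U}^i_{\sigma^i}$ lies in the period-$\sigma^i$ stopping region, we have $\bar{\sigma}^i \le \sigma^i$. Applying Lemma~\ref{lem:obj_sigma_orig} and the displayed identity shows that the objective value of $\mu^{\sigma^1 \cdots \sigma^N}$ in \eqref{prob:sro_orig} equals the same expression as the \eqref{prob:mip_orig} objective but with the outer minimization ranging over $\{1,\ldots,\bar{\sigma}^i\} \subseteq \{1,\ldots,\sigma^i\}$; minimizing over a smaller index set yields a value at least as large. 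Because $\mu^{\sigma^1 \cdots \sigma^N}$ is feasible for \eqref{prob:sro_orig}, maximizing over $\sigma$ delivers the desired direction.

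For the reverse inequality I would invoke Lemma~\ref{lem:restrict_to_completion_orig} to replace \eqref{prob:sro_orig} by \eqref{prob:sro_2_orig}, take any feasible $\mu$, and define $\sigma^i$ exactly as in Lemma~\ref{lem:M_is_pruned_orig}. For each $j$ with $\sigma^j = t$ the set $\mathcal{U}^j_t$ is contained in the stopping region of $\mu$ in period $t$, hence $\bigcup_{j:\,\sigma^j=t}\mathcal{U}^j_t$ is a subset of that stopping region; taking an infimum over the larger set then shows, via Lemma~\ref{lem:obj_sigma_orig}, that the objective value of $\mu$ is bounded above by the \eqref{prob:mip_orig} objective evaluated at this $\sigma$, which in turn is bounded by the optimal value of \eqref{prob:mip_orig}. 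Taking the supremum over feasible $\mu$ completes the equality. I anticipate the main obstacle to be the careful bookkeeping of the two mismatched stopping indices $\bar{\sigma}^i$ versus $\sigma^i$ and the directions of the set inclusions they induce: the first inequality exploits $\bar{\sigma}^i \le \sigma^i$ (fewer terms, larger minimum), while the second exploits the containment of the union of uncertainty sets inside the stopping region (larger feasible set, smaller infimum), and one must confirm that the $+\infty$ convention for empty intersections is handled consistently so that the outer minimum remains well defined and finite in both arguments.
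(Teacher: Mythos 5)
Your proof is correct and follows essentially the same route as the paper's: the same two-inequality decomposition, the same reliance on Lemmas~\ref{lem:restrict_to_completion_orig} and \ref{lem:obj_sigma_orig}, the same union-intersection identity for the stopping regions of $\mu^{\sigma^1\cdots\sigma^N}$, and, for the direction \eqref{prob:sro_orig} $\ge$ \eqref{prob:mip_orig}, the identical observation that $\bar{\sigma}^i \le \sigma^i$ makes the minimum over the smaller index set at least as large. The only cosmetic difference is in the reverse direction, where you bound the objective of an arbitrary feasible policy for \eqref{prob:sro_2_orig} directly via the inclusion $\bigcup_{j:\,\sigma^j=t}\mathcal{U}^j_t \subseteq \{y_t \in \mathcal{X} : \mu_t(y_t) = \textsc{Stop}\}$ and monotonicity of the infimum, whereas the paper routes exactly the same monotonicity through Theorem~\ref{thm:characterization_orig} and the pruning Lemmas~\ref{lem:pruning_orig} and \ref{lem:M_is_pruned_orig}.
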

\begin{proof}{Proof.}
{\color{black}
Our proof of Theorem~\ref{thm:reform_orig} is essentially identical to the proof of Theorem~\ref{thm:reform}. Indeed, let Assumption~\ref{ass:reward_simple} hold. With this assumption, our proof of Theorem~\ref{thm:reform_orig} is split into the following two intermediary claims: %Claims~\ref{claim:reformulation_orig} and \ref{claim:reformulation_2_orig}:

\begin{claim}\label{claim:reformulation_1_orig}
The optimal objective value of \eqref{prob:sro_orig} is less than or equal to the optimal objective value of \eqref{prob:mip_orig}.
\end{claim}
\begin{proof}{Proof of Claim~\ref{claim:reformulation_1_orig}.}
Consider any arbitrary  exercise policy $\mu \in \mathcal{M}$, and define the integers
\begin{align}
\sigma^i \triangleq  \min \left \{ t \in \{1,\ldots,T\}: \mu_t(y_t) = \textsc{Stop} \;  \forall y_t \in \mathcal{U}^i_t \right \}, \quad \forall i \in \{1,\ldots,N\}.\label{line:argument_prime}
\end{align}
It follows from the definition of $\mathcal{M}$ that $\mu$  satisfies the constraints of \eqref{prob:sro_2_orig}, and so  Lemma~\ref{lem:M_is_pruned_orig} implies that $\mu^{\sigma^1 \cdots \sigma^N}$ is a pruned version of $\mu$. Therefore, it follows from Definition~\ref{defn:pruning} and line~\eqref{line:argument_prime} that the following equalities hold: 
\begin{align}
\sigma^i = \min  \left \{ t \in \{1,\ldots,T\}: \; \mu_t^{\sigma^1 \cdots \sigma^N}(y_t) = \textsc{Stop} \textnormal{ for all } y_t \in \mathcal{U}^i_t \right \}  \quad \forall i \in \{1,\ldots,N\}. \label{line:argument2_orig}
\end{align}
Therefore, we observe that 
\begin{align}
&\frac{1}{N} \sum_{i=1}^N  \inf_{y \in \mathcal{U}^i} g \left(\tau_{\mu}(y),y \right) \notag \\
&\le \frac{1}{N} \sum_{i=1}^N  \inf_{y \in \mathcal{U}^i} g \left(\tau_{\mu^{\sigma^1 \cdots \sigma^N}}(y),y \right)\notag  \\
  &= \frac{1}{N} \sum_{i=1}^N  \min \limits_{t \in \{1,\ldots,\sigma^i\}} \inf_{y_t \in \mathcal{U}^i_t} \left \{ h(t,y_t) :  \; \mu_t^{\sigma^1 \cdots \sigma^N}(y_t) = \textsc{Stop} \right \} \notag \\
&=  \frac{1}{N} \sum_{i=1}^N \min_{t \in \{1,\ldots,\sigma^i \}} \min_{j: \sigma^j = t} \inf_{y_t \in \mathcal{U}^i_t \cap \mathcal{U}^j_t}  h(t,y_t), \label{line:argument3_orig}
\end{align}
where the first inequality follows from Lemma~\ref{lem:pruning_orig},  the first equality follows from Lemma~\ref{lem:obj_sigma_orig} and line~\eqref{line:argument2_orig}, and the second equality  follows from the fact that $\mu^{\sigma^1 \cdots \sigma^N}_t(y_t) = \textsc{Stop}$ if and only if there exists a sample path $j$ such that $y_t \in \mathcal{U}^j_t$ and $\sigma^j = t$. Because $\mu \in \mathcal{M}$ was chosen arbitrarily, we conclude from line~\eqref{line:argument3_orig} and Theorem~\ref{thm:characterization_orig} that  the optimal objective value of \eqref{prob:sro_orig} is less than or equal to the optimal objective value of \eqref{prob:mip_orig}. Our proof of Claim~\ref{claim:reformulation_1_orig} is thus complete.  \halmos
\end{proof}

\begin{claim} \label{claim:reformulation_2_orig}
The optimal objective value of \eqref{prob:sro_orig} is greater than or equal to the optimal objective value of \eqref{prob:mip_orig}.  Furthermore, for any choice of integers $\sigma^1,\ldots,\sigma^N \in \{1,\ldots,T\}$, the corresponding exercise policy $\mu^{\sigma^1 \cdots \sigma^N} \equiv (\mu_1^{\sigma^1 \cdots \sigma^N},\ldots,\mu_T^{\sigma^1 \cdots \sigma^N})$ satisfies  % 
 \begin{align*}
 &\frac{1}{N} \sum_{i=1}^N \inf_{y \in \mathcal{U}^i}  g \left(\tau_{\mu^{\sigma^1 \cdots \sigma^N}}(y), y \right) \ge \frac{1}{N} \sum_{i=1}^N \min_{t \in \{1,\ldots,\sigma^i \}} \min_{j: \sigma^j = t} \inf_{y_t \in \mathcal{U}^i_t \cap \mathcal{U}^j_t}  h(t,y_t).
 \end{align*}
\end{claim}
\begin{proof}{Proof of Claim~\ref{claim:reformulation_2_orig}.}
Consider any integers $\sigma^1,\ldots,\sigma^N \in \{1,\ldots,T\}$. For each sample path $i \in \{1,\ldots,N\}$, we observe that
\begin{align}
&\min  \left \{ t \in \{1,\ldots,T\}: \; \mu_t^{\sigma^1 \cdots \sigma^N}(y_t) = \textsc{Stop} \textnormal{ for all } y_t \in \mathcal{U}^i_t \right \}\notag  \\
&=\min  \left \{ t \in \{1,\ldots,\sigma^i\}: \; \mu_t^{\sigma^1 \cdots \sigma^N}(y_t) = \textsc{Stop} \textnormal{ for all } y_t \in \mathcal{U}^i_t \right \} \notag \\
&\le \sigma^i,\label{line:argument4_orig}
\end{align}
where  the equality follows from the fact that $\mu^{\sigma^1 \cdots \sigma^N} \equiv (\mu^{\sigma^1 \cdots \sigma^N}_1,\ldots,\mu^{\sigma^1 \cdots \sigma^N}_T)$ by construction satisfies $\mu^{\sigma^1 \cdots \sigma^N}_{\sigma^i}(y_{\sigma^i}) = \textsc{Stop}$ for all $y_{\sigma^i} \in \mathcal{U}^i_{\sigma^i}$, and the inequality follows from algebra. Therefore, 
 \begin{align}
&\frac{1}{N} \sum_{i=1}^N  \inf_{y \in \mathcal{U}^i} g \left(\tau_{\mu^{\sigma^1 \cdots \sigma^N}}(y),y \right) \notag  \\
&\ge  \frac{1}{N} \sum_{i=1}^N  \min \limits_{t \in \{1,\ldots,\sigma^i\}} \inf_{y_t \in \mathcal{U}^i_t} \left \{ h(t,y_t) :  \; \mu_t^{\sigma^1 \cdots \sigma^N}(y_t) = \textsc{Stop} \right \} \notag \\
&=  \frac{1}{N} \sum_{i=1}^N \min_{t \in \{1,\ldots,\sigma^i \}} \min_{j: \sigma^j = t} \inf_{y_t \in \mathcal{U}^i_t \cap \mathcal{U}^j_t}  h(t,y_t), \label{line:argument5_orig}
\end{align}
where the inequality follows from Lemma~\ref{lem:obj_sigma_orig} and line~\eqref{line:argument4_orig}, and the equality follows from the fact that $\mu^{\sigma^1 \cdots \sigma^N}_t(y_t) = \textsc{Stop}$ if and only if there exists a sample path $j$ such that $y_t \in \mathcal{U}^j_t$ and $\sigma^j = t$. Because $\sigma^1,\ldots,\sigma^N \in \{1,\ldots,T\}$ were chosen arbitrarily, we conclude from line~\eqref{line:argument5_orig} that the optimal objective value of \eqref{prob:sro_orig} is greater than or equal to the optimal objective value of \eqref{prob:mip_orig}, which concludes our proof of Claim~\ref{claim:reformulation_2_orig}. 
\halmos \end{proof}
Combining Claims~\ref{claim:reformulation_1_orig} and \ref{claim:reformulation_2_orig}, our proof of Theorem~\ref{thm:reform_orig} is thus complete. 
}

\halmos \end{proof}

Equipped with the above Theorem~\ref{thm:reform_orig}, we now explain why formulation~\eqref{prob:sro} is more amenable from an tractability perspective than \eqref{prob:sro_orig}. Indeed, we recall from Theorem~\ref{thm:reform} in \S\ref{sec:algorithms:prelim} that the robust optimization problem~\eqref{prob:sro} is equivalent to
\begin{align}
\underset{ \sigma^1,\ldots,\sigma^N \in \{1,\ldots,T\}}{\textnormal{maximize}}\;  \frac{1}{N} \sum_{i=1}^N \underbrace{\min_{t \in \{1,\ldots,\sigma^i \}} \left \{g(t,x^i): \textnormal{ there exists } j \textnormal{ such that } \mathcal{U}^i_t \cap \mathcal{U}^j_t \neq \emptyset \textnormal{ and } \sigma^j = t\right \}}_{\nu_i(\sigma)}, \tag{\ref{prob:mip}} 
\end{align}
where we observe for each sample path $i \in \{1,\ldots,N\}$ that the quantity $\nu_i(\sigma)$ satisfies
\begin{align*}
\nu_i(\sigma)\in \bigcup_{t=1}^T \{ g(t,x^i) \}. 
\end{align*}
In contrast, we showed in the above Theorem~\ref{thm:reform_orig} that \eqref{prob:sro_orig} is equivalent to 
\begin{align}
\underset{ \sigma^1,\ldots,\sigma^N \in \{1,\ldots,T\}}{\textnormal{maximize}}\;  \frac{1}{N} \sum_{i=1}^N \underbrace{ \min_{t \in \{1,\ldots,\sigma^i \}} \min_{j: \sigma^j = t} \inf_{y_t \in \mathcal{U}^i_t \cap \mathcal{U}^j_t}   h(t,y_t)}_{\nu_i'(\sigma)},   \tag{\ref{prob:mip_orig}}
\end{align}
where we observe for each sample path $i \in \{1,\ldots,N\}$ that the quantity $\nu_i'(\sigma)$  satisfies
\begin{align*}
\nu_i'(\sigma)\in \bigcup_{t=1}^T \bigcup_{j: \mathcal{U}^i_t  \cap \mathcal{U}^j_t \neq \emptyset} \left \{ \inf_{y_t \in \mathcal{U}^i_t \cap \mathcal{U}^j_t}   h(t,y_t) \right \}.
 \end{align*}
 Hence, for each sample path $i \in \{1,\ldots,N\}$, we observe that the number of possible values for $\nu_i'(\sigma)$ can be a factor of $\mathcal{O}(N)$ greater than the number of possible values for $\nu_i(\sigma)$. %In particular, our reformulation techniques for \eqref{prob:mip} in \S\ref{sec:exact} and \S\ref{sec:approx} utilize the fact that the quantities  objective function
  This demonstrates that \eqref{prob:sro} is more amenable to compact reformulations than \eqref{prob:sro_orig} and concludes our motivation for using formulation \eqref{prob:sro} instead of formulation~\eqref{prob:sro_orig} throughout the paper. 
 }

\section{{\color{black} Proofs of Theorems~\ref{thm:conv:asympt}, \ref{thm:conv:policy}, and \ref{thm:conv:ub}}} \label{appx:proof_conv}% for Theorem~\ref{thm:conv}}\label{appx:proof_conv}
%This appendix contains the proof of  Theorem~\ref{thm:conv}. We first review the necessary notation in Appendix~\ref{appx:proof_conv:prelim}, followed by the proof in Appendix~\ref{appx:proof_conv:proof}. 
{\color{black}Establishing the convergence guarantees from \S\ref{sec:opt} for the robust optimization problem~\eqref{prob:sro} can be organized into two high-level steps. The first high-level step, comprised of Theorem~\ref{thm:conv:ub}, consists of showing that the objective function of the robust optimization problem~\eqref{prob:sro} converges almost surely to a (conservative) lower bound approximation of the objective function of the stochastic optimal stopping problem~\eqref{prob:main}, uniformly over the space of all exercise policies. The second high-level step, comprised of  Theorems~\ref{thm:conv:asympt} and \ref{thm:conv:policy}, consists of showing that the conservativeness of the robust optimization problem~\eqref{prob:sro} disappears almost surely as the robustness parameter tends to zero and the number of sample paths tends to infinity. We present the first high-level step in Appendix~\ref{appx:proof_conv:prelim}, and the second high-level step can be found in Appendix~\ref{appx:proof_conv:new}.

Let us reflect on the novelty of the results in the present Appendix~\ref{appx:proof_conv}. The first high-level step in Appendix~\ref{appx:proof_conv:prelim} (i.e., the proof of Theorem~\ref{thm:conv:ub}) is not novel and follows immediately from a uniform convergence result established by \citetalias{bertsimasdata}. Rather, the significant novelty of Appendix~\ref{appx:proof_conv} is found in the second high-level step in Appendix~\ref{appx:proof_conv:new} (i.e., the proofs of Theorems~\ref{thm:conv:asympt} and \ref{thm:conv:policy}). %We elaborate below on our technical innovations regarding the second high-level step. 
Stated succinctly,  the second high-level step is challenging to establish because %establishing that the conservativeness of the  robust optimization problem~\eqref{prob:sro}  converges to zero almost surely as $\epsilon \to 0$ and $N \to \infty$ 
it requires showing that (a) the limit of the optimal objective value of the robust optimization problem~\eqref{prob:sro} exists almost surely as $\epsilon \to \infty$ and $N \to \infty$, and showing that (b) there always exists an arbitrarily near-optimal Markovian stopping rule $\mu$ for the stochastic optimal stopping problem~\eqref{prob:main} that almost surely satisfies $\tau_\mu(y) = \tau_\mu(x)$ for all sufficiently close realizations of $y \equiv (y_1,\ldots,y_T)$ to the stochastic process $x \equiv (x_1,\ldots,x_T)$. 

For the general classes of stochastic dynamic optimization problems considered by \citetalias{bertsimasdata} and \cite{sturtthesis}, the authors were unable to identify simple and verifiable conditions under which the aforementioned properties (a) and (b) are guaranteed to hold. Consequently, the authors only presented upper bounds on the gap between the optimal objective value of the robust optimization problem and the optimal objective value of the stochastic dynamic optimization (\citetalias[Theorem 1]{bertsimasdata}). In certain cases, the authors alternatively assumed that the stochastic dynamic optimization problem happened to have arbitrarily near-optimal control policies with convenient structure \citep[Assumption 15]{sturtthesis}. The authors of \citetalias{bertsimasdata} reflect on the weaknesses of these results in the literature at the end of their \S4.3, saying that ``future work may identify subclasses of [stochastic dynamic optimization problems] where the equality of the bounds can be ensured." % the question of whether properties (a) and (b)  stochastic dynamic optimization    assumed that the  convergence guarantees involved without ex-ante knowledge of the structure of optimal control policies  

{\color{black}

In Appendix~\ref{appx:proof_conv:new} of this paper, we resolve this gap in the literature by showing that the aforementioned properties (a) and (b) are guaranteed to hold for the specific class of stochastic dynamic optimization problems~\eqref{prob:main} under the mild assumptions that the stochastic problem has a bounded objective function  (Assumption~\ref{ass:bound}) and that the random variables in the stochastic problem have a continuous joint probability distribution (Assumption~\ref{ass:continuous}). Specifically, we establish in Lemma~\ref{lem:mcdiarmid} of Appendix~\ref{appx:proof_conv:new} that property (a) holds by combining Assumption~\ref{ass:bound} with McDiarmid's inequality, and we establish in Lemma~\ref{lem:bradsmart} of Appendix~\ref{appx:proof_conv:new} that property (b) holds by combining Assumptions~\ref{ass:continuous} and \ref{ass:bound} with elementary techniques from topology and measure theory. These lemmas allow us to establish the proofs of Theorems~\ref{thm:conv:asympt} and \ref{thm:conv:policy} at the end of Appendix~\ref{appx:proof_conv:new}. 
}
%which corresponding reward function satisfies enjoys a regularity condition based on semicontinuity (see, for example, the proof of \citet[Theorem 3.6(i)]{mohajerin2018data}). In the context of \eqref{prob:main}, however, the set of feasible Markovian stopping rules is not restricted in such a way that ensures that the reward function $\inf_{y: \| y - x^i \|_\infty \le \epsilon} g(\tau_\mu(y),x^i)$ converges to zero  that the stochastic optimal stopping problem~\eqref{prob:main} involves an infinite-dimensional space of solutions. Because the solution space of \eqref{prob:main} is infinite-dimensinoal, one cannot guarantee 

%optimal solution to the stochastic problem a structural  traditionally  

 } 
\subsection{{\color{black}Proof of Theorem~\ref{thm:conv:ub}}} \label{appx:proof_conv:prelim}
{\color{black}In Appendix~\ref{appx:proof_conv:prelim}, we establish the proof of Theorem~\ref{thm:conv:ub} by using a uniform convergence result from \citetalias{bertsimasdata}.} 
We begin by presenting some preliminary notation. 
Recall that $x \equiv (x_1,\ldots,x_T)$, $x^1 \equiv (x_1^1,\ldots,x^1_T)$, $x^2 \equiv (x_1^2,\ldots,x^2_T),\ldots \in \mathcal{X}^T {\color{black}\equiv} \R^{Td}$ are sample paths drawn independently from an identical joint probability distribution. % and identically distributed. % where the states in each period are a random vector of the form $x_t \in \mathcal{X} \subseteq \R^d$.  
We will make use of the following additional notation: % throughout Appendix~\ref{appx:proof_conv}: % for all $N \in \N$, $i \in \N$, and $\epsilon \ge 0$. 
\begin{align*}
\delta_N &\triangleq N^{-\frac{1}{\max \{ 3,Td+1\}}};&\mathcal{U}^i(\delta_N) &\triangleq \left \{ y \in \mathcal{X}^T: \| y - x^i \|_\infty \le \delta_N \right \};\\
\mathcal{U}^i  &\triangleq \left \{ y \in \mathcal{X}^T: \| y - x^i \|_\infty \le \epsilon \right \};&M_N &\triangleq N^{-\frac{1}{(Td+1)(Td+2)}} \log N.
\end{align*} 
%We additionally assume that the norm over the entire stochastic process defined as $\| y - x^i \| \triangleq \max_{t} \| y_t - x^i_t\|$, in which case the above definition of $\mathcal{U}^i$ is equivalent to that presented in Section~\ref{sec:rob}. 
{\color{black}We now state the uniform convergence result from \citetalias{bertsimasdata}, which has been} adapted to the notation of the present paper. 
\begin{lemma}[Theorem~2 of \citetalias{bertsimasdata}] \label{lem:bertsimas}
Let Assumption~\ref{ass:lighttail} hold.  % the following  for each $N \in \N$:
Then there exists a finite $\bar{N} \in \N$, almost surely, such that the following inequality holds for all $N \ge \bar{N}$ and all measurable functions $f: \R^{Td} \to \R$: 
\begin{align*}
\Exp \left[f(x) \mathbb{I} \left \{  x \in \cup_{i=1}^N \mathcal{U}^i(\delta_N) \right \} \right] \ge \frac{1}{N} \sum_{i=1}^N \inf_{y \in \mathcal{U}^i(\delta_N)  } f(y) - M_N  \sup_{y \in \cup_{i=1}^N\mathcal{U}^i(\delta_N)} |f(y)|.
\end{align*}
%for all $N \ge \bar{N}$ and all measurable functions $f: \R^{Td} \to { \R}$, where $M_N \coloneqq N^{-\frac{1}{(d+1)(d+2)}} \log N$.
\end{lemma}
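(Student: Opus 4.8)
The plan is to prove this as a statement about \emph{measures} rather than about individual functions $f$, since the only way $f$ enters the two sides is through the pointwise lower bound $f(y) \ge \inf_{\mathcal{U}^i(\delta_N)} f$ valid for every $y \in \mathcal{U}^i(\delta_N)$, together with the global bound $|f| \le \sup_{B_N}|f|$ on $B_N \triangleq \cup_{i=1}^N \mathcal{U}^i(\delta_N)$. First I would reduce the claim to the existence of a transportation plan coupling the true law $\Prb$ of $x$ with the empirical measure $\frac 1N \sum_{i=1}^N \delta_{x^i}$ that (i) moves each unit of true mass by at most $\delta_N$ in $\|\cdot\|_\infty$-distance, and (ii) leaves at most a fraction $M_N$ of the total mass unmatched. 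Given such a plan, I would take disjoint measurable sets $A_i \subseteq \mathcal{U}^i(\delta_N)$ carrying the mass that the plan transports to $x^i$; because $f \ge \inf_{\mathcal{U}^i(\delta_N)} f$ throughout $A_i$ while $|f| \le \sup_{B_N}|f|$ on the unmatched remainder, splitting $\Exp[f(x)\mathbb{I}\{x \in B_N\}]$ into its matched and unmatched contributions and bounding each separately would yield exactly the claimed inequality, and crucially this holds \emph{uniformly} over all measurable $f$ with no empirical-process or VC machinery over function classes required.

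The heart of the argument is therefore to construct this transportation plan with the stated rates, which I would do in two pieces. First, I would show that, eventually almost surely, the union of balls $B_N$ covers all but $M_N$ of the true mass; this is a coverage estimate governed by the covering number of the (slowly growing) region on which the samples concentrate, and Assumption~\ref{ass:lighttail} is precisely what is needed to confine the relevant mass to a ball of radius growing like a power of $\log N$ and to render the tail contribution negligible. Second, on this covered region I would lay down a grid of cells of side comparable to $\delta_N$ and match the empirical mass $1/N$ per sample to the true mass cell by cell; the number of samples falling in each cell concentrates around its expected value, and the mismatch accumulated across all cells is controlled by a sum of per-cell deviations whose total is of order $M_N$ for the prescribed exponents. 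Combining the two pieces yields a plan of transport radius $\delta_N$ with unmatched mass at most $M_N$.

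The main obstacle is this quantitative matching step: one must verify that the characteristic rate at which the empirical measure of $N$ i.i.d.\ points in $\R^{Td}$ approaches the true measure, when mass is permitted to move a distance $\delta_N$, leaves only $O(M_N)$ unmatched. This is an optimal-matching / empirical-Wasserstein phenomenon in dimension $Td$, which is exactly why the exponents in $\delta_N = N^{-1/\max\{3,Td+1\}}$ and $M_N = N^{-1/((Td+1)(Td+2))}\log N$ are tied to $Td$: they sit at the threshold below which the per-cell fluctuations can be absorbed. A final subtlety is upgrading ``with high probability for each fixed $N$'' to ``almost surely for all $N \ge \bar N$''; here I would derive a deviation bound summable in $N$ (again invoking Assumption~\ref{ass:lighttail} to handle the unbounded domain via truncation at a slowly growing radius) and then apply the Borel--Cantelli lemma to obtain the almost-sure conclusion with a finite random threshold $\bar N$.
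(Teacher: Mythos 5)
There is nothing in the paper to compare your argument against: Lemma~\ref{lem:bertsimas} is not proved in this paper at all. It is imported verbatim as Theorem~2 of \citetalias{bertsimasdata} (the lemma's header says exactly this), and its only role is in the proof of Theorem~\ref{thm:conv:ub} in Appendix~\ref{appx:proof_conv:prelim}, where it is invoked as a black box. So what you have written is a proposal for reproving the cited result of \citetalias{bertsimasdata}, not an alternative to any internal proof.

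Judged on its own terms, your outline is the right one, and it is essentially the style of argument that underlies the cited theorem: reduce the uniform-in-$f$ statement to a statement about a partial coupling between the law of $x$ and the empirical measure (this reduction is the key observation --- $f$ enters only through the pointwise bounds $f(y) \ge \inf_{\mathcal{U}^i(\delta_N)} f$ on matched mass and $|f| \le \sup_{B_N}|f|$ on unmatched mass, where $B_N \equiv \cup_{i=1}^N \mathcal{U}^i(\delta_N)$, so no empirical-process machinery over function classes is needed), then build the coupling by truncating to a radius of order $(\log N)^{1/a}$ via Assumption~\ref{ass:lighttail}, partitioning into cells of side comparable to $\delta_N$, and controlling per-cell discrepancies. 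The aggregate cell discrepancy is of order $\sqrt{\delta_N^{-Td}/N}$ up to logarithms, i.e.\ roughly $N^{-1/(2(Td+1))}$, which is indeed below $M_N$, and Borel--Cantelli upgrades summable deviation bounds to the almost-sure finite threshold $\bar{N}$; so the rates you assert do check out. Two points need care if you execute this. First, your accounting produces \emph{two} error terms --- the empirical deficit $\sum_{i=1}^N (1/N - m_i)$, where $m_i \le 1/N$ is the mass matched to $x^i$, and the $\Prb$-mass of $B_N$ not covered by the matched sets --- each bounded only by the unmatched fraction, so the plan must leave at most $M_N/2$ (not $M_N$) unmatched; the slack in the rates absorbs this, but it must be said. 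Second, the matched sets $A_i$ cannot in general be taken disjoint when the balls $\mathcal{U}^i(\delta_N)$ overlap; one should instead disaggregate the coupling by sub-densities $d\nu_i/d\Prb$ with $\sum_i d\nu_i / d\Prb \le 1$. Neither issue is fatal, but as written the central quantitative step --- the matching estimate that constitutes the entire content of the lemma --- is described rather than carried out, so your proposal is a credible reconstruction of the argument behind \citetalias{bertsimasdata}'s theorem, not yet a complete proof of it.
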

%which we split into two parts. % We begin with the proof of  Theorem~\ref{thm:conv:ub}, which establishes that the robust objective function $\widehat{J}_{N,\epsilon}(\cdot)$ is an asymptotic lower bound on the stochastic objective function $J^*(\cdot)$. 
In view of the above notation and lemma, we are now ready to present the proof {\color{black}of  Theorem~\ref{thm:conv:ub}.}   \\
%\subsection{Proof of Theorem~\ref{thm:conv}}\label{appx:proof_conv:proof}
\begin{proof}{Proof of Theorem~\ref{thm:conv:ub}.}
%Define the sequence $ \{ \epsilon_N  \}_{N \in \N}$ where $\epsilon_N \triangleq N^{-\frac{1}{\max \{3, T*d+1 \}}}$. Then the following is from \cite{bertsimasdata}:
%We recall that $\mathcal{U}^i \equiv \mathcal{U}^i(\epsilon)$. 
%We begin with the proof of  Theorem~\ref{thm:conv:ub}, which establishes that the robust objective function $\widehat{J}_{N,\epsilon}(\cdot)$ is an asymptotic lower bound on the stochastic objective function $J^*(\cdot)$. 
Consider any arbitrary choice of the robustness parameter $\epsilon > 0$, and 
recall  that the reward function satisfies $g(\infty,y) = 0$ (see \S\ref{sec:setting}) and $0 \le g(1, y),\ldots,g(T,y) \le U$ for all trajectories $y \in \mathcal{X}^T$ (Assumption~\ref{ass:bound}). With this notation, we observe that % using Lemma~\ref{lem:bertsimas} and  Assumptions~\ref{ass:lighttail}-\ref{ass:bound}:
\begin{align}
&\liminf_{N \to \infty} \inf_{\mu} \left \{ J^*(\mu) - \widehat{J}_{N,\epsilon}(\mu) \right \} \notag \\
&= \liminf_{N \to \infty} \inf_{\mu} \left \{ \Exp \left[g(\tau_\mu(x), x) \right] - \widehat{J}_{N,\epsilon}(\mu) \right \}  \label{line:defn_stoch} \\
&\ge \liminf_{N \to \infty} \inf_{\mu} \left \{ \Exp \left[g(\tau_\mu(x), x)  \mathbb{I} \left \{  x \in \cup_{i=1}^N \mathcal{U}^i(\delta_N) \right \} \right] - \widehat{J}_{N,\epsilon}(\mu)\right \} \label{line:exp_to_sup} \\
& \ge \liminf_{N \to \infty} \inf_{\mu} \left \{  \frac{1}{N} \sum_{i=1}^N \inf_{y \in \mathcal{U}^i(\delta_N)  } g(\tau_\mu(y),y)- M_N U  - \widehat{J}_{N,\epsilon}(\mu)\right \}\;\;\; \text{almost surely} \label{line:hellothere} \\
&=  \liminf_{N \to \infty} \inf_{\mu} \left \{  \frac{1}{N} \sum_{i=1}^N \inf_{y \in \mathcal{U}^i(\delta_N)  } g(\tau_\mu(y),y)- \widehat{J}_{N,\epsilon}(\mu) \right \} \label{line:hellothere2}\\
&\ge\liminf_{N \to \infty} \inf_{\mu} \left \{ \frac{1}{N} \sum_{i=1}^N \inf_{y \in \mathcal{U}^i  } g(\tau_\mu(y),y)- \widehat{J}_{N,\epsilon}(\mu) \right \}.\label{line:intermediary}
%\end{aligned}
\end{align} 
\eqref{line:defn_stoch} follows from applying the definition of $J^*(\mu)$; \eqref{line:exp_to_sup} holds because the reward function is nonnegative; \eqref{line:hellothere} follows from Lemma~\ref{lem:bertsimas} %(which is implied by  Assumption~\ref{ass:lighttail}) 
and the boundedness of the reward function; \eqref{line:hellothere2} holds because $M_N \to 0$; \eqref{line:intermediary} holds because $\delta_N \to 0$ implies, for any arbitrary $\epsilon > 0$, that the inequality $ \inf_{y \in \mathcal{U}^i } g(\tau_\mu(y),y) \le  \inf_{y \in \mathcal{U}^i(\delta_N)  } g(\tau_\mu(y),y)$ is satisfied for all $i \in \N$, for all $\mu$, and for all large $N \in \N$. Moreover: % next show that line~\eqref{line:intermediary} is an asymptotic upper bound on \eqref{prob:sro}: % Assumptions~\ref{ass:weird} and \ref{ass:bound}.
% In the following, we use the notation $\mathcal{U}^i \triangleq \mathcal{U}^i(\epsilon)$: % \citet[Theorem 3]{bertsimasdata} that
\begin{align}
\eqref{line:intermediary}\;&= \liminf_{N \to \infty} \inf_{\mu} \left \{ \frac{1}{N} \sum_{i=1}^N \inf_{y \in \mathcal{U}^i }   \left \{ g(\tau_\mu(y), x^i) + g(\tau_\mu(y),y)  - g(\tau_\mu(y), x^i)\right\} - \widehat{J}_{N,\epsilon}(\mu) \right \}\label{line:alg1} \\
&\ge  \liminf_{N \to \infty} \inf_{\mu} \left \{  \frac{1}{N} \sum_{i=1}^N \left( \inf_{y \in \mathcal{U}^i }    g(\tau_\mu(y), x^i) + \inf_{y \in \mathcal{U}^i  } \left \{ g(\tau_\mu(y),y)  - g(\tau_\mu(y), x^i) \right \} \right)- \widehat{J}_{N,\epsilon}(\mu) \right \} \label{line:alg2} \\
&\ge  \liminf_{N \to \infty} \inf_{\mu} \left \{  \frac{1}{N} \sum_{i=1}^N \left( \inf_{y \in \mathcal{U}^i }    g(\tau_\mu(y), x^i) +\min_{t \in \{1,\ldots,T\}} \left \{ \inf_{y \in \mathcal{U}^i  }  g(t,y)  - g(t, x^i) \right \} \right)- \widehat{J}_{N,\epsilon}(\mu) \right \} \label{line:extrastuff}\\
&\ge  \liminf_{N \to \infty} \inf_{\mu} \left \{  \frac{1}{N} \sum_{i=1}^N \left( \inf_{y \in \mathcal{U}^i }    g(\tau_\mu(y), x^i) +\Delta_{\epsilon}(x^i)  \right) - \widehat{J}_{N,\epsilon}(\mu) \right \}\label{line:defn_delta}\\
&=\liminf_{N \to \infty} \inf_{\mu} \left \{ \widehat{J}_{N,\epsilon}(\mu)  - \widehat{J}_{N,\epsilon}(\mu) \right \} +  \liminf_{N \to \infty} \frac{1}{N} \sum_{i=1}^N  \Delta_{\epsilon}(x^i)  \label{line:defn_rob} \\
&=   \Exp \left[\Delta_{\epsilon}(x) \right] \quad \text{almost surely}. \label{line:slln}
\end{align}
 \eqref{line:alg1}, \eqref{line:alg2}, and \eqref{line:extrastuff} follow from algebra; \eqref{line:defn_delta} follows from the definition of $\Delta_{\epsilon}(x^i)$ (see Assumption~\ref{ass:weird}); \eqref{line:defn_rob} follows from the definition of $\widehat{J}_{N,\epsilon}(\mu)$; \eqref{line:slln} follows from the strong law of large numbers. 
 
 Since $\epsilon > 0$ was chosen arbitrarily, we have shown that
\begin{align*}
\lim_{\epsilon \to 0}  \liminf_{N \to \infty} \inf_{\mu} \left \{ J^*(\mu) - \widehat{J}_{N,\epsilon}(\mu) \right \} \ge  \lim_{\epsilon \to 0}  \Exp \left[\Delta_{\epsilon}(x) \right]\ge 0\quad  \text{almost surely}, %\label{line:partb}
\end{align*} 
where the first inequality holds almost surely from lines~\eqref{line:defn_stoch}-\eqref{line:slln}, and the second inequality holds almost surely due to the dominated convergence theorem and  Assumption~\ref{ass:weird}. Note that the above limits exist because $\epsilon \mapsto \liminf_{N \to \infty} \inf_{\mu}  \{ J^*(\mu) - \widehat{J}_{N,\epsilon}(\mu)  \} $  and $\epsilon \mapsto \Exp[\Delta_\epsilon(x)]$  are monotonic functions. This concludes the proof of Theorem~\ref{thm:conv:ub}. 
\halmos \end{proof}

\subsection{{\color{black}Proofs of Theorems~\ref{thm:conv:asympt} and \ref{thm:conv:policy}}} \label{appx:proof_conv:new}
{\color{black}We begin Appendix~\ref{appx:proof_conv:new} by presenting the two novel intermediary results, Lemmas~\ref{lem:mcdiarmid} and \ref{lem:bradsmart}, that were discussed at the beginning of Appendix~\ref{appx:proof_conv}. In the first  novel intermediary result, denoted below by Lemma~\ref{lem:mcdiarmid}, we prove that the limit of the optimal objective value of the robust optimization problem~\eqref{prob:sro} exists almost surely as $\epsilon \to \infty$ and $N \to \infty$. The proof of the following lemma is based on McDiarmid's inequality.}

\begin{lemma} \label{lem:mcdiarmid}
Let Assumption~\ref{ass:bound} hold. Then for all $\epsilon > 0$, %the following limit exists almost surely}:
\begin{align*}
\liminf_{N \to \infty}  \sup_{\mu}\widehat{J}_{N,\epsilon}(\mu) = \limsup_{N \to \infty}  \sup_{\mu}\widehat{J}_{N,\epsilon}(\mu) \quad \textnormal{almost surely}.
%\liminf_{N \to \infty} \sup_{\mu}\widehat{J}_{N,\epsilon}(\mu) =  \limsup_{N \to \infty}\sup_{\mu}\widehat{J}_{N,\epsilon}(\mu) \quad \textnormal{almost surely}. 
\end{align*}
\end{lemma}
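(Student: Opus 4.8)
The plan is to fix $\epsilon > 0$ and prove that the random sequence $V_N \triangleq \sup_\mu \widehat{J}_{N,\epsilon}(\mu)$ converges almost surely, which is precisely the asserted equality $\liminf_N V_N = \limsup_N V_N$. I would establish convergence in two independent pieces: (i) a concentration argument showing $V_N - \Exp[V_N] \to 0$ almost surely, and (ii) a subadditivity argument showing that the deterministic sequence $\Exp[V_N]$ converges.

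For (i), I would first note that by Theorem~\ref{thm:reform} the quantity $V_N$ equals the optimal value of the finite-dimensional problem~\eqref{prob:mip}, so it is a genuine measurable function $V_N = F(x^1,\ldots,x^N)$ of the i.i.d.\ sample paths (this sidesteps any difficulty with an uncountable supremum). The key structural observation is that $F$ has bounded differences. Indeed, for each fixed policy $\mu$ the summand $\inf_{y \in \mathcal{U}^i} g(\tau_\mu(y), x^i)$ depends on the data only through $x^i$, and by Assumption~\ref{ass:bound} together with $g(\infty,\cdot)\equiv 0$ it lies in $[0,U]$. Hence replacing one sample path $x^j$ by any $\tilde{x}^j$ changes $\frac{1}{N}\sum_{i=1}^N \inf_{y \in \mathcal{U}^i} g(\tau_\mu(y), x^i)$ by at most $U/N$ for every $\mu$; taking suprema and using $|\sup_\mu a(\mu) - \sup_\mu b(\mu)| \le \sup_\mu |a(\mu) - b(\mu)|$ shows $F$ changes by at most $c_j = U/N$ under a perturbation of its $j$-th coordinate. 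McDiarmid's bounded-differences inequality then yields $\Prb(|V_N - \Exp[V_N]| \ge t) \le 2\exp(-2 t^2 N / U^2)$ for every $t>0$, since $\sum_{j=1}^N c_j^2 = U^2/N$. This bound is summable in $N$ for each fixed $t$, so Borel--Cantelli gives $V_N - \Exp[V_N] \to 0$ almost surely.

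For (ii), I would exploit the convexity of the supremum to obtain subadditivity of $a_N \triangleq N\,\Exp[V_N]$. Splitting $N = m+n$ paths into the first $m$ and the last $n$, the decomposition $\frac{1}{m+n}\sum_{i=1}^{m+n}(\cdot) = \frac{1}{m+n}\big(\sum_{i=1}^{m}(\cdot) + \sum_{i=m+1}^{m+n}(\cdot)\big)$ together with $\sup_\mu(A(\mu)+B(\mu)) \le \sup_\mu A(\mu) + \sup_\mu B(\mu)$ gives $(m+n)V_{m+n} \le m V_m' + n V_n''$, where $V_m'$ and $V_n''$ are the robust optima built from the two sub-collections. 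Taking expectations and using that the paths are i.i.d.\ (so $\Exp[V_m'] = \Exp[V_m]$ and $\Exp[V_n''] = \Exp[V_n]$) yields $a_{m+n} \le a_m + a_n$. Fekete's subadditive lemma then gives that $\lim_N \Exp[V_N] = \lim_N a_N/N = \inf_N a_N/N$ exists and is finite (as $\Exp[V_N] \in [0,U]$). Combining (i) and (ii), $V_N = (V_N - \Exp[V_N]) + \Exp[V_N]$ converges almost surely to $\lim_N \Exp[V_N]$, which is the claim.

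I expect the main obstacle to be piece (ii): the almost-sure fluctuation control in (i) is a routine McDiarmid computation, whereas merely showing $V_N$ concentrates does not by itself produce a limit, and a genuinely separate ingredient is needed to pin down convergence of the centering sequence $\Exp[V_N]$. The neatest route, and the one I would pursue, is the subadditivity of $N\,\Exp[V_N]$ derived from convexity of the supremum and the i.i.d.\ structure. The one subtlety to treat carefully along the way is measurability of $V_N$, which I would dispatch via the finite reformulation~\eqref{prob:mip} (Theorem~\ref{thm:reform}) rather than by arguing directly about a supremum over the infinite-dimensional space of exercise policies.
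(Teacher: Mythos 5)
Your proof is correct, and its architecture is the same as the paper's: define $V_N = \sup_\mu \widehat{J}_{N,\epsilon}(\mu)$, establish a bounded-differences property with constant $U/N$ (your route via $|\sup_\mu a(\mu) - \sup_\mu b(\mu)| \le \sup_\mu |a(\mu)-b(\mu)|$ is a cleaner packaging of the paper's explicit near-optimal-policy exchange, and is valid because under \eqref{prob:sro} the $i$-th summand depends on the data only through $x^i$), apply McDiarmid's inequality, and conclude $V_N - \Exp[V_N] \to 0$ almost surely by Borel--Cantelli. The one place you genuinely diverge is the centering sequence: the paper argues that $\Exp[V_N]$ is \emph{monotonically decreasing} in $N$ (by the leave-one-out averaging argument of \citet[Proposition 5.6]{shapiro2014lectures}) and hence convergent since it lies in $[0,U]$, whereas you prove subadditivity $a_{m+n} \le a_m + a_n$ for $a_N = N\,\Exp[V_N]$ via the two-block split and invoke Fekete's lemma. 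Both arguments rest on the same underlying mechanism (decomposing the empirical average over sub-collections of i.i.d.\ paths and pulling the supremum inside), and both close the gap; the monotonicity route yields the slightly sharper qualitative fact that the SAA/robust optimum is biased upward and decreases in expectation toward its limit, while your Fekete route is marginally more self-contained since it avoids the external citation. A further point in your favor: your explicit reduction of measurability of $V_N$ to the finite-dimensional reformulation \eqref{prob:mip} via Theorem~\ref{thm:reform} (whose proof is independent of the convergence results, so no circularity arises) addresses a point that the paper's proof passes over silently, since McDiarmid's inequality does require $V_N$ to be a bona fide random variable.
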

\begin{proof}{Proof.}
Consider any fixed $\epsilon > 0$, and,  for notational convenience, define the following function:  $$h_\epsilon(x^1,\ldots,x^N) \triangleq \sup_\mu \widehat{J}_{N,\epsilon}(\mu).$$ 
We will utilize the following intermediary claim: % regarding the function defined above.\\
\begin{claim} \label{claim:mcdiarmid}
 The function $h_\epsilon: \mathcal{X}^T \times \cdots \times \mathcal{X}^T \to \R$ has the following `bounded differences' property: for all $\breve{x}^1,\ldots,\breve{x}^N \in \mathcal{X}^T$ and $\bar{x}^1,\ldots,\bar{x}^N \in \mathcal{X}^T$  that differ only on the $j$th coordinate ($\breve{x}^i = \bar{x}^i$ for $i \neq j$), %  We begin by showing that
\begin{align*}
\left| h_\epsilon(\bar{x}^1,\ldots,\bar{x}^N) -  h_\epsilon(\breve{x}^1,\ldots,\breve{x}^N) \right| \le \frac{U}{N}.  %\label{line:bounded_diff}% \tag{Bounded Difference}
\end{align*}
\end{claim}
\begin{proof}{Proof of Claim~\ref{claim:mcdiarmid}.} For any arbitrary $\eta > 0$, let the exercise policies $\breve{\mu}^\eta$ be chosen to satisfy 
\begin{align}
\frac{1}{N} \sum_{i=1}^N \inf_{y \in \mathcal{X}^T: \; \| y - \breve{x}^i \|_\infty \le \epsilon} g(\tau_{\breve{\mu}^\eta}(y),\breve{x}^i) \ge h_\epsilon(\breve{x}^1,\ldots,\breve{x}^N) - \eta. \label{line:idk}
\end{align}
We observe that
\begin{align*}
& h_\epsilon(\breve{x}^1,\ldots,\breve{x}^N) - h_\epsilon(\bar{x}^1,\ldots,\bar{x}^N) \\
&\le \left( \frac{1}{N} \sum_{i=1}^N \inf_{y \in \mathcal{X}^T: \; \| y - \breve{x}^i \|_\infty \le \epsilon} g(\tau_{\breve{\mu}^\eta}(y),\breve{x}^i) + \eta \right) -  \left(  \frac{1}{N} \sum_{i=1}^N \inf_{y \in \mathcal{X}^T: \;\| y - \bar{x}^i \|_\infty \le \epsilon} g(\tau_{\breve{\mu}^\eta}(y),\bar{x}^i) \right)  \\
&=   \frac{1}{N}  \inf_{y \in \mathcal{X}^T: \;\| y - \breve{x}^j \|_\infty \le \epsilon} g(\tau_{\breve{\mu}^\eta}(y),\breve{x}^j) -  \frac{1}{N}  \inf_{y \in \mathcal{X}^T: \;\| y - \bar{x}^j \|_\infty \le \epsilon} g(\tau_{\breve{\mu}^\eta}(y),\bar{x}^j)  + \eta \\
&\le \frac{U}{N} + \eta. 
\end{align*}
Indeed, the first inequality holds because of line~\eqref{line:idk} and because $\breve{\mu}^\eta$ is a feasible but possibly suboptimal solution to the optimization problem $\sup_\mu \frac{1}{N} \sum_{i=1}^N \inf_{y \in \mathcal{X}^T: \;\| y - \bar{x}^i \|_\infty \le \epsilon} g(\tau_{\mu}(y),\bar{x}^i) \equiv h_\epsilon(\bar{x}^1,\ldots,\bar{x}^N)$, and the second inequality follows from Assumption~\ref{ass:bound}. Because $\eta >0$ was chosen arbitrarily, we  have shown that
\begin{align*}
h_\epsilon(\breve{x}^1,\ldots,\breve{x}^N) - h_\epsilon(\bar{x}^1,\ldots,\bar{x}^N) \le \frac{U}{N}. 
\end{align*}
It follows from symmetry that
\begin{align*}
h_\epsilon(\bar{x}^1,\ldots,\bar{x}^N) - h_\epsilon(\breve{x}^1,\ldots,\breve{x}^N)  \le \frac{U}{N}, 
\end{align*}
which concludes our proof of Claim~\ref{claim:mcdiarmid}. \halmos \end{proof}

Because the above Claim~\ref{claim:mcdiarmid} holds, it follows from  McDiarmid's inequality %\citep{McDiarmid1998} 
 that %the following inequality holds for all $ \eta > 0$:
  \begin{align*}
  \Prb \left( \left|  h_\epsilon(x^1,\ldots,x^N) - \Exp \left[ h_\epsilon(x^1,\ldots,x^N) \right] \right| > \eta \right)  \le 2 \textnormal{exp} \left( - \frac{2 \eta^2 N}{U^2} \right) \quad \forall \eta > 0.
  \end{align*}
It follows from the above line that
$$ \sum_{N=1}^\infty   \Prb \left( \left|  h_\epsilon(x^1,\ldots,x^N) - \Exp \left[ h_\epsilon(x^1,\ldots,x^N) \right] \right| > \eta \right) < \infty \quad \forall \eta > 0,$$
and so the Borel-Cantelli lemma implies that
\begin{align}
\lim_{N \to \infty} \left|  h_\epsilon(x^1,\ldots,x^N) - \Exp \left[h_\epsilon(x^1,\ldots,x^N) \right] \right|=0 \quad \text{almost surely}.   \label{line:nice}
\end{align}
We observe from identical reasoning as in the proof of \citet[Proposition 5.6]{shapiro2014lectures} that $ \Exp \left[h_\epsilon(x^1,\ldots,x^N) \right]$ is monotonically decreasing with respect to $N \in \N$. Since the random variables $h_\epsilon(x^1,\ldots,x^N)$ are also contained in the interval $[0,U]$ for all $N \in \N$, we conclude that $\lim_{N \to \infty}\Exp [ h_\epsilon(x^1,\ldots,x^N)] $ exists, and thus it follows from  line~\eqref{line:nice} that $\lim_{N \to \infty}h_\epsilon(x^1,\ldots,x^N)$ exists almost surely. This concludes the proof of Lemma~\ref{lem:mcdiarmid}. % the desired result follows from line~\eqref{line:nice}. % $N \to \infty$. 
\halmos \end{proof}

{\color{black}
In the second novel intermediary result, denoted below by Lemma~\ref{lem:bradsmart}, we show that there always exists an arbitrarily near-optimal Markovian stopping rule $\mu$ for the stochastic optimal stopping problem~\eqref{prob:main} that almost surely satisfies $\tau_\mu(y) = \tau_\mu(x)$ for all sufficiently close realizations of $y \equiv (y_1,\ldots,y_T)$ to the stochastic process $x \equiv (x_1,\ldots,x_T)$. The proof of the following lemma is based on elementary techniques from topology and measure theory. 
\begin{lemma}\label{lem:bradsmart}
Under Assumptions~\ref{ass:continuous} and \ref{ass:bound},
\begin{align*}
\sup_\mu \Exp \left[ g(\tau_\mu(x),x) \right] = \sup_\mu \Exp \left[ \liminf_{y \to x} g(\tau_\mu(y),x)\right].
\end{align*}
\end{lemma}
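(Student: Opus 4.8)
The plan is to prove the two inequalities separately, the forward one ($\le$) being immediate and the reverse one ($\ge$) carrying all the content. For the forward direction, note that for every exercise policy $\mu$ and every realization $x$, the infimum defining the liminf is taken over neighborhoods containing $x$ itself, so $\liminf_{y\to x} g(\tau_\mu(y),x) \le g(\tau_\mu(x),x)$ pointwise; integrating and taking the supremum over $\mu$ gives $\sup_\mu \Exp[\liminf_{y\to x} g(\tau_\mu(y),x)] \le \sup_\mu \Exp[g(\tau_\mu(x),x)]$.

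For the reverse inequality, the idea is to show that any exercise policy can be approximated, with negligible loss of expected reward, by one whose induced stopping time is \emph{locally constant almost everywhere}. Concretely, given arbitrary $\mu \equiv (\mu_1,\ldots,\mu_T)$ with stopping regions $S_t \triangleq \{z \in \mathcal{X}: \mu_t(z) = \textsc{Stop}\}$ and a tolerance $\eta > 0$, I would replace each $S_t$ by a finite union of open boxes $S_t'$. Using that $x$ has a density (Assumption~\ref{ass:continuous}), each marginal law $\nu_t$ of $x_t$ is absolutely continuous, and the standard approximation of measurable sets by finite unions of boxes in the generated algebra lets me choose $S_t'$ with $\nu_t(S_t \triangle S_t')$ arbitrarily small. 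The point of using open boxes is that the topological boundary $\partial S_t'$ lies in finitely many hyperplanes, hence is Lebesgue-null, hence $\nu_t(\partial S_t') = 0$.

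I would then verify the two properties of the approximating policy $\mu'$ (with stopping regions $S_t'$). First, reward closeness: since $\{\tau_\mu(x) \neq \tau_{\mu'}(x)\} \subseteq \bigcup_{t} \{x_t \in S_t \triangle S_t'\}$ and the reward is bounded by $U$ (Assumption~\ref{ass:bound}), a union bound yields $|\Exp[g(\tau_{\mu'}(x),x)] - \Exp[g(\tau_\mu(x),x)]| \le U\sum_{t} \nu_t(S_t \triangle S_t') \le \eta$ for an appropriate choice of the box approximations. Second, local constancy: on the full-measure event $G \triangleq \{x: x_t \notin \partial S_t' \text{ for all } t\}$ (full measure because $\nu_t(\partial S_t') = 0$), at the stopping period $t^* = \tau_{\mu'}(x) \le T$ the state satisfies $x_{t^*} \in \Int(S_{t^*}')$ while $x_s$ lies in the open complement of $\cl(S_s')$ for each $s < t^*$; since these are open conditions, $\tau_{\mu'}(y) = \tau_{\mu'}(x)$ for all $y$ near $x$, so $\liminf_{y\to x} g(\tau_{\mu'}(y),x) = g(\tau_{\mu'}(x),x)$ (the case $t^* = \infty$ is handled identically). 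Combining the two properties gives $\Exp[\liminf_{y\to x} g(\tau_{\mu'}(y),x)] = \Exp[g(\tau_{\mu'}(x),x)] \ge \Exp[g(\tau_\mu(x),x)] - \eta$; taking the supremum over $\mu$ and letting $\eta \downarrow 0$ completes the reverse inequality.

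The main obstacle I anticipate is the second property, namely ensuring the perturbed policy has a stopping time that is locally constant on a set of full probability. This is exactly where the two assumptions work in tandem: Assumption~\ref{ass:continuous} guarantees that the null boundaries of the approximating stopping regions are invisible to the distribution, so that almost every trajectory avoids all region boundaries at every period, while Assumption~\ref{ass:bound} converts the small probability of disagreement between $\mu$ and $\mu'$ into a small reward gap. The measure-theoretic step of approximating an arbitrary measurable stopping region by a finite union of boxes with null boundary — simultaneously in the right measure and with the null-boundary property — is routine but must be carried out carefully so that the symmetric-difference estimate and the topological interior/closure arguments hold at once.
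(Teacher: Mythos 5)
Your proof is correct and follows essentially the same strategy as the paper's: approximate each stopping region by a finite union of open sets with Lebesgue-null boundary, invoke Assumption~\ref{ass:continuous} so that almost every trajectory avoids all region boundaries at every period (hence the stopping time is locally constant almost surely), and invoke Assumption~\ref{ass:bound} to convert the small probability of disagreement into a small reward gap. The only cosmetic difference is that you obtain the finite union of open boxes in a single symmetric-difference algebra-approximation step, whereas the paper first passes to open supersets via outer regularity and then truncates a countable union of open balls; both routes establish the same two claims and combine identically.
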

\begin{proof}{Proof.}
Our proof of Lemma~\ref{lem:bradsmart} is organized into the following steps. In our first step, we will show that any feasible exercise policy to the stochastic optimal stopping problem~\eqref{prob:main} can be approximated to arbitrary accuracy by an exercise policy with exercise regions that are open sets. In our second step, we will show that these exercise  policies from the previous step can be approximated to arbitrary accuracy by an exercise policy with exercise regions that are the union of finitely many open balls. Finally, we will then  invoke  Assumption~\ref{ass:continuous} to conclude that the probability of a random state lying a strictly positive distance from the boundary of finitely many open balls is equal to one, which implies that the corresponding Markovian stopping rule  almost surely satisfies $\tau_\mu(y) = \tau_\mu(x)$ for all sufficiently close realizations of $y \equiv (y_1,\ldots,y_T)$ to the stochastic process $x \equiv (x_1,\ldots,x_T)$. From this, the desired lemma will  follow readily. 

In view of the above organization, we now present our proof. Choose any arbitrary exercise policy $\bar{\mu} \equiv (\bar{\mu}_1,\ldots,\bar{\mu}_T)$ which is feasible for the stochastic optimal stopping problem \eqref{prob:main}, and choose any arbitrary constant $\eta > 0$. In our first claim, we show that this exercise policy can be approximated by an exercise policy with exercise regions that are open sets. %We begin by proving the following claim. 
\begin{claim} \label{claim:2}
There exists an exercise policy $\tilde{\mu}^\eta \equiv (\tilde{\mu}_1^\eta,\ldots,\tilde{\mu}_T^\eta)$ such that:
\begin{enumerate}[label={(\alph*)},ref=(\alph*)]
\item  $\Exp[g(\tau_{\tilde{\mu}^\eta}(x),x)] \ge \Exp[g(\tau_{\bar{\mu}}(x),x)] - \eta$; \label{claim1:a}
\item For each period $t \in \{1,\ldots,T\}$, $A_t^\eta \triangleq \{y_t \in \mathcal{X}: \tilde{\mu}^\eta(y_t) = \textsc{Stop} \}$ is an open set. \label{claim1:b}
%\item For each period $t \in \{1,\ldots,T,\infty\}$, $\{y_t \in \mathcal{X}: \bar{\mu}(y_t) = \textsc{Stop} \} \subseteq \{y_t \in \mathcal{X}: \tilde{\mu}^\eta(y_t) = \textsc{Stop} \}$.\label{claim1:c}
\end{enumerate}
\end{claim}
\begin{proof}{Proof of Claim~\ref{claim:2}.}  %For the exercise policy $\bar{\mu} \equiv (\bar{\mu}_1,\ldots,\bar{\mu}_T)$ to be feasible, it must be the case that the expectation $\Exp[g(\tau_{\bar{\mu}}(x),x)]$ is taken over a measurable function $y \mapsto g(\tau_{\bar{\mu}}(y),y)$. 
Because  $\bar{\mu}_1,\ldots,\bar{\mu}_T: \R^d \to \{\textsc{Stop}, \textsc{Continue} \}$ are feasible for the optimization problem~\eqref{prob:main}, it follows from \S\ref{sec:setting:notation} that  $\bar{\mu}_1,\ldots,\bar{\mu}_T: \R^d \to \{\textsc{Stop}, \textsc{Continue} \}$ are measurable functions. Thus,    %Using the standard convention from the stochastic programming literature, a control policy over a state space in $\R^{Td}$ is feasible only if it is a measurable function.
it follows for each period $t \in \{1,\ldots,T\}$ that the set 
${\color{black}{A}_t} \triangleq %\bar{\mu}_t^{-1} ( \{ \textsc{Stop} \}) 
 \left \{ y_t \in \R^d: {\mu}_t(y_t) = \textsc{Stop} \right \}$ is a Borel set in $\R^d$. 
%{A}^t &\triangleq  \tau_{\bar{\mu}}^{-1}(\{t \}) = \left \{y \equiv (y_1,\ldots,y_T) \in \R^{Td}: \tau_\mu(y) = t \right \} =  \left \{y \equiv (y_1,\ldots,y_T) \in \R^{Td}: y_s \notin A_s \text{ for all } s < t, \; y_t \in A_t \right \} \\
%To interpret the above definition, we see that each Borel set $A^t \subseteq \R^{Td}$ is comprised of the realizations of stochastic process for which the Markovian stopping rule corresponding to the exercise policy $\bar{\mu}$ halts the realization on period $t$.  It follows immediately from the construction of these sets that the union of these sets contains the stochastic process almost surely, $\Prb(A^1 \cup \cdots \cup A^T \cup A^\infty) = 1$. 
Now, for each period $t \in \{1,\ldots,T\}$, let  $\Prb_{x,t}(\cdot) \triangleq \Prb(x_t \in \cdot)$ denote the marginal probability law of the stochastic process on period $t$.  Since ${A}_t$ is a Borel set, it is a well known result from measure theory \citep[Remark 11.11(b)]{rudin1964principles} that there exists an {open} set $A_t^{\eta} \subseteq \R^d$ which satisfies $A_t \subseteq A_t^\eta$ and  $\Prb_{x,t}(A_t^\eta \setminus A_t) \le \frac{\eta}{T}$. Using these sets, we define an exercise policy on each period $t$ by
\begin{align*}
\tilde{\mu}^\eta_t(y_t) \triangleq \begin{cases}
\textsc{Stop},&\text{if } y_t \in A^\eta_t,\\
\textsc{Continue},&\text{otherwise}.
\end{cases} 
\end{align*}
It follows from the above construction that the new exercise policy $\tilde{\mu}^\eta \equiv (\tilde{\mu}^\eta_1,\ldots,\tilde{\mu}^\eta_T)$ satisfies property~\ref{claim1:b} of Claim~\ref{claim:2}. To show that property~\ref{claim1:a} of Claim~\ref{claim:2} holds, let us first define the following set for notational convenience:
\begin{align*}
B \triangleq \left \{  y \equiv (y_1,\ldots,y_T): y_t \notin A_t^\eta \setminus A_t  \; \text{for all } t \in \{1,\ldots,T\} \right \} 
\end{align*}
We observe that $B$ is a Borel set in $\R^{Td}$, since $B$ is comprised of a finite number of complements and intersections of Borel sets. Then, 
\begin{align*}
&\Exp[ \left| g(\tau_{\tilde{\mu}^\eta}(x),x) - g(\tau_{\bar{\mu}}(x),x) \right| ] \\
&= \Exp \left[ \left| g(\tau_{\tilde{\mu}^\eta}(x),x) - g(\tau_{\bar{\mu}}(x),x) \right| \mathbb{I} \left \{ x \notin B \right \}   \right] + \Exp \left[ \left| g(\tau_{\tilde{\mu}^\eta}(x),x) - g(\tau_{\bar{\mu}}(x),x) \right| \mathbb{I} \left \{ x \in B \right \}   \right]  \\
&= \Exp \left[ \left| g(\tau_{\tilde{\mu}^\eta}(x),x) - g(\tau_{\bar{\mu}}(x),x) \right| \mathbb{I} \left \{ x \notin B \right \}   \right]   + 0 \\
&\le  U \Prb \left(  x \notin B \right)\\
&\le \sum_{t=1}^T \Prb \left( x_t \in A_t^\eta \setminus A_t \right)\\
&\le \eta. 
\end{align*}
The first equality  follows from the law of total expectation. The second equality holds because $g(\tau_{\tilde{\mu}^\eta}(x),x) = g(\tau_{\bar{\mu}}(x),x)$ when $x \in B$.  The first inequality follows from Assumption~\ref{ass:bound}. The second inequality follows from the union bound and the definition of the set $B$. The third and final inequality holds because $\Prb(x_t \in A_t^\eta \setminus A_t) = \Prb_{x,t}(A_t^\eta \setminus A_t) \le \frac{\eta}{T}$. This concludes our proof of Claim~\ref{claim:2}. 
\halmos \end{proof}

In the above Claim~\ref{claim:2}, we showed that we can construct a new exercise policy $\tilde{\mu}^\eta \equiv (\tilde{\mu}^\eta_1,\ldots,\tilde{\mu}^\eta_T)$  which is close to the original exercise policy  $\bar{\mu} \equiv (\bar{\mu}_1,\ldots,\bar{\mu}_T)$ with respect to expected reward. However, the new exercise policy is comprised of exercise regions in each period which are open sets. We now use this open set property, along with Assumption~\ref{ass:continuous}, to show the following second claim.
\begin{claim} \label{claim:3}
There exists an exercise policy $\breve{\mu}^\eta \equiv (\breve{\mu}_1^\eta,\ldots,\breve{\mu}_T^\eta)$ such that: % satisfies the following conditions:
\begin{enumerate}[label={(\alph*)},ref=(\alph*)]
\item  $\Exp[g(\tau_{\breve{\mu}^\eta}(x),x)] \ge \Exp[g(\tau_{\tilde{\mu}^\eta}(x),x)] - \eta$; \label{claim3:a}
\item $\lim_{y \to x} \tau_{\breve{\mu}^\eta}(y) = \tau_{\breve{\mu}^\eta}(x)$ almost surely. \label{claim3:b}
%\item For each period $t \in \{1,\ldots,T,\infty\}$, $\{y_t \in \mathcal{X}: \bar{\mu}(y_t) = \textsc{Stop} \} \subseteq \{y_t \in \mathcal{X}: \tilde{\mu}^\eta(y_t) = \textsc{Stop} \}$.\label{claim1:c}
\end{enumerate}
\end{claim}
\begin{proof}{Proof of Claim~\ref{claim:3}.} 
For each period $t \in \{1,\ldots,T\}$, consider the exercise region $A_t^\eta \triangleq \{y_t \in \mathcal{X}: \tilde{\mu}^\eta(y_t) = \textsc{Stop} \}$ corresponding to the exercise policy $\tilde{\mu}^\eta_t$. Since $A_t^\eta $ is an open set, it is a well-known result from measure theory \citep[Remark 11.11(a)]{rudin1964principles} that $A_t^\eta$ is the union of a countable collection of open balls. That is, there exists a countable set of tuples $\{(y_t^{\eta,\ell}, \epsilon_t^{\eta,\ell})\}_{\ell \in \N}$ such that $\epsilon_t^{\eta,\ell} > 0$ for all $\ell \in \N$ and the following equality is satisfied:
\begin{align*}
A_t^\eta = \lim_{k \to \infty} \underbrace{ \bigcup_{\ell = 1}^k \left \{ y_t \in \R^d: \| y_t - y_t^{{\color{black}\eta,}\ell} \|_\infty \le \epsilon_t^{{\color{black}\eta,}\ell} \right \}}_{C^{\eta,k}_t}.
\end{align*}
We observe from the above construction that $C^{\eta,k}_t$, $k \in \N$, is an increasing sequence of sets which converges to $A_t^\eta$. Consequently, we have %$C^{\eta,1}_t \subseteq C^{\eta,2}_t \subseteq \cdots  \subseteq A_t^\eta$.   %$C^\eta_\bigcup_{\ell = 1}^k \left \{ y_t \in \R^d: \| y_t - y_t^\ell \| \le \epsilon_t^\ell \right \} for each $k \in \N$. 
$\lim_{k \to \infty} \Prb \left( x_t \in A_t^\eta \setminus C^{\eta,k}_t \right) = 0$, 
%$\Prb(x_t \in A_t^\eta) = \lim_{k \to \infty} \Prb \left( x_t \in C^{\eta,k}_t \right) \implies \lim_{k \to \infty} \Prb \left( x_t \in A_t^\eta \setminus C^{\eta,k}_t \right) = 0$
and so it follows from the definition of a limit that there exists a $k_t^\eta \in \N$ such that  
\begin{align}
\Prb \left( x_t \in A_t^\eta \setminus C^{\eta,k_t^\eta}_t \right) \le \frac{\eta}{T}. \label{line:toolongtohandle}
\end{align}
Using these sets, we define an exercise policy on each period $t$ by
\begin{align*}
\breve{\mu}^\eta_t(y_t) \triangleq \begin{cases}
\textsc{Stop},&\text{if } y_t \in C^{\eta,k_t^\eta}_t,\\
\textsc{Continue},&\text{otherwise}.
\end{cases} 
\end{align*}
Since the inequality \eqref{line:toolongtohandle} holds for each period $t$, it follows from identical reasoning as the proof of property \ref{claim1:a} of Claim~\ref{claim:2} that the new exercise policy $\breve{\mu}^\eta \equiv (\breve{\mu}^\eta_1,\ldots,\breve{\mu}^\eta_T)$ satisfies property~\ref{claim3:a} of Claim~\ref{claim:3}.

Moreover, since the exercise regions $C^{\eta,k_t^\eta}_t$ are unions of finite numbers of open balls, and since the Lebesgue measure of the boundaries of a finite number of open balls is equal to zero, it follows from Assumption~\ref{ass:continuous} that the random state in each period $x_t \in \R^d$ will be a strictly positive distance from the boundary of  $C^{\eta,k_t^\eta}_t$ with probability one. This concludes our proof of property~\ref{claim3:b} of Claim~\ref{claim:3}, and thus concludes the proof of Claim~\ref{claim:3}. 
\halmos \end{proof}

We now combine Claims~\ref{claim:2} and \ref{claim:3} to conclude our proof of Lemma~\ref{lem:bradsmart}. Indeed, it follows from these claims that %we showed in property~\ref{claim1:a} of Claim~\ref{claim:2} and property~\ref{claim3:a} of Claim~\ref{claim:hardness:2} that
 \begin{align*}
  \Exp \left[ g(\tau_{\bar{\mu}}(x),x)\right] \le \Exp \left[g(\tau_{\tilde{\mu}^\eta}(x),x) \right] + \eta  \le \Exp \left[g(\tau_{\breve{\mu}^\eta}(x),x) \right]  + 2 \eta =  \Exp \left[\liminf_{y \to x} g(\tau_{\breve{\mu}^\eta}(y),x) \right]  + 2 \eta,
 \end{align*}
 where the first inequality follows from property~\ref{claim1:a} of Claim~\ref{claim:2}, the second inequality follows from property~\ref{claim3:a} of Claim~\ref{claim:3}, and the third inequality follows from property~\ref{claim3:b} of Claim~\ref{claim:3}. Since the exercise policy $\bar{\mu} \equiv (\bar{\mu}_1,\ldots,\bar{\mu}_T)$ and constant $\eta > 0$ were chosen arbitrarily, we have proven that
%\begin{align*}
%\lim_{k \to \infty} \Exp \left[ \liminf_{y \to x} g(\tau_{\bar{\mu}^k}(y),x)\right]  = \Exp \left[g(\tau_{\bar{\mu}}(x),x)\right],
%\end{align*}
%which proves that
\begin{align*}
\sup_\mu \Exp \left[g(\tau_{\bar{\mu}}(x),x)\right] \le \sup_\mu \Exp \left[ \liminf_{y \to x} g(\tau_{{\mu}}(y),x)\right]. 
\end{align*}
The other direction of the inequality obviously holds, and so our proof of Lemma~\ref{lem:bradsmart} is complete. 
\halmos \end{proof}

We now combine the above novel intermediary lemmas to establish our proofs of Theorems~\ref{thm:conv:asympt} and \ref{thm:conv:policy}.
}

{\color{black}\begin{proof}{Proof of Theorem~\ref{thm:conv:asympt}.}
We first show that the optimal objective value of \eqref{prob:sro} is an asymptotic lower bound on the optimal objective value of \eqref{prob:main}. Indeed,
\begin{align}
0 &\ge \lim_{\epsilon \to 0}  \limsup_{N \to \infty} \sup_{\mu} \left \{  \widehat{J}_{N,\epsilon}(\mu)  - J^*(\mu)  \right \} \quad  \text{almost surely}  \label{line:idc1}\\
&\ge  \lim_{\epsilon \to 0}  \limsup_{N \to \infty} \left( \sup_{\mu} \widehat{J}_{N,\epsilon}(\mu) - \sup_{\mu}  J^*(\mu)  \right) \label{line:idc2}  \\
 &= - \sup_\mu J^*(\mu)+  \lim_{\epsilon \to 0}  \limsup_{N \to \infty}  \sup_{\mu} \widehat{J}_{N,\epsilon}(\mu) \label{line:idc3}\\
  &= - \sup_\mu J^*(\mu)+  \lim_{\epsilon \to 0}  \lim_{N \to \infty}  \sup_{\mu} \widehat{J}_{N,\epsilon}(\mu) \quad \text{almost surely}, \label{line:idc4}
\end{align}
where \eqref{line:idc1} follows from Theorem~\ref{thm:conv:ub}, \eqref{line:idc2} and \eqref{line:idc3} follow from algebra, and \eqref{line:idc4} follows from Lemma~\ref{lem:mcdiarmid}. Note that all of the above limits exist because $\epsilon \mapsto \limsup_{N \to \infty} \sup_{\mu}  \{  \widehat{J}_{N,\epsilon}(\mu)  - J^*(\mu)   \}$ and $\epsilon \mapsto  \limsup_{N \to \infty}  \sup_{\mu} \widehat{J}_{N,\epsilon}(\mu)$ are monotonic functions. 

We next show that the optimal objective value of \eqref{prob:sro} provides an asymptotic upper bound on the optimal objective value of \eqref{prob:main}. Indeed,  we observe that
\begin{align}
\lim_{\epsilon \to 0}  \lim_{N \to \infty}  \sup_{\mu} \widehat{J}_{N,\epsilon}(\mu) &= \lim_{\epsilon \to 0}   \liminf_{N \to \infty} \sup_{\mu} \frac{1}{N} \sum_{i=1}^N \inf_{y \in \mathcal{U}^i }    g(\tau_\mu(y), x^i) \notag\\
 &\ge  \lim_{\epsilon \to 0}  \sup_{\mu} \liminf_{N \to \infty}  \frac{1}{N} \sum_{i=1}^N \inf_{y \in \mathcal{U}^i }    g(\tau_{{\color{black}{\mu}}}(y), x^i) \label{line:ihatenames_again} \\
 &=  \lim_{\epsilon \to 0}  \sup_{\mu} \Exp \left[  \inf_{y \in \mathcal{X}^T: \; \| y - x \| \le \epsilon }    g(\tau_{{\color{black}{\mu}}}(y), x)\right] \quad \text{almost surely} \label{line:ihatenames}\\
  &{\color{black}=  \sup_{\mu} \Exp \left[  \liminf_{y \to x }    g(\tau_{{\mu}}(y), x)\right] \quad \text{almost surely}}. \label{line:istillhatenames}\\
 & {\color{black}= \sup_{\mu} \Exp \left[ g(\tau_\mu(x),x) \right]} \label{line:bradsosmart}
  %  &=  \Exp \left[  \lim_{\epsilon \to 0}  \inf_{y \in \mathcal{X}^T: \; \| y - x \| \le \epsilon }    g(\tau_{\mu^*}(y), x)\right] \quad \text{almost surely} \label{line:istillhatenames}\\
 \end{align}
 \eqref{line:ihatenames_again} follows from algebra; 
\eqref{line:ihatenames} follows from the strong law of large numbers; \eqref{line:istillhatenames} follows from the dominated convergence theorem. We note that the strong law of large numbers and dominated convergence theorem can both be applied because of Assumption~\ref{ass:bound}. Finally, \eqref{line:bradsosmart} follows from Lemma~\ref{lem:bradsmart}. Combining the above, our proof of Theorem~\ref{thm:conv:asympt} is complete. \halmos
\end{proof}}

\begin{proof}{Proof of Theorem~\ref{thm:conv:policy}.}
We observe from Theorems~\ref{thm:conv:asympt} and \ref{thm:conv:ub} that for every arbitrary $\eta > 0$, there exists a finite $\bar{\epsilon}(\eta) > 0$ almost surely such that the following statements hold for all $0 < \epsilon < \bar{\epsilon}(\eta)$: %there exists a finite $\bar{N}(\eta, \epsilon) \in \N$ almost surely such that %for all $N \ge \bar{N}(\eta, \epsilon)$,
\begin{align}
\left| \lim_{N \to \infty} \widehat{J}_{N,\epsilon}(\hat{\mu}_{N,\epsilon}) - \sup_\mu J^*(\mu) \right|  \le \eta \quad \textnormal{almost surely}; \label{line:noname1}\\
 \liminf_{N \to \infty}  \left( J^*(\hat{\mu}_{N,\epsilon}) - \widehat{J}_{N,\epsilon}(\hat{\mu}_{N,\epsilon}) \right) \ge - \eta %J^*(\hat{\mu}_{N,\epsilon}) \ge \widehat{J}_{N,\epsilon}(\hat{\mu}_{N,\epsilon}) - \eta 
 \quad \textnormal{almost surely} .\label{line:noname2}
\end{align}
Therefore,
\begin{align*}
\sup_\mu J^*(\mu)  \ge \limsup_{N \to \infty} J^*(\hat{\mu}_{N,\epsilon})  \ge \liminf_{N \to \infty} J^*(\hat{\mu}_{N,\epsilon})  \ge  \liminf_{N \to \infty} \widehat{J}_{N,\epsilon}(\hat{\mu}_{N,\epsilon}) - \eta \ge \sup_\mu J^*(\mu) - 2\eta,
\end{align*}
where the first inequality holds because each $\hat{\mu}_{N,\epsilon}$ is a feasible but possibly suboptimal solution to \eqref{prob:main}, the second inequality is obvious, the third inequality follows from \eqref{line:noname2}, and the final inequality follows from \eqref{line:noname1}. Rearranging the above line, we have shown that the following statements hold  for all $0 < \epsilon < \bar{\epsilon}(\eta)$:
\begin{align*}
\left|  \limsup_{N \to \infty} J^*(\hat{\mu}_{N,\epsilon})  - \sup_\mu J^*(\mu) \right | &\le 2\eta \quad \textnormal{almost surely};\\
\left|  \liminf_{N \to \infty} J^*(\hat{\mu}_{N,\epsilon})  - \sup_\mu J^*(\mu) \right | &\le 2\eta \quad \textnormal{almost surely}.
\end{align*}
Since $\eta>0$ was chosen arbitrarily, our proof of Theorem~\ref{thm:conv:policy} is complete. \halmos \end{proof}

\section{\color{black}Proofs from \S\ref{sec:main}} \label{appx:characterization}

{\color{black}

\begin{proof}{Proof of Lemma~\ref{lem:restrict_to_completion}.}
We recall from \S\ref{sec:setting:notation} that a Markovian stopping rule satisfies $\tau_\mu(y) = \infty$ for a trajectory $y  \equiv (y_1,\ldots,y_T) \in \mathcal{X}^T$ if and only if $\mu_t(y_t) = \textsc{Continue}$ for each period $t \in \{1,\ldots,T\}$. Therefore, we observe the robust optimization problem~\eqref{prob:sro_2} is equivalent to the following optimization problem:
\begin{align} \label{prob:sro_intermediary} \tag{RO''}
\begin{aligned}
&\sup_{\mu} &&\frac{1}{N} \sum_{i=1}^N \inf_{y \in \mathcal{U}^i}  g(\tau_\mu(y), x^i)\\
&\textnormal{subject to}&& \tau_\mu(y) < \infty\quad  \textnormal{for all } i \in \{1,\ldots,N\} \textnormal{ and } y \in \mathcal{U}^i 
\end{aligned}
\end{align}
Now consider any arbitrary exercise policy $\mu \equiv (\mu_1,\ldots,\mu_T)$, and let $\mu' \equiv (\mu',\ldots,\mu')$ be an exercise policy that is defined for each period $t \in \{1,\ldots,T\}$ and state $y_t \in \mathcal{X}$ as
\begin{align*}
\mu'_t(y_t) \triangleq \begin{cases}
\mu_t(y_t),&\text{if } t \in \{1,\ldots,T-1\},\\
\textsc{Stop},&\text{if } t = T.
%\textsc{Continue},&\text{if } t = T \textnormal{ and } y_t \notin  \bigcup_{i =1}^N \mathcal{U}^i_T. 
\end{cases}
\end{align*}
We readily observe that $\tau_{\mu'}(y) \le T$ for each $i \in \{1,\ldots,N\}$ and $y \in \mathcal{U}^i$, which implies that $\mu'$ satisfies the constraints of \eqref{prob:sro_intermediary}. Moreover, we observe for each $i \in \{1,\ldots,N\}$ and $y \in \mathcal{U}^i$ that
\begin{align*}
\tau_{\mu'}(y) &= \begin{cases}
\tau_{\mu}(y),&\text{if } \tau_{\mu}(y) < \infty,\\
T,&\text{if } \tau_\mu(y) = \infty.
\end{cases}
\end{align*}
Since the reward function satisfies $g(1,y),\ldots,g(T,y) \ge 0$ and $g(\infty,y) = 0$ for all $y \in \mathcal{X}^T$, we have shown for each $i \in \{1,\ldots,N\}$ and $y \in \mathcal{U}^i$ that
\begin{align*}
 g(\tau_\mu(y), x^i) &= g(\tau_{\mu'}(y),x^i) &&\textnormal{ if } \tau_\mu(y) < \infty, \textnormal{ and } \\
  g(\tau_\mu(y), x^i) &= 0 \le  g(T,x^i) = g(\tau_{\mu'}(y),x^i) &&\textnormal{ if } \tau_\mu(y) = \infty. 
\end{align*}
We thus conclude that the objective value $\frac{1}{N} \sum_{i=1}^N \inf_{y \in \mathcal{U}^i} g(\tau_{\mu'}(y),x^i)$ associated with the new exercise policy $\mu' \equiv (\mu'_1,\ldots,\mu'_T)$ is always greater than or equal to the objective value $\frac{1}{N} \sum_{i=1}^N \inf_{y \in \mathcal{U}^i} g(\tau_{\mu}(y),x^i)$ associated with the original exercise policy $\mu \equiv (\mu_1,\ldots,\mu_T)$. Since $\mu \equiv (\mu_1,\ldots,\mu_T)$ was chosen arbitrarily, our proof of Lemma~\ref{lem:restrict_to_completion} is complete. \halmos \end{proof}

\begin{proof}{Proof of Lemma~\ref{lem:obj_sigma}.}
Consider any $\mu \equiv (\mu_1,\ldots,\mu_T)$ that satisfies the constraints of \eqref{prob:sro_2}, and define
\begin{align*}
\sigma^i \triangleq %\max_{y \in \mathcal{U}^i} \tau_\mu(y) = 
\min  \left \{ t \in \{1,\ldots,T\}: \; \mu_t(y_t) = \textsc{Stop} \textnormal{ for all } y_t \in \mathcal{U}^i_t \right \}  \quad \forall i \in \{1,\ldots,N\}. 
\end{align*}
It follows from the fact that $\mu$ is feasible for \eqref{prob:sro_2} that $\sigma^1,\ldots,\sigma^N \in \{1,\ldots,T\}$. Moreover, for each sample path $i \in \{1,\ldots,N\}$ and trajectory $y \in \mathcal{U}^i$,  we observe that 
\begin{align*}
\tau_\mu(y) &=  \min \{ t \in \{1,\ldots,T\}: \mu_t(y_t) = \textsc{Stop} \} \le \sigma^i,
\end{align*}
where the equality is simply the definition of a Markovian stopping rule and the inequality follows from the fact that $\mu_{\sigma^i}(y_{\sigma^i}) = \textsc{Stop}$ for all $y_{\sigma^i} \in \mathcal{U}^i_{\sigma^i}$. Therefore, we observe for each sample path $i \in \{1,\ldots,N\}$ that 
\begin{align*}
\left \{ \tau_\mu(y): y \in \mathcal{U}^i \right\} &= \left \{t \in \{1,\ldots,\sigma^i\}: \begin{aligned} &\textnormal{there exists } y \in \mathcal{U}^i \textnormal{ such that } \mu_{t'}(y_{t'}) = \textsc{Continue} \\
&\textnormal{for all } t' \in \{1,\ldots,t-1\} \textnormal{ and } \mu_t(y)  = \textsc{Stop}\end{aligned} \right \} \\
&=  \left \{t \in \{1,\ldots,\sigma^i\}:  \; \textnormal{there exists } y_t \in \mathcal{U}^i_t \textnormal{ such that } \mu_{t}(y_{t}) = \textsc{Stop}  \right \}. 
\end{align*}
Indeed,  the first equality follows from the definition of a Markovian stopping rule and from the fact that $\tau_\mu(y) \le \sigma^i$ for all $y \in \mathcal{U}^i$.  The second equality follows from the fact that $\mathcal{U}^i_t$ is not a subset of $\{y_t \in \mathcal{X}: \mu_t(y_t) = \textsc{Stop} \}$ for each period $t \in \{1,\ldots,\sigma^i -1 \}$, which implies for each $t \in \{1,\ldots,\sigma^i -1 \}$ that there exists a $y_t \in \mathcal{U}^i_t$ that satisfies $\mu_t(y_t) = \textsc{Continue}$. We thus conclude for each sample path $i \in \{1,\ldots,N\}$ that  
\begin{align*}
& \inf_{y \in \mathcal{U}^i} g(\tau_{\mu}(y),x^i) = \min \limits_{t \in \{1,\ldots,\sigma^i\}} \left \{ g(t,x^i) :  \textnormal{there exists }  y_t \in \mathcal{U}^i_t \textnormal{ such that } \mu_t(y_t) = \textsc{Stop} \right \},
\end{align*}
which completes our proof of Lemma~\ref{lem:obj_sigma}.\halmos \end{proof}

\begin{proof}{Proof of Lemma~\ref{lem:pruning}.}
Let $\mu'$ be a pruned version of $\mu$, and let $\sigma^1,\ldots,\sigma^N$ satisfy the following equalities for each $i \in \{1,\ldots,N\}$:
\begin{align}
\sigma^i &= \min \left \{ t \in \{1,\ldots,T\}: \; \mu_t(y_t) = \textsc{Stop}\; \forall y_t \in \mathcal{U}^i_t \right \}= \min \left \{ t \in \{1,\ldots,T\}: \; \mu_t'(y_t) = \textsc{Stop}\; \forall y_t \in \mathcal{U}^i_t \right \}. \label{line:almostdonehurray}
\end{align} 
Then it follows from the fact that $\mu$ is feasible for \eqref{prob:sro_2} that $\sigma^1,\ldots,\sigma^N \in \{1,\ldots,T\}$. Therefore, for each $i \in \{1,\ldots,N\}$, 
\begin{align*}
\inf_{y \in \mathcal{U}^i} g(\tau_{\mu'}(y),x^i) &=  \min \limits_{t \in \{1,\ldots,\sigma^i\}} \left \{ g(t,x^i) :  \textnormal{there exists }  y_t \in \mathcal{U}^i_t \textnormal{ such that } \mu_t'(y_t) = \textsc{Stop} \right \} \\
&\ge  \min \limits_{t \in \{1,\ldots,\sigma^i\}} \left \{ g(t,x^i) :  \textnormal{there exists }  y_t \in \mathcal{U}^i_t \textnormal{ such that } \mu_t(y_t) = \textsc{Stop} \right \} \\
&= \inf_{y \in \mathcal{U}^i} g(\tau_{\mu}(y),x^i).
\end{align*} 
Indeed, the two equalities follow from Lemma~\ref{lem:obj_sigma} and line~\eqref{line:almostdonehurray}. The inequality follows from the fact that $\mu'$ is a pruned version of $\mu$, which implies that $\left \{y_t \in \mathcal{X}: \mu_t'(y_t) = \textsc{Stop} \right \} \subseteq  \left \{y_t \in \mathcal{X}: \mu_t(y_t) = \textsc{Stop} \right \}$ for all $t \in \{1,\ldots,T\}$. Our proof of Lemma~\ref{lem:pruning} is thus complete. 
\halmos \end{proof}

\begin{proof}{Proof of Lemma~\ref{lem:M_is_pruned}.}
Consider any $\mu \equiv (\mu_1,\ldots,\mu_T)$ that satisfies the constraints of \eqref{prob:sro_2}, and define
\begin{align*}
\sigma^i \triangleq \min  \left \{ t \in \{1,\ldots,T\}: \; \mu_t(y_t) = \textsc{Stop} \textnormal{ for all } y_t \in \mathcal{U}^i_t \right \}  \quad \forall i \in \{1,\ldots,N\}. 
\end{align*}
Our proof that $\mu^{\sigma^1 \cdots \sigma^N}$ is a pruned version of $\mu$ is split into the following two intermediary claims.
\begin{claim} \label{claim:M_is_pruned:1}
% \begin{align*}
$ \{y_t \in \mathcal{X}: \mu_t^{\sigma^1 \cdots \sigma^N}(y_t) = \textsc{Stop}  \} \subseteq  \left \{y_t \in \mathcal{X}: \mu_t(y_t) = \textsc{Stop} \right \}$ for all $t \in \{1,\ldots,T\}$. 
 %\end{align*}
 \end{claim}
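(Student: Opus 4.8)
The plan is to prove this set inclusion directly from the definitions, since the claim is simply a statement that the stopping regions of $\mu^{\sigma^1 \cdots \sigma^N}$ are no larger than those of $\mu$. I would fix an arbitrary period $t \in \{1,\ldots,T\}$ and argue the inclusion pointwise: take any state $y_t \in \mathcal{X}$ satisfying $\mu_t^{\sigma^1 \cdots \sigma^N}(y_t) = \textsc{Stop}$, and show that it must also satisfy $\mu_t(y_t) = \textsc{Stop}$.

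The key step is to unpack the two definitions in sequence. First, by the definition of the exercise policy $\mu^{\sigma^1 \cdots \sigma^N}$ in line~\eqref{line:equality_for_policies}, the condition $\mu_t^{\sigma^1 \cdots \sigma^N}(y_t) = \textsc{Stop}$ is equivalent to $y_t \in \bigcup_{i:\, \sigma^i = t} \mathcal{U}^i_t$. Hence there exists a sample path $i \in \{1,\ldots,N\}$ with $\sigma^i = t$ and $y_t \in \mathcal{U}^i_t$. Second, I would invoke the definition of $\sigma^i$ given in the statement of the lemma, namely
\begin{align*}
\sigma^i = \min \left \{ s \in \{1,\ldots,T\}: \; \mu_s(y_s) = \textsc{Stop} \textnormal{ for all } y_s \in \mathcal{U}^i_s \right \}.
\end{align*}
Since $\sigma^i = t$, the period $t$ attains this minimum, which in particular forces $\mu_t(y_t') = \textsc{Stop}$ for every $y_t' \in \mathcal{U}^i_t$. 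Because $y_t$ is itself an element of $\mathcal{U}^i_t$, I conclude $\mu_t(y_t) = \textsc{Stop}$, so $y_t$ lies in the stopping region of $\mu$ at period $t$.

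As $y_t$ and $t$ were arbitrary, this establishes the desired inclusion $\{y_t \in \mathcal{X}: \mu_t^{\sigma^1 \cdots \sigma^N}(y_t) = \textsc{Stop}\} \subseteq \{y_t \in \mathcal{X}: \mu_t(y_t) = \textsc{Stop}\}$ for all $t$. I do not anticipate any genuine obstacle here: the entire argument is a two-line chase through the definitions of $\mu^{\sigma^1 \cdots \sigma^N}$ and of $\sigma^i$, with no appeal to the reward function, the objective, or the feasibility of $\mu$ for \eqref{prob:sro_2}. The only point requiring mild care is keeping the quantifiers straight — distinguishing the particular $y_t$ being tested from the universally-quantified $y_t'$ ranging over $\mathcal{U}^i_t$ in the definition of $\sigma^i$ — but this is purely bookkeeping. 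This claim will then serve as the first of the two inclusions needed to verify that $\mu^{\sigma^1 \cdots \sigma^N}$ satisfies Definition~\ref{defn:pruning}, with the companion claim handling the equality-of-stopping-times condition.
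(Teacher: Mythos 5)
Your proof is correct and follows essentially the same argument as the paper: unpack the definition of $\mu^{\sigma^1 \cdots \sigma^N}_t$ to see that its stopping region is $\bigcup_{i:\,\sigma^i = t} \mathcal{U}^i_t$, then use the definition of $\sigma^i$ (the minimum being attained at $t$) to conclude $\mathcal{U}^i_t \subseteq \{y_t \in \mathcal{X}: \mu_t(y_t) = \textsc{Stop}\}$ for each $i$ with $\sigma^i = t$. The paper writes this as a chain of set inclusions while you argue pointwise, but the content is identical.
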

 \begin{proof}{Proof of Claim~\ref{claim:M_is_pruned:1}.}
 Indeed, we observe for each period $t \in \{1,\ldots,T \}$ that 
 \begin{align*}
 &\left \{y_t \in \mathcal{X}: \mu_t^{\sigma^1 \cdots \sigma^N}(y_t) = \textsc{Stop} \right \} \\
 &= \left \{ y_t \in \mathcal{X}: y_t \in  \bigcup \limits_{i:\; \sigma^i = t} \mathcal{U}^i_t \right \} \\
 &\subseteq \left \{ y_t \in \mathcal{X}: y_t \in  \bigcup \limits_{i:\; \sigma^i = t} \left \{ y_t' \in \mathcal{X}: \mu_t(y_t') = \textsc{Stop} \right \} \right \} \\
 &= \begin{cases}
 \left \{ y_t \in \mathcal{X}: \mu_t(y_t) = \textsc{Stop} \right \},&\text{if there exists } i \in \{1,\ldots,N\} \textnormal{ such that } \sigma^i = t, \\
 \emptyset,&\text{otherwise}
 \end{cases}\\
% &= \left \{ y_t \in \mathcal{X}: y_t \in  \begin{cases}
% \left \{ y_t' \in \mathcal{X}: \mu_t(y_t') = \textsc{Stop} \right \},&\text{if } \exists i \in \{1,\ldots,N\} \textnormal{ such that } \sigma^i = t, \\
% \emptyset,&\text{otherwise}
% \end{cases}
 %\bigcup \limits_{i:\; \sigma^i = t} \left \{ y_t' \in \mathcal{X}: \mu_t(y_t') = \textsc{Stop} \right \} \right \} \\
 % &= \bigcup \limits_{i:\; \sigma^i = t} \left \{ y_t \in \mathcal{X}: \right \}  \\
 % \left \{y_t \in \mathcal{X}: \textnormal{ there exists } i \in \{1,\ldots,N\} \textnormal{ such that } \sigma^i = t \textnormal{ and } y_t \in \mathcal{U}^i_t \right \}\\
 &\subseteq  \left \{y_t \in \mathcal{X}: \mu_t(y_t) = \textsc{Stop} \right \},
 \end{align*}
where the first equality follows from the definition of $\mu^{\sigma^1 \cdots \sigma^N}_t$, the first inclusion follows from the fact that $\mathcal{U}^i_t \subseteq \{y_t \in \mathcal{X}: \mu_t(y_t) = \textsc{Stop} \}$ for each sample path $i$ that satisfies $\sigma^i = t$,  the second equality follows from algebra, and the second inclusion follows from algebra.  This concludes our proof of Claim~\ref{claim:M_is_pruned:1}.
\halmos \end{proof}
\begin{claim}\label{claim:M_is_pruned:2}
For each $i \in \{1,\ldots,N\}$,
\begin{align*}
\min \left \{ t \in \{1,\ldots,T\}: \; \mu_t^{\sigma^1 \cdots \sigma^N}(y_t) = \textsc{Stop}\; \forall y_t \in \mathcal{U}^i_t \right \}= \min \left \{ t \in \{1,\ldots,T\}: \; \mu_t(y_t) = \textsc{Stop} \;\forall y_t \in \mathcal{U}^i_t \right \}.
\end{align*}
\end{claim}
\begin{proof}{Proof of Claim~\ref{claim:M_is_pruned:2}.}
Consider any sample path $i \in \{1,\ldots,N\}$. We first observe that 
 \begin{align*}
\min \left \{ t \in \{1,\ldots,T\}: \; \mu_t^{\sigma^1 \cdots \sigma^N}(y_t) = \textsc{Stop}\; \forall y_t \in \mathcal{U}^i_t \right \}%&=\min \left \{ t \in \{1,\ldots,T\}: \; y_t \in \bigcup_{j: \sigma^j = t} \mathcal{U}^j_t \; \forall y_t \in \mathcal{U}^i_t \right \}\\
&\le \sigma^i \\
&= \min \left \{ t \in \{1,\ldots,T\}: \; \mu_t(y_t) = \textsc{Stop} \;\forall y_t \in \mathcal{U}^i_t \right \},
 \end{align*}
 where the inequality follows from the fact that $\mu_{\sigma^i}^{\sigma^1 \cdots \sigma^N}(y_{\sigma^i}) = \textsc{Stop}$ for all $y_{\sigma^i} \in \mathcal{U}^i_{\sigma^i}$, and the equality follows from the definition of $\sigma^i$. Moreover, it follows immediately from Claim~\ref{claim:M_is_pruned:1} that
\begin{align*}
\min \left \{ t \in \{1,\ldots,T\}: \; \mu_t^{\sigma^1 \cdots \sigma^N}(y_t) = \textsc{Stop}\; \forall y_t \in \mathcal{U}^i_t \right \}\ge  \min \left \{ t \in \{1,\ldots,T\}: \; \mu_t(y_t) = \textsc{Stop} \;\forall y_t \in \mathcal{U}^i_t \right \}.
\end{align*}
Our proof of Claim~\ref{claim:M_is_pruned:2} is thus complete. 
 \halmos \end{proof}
 Combining Claims~\ref{claim:M_is_pruned:1} and \ref{claim:M_is_pruned:2} with Definition~\ref{defn:pruning}, we conclude that $\mu^{\sigma^1 \cdots \sigma^N}$ is a pruned version of $\mu$, which completes our proof of Lemma~\ref{lem:M_is_pruned}. 
\halmos \end{proof}

The following proof follows the identical reasoning as discussed in \S\ref{sec:main:proof}, and is stated formally here for the sake of completeness. 
\begin{proof}{Proof  of Theorem~\ref{thm:characterization}.}
 Lemma~\ref{lem:restrict_to_completion} shows that the robust optimization problem~\eqref{prob:sro} is equivalent to the robust optimization problem~\eqref{prob:sro_2}. Moreover, for any arbitrary  exercise policy $\mu$ that is feasible for \eqref{prob:sro_2},  Lemmas~\ref{lem:pruning} and \ref{lem:M_is_pruned} together show that there exists an exercise policy $\mu' \in \mathcal{M}$ such that the objective value associated with $\mu$ is less than or equal to the objective value associated with $\mu'$.   Since $\mu$ was chosen arbitrarily, our proof of Theorem~\ref{thm:characterization} is complete. 
\halmos \end{proof}

\begin{proof}{Proof of Theorem~\ref{thm:reform}.}
We split our proof into the following two intermediary claims.
\begin{claim} \label{claim:reform:1}
The optimal objective value of \eqref{prob:sro} is less than or equal to the optimal objective value of \eqref{prob:mip}.
\end{claim}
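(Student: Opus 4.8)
The plan is to show that the optimal objective value of \eqref{prob:sro} is at most the optimal objective value of \eqref{prob:mip}. I would begin by invoking Lemma~\ref{lem:restrict_to_completion} to replace \eqref{prob:sro} with the constrained problem \eqref{prob:sro_2}, so that it suffices to show that \emph{every} exercise policy $\mu$ feasible for \eqref{prob:sro_2} has objective value bounded above by the optimal value of \eqref{prob:mip}; taking the supremum over such $\mu$ then recovers the optimal value of \eqref{prob:sro_2}, which equals that of \eqref{prob:sro} by Lemma~\ref{lem:restrict_to_completion}. Notice that this direction should require only Lemmas~\ref{lem:restrict_to_completion} and \ref{lem:obj_sigma}, reserving Lemmas~\ref{lem:pruning} and \ref{lem:M_is_pruned} for the reverse inequality.

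First I would fix an arbitrary $\mu$ feasible for \eqref{prob:sro_2} and define $\sigma^i \triangleq \min\{t \in \{1,\ldots,T\}: \mu_t(y_t) = \textsc{Stop}\text{ for all }y_t \in \mathcal{U}^i_t\}$ exactly as in Lemmas~\ref{lem:obj_sigma} and \ref{lem:M_is_pruned}. Feasibility for \eqref{prob:sro_2} guarantees $\sigma^i \in \{1,\ldots,T\}$, so $\sigma^1,\ldots,\sigma^N$ is a feasible solution for \eqref{prob:mip}. Applying Lemma~\ref{lem:obj_sigma} then rewrites each term of the objective of \eqref{prob:sro_2} as
\[
\inf_{y \in \mathcal{U}^i} g(\tau_\mu(y),x^i) = \min_{t \in \{1,\ldots,\sigma^i\}}\left\{ g(t,x^i): \text{there exists }y_t \in \mathcal{U}^i_t\text{ such that }\mu_t(y_t) = \textsc{Stop}\right\}.
\]

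The crux of the argument is then a one-sided comparison of this minimum against the corresponding term of the objective of \eqref{prob:mip}. The key observation is a set inclusion between the two index sets over which the minima are taken: for any period $t \in \{1,\ldots,\sigma^i\}$, if there exists a sample path $j$ with $\mathcal{U}^i_t \cap \mathcal{U}^j_t \neq \emptyset$ and $\sigma^j = t$, then the definition of $\sigma^j$ forces $\mu_t(y_t) = \textsc{Stop}$ for all $y_t \in \mathcal{U}^j_t$, so any point $y_t \in \mathcal{U}^i_t \cap \mathcal{U}^j_t$ witnesses that there exists $y_t \in \mathcal{U}^i_t$ with $\mu_t(y_t) = \textsc{Stop}$. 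Hence the feasible index set for the inner minimum of \eqref{prob:mip} is contained in the feasible index set for the inner minimum produced by Lemma~\ref{lem:obj_sigma}, and minimizing $g(t,x^i)$ over the \emph{larger} index set yields the smaller value, so
\[
\inf_{y \in \mathcal{U}^i} g(\tau_\mu(y),x^i) \le \min_{t \in \{1,\ldots,\sigma^i\}}\left\{ g(t,x^i): \text{there exists }j\text{ such that }\mathcal{U}^i_t \cap \mathcal{U}^j_t \neq \emptyset\text{ and }\sigma^j = t\right\}.
\]
I would also note that both index sets are nonempty, since each contains $t = \sigma^i$ (for the left-hand set by the definition of $\sigma^i$ and nonemptiness of $\mathcal{U}^i_{\sigma^i}$, and for the right-hand set via the choice $j = i$), so the minima are well defined. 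Averaging over $i$ shows the objective of $\mu$ in \eqref{prob:sro_2} is at most the objective of $\sigma^1,\ldots,\sigma^N$ in \eqref{prob:mip}, hence at most the optimal value of \eqref{prob:mip}; taking the supremum over $\mu$ and applying Lemma~\ref{lem:restrict_to_completion} finishes the claim.

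The main obstacle is getting the direction of the minimum inequality right. The essential content is verifying the one-sided implication (\eqref{prob:mip}-admissibility of $t$ implies a $\textsc{Stop}$-witness inside $\mathcal{U}^i_t$) rather than an equivalence, and then correctly concluding that the minimum over the \emph{larger} admissible set is the \emph{smaller} quantity, so that the \eqref{prob:sro_2} term lower-bounds—not upper-bounds—the \eqref{prob:mip} term. Everything else is routine bookkeeping with Lemmas~\ref{lem:restrict_to_completion} and \ref{lem:obj_sigma}.
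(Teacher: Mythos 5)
Your proof is correct, and it takes a genuinely leaner route than the paper's. The paper proves this claim by first restricting attention to policies $\mu \in \mathcal{M}$ (invoking Theorem~\ref{thm:characterization} to ensure the optimum of \eqref{prob:sro} is attained there) and then passing through the intermediate policy $\mu^{\sigma^1 \cdots \sigma^N}$: Lemma~\ref{lem:M_is_pruned} shows it is a pruned version of $\mu$, Lemma~\ref{lem:pruning} yields $\inf_{y \in \mathcal{U}^i} g(\tau_\mu(y),x^i) \le \inf_{y \in \mathcal{U}^i} g(\tau_{\mu^{\sigma^1 \cdots \sigma^N}}(y),x^i)$, and Lemma~\ref{lem:obj_sigma} applied to $\mu^{\sigma^1 \cdots \sigma^N}$ turns the right-hand side into \emph{exactly} the \eqref{prob:mip} objective at $\sigma^1,\ldots,\sigma^N$, since the stopping region of $\mu^{\sigma^1 \cdots \sigma^N}$ at period $t$ is $\bigcup_{j:\, \sigma^j = t} \mathcal{U}^j_t$. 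You instead apply Lemma~\ref{lem:obj_sigma} directly to an arbitrary $\mu$ feasible for \eqref{prob:sro_2} and replace the pruning step with a one-line index-set inclusion: any \eqref{prob:mip}-admissible period $t$ admits a $\textsc{Stop}$-witness in $\mathcal{U}^i_t$ because $\sigma^j = t$ forces $\mu_t(y_t) = \textsc{Stop}$ on all of $\mathcal{U}^j_t$, so the minimum in Lemma~\ref{lem:obj_sigma} runs over a superset of indices and is therefore the smaller quantity. Your inclusion is in effect a localized restatement of the containment of stopping regions established in Claim~\ref{claim:M_is_pruned:1} inside the proof of Lemma~\ref{lem:M_is_pruned}, so the two arguments share the same mathematical core; the differences are in the bookkeeping. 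What your route buys is self-containedness: it needs only Lemmas~\ref{lem:restrict_to_completion} and \ref{lem:obj_sigma}, avoids any appeal to Theorem~\ref{thm:characterization} or the pruning lemmas, establishes the bound for \emph{all} feasible policies of \eqref{prob:sro_2} rather than only those in $\mathcal{M}$, and (as you note) both minima are well defined since $t = \sigma^i$ with $j = i$ lies in each index set. What the paper's detour buys is reuse: the pruning machinery is needed anyway for Theorem~\ref{thm:characterization} and for Claim~\ref{claim:reform:2}, so routing this direction through it makes the claim a near-immediate corollary of results already in hand.
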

\begin{proof}{Proof of Claim~\ref{claim:reform:1}.}
Consider any arbitrary $\mu \in \mathcal{M}$, and define the integers
%We conclude \S\ref{sec:algorithms:prelim} by reflecting on the connection between Theorem~\ref{thm:reform} and Lemmas~\ref{lem:obj_sigma} and \ref{lem:M_is_pruned}. To see this connection, consider any $\mu \equiv (\mu_1,\ldots,\mu_T)$ that satisfies the constraints of \eqref{prob:sro_2}, and define
\begin{align}
\sigma^i \triangleq \min  \left \{ t \in \{1,\ldots,T\}: \; \mu_t(y_t) = \textsc{Stop} \textnormal{ for all } y_t \in \mathcal{U}^i_t \right \}  \quad \forall i \in \{1,\ldots,N\}. \label{line:argument}
\end{align}
It follows from the definition of $\mathcal{M}$ that $\mu$  satisfies the constraints of \eqref{prob:sro_2}, and so  Lemma~\ref{lem:M_is_pruned} implies that $\mu^{\sigma^1 \cdots \sigma^N}$ is a pruned version of $\mu$. Therefore, it follows from Definition~\ref{defn:pruning} and line~\eqref{line:argument} that the following equalities hold: 
\begin{align}
\sigma^i = \min  \left \{ t \in \{1,\ldots,T\}: \; \mu_t^{\sigma^1 \cdots \sigma^N}(y_t) = \textsc{Stop} \textnormal{ for all } y_t \in \mathcal{U}^i_t \right \}  \quad \forall i \in \{1,\ldots,N\}. \label{line:argument2}
\end{align}
Therefore, we observe that 
\begin{align}
&\frac{1}{N} \sum_{i=1}^N  \inf_{y \in \mathcal{U}^i} g \left(\tau_{\mu}(y),x^i \right) \notag \\
&\le \frac{1}{N} \sum_{i=1}^N  \inf_{y \in \mathcal{U}^i} g \left(\tau_{\mu^{\sigma^1 \cdots \sigma^N}}(y),x^i \right)\notag  \\
 &= \frac{1}{N} \sum_{i=1}^N  \min \limits_{t \in \{1,\ldots,\sigma^i\}} \left \{ g(t,x^i) :  \textnormal{there exists }  y_t \in \mathcal{U}^i_t \textnormal{ such that } \mu_t^{\sigma^1 \cdots \sigma^N}(y_t) = \textsc{Stop} \right \} \notag \\
&=  \frac{1}{N} \sum_{i=1}^N \min_{t \in \{1,\ldots,\sigma^i \}} \left \{g(t,x^i): \textnormal{ there exists } j \textnormal{ such that } \mathcal{U}^i_t \cap \mathcal{U}^j_t \neq \emptyset \textnormal{ and } \sigma^j = t\right \}, \label{line:argument3}
\end{align}
where the first inequality follows from Lemma~\ref{lem:pruning},  the first equality follows from Lemma~\ref{lem:obj_sigma} and line~\eqref{line:argument2}, and the second equality  follows from the fact that $\mu^{\sigma^1 \cdots \sigma^N}_t(y_t) = \textsc{Stop}$ if and only if there exists a sample path $j$ such that $y_t \in \mathcal{U}^j_t$ and $\sigma^j = t$. Because $\mu \in \mathcal{M}$ was chosen arbitrarily, we conclude from line~\eqref{line:argument3} and Theorem~\ref{thm:characterization} that  the optimal objective value of \eqref{prob:sro} is less than or equal to the optimal objective value of \eqref{prob:mip}. Our proof of Claim~\ref{claim:reform:1} is thus complete.  \halmos
\end{proof}

\begin{claim} \label{claim:reform:2}
The optimal objective value of \eqref{prob:sro} is greater than or equal to the optimal objective value of \eqref{prob:mip}.  Furthermore, for any choice of integers $\sigma^1,\ldots,\sigma^N \in \{1,\ldots,T\}$, the corresponding exercise policy $\mu^{\sigma^1 \cdots \sigma^N} \equiv (\mu_1^{\sigma^1 \cdots \sigma^N},\ldots,\mu_T^{\sigma^1 \cdots \sigma^N})$ satisfies  % 
 \begin{align*}
 &\frac{1}{N} \sum_{i=1}^N \inf_{y \in \mathcal{U}^i}  g \left(\tau_{\mu^{\sigma^1 \cdots \sigma^N}}(y), x^i \right) \\
 &\ge \frac{1}{N} \sum_{i=1}^N \min_{t \in \{1,\ldots,\sigma^i \}} \left \{g(t,x^i): \textnormal{ there exists } j \textnormal{ such that } \mathcal{U}^i_t \cap \mathcal{U}^j_t \neq \emptyset \textnormal{ and } \sigma^j = t\right \}.
 \end{align*}
\end{claim}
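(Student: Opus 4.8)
The plan is to run the same argument used for the already-established Claim~\ref{claim:reform:1}, but in the opposite direction: I would treat the integers $\sigma^1,\ldots,\sigma^N$ as the primitive object and the exercise policy $\mu^{\sigma^1 \cdots \sigma^N}$ (defined via line~\eqref{line:equality_for_policies}) as the derived object. First I would fix an arbitrary choice of integers $\sigma^1,\ldots,\sigma^N \in \{1,\ldots,T\}$ and form $\mu^{\sigma^1 \cdots \sigma^N}$. The immediate observation is that this policy is feasible for \eqref{prob:sro_2}: since $\mathcal{U}^i_{\sigma^i} \subseteq \bigcup_{j:\, \sigma^j = \sigma^i} \mathcal{U}^j_{\sigma^i}$, the definition of $\mu^{\sigma^1 \cdots \sigma^N}$ forces $\mu^{\sigma^1 \cdots \sigma^N}_{\sigma^i}(y_{\sigma^i}) = \textsc{Stop}$ for every $y_{\sigma^i} \in \mathcal{U}^i_{\sigma^i}$. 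Hence the minimal period at which $\mu^{\sigma^1 \cdots \sigma^N}$ stops on all of $\mathcal{U}^i_t$, which I will call $\tilde{\sigma}^i$, is well-defined and satisfies $\tilde{\sigma}^i \le \sigma^i$ for each $i$.

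Next I would apply Lemma~\ref{lem:obj_sigma} to the feasible policy $\mu^{\sigma^1 \cdots \sigma^N}$ (with its own index $\tilde{\sigma}^i$) to rewrite $\inf_{y \in \mathcal{U}^i} g(\tau_{\mu^{\sigma^1 \cdots \sigma^N}}(y), x^i)$ as the minimum of $g(t,x^i)$ over all $t \in \{1,\ldots,\tilde{\sigma}^i\}$ for which there exists $y_t \in \mathcal{U}^i_t$ with $\mu^{\sigma^1 \cdots \sigma^N}_t(y_t) = \textsc{Stop}$. The key rewriting step is then to translate this stopping condition into the combinatorial condition appearing in \eqref{prob:mip}: directly from the definition of $\mu^{\sigma^1 \cdots \sigma^N}$, some $y_t \in \mathcal{U}^i_t$ satisfies $\mu^{\sigma^1 \cdots \sigma^N}_t(y_t) = \textsc{Stop}$ if and only if $\mathcal{U}^i_t$ meets $\bigcup_{j:\, \sigma^j = t} \mathcal{U}^j_t$, that is, if and only if there exists $j$ with $\mathcal{U}^i_t \cap \mathcal{U}^j_t \neq \emptyset$ and $\sigma^j = t$.

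The one point requiring care, and the step I expect to be the main obstacle, is the discrepancy between $\tilde{\sigma}^i$ and $\sigma^i$. After the rewriting above I have expressed $\inf_{y \in \mathcal{U}^i} g(\tau_{\mu^{\sigma^1 \cdots \sigma^N}}(y),x^i)$ as a minimum over $t \in \{1,\ldots,\tilde{\sigma}^i\}$, whereas the \eqref{prob:mip} objective minimizes over the larger index range $t \in \{1,\ldots,\sigma^i\}$ (recall $\tilde{\sigma}^i \le \sigma^i$). Because enlarging the index set of a minimization can only decrease its value, the minimum over $\{1,\ldots,\tilde{\sigma}^i\}$ is greater than or equal to the minimum over $\{1,\ldots,\sigma^i\}$ of the same quantity; this is precisely where the inequality, rather than equality, in the claim originates. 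Chaining these observations yields, for each $i$, the per-sample-path bound, and summing over $i$ and dividing by $N$ produces the displayed inequality in the statement.

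Finally, to obtain the first sentence of the claim, I would note that $\mu^{\sigma^1 \cdots \sigma^N}$ is a feasible solution for \eqref{prob:sro}, so its objective value is a lower bound on the optimal value of \eqref{prob:sro}; combining this with the per-solution inequality just established and using that $\sigma^1,\ldots,\sigma^N$ were arbitrary, taking the supremum over all integer choices shows that the optimal objective value of \eqref{prob:sro} is at least the optimal objective value of \eqref{prob:mip}. Together with Claim~\ref{claim:reform:1}, which gives the reverse inequality, this completes the proof of Theorem~\ref{thm:reform}.
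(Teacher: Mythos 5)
Your proposal is correct and follows essentially the same route as the paper's own proof: both arguments observe that the first period at which $\mu^{\sigma^1 \cdots \sigma^N}$ stops on all of $\mathcal{U}^i_t$ is at most $\sigma^i$ (since the policy stops on all of $\mathcal{U}^i_{\sigma^i}$ by construction), invoke Lemma~\ref{lem:obj_sigma}, translate the condition ``there exists $y_t \in \mathcal{U}^i_t$ with $\mu^{\sigma^1 \cdots \sigma^N}_t(y_t) = \textsc{Stop}$'' into ``there exists $j$ with $\mathcal{U}^i_t \cap \mathcal{U}^j_t \neq \emptyset$ and $\sigma^j = t$,'' and conclude via monotonicity of the minimum when the index range $\{1,\ldots,\tilde{\sigma}^i\}$ is enlarged to $\{1,\ldots,\sigma^i\}$. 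The only (harmless) difference is that you deduce the first sentence directly from feasibility of $\mu^{\sigma^1 \cdots \sigma^N}$ for \eqref{prob:sro}, correctly dispensing with the paper's superfluous appeal to Theorem~\ref{thm:characterization} for this direction of the inequality.
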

\begin{proof}{Proof of Claim~\ref{claim:reform:2}.}
Consider any arbitrary integers $\sigma^1,\ldots,\sigma^N \in \{1,\ldots,T\}$. For each sample path $i \in \{1,\ldots,N\}$, we observe that
\begin{align}
&\min  \left \{ t \in \{1,\ldots,T\}: \; \mu_t^{\sigma^1 \cdots \sigma^N}(y_t) = \textsc{Stop} \textnormal{ for all } y_t \in \mathcal{U}^i_t \right \}\notag  \\
&=\min  \left \{ t \in \{1,\ldots,\sigma^i\}: \; \mu_t^{\sigma^1 \cdots \sigma^N}(y_t) = \textsc{Stop} \textnormal{ for all } y_t \in \mathcal{U}^i_t \right \} \notag \\
&\le \sigma^i,\label{line:argument4}
\end{align}
where  the equality follows from the fact that $\mu^{\sigma^1 \cdots \sigma^N} \equiv (\mu^{\sigma^1 \cdots \sigma^N}_1,\ldots,\mu^{\sigma^1 \cdots \sigma^N}_T)$ by construction satisfies $\mu^{\sigma^1 \cdots \sigma^N}_{\sigma^i}(y_{\sigma^i}) = \textsc{Stop}$ for all $y_{\sigma^i} \in \mathcal{U}^i_{\sigma^i}$, and the inequality follows from algebra. Therefore, 
 \begin{align}
&\frac{1}{N} \sum_{i=1}^N  \inf_{y \in \mathcal{U}^i} g \left(\tau_{\mu^{\sigma^1 \cdots \sigma^N}}(y),x^i \right) \notag  \\
&\ge \frac{1}{N} \sum_{i=1}^N  \min \limits_{t \in \{1,\ldots,\sigma^i\}} \left \{ g(t,x^i) :  \textnormal{there exists }  y_t \in \mathcal{U}^i_t \textnormal{ such that } \mu_t^{\sigma^1 \cdots \sigma^N}(y_t) = \textsc{Stop} \right \} \notag \\
&=  \frac{1}{N} \sum_{i=1}^N \min_{t \in \{1,\ldots,\sigma^i \}} \left \{g(t,x^i): \textnormal{ there exists } j \textnormal{ such that } \mathcal{U}^i_t \cap \mathcal{U}^j_t \neq \emptyset \textnormal{ and } \sigma^j = t\right \},  \label{line:argument5}
\end{align}
where the inequality follows from Lemma~\ref{lem:obj_sigma} and line~\eqref{line:argument4}, and the equality follows from the fact that $\mu^{\sigma^1 \cdots \sigma^N}_t(y_t) = \textsc{Stop}$ if and only if there exists a sample path $j$ such that $y_t \in \mathcal{U}^j_t$ and $\sigma^j = t$. Because $\sigma^1,\ldots,\sigma^N \in \{1,\ldots,T\}$ were chosen arbitrarily, we conclude from line~\eqref{line:argument5} and Theorem~\ref{thm:characterization} that the optimal objective value of \eqref{prob:sro} is greater than or equal to the optimal objective value of \eqref{prob:mip}, which concludes our proof  of Claim~\ref{claim:reform:2} is complete.
%Therefore,
%\begin{align}
%\eqref{prob:sro} &= \underset{ \mu \in \mathcal{M}}{\textnormal{maximize}}\;  \frac{1}{N} \sum_{i=1}^N \inf_{y \in \mathcal{U}^i}  g \left(\tau_{\mu}(y), x^i \right) \notag \\
%&= \underset{ \sigma^1,\ldots,\sigma^N \in \{1,\ldots,T\}}{\textnormal{maximize}}\;  \frac{1}{N} \sum_{i=1}^N \inf_{y \in \mathcal{U}^i}  g \left(\tau_{\mu^{\sigma^1 \cdots \sigma^N}}(y), x^i \right) \notag  \\
%&\ge\underset{ \sigma^1,\ldots,\sigma^N \in \{1,\ldots,T\}}{\textnormal{maximize}}\;  \frac{1}{N} \sum_{i=1}^N \min_{t \in \{1,\ldots,\sigma^i \}} \left \{g(t,x^i): \textnormal{ there exists } j \textnormal{ such that } \mathcal{U}^i_t \cap \mathcal{U}^j_t \neq \emptyset \textnormal{ and } \sigma^j = t\right \} \notag \\
%&= \eqref{prob:mip},\label{line:argument6}
%\end{align}
%where the first equality follows from Theorem~\ref{thm:characterization}, the second equality follows from the definition of $\mathcal{M}$, the inequality follows from \eqref{line:argument5}, and the final equality follows from the definition of the optimization problem~\eqref{prob:mip}.
%
%Combining lines~\eqref{line:argument5} and \eqref{line:argument6}, our proof of Claim~\ref{claim:reform:2} is complete. 
\halmos \end{proof}
Combining Claims~\ref{claim:reform:1} and \ref{claim:reform:2}, our proof of Theorem~\ref{thm:reform} is thus complete. 
\halmos \end{proof}

}

\section{Proof of Theorem~\ref{thm:hard}} \label{appx:hard}
Our proof of the computational complexity of \eqref{prob:mip} consists of a reduction from MIN-2-SAT, which is shown to be strongly NP-hard by \cite{ kohli1994minimum}:
%\vspace{1em}
%
%\begin{center}
%\fbox{\begin{minipage}{0.9\linewidth}
%\begin{center}
%\textbf{\underline{MIN-2-SAT}}
%\end{center}
%Let $b_1,\ldots,b_L$ be binary literals,   $C_1,\ldots,C_K$ be clauses, and $w_1,\ldots,w_K \ge 0$ be weights. Let $I^+_k$ be the set of indices of unnegated literals for clause $k$ and $I^-_k$ be the set of indices of negated literals, where $| I^+_k|+|I^-_k| = 2$. The optimization version of MIN-2-SAT is to compute the optimal value of the following binary program:
%\begin{equation*}
%\begin{aligned}
%v^{\textnormal{MIN-2-SAT}}\triangleq \quad &\underset{b,z}{\textnormal{minimize}}&& \sum_{k=1}^K z_k\\
%&\textnormal{subject to}&& z_k \ge b_\ell && \forall k \in \{1,\ldots,K \}, \forall \ell \in I^+_k\\
%&&& z_k \ge 1-b_\ell && \forall k \in \{1,\ldots,K \}, \forall \ell \in I^-_k\\
%&&& b_\ell \in \{0,1\}&& \forall \ell\in \{1,\ldots,L\}. 
%\end{aligned}
%\end{equation*}
%\end{minipage}}
%\end{center}
%\vspace{1em}

\vspace{1em}

\begin{center}
\fbox{\begin{minipage}{0.9\linewidth}
\begin{center}
\textbf{\underline{MIN-2-SAT}}
\end{center}
\vspace{1em}
The optimization version of MIN-2-SAT is to compute the optimal objective value of the binary linear optimization problem 
\begin{equation*}
\begin{aligned}
v^{\textnormal{MIN-2-SAT}}\triangleq \quad &\underset{b,z}{\textnormal{minimize}}&& \sum_{k=1}^K z_k\\
&\textnormal{subject to}&& z_k \ge b_\ell && \forall k \in \{1,\ldots,K \}, \forall \ell \in I^+_k\\
&&& z_k \ge 1-b_\ell && \forall k \in \{1,\ldots,K \}, \forall \ell \in I^-_k\\
&&& b_\ell \in \{0,1\}&& \forall \ell\in \{1,\ldots,L\}, 
\end{aligned}
\end{equation*}
where the given sets $I^+_k, I^-_k \subseteq \{1,\ldots,L\}$ satisfy $| I^+_k|+|I^-_k| = 2$ for each $k \in \{1,\ldots,K\}$. 
\end{minipage}}
\end{center}
\vspace{1em}

\noindent Note that the following equality is obtained by replacing each decision variable $z_k$ with $1- z_k$:
\begin{equation*}
\begin{aligned}
v^{\textnormal{MIN-2-SAT}}= \quad  N \;\; - \;\; &\underset{b,z}{\textnormal{maximize}}&& \sum_{k=1}^K z_k\\
&\textnormal{subject to}&& z_k \le 1- b_\ell && \forall k \in \{1,\ldots,K \}, \forall \ell \in I^+_k\\
&&& z_k \le b_\ell && \forall k \in \{1,\ldots,K \}, \forall \ell \in I^-_k\\
&&& b_\ell \in \{0,1\}&& \forall \ell\in \{1,\ldots,L\}. 
\end{aligned}
\end{equation*} 
We now show that any instance of the above maximization problem can be equivalently reformulated as polynomially-size instance of   \eqref{prob:mip} with $T=3$ periods. 

\begin{proof}{Proof of Theorem~\ref{thm:hard}.}
Consider any arbitrary instance of the binary linear optimization problem 
\begin{equation} \label{prob:notmin2sat} \tag{$\lnot$MIN-2-SAT}
\begin{aligned}
 &\underset{b,z}{\textnormal{maximize}}&& \sum_{k=1}^K z_k\\
&\textnormal{subject to}&& z_k \le 1- b_\ell && \forall k \in \{1,\ldots,K \}, \forall \ell \in I^+_k\\
&&& z_k \le b_\ell && \forall k \in \{1,\ldots,K \}, \forall \ell \in I^-_k\\
&&& b_\ell \in \{0,1\}&& \forall \ell\in \{1,\ldots,L\}, 
\end{aligned}
\end{equation} 
and let $e_\ell \in \R^{L+1}$ denote the $\ell$-th column vector of the identity matrix. We construct an instance of \eqref{prob:mip} defined as follows: % stopping problem.
\begin{itemize}
\item The number of periods is $T=3$. %Let the state space be $\mathcal{X} \equiv \R^2$. 
\item The state space is $\mathcal{X} = \R^{L+1}$.
\item The reward function for each period $t \in \{1,2,3\}$ is $g(t,y) = y_t \cdot e_{{\color{black}L}+1} + K$.
%\begin{align*}
%g(t,y) = \begin{cases}
%(y_t)_{L+1} +1,&\text{if } (y_t)_{L+1} \le -1,\\
%0,&\text{if } -1 \le (y_t)_{L+1} \le 1,\\
%(y_t)_{L+1} -1,&\text{if } (y_t)_{L+1} \ge 1,
%\end{cases}
%\end{align*}
%\begin{align*}
%g(t,y) = \begin{cases}
%y_t \cdot e_{L+1} +1,&\text{if } y_t \cdot e_{L+1} \le -1,\\
%0,&\text{if } -1 \le y_t \cdot e_{L+1} \le 1,\\
%y_t \cdot e_{L+1} -1,&\text{if } y_t \cdot e_{L+1} \ge 1,
%\end{cases}
%\end{align*}
%where  $(y_t)_{L+1}$ denotes the last entry of a column vector $y_t \in \R^{L+1}$. 
\item The robustness parameter in the uncertainty sets is $\epsilon = \frac{2}{3}$. % set for each sample path $i \in \{1,\ldots,N \}$ and period $t \in \{ 1,2,3\}$ is defined as
\item The number of sample paths is $N \triangleq L+K$, and the sample paths are defined as follows: % sample paths as follows. 
\begin{itemize}
\item For each $\ell  \in \{1,\ldots,L\}$, let $x^\ell_1 =x^\ell_2 = e_\ell$ and  $x^\ell_3 = -K e_{L+1}$.
\item  For each $k \in \{1,\ldots,K\}$, let $x^{L+k}_1 =\frac{1}{2} \sum_{\ell \in I^+_k} e_{\ell} $, $x^{L+k}_2 = \frac{1}{2} \sum_{\ell \in I^-_k} e_{\ell} $, and $x_3^{L+k} = e_{L+1}$.
\end{itemize}
% $$\mathcal{U}^i_t =\left \{ y_t \in \R^{L+1}: \| y_t - x^i_t \|_\infty \le \frac{1}{3}\right \}$$ 
\end{itemize}
In the remainder of the proof, we show that the above instance of \eqref{prob:mip} is equivalent to \eqref{prob:notmin2sat}. Indeed, it follows immediately from the above construction that the values of {\color{black}$g(t,x^i)$} for each sample path $i$ and period $t$ are:
%It follows immediately from the above construction that the values of $v^{ii}_t$ for each sample path $i$ and period $t$ are:
%\begin{lemma} \label{lem:complexity:ii}
%For all $\ell \in \{1,\ldots,L\}$ and $k \in \{1,\ldots,K\}$:
%\begin{align*}
%v^{\ell \ell }_1 = v^{\ell \ell }_2 = 0 ; \quad  v^{\ell \ell }_3 = -(L+1); \quad v^{L+k, L+k }_1 = v^{L+k, L+k }_2 = 0; \quad v^{L+k, L+k }_3 = 1.
%\end{align*}
%\end{lemma}
%{lemma} \label{lem:complexity:ii}
%For all $\ell \in \{1,\ldots,L\}$ and all $k \in \{1,\ldots,K\}$:
{\color{black}
\begin{align*}
\begin{aligned}
g(1,x^\ell) &= K; & g(2,x^\ell) &= K ; &  g(3,x^\ell) &= 0, && \forall \ell \in \{1,\ldots,L\},\\
g(1,x^{L+k}) &= K; &  g(2,x^{L+k}) &= K; & g(3,x^{L+k}) &= K+1, && \forall k \in \{1,\ldots,K\}.
\end{aligned}
\end{align*}
}
\begin{comment}
\begin{align*}
\begin{aligned}
v^{\ell \ell }_1 &= K; &  v^{\ell \ell }_2 &= K ; &  v^{\ell \ell }_3 &= 0, && \forall \ell \in \{1,\ldots,L\},\\
v^{L+k, L+k }_1 &= K; &  v^{L+k, L+k }_2 &= K; & v^{L+k, L+k }_3 &= K+1, && \forall k \in \{1,\ldots,K\}.
\end{aligned}
\end{align*}
\end{comment}
We require two intermediary claims: % The first of these lemmas presents a characterization of optimal solutions for \eqref{prob:mip}:
%\begin{lemma} \label{lem:complexity:optimal}
\begin{claim}\label{claim:hardness:2} {There exists an optimal solution for \eqref{prob:mip} which satisfies}
  $$\sigma^1,\ldots,\sigma^L \in \{1,2\} \text{ and }  \sigma^{L+1} = \cdots = \sigma^{L+K} =3.$$
  \end{claim}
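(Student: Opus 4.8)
The plan is to prove this existential statement by an exchange argument: starting from any optimal solution $\sigma \equiv (\sigma^1,\ldots,\sigma^N)$ of \eqref{prob:mip}, I would transform it coordinate by coordinate into a solution of the advertised form while never decreasing the objective. Throughout I would use the reward table computed just above the claim — $g(1,x^i)=g(2,x^i)=K$ for every path $i$, $g(3,x^\ell)=0$ for literal-paths $\ell\in\{1,\ldots,L\}$, and $g(3,x^{L+k})=K+1$ for clause-paths $L+k$, $k\in\{1,\ldots,K\}$ — together with one geometric fact: since $\|x^\ell_3-x^{L+k}_3\|_\infty=K+1>2\epsilon$ for all $K\ge 1$, the period-$3$ uncertainty sets of a literal-path and a clause-path never intersect. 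The basic mechanism I would invoke repeatedly is the monotonicity of the inner minimum in \eqref{prob:mip}: for a fixed path $i$, enlarging the set of periods $t$ that are \emph{selected} (i.e. for which some $j$ with $\sigma^j=t$ satisfies $\mathcal{U}^i_t\cap\mathcal{U}^j_t\neq\emptyset$) can only lower its contribution $\min_{t\le\sigma^i}\{g(t,x^i):\ldots\}$, while shrinking that set can only raise it; moreover a path always selects its own terminal period $\sigma^i$ (take $j=i$), so the minimum is never over an empty set.

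First I would push every clause-path to period $3$. Fix $k$ with $\sigma^{L+k}\neq 3$ and reset $\sigma^{L+k}\leftarrow 3$. Path $L+k$'s own contribution moves from $K$ (it previously selected a period in $\{1,2\}$, all of reward $K$) to the minimum of a set that contains the value $K+1$ and possibly some $K$'s, hence to a value $\ge K$. The reassignment deletes $L+k$'s selection at its old period, which by monotonicity can only help other paths, and it creates a new selection at period $3$; by the geometric fact this new selection is seen only by the other clause-paths, each of which receives through it the value $g(3,x^{L+k'})=K+1$, the global maximum, so no path's minimum drops. Hence each such move is non-decreasing, and after finitely many of them all clause-paths sit at period $3$.

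The crux — and the step I expect to be the main obstacle — is then to pull every literal-path out of period $3$. If $\sigma^\ell=3$ then, because $\mathcal{U}^\ell_3$ selects itself with $g(3,x^\ell)=0$, path $\ell$ contributes $0$; I would reset $\sigma^\ell\leftarrow 1$, raising its contribution to exactly $g(1,x^\ell)=K$, a gain of $K$. The only harm this can do is through the newly created selection of period $1$: every other path $i$ overlapping $\ell$ in period $1$ now has period $1$ available in its minimum with value $g(1,x^i)=K$. For literal-paths this is harmless (a path at period $1$ or $2$ already contributes $K$, and a path at period $3$ already contributes $0$), so the only losses come from clause-paths whose contribution falls from $K+1$ to $K$. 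Each such clause-path loses exactly $1$, and there are at most $K$ clause-paths, so the total loss is at most $K$, exactly balanced by the gain of $K$; the net change is therefore $\ge 0$. Iterating over all $\ell$ with $\sigma^\ell=3$ yields a solution with $\sigma^1,\ldots,\sigma^L\in\{1,2\}$ and $\sigma^{L+1}=\cdots=\sigma^{L+K}=3$ whose objective is no smaller than the optimum, which proves the claim. The delicate point is precisely this $K$-for-$K$ accounting: it relies on the reward gap $g(3,x^{L+k})-g(1,x^{L+k})=1$ being normalized against the penalty $g(1,x^\ell)-g(3,x^\ell)=K$ that the construction builds in, so I would isolate the clause-path bound (each drops by at most $1$, capped at $K$ in total) as a separate sub-claim before assembling the two steps.
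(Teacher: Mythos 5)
Your proof is correct, but it takes a genuinely different route from the paper's. Both arguments rest on the same two facts you isolate explicitly — the period-3 separation $\|x^\ell_3 - x^{L+k}_3\|_\infty = K+1 > 2\epsilon$, and the monotonicity/non-emptiness of the inner minimum in \eqref{prob:mip} — but the decomposition is reversed and the literal-path step is handled by a different mechanism. The paper first disposes of the literal paths by a \emph{global} bound: it observes that the all-ones solution $\sigma^1=\cdots=\sigma^N=1$ already achieves objective $K$, and that \emph{any} solution with some $\sigma^{\ell'}=3$ has objective at most $\frac{1}{N}\bigl((L-1)K + 0 + K(K+1)\bigr) = K$, so such solutions can be discarded wholesale in favor of all-ones; only then does it run an exchange argument pushing clause paths to period 3. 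You instead run a pure exchange proof in the opposite order: clause paths to period 3 first (valid from an arbitrary starting solution, a small generality the paper does not need), then literal paths from 3 to 1, where your delicate $K$-for-$K$ accounting (gain exactly $K$ on path $\ell$, lose at most $1$ on each of the exactly $K$ clause paths, whose contributions are pinned in $\{K,K+1\}$ after your first step) replaces the paper's global comparison. The trade-off: the paper's bound is shorter and sidesteps the accounting entirely, while your argument is more local and makes visible \emph{why} the reduction is calibrated as it is — the reward gap of $1$ per clause path against the penalty of $K$ per literal path, with exactly $K$ clauses, is precisely what makes the exchange break even. Your ordering does carry one obligation the paper's avoids: your Step 2 accounting needs the clause-path contributions bounded below by $K$, which holds because all of a clause path's visible rewards are $\ge K$; you note this correctly, so there is no gap.
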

\begin{proof}{Proof of Claim~\ref{claim:hardness:2}.} We observe that the optimal objective value of \eqref{prob:mip} is greater than or equal to $K$, since this objective value would be achieved by  setting $\sigma^1 = \cdots = \sigma^N = 1$. Now consider any solution to \eqref{prob:mip} where $\sigma^{\ell'} = 3$ for some $\ell' \in \{1,\ldots,L\}$. For that solution, 
{\color{black}
\begin{align*}
& \frac{1}{N} \sum_{i=1}^N \min_{t \in \{1,\ldots,\sigma^i \}} \left \{g(t,x^i): \textnormal{ there exists } j \textnormal{ such that } \mathcal{U}^i_t \cap \mathcal{U}^j_t \neq \emptyset \textnormal{ and } \sigma^j = t\right \}\\
&= \frac{1}{N} \sum_{\ell \in \{1,\ldots,L\}: \ell \neq \ell'} \min_{t \in \{1,\ldots,\sigma^\ell \}} \left \{g(t,x^\ell): \textnormal{ there exists } j \textnormal{ such that } \mathcal{U}^\ell_t \cap \mathcal{U}^j_t \neq \emptyset \textnormal{ and } \sigma^j = t\right \}\\
&\quad + \frac{1}{N}  \min_{t \in \{1,\ldots,\sigma^{\ell'} \}} \left \{g(t,x^{\ell'}): \textnormal{ there exists } j \textnormal{ such that } \mathcal{U}^{\ell'}_t \cap \mathcal{U}^j_t \neq \emptyset \textnormal{ and } \sigma^j = t\right \}\\
&\quad + \frac{1}{N} \sum_{k=1}^K \min_{t \in \{1,\ldots,\sigma^{L+k} \}} \left \{g(t,x^{L+k}): \textnormal{ there exists } j \textnormal{ such that } \mathcal{U}^{L+k}_t \cap \mathcal{U}^j_t \neq \emptyset \textnormal{ and } \sigma^j = t\right \}\\
&\le \frac{1}{N}  \sum_{\ell \in \{1,\ldots,L\}: \ell \neq \ell'} K    + 0 + \sum_{k=1}^K (K+1) \\
 &= K.
 \end{align*}}
\begin{comment}
\begin{align*}
 \frac{1}{N} \sum_{i=1}^N \min_{j: \sigma^j \le \sigma^i} v^{ij}_{\sigma^j} &=  \frac{1}{N} \left( \sum_{\ell=1: \ell \neq \ell'}^L \min_{j: \sigma^j \le \sigma^\ell} v^{\ell j}_{\sigma^j}  + \min_{j: \sigma^j \le \sigma^{\ell'}} v^{\ell'j}_{\sigma^j}  + \sum_{k=1}^K\min_{j: \sigma^j \le \sigma^{L+k}} v^{L+k, j}_{\sigma^j}  \right) \\
 &\le \frac{1}{N} \left(\left(  \sum_{\ell=1: \ell \neq \ell'}^L K \right)   + 0 +  \left(\sum_{k=1}^K (K+1) \right) \right) \\
 &= K.
 \end{align*}
 \end{comment}
Because the objective value associated with this solution is never better than the objective value obtained by the solution $\sigma^1 = \cdots = \sigma^{N} = 1$,  we have shown that there exists an optimal solution for  \eqref{prob:mip} that satisfies $\sigma^1,\ldots,\sigma^L \in \{1,2\}$.  

Consider any arbitrary solution $\sigma^1,\ldots,\sigma^N \in \{1,2,3\}$ that satisfies  $\sigma^1,\ldots,\sigma^L \in \{1,2\}$, and suppose that $\sigma^{L+k} \in \{1,2\}$ for some $k \in \{1,\ldots,K\}$. To perform an exchange argument, we construct an alternative solution  $\bar{\sigma}^1,\ldots,\bar{\sigma}^N \in \{1,2,3\}$ defined as
$$
\bar{\sigma}^i \triangleq \begin{cases}
\sigma^i,&\text{if } i \neq L+k,\\
3,&\text{if } i = L+k.
\end{cases}
$$
We observe that {\color{black}the inclusion} $\{j: \sigma^j \le \sigma^i \} \supseteq \{ j: \bar{\sigma}^j \le \bar{\sigma}^i \}$ holds for all $i \in \{1,\ldots,N\} \setminus \{ L+k\}$. Moreover, {\color{black}we observe that
\begin{align*}
 \min_{t \in \{1,\ldots,\sigma^{L+k} \}} \left \{g(t,x^{L+k}): \textnormal{ there exists } j \textnormal{ such that } \mathcal{U}^{L+k}_t \cap \mathcal{U}^j_t \neq \emptyset \textnormal{ and } \sigma^j = t\right \} &= K, \; \text{and}\\
  \min_{t \in \{1,\ldots,\bar{\sigma}^{L+k} \}} \left \{g(t,x^{L+k}): \textnormal{ there exists } j \textnormal{ such that } \mathcal{U}^{L+k}_t \cap \mathcal{U}^j_t \neq \emptyset \textnormal{ and } \bar{\sigma}^j = t\right \} & \in \{K,K+1\}.
\end{align*}
}
%$\min_{j: \sigma^j \le \sigma^{L+k}}v^{L+k,j}_{\sigma^j} = K $ and  $\min_{j: \bar{\sigma}^j \le \bar{\sigma}^{L+k}}v^{L+k,j}_{\bar{\sigma}^j} \in \{K,K+1\}$ also hold. 
Therefore, % which implies that
{\color{black}
\begin{align*}
& \frac{1}{N} \sum_{i=1}^N \min_{t \in \{1,\ldots,\sigma^i \}} \left \{g(t,x^i): \textnormal{ there exists } j \textnormal{ such that } \mathcal{U}^i_t \cap \mathcal{U}^j_t \neq \emptyset \textnormal{ and } \sigma^j = t\right \}\\
&\le \frac{1}{N} \sum_{i=1}^N \min_{t \in \{1,\ldots,\bar{\sigma}^i \}} \left \{g(t,x^i): \textnormal{ there exists } j \textnormal{ such that } \mathcal{U}^i_t \cap \mathcal{U}^j_t \neq \emptyset \textnormal{ and } \bar{\sigma}^j = t\right \}
\end{align*}
}
\begin{comment}
\begin{align*}
\frac{1}{N} \sum_{i=1}^N \min_{j: \sigma^j \le \sigma^i}v^{ij}_{\sigma^j} &\le \frac{1}{N} \sum_{i=1}^N \min_{j: \bar{\sigma}^j \le \bar{\sigma}^i}v^{ij}_{\bar{\sigma}^j}.
\end{align*}
\end{comment}
 Because $\sigma^1,\ldots,\sigma^N$ was chosen arbitrarily, we conclude that there exists an optimal solution  for  \eqref{prob:mip} which satisfies $\sigma^1,\ldots,\sigma^L \in \{1,2\}$ and $\sigma^{L+1} = \cdots = \sigma^{L+K} =3$. This concludes our proof of Claim~\ref{claim:hardness:2}. 
 \halmos \end{proof}
%\begin{enumerate}
%\item If $i \in \{1,\ldots,L\}$, then $\{j: \sigma^j \le \sigma^i \} = \{j: \sigma^j \le \bar{\sigma}^i \} \supseteq \{ j: \bar{\sigma}^j \le \bar{\sigma}^i \}$, which implies that $$\min \limits_{j: \sigma^j \le \sigma^i}v^{ij}_{\sigma^j} \le \min \limits_{j: \bar{\sigma}^j \le \bar{\sigma}^i}v^{ij}_{\bar{\sigma}^j}.$$ 
%\item If $i \in \{L+1,\ldots,L+K \}$ and $i \neq L+k$, then $\{j: \sigma^j \le \sigma^i \} = \{ j: \bar{\sigma}^j \le \bar{\sigma}^i \}$, which implies that 
%$$\min \limits_{j: \sigma^j \le \sigma^i}v^{ij}_{\sigma^j} = \min \limits_{j: \bar{\sigma}^j \le \bar{\sigma}^i}v^{ij}_{\bar{\sigma}^j}.$$ 
%\item If $i = L+k$, then $\min_{j: \sigma^j \le \sigma^i}v^{ij}_{\sigma^j}  = K$ and 
%\end{enumerate}
%\begin{align*}
%\min_{j: \sigma^j \le \sigma^i}v^{ij}_{\sigma^j} &\le \min_{j: \bar{\sigma}^j \le \bar{\sigma}^i}v^{ij}_{\bar{\sigma}^j} \quad \forall i \neq L+k,
%\end{align*}
%Finally, the claim that there exists an optimal solution  for  \eqref{prob:mip} which satisfies  $\sigma^{L+1} = \cdots = \sigma^{L+K} =3$ follows  directly from $ v^{L+k, L+k }_3 = K+1$. %Lemma~\ref{lem:complexity:ii} which showed that $v^{L+k,L+k}_3 = \max_{t \in \{1,2,3\}} v^{L+k,L+k}_t$. % the second part, let $\sigma^1,\ldots,\sigma^N \in \{1,2,3\}$ denote any solution where $\sigma^{L+k'} \in \{1,2\}$ for some $k' \in \{1,\ldots,K\}$. Define another solution $\bar{\sigma}^1,\ldots,\bar{\sigma}^N \in \{1,2,3\}$ as 
%% \begin{align*}
%% \bar{\sigma}^i = \begin{cases}
%%  \sigma^i,&\text{if }i \neq L+k',\\
%%  3,&\text{if }i = L+k'.
%%  \end{cases}

\begin{claim}\label{claim:hardness:3}  %\label{lem:complexity:final}
{If  $\sigma^{L+1} = \cdots = \sigma^{L+K} = 3$, then the following equality holds for each $k \in \{1,\ldots,K\}$:}
{\color{black}
\begin{align*}
& \min_{t \in \{1,\ldots,\sigma^{L+k} \}} \left \{g(t,x^{L+k}): \textnormal{ there exists } j \textnormal{ such that } \mathcal{U}^{L+k}_t \cap \mathcal{U}^j_t \neq \emptyset \textnormal{ and } \sigma^j = t\right \} \\
 &=K + \mathbb{I} \left \{\sigma^{\ell} = 2 \textnormal{ for all } \ell \in I^+_k \textnormal{ and } \sigma^{\ell} = 1 \textnormal{ for all } \ell \in I^-_k\right \}.
 \end{align*}
}
\begin{comment}
  \begin{align*}
  \min_{j: \; \sigma^j \le \sigma^i} v^{L+k,j}_{\sigma^j} = K + \mathbb{I} \left \{\sigma^{\ell} = 2 \textnormal{ for all } \ell \in I^+_k \textnormal{ and } \sigma^{\ell} = 1 \textnormal{ for all } \ell \in I^-_k\right \}.
  \end{align*}
  \end{comment}
  \end{claim}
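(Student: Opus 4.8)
The plan is to evaluate, for the fixed sample path $i = L+k$ and under the hypothesis $\sigma^{L+1} = \cdots = \sigma^{L+K} = 3$, the inner minimum $\min_{t \in \{1,\ldots,\sigma^{L+k}\}} \{ g(t,x^{L+k}) : \exists\, j \text{ with } \mathcal{U}^{L+k}_t \cap \mathcal{U}^j_t \neq \emptyset \text{ and } \sigma^j = t \}$ appearing in the objective of \eqref{prob:mip}. Since $\sigma^{L+k} = 3$, the index $t$ ranges over $\{1,2,3\}$, and I would determine separately, for each of these three periods, whether there exists a feasible index $j$ (call a period \emph{active} when such a $j$ exists). The reward values already recorded, namely $g(1,x^{L+k}) = g(2,x^{L+k}) = K$ and $g(3,x^{L+k}) = K+1$, show that the minimum equals $K$ precisely when period $1$ or period $2$ is active and equals $K+1$ otherwise; moreover period $3$ is always active, because $j = L+k$ itself satisfies $\sigma^{L+k} = 3$ and $\mathcal{U}^{L+k}_3 \cap \mathcal{U}^{L+k}_3 \neq \emptyset$. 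Thus the whole claim reduces to characterizing when periods $1$ and $2$ are active.

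First I would cut down the candidate indices $j$. For $t \in \{1,2\}$ we need $\sigma^j = t$, and by hypothesis every index of the form $L+k'$ has $\sigma^{L+k'} = 3$; hence the only admissible $j$ in periods $1$ and $2$ are the variable sample paths $x^1,\ldots,x^L$. The geometric heart of the argument is then to compute the pairwise $\ell_\infty$ distances and compare them to the intersection threshold determined by $\epsilon$. Using $x^\ell_1 = e_\ell$ and $x^{L+k}_1 = \tfrac12\sum_{\ell' \in I^+_k} e_{\ell'}$, one finds $\| x^{L+k}_1 - x^\ell_1 \|_\infty = \tfrac12$ when $\ell \in I^+_k$ and $\| x^{L+k}_1 - x^\ell_1 \|_\infty = 1$ when $\ell \notin I^+_k$; since two uncertainty sets meet iff their centers lie within $2\epsilon$ in the $\ell_\infty$ norm, the calibration of $\epsilon$ separates these two distances and yields $\mathcal{U}^{L+k}_1 \cap \mathcal{U}^\ell_1 \neq \emptyset$ if and only if $\ell \in I^+_k$. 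An identical computation with $x^\ell_2 = e_\ell$ and $x^{L+k}_2 = \tfrac12\sum_{\ell' \in I^-_k} e_{\ell'}$ gives $\mathcal{U}^{L+k}_2 \cap \mathcal{U}^\ell_2 \neq \emptyset$ if and only if $\ell \in I^-_k$.

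Combining these two steps, period $1$ is active if and only if some $\ell \in I^+_k$ has $\sigma^\ell = 1$, and period $2$ is active if and only if some $\ell \in I^-_k$ has $\sigma^\ell = 2$. Consequently the minimum equals $K+1$ exactly when no $\ell \in I^+_k$ satisfies $\sigma^\ell = 1$ and no $\ell \in I^-_k$ satisfies $\sigma^\ell = 2$. Invoking that the variable indices take values in $\{1,2\}$ (the regime of the optimal solutions isolated in Claim~\ref{claim:hardness:2}, under which this claim is applied), the first condition becomes $\sigma^\ell = 2$ for all $\ell \in I^+_k$ and the second becomes $\sigma^\ell = 1$ for all $\ell \in I^-_k$, which is exactly the event inside the indicator. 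This establishes the stated equality.

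The step I expect to be the main obstacle is the geometric bookkeeping of the second paragraph: one must verify the $\ell_\infty$ distance computations for all admissible sizes of $I^+_k$ and $I^-_k$ (recall $|I^+_k| + |I^-_k| = 2$), confirm that the chosen radius cleanly separates the distance $\tfrac12$ (membership) from the distance $1$ (non-membership), and check that the variable paths are genuinely far from $x^{L+k}$ in period $3$ so that they never spuriously activate it. A secondary subtlety is that the passage from ``no active variable in $I^+_k$'' to ``$\sigma^\ell = 2$ for every $\ell \in I^+_k$'' relies on the restriction $\sigma^\ell \in \{1,2\}$; without it a variable index equal to $3$ could make the indicator and the true minimum disagree, so the claim should be read in tandem with Claim~\ref{claim:hardness:2}.
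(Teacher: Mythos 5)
Your proof is essentially the paper's own argument: the paper computes the same $\ell_\infty$ distances ($\tfrac{1}{2}$ versus $1$ according to membership in $I^+_k$ or $I^-_k$), concludes the same intersection pattern for periods $1$ and $2$, notes that only the variable paths can activate those periods under the hypothesis, and collapses the minimum over active periods to $K+\mathbb{I}\{\cdot\}$ exactly as you do, with period $3$ supplying the fallback value $K+1$ through self-intersection. There is no methodological divergence to report.

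What is worth reporting is that both of the ``obstacles'' you flag are real, and on both points your write-up is more careful than the paper. First, the intersection criterion you state (two radius-$\epsilon$ $\ell_\infty$ balls meet iff their centers are within $2\epsilon$) is the correct one, and under it the paper's declared parameter $\epsilon = \tfrac{2}{3}$ does \emph{not} separate the two distances: $2\epsilon = \tfrac{4}{3} \ge 1$, so $\mathcal{U}^{L+k}_1 \cap \mathcal{U}^{\ell}_1 \neq \emptyset$ even when $\ell \notin I^+_k$, since the midpoint of the two centers lies in both sets. The paper's proof simply asserts disjointness at distance $1$, which is false for $\epsilon = \tfrac{2}{3}$; the reduction is repaired by taking any $\epsilon \in [\tfrac{1}{4},\tfrac{1}{2})$, e.g.\ $\epsilon = \tfrac{1}{3}$, after which both your argument and the paper's go through verbatim. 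Second, you are right that the stated hypothesis $\sigma^{L+1} = \cdots = \sigma^{L+K} = 3$ alone does not suffice: if some $\ell \in I^+_k \cup I^-_k$ had $\sigma^{\ell} = 3$, the left-hand side would equal $K+1$ while the indicator equals $0$. The paper glosses over this and only ever applies the claim inside the maximization over $\sigma^1,\ldots,\sigma^L \in \{1,2\}$ justified by Claim~\ref{claim:hardness:2}, which is precisely the ``read in tandem'' convention you adopt; stated in isolation, the claim needs that restriction added to its hypothesis.
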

\begin{proof}{Proof of Claim~\ref{claim:hardness:3}.} 
 For each $\ell  \in \{1,\ldots,L\}$ and $k \in \{1,\ldots,K\}$, we observe that
 \begin{align*}
 \left \| x^\ell_1 - x^{L+k}_1 \right \|_\infty = \left \|  e_\ell - \frac{1}{2} \sum_{\ell' \in I^+_k} e_{\ell'}  \right \|_\infty = \begin{cases}
 \frac{1}{2},&\text{if } \ell \in {I}^+_k,\\
 1,&\text{otherwise}. 
 \end{cases}\\
  \left \| x^\ell_2 - x^{L+k}_2 \right \|_\infty = \left \|  e_\ell - \frac{1}{2} \sum_{\ell' \in I^-_k} e_{\ell'}  \right \|_\infty = \begin{cases}
 \frac{1}{2},&\text{if } \ell \in {I}^-_k,\\
 1,&\text{otherwise}. 
 \end{cases}
 \end{align*}
This implies that the set
 \begin{align*}
 \mathcal{U}^\ell_{1} \cap  \mathcal{U}^{L+k}_{1} = \left \{ y_1 \in \R^{L+1}: \| y_1 - x^{\ell}_1 \|_\infty \le \frac{2}{3}\right \} \cap  \left \{ y_1 \in \R^{L+1}: \| y_1 - x^{L+k}_1 \|_\infty \le \frac{2}{3}\right \} 
 \end{align*}
 is nonempty if and only if $\ell \in {I}^+_k$, and the set
  \begin{align*}
 \mathcal{U}^\ell_{2} \cap  \mathcal{U}^{L+k}_{2} = \left \{ y_2 \in \R^{L+1}: \| y_2 - x^{\ell}_2 \|_\infty \le \frac{2}{3}\right \} \cap  \left \{ y_2 \in \R^{L+1}: \| y_2 - x^{L+k}_2 \|_\infty \le \frac{2}{3}\right \} 
 \end{align*}
  is nonempty if and only if $\ell \in {I}^-_k$. Consequently,
  \begin{comment}
\begin{align*}
v^{L+k,\ell}_1 = \begin{cases}
K,&\textnormal{if } \ell \in I_k^+,\\
\infty,&\textnormal{otherwise}. 
\end{cases}; \quad v^{L+k,\ell}_2 = \begin{cases}
K,&\textnormal{if } \ell \in I_k^-,\\
\infty,&\textnormal{otherwise}.
\end{cases}
\end{align*} 
\end{comment}
since $\sigma^{L+1} = \cdots = \sigma^{L+K} = 3$, we conclude that the following equalities hold for all $k \in \{1,\ldots,K\}$:
\begin{align*}
&{\color{black} \min_{t \in \{1,\ldots,\sigma^{L+k} \}} \left \{g(t,x^{L+k}): \textnormal{ there exists } j \textnormal{ such that } \mathcal{U}^{L+k}_t \cap \mathcal{U}^j_t \neq \emptyset \textnormal{ and } \sigma^j = t\right \}} \\
%&= \min \left \{ \min_{\ell \in \{1,\ldots,L\}:\; \sigma^\ell = 1} v_1^{L+k,\ell},\min_{\ell \in \{1,\ldots,L\}: \; \sigma^\ell = 2} v_2^{L+k,\ell}, v^{L+k,L+k}_3 \right \} \\
&=\min \left \{ \min_{\ell \in {I}^+_k: \;\sigma^\ell = 1} K ,\min_{\ell \in {I}^-_k: \; \sigma^\ell = 2} K, K+1\right \} \\
&=  K + \mathbb{I} \left \{\sigma^{\ell} = 2 \textnormal{ for all } \ell \in I^+_k \textnormal{ and } \sigma^{\ell} = 1 \textnormal{ for all } \ell \in I^-_k\right \}
\end{align*}
\begin{comment}
\begin{align*}
\min_{j: \sigma^j \le \sigma^{L+k}} v^{L+k,j}_{\sigma^j} &= \min \left \{ \min_{\ell \in \{1,\ldots,L\}:\; \sigma^\ell = 1} v_1^{L+k,\ell},\min_{\ell \in \{1,\ldots,L\}: \; \sigma^\ell = 2} v_2^{L+k,\ell}, v^{L+k,L+k}_3 \right \} \\
&=\min \left \{ \min_{\ell \in {I}^+_k: \;\sigma^\ell = 1} K ,\min_{\ell \in {I}^-_k: \; \sigma^\ell = 2} K, K+1\right \} \\
&=  K + \mathbb{I} \left \{\sigma^{\ell} = 2 \textnormal{ for all } \ell \in I^+_k \textnormal{ and } \sigma^{\ell} = 1 \textnormal{ for all } \ell \in I^-_k\right \}
\end{align*}
\end{comment}
This concludes the proof of Claim~\ref{claim:hardness:3}. \halmos \end{proof}

  We now combine Claims~\ref{claim:hardness:2} and \ref{claim:hardness:3} to complete our proof of Theorem~\ref{thm:hard}:

  \begin{align}
&\eqref{prob:mip}   \\
&= {\color{black}  \max_{\substack{\sigma^1,\ldots,\sigma^{L} \in \{1,2\}\\\sigma^{L+1} = \cdots = \sigma^{L+K} = 3}} \frac{1}{L+K} \sum_{i=1}^{L+K} \min_{t \in \{1,\ldots,\sigma^i \}} \left \{g(t,x^i): \textnormal{ there exists } j \textnormal{ such that } \mathcal{U}^i_t \cap \mathcal{U}^j_t \neq \emptyset \textnormal{ and } \sigma^j = t\right \}} \label{line:applying_da_claims} \\
&={\color{black} \max_{\substack{\sigma^1,\ldots,\sigma^{L} \in \{1,2\}\\\sigma^{L+1} = \cdots = \sigma^{L+K} = 3}} \left \{  \frac{1}{L+K} \sum_{\ell=1}^{L} K \right.} \notag \\
&\quad  {\color{black}+ \left.  \frac{1}{L+K}   \sum_{k=1}^{K}  \min_{t \in \{1,\ldots,\sigma^{L+k} \}} \left \{g(t,x^{L+k}): \textnormal{ there exists } j \textnormal{ such that } \mathcal{U}^{L+k}_t \cap \mathcal{U}^j_t \neq \emptyset \textnormal{ and } \sigma^j = t\right \}   \right \} }\label{line:applying_da_claims2} \\
%&= \max_{\substack{\sigma^1,\ldots,\sigma^{L} \in \{1,2\}\\\sigma^{L+1} = \cdots = \sigma^{L+K} = 3}} \left \{  \frac{1}{L+K} \sum_{\ell=1}^{L} K \right. \right.\\
%&\quad \left. \left. +  \frac{1}{L+K}  \sum_{k=1}^{K} \min_{j: \; \sigma^j \le \sigma^{L+k}} v^{L+k, j}_{\sigma^j} \right)  \right \} \label{line:applying_da_claims2} \\
&= \max_{\sigma^1,\ldots,\sigma^{L} \in \{1,2\}} \left \{  \frac{LK}{L+K}  + \frac{1}{L+K}  \sum_{k=1}^{K} \left( K +  \mathbb{I} \left \{\sigma^{\ell} = 2 \text{ for all } \ell \in I^+_k \text{ and } \sigma^{\ell} = 1 \text{ for all } \ell \in I^-_k\right \} \right)  \right \} \label{line:applying_da_claims3} \\
%&= \max_{\sigma^1,\ldots,\sigma^{L} \in \{1,2\}} \left \{  \frac{1}{L+K} \left(\sum_{\ell=1}^{L} \min_{j: \; \sigma^j \le \sigma^\ell} v^{\ell j}_{\sigma^j} + \sum_{k=1}^{K} \left( K +  \mathbb{I} \left \{\sigma^{\ell} = 2 \text{ for all } \ell \in I^+_k \text{ and } \sigma^{\ell} = 1 \text{ for all } \ell \in I^-_k\right \} \right) \right)\right \} \\
%&= \max_{\sigma^1,\ldots,\sigma^{L} \in \{1,2\}} \left \{  \frac{1}{L+K} \left(\sum_{\ell=1}^{L} K + \sum_{k=1}^{K} \left( K +  \mathbb{I} \left \{\sigma^{\ell} = 2 \text{ for all } \ell \in I^+_k \text{ and } \sigma^{\ell} = 1 \text{ for all } \ell \in I^-_k\right \} \right) \right)\right \} \\
%&= \max_{\substack{\sigma^1,\ldots,\sigma^{L} \in \{1,2\}\\\sigma^{L+1},\ldots,\sigma^{L+K} = 3}} \left \{  \frac{1}{L+K} \left(\sum_{\ell=1}^{L} \min_{j: \; \sigma^j \le \sigma^\ell} v^{\ell j}_{\sigma^j} + \sum_{k=1}^{K} \min_{j: \; \sigma^j \le \sigma^{L+k}} v^{L+k, j}_{\sigma^j} \right)\right \} \\
%&= \max_{\sigma^1,\ldots,\sigma^{L} \in \{1,2\}}  \left \{ \frac{1}{L+K}  \left( LK + \sum_{k=1}^{K} \min \left \{ \min_{\ell \in \{1,\ldots,L\}}v^{L+k,\ell}_{\sigma^\ell},K+ 1 \right \}\right)\right \}\\
&=K +  \left(\frac{1}{L+K} \right)\max_{\sigma^1,\ldots,\sigma^{L} \in \{1,2\}}  \left \{  \sum_{k=1}^{K} \mathbb{I} \left \{\sigma^{\ell} = 2 \text{ for all } \ell \in I^+_k \text{ and } \sigma^{\ell} = 1 \text{ for all } \ell \in I^-_k\right \}  \right \} \notag \\
%&= K +  \left(\frac{1}{L+K} \right) \left ( \begin{aligned}
% &\underset{b,z}{\textnormal{maximize}}&& \sum_{k=1}^{K} z_k \\
% &\textnormal{subject to}&&z_k \le b_\ell &&  \forall k \in \{1,\ldots,K\}, \; \forall \ell \in I^+_k \\
% &&& z_k \le 1 - b_\ell && \forall k \in \{1,\ldots,K\}, \; \forall \ell \in I^-_k\\
% &&& b_\ell \in \{0,1\}&& \forall \ell\in \{1,\ldots,L\}.% \min_{\ell \in \{1,\ldots,L\}} \left \{(1- b^\ell) v^{L+k,\ell}_{1} +  b^\ell v^{L+k,\ell}_{2} \right\},
% \end{aligned}
% \right)\\
&= K +  \left(\frac{1}{L+K} \right) * \eqref{prob:notmin2sat}. \label{line:applying_da_claims4}
\end{align}
Indeed, \eqref{line:applying_da_claims} follows from Claim~\ref{claim:hardness:2}; \eqref{line:applying_da_claims2} holds because {\color{black}$g(1,x^\ell) = g(2,x^\ell) = K$} for all $\ell \in \{1,\ldots,L\}$;  \eqref{line:applying_da_claims3} follows from Claim~\ref{claim:hardness:3}; \eqref{line:applying_da_claims4} follows from algebra and setting $b^\ell = 0$ if and only if $\sigma^\ell = 2$. We have thus shown that any instance of MIN-2-SAT can be reduced to solving a polynomially-sized instance of \eqref{prob:mip} with $T=3$, which concludes our proof of Theorem~\ref{thm:hard}.  
\halmos \end{proof}

\section{{\color{black}Reformulation of \eqref{prob:bp} as Mixed-Integer Linear Optimization Problem}} \label{appx:reform}
Zero-one bilinear programs can be transformed into equivalent mixed-integer linear optimization problems by introducing auxiliary decision variables  \citep{adams1986tight}. % in a way which reformulates \eqref{prob:bp} as a mixed-integer linear optimization problem \citep{adams1986tight}. 
In numerical experiments in \S\ref{sec:experiments}, we perform such a linearization of the bilinear program~\eqref{prob:bp} by introducing auxiliary continuous decision variables $f^i_{t\ell}$ which obey the constraints
\begin{align*}
f^i_{t\ell} \le b^i_t && \textnormal{for all } i \in \{1,\ldots,N\}, \; t \in \{1,\ldots,T\}, \; \ell \in \{1,\ldots,L^i_t-1\}\\
f^i_{t\ell} \le {\color{black}1}-w^i_{t\ell} && \textnormal{for all } i \in \{1,\ldots,N\}, \; t \in \{1,\ldots,T\}, \; \ell \in \{1,\ldots,L^i_t-1\}
\end{align*}
and replacing the objective function of  \eqref{prob:bp}  with
\begin{align*}
\begin{aligned}
&\underset{b,w,f}{\textnormal{maximize}}&&\frac{1}{N} \sum_{i=1}^N \sum_{t=1}^T \sum_{\ell=1}^{L^i_t-1}( \kappa^i_{\ell+1} - \kappa^i_\ell)f^i_{t\ell}.
\end{aligned}
\end{align*}
To strengthen this linear relaxation of \eqref{prob:bp}, we also add the valid constraints:
\begin{align*}
\begin{aligned}
& w^i_{1,1} = 0 && \textnormal{for all }i \in \{1,\ldots,N\} \\
&b^i_t + w^i_{t,1} = w^i_{t+1,1}&&\textnormal{for all }i \in \{1,\ldots,N\},\; t \in \{1,\ldots,T-1\}\\
&b^i_T + w^i_{T,1} = 1 && \textnormal{for all }i \in \{1,\ldots,N\}.
\end{aligned}
\end{align*}
Indeed, the validity of the above constraints for \eqref{prob:bp} follows from the fact that there is an optimal solution to this zero-one bilinear program which satisfies  $\sum_{t=1}^T b^i_t = 1$ for each sample path $i$. In summary,  this linearization procedure transforms \eqref{prob:bp} into the following equivalent mixed-integer linear optimization problem: 
\begin{equation*}% \label{prob:bp_reform}% \tag{BQP-LINEAR}
\begin{aligned}
&\underset{b,w,f}{\textnormal{maximize}}&&\frac{1}{N} \sum_{i=1}^N \sum_{t=1}^T \sum_{\ell=1}^{L^i_t-1}( \kappa^i_{\ell+1} - \kappa^i_\ell)f^i_{t\ell}\\
&\textnormal{subject to}&& \begin{aligned}[t]
&f^i_{t\ell} \le b^i_t && \textnormal{for all } i \in \{1,\ldots,N\}, \; t \in \{1,\ldots,T\}, \; \ell \in \{1,\ldots,L^i_t-1\}\\
&f^i_{t\ell} \le {\color{black}1}-w^i_{t\ell} && \textnormal{for all } i \in \{1,\ldots,N\}, \; t \in \{1,\ldots,T\}, \; \ell \in \{1,\ldots,L^i_t-1\}\\
&w^{i}_{t, \ell} \le  w^i_{t+1,\ell} && \textnormal{for all } i \in \{1,\ldots,N\}, \;t \in \{1,\ldots,T-1\},\; \ell \in \{1,\ldots,|\mathcal{K}^i|\}\\
&w^{i}_{t\ell} \le w^i_{t,\ell+1} && \textnormal{for all } i \in \{1,\ldots,N\},\; t \in \{1,\ldots,T\},\;\ell \in \{1,\ldots,|\mathcal{K}^i|-1\} \\
& w^i_{1,1} = 0&&\textnormal{for all }i \in \{1,\ldots,N\}\\
&b^i_t + w^i_{t,1} = w^i_{t+1,1}&&\textnormal{for all }i \in \{1,\ldots,N\},\; t \in \{1,\ldots,T-1\}\\
&b^i_T + w^i_{T,1} = 1&&\textnormal{for all }i \in \{1,\ldots,N\}\\
&{\color{black}b^i_t \le w^i_{t+1,1}}&&{\color{black}\textnormal{for all }i \in \{1,\ldots,N\},\; t \in \{1,\ldots,T-1\}}\\
&{\color{black}b^j_t \le w^i_{t\ell}}&&{\color{black} \textnormal{for all } i, j \in \{1,\ldots,N\} \textnormal{ and } t \in \{1,\dots,T\} \textnormal{ such that } g(t,x^i) = \kappa^i_\ell } \\
&&&{\color{black} \textnormal{and } \mathcal{U}^i_t \cap \mathcal{U}^j_t \neq \emptyset}\\
%&b^j_t \le w^i_{t\ell}&& \textnormal{for all } i \neq j \in \{1,\ldots,N\} \textnormal{ and } t \in \{1,\dots,T\} \textnormal{ such that }v^{ij}_t = \kappa^i_\ell\\
&b^i_t \in \{0,1\} && \textnormal{for all } i \in \{1,\ldots,N\}, \; t \in \{1,\dots,T\}\\
&w^i_{t\ell} \in \R  && \textnormal{for all } i \in \{1,\ldots,N\}, \; t \in \{1,\ldots,T\}, \; \ell \in \{1,\ldots,| \mathcal{K}^i|\}.
\end{aligned}
\end{aligned}
\end{equation*}

\section{{\color{black}Proofs of  Theorem~\ref{thm:bp_main},  Lemma~\ref{lem:exact:bp_1}, and Lemma~\ref{lem:submodular_reform}}}\label{appx:exact}
{\color{black}
%We begin this appendix by presenting the proof of.
\begin{proof}{Proof of Lemma~\ref{lem:exact:bp_1}.}
Our proof of Lemma~\ref{lem:exact:bp_1} is split into two intermediary steps. 

In the first intermediary step of our proof of Lemma~\ref{lem:exact:bp_1}, we show that every feasible solution for \eqref{prob:mip} can be transformed into a feasible solution for the \eqref{prob:bp_1} with the same objective value. Indeed, 
consider any feasible solution $\sigma^1,\ldots,\sigma^N \in \{1,\ldots,T\}$ for the optimization problem~\eqref{prob:mip}. From these integers, we can define binary variables  $b^i_t \triangleq \mathbb{I} \left \{ \sigma^i = t \right \}$ for each sample path $i \in \{1,\ldots,N\}$ and period $t \in \{1,\ldots,T\}$. With this definition of a binary vector $b$, we observe  for  each sample path $i \in \{1,\ldots,N\}$ that  
\begin{align}
&\psi^i(b) \notag \\
&= \sum_{t=1}^T b^i_t \left( \prod_{s=1}^{t-1}(1 - b^i_s)  \right) \min_{s \in \{1,\ldots,t \}} \left \{g(s,x^i): \textnormal{there exists } j \textnormal{ such that } \mathcal{U}^i_s \cap \mathcal{U}^j_s \neq \emptyset \textnormal{ and } b^j_s = 1\right \} \label{line:placeholder1}\\
&= \min_{s \in \{1,\ldots,\sigma^i \}} \left \{g(s,x^i): \textnormal{there exists } j \textnormal{ such that } \mathcal{U}^i_s \cap \mathcal{U}^j_s \neq \emptyset \textnormal{ and } b^j_s = 1\right \} \label{line:placeholder2}\\
&=  \min \limits_{s \in \{1,\ldots,\sigma^i \}} \left \{g(s,x^i): \textnormal{there exists } j \textnormal{ such that } \mathcal{U}^i_s \cap \mathcal{U}^j_s \neq \emptyset \textnormal{ and } \sigma^j = s\right \}, \label{line:placeholder3}
\end{align}
where line~\eqref{line:placeholder1} follows from the definition of $\psi^i(b)$, and lines~\eqref{line:placeholder2} and \eqref{line:placeholder3} follow from the fact that we have defined the binary variables to satisfy $b^j_t = \mathbb{I} \left \{ \sigma^j = t \right \}$ for each sample path $j \in \{1,\ldots,N\}$ and period $t \in \{1,\ldots,T\}$. Because the above reasoning holds for each sample path $i \in \{1,\ldots,N\}$, we have shown that 
\begin{align}
 \frac{1}{N} \sum_{i=1}^N \psi^i \left(b \right)  &= \frac{1}{N} \sum_{i=1}^N \min_{t \in \{1,\ldots,\sigma^i \}} \left \{g(t,x^i): \textnormal{ there exists } j \textnormal{ such that } \mathcal{U}^i_t \cap \mathcal{U}^j_t \neq \emptyset \textnormal{ and } \sigma^j = t\right \}. \label{line:placeholder4}
 \end{align}
Therefore, we conclude that every feasible solution for \eqref{prob:mip} can be transformed into a feasible solution for \eqref{prob:bp_1} with the same objective value. 

In the second intermediary step  of the proof of Lemma~\ref{lem:exact:bp_1}, we show that if   $b$ is a feasible solution for \eqref{prob:bp_1} and if $\sigma^i \triangleq \min \{\min \{t \in \{1,\ldots,T\}:  b^i_t = 1 \}, T \}$ for each $i \in \{1,\ldots,N\}$, then
  \begin{align}
&\frac{1}{N} \sum_{i=1}^N \min_{t \in \{1,\ldots,\sigma^i \}} \left \{g(t,x^i): \textnormal{ there exists } j \textnormal{ such that } \mathcal{U}^i_t \cap \mathcal{U}^j_t \neq \emptyset \textnormal{ and } \sigma^j = t\right \} \ge  \frac{1}{N} \sum_{i=1}^N\psi^i(b).  \label{line:idkanymorek}
  \end{align}
 Indeed, consider any feasible solution $b$ for \eqref{prob:bp_1}, and let us define a new binary vector $\bar{b}$ as
 \begin{align*}
 \bar{b}^i_t \triangleq \begin{cases}
 b^i_t,&\text{if } t \in \{1,\ldots,T-1\},\\
 1,&\text{if } t = T.
 \end{cases}
 \end{align*}
It follows immediately from the above definition of $\bar{b}$ and from the definition of the function $\psi^i(\cdot)$ for each sample path $i \in \{1,\ldots,N\}$ that
\begin{align*}
& \psi^i \left(\bar{b} \right) -  \psi^i \left({b} \right)   \\
 &= \begin{cases}
\min \limits_{s \in \{1,\ldots,T\}} \left \{g(s,x^i): \textnormal{there exists } j \textnormal{ such that } \mathcal{U}^i_s \cap \mathcal{U}^j_s \neq \emptyset \textnormal{ and } b^j_s = 1\right \} - 0,&\text{if } b^i_1 = \cdots = b^i_T = 0,\\
0,&\text{otherwise}\\
 \end{cases}\\
 &\ge 0.
\end{align*}
where the inequality holds because the reward function is nonnegative, that is, $g(t,x^i) \ge 0$ for each sample path $i \in \{1,\ldots,N\}$ and period $s \in \{1,\ldots,T\}$  (see \S\ref{sec:setting:notation}). 
Therefore, we have proven that $\bar{b}$ has the same or better objective value as $b$ in the optimization problem~\eqref{prob:bp_1}, i.e.,
\begin{align}
 \frac{1}{N} \sum_{i=1}^N \psi^i \left(b \right) \le  \frac{1}{N} \sum_{i=1}^N  \psi^i \left(\bar{b} \right). \label{line:boring_stuff_1}
\end{align}
Now, for each sample path $i \in \{1,\ldots,N\}$, let us  define the following integer:
\begin{align*}
\sigma^i \triangleq \min \left \{ \min_{t \in \{1,\ldots,T\}} \left \{ t: b^i_t = 1 \right \}, T \right \} = \min_{t \in \{1,\ldots,T\}} \left \{ t: \bar{b}^i_t = 1 \right \}.
\end{align*}
Given the integers $\sigma^1,\ldots,\sigma^N \in \{1,\ldots,T\}$ defined above, it follows from identical reasoning as in lines~\eqref{line:placeholder1}, \eqref{line:placeholder2}, \eqref{line:placeholder3},  and \eqref{line:placeholder4} that 
\begin{align*}
 \frac{1}{N} \sum_{i=1}^N  \psi^i \left(\bar{b} \right)  &= \frac{1}{N} \sum_{i=1}^N \min_{t \in \{1,\ldots,\sigma^i \}} \left \{g(t,x^i): \textnormal{ there exists } j \textnormal{ such that } \mathcal{U}^i_t \cap \mathcal{U}^j_t \neq \emptyset \textnormal{ and } \sigma^j = t\right \}. %\label{line:placeholder4}
 \end{align*}
 Combining the above equality with \eqref{line:boring_stuff_1}, we have proven that line~\eqref{line:idkanymorek} holds, thereby completing our proof of the second intermediary step.
 
 In conclusion, we  have shown through the above two intermediary steps  that every feasible solution for either \eqref{prob:mip} or \eqref{prob:bp_1} can be transformed into a feasible solution for the other problem with the same or better objective value, which implies that the optimal objective values of \eqref{prob:mip} and \eqref{prob:bp_1} are equal. Moreover, the inequality in Lemma~\ref{lem:exact:bp_1} follows immediately from the second intermediary step. Thus, our proof of Lemma~\ref{lem:exact:bp_1} is complete. 
\halmos \end{proof}
 \begin{proof}{Proof of Lemma~\ref{lem:submodular_reform}.}
  Consider any sample path $i \in \{1,\ldots,N\}$ and binary vector $b$.  For the sake of convenience,  let us repeat below the linear optimization problem that is found in the statement of  Lemma~\ref{lem:submodular_reform} for the given sample path $i$ and binary vector $b$: % below for convenience:
   \begin{equation} \tag{\ref{line:partial_reform}}
 \begin{aligned}
  &\underset{w^i}{\textnormal{maximize}}&& \sum_{t=1}^T  \sum_{\ell=1}^{L^i_t-1}( \kappa^i_{\ell+1} - \kappa^i_\ell) b^i_t (1 - w^i_{t\ell}) \\
  &\textnormal{subject to}&& \begin{aligned}[t]
  &w^{i}_{t \ell} \le  w^i_{t+1,\ell} && \textnormal{for all } t \in \{1,\ldots,T-1\},\; \ell \in \{1,\ldots,|\mathcal{K}^i|\}\\
&w^{i}_{t\ell} \le w^i_{t,\ell+1} && \textnormal{for all } t \in \{1,\ldots,T\},\; \ell \in \{1,\ldots,|\mathcal{K}^i|-1\} \\
&b^j_t \le w^i_{t\ell}&& \textnormal{for all } j \in \{1,\ldots,N\} \textnormal{ and } t \in \{1,\dots,T\} \\
&&& \textnormal{such that }g(t,x^i)  = \kappa^i_\ell \textnormal{ and }  \mathcal{U}^i_t \cap \mathcal{U}^j_t \neq \emptyset\\
& b^i_t \le w^i_{t+1,1}  && \textnormal{for all }  \; t \in \{1,\ldots,T-1\}\\
&w^i_{t \ell} \in \R&& \textnormal{for all }\; t \in \{1,\ldots,T\}, \; \ell \in \{1,\ldots,|\mathcal{K}^i| \}.
  \end{aligned}
  \end{aligned} 
 \end{equation}
In the remainder of this proof, we show that the optimal objective value of \eqref{line:partial_reform} is equal to $\psi^i(b)$. 

To this end, we begin by characterizing the decision variables $w^i$ in \eqref{line:partial_reform} at optimality. Indeed,  it follows from the construction of the constants $\kappa^i_\ell$ that each quantity $\kappa^i_{\ell+1} - \kappa^i_\ell$ is strictly positive. Therefore, since each $b^j_t$ is binary, we readily observe that there always exists an optimal solution $w^i$ for \eqref{line:partial_reform}  where the following equality holds for each period $t \in \{1,\ldots,T\}$ and each $\ell \in \{1,\ldots, | \mathcal{K}^i| \}$: 
\begin{align}
w^{i}_{t \ell} &= \begin{cases}
%0, &\text{if } b^i_s = 0 \text{ for all } s < t \text{ and } \\
%&\;\;\;\;\;b^j_s = 0 \text{ for all } j \neq i \text{ and all } s \le t \text{ which satisfy } v^{ij}_s < \kappa^i_\ell, \\
0, &\text{if } \left[ b^j_s = 0  \textnormal{ for all } j \in \{1,\ldots,N\} \textnormal{ and } s \in \{1,\dots,t\} \textnormal{ such that }g(s,x^i)  \le \kappa^i_\ell \textnormal{ and }  \mathcal{U}^i_s \cap \mathcal{U}^j_s \neq \emptyset\right] \\
&\text{and } \left[ b^i_s = 0 \textnormal{ for all } s \in \{1,\ldots,t-1\} \right], \\%\\ \text{ for all } j \neq i \text{ and all } t \le s \text{ which satisfy } g(t,x^i)  = \kappa^i_\ell \textnormal{ and }  \mathcal{U}^i_t \cap \mathcal{U}^j_t \neq \emptyset, \\
1, &\text{otherwise}. 
\end{cases} \label{line:closed_form_w}
\end{align}

Now consider any optimal solution $w^i$ for the optimization problem \eqref{line:partial_reform}  for which the equality in \eqref{line:closed_form_w} is satisfied for each  period $t \in \{1,\ldots,T\}$ and each $\ell \in \{1,\ldots, | \mathcal{K}^i| \}$. For each period $t \in \{1,\ldots,T\}$, let $\tilde{\ell}^i_t$ be defined as the smallest integer such that there exists a sample path $j \in \{1,\ldots,N\}$ and a period $s \in \{1,\ldots,t\}$ that satisfy $b^j_s = 1$,  $g(s,x^i)  = \kappa^i_{\tilde{\ell}^i_t}$, and $\mathcal{U}^i_s \cap \mathcal{U}^j_s \neq \emptyset$.\footnote{{\color{black}If no such integer exists, then we assign $\tilde{\ell}^i_t$ to be equal to $L^i_t$.}} 
%Under this definition, we observe from the fact that $\kappa^i_1 < \cdots < \kappa^i_{| \mathcal{K}^i|}$ that the integer $\tilde{\ell}^i_t$ satisfies the equality $\tilde{\ell}^i_t \triangleq \argmin_{\ell \in \{1,\ldots,| \mathcal{K}^i| \}} \left \{ g(s,x^i): \; \textnormal{there exists } j \textnormal{ such that } \right \}$. 
In particular, we observe from \eqref{line:closed_form_w} that the quantity $\tilde{\ell}^i_t$ is equal to the smallest integer such that $w^i_{t \tilde{\ell}^i_t} = 1$. Hence,  the optimal objective value of the optimization problem \eqref{line:partial_reform} is equal to:% the following:
\begin{align}
&\sum_{t=1}^T \sum_{\ell=1}^{L^i_t-1}( \kappa^i_{\ell+1} - \kappa^i_\ell) b^i_t(1 - w^i_{t\ell}) \notag  \\
&= \sum_{t \in \{1,\ldots,T\}: \; b^i_t = 1} \sum_{\ell=1}^{L^i_t-1}( \kappa^i_{\ell+1} - \kappa^i_\ell) (1 - w^i_{t\ell}) \label{line:using_algebra_for_b}\\
&= \begin{cases}
\sum_{\ell=1}^{L^i_t-1}( \kappa^i_{\ell+1} - \kappa^i_\ell) (1 - w^i_{t\ell}),&\textnormal{if } b^i_1 = \cdots =  b^i_{t-1} = 0 \textnormal{ and } b^i_t = 1 \textnormal{ for } t \in \{1,\ldots,T\},\\
0,&\text{otherwise}
\end{cases} \label{line:using_w_property}\\
&= \begin{cases}
\sum_{\ell=1}^{\tilde{\ell}^i_t-1}( \kappa^i_{\ell+1} - \kappa^i_\ell),&\textnormal{if } b^i_1 = \cdots =  b^i_{t-1} = 0 \textnormal{ and } b^i_t = 1 \textnormal{ for } t \in \{1,\ldots,T\},\\
0,&\text{otherwise}
\end{cases} \label{line:using_ell_property}\\
&= \begin{cases}
 \kappa^i_{\tilde{\ell}^i_t},&\textnormal{if } b^i_1 = \cdots =  b^i_{t-1} = 0 \textnormal{ and } b^i_t = 1 \textnormal{ for } t \in \{1,\ldots,T\},\\
0,&\text{otherwise}
\end{cases} \label{line:using_kappa_property}\\
&= \sum_{t=1}^T b^i_t \left(\prod_{s=1}^{t-1} \left(1 - b^i_s \right) \right) \kappa^i_{\tilde{\ell}^i_t}  \label{line:using_whatev_property}  \\
 &=  \sum_{t=1}^T b^i_t \left(\prod_{s=1}^{t-1} \left(1 - b^i_s \right) \right) \min_{s \in \{1,\ldots,t \}} \left \{g(s,x^i): \textnormal{ there exists } j \textnormal{ such that } \mathcal{U}^i_s \cap \mathcal{U}^j_s \neq \emptyset \textnormal{ and } b^j_s = 1\right \} \label{line:using_whatev_property2} \\
 &= \psi^i(b). \notag
\end{align}
Indeed, we observe that the optimal objective value of \eqref{line:partial_reform} is equal to $\sum_{t=1}^T \sum_{\ell=1}^{L^i_t-1}( \kappa^i_{\ell+1} - \kappa^i_\ell) b^i_t(1 - w^i_{t\ell})$ because $w^i$ is an optimal solution for \eqref{line:partial_reform}. Line~\eqref{line:using_algebra_for_b} follows from algebra. Line~\eqref{line:using_w_property} follows from    \eqref{line:closed_form_w}, which implies that if $b^i_t = 1$, then $w^i_{s\ell} = 1$ for all $s \in \{t+1,\ldots,T\}$ and all $\ell \in \{1,\ldots,| \mathcal{K}^i|\}$.  Line~\eqref{line:using_ell_property} follows from the definition of $\tilde{\ell}^{i}_t$ and from \eqref{line:closed_form_w}. Line~\eqref{line:using_kappa_property} follows from the fact that $\kappa^i_1 = 0$. Line~\eqref{line:using_whatev_property} follows from algebra. Line~\eqref{line:using_whatev_property2} follows from the definition of $\tilde{\ell}^i_t$ and the definition of the constants $\kappa^i_1,\ldots,\kappa^i_{| \mathcal{K}^i|}$. The final equality follows from the definition of $\psi^i(b)$. This completes our proof of Lemma~\ref{lem:submodular_reform}.  \halmos \end{proof}

\begin{proof}{Proof of Theorem~\ref{thm:bp_main}.}
The proof of Theorem~\ref{thm:bp_main}  follows immediately from Lemmas~\ref{lem:exact:bp_1} and \ref{lem:submodular_reform}. 
\halmos \end{proof}

\section{Proofs from \S\ref{sec:approx}} \label{appx:approx}

\subsection{Proofs from \S\ref{sec:approx:prelim}} 
\begin{proof}{Proof of Proposition~\ref{prop:similarity_of_formulations}.}
Let $\hat{b},\hat{w}$ denote an optimal solution for \eqref{prob:bp}, and let the set $\widehat{\mathcal{T}}^i \triangleq \{t: \; \hat{b}^i_t = 1 \}$ be defined for each sample path $i$. We will henceforth consider the optimization problem~\eqref{prob:h} in the case where the linear objective function $f(b,w)$ is defined equal to the following function:
\begin{align*}
\hat{f}(b,w) \triangleq \frac{1}{N} \sum_{i=1}^N \sum_{t \in \widehat{\mathcal{T}}^i} \sum_{\ell=1}^{L^i_t-1}( \kappa^i_{\ell+1} - \kappa^i_\ell) (b^i_t - w^i_{t\ell}).
\end{align*}
 We readily observe that the function $\hat{f}(b,w)$ is linear in $b$ and $w$. Moreover, it follows from algebra that $\hat{f}(b,w)$ is less than or equal to the objective function of \eqref{prob:bp} for all feasible solutions of \eqref{prob:bp}; that is, the linear function $\hat{f}(b,w)$ satisfies the condition from line~\eqref{line:f_lb}.\footnote{{\color{black}To see why $\hat{f}(b,w)$ is a lower bound on the objective function of \eqref{prob:bp}, consider any arbitrary vectors $b,w$ that satisfy the constraints of \eqref{prob:bp}. Since feasibility for the optimization problem~\eqref{prob:bp} implies that $b$ is a binary vector, we observe that the equality  $b^i_t(1 - w^i_{t \ell}) = \max \left \{ b^i_t - w^i_{t\ell}, 0 \right \}$ holds for each $i \in \{1,\ldots,N\}$, $t \in \{1,\ldots,T\}$, and $\ell \in \{1,\ldots,| \mathcal{K}^i|\}$.}} Thus, we conclude that solving the optimization problem~\eqref{prob:h} with objective function $f(b,w) = \hat{f}(b,w)$ will provide a lower-bound approximation of the optimization problem~\eqref{prob:bp}, and any optimal solution for \eqref{prob:h} will be a feasible solution for \eqref{prob:bp}. For notational convenience, we henceforth let $J^{\textnormal{\ref{prob:h}}}$ denote the optimal objective value of \eqref{prob:h} with objective function $f(b,w) = \hat{f}(b,w)$, and we let $J^{\textnormal{\ref{prob:bp}}}$ denote the optimal objective value of \eqref{prob:bp}.  

We first show  that the optimal objective value of \eqref{prob:h}  with objective function $f(b,w) = \hat{f}(b,w)$ is equal to the optimal objective value of \eqref{prob:bp}. Indeed,  it follows from the fact that \eqref{prob:h} and \eqref{prob:bp} have the same constraints that the optimal solution $\hat{b},\hat{w}$ for \eqref{prob:bp} is a feasible solution for \eqref{prob:h}. Therefore, %Moreover, it follows from the construction of the sets $\mathcal{T}^1,\ldots,\mathcal{T}^N \subseteq \{1,\ldots,T\}$ and from the fact that $b,w$ is a feasible but possibly suboptimal solution for \eqref{prob:h} that 
\begin{align*}
J^{\textnormal{\ref{prob:h}}} &\ge \frac{1}{N} \sum_{i=1}^N \sum_{t \in \widehat{\mathcal{T}}^i} \sum_{\ell=1}^{L^i_t-1}( \kappa^i_{\ell+1} - \kappa^i_\ell) \left(\hat{b}^i_t - \hat{w}^i_{t\ell} \right)  \\
&= \frac{1}{N} \sum_{i=1}^N \sum_{t: \hat{b}_t^i = 1} \sum_{\ell=1}^{L^i_t-1}( \kappa^i_{\ell+1} - \kappa^i_\ell) (1 - \hat{w}^i_{t\ell})  + \frac{1}{N} \sum_{i=1}^N \sum_{t: \hat{b}_t^i = 0} \sum_{\ell=1}^{L^i_t-1}( \kappa^i_{\ell+1} - \kappa^i_\ell) 0 \\
&= \frac{1}{N} \sum_{i=1}^N \sum_{t=1}^T \sum_{\ell=1}^{L^i_t-1}( \kappa^i_{\ell+1} - \kappa^i_\ell)\hat{b}^i_t (1 - \hat{w}^i_{t\ell})  \\
&= J^{\textnormal{\ref{prob:bp}}},
\end{align*}
where the first inequality holds because $\hat{b},\hat{w}$ is a feasible but possibly suboptimal solution for \eqref{prob:h}, the first equality follows from the construction of the sets  $\widehat{\mathcal{T}}^1,\ldots,\widehat{\mathcal{T}}^N \subseteq \{1,\ldots,T\}$, the second equality follows from algebra, and the final equality follows from the fact that $\hat{b},\hat{w}$ is an optimal solution for \eqref{prob:bp}. Since the optimal objective value of \eqref{prob:h} is always less than or equal to the optimal objective value of \eqref{prob:bp}, we have proved that  the optimal objective value of \eqref{prob:h} is equal to the optimal objective value of \eqref{prob:bp}.

We conclude the proof of Proposition~\ref{prop:similarity_of_formulations} by showing that every optimal solution for \eqref{prob:h}  with objective function $f(b,w) = \hat{f}(b,w)$ is an optimal solution for \eqref{prob:bp}. Indeed, let $\bar{b},\bar{w}$ denote an optimal solution for \eqref{prob:h}. Then, 
\begin{align*}
J^{\textnormal{\ref{prob:h}}} =  \hat{f}(\bar{b},\bar{w}) \le \frac{1}{N} \sum_{i=1}^N \sum_{t=1}^T \sum_{\ell=1}^{L^i_t-1}( \kappa^i_{\ell+1} - \kappa^i_\ell)\bar{b}^i_t\left(1 - \bar{w}^i_{t\ell} \right) \le J^{\textnormal{\ref{prob:bp}}},
\end{align*}
where the first equality follows from the fact that $\bar{b},\bar{w}$ is an optimal solution for \eqref{prob:h} and from the fact that the objective function of \eqref{prob:h} is $f(b,w) = \hat{f}(b,w)$, the first inequality follows from the fact that the linear function $\hat{f}(b,w)$ satisfies the condition from line~\eqref{line:f_lb}, and the final inequality follows from the fact that $\bar{b},\bar{w}$ is a feasible but possibly suboptimal solution for \eqref{prob:bp}. Since we have previously shown that the optimal objective value $J^{\textnormal{\ref{prob:h}}}$ of \eqref{prob:h} is equal to the optimal objective value $J^{\textnormal{\ref{prob:bp}}}$ of \eqref{prob:bp}, we have thus proven that $\bar{b},\bar{w}$ is an optimal solution for \eqref{prob:bp}. This concludes our proof of Proposition~\ref{prop:similarity_of_formulations}.
\halmos \end{proof}

\subsection{Proofs from \S\ref{sec:approx:computation}} 
 \begin{proof}{Proof of Lemma~\ref{lem:optimal_h_bar}.}
Let $f(b,w) = \bar{f}(b,w)$,  and consider any binary vector $b$ that is optimal for the optimization problem \eqref{prob:h}. Since $b$ is binary, we readily observe that there exists an optimal choice for the remaining decision variables $w$ in the optimization problem~\eqref{prob:h}  in which the following equality holds for each sample path $i \in \{1,\ldots,N\}$, period $t \in \{1,\ldots,T\}$, and $\ell \in \{1,\ldots, | \mathcal{K}^i| \}$: 
\begin{align}
w^{i}_{t \ell} &= \begin{cases}
%0, &\text{if } b^i_s = 0 \text{ for all } s < t \text{ and } \\
%&\;\;\;\;\;b^j_s = 0 \text{ for all } j \neq i \text{ and all } s \le t \text{ which satisfy } v^{ij}_s < \kappa^i_\ell, \\
0, &\text{if } \left[ b^j_s = 0  \textnormal{ for all } j \in \{1,\ldots,N\} \textnormal{ and } s \in \{1,\dots,t\} \textnormal{ such that }g(s,x^i)  \le \kappa^i_\ell \textnormal{ and }  \mathcal{U}^i_s \cap \mathcal{U}^j_s \neq \emptyset\right] \\
&\text{and } \left[ b^i_s = 0 \textnormal{ for all } s \in \{1,\ldots,t-1\} \right], \\%\\ \text{ for all } j \neq i \text{ and all } t \le s \text{ which satisfy } g(t,x^i)  = \kappa^i_\ell \textnormal{ and }  \mathcal{U}^i_t \cap \mathcal{U}^j_t \neq \emptyset, \\
1, &\text{otherwise}. 
\end{cases} \label{line:closed_form_w_argg}
\end{align}

We will now construct a new solution for  \eqref{prob:h} that has the same or greater objective value than $b,w$. Indeed, let $\bar{b},\bar{w}$ be a solution for \eqref{prob:h} defined by the following equalities for each sample path $i \in \{1,\ldots,N\}$, period $t \in \{1,\ldots,T\}$, and $\ell \in \{1,\ldots, | \mathcal{K}^i| \}$: 
\begin{align*}
\bar{b}^i_{t} &\triangleq \begin{cases}
1,&\text{if } \left[b^i_{t} = 1 \text{ and } t = T^i \right] \text{ or } \left[t = T\right],\\
0,&\text{otherwise};
\end{cases}\\
\bar{w}^{i}_{t \ell} &\triangleq \begin{cases}
%0, &\text{if } b^i_s = 0 \text{ for all } s < t \text{ and } \\
%&\;\;\;\;\;b^j_s = 0 \text{ for all } j \neq i \text{ and all } s \le t \text{ which satisfy } v^{ij}_s < \kappa^i_\ell, \\
0, &\text{if } \left[ \bar{b}^j_s = 0  \textnormal{ for all } j \in \{1,\ldots,N\} \textnormal{ and } s \in \{1,\dots,t\} \textnormal{ such that }g(s,x^i)  \le \kappa^i_\ell \textnormal{ and }  \mathcal{U}^i_s \cap \mathcal{U}^j_s \neq \emptyset\right] \\
&\text{and } \left[ \bar{b}^i_s = 0 \textnormal{ for all } s \in \{1,\ldots,t-1\} \right], \\%\\ \text{ for all } j \neq i \text{ and all } t \le s \text{ which satisfy } g(t,x^i)  = \kappa^i_\ell \textnormal{ and }  \mathcal{U}^i_t \cap \mathcal{U}^j_t \neq \emptyset, \\
1, &\text{otherwise}.
\end{cases}
\end{align*}

We observe for each sample path $i \in \{1,\ldots,N\}$ that $\bar{b}^i_T = 1$ and $\bar{b}^i_t = 0$ for all $t \in \{1,\ldots,T\} \setminus \mathcal{T}^i$. Moreover, we readily observe from inspection that the solution $\bar{b},\bar{w}$ is feasible for \eqref{prob:h}. Therefore, it remains for us to prove that $\bar{b},\bar{w}$ is an optimal solution for \eqref{prob:h}. 

To show that $\bar{b},\bar{w}$ is an optimal solution for \eqref{prob:h}, we observe  
for each sample path $i \in \{1,\ldots,N\}$, period $t \in \{1,\ldots,T\}$, and $\ell \in \{1,\ldots, L^i_t - 1\}$, 
\begin{align}
\bar{w}^{i}_{t \ell} &= \begin{cases}
%0, &\text{if } b^i_s = 0 \text{ for all } s < t \text{ and } \\
%&\;\;\;\;\;b^j_s = 0 \text{ for all } j \neq i \text{ and all } s \le t \text{ which satisfy } v^{ij}_s < \kappa^i_\ell, \\
0, &\text{if } \left[ {b}^j_{T^j} = 0  \textnormal{ for all } j \in \{1,\ldots,N\} \textnormal{ such that } T^j \le t, \; g(T^j,x^i)  \le \kappa^i_\ell, \textnormal{ and }  \mathcal{U}^i_{T^j} \cap \mathcal{U}^j_{T^j} \neq \emptyset\right] \\
&\text{and } \left[b^i_{T^i} = 0  \text{ if } T^i  \in \{1,\ldots,t-1\} \right], \\%\\ \text{ for all } j \neq i \text{ and all } t \le s \text{ which satisfy } g(t,x^i)  = \kappa^i_\ell \textnormal{ and }  \mathcal{U}^i_t \cap \mathcal{U}^j_t \neq \emptyset, \\
1, &\text{otherwise}
\end{cases} \notag \\
&\le w^i_{t \ell}, \label{line:improving_fun_times}
\end{align}
where the equality follows from the definition of $\bar{b},\bar{w}$, and the inequality follows from line~\eqref{line:closed_form_w_argg}. Therefore,
\begin{align*}
\bar{f}(\bar{b},\bar{w}) &= \frac{1}{N} \sum_{i=1}^N \sum_{t \in \mathcal{T}^i} \sum_{\ell=1}^{L^i_t-1}( \kappa^i_{\ell+1} - \kappa^i_\ell) \left( \bar{b}^i_t  - \bar{w}^i_{t\ell} \right)\\
&\ge \frac{1}{N} \sum_{i=1}^N \sum_{t \in \mathcal{T}^i} \sum_{\ell=1}^{L^i_t-1}( \kappa^i_{\ell+1} - \kappa^i_\ell) \left( {b}^i_t  - {w}^i_{t\ell} \right)\\
&= \bar{f}(b,w).
\end{align*}
Indeed, the first equality follows from the definition of $\bar{f}(\cdot,\cdot)$. The inequality follows from line~\eqref{line:improving_fun_times}, from the facts that $\bar{b}^i_{T^i} \ge b^i_{T^i}$ and $\bar{b}^i_T \ge b^i_T$, and from the fact that each quantity $\kappa^i_{\ell+1} - \kappa^i_\ell$ is strictly positive. The final equality follows from the definition of $\bar{f}(\cdot,\cdot)$. Since $b,w$ was an optimal solution for \eqref{prob:h}, our proof of Lemma~\ref{lem:optimal_h_bar} is complete.  \halmos \end{proof}

\begin{proof}{Proof of Lemma~\ref{lem:equivalence_of_h_bar}.}
 Let $f(b,w) = \bar{f}(b,w)$. In this case, we recall from Lemma~\ref{lem:optimal_h_bar} that there exists an optimal solution $b,w$ for \eqref{prob:h} that satisfies  $b^1_T = \cdots = b^N_T = 1$ and $b^i_t = 0$ for each sample path $i \in \{1,\ldots,N\}$ and period $t \in \{1,\ldots,T\} \setminus \mathcal{T}^i$. Therefore, we can without loss of generality impose those equality constraints into the optimization problem \eqref{prob:h}. That is, \eqref{prob:h} can be rewritten equivalently as 
  \begin{equation} \label{prob:h1} \tag{H-1}
\begin{aligned}
 &\underset{ b,w}{\textnormal{maximize}}&& \frac{1}{N} \sum_{i=1}^N \sum_{t \in \mathcal{T}^i} \sum_{\ell=1}^{L^i_t-1}( \kappa^i_{\ell+1} - \kappa^i_\ell) \left( b^i_t  - w^i_{t\ell} \right) \\
&\textnormal{subject to}&& \begin{aligned}[t]
&w^{i}_{t\ell} \le  w^i_{t+1,\ell} && \textnormal{for all } i \in \{1,\ldots,N\}, \;t \in \{1,\ldots,T-1\},\; \ell \in \{1,\ldots,|\mathcal{K}^i|\}\\
&w^{i}_{t\ell} \le w^i_{t,\ell+1} && \textnormal{for all } i \in \{1,\ldots,N\},\; t \in \{1,\ldots,T\},\;\ell \in \{1,\ldots,|\mathcal{K}^i|-1\} \\
&b^i_t \le w^i_{t+1,1}&&\textnormal{for all }i \in \{1,\ldots,N\},\; t \in \{1,\ldots,T-1\}\\
&b^j_t \le w^i_{t\ell}&& \textnormal{for all } i, j \in \{1,\ldots,N\} \textnormal{ and } t \in \{1,\dots,T\} \textnormal{ such that } g(t,x^i) = \kappa^i_\ell\\
&&& \textnormal{and } \mathcal{U}^i_t \cap \mathcal{U}^j_t \neq \emptyset\\
&b^i_T = 1 && \textnormal{for all } i \in \{1,\ldots,N\}\\
&b^i_t = 0 && \textnormal{for all } i \in \{1,\ldots,N\}, \; t \in \{1,\ldots,T\} \setminus \mathcal{T}^i\\
&b^i_t \in \{0,1\} && \textnormal{for all } i \in \{1,\ldots,N\}, \; t \in \{1,\dots,T\}\\
&w^i_{t\ell} \in \R  && \textnormal{for all } i \in \{1,\ldots,N\}, \; t \in \{1,\ldots,T\}, \; \ell \in \{1,\ldots,| \mathcal{K}^i|\}.
\end{aligned}
\end{aligned}
\end{equation}
After substituting out the decision variables $b^i_t$ that have been set to zero or one, it follows from algebra that the above optimization problem can be rewritten equivalently as 
   \begin{equation} \label{prob:h2} \tag{H-2}
\begin{aligned}
 &\underset{ b,w}{\textnormal{maximize}}&& \frac{1}{N} \sum_{i \in \{1,\ldots,N\}: \; T^i < T} \sum_{\ell=1}^{L^i_{T^i}-1}( \kappa^i_{\ell+1} - \kappa^i_\ell) \left( b^i_{T^i}  - w^i_{T^i \ell} \right) +  \frac{1}{N} \sum_{i =1}^N \sum_{\ell=1}^{L^i_{T}-1}( \kappa^i_{\ell+1} - \kappa^i_\ell) \left(1  - w^i_{T \ell} \right) \\
&\textnormal{subject to}&& \begin{aligned}[t]
&w^{i}_{t\ell} \le  w^i_{t+1,\ell} && \textnormal{for all } i \in \{1,\ldots,N\}, \;t \in \{1,\ldots,T-1\},\; \ell \in \{1,\ldots,|\mathcal{K}^i|\}\\
&w^{i}_{t\ell} \le w^i_{t,\ell+1} && \textnormal{for all } i \in \{1,\ldots,N\},\; t \in \{1,\ldots,T\},\;\ell \in \{1,\ldots,|\mathcal{K}^i|-1\} \\
&b^i_{T^i} \le w^i_{T^i+1,1}&&\textnormal{for all }i \in \{1,\ldots,N\} \textnormal{ such that } T^i < T\\
&b^j_{T^j} \le w^i_{T^j \ell}&& \textnormal{for all } i, j \in \{1,\ldots,N\} \textnormal{ such that } T^j < T, \; g(T^j,x^i) = \kappa^i_\ell,\; \textnormal{and } \mathcal{U}^i_{T^j} \cap \mathcal{U}^j_{T^j} \neq \emptyset\\
&w^i_{T,L^i_T} = 1 && \textnormal{for all } i \in \{1,\ldots,N\}\\
&b^i_{T^i} \in \{0,1\} && \textnormal{for all } i \in \{1,\ldots,N\} \textnormal{ such that } T^i < T\\
&w^i_{t\ell} \in \R  && \textnormal{for all } i \in \{1,\ldots,N\}, \; t \in \{1,\ldots,T\}, \; \ell \in \{1,\ldots,| \mathcal{K}^i|\}.
\end{aligned}
\end{aligned}
\end{equation}
Let us make three observations about the above optimization problem. First, we observe from inspection that the constraints in the above optimization problem of the form $w^i_{T,L^i_T} = 1$ for all $i \in \{1,\ldots,N\}$ can be removed from the above optimization problem without affecting its optimal objective value. Second, we observe that the decision variables $w^i_{t \ell}$ for each $t \in \{1,\ldots,T\} \setminus \mathcal{T}^i$ do not appear in the objective function of \eqref{prob:h2}. Third, since each term $\kappa^i_{\ell+1} - \kappa^i_\ell$ is strictly positive, we observe that there exists an optimal solution for \eqref{prob:h2} that satisfies the following equality  for each sample path $i \in \{1,\ldots,N\}$, period $t \in \mathcal{T}^i$, and $\ell \in \{1,\ldots,| \mathcal{K}^i|\}$:
\begin{align*}
w^{i}_{t \ell} &= \begin{cases}
%0, &\text{if } b^i_s = 0 \text{ for all } s < t \text{ and } \\
%&\;\;\;\;\;b^j_s = 0 \text{ for all } j \neq i \text{ and all } s \le t \text{ which satisfy } v^{ij}_s < \kappa^i_\ell, \\
0, &\text{if } \left[ b^j_{T^j} = 0  \textnormal{ for all } j \in \{1,\ldots,N\} \textnormal{ such that } T^j \le t, \;T^j < T, \; g(T^j,x^i)  \le \kappa^i_\ell,\; \textnormal{and }  \mathcal{U}^i_{T^j} \cap \mathcal{U}^j_{T^j} \neq \emptyset\right] \\
&\text{and } \left[  \textnormal{if } T^i < t, \textnormal{ then } b^i_{T^i} = 0\right], \\%\\ \text{ for all } j \neq i \text{ and all } t \le s \text{ which satisfy } g(t,x^i)  = \kappa^i_\ell \textnormal{ and }  \mathcal{U}^i_t \cap \mathcal{U}^j_t \neq \emptyset, \\
1, &\text{otherwise}. 
\end{cases}% \label{line:happy_happy}
\end{align*}
It follows from the aforementioned three observations that \eqref{prob:h2} can be rewritten equivalently as
 \begin{equation} \tag{\ref{prob:h_bar}}
\begin{aligned}
 %&\underset{ b,w}{\textnormal{maximize}}&& \frac{1}{N} \sum_{i=1}^N\left( \sum_{\ell=1}^{L^i_{T^i}-1}( \kappa^i_{\ell+1} - \kappa^i_\ell) \left( b^i_{T^i}  - w^i_{T^i \ell} \right)  + \sum_{\ell=1}^{L^i_T - 1} ( \kappa^i_{\ell+1} - \kappa^i_\ell)  \left(1 - w^i_{T \ell} \right) \right)  \\
  &\underset{ b,w}{\textnormal{maximize}}&& \frac{1}{N} \sum_{i \in \{1,\ldots,N\}: T^i < T} \sum_{\ell=1}^{L^i_{T^i}-1}( \kappa^i_{\ell+1} - \kappa^i_\ell) \left( b^i_{T^i}  - w^i_{T^i \ell} \right)  + \frac{1}{N} \sum_{i=1}^N \sum_{\ell=1}^{L^i_T - 1} ( \kappa^i_{\ell+1} - \kappa^i_\ell)  \left(1 - w^i_{T \ell} \right)  \\
&\textnormal{subject to}&& \begin{aligned}[t]
%&w^{i}_{T^i\ell} \le  w^i_{T \ell} && \textnormal{for all } i \in \{1,\ldots,N\}, \;\ell \in \{1,\ldots,|\mathcal{K}^i|\} \\
%&w^{i}_{T^i\ell} \le  w^i_{T \ell} && \textnormal{for all } i \in \{1,\ldots,N\} \textnormal{ and } \ell \in \{1,\ldots,|\mathcal{K}^i|\} \textnormal{ such that } T^i < T\\
%&w^{i}_{T^i \ell} \le w^i_{T^i,\ell+1} && \textnormal{for all } i \in \{1,\ldots,N\},\;\ell \in \{1,\ldots,|\mathcal{K}^i|-1\} \\
%&w^{i}_{T \ell} \le w^i_{T,\ell+1} && \textnormal{for all } i \in \{1,\ldots,N\},\;\ell \in \{1,\ldots,|\mathcal{K}^i|-1\} \\
&w^{i}_{t\ell} \le w^i_{t,\ell+1} && \textnormal{for all } i \in \{1,\ldots,N\},\; t \in \mathcal{T}^i,\;\ell \in \{1,\ldots,|\mathcal{K}^i|-1\} \\
&b^i_{T^i} \le w^i_{T1}&&\textnormal{for all }i \in \{1,\ldots,N\} \textnormal{ such that }  T^i < T\\
&b^j_{T^j} \le w^i_{t \ell}&& \textnormal{for all } i, j \in \{1,\ldots,N\} \textnormal{ and } t \in \mathcal{T}^i \textnormal{ such that } g(T^j,x^i) = \kappa^i_\ell,\\
&&&T^j \le t, \;  T^j < T, \; \textnormal{and } \mathcal{U}^i_{T^j} \cap \mathcal{U}^j_{T^j} \neq \emptyset\\
%&b^j_{T^j} \le w^i_{T^i \ell}&& \textnormal{for all } i, j \in \{1,\ldots,N\} \textnormal{ such that } g(T^j,x^i) = \kappa^i_\ell,\;T^j \le T^i, \; \\
%&&&\textnormal{and } \mathcal{U}^i_{T^j} \cap \mathcal{U}^j_{T^j} \neq \emptyset\\
%&b^j_{T^j} \le w^i_{T \ell}&& \textnormal{for all } i, j \in \{1,\ldots,N\} \textnormal{ such that } g(T^j,x^i) = \kappa^i_\ell,\;T^j > T^i, \; \\
%&&&\textnormal{and } \mathcal{U}^i_{T^j} \cap \mathcal{U}^j_{T^j} \neq \emptyset\\
&b^i_{T^i} \in \{0,1\} && \textnormal{for all } i \in \{1,\ldots,N\} \textnormal{ such that } T^i < T\\
&w^i_{t\ell} \in \R  && \textnormal{for all } i \in \{1,\ldots,N\}, \; t \in \mathcal{T}^i, \; \ell \in \{1,\ldots,| \mathcal{K}^i|\}.
\end{aligned}
\end{aligned}
\end{equation}
We have thus shown that the optimal objective value of \eqref{prob:h_bar} is equal to the optimal objective value of \eqref{prob:h} when $f(b,w) = \bar{f}(b,w)$. Moreover, it follows from the above reasoning that any optimal solution $\bar{b},\bar{w}$ for \eqref{prob:h_bar} can be transformed into an optimal solution for \eqref{prob:h} using the following equality:
 \begin{align*}
{b}^i_t &= \begin{cases}
\bar{b}^i_t,&\textnormal{if } t = T^i \textnormal{ and } T^i < T,\\
1,&\textnormal{if } t = T,\\
0,&\textnormal{otherwise}. 
\end{cases}
\end{align*}
This concludes our proof of Lemma~\ref{lem:equivalence_of_h_bar}. 
\halmos \end{proof}

{\color{black}
\begin{proof}{Proof of Proposition~\ref{prop:approx:tract}.}

We observe that {\color{black}\eqref{prob:h_bar}} is equivalent to a binary linear optimization problem with $\mathcal{O}(NT)$ binary decision variables and $\mathcal{O}(N^2 + NT)$ constraints, with each constraint of the form $\lambda_i \ge \lambda_j$.  
 It thus follows from \citet[\S 3]{picard1976maximal} that the optimization problem {\color{black}\eqref{prob:h_bar}} is equivalent to a problem of computing the maximal closure of a directed graph with $\mathcal{O}(NT)$ nodes and $\mathcal{O}(N^2 + NT)$ edges. Furthermore, \citet[\S 4]{picard1976maximal} shows that any maximal closure problem can be solved by computing the maximum flow in an augmented graph of identical size. Applying the algorithm of \cite{orlin2013max} to compute the maximum flow in this augmented graph, we obtain an $\mathcal{O}(N^2 T(N + T))$ algorithm for solving {\color{black}\eqref{prob:h_bar}}. {\color{black}Finally, the output of the maximal closure problem can be transformed into an optimal solution for \eqref{prob:h} using Lemma~\ref{lem:equivalence_of_h_bar}.  This concludes our proof of Proposition~\ref{prop:approx:tract}.}
 \halmos \end{proof}
 }

\subsection{Proofs from \S\ref{sec:approx:theory}}

\begin{proof}{Proof of Proposition~\ref{prop:equivalence_2}.}
Suppose that the number of periods is $T = 2$. We recall from Remark~\ref{rem:f_bar} that the optimal objective value of \eqref{prob:h_bar} is less than or equal to the optimal objective value of \eqref{prob:bp}. Therefore, it remains for us to show that the optimal objective value of \eqref{prob:bp} is less than or equal to the optimal objective value of \eqref{prob:h_bar}.

We begin by restating the optimization problem~\eqref{prob:bp} when $T=2$ for the sake of convenience:
 \begin{equation} \tag{\ref{prob:bp}}
\begin{aligned}
 &\underset{ b,w}{\textnormal{maximize}}&& \frac{1}{N} \sum_{i=1}^N  \sum_{\ell=1}^{L^i_1-1}( \kappa^i_{\ell+1} - \kappa^i_\ell)b^i_1 (1 - w^i_{1\ell}) + \frac{1}{N} \sum_{i =1}^N \sum_{\ell=1}^{L^i_2-1}( \kappa^i_{\ell+1} - \kappa^i_\ell) b^i_2(1 - w^i_{2\ell})  \\
&\textnormal{subject to}&& \begin{aligned}[t]
&w^{i}_{1 \ell} \le  w^i_{2 \ell} && \textnormal{for all } i \in \{1,\ldots,N\}, \;\ell \in \{1,\ldots,|\mathcal{K}^i|\}\\
&w^{i}_{t\ell} \le w^i_{t,\ell+1} && \textnormal{for all } i \in \{1,\ldots,N\},\; t \in \{1,2\},\;\ell \in \{1,\ldots,|\mathcal{K}^i|-1\} \\
&b^i_1 \le w^i_{21}&&\textnormal{for all }i \in \{1,\ldots,N\}\\
&b^j_1 \le w^i_{1 \ell}&& \textnormal{for all } i, j \in \{1,\ldots,N\}  \textnormal{ such that } g(1,x^i) = \kappa^i_\ell\; \textnormal{and } \mathcal{U}^i_1 \cap \mathcal{U}^j_1 \neq \emptyset\\
&b^j_2 \le w^i_{2 \ell}&& \textnormal{for all } i, j \in \{1,\ldots,N\}  \textnormal{ such that } g(2,x^i) = \kappa^i_\ell\; \textnormal{and } \mathcal{U}^i_2 \cap \mathcal{U}^j_2 \neq \emptyset\\
&b^i_t \in \{0,1\} && \textnormal{for all } i \in \{1,\ldots,N\}, \; t \in \{1,2\} \\
&w^i_{t\ell} \in \R  && \textnormal{for all } i \in \{1,\ldots,N\}, \; t \in \{1,2\}, \; \ell \in \{1,\ldots,| \mathcal{K}^i|\}.
\end{aligned}
\end{aligned}
\end{equation}
We readily observe from inspection of the optimization problem~\eqref{prob:bp} that there exists an optimal solution $b,w$ for \eqref{prob:bp} that satisfies the equalities $b^1_2 = \cdots = b^N_2 = 1$ and $b^i_1 = 0$ for all $i \in \{1,\ldots,N\}$ such that $T^i = 2$. Therefore, we observe that those equality constraints can be added to the optimization problem~\eqref{prob:bp}  without changing its optimal objective value. That is, the optimization problem~\eqref{prob:bp} can be rewritten equivalently as
 \begin{equation*} %\tag{\ref{prob:bp}}
\begin{aligned}
 &\underset{ b,w}{\textnormal{maximize}}&& \frac{1}{N} \sum_{i=1}^N  \sum_{\ell=1}^{L^i_1-1}( \kappa^i_{\ell+1} - \kappa^i_\ell)b^i_1 (1 - w^i_{1\ell}) + \frac{1}{N} \sum_{i =1}^N \sum_{\ell=1}^{L^i_2-1}( \kappa^i_{\ell+1} - \kappa^i_\ell) b^i_2(1 - w^i_{2\ell})  \\
&\textnormal{subject to}&& \begin{aligned}[t]
&w^{i}_{1 \ell} \le  w^i_{2 \ell} && \textnormal{for all } i \in \{1,\ldots,N\}, \;\ell \in \{1,\ldots,|\mathcal{K}^i|\}\\
&w^{i}_{t\ell} \le w^i_{t,\ell+1} && \textnormal{for all } i \in \{1,\ldots,N\},\; t \in \{1,2\},\;\ell \in \{1,\ldots,|\mathcal{K}^i|-1\} \\
&b^i_1 \le w^i_{21}&&\textnormal{for all }i \in \{1,\ldots,N\}\\
&b^j_1 \le w^i_{1 \ell}&& \textnormal{for all } i, j \in \{1,\ldots,N\}  \textnormal{ such that } g(1,x^i) = \kappa^i_\ell\; \textnormal{and } \mathcal{U}^i_1 \cap \mathcal{U}^j_1 \neq \emptyset\\
&b^j_2 \le w^i_{2 \ell}&& \textnormal{for all } i, j \in \{1,\ldots,N\}  \textnormal{ such that } g(2,x^i) = \kappa^i_\ell\; \textnormal{and } \mathcal{U}^i_2 \cap \mathcal{U}^j_2 \neq \emptyset\\
&b^i_2 = 1 && \textnormal{for all } i \in \{1,\ldots,N\}\\
&b^i_1 = 0 && \textnormal{for all } i \in \{1,\ldots,N\} \textnormal{ such that } T^i = 2\\
&b^i_t \in \{0,1\} && \textnormal{for all } i \in \{1,\ldots,N\}, \; t \in \{1,2\} \\
&w^i_{t\ell} \in \R  && \textnormal{for all } i \in \{1,\ldots,N\}, \; t \in \{1,2\}, \; \ell \in \{1,\ldots,| \mathcal{K}^i|\}.
\end{aligned}
\end{aligned}
\end{equation*}
After eliminating the decision variables $b^i_t$ that have been constrained to be equal to zero or one, the above optimization problem can be rewritten equivalently as
 \begin{equation*} %\label{prob:bp} \tag{BP}
\begin{aligned}
 &\underset{ b,w}{\textnormal{maximize}}&& \frac{1}{N} \sum_{i \in \{1,\ldots,N\}: T^i = 1}  \sum_{\ell=1}^{L^i_1-1}( \kappa^i_{\ell+1} - \kappa^i_\ell)b^i_1 (1 - w^i_{1\ell}) + \frac{1}{N} \sum_{i =1}^N \sum_{\ell=1}^{L^i_2-1}( \kappa^i_{\ell+1} - \kappa^i_\ell) (1 - w^i_{2\ell})  \\
&\textnormal{subject to}&& \begin{aligned}[t]
&w^{i}_{1 \ell} \le  w^i_{2 \ell} && \textnormal{for all } i \in \{1,\ldots,N\}, \ell \in \{1,\ldots,|\mathcal{K}^i|\}\\
&w^{i}_{t\ell} \le w^i_{t,\ell+1} && \textnormal{for all } i \in \{1,\ldots,N\},\; t \in \{1,2\},\;\ell \in \{1,\ldots,|\mathcal{K}^i|-1\} \\
&b^i_1 \le w^i_{21}&&\textnormal{for all }i \in \{1,\ldots,N\} \textnormal{ such that }T^i = 1\\
&b^j_1 \le w^i_{1 \ell}&& \textnormal{for all } i, j \in \{1,\ldots,N\}  \textnormal{ such that } g(1,x^i) = \kappa^i_\ell,\; T^j = 1, \; \textnormal{and } \mathcal{U}^i_1 \cap \mathcal{U}^j_1 \neq \emptyset\\
&w^i_{2, L^i_2} = 1 && \textnormal{for all } i \in \{1,\ldots,N\} \\
&b^i_1 \in \{0,1\} && \textnormal{for all } i \in \{1,\ldots,N\} \textnormal{ such that } T^i = 1\\
&w^i_{t\ell} \in \R  && \textnormal{for all } i \in \{1,\ldots,N\}, \; t \in \{1,2\}, \; \ell \in \{1,\ldots,| \mathcal{K}^i|\}.
\end{aligned}
\end{aligned}
\end{equation*}
Let us make two observations about the above optimization problem. First, we observe from inspection that the constraints in the above optimization problem of the form $w^i_{2,L^i_2} = 1$ for all $i \in \{1,\ldots,N\}$ can be removed from the above optimization problem without affecting its optimal objective value. Second,  since each term $\kappa^i_{\ell+1} - \kappa^i_\ell$ is strictly positive, we observe that there exists an optimal solution for the above optimization problem in which the equalities $w^i_{1 1} = \cdots = w^i_{1, L^i_1 - 1} = 0$ are satisfied for each sample path $i \in \{1,\ldots,N\}$. %In particular, the equalities $w^i_{1 1} = \cdots = w^i_{1, L^i_1 - 1} = 0$ imply that each term $b^i_1 (1 - w^i_{1 \ell})$ in the objective function of the above optimization problem can be replaced with 
 Therefore, the above optimization problem can be rewritten equivalently as 
  \begin{equation*} %\label{prob:bp} \tag{BP}
\begin{aligned}
 &\underset{ b,w}{\textnormal{maximize}}&& \frac{1}{N} \sum_{i \in \{1,\ldots,N\}: T^i = 1}  \sum_{\ell=1}^{L^i_1-1}( \kappa^i_{\ell+1} - \kappa^i_\ell)b^i_1 (1 - w^i_{1\ell}) + \frac{1}{N} \sum_{i =1}^N \sum_{\ell=1}^{L^i_2-1}( \kappa^i_{\ell+1} - \kappa^i_\ell) (1 - w^i_{2\ell})  \\
&\textnormal{subject to}&& \begin{aligned}[t]
&w^{i}_{1 \ell} \le  w^i_{2 \ell} && \textnormal{for all } i \in \{1,\ldots,N\}, \ell \in \{1,\ldots,|\mathcal{K}^i|\}\\
&w^{i}_{t\ell} \le w^i_{t,\ell+1} && \textnormal{for all } i \in \{1,\ldots,N\},\; t \in \{1,2\},\;\ell \in \{1,\ldots,|\mathcal{K}^i|-1\} \\
&b^i_1 \le w^i_{21}&&\textnormal{for all }i \in \{1,\ldots,N\} \textnormal{ such that }T^i = 1\\
&b^j_1 \le w^i_{1 \ell}&& \textnormal{for all } i, j \in \{1,\ldots,N\}  \textnormal{ such that } g(1,x^i) = \kappa^i_\ell,\; T^j = 1, \; \textnormal{and } \mathcal{U}^i_1 \cap \mathcal{U}^j_1 \neq \emptyset\\
&w^i_{1 \ell} = 0 &&  \textnormal{for all } i \in \{1,\ldots,N\} \textnormal{ and } \ell \in \{1,\ldots,L^i_1 - 1\} \\  
&b^i_1 \in \{0,1\} && \textnormal{for all } i \in \{1,\ldots,N\} \textnormal{ such that } T^i = 1\\
&w^i_{t\ell} \in \R  && \textnormal{for all } i \in \{1,\ldots,N\}, \; t \in \{1,2\}, \; \ell \in \{1,\ldots,| \mathcal{K}^i|\}.
\end{aligned}
\end{aligned}
\end{equation*}
Because the above optimization problem has the constraint that $w^i_{1 \ell} = 0$ for all  $i \in \{1,\ldots,N\}$ and $\ell \in \{1,\ldots,L^i_1 - 1\}$, we can without loss of generality replace each term $b^i_1 (1 - w^i_{1\ell})$ in the objective function of the above optimization problem with $b^i_1 - w^i_{1\ell}$. That is, the above optimization problem is equivalent to 
\begin{equation*} %\label{prob:bp} \tag{BP}
\begin{aligned}
 &\underset{ b,w}{\textnormal{maximize}}&& \frac{1}{N} \sum_{i \in \{1,\ldots,N\}: T^i = 1}  \sum_{\ell=1}^{L^i_1-1}( \kappa^i_{\ell+1} - \kappa^i_\ell)(b^i_1 - w^i_{1\ell}) + \frac{1}{N} \sum_{i =1}^N \sum_{\ell=1}^{L^i_2-1}( \kappa^i_{\ell+1} - \kappa^i_\ell) (1 - w^i_{2\ell})  \\
&\textnormal{subject to}&& \begin{aligned}[t]
&w^{i}_{1 \ell} \le  w^i_{2 \ell} && \textnormal{for all } i \in \{1,\ldots,N\}, \ell \in \{1,\ldots,|\mathcal{K}^i|\}\\
&w^{i}_{t\ell} \le w^i_{t,\ell+1} && \textnormal{for all } i \in \{1,\ldots,N\},\; t \in \{1,2\},\;\ell \in \{1,\ldots,|\mathcal{K}^i|-1\} \\
&b^i_1 \le w^i_{21}&&\textnormal{for all }i \in \{1,\ldots,N\} \textnormal{ such that }T^i = 1\\
&b^j_1 \le w^i_{1 \ell}&& \textnormal{for all } i, j \in \{1,\ldots,N\}  \textnormal{ such that } g(1,x^i) = \kappa^i_\ell,\; T^j = 1, \; \textnormal{and } \mathcal{U}^i_1 \cap \mathcal{U}^j_1 \neq \emptyset\\
&w^i_{1 \ell} = 0 &&  \textnormal{for all } i \in \{1,\ldots,N\} \textnormal{ and } \ell \in \{1,\ldots,L^i_1 - 1\} \\  
&b^i_1 \in \{0,1\} && \textnormal{for all } i \in \{1,\ldots,N\} \textnormal{ such that } T^i = 1\\
&w^i_{t\ell} \in \R  && \textnormal{for all } i \in \{1,\ldots,N\}, \; t \in \{1,2\}, \; \ell \in \{1,\ldots,| \mathcal{K}^i|\}.
\end{aligned}
\end{aligned}
\end{equation*}
Finally, we can relax the above optimization problem by removing the constraint that $w^i_{1 \ell} = 0$ for all  $i \in \{1,\ldots,N\}$ and $\ell \in \{1,\ldots,L^i_1 - 1\}$. That is, the optimal objective value of the above optimization problem is less than or equal to the optimal objective value of the following optimization problem:
\begin{equation*} %\label{prob:bp} \tag{BP}
\begin{aligned}
 &\underset{ b,w}{\textnormal{maximize}}&& \frac{1}{N} \sum_{i \in \{1,\ldots,N\}: T^i = 1}  \sum_{\ell=1}^{L^i_1-1}( \kappa^i_{\ell+1} - \kappa^i_\ell)(b^i_1 - w^i_{1\ell}) + \frac{1}{N} \sum_{i =1}^N \sum_{\ell=1}^{L^i_2-1}( \kappa^i_{\ell+1} - \kappa^i_\ell) (1 - w^i_{2\ell})  \\
&\textnormal{subject to}&& \begin{aligned}[t]
&w^{i}_{1 \ell} \le  w^i_{2 \ell} && \textnormal{for all } i \in \{1,\ldots,N\}, \ell \in \{1,\ldots,|\mathcal{K}^i|\}\\
&w^{i}_{t\ell} \le w^i_{t,\ell+1} && \textnormal{for all } i \in \{1,\ldots,N\},\; t \in \{1,2\},\;\ell \in \{1,\ldots,|\mathcal{K}^i|-1\} \\
&b^i_1 \le w^i_{21}&&\textnormal{for all }i \in \{1,\ldots,N\} \textnormal{ such that }T^i = 1\\
&b^j_1 \le w^i_{1 \ell}&& \textnormal{for all } i, j \in \{1,\ldots,N\}  \textnormal{ such that } g(1,x^i) = \kappa^i_\ell,\; T^j = 1, \; \textnormal{and } \mathcal{U}^i_1 \cap \mathcal{U}^j_1 \neq \emptyset\\
&b^i_1 \in \{0,1\} && \textnormal{for all } i \in \{1,\ldots,N\} \textnormal{ such that } T^i = 1\\
&w^i_{t\ell} \in \R  && \textnormal{for all } i \in \{1,\ldots,N\}, \; t \in \{1,2\}, \; \ell \in \{1,\ldots,| \mathcal{K}^i|\}.
\end{aligned}
\end{aligned}
\end{equation*}
We observe that the above optimization problem is equivalent to \eqref{prob:h_bar} for the case where $T = 2$. Therefore, we have shown that the optimal objective value of \eqref{prob:bp} is less than or equal to the optimal objective value of \eqref{prob:h_bar}, which concludes our proof of Proposition~\ref{prop:equivalence_2}. 
\halmos \end{proof}

%\begin{example} \label{ex:noassumption}
%Let $J^{\textnormal{\ref{prob:bp}}} $ denote the optimal objective value of \eqref{prob:bp}, and let $J^{\textnormal{\ref{prob:h_bar}}}$  denote the optimal objective value of \eqref{prob:h_bar}. In this example, we show that 
%\end{example}

Our proofs of Propositions~\ref{prop:lb_on_h2:T} and \ref{prop:lb_on_h2:N} will make use of two intermediary results, denoted below by Lemmas~\ref{lem:lb_on_h2:ub} and \ref{lem:lb_on_h2:lb}.  These intermediary lemmas establish an upper bound on the optimal objective value of \eqref{prob:bp} and  a lower bound on the optimal objective value of \eqref{prob:h_bar}, respectively. Throughout the proofs, we let $J^{\textnormal{\ref{prob:bp}}} $ denote the optimal objective value of \eqref{prob:bp} and $J^{\textnormal{\ref{prob:h_bar}}}$  denote the optimal objective value of \eqref{prob:h_bar}. %  Claims~\ref{claim:lb_on_h2:ub}  shows that the optimal objective value of %In the first part, we show that $J^{\textnormal{\ref{prob:h2}}} \ge \frac{1}{T} J^{\textnormal{\ref{prob:bp}}}$. 
\begin{lemma} \label{lem:lb_on_h2:ub}
% $J^{\textnormal{\ref{prob:h2}}} \ge \frac{1}{T} J^{\textnormal{\ref{prob:bp}}}.$
$J^{\textnormal{\ref{prob:bp}}} \le \frac{1}{N} \sum_{i=1}^N g(T^i,x^i)$.
\end{lemma}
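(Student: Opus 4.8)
The plan is to reduce the claim to a termwise bound on the objective of \eqref{prob:mip} and then exploit the definition of $T^i$ as an $\argmax$. First I would invoke Theorem~\ref{thm:bp_main}, which establishes that the optimal objective value $J^{\textnormal{\ref{prob:bp}}}$ of \eqref{prob:bp} equals the optimal objective value of the integer optimization problem~\eqref{prob:mip}. It therefore suffices to show that the objective value of \eqref{prob:mip} associated with \emph{every} feasible solution $\sigma^1,\ldots,\sigma^N \in \{1,\ldots,T\}$ is bounded above by $\frac{1}{N} \sum_{i=1}^N g(T^i,x^i)$, since this bound then transfers to the optimal solution and, through Theorem~\ref{thm:bp_main}, to $J^{\textnormal{\ref{prob:bp}}}$.

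Next I would fix an arbitrary feasible solution $\sigma^1,\ldots,\sigma^N \in \{1,\ldots,T\}$ and examine, for a single sample path $i$, the inner minimization
\[
\min_{t \in \{1,\ldots,\sigma^i \}} \left \{g(t,x^i): \textnormal{ there exists } j \textnormal{ such that } \mathcal{U}^i_t \cap \mathcal{U}^j_t \neq \emptyset \textnormal{ and } \sigma^j = t\right \}.
\]
The key preliminary observation is that this minimization is taken over a nonempty index set, so that its value is finite rather than $\infty$ under the paper's convention for minimization over the empty set. Indeed, the period $t = \sigma^i$ satisfies the constraint: choosing $j = i$ gives $\sigma^j = \sigma^i = t$ and $\mathcal{U}^i_t \cap \mathcal{U}^i_t = \mathcal{U}^i_t \neq \emptyset$, since each uncertainty set $\mathcal{U}^i_t$ is a nonempty hypercube (of radius $\epsilon \ge 0$ centered at $x^i_t$). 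Hence the inner minimum is a genuine minimum over a subset of the values $\{g(t,x^i) : t \in \{1,\ldots,\sigma^i\}\}$.

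Each such value satisfies $g(t,x^i) \le \max_{s \in \{1,\ldots,T\}} g(s,x^i) = g(T^i,x^i)$ by the definition of $T^i$, so the inner minimum is at most $g(T^i,x^i)$ for every sample path $i$. Summing over $i$ and dividing by $N$ yields that the objective value of \eqref{prob:mip} associated with $\sigma^1,\ldots,\sigma^N$ is at most $\frac{1}{N} \sum_{i=1}^N g(T^i,x^i)$. Because $\sigma^1,\ldots,\sigma^N$ was arbitrary, the same bound holds for the optimal solution of \eqref{prob:mip}, and the first step then gives $J^{\textnormal{\ref{prob:bp}}} \le \frac{1}{N} \sum_{i=1}^N g(T^i,x^i)$.

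I do not expect a genuine obstacle in this argument; it is essentially immediate once the reduction to \eqref{prob:mip} is in hand. The only point requiring care is the nonemptiness of the inner index set (ensuring a finite inner minimum), which is dispatched by the $j = i$ argument above, and the bound $g(t,x^i) \le g(T^i,x^i)$, which is nothing more than the defining maximality of $T^i$.
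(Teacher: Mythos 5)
Your proof is correct, but it takes a different route from the paper's. You pass to the combinatorial reformulation \eqref{prob:mip} via the equivalence in Theorem~\ref{thm:bp_main} and then bound the objective termwise: for every feasible $\sigma^1,\ldots,\sigma^N$ the inner index set is nonempty (your $t=\sigma^i$, $j=i$ observation, which correctly rules out the $+\infty$ convention), so each inner minimum is at most $g(\sigma^i,x^i) \le g(T^i,x^i)$ by the maximality defining $T^i$. The paper instead never leaves the bilinear program: it relaxes \eqref{prob:bp} by deleting the coupling constraints $b^j_t \le w^i_{t\ell}$, notes that the relaxed problem admits an optimal solution with $b^i_t = \mathbb{I}\{t = T^i\}$, and then evaluates the telescoping sum $\sum_{\ell=1}^{L^i_{T^i}-1}(\kappa^i_{\ell+1}-\kappa^i_\ell) = g(T^i,x^i)$ using $\kappa^i_1 = 0$ and the nonnegativity of the $w^i_{T^i\ell}$. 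Your argument is more elementary in that it sidesteps all reasoning about the $w$ variables and the $\kappa$-increment structure, at the cost of importing Theorem~\ref{thm:bp_main} as a black box (which is legitimate here, since that theorem is established earlier and independently of this lemma); the paper's argument is self-contained within \eqref{prob:bp} and keeps the manipulations in the same $(b,w,\kappa)$ notation used by the companion lower-bound Lemma~\ref{lem:lb_on_h2:lb}, which makes the two bounds easier to combine in the proofs of Propositions~\ref{prop:lb_on_h2:T} and \ref{prop:lb_on_h2:N}.
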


\begin{proof}{Proof of Lemma~\ref{lem:lb_on_h2:ub}.}
By removing constraints from the optimization problem~\eqref{prob:bp}, we obtain the following optimization problem:
 \begin{equation*} %\label{prob:bp} %\tag{BP}
\begin{aligned}
 &\underset{ b,w}{\textnormal{maximize}}&& \frac{1}{N} \sum_{i=1}^N \sum_{t=1}^T \sum_{\ell=1}^{L^i_t-1}( \kappa^i_{\ell+1} - \kappa^i_\ell)b^i_t (1 - w^i_{t\ell}) \\
&\textnormal{subject to}&& \begin{aligned}[t]
&w^{i}_{t\ell} \le  w^i_{t+1,\ell} && \textnormal{for all } i \in \{1,\ldots,N\}, \;t \in \{1,\ldots,T-1\},\; \ell \in \{1,\ldots,|\mathcal{K}^i|\}\\
&w^{i}_{t\ell} \le w^i_{t,\ell+1} && \textnormal{for all } i \in \{1,\ldots,N\},\; t \in \{1,\ldots,T\},\;\ell \in \{1,\ldots,|\mathcal{K}^i|-1\} \\
&b^i_t \le w^i_{t+1,1}&&\textnormal{for all }i \in \{1,\ldots,N\},\; t \in \{1,\ldots,T-1\}\\
%&b^j_t \le w^i_{t\ell}&& \textnormal{for all } i, j \in \{1,\ldots,N\} \textnormal{ and } t \in \{1,\dots,T\} \textnormal{ such that } g(t,x^i) = \kappa^i_\ell\\
%&&& \textnormal{and } \mathcal{U}^i_t \cap \mathcal{U}^j_t \neq \emptyset\\
&b^i_t \in \{0,1\} && \textnormal{for all } i \in \{1,\ldots,N\}, \; t \in \{1,\dots,T\}\\
&w^i_{t\ell} \in \R  && \textnormal{for all } i \in \{1,\ldots,N\}, \; t \in \{1,\ldots,T\}, \; \ell \in \{1,\ldots,| \mathcal{K}^i|\}.
\end{aligned}
\end{aligned}
\end{equation*}
In greater detail, we observe that the above optimization problem is identical to \eqref{prob:bp}, with the exception that the above optimization problem does not include the constraints of the form $b^j_t \le w^i_{t\ell}$ for all $i, j \in \{1,\ldots,N\}$ and $t \in \{1,\dots,T\}$ such that $g(t,x^i) = \kappa^i_\ell$ and $\mathcal{U}^i_t \cap \mathcal{U}^j_t \neq \emptyset$. Consequently, the optimal objective value of the above optimization problem is greater than or equal to the optimal objective value of \eqref{prob:bp}. Moreover,  it follows from  inspection and from the fact that each term $\kappa^i_{\ell+1} - \kappa^i_\ell$ is strictly positive  that there exists an optimal solution for the above optimization problem that satisfies $b^i_t = \mathbb{I} \left \{ t = T^i \right \}$ for each sample path $i \in \{1,\ldots,N\}$. Therefore, the optimal objective value of the above optimization problem is equal to
 \begin{align*}
  \frac{1}{N} \sum_{i=1}^N \sum_{t=1}^T \sum_{\ell=1}^{L^i_t-1}( \kappa^i_{\ell+1} - \kappa^i_\ell)\mathbb{I} \left \{ t = T^i \right \}(1 - w^i_{t\ell}) &=   \frac{1}{N} \sum_{i=1}^N \sum_{\ell=1}^{L^i_{T^i}-1}( \kappa^i_{\ell+1} - \kappa^i_\ell) (1 - w^i_{T^i \ell})\\
&\le    \frac{1}{N} \sum_{i=1}^N \sum_{\ell=1}^{L^i_{T^i}-1}( \kappa^i_{\ell+1} - \kappa^i_\ell) \\
&= \frac{1}{N} \sum_{i=1}^N g(T^i,x^i),
 \end{align*}
 where the first equality follows from algebra, the inequality follows form the fact that each decision variable $w^i_{T^i \ell}$ must be greater than or equal to zero, and the final equality follows from the definitions of the constants $\kappa^i_{\ell}$ and $L^i_t$. This concludes our proof of Lemma~\ref{lem:lb_on_h2:ub}. 
\halmos
\end{proof}

\begin{lemma} \label{lem:lb_on_h2:lb}
 $J^{\textnormal{\ref{prob:h_bar}}}$ is greater than or equal to the optimal objective value of the following  optimization problem: 
 \begin{equation} \label{prob:h_bar_lb} \tag{$\bar{\textnormal{H}}$-LB}
\begin{aligned}
 %&\underset{ b,w}{\textnormal{maximize}}&& \frac{1}{N} \sum_{i=1}^N\left( \sum_{\ell=1}^{L^i_{T^i}-1}( \kappa^i_{\ell+1} - \kappa^i_\ell) \left( b^i_{T^i}  - w^i_{T^i \ell} \right)  + \sum_{\ell=1}^{L^i_T - 1} ( \kappa^i_{\ell+1} - \kappa^i_\ell)  \left(1 - w^i_{T \ell} \right) \right)  \\
  &\underset{ b,w}{\textnormal{maximize}}&& \frac{1}{N} \sum_{i=1}^N \sum_{\ell=1}^{L^i_{T^i}-1}( \kappa^i_{\ell+1} - \kappa^i_\ell) \left( b^i_{T^i}  - w^i_{T^i \ell} \right)    \\
&\textnormal{subject to}&& \begin{aligned}[t]
%&w^{i}_{T^i\ell} \le  w^i_{T \ell} && \textnormal{for all } i \in \{1,\ldots,N\}, \;\ell \in \{1,\ldots,|\mathcal{K}^i|\} \\
%&w^{i}_{T^i\ell} \le  w^i_{T \ell} && \textnormal{for all } i \in \{1,\ldots,N\} \textnormal{ and } \ell \in \{1,\ldots,|\mathcal{K}^i|\} \textnormal{ such that } T^i < T\\
%&w^{i}_{T^i \ell} \le w^i_{T^i,\ell+1} && \textnormal{for all } i \in \{1,\ldots,N\},\;\ell \in \{1,\ldots,|\mathcal{K}^i|-1\} \\
%&w^{i}_{T \ell} \le w^i_{T,\ell+1} && \textnormal{for all } i \in \{1,\ldots,N\},\;\ell \in \{1,\ldots,|\mathcal{K}^i|-1\} \\
&w^{i}_{T^i \ell} \le w^i_{T^i,\ell+1} && \textnormal{for all } i \in \{1,\ldots,N\},\;\ell \in \{1,\ldots,|\mathcal{K}^i|-1\} \\
&b^j_{T^j} \le w^i_{T^i \ell}&& \textnormal{for all } i, j \in \{1,\ldots,N\}  \textnormal{ such that } g(T^j,x^i) = \kappa^i_\ell,\\
&&& \kappa^i_\ell < g(T^i,x^i),\; T^j < T^i, \; \textnormal{and } \mathcal{U}^i_{T^j} \cap \mathcal{U}^j_{T^j} \neq \emptyset\\
%&b^j_{T^j} \le w^i_{T^i \ell}&& \textnormal{for all } i, j \in \{1,\ldots,N\} \textnormal{ such that } g(T^j,x^i) = \kappa^i_\ell,\;T^j \le T^i, \; \\
%&&&\textnormal{and } \mathcal{U}^i_{T^j} \cap \mathcal{U}^j_{T^j} \neq \emptyset\\
%&b^j_{T^j} \le w^i_{T \ell}&& \textnormal{for all } i, j \in \{1,\ldots,N\} \textnormal{ such that } g(T^j,x^i) = \kappa^i_\ell,\;T^j > T^i, \; \\
%&&&\textnormal{and } \mathcal{U}^i_{T^j} \cap \mathcal{U}^j_{T^j} \neq \emptyset\\
&b^i_{T^i} \in \{0,1\} && \textnormal{for all } i \in \{1,\ldots,N\} \\
&w^i_{T^i \ell} \in \R  && \textnormal{for all } i \in \{1,\ldots,N\},  \; \ell \in \{1,\ldots,| \mathcal{K}^i| - 1\}.
\end{aligned}
\end{aligned}
\end{equation}
\end{lemma}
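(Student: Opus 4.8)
The plan is to prove Lemma~\ref{lem:lb_on_h2:lb} by exhibiting a feasibility-preserving map from any solution of the relaxed problem~\eqref{prob:h_bar_lb} into a feasible solution for~\eqref{prob:h_bar} that has at least as good an objective value. Comparing the two optimization problems, I observe that~\eqref{prob:h_bar_lb} is a \emph{restriction} of~\eqref{prob:h_bar}: it only retains the decision variables $w^i_{T^i \ell}$ associated with the \emph{single} period $t = T^i$ for each sample path $i$ (rather than all periods $t \in \mathcal{T}^i = \{T^i\} \cup \{T\}$), it drops the second summand of the objective of~\eqref{prob:h_bar} corresponding to the period-$T$ terms, and it only keeps the subset of the linking constraints $b^j_{T^j} \le w^i_{t\ell}$ for which $t = T^i$, $\kappa^i_\ell < g(T^i,x^i)$, and $T^j < T^i$. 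So the natural approach is: take an optimal solution $\hat b, \hat w$ of~\eqref{prob:h_bar_lb}, and extend it to a feasible solution of~\eqref{prob:h_bar} by supplying values for the missing variables (namely the $w^i_{T \ell}$ and, when $T^i = T$, the appropriate reconciliation), then verify that the extended solution satisfies all constraints of~\eqref{prob:h_bar} and that its objective value in~\eqref{prob:h_bar} is at least the objective value of $\hat b, \hat w$ in~\eqref{prob:h_bar_lb}.

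First I would set up the extension explicitly. For each $i$ with $T^i < T$, keep $b^i_{T^i} = \hat b^i_{T^i}$ and keep $w^i_{T^i \ell} = \hat w^i_{T^i \ell}$ for the indices $\ell$ appearing in~\eqref{prob:h_bar_lb}; then I would assign the remaining $w^i_{t\ell}$ variables (those with $t = T$, and the $w^i_{T^i \ell}$ with $\kappa^i_\ell \ge g(T^i,x^i)$) to their minimal feasible values, which—since each coefficient $\kappa^i_{\ell+1} - \kappa^i_\ell$ is strictly positive—is the correct choice for maximizing the objective. Because the objective of~\eqref{prob:h_bar} adds the nonnegative second summand $\frac{1}{N}\sum_i \sum_\ell (\kappa^i_{\ell+1} - \kappa^i_\ell)(1 - w^i_{T\ell})$ on top of a first summand that, when restricted to the $\ell < L^i_{T^i}$ terms, coincides with the objective of~\eqref{prob:h_bar_lb}, I expect the extended solution's value to dominate. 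The key monotonicity observations I would invoke are that (a) setting $w^i_{T\ell}$ as small as feasibility permits makes $1 - w^i_{T\ell}$ as large as possible, and (b) the dropped linking constraints in~\eqref{prob:h_bar_lb} that involve $t = T$ or $\kappa^i_\ell \ge g(T^i,x^i)$ can always be satisfied by taking the relevant $w$ variables equal to one, which costs nothing in the first summand because those $\ell$ indices either lie beyond $L^i_{T^i} - 1$ or belong to the period-$T$ block.

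The main obstacle will be the careful bookkeeping of the linking constraints and the chaining (monotonicity-in-$t$) constraints $w^i_{t\ell} \le w^i_{t+1,\ell}$ present in~\eqref{prob:h_bar} but absent from~\eqref{prob:h_bar_lb}. Specifically, I must ensure that the values I assign to the period-$T$ variables $w^i_{T\ell}$ are simultaneously (i) large enough to satisfy $b^j_{T^j} \le w^i_{T\ell}$ for every linking pair that~\eqref{prob:h_bar} imposes but~\eqref{prob:h_bar_lb} omits, and $w^i_{T^i \ell} \le w^i_{T\ell}$ where applicable, yet (ii) small enough that the resulting decrease in the period-$T$ summand does not erase the gain. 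The cleanest way around this is to argue that for each $i$, I can set $w^i_{T\ell} = 1$ for all $\ell$ forced by a linking constraint with $b^j_{T^j} = 1$, and $w^i_{T\ell} = w^i_{T^i\ell}$ otherwise, and then show that every term thereby forced to one in the second summand corresponds to an index $\ell \ge L^i_{T^i}$ (using $T^j \le T$ and the condition $\kappa^i_\ell \ge g(T^j, x^i)$), so that it does not conflict with the first summand's already-optimized values. Establishing this non-conflict—essentially that the restriction in~\eqref{prob:h_bar_lb} to $T^j < T^i$ and $\kappa^i_\ell < g(T^i,x^i)$ discards exactly the constraints that would otherwise pin down period-$T$ variables relevant to the dropped objective terms—is the delicate step, and I would prove it by a direct case analysis on the relative order of $T^i$, $T^j$, and the magnitudes $\kappa^i_\ell$ versus $g(T^i,x^i)$, leaning on the strict positivity of the increments $\kappa^i_{\ell+1} - \kappa^i_\ell$ throughout.
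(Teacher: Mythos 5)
Your overall strategy---pull an optimal solution of \eqref{prob:h_bar_lb} back up to a feasible solution of \eqref{prob:h_bar} by supplying the missing variables at their minimal feasible (equivalently, fixed-to-one) values---is sound, and it is essentially the paper's own argument run in reverse: the paper goes top-down from \eqref{prob:h_bar}, replacing the constant $1$ by a dummy binary $b^i_T$ for paths with $T^i = T$, dropping the period-$T$ summand because it is nonnegative at suitable solutions, and then fixing to one and eliminating every variable that no longer appears in the objective, arriving at \eqref{prob:h_bar_lb}. Both directions rest on the same three facts, so the route itself is not the problem.

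The genuine gap is the claim you designate as the ``delicate step'': that every period-$T$ term forced to one corresponds to an index $\ell \ge L^i_{T^i}$. This is false. Note first that $L^i_{T^i} = |\mathcal{K}^i|$, so your claim says only the \emph{top} index can be forced. But for a path $i$ with $T^i < T$, the linking constraints of \eqref{prob:h_bar} at $t = T$ read $b^j_{T^j} \le w^i_{T\ell}$ for every $j$ with $g(T^j,x^i) = \kappa^i_\ell$, $T^j < T$, and $\mathcal{U}^i_{T^j} \cap \mathcal{U}^j_{T^j} \neq \emptyset$; here $\kappa^i_\ell = g(T^j,x^i)$ can be any element of $\mathcal{K}^i$, in particular one far below $g(T^i,x^i)$, so an optimal \eqref{prob:h_bar_lb} solution with $\hat b^j_{T^j} = 1$ forces $w^i_{T\ell} = 1$ at a small index $\ell \le L^i_T - 1$ and zeroes out that term of the period-$T$ summand. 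No case analysis on $T^i$, $T^j$, $\kappa^i_\ell$ will rescue the claim. (A smaller misreading: \eqref{prob:h_bar} contains no cross-period constraint $w^i_{T^i\ell} \le w^i_{T\ell}$; its monotonicity constraints run only over $\ell$ within each $t \in \mathcal{T}^i$.)

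Fortunately the claim is also unnecessary, because the terms it tries to protect never needed protecting. Write the \eqref{prob:h_bar_lb} objective of $(\hat b,\hat w)$ as $A + B$, where $A$ collects the terms of paths with $T^i < T$ and $B$ those with $T^i = T$, and write the \eqref{prob:h_bar} objective of your extension as $A' + C_1 + C_2$, where $A'$ is the first summand, $C_1$ collects the period-$T$ terms of paths with $T^i = T$, and $C_2$ those of paths with $T^i < T$. Then $A' = A$ because you keep $b^i_{T^i}$ and $w^i_{T^i\ell}$, $\ell \le L^i_{T^i}-1$, unchanged; $C_1 \ge B$ termwise because each coefficient $\kappa^i_{\ell+1}-\kappa^i_\ell$ is positive and $1 \ge \hat b^i_T$; and $C_2 \ge 0$ termwise because the componentwise-minimal feasible assignment of the period-$T$ variables is binary (every lower bound propagated through the constraints is a binary variable), so $1 - w^i_{T\ell} \ge 0$ no matter how many of these surplus terms are forced to zero. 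Two small repairs are still needed: (i) first normalize the optimal solution of \eqref{prob:h_bar_lb}, without loss of optimality, so that $\hat w$ is componentwise minimal and hence binary---otherwise the monotonicity constraints $w^i_{T^i,|\mathcal{K}^i|-1} \le w^i_{T^i,|\mathcal{K}^i|}$ into the variables you set to one may fail, since \eqref{prob:h_bar_lb} never imposes $\hat w \le 1$; and (ii) when verifying feasibility at $t = T^i$, use that $T^j < T^i$ forces $\kappa^i_\ell = g(T^j,x^i) < g(T^i,x^i)$ automatically (by minimality of $T^i$ as a maximizer), so the only constraints of \eqref{prob:h_bar} at $t = T^i$ absent from \eqref{prob:h_bar_lb} are those pinning the variable $w^i_{T^i,|\mathcal{K}^i|}$, which you have set to one.
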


\begin{proof}{Proof of Lemma~\ref{lem:lb_on_h2:lb}.}
We begin by rewriting \eqref{prob:h_bar} as the following equivalent optimization problem: % in which there is a decision variable $b^i_t$ for each  an upper bounds on the optimal objective value of \eqref{prob:bp}. 
 \begin{equation} \label{prob:h_bar_prime} \tag{$\bar{\textnormal{H}}'$}
\begin{aligned}
 %&\underset{ b,w}{\textnormal{maximize}}&& \frac{1}{N} \sum_{i=1}^N\left( \sum_{\ell=1}^{L^i_{T^i}-1}( \kappa^i_{\ell+1} - \kappa^i_\ell) \left( b^i_{T^i}  - w^i_{T^i \ell} \right)  + \sum_{\ell=1}^{L^i_T - 1} ( \kappa^i_{\ell+1} - \kappa^i_\ell)  \left(1 - w^i_{T \ell} \right) \right)  \\
  &\underset{ b,w}{\textnormal{maximize}}&& \frac{1}{N} \sum_{i=1}^N \sum_{\ell=1}^{L^i_{T^i}-1}( \kappa^i_{\ell+1} - \kappa^i_\ell) \left( b^i_{T^i}  - w^i_{T^i \ell} \right)  + \frac{1}{N} \sum_{i \in \{1,\ldots,N\}: T^i < T} \sum_{\ell=1}^{L^i_T - 1} ( \kappa^i_{\ell+1} - \kappa^i_\ell)  \left(1 - w^i_{T \ell} \right)  \\
&\textnormal{subject to}&& \begin{aligned}[t]
%&w^{i}_{T^i\ell} \le  w^i_{T \ell} && \textnormal{for all } i \in \{1,\ldots,N\} \textnormal{ and } \ell \in \{1,\ldots,|\mathcal{K}^i|\} \textnormal{ such that } T^i < T\\
&w^{i}_{t\ell} \le w^i_{t,\ell+1} && \textnormal{for all } i \in \{1,\ldots,N\},\; t \in \mathcal{T}^i,\;\ell \in \{1,\ldots,|\mathcal{K}^i|-1\} \\
&b^i_{T^i} \le w^i_{T1}&&\textnormal{for all }i \in \{1,\ldots,N\} \textnormal{ such that }  T^i < T\\
&b^j_{T^j} \le w^i_{t \ell}&& \textnormal{for all } i, j \in \{1,\ldots,N\} \textnormal{ and } t \in \mathcal{T}^i \textnormal{ such that } g(T^j,x^i) = \kappa^i_\ell,\\
&&&T^j \le t, \;  T^j < T, \; \textnormal{and } \mathcal{U}^i_{T^j} \cap \mathcal{U}^j_{T^j} \neq \emptyset\\
%&b^j_{T^j} \le w^i_{T^i \ell}&& \textnormal{for all } i, j \in \{1,\ldots,N\} \textnormal{ such that } g(T^j,x^i) = \kappa^i_\ell,\;T^j \le T^i, \; \\
%&&&\textnormal{and } \mathcal{U}^i_{T^j} \cap \mathcal{U}^j_{T^j} \neq \emptyset\\
%&b^j_{T^j} \le w^i_{T \ell}&& \textnormal{for all } i, j \in \{1,\ldots,N\} \textnormal{ such that } g(T^j,x^i) = \kappa^i_\ell,\;T^j > T^i, \; \\
%&&&\textnormal{and } \mathcal{U}^i_{T^j} \cap \mathcal{U}^j_{T^j} \neq \emptyset\\
&b^i_{T^i} \in \{0,1\} && \textnormal{for all } i \in \{1,\ldots,N\}\\
&w^i_{t\ell} \in \R  && \textnormal{for all } i \in \{1,\ldots,N\}, \; t \in \mathcal{T}^i, \; \ell \in \{1,\ldots,| \mathcal{K}^i|\}.
\end{aligned}
\end{aligned}
\end{equation}
We claim that the optimal objective value of the above optimization problem~\eqref{prob:h_bar_prime} is equal to the optimal objective value of \eqref{prob:h_bar}. To see why this is true, we first observe that the above optimization problem is identical to \eqref{prob:h_bar}, except for the fact that the term $  \left(1 - w^i_{T \ell} \right) $ in the objective function of \eqref{prob:h_bar} has been replaced with $  \left(b^i_T - w^i_{T \ell} \right) $ in the objective function of \eqref{prob:h_bar_prime}  for each $i \in \{1,\ldots,N\}$ such that $T^i = T$. Because the new decision variables $b^i_{T^i} \in \{0,1\}$ for each $i \in \{1,\ldots,N\}$ such that $T^i = T$ do not appear in any inequality constraints, and because  each term $\kappa^i_{\ell+1} - \kappa^i_\ell$ is strictly positive,  we observe that there exists an optimal solution for the above optimization problem in which $b^i_{T^i} = 1$ for each $ i \in \{1,\ldots,N\}$ such that $T^i = T$. Hence, we conclude that the optimal objective value of \eqref{prob:h_bar_prime} is equal to the optimal objective value of \eqref{prob:h_bar}.

We now construct a lower bound  approximation of \eqref{prob:h_bar_prime} by modifying its objective function and adding constraints. Indeed, we first observe for every feasible solution for \eqref{prob:h_bar_prime} that the objective function of \eqref{prob:h_bar_prime} satisfies 
\begin{align*}
&\frac{1}{N} \sum_{i=1}^N \sum_{\ell=1}^{L^i_{T^i}-1}( \kappa^i_{\ell+1} - \kappa^i_\ell) \left( b^i_{T^i}  - w^i_{T^i \ell} \right)  + \frac{1}{N} \sum_{i \in \{1,\ldots,N\}: T^i < T} \sum_{\ell=1}^{L^i_T - 1} ( \kappa^i_{\ell+1} - \kappa^i_\ell)  \left(1 - w^i_{T \ell} \right)\\
\ge& \frac{1}{N} \sum_{i=1}^N \sum_{\ell=1}^{L^i_{T^i}-1}( \kappa^i_{\ell+1} - \kappa^i_\ell) \left( b^i_{T^i}  - w^i_{T^i \ell} \right),
\end{align*}
where the inequality holds because  each term $\kappa^i_{\ell+1} - \kappa^i_\ell$ is strictly positive and because the decision variables $w^i_{T \ell}$ are always nonnegative. Therefore, a lower bound on the optimal objective value of \eqref{prob:h_bar_prime} is given by the optimal objective value of the following optimization problem:
 \begin{equation}  \label{prob:h_bar_lb_1} \tag{$\bar{\textnormal{H}}$-LB-1} % \label{prob:h_bar_prime} \tag{$\bar{\textnormal{H}}'$}
\begin{aligned}
 %&\underset{ b,w}{\textnormal{maximize}}&& \frac{1}{N} \sum_{i=1}^N\left( \sum_{\ell=1}^{L^i_{T^i}-1}( \kappa^i_{\ell+1} - \kappa^i_\ell) \left( b^i_{T^i}  - w^i_{T^i \ell} \right)  + \sum_{\ell=1}^{L^i_T - 1} ( \kappa^i_{\ell+1} - \kappa^i_\ell)  \left(1 - w^i_{T \ell} \right) \right)  \\
  &\underset{ b,w}{\textnormal{maximize}}&& \frac{1}{N} \sum_{i=1}^N \sum_{\ell=1}^{L^i_{T^i}-1}( \kappa^i_{\ell+1} - \kappa^i_\ell) \left( b^i_{T^i}  - w^i_{T^i \ell} \right)   \\
&\textnormal{subject to}&& \begin{aligned}[t]
%&w^{i}_{T^i\ell} \le  w^i_{T \ell} && \textnormal{for all } i \in \{1,\ldots,N\} \textnormal{ and } \ell \in \{1,\ldots,|\mathcal{K}^i|\} \textnormal{ such that } T^i < T\\
&w^{i}_{t\ell} \le w^i_{t,\ell+1} && \textnormal{for all } i \in \{1,\ldots,N\},\; t \in \mathcal{T}^i,\;\ell \in \{1,\ldots,|\mathcal{K}^i|-1\} \\
&b^i_{T^i} \le w^i_{T1}&&\textnormal{for all }i \in \{1,\ldots,N\} \textnormal{ such that }  T^i < T\\
&b^j_{T^j} \le w^i_{t \ell}&& \textnormal{for all } i, j \in \{1,\ldots,N\} \textnormal{ and } t \in \mathcal{T}^i \textnormal{ such that } g(T^j,x^i) = \kappa^i_\ell,\\
&&&T^j \le t, \;  T^j < T, \; \textnormal{and } \mathcal{U}^i_{T^j} \cap \mathcal{U}^j_{T^j} \neq \emptyset\\
%&b^j_{T^j} \le w^i_{T^i \ell}&& \textnormal{for all } i, j \in \{1,\ldots,N\} \textnormal{ such that } g(T^j,x^i) = \kappa^i_\ell,\;T^j \le T^i, \; \\
%&&&\textnormal{and } \mathcal{U}^i_{T^j} \cap \mathcal{U}^j_{T^j} \neq \emptyset\\
%&b^j_{T^j} \le w^i_{T \ell}&& \textnormal{for all } i, j \in \{1,\ldots,N\} \textnormal{ such that } g(T^j,x^i) = \kappa^i_\ell,\;T^j > T^i, \; \\
%&&&\textnormal{and } \mathcal{U}^i_{T^j} \cap \mathcal{U}^j_{T^j} \neq \emptyset\\
&b^i_{T^i} \in \{0,1\} && \textnormal{for all } i \in \{1,\ldots,N\}\\
&w^i_{t\ell} \in \R  && \textnormal{for all } i \in \{1,\ldots,N\}, \; t \in \mathcal{T}^i, \; \ell \in \{1,\ldots,| \mathcal{K}^i|\}.
\end{aligned}
\end{aligned}
\end{equation}
Moreover, we observe from inspection that the optimal objective value of the above optimization problem would not change if we added the constraints $w^i_{T\ell} = 1$ to the above optimization problem for each sample path $i \in \{1,\ldots,N\}$ and $\ell \in \{1,\ldots,| \mathcal{K}^i| \}$ that satisfies $T^i < T$. 
% suppose that we added the constraints $b^i_T = 0$ to the optimization problem~\eqref{prob:h_bar_prime} for each sample path $i \in \{1,\ldots,N\}$ that satisfy $T^i < T$, and suppose that we added the constraint $b^i_{T^j} \le w^i_{T^i \ell}$ to the optimization problem~\eqref{prob:h_bar_prime} for each pair of sample paths $i,j \in \{1,\ldots,N\}$ that satisfy $g(T^j,x^i) = \kappa^i_\ell$, $T^j \le T^i$, $T^j = T$, and $\mathcal{U}^i_{T^j} \cap \mathcal{U}^j_{T^j} \neq \emptyset$.
By eliminating these decision variables $w^i_{T \ell}$ that can be constrained without loss of generality to be equal to one, %and after adding the constraint $b^i_{T^j} \le w^i_{T^i \ell}$ for each pair of sample paths $i,j \in \{1,\ldots,N\}$ that satisfy $g(T^j,x^i) = \kappa^i_\ell$, $T^j \le T^i$, $T^j = T$, and $\mathcal{U}^i_{T^j} \cap \mathcal{U}^j_{T^j} \neq \emptyset$,
 it follows from algebra that the optimal objective value of the optimization problem~\eqref{prob:h_bar_lb_1} is equal to 
 \begin{equation} \label{prob:h_bar_lb_2} \tag{$\bar{\textnormal{H}}$-LB-2}
\begin{aligned}
 %&\underset{ b,w}{\textnormal{maximize}}&& \frac{1}{N} \sum_{i=1}^N\left( \sum_{\ell=1}^{L^i_{T^i}-1}( \kappa^i_{\ell+1} - \kappa^i_\ell) \left( b^i_{T^i}  - w^i_{T^i \ell} \right)  + \sum_{\ell=1}^{L^i_T - 1} ( \kappa^i_{\ell+1} - \kappa^i_\ell)  \left(1 - w^i_{T \ell} \right) \right)  \\
  &\underset{ b,w}{\textnormal{maximize}}&& \frac{1}{N} \sum_{i=1}^N \sum_{\ell=1}^{L^i_{T^i}-1}( \kappa^i_{\ell+1} - \kappa^i_\ell) \left( b^i_{T^i}  - w^i_{T^i \ell} \right)    \\
&\textnormal{subject to}&& \begin{aligned}[t]
%&w^{i}_{T^i\ell} \le  w^i_{T \ell} && \textnormal{for all } i \in \{1,\ldots,N\}, \;\ell \in \{1,\ldots,|\mathcal{K}^i|\} \\
%&w^{i}_{T^i\ell} \le  w^i_{T \ell} && \textnormal{for all } i \in \{1,\ldots,N\} \textnormal{ and } \ell \in \{1,\ldots,|\mathcal{K}^i|\} \textnormal{ such that } T^i < T\\
%&w^{i}_{T^i \ell} \le w^i_{T^i,\ell+1} && \textnormal{for all } i \in \{1,\ldots,N\},\;\ell \in \{1,\ldots,|\mathcal{K}^i|-1\} \\
%&w^{i}_{T \ell} \le w^i_{T,\ell+1} && \textnormal{for all } i \in \{1,\ldots,N\},\;\ell \in \{1,\ldots,|\mathcal{K}^i|-1\} \\
&w^{i}_{T^i \ell} \le w^i_{T^i,\ell+1} && \textnormal{for all } i \in \{1,\ldots,N\},\;\ell \in \{1,\ldots,|\mathcal{K}^i|-1\} \\
&b^j_{T^j} \le w^i_{T^i \ell}&& \textnormal{for all } i, j \in \{1,\ldots,N\}  \textnormal{ such that } g(T^j,x^i) = \kappa^i_\ell,\\
&&&T^j \le T^i, \; T^j < T, \; \textnormal{and } \mathcal{U}^i_{T^j} \cap \mathcal{U}^j_{T^j} \neq \emptyset\\
%&b^j_{T^j} \le w^i_{T^i \ell}&& \textnormal{for all } i, j \in \{1,\ldots,N\} \textnormal{ such that } g(T^j,x^i) = \kappa^i_\ell,\;T^j \le T^i, \; \\
%&&&\textnormal{and } \mathcal{U}^i_{T^j} \cap \mathcal{U}^j_{T^j} \neq \emptyset\\
%&b^j_{T^j} \le w^i_{T \ell}&& \textnormal{for all } i, j \in \{1,\ldots,N\} \textnormal{ such that } g(T^j,x^i) = \kappa^i_\ell,\;T^j > T^i, \; \\
%&&&\textnormal{and } \mathcal{U}^i_{T^j} \cap \mathcal{U}^j_{T^j} \neq \emptyset\\
&b^i_{T^i} \in \{0,1\} && \textnormal{for all } i \in \{1,\ldots,N\} \\
&w^i_{T^i \ell} \in \R  && \textnormal{for all } i \in \{1,\ldots,N\},  \; \ell \in \{1,\ldots,| \mathcal{K}^i|\}.
\end{aligned}
\end{aligned}
\end{equation}
We further observe from inspection that the optimal objective value of the above optimization problem would not change if we added the constraints $w^i_{T^i,| \mathcal{K}^i|} = 1$ to the above optimization problem for each sample path $i \in \{1,\ldots,N\}$.\footnote{{\color{black}We note  that the equality $L^i_{T^i} = | \mathcal{K}^i|$ holds for each sample path $i \in \{1,\ldots,N\}$. This equality can be verified by simply applying the definitions of $L^i_t$, $\mathcal{K}^i$, and $T^i$.}} By eliminating these decision variables $w^i_{T^i,| \mathcal{K}^i|}$ that can be constrained without loss of generality to be equal to one, and by observing that the above optimization problem has a constraint of the form $b^j_{t} \le w^i_{t \ell}$ if and only if  $\ell = | \mathcal{K}^i|$ and $t = T^i = T^j$,  we conclude that the optimal objective value of the optimization problem~\eqref{prob:h_bar_lb_2} is equal to 
 \begin{equation} \tag{\ref{prob:h_bar_lb}}
\begin{aligned}
 %&\underset{ b,w}{\textnormal{maximize}}&& \frac{1}{N} \sum_{i=1}^N\left( \sum_{\ell=1}^{L^i_{T^i}-1}( \kappa^i_{\ell+1} - \kappa^i_\ell) \left( b^i_{T^i}  - w^i_{T^i \ell} \right)  + \sum_{\ell=1}^{L^i_T - 1} ( \kappa^i_{\ell+1} - \kappa^i_\ell)  \left(1 - w^i_{T \ell} \right) \right)  \\
  &\underset{ b,w}{\textnormal{maximize}}&& \frac{1}{N} \sum_{i=1}^N \sum_{\ell=1}^{L^i_{T^i}-1}( \kappa^i_{\ell+1} - \kappa^i_\ell) \left( b^i_{T^i}  - w^i_{T^i \ell} \right)    \\
&\textnormal{subject to}&& \begin{aligned}[t]
%&w^{i}_{T^i\ell} \le  w^i_{T \ell} && \textnormal{for all } i \in \{1,\ldots,N\}, \;\ell \in \{1,\ldots,|\mathcal{K}^i|\} \\
%&w^{i}_{T^i\ell} \le  w^i_{T \ell} && \textnormal{for all } i \in \{1,\ldots,N\} \textnormal{ and } \ell \in \{1,\ldots,|\mathcal{K}^i|\} \textnormal{ such that } T^i < T\\
%&w^{i}_{T^i \ell} \le w^i_{T^i,\ell+1} && \textnormal{for all } i \in \{1,\ldots,N\},\;\ell \in \{1,\ldots,|\mathcal{K}^i|-1\} \\
%&w^{i}_{T \ell} \le w^i_{T,\ell+1} && \textnormal{for all } i \in \{1,\ldots,N\},\;\ell \in \{1,\ldots,|\mathcal{K}^i|-1\} \\
&w^{i}_{T^i \ell} \le w^i_{T^i,\ell+1} && \textnormal{for all } i \in \{1,\ldots,N\},\;\ell \in \{1,\ldots,|\mathcal{K}^i|-1\} \\
&b^j_{T^j} \le w^i_{T^i \ell}&& \textnormal{for all } i, j \in \{1,\ldots,N\}  \textnormal{ such that } g(T^j,x^i) = \kappa^i_\ell,\\
&&& \kappa^i_\ell < g(T^i,x^i),\; T^j < T^i, \; \textnormal{and } \mathcal{U}^i_{T^j} \cap \mathcal{U}^j_{T^j} \neq \emptyset\\
%&b^j_{T^j} \le w^i_{T^i \ell}&& \textnormal{for all } i, j \in \{1,\ldots,N\} \textnormal{ such that } g(T^j,x^i) = \kappa^i_\ell,\;T^j \le T^i, \; \\
%&&&\textnormal{and } \mathcal{U}^i_{T^j} \cap \mathcal{U}^j_{T^j} \neq \emptyset\\
%&b^j_{T^j} \le w^i_{T \ell}&& \textnormal{for all } i, j \in \{1,\ldots,N\} \textnormal{ such that } g(T^j,x^i) = \kappa^i_\ell,\;T^j > T^i, \; \\
%&&&\textnormal{and } \mathcal{U}^i_{T^j} \cap \mathcal{U}^j_{T^j} \neq \emptyset\\
&b^i_{T^i} \in \{0,1\} && \textnormal{for all } i \in \{1,\ldots,N\} \\
&w^i_{T^i \ell} \in \R  && \textnormal{for all } i \in \{1,\ldots,N\},  \; \ell \in \{1,\ldots,| \mathcal{K}^i| - 1\}.
\end{aligned}%
\end{aligned}
\end{equation}
This concludes our proof of Lemma~\ref{lem:lb_on_h2:lb}.
\halmos \end{proof}
In view of the above Lemmas~\ref{lem:lb_on_h2:ub} and \ref{lem:lb_on_h2:lb}, we now present the proofs of Propositions~\ref{prop:lb_on_h2:T} and \ref{prop:lb_on_h2:N}.

\begin{proof}{Proof of Proposition~\ref{prop:lb_on_h2:T}.}
In the above Lemma~\ref{lem:lb_on_h2:ub}, we showed that $J^{\textnormal{\ref{prob:bp}}} \le \frac{1}{N} \sum_{i=1}^N g(T^i,x^i)$. Therefore, it remains for us to show that $J^{\textnormal{\ref{prob:h_bar}}} \ge \frac{1}{T} ( \frac{1}{N} \sum_{i=1}^N g(T^i,x^i) )$. Indeed, 
let %$\mathscr{N}_t \triangleq \{i \in \{1,\ldots,N\}: \; T^i = t \}$ denote the set of sample paths that satisfy $T^i = t$ for each period $t \in \{1,\ldots,T\}$. Moreover, let 
$$t^* \triangleq \argmax_{t \in \{1,\ldots,T\}} \sum_{i \in \{1,\ldots,N\}: T^i = t}  g(T^i,x^i) $$ be defined as any period that maximizes the sum  of  the rewards $g(T^i,x^i)$ over all of the sample paths $i \in \{1,\ldots,N\}$ that satisfy $T^i = t^*$. We observe from algebra and from our construction of $t^*$ that the following inequality must hold:
\begin{align}
\frac{1}{N}  \sum_{i \in \{1,\ldots,N\}: T^i = t^*}  g(T^i,x^i) \ge  \frac{1}{T} \left( \frac{1}{N} \sum_{i=1}^N g(T^i,x^i) \right).  \label{line:benji_laughing}
 \end{align} 
% In the rest of the proof, we will show that the inequality $J^{\textnormal{\ref{prob:h_bar}}} \ge \frac{1}{N}  \sum_{i \in \{1,\ldots,N\}: T^i = t^*}  g(T^i,x^i)$ holds. 
Moreover, we recall from the above Lemma~\ref{lem:lb_on_h2:lb} that $J^{\textnormal{\ref{prob:h_bar}}}$ is greater than or equal to the optimal objective value of the optimization problem~\eqref{prob:h_bar_lb}. Consider the solution $b,w$ for the optimization problem~\eqref{prob:h_bar_lb} that satisfies the following equalities for each sample path $i \in \{1,\ldots,N\}$, period $t \in \{1,\ldots,T\}$, and $\ell \in \{1,\ldots,| \mathcal{K}^i| - 1 \}$:
 \begin{align*}
 b^i_{T^i} &= \begin{cases}
 1,&\text{if } T^i \in \{t^*,\ldots,T\},\\
 0,&\text{if } T^i \in \{1,\ldots,t^*-1\};
 \end{cases}
  &w^i_{T^i \ell} &= \begin{cases}
 1,&\text{if } T^i \in \{t^* + 1,\ldots,T\},\\
 0,&\text{if } T^i \in \{1,\ldots,t^*\}.
 \end{cases}
 \end{align*}
 We observe from inspection that the solution $b,w$ defined by the above equalities is a feasible solution for the optimization problem~\eqref{prob:h_bar_lb}. Therefore, the optimal objective value of \eqref{prob:h_bar_lb} is greater than or equal to
 \begin{align*}
 & \frac{1}{N} \sum_{i=1}^N \sum_{\ell=1}^{L^i_{T^i}-1}( \kappa^i_{\ell+1} - \kappa^i_\ell) \left( \mathbb{I} \left \{ T^i \in \{t^*,\ldots,T\} \right \}  - \mathbb{I} \left \{ T^i \in \{t^* + 1,\ldots,T\} \right \}  \right)    \\
  &= \frac{1}{N} \sum_{i=1}^N \sum_{\ell=1}^{L^i_{T^i}-1}( \kappa^i_{\ell+1} - \kappa^i_\ell)  \mathbb{I} \left \{ T^i = t^* \right \}    \\
  &=  \frac{1}{N} \sum_{i \in \{1,\ldots,N\}: T^i = t^*} \sum_{\ell=1}^{L^i_{T^i}-1}( \kappa^i_{\ell+1} - \kappa^i_\ell) \\
  &=  \frac{1}{N} \sum_{i \in \{1,\ldots,N\}: T^i = t^*} g(T^i,x^i),
  \end{align*}
  where the last equality follows from the definition of the constants $\kappa^i_\ell$. Combining the above equalities with line~\eqref{line:benji_laughing}, our proof of Proposition~\ref{prop:lb_on_h2:T} is complete. 
\halmos \end{proof}

\begin{proof}{Proof of Proposition~\ref{prop:lb_on_h2:N}.}
Let Assumption~\ref{ass:lipschitz_local} hold. Our proof of Proposition~\ref{prop:lb_on_h2:N} is split into the following two intermediary claims. 

\begin{claim} \label{claim:lb_on_h2:N}
Assume without loss of generality that $g(T^1,x^1) \ge \cdots \ge g(T^N,x^N)$. Then,
\begin{align*}
J^{\textnormal{\ref{prob:h_bar}}} \ge \max_{i \in \{1,\ldots,N\}} \frac{i}{N} g(T^i,x^i) - 2 \epsilon L,
\end{align*}
where $\epsilon$ is the radius of the uncertainty sets (see \S\ref{sec:rob}) and $L$ is the constant from Assumption~\ref{ass:lipschitz_local}. 
%The optimal objective value of the following problem is less than or equal to $J^{\textnormal{\ref{prob:h_bar}}}$: 
% \begin{equation} \label{prob:h_bar_lb} \tag{$\bar{\textnormal{H}}$-LB}
%\begin{aligned}
% %&\underset{ b,w}{\textnormal{maximize}}&& \frac{1}{N} \sum_{i=1}^N\left( \sum_{\ell=1}^{L^i_{T^i}-1}( \kappa^i_{\ell+1} - \kappa^i_\ell) \left( b^i_{T^i}  - w^i_{T^i \ell} \right)  + \sum_{\ell=1}^{L^i_T - 1} ( \kappa^i_{\ell+1} - \kappa^i_\ell)  \left(1 - w^i_{T \ell} \right) \right)  \\
%  &\underset{ b}{\textnormal{maximize}}&& \frac{1}{N} \sum_{i=1}^N b^i_{T^i} %\min \left \{ g\left (T^i, x^i \right), 
%   \min_{j \in \{1,\ldots,N\}: \; b^j_{T^j} = 1 \textnormal{ and } T^j \le T^i} g \left(T^j,x^i \right) \\%\right \}  \\
%&\textnormal{subject to}&& \begin{aligned}[t]
%&b^j_{T^j} \le b^i_{T^i} && \textnormal{for all } i,j \in \{1,\ldots,N\} \textnormal{ such that } T^j \le T^i \textnormal{ and } g(T^j,x^j) \le g(T^i,x^i) \\
%&b^i_{T^i} \in \{0,1\} && \textnormal{for all } i \in \{1,\ldots,N\}.
%\end{aligned}
%\end{aligned}
%\end{equation}
\end{claim}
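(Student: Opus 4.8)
The plan is to prove Claim~\ref{claim:lb_on_h2:N} by directly exhibiting, for each fixed index $i^\ast \in \{1,\ldots,N\}$, a feasible solution to the lower-bounding problem \eqref{prob:h_bar_lb} whose objective value is at least $\frac{i^\ast}{N} g(T^{i^\ast},x^{i^\ast}) - 2\epsilon L$; maximizing over $i^\ast$ then gives the claim, since by Lemma~\ref{lem:lb_on_h2:lb} the optimum of \eqref{prob:h_bar_lb} lower bounds $J^{\textnormal{\ref{prob:h_bar}}}$. Recalling the sorting $g(T^1,x^1) \ge \cdots \ge g(T^N,x^N)$, I would activate exactly the $i^\ast$ highest-reward paths: set $b^i_{T^i} = 1$ for $i \le i^\ast$ and $b^i_{T^i} = 0$ otherwise. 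For the continuous variables I would take, for each activated path $i$, the pointwise-smallest monotone choice consistent with the constraints: letting $\ell^\ast_i$ be the smallest level $\ell$ at which a constraint $b^j_{T^j} \le w^i_{T^i\ell}$ is actually imposed by some activated $j \le i^\ast$ (and $\ell^\ast_i = L^i_{T^i}$ if none is), set $w^i_{T^i\ell} = \mathbb{I}\{\ell \ge \ell^\ast_i\}$. Feasibility of this pair follows by inspection of the two constraint families of \eqref{prob:h_bar_lb}, and because the objective weights $\kappa^i_{\ell+1}-\kappa^i_\ell$ are positive, the smallest feasible monotone $w^i$ maximizes each path's contribution.

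Next I would evaluate the contribution of each activated path $i \le i^\ast$. Since $\kappa^i_1 = 0$ and the sum telescopes, the per-path term equals $\sum_{\ell=1}^{\ell^\ast_i - 1}(\kappa^i_{\ell+1} - \kappa^i_\ell) = \kappa^i_{\ell^\ast_i}$. If $\ell^\ast_i = L^i_{T^i}$ the path is unforced and contributes $\kappa^i_{L^i_{T^i}} = g(T^i,x^i) \ge g(T^{i^\ast},x^{i^\ast})$, the inequality holding because $i \le i^\ast$ in the sorted order. Otherwise the forcing level is attained through some activated $j^\ast \le i^\ast$ with $g(T^{j^\ast},x^i) = \kappa^i_{\ell^\ast_i}$, $T^{j^\ast} < T^i$, and $\mathcal{U}^i_{T^{j^\ast}} \cap \mathcal{U}^{j^\ast}_{T^{j^\ast}} \neq \emptyset$. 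The nonempty-intersection condition forces $\|x^i_{T^{j^\ast}} - x^{j^\ast}_{T^{j^\ast}}\|_\infty \le 2\epsilon$, so Assumption~\ref{ass:lipschitz_local} yields $|g(T^{j^\ast},x^i) - g(T^{j^\ast},x^{j^\ast})| \le 2\epsilon L$; combined with $g(T^{j^\ast},x^{j^\ast}) \ge g(T^{i^\ast},x^{i^\ast})$ (as $j^\ast \le i^\ast$), the contribution satisfies $g(T^{j^\ast},x^i) \ge g(T^{i^\ast},x^{i^\ast}) - 2\epsilon L$. Hence in both cases each of the $i^\ast$ activated paths contributes at least $g(T^{i^\ast},x^{i^\ast}) - 2\epsilon L$.

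Summing the $i^\ast$ contributions and dividing by $N$, the objective of this feasible solution is at least $\frac{i^\ast}{N}\bigl(g(T^{i^\ast},x^{i^\ast}) - 2\epsilon L\bigr) \ge \frac{i^\ast}{N} g(T^{i^\ast},x^{i^\ast}) - 2\epsilon L$, the final step using $i^\ast/N \le 1$. Maximizing over $i^\ast$ finishes the proof. The point I expect to require the most care is the case analysis for the per-path term: one must verify that the chosen monotone $w^i$ is genuinely feasible and that the forcing value $\kappa^i_{\ell^\ast_i}$ is the reward $g(T^{j^\ast},\cdot)$ evaluated along the \emph{foreign} path $x^i$ rather than along $x^{j^\ast}$. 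It is exactly this mismatch that the uncertainty-set overlap, through the Lipschitz Assumption~\ref{ass:lipschitz_local}, is used to absorb, producing precisely the $2\epsilon L$ slack.
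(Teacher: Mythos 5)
Your construction of $b$ and your per-path analysis of the activated paths match the paper's proof (the paper uses the same activation $b^i_{T^i}=\mathbb{I}\{i\le i^*\}$ and the same Lipschitz-plus-sorting argument, with a threshold choice $w^i_{T^i\ell}=\mathbb{I}\{\kappa^i_\ell \ge g(T^{i^*},x^{i^*})-2\epsilon L\}$ rather than your minimal feasible $w$). However, there is a genuine gap in your accounting. The objective of \eqref{prob:h_bar_lb} sums over \emph{all} $i\in\{1,\ldots,N\}$, and the variables $w^i_{T^i\ell}$ exist for every path, not just the activated ones. For a deactivated path $i>i^*$ (so $b^i_{T^i}=0$), the term is $-\sum_{\ell}(\kappa^i_{\ell+1}-\kappa^i_\ell)\,w^i_{T^i\ell}$, which is strictly negative whenever some activated $j\le i^*$ with $T^j<T^i$, $g(T^j,x^i)=\kappa^i_\ell<g(T^i,x^i)$, and $\mathcal{U}^i_{T^j}\cap\mathcal{U}^j_{T^j}\neq\emptyset$ forces $w^i_{T^i\ell}=1$. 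Your proposal leaves $w^i$ undefined for these paths (so the feasible point is not fully specified), and your final step, ``summing the $i^*$ contributions,'' silently assumes the deactivated paths contribute zero, which is false in general. The asserted inequality ``objective $\ge \frac{i^*}{N}\bigl(g(T^{i^*},x^{i^*})-2\epsilon L\bigr)$'' therefore does not follow from what you proved.

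The gap is fixable with exactly the tools you already used, and the repair is what the paper does. Define $w^i$ by the same minimal (or threshold) rule for \emph{every} path $i$. For a deactivated path $i>i^*$, if some constraint forces $w^i_{T^i\ell^*_i}=1$, the forcing value satisfies $\kappa^i_{\ell^*_i}=g(T^{j^*},x^i)\ge g(T^{j^*},x^{j^*})-2\epsilon L\ge g(T^{i^*},x^{i^*})-2\epsilon L$ by the Lipschitz bound and sorting, while $g(T^i,x^i)\le g(T^{i^*},x^{i^*})$ since $i>i^*$; hence each deactivated path loses at most $g(T^i,x^i)-\kappa^i_{\ell^*_i}\le 2\epsilon L$. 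The total objective is then at least $\frac{i^*}{N}\bigl(g(T^{i^*},x^{i^*})-2\epsilon L\bigr)-\frac{N-i^*}{N}\,2\epsilon L=\frac{i^*}{N}g(T^{i^*},x^{i^*})-2\epsilon L$. Note that the slack you discarded in your last display, $\bigl(1-\frac{i^*}{N}\bigr)2\epsilon L$, is precisely the budget needed to absorb the deactivated paths' losses — so the bound you claim is right, but only once this second half of the accounting is carried out.
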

\begin{proof}{Proof of Claim~\ref{claim:lb_on_h2:N}.}
Assume without loss of generality that $g(T^1,x^1) \ge \cdots \ge g(T^N,x^N)$, and consider any arbitrary $i^* \in \{1,\ldots,N\}$. In the remainder of the proof, we will show that 
\begin{align*}
J^{\textnormal{\ref{prob:h_bar}}} \ge \frac{i^*}{N} g(T^{i^*},x^{i^*}) - 2 \epsilon L. 
\end{align*}
To begin, we recall from the above Lemma~\ref{lem:lb_on_h2:lb} that $J^{\textnormal{\ref{prob:h_bar}}}$ is greater than or equal to the optimal objective value of the optimization problem~\eqref{prob:h_bar_lb}. Next, consider the solution $b,w$ for the optimization problem~\eqref{prob:h_bar_lb} that satisfies the following equalities for each sample path $i \in \{1,\ldots,N\}$, period $t \in \{1,\ldots,T\}$, and $\ell \in \{1,\ldots,| \mathcal{K}^i| - 1 \}$:
 \begin{align*}
 b^i_{T^i} &= \begin{cases}
 1,&\text{if } i \in \{1,\ldots,i^*\},\\
 0,&\text{if } i \in \{i^*+1,\ldots,N\};
 \end{cases}
  &w^i_{T^i \ell} &= \begin{cases}
 1,&\text{if } \kappa^i_\ell \ge g(T^{i^*},x^{i^*}) - 2 L \epsilon, \\
 0,&\text{if } T^i \in \{1,\ldots,t^*\}.
 \end{cases}
 \end{align*}
 We will now prove that the solution $b,w$ defined by the above equalities is a feasible solution for the optimization problem~\eqref{prob:h_bar_lb}. Indeed, we readily observe from inspection that the solution $b,w$ defined by the above equalities satisfies the first set of constraints in the optimization problem~\eqref{prob:h_bar_lb}, i.e.,
 \begin{align*}
 \begin{aligned}
 &w^{i}_{T^i \ell} \le w^i_{T^i,\ell+1} && \textnormal{for all } i \in \{1,\ldots,N\},\;\ell \in \{1,\ldots,|\mathcal{K}^i|-1\}.
 \end{aligned}
\end{align*}
To show that the solution $b,w$ satisfies the second set of constraints in the optimization problem~\eqref{prob:h_bar_lb}, consider any sample paths $ i, j \in \{1,\ldots,N\}$ such that $g(T^j,x^i) = \kappa^i_\ell$, $\kappa^i_\ell < g(T^i,x^i)$, $T^j < T^i$, and $\mathcal{U}^i_{T^j} \cap \mathcal{U}^j_{T^j} \neq \emptyset$. We have two cases to consider. First, suppose that the sample path $j$ satisfies $j \in \{i^*+1,\ldots,N\}$. In this case, it follows from the fact that $b^j_{T^j} = 0$ that the inequality $b^j_{T^j} \le w^i_{T^i \ell}$ is satisfied. Second, suppose that sample path $j$ satisfies $j \in \{1,\ldots,i^*\}$. In this case, we observe that % follows from the fact that   and from Assumption~\ref{ass:lipschitz_local} that 
\begin{align*}
g(T^j,x^i)  &= \kappa^i_\ell \ge g(T^j,x^j) - 2 \epsilon L \ge g(T^{i^*},x^{i^*}) - 2 \epsilon L,
\end{align*}
where the first inequality follows from Assumption~\ref{ass:lipschitz_local} and from the fact that $\mathcal{U}^i_{T^j} \cap \mathcal{U}^j_{T^j} \neq \emptyset$, and the second inequality follows from the fact that $j \in \{1,\ldots,i^*\}$. Therefore, we conclude that from our construction of the solution $b,w$ that $w^i_{T^i \ell} = 1$, which implies that the inequality $b^j_{T^j} \le w^i_{T^i \ell}$ is satisfied. We have thus shown that the solution $b,w$ satisfies the second set of constraints in the optimization problem~\eqref{prob:h_bar_lb}, i.e., 
\begin{align*}
\begin{aligned}
&b^j_{T^j} \le w^i_{T^i \ell}&& \textnormal{for all } i, j \in \{1,\ldots,N\}  \textnormal{ such that } g(T^j,x^i) = \kappa^i_\ell,\\
&&&\kappa^i_\ell < g(T^i,x^i),\; T^j < T^i, \; \textnormal{and } \mathcal{U}^i_{T^j} \cap \mathcal{U}^j_{T^j} \neq \emptyset.
\end{aligned}
\end{align*}
This concludes our proof that the solution $b,w$ is a feasible solution for the optimization problem~\eqref{prob:h_bar_lb}.

Since $b,w$ is a feasible solution for the optimization problem~\eqref{prob:h_bar_lb}, we have the following:
\begin{align}
J^{\textnormal{\ref{prob:h_bar}}} &\ge  \frac{1}{N} \sum_{i=1}^N \sum_{\ell=1}^{L^i_{T^i}-1}( \kappa^i_{\ell+1} - \kappa^i_\ell) \left( \mathbb{I} \left \{ i \in \{1,\ldots,i^*\} \right \}  - \mathbb{I} \left \{ \kappa^i_\ell \ge g(T^{i^*},x^{i^*}) - 2 L \epsilon \right \} \right) \label{line:silly:1}\\
&= \frac{1}{N} \sum_{i=1}^{i^*} \sum_{\ell =1}^{L^i_{T^i}-1}( \kappa^i_{\ell+1} - \kappa^i_\ell) -  \frac{1}{N} \sum_{i=1}^{N}\sum_{\ell=1}^{L^i_{T^i}-1}   \mathbb{I} \left \{ \kappa^i_\ell \ge g(T^{i^*},x^{i^*}) - 2 L \epsilon \right \} ( \kappa^i_{\ell+1} - \kappa^i_\ell)  \label{line:silly:2} \\
&=   \frac{1}{N} \sum_{i=1}^{i^*} g(T^i,x^i)  -  \frac{1}{N} \sum_{i=1}^{N}\sum_{\ell=1}^{L^i_{T^i}-1}   \mathbb{I} \left \{ \kappa^i_\ell \ge g(T^{i^*},x^{i^*}) - 2 L \epsilon \right \} ( \kappa^i_{\ell+1} - \kappa^i_\ell)  \label{line:silly:2a} \\
&\ge  \frac{1}{N} \sum_{i=1}^{i^*} g(T^i,x^i) \notag \\
&\quad -  \frac{1}{N} \sum_{i=1}^{N} \mathbb{I} \left \{  \exists \ell \in \{1,\ldots,L^i_{T^i} - 1 \} \textnormal{ such that } \kappa^i_{ \ell} \ge g(T^{i^*},x^{i^*}) - 2 L \epsilon \right \}\left( g(T^i,x^i) - \left( g(T^{i^*},x^{i^*}) - 2 L \epsilon \right ) \right) \label{line:silly:3} \\
&\ge  \frac{1}{N} \sum_{i=1}^{i^*} g(T^i,x^i) -  \frac{1}{N} \sum_{i=1}^{N} \mathbb{I} \left \{  g(T^i,x^i) \ge g(T^{i^*},x^{i^*}) - 2 L \epsilon \right \}\left( g(T^i,x^i) - g(T^{i^*},x^{i^*}) + 2 L \epsilon \right )  \label{line:silly:3a} \\
&= \frac{1}{N} \sum_{i=1}^{i^*} \left( g(T^{i^*},x^{i^*}) - 2 L \epsilon \right)  - \frac{1}{N} \sum_{i \in \{i^* + 1,\ldots,N\}:  \;g(T^i,x^i) \ge g(T^{i^*},x^{i^*}) - 2 L \epsilon} \left( g(T^i,x^i) - g(T^{i^*},x^{i^*}) + 2 L \epsilon \right )  \label{line:silly:4}\\
&\ge \frac{1}{N} \sum_{i=1}^{i^*} \left( g(T^{i^*},x^{i^*}) - 2 L \epsilon \right)  - \frac{1}{N} \sum_{i \in \{i^* + 1,\ldots,N\}:  \;g(T^i,x^i) \ge g(T^{i^*},x^{i^*}) - 2 L \epsilon}  2 L \epsilon  \label{line:silly:5}  \\
&\ge \frac{1}{N} \sum_{i=1}^{i^*} \left( g(T^{i^*},x^{i^*}) - 2 L \epsilon \right)  - \frac{ N - i^*}{N}  2 L \epsilon   \label{line:silly:6} \\
&= \frac{i^*}{N} g(T^{i^*},x^{i^*}) - 2 \epsilon L, \label{line:silly:7}
\end{align}
Indeed, \eqref{line:silly:1} holds because $J^{\textnormal{\ref{prob:h_bar}}}$ is greater than or equal to the optimal objective value of the optimization problem~\eqref{prob:h_bar_lb} and because $b,w$ is a feasible but possibly suboptimal solution for the optimization problem~\eqref{prob:h_bar_lb}. Lines~\eqref{line:silly:2}, \eqref{line:silly:2a}, and \eqref{line:silly:3} follow from algebra. Line~\eqref{line:silly:3a} follows from the fact that $\kappa_0^i < \cdots < \kappa^i_{L^i_{T^i}}$. Line~\eqref{line:silly:4} follows from rearranging terms. Line~\eqref{line:silly:5} follows from the fact that $g(T^i,x^i) - g(T^{i^*},x^{i^*}) \le 0$ for all $i \in \{i^*+1,\ldots,N\}$. Lines~\eqref{line:silly:6} and \eqref{line:silly:7} follow from algebra. 

Since $i^* \in \{1,\ldots,N\}$ was chosen arbitrarily, our proof of Claim~\ref{claim:lb_on_h2:N} is complete.  \halmos \end{proof}

\begin{claim} \label{claim:lb_on_h2:N2}
$J^{\textnormal{\ref{prob:h_bar}}} \ge \frac{1}{\log N + 1} \left( \frac{1}{N} \sum_{i=1}^N g(T^i,x^i) \right) -  2\epsilon L$. 
\end{claim}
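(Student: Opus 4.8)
The plan is to deduce Claim~\ref{claim:lb_on_h2:N2} directly from Claim~\ref{claim:lb_on_h2:N}, thereby reducing everything to an elementary inequality about sorted nonnegative sequences. Since both quantities appearing in Claim~\ref{claim:lb_on_h2:N2}, namely $J^{\textnormal{\ref{prob:h_bar}}}$ and $\frac{1}{N}\sum_{i=1}^N g(T^i,x^i)$, are invariant under relabeling of the sample paths, I would first assume without loss of generality that the sample paths are indexed so that $g(T^1,x^1) \ge \cdots \ge g(T^N,x^N)$, exactly as in Claim~\ref{claim:lb_on_h2:N}. Under this ordering, Claim~\ref{claim:lb_on_h2:N} already supplies the bound $J^{\textnormal{\ref{prob:h_bar}}} \ge \max_{i \in \{1,\ldots,N\}} \frac{i}{N} g(T^i,x^i) - 2\epsilon L$, so the entire task collapses to establishing the purely combinatorial estimate
\begin{align*}
\max_{i \in \{1,\ldots,N\}} \frac{i}{N} g(T^i,x^i) \ge \frac{1}{\log N + 1}\left( \frac{1}{N} \sum_{i=1}^N g(T^i,x^i) \right).
\end{align*}

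To prove this combinatorial estimate, I would write $a_i \triangleq g(T^i,x^i)$, so that $a_1 \ge \cdots \ge a_N \ge 0$, and set $M \triangleq \max_{i} i\,a_i$. The single key observation is that $M \ge i\,a_i$ for every $i$, which rearranges to $a_i \le M/i$. Summing this over $i$ and invoking the harmonic-number bound $\sum_{i=1}^N \tfrac{1}{i} \le 1 + \log N$ gives $\sum_{i=1}^N a_i \le M \sum_{i=1}^N \tfrac{1}{i} \le M(\log N + 1)$, whence $M \ge \frac{1}{\log N + 1}\sum_{i=1}^N a_i$. Dividing by $N$ yields $\max_i \frac{i}{N} a_i = \frac{M}{N} \ge \frac{1}{\log N + 1}\left(\frac{1}{N}\sum_{i=1}^N a_i\right)$, which is precisely the desired estimate. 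The harmonic bound $\sum_{i=1}^N \tfrac1i \le 1 + \log N$ itself follows from comparing the sum to $\int_1^N \tfrac{dx}{x}$, and I would state it inline rather than belabor it.

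Combining the two displays gives $J^{\textnormal{\ref{prob:h_bar}}} \ge \frac{1}{\log N + 1}\left( \frac{1}{N} \sum_{i=1}^N g(T^i,x^i) \right) - 2\epsilon L$, completing the proof of Claim~\ref{claim:lb_on_h2:N2}; the final statement of Proposition~\ref{prop:lb_on_h2:N} then follows at once by combining this with the upper bound $J^{\textnormal{\ref{prob:bp}}} \le \frac{1}{N}\sum_{i=1}^N g(T^i,x^i)$ of Lemma~\ref{lem:lb_on_h2:ub}. I do not anticipate a genuine obstacle here, since Claim~\ref{claim:lb_on_h2:N} has already done the heavy lifting of constructing an explicit feasible solution to \eqref{prob:h_bar_lb}; the only step requiring care is recognizing that the correct way to aggregate the family of lower bounds $\{\frac{i}{N}a_i\}_{i}$ against the average $\frac{1}{N}\sum_i a_i$ is via the bound $a_i \le M/i$ together with the harmonic series, which is what produces the $\log N$ (rather than a weaker polynomial) denominator.
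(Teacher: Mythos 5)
Your proposal is correct and follows essentially the same route as the paper: both reduce the claim to Claim~\ref{claim:lb_on_h2:N} and then establish the combinatorial estimate via the harmonic-sum bound $\sum_{i=1}^N \tfrac{1}{i} \le \log N + 1$ — the paper phrases this by writing $\frac{1}{N}\sum_i g(T^i,x^i) = \sum_i \frac{1}{i}\gamma^i \le (\max_i \gamma^i)\sum_i \frac{1}{i}$ with $\gamma^i \triangleq \frac{i}{N}g(T^i,x^i)$, which is exactly your inequality $a_i \le M/i$ in rescaled notation. No gap; the two arguments are the same up to bookkeeping.
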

\begin{proof}{Proof of Claim~\ref{claim:lb_on_h2:N2}.}
Assume without loss of generality that $g(T^1,x^1) \ge \cdots \ge g(T^N,x^N)$. In Claim~\ref{claim:lb_on_h2:N}, we showed that 
\begin{align*}
J^{\textnormal{\ref{prob:h_bar}}} \ge \max_{i \in \{1,\ldots,N\}} \frac{i}{N} g(T^i,x^i) - 2 \epsilon L.
\end{align*}
For notational convenience, let us define $\gamma^i \triangleq  \frac{i}{N} g(T^i,x^i) $ for each sample path $i \in \{1,\ldots,N\}$. With this notation, we observe from algebra that
\begin{align*}
\frac{1}{\log N + 1} \left( \frac{1}{N} \sum_{i=1}^N g(T^i,x^i) \right)  - 2 \epsilon L &= \frac{1}{\log N + 1} \left(  \sum_{i=1}^N \frac{1}{i} \gamma^i \right)  - 2 \epsilon L\\
&\le  \frac{1}{\log N + 1} \left( \max_{i \in \{1,\ldots,N\}} \gamma^{i} \right)  \sum_{i=1}^N \frac{1}{i}  - 2 \epsilon L\\
& \le   \max_{i \in \{1,\ldots,N\}} \gamma^{i}  - 2 \epsilon L \\
&=  \max_{i \in \{1,\ldots,N\}}  \frac{i}{N} g(T^i,x^i)    - 2 \epsilon L \\
&\le J^{\textnormal{\ref{prob:h_bar}}}, 
\end{align*}
where the last line follows from Claim~\ref{claim:lb_on_h2:N}. This completes the proof of Claim~\ref{claim:lb_on_h2:N2}. 
\halmos \end{proof}

Combining Claim~\ref{claim:lb_on_h2:N2} with Lemma~\ref{lem:lb_on_h2:ub}  completes our proof of Proposition~\ref{prop:lb_on_h2:N}. 
\halmos
\end{proof}
}

\begin{comment}
\section{Counterexamples} \label{appx:examples}
This section provides examples which show the tightness of our approximation bounds from \S\ref{sec:approx}.

The following two examples show that the converses of Examples~\ref{ex:all_ones} and Example~\ref{ex:all_zeros} do not hold in general.
\begin{example}[Converse of Example~\ref{ex:all_zeros} does not hold]
Consider an instance of \eqref{prob:sro} with a one-dimensional state space equal to the real numbers ($\mathcal{X} = \R$),  reward function given by $g(t,x) = x_t$, robustness parameter equal to zero ($\epsilon = 0.0$), a time horizon equal to three ($T = 3$), and three sample paths ($N = 3$) with values given by
\begin{align*}
x^1 = (0,3,6), \quad x^2 = (1,0,0),\quad x^3 = (1,3,1).
\end{align*}
For the sake of convenience, we remark that the equalities $T^1 = 3$, $T^2 = 1$, $T^3 = 2$ hold. % and that changing from $b^1 = 0$ to $b^1 = 1$ would not affect the objective value of \eqref{prob:lb}. 
\begin{center}
\begin{tabular}{| c c c c |}\hline
$b^2$ & $b^3$ & Objective value of \eqref{prob:lb} & Objective value of \eqref{prob:lb1} \\   \hline \hline
0 & 0 & $\sfrac{7}{3}$ & $\sfrac{7}{3}$ \\
0 & 1 & &$\sfrac{6}{3}$ \\
1 & 0 & &$\sfrac{8}{3}$ \\
1 & 1 & &$\sfrac{5}{3}$ \\
\end{tabular}
\end{center}
 $b^1=b^2=b^3 = 0$ is an optimal solution to \eqref{prob:lb}.

\halmos \end{example}

\end{comment}
%\clearpage

\section{Additional Numerical Results} \label{appx:numerics}
In this appendix, we present numerical results for additional parameter settings which were omitted from \S\ref{sec:barrier} due to length considerations.% In Figures~\ref{fig:barrier_policy_symmetric} and \ref{fig:barrier_policy_asymmetric}, we present visualizations of stopping rules obtained from the robust optimization problem on different parameter settings. In Figures~\ref{fig:barrier_epsilon_symmetric} and \ref{fig:barrier_epsilon_asymmetric}, we present visualizations of stopping rules obtained from the robust optimization problem on different parameter settings. 

%\begin{figure}
%\centering
%\FIGURE{\includegraphics[width=0.9\linewidth]{figures/barrier_rob_vs_ls_history_asymmetric.png}}{Barrier Option (Asymmetric) - Performance of methods for $d=16$, $\bar{x} = 100$\label{fig:barrier_others_asymmetric}}
%{ Each plot shows the exercise policies obtained from solving a robust optimization problem constructed from a training dataset of  size $N = 10^3$ and with the robustness parameter selected using a validation set of size $\tilde{N} = 10^3$. The problem parameters are the same as those shown in Table~\ref{tbl:barrier_asymmetric}.  }
%
%\end{figure}

    \afterpage{%
\null
\vfill
\begin{table}[H]
\TABLE{Barrier Option (Asymmetric) - Expected Reward. \label{tbl:barrier_asymmetric}}
{\centering \small
% latex table generated in R 4.1.0 by xtable 1.8-4 package
% Mon Jun 13 13:13:43 2022
\begin{tabular}{cclcccc}
  \hline
  &&& \multicolumn{3}{c}{Initial Price}& \\
 $d$ & Method & Basis functions & $\bar{x} = 90$ & $\bar{x} = 100$ & $\bar{x} = 110$ & \# of Sample Paths \\
 \hline
  8 & RO & maxprice & \textbf{57.53 (0.16)} & \textbf{67.25 (0.27)} & \textbf{69.10 (0.42)} & $10^3$ training, $10^3$ validation \\ 
    8 & LS & one, pricesKO, KOind, payoff & 56.44 (0.11) & 65.52 (0.16) & 68.34 (0.09) & $10^5$ \\ 
    8 & LS & one, pricesKO, payoff & 56.54 (0.12) & 65.74 (0.17) & 68.31 (0.08) & $10^5$ \\ 
    8 & LS & payoff, KOind, pricesKO & 56.44 (0.11) & 65.52 (0.16) & 68.34 (0.09) & $10^5$ \\ 
    8 & LS & pricesKO, payoff & 56.51 (0.12) & 65.75 (0.18) & 68.31 (0.08) & $10^5$ \\ 
    8 & LS & one, prices, payoff & 55.56 (0.09) & 61.92 (0.09) & 60.07 (0.12) & $10^5$ \\ 
    8 & LS & one & 45.80 (0.06) & 52.84 (0.07) & 48.56 (0.08) & $10^5$ \\ 
    8 & LS & one, KOind, prices & 52.33 (0.09) & 61.85 (0.10) & 63.88 (0.08) & $10^5$ \\ 
    8 & LS & one, prices & 48.49 (0.11) & 53.03 (0.09) & 50.83 (0.06) & $10^5$ \\ 
    8 & LS & one, pricesKO & 51.67 (0.08) & 61.49 (0.10) & 63.14 (0.09) & $10^5$ \\ 
    8 & LS & maxprice, KOind, pricesKO & 52.63 (0.09) & 61.97 (0.10) & 64.14 (0.12) & $10^5$ \\ 
    8 & PO & payoff, KOind, pricesKO & 53.61 (0.18) & 58.29 (0.36) & 52.96 (0.30) & $2 \times 10^3$ outer, $500$ inner \\ 
    8 & PO & prices & 53.90 (0.16) & 60.66 (0.28) & 58.14 (0.12) & $2 \times 10^3$ outer, $500$ inner \\ 
    8 & Tree & payoff, time & 55.75 (0.13) & 62.78 (0.31) & 63.95 (0.52) & $10^5$ \\ 
    8 & Tree & maxprice, time & 55.75 (0.13) & 62.78 (0.31) & 63.95 (0.52) & $10^5$ \\ 
   \hline
 16 & RO & maxprice & \textbf{72.56 (0.15)} & \textbf{77.44 (0.29)} & 69.74 (0.30) & $10^3$ training, $10^3$ validation \\ 
   16 & LS & one, pricesKO, KOind, payoff & 70.94 (0.08) & 76.10 (0.10) & \textbf{70.59 (0.13)} & $10^5$ \\ 
   16 & LS & one, pricesKO, payoff & 71.06 (0.08) & 76.14 (0.08) & 70.05 (0.12) & $10^5$ \\ 
   16 & LS & payoff, KOind, pricesKO & 70.94 (0.08) & 76.10 (0.10) & \textbf{70.59 (0.13)} & $10^5$ \\ 
   16 & LS & pricesKO, payoff & 71.06 (0.08) & 76.14 (0.08) & 70.04 (0.12) & $10^5$ \\ 
   16 & LS & one, prices, payoff & 67.57 (0.08) & 67.07 (0.16) & 56.36 (0.14) & $10^5$ \\ 
   16 & LS & one & 59.09 (0.05) & 56.20 (0.11) & 45.55 (0.05) & $10^5$ \\ 
   16 & LS & one, KOind, prices & 66.78 (0.06) & 71.06 (0.14) & 63.54 (0.20) & $10^5$ \\ 
   16 & LS & one, prices & 59.35 (0.07) & 57.68 (0.08) & 49.67 (0.07) & $10^5$ \\ 
   16 & LS & one, pricesKO & 66.51 (0.05) & 70.70 (0.13) & 62.53 (0.16) & $10^5$ \\ 
   16 & LS & maxprice, KOind, pricesKO & 67.00 (0.06) & 71.27 (0.13) & 64.04 (0.19) & $10^5$ \\ 
   16 & PO & payoff, KOind, pricesKO & 65.14 (0.31) & 62.12 (0.34) & 47.16 (0.37) & $2 \times 10^3$ outer, $500$ inner \\ 
   16 & PO & prices & 66.69 (0.18) & 66.23 (0.23) & 53.84 (0.26) & $2 \times 10^3$ outer, $500$ inner \\ 
   16 & Tree & payoff, time & 68.88 (0.15) & 71.11 (0.19) & 55.05 (0.11) & $10^5$ \\ 
   16 & Tree & maxprice, time & 68.88 (0.15) & 71.11 (0.19) & 55.05 (0.11) & $10^5$ \\ 
   \hline
 32 & RO & maxprice & \textbf{84.12 (0.28)} & 79.14 (0.40) & 60.66 (0.57) & $10^3$ training, $10^3$ validation \\ 
   32 & LS & one, pricesKO, KOind, payoff & 82.38 (0.09) & \textbf{79.17 (0.10)} & \textbf{62.92 (0.15)} & $10^5$ \\ 
   32 & LS & one, pricesKO, payoff & 82.45 (0.10) & 78.91 (0.10) & 62.27 (0.14) & $10^5$ \\ 
   32 & LS & payoff, KOind, pricesKO & 82.38 (0.09) & \textbf{79.17 (0.10)} & \textbf{62.92 (0.15)} & $10^5$ \\ 
   32 & LS & pricesKO, payoff & 82.45 (0.10) & 78.91 (0.10) & 62.27 (0.14) & $10^5$ \\ 
   32 & LS & one, prices, payoff & 73.46 (0.19) & 62.81 (0.17) & 48.79 (0.17) & $10^5$ \\ 
   32 & LS & one & 64.26 (0.12) & 50.78 (0.09) & 44.32 (0.05) & $10^5$ \\ 
   32 & LS & one, KOind, prices & 77.53 (0.11) & 71.06 (0.12) & 56.06 (0.21) & $10^5$ \\ 
   32 & LS & one, prices & 64.63 (0.11) & 55.21 (0.12) & 46.58 (0.05) & $10^5$ \\ 
   32 & LS & one, pricesKO & 77.36 (0.10) & 70.44 (0.16) & 55.11 (0.24) & $10^5$ \\ 
   32 & LS & maxprice, KOind, pricesKO & 77.67 (0.11) & 71.53 (0.13) & 56.30 (0.22) & $10^5$ \\ 
   32 & PO & payoff, KOind, pricesKO & 70.79 (0.49) & 55.67 (0.31) & 35.36 (1.21) & $2 \times 10^3$ outer, $500$ inner \\ 
   32 & PO & prices & 73.86 (0.32) & 62.09 (0.22) & 44.23 (0.29) & $2 \times 10^3$ outer, $500$ inner \\ 
   32 & Tree & payoff, time & 77.64 (0.44) & 63.91 (0.13) & 50.95 (0.04) & $10^5$ \\ 
   32 & Tree & maxprice, time & 77.64 (0.44) & 63.91 (0.13) & 50.95 (0.04) & $10^5$ \\ 
   \hline
\end{tabular}

}
{Optimal is indicated in bold for each number of assets $d \in \{8,16,32\}$ and initial price $\bar{x} \in \{90,100,110\}$.  %The number of sample paths for the robust optimization method \eqref{prob:sro} is equal to the number of training sample paths ($N = 10^3$) plus the number of validation sample paths ($\bar{N} = 10^3$), see \S\ref{sec:params} for additional details. The robust optimization method was run with robustness parameters $\epsilon \in \{k \times 10^p : k \in \{1,\ldots,10\}, p \in \{0.01,0.1,1\} \}$ and solved approximately using the algorithm from Theorem~\ref{thm:lb}.   
Problem parameters are $T = 54$, $Y = 3$, $r = 0.05$, $K = 100$, $B_0 = 150$, $\delta = 0.25$, $\sigma_a = 0.1 + s\frac{a \times d}{5}$, $\rho_{a,a'} = 0$ for all $a \neq a'$.  }
\end{table}
\null
\vfill
\clearpage
}

    \afterpage{%
\null
\vfill
\begin{table}[H]
\TABLE{Barrier Option (Asymmetric) - Computation Times. \label{tbl:barrier_time_asymmetric}}
{\centering \small
% latex table generated in R 4.1.0 by xtable 1.8-4 package
% Mon Jun 13 13:13:46 2022
\begin{tabular}{cclcccc}
  \hline
  &&& \multicolumn{3}{c}{Initial Price}& \\
 $d$ & Method & Basis functions & $\bar{x} = 90$ & $\bar{x} = 100$ & $\bar{x} = 110$ & \# of Sample Paths \\
 \hline
  8 & RO & maxprice &   7.19 (0.37) &   8.62 (1.09) &  18.38 (4.59) & $10^3$ training, $10^3$ validation \\ 
    8 & LS & one, pricesKO, KOind, payoff &   3.29 (0.2) &   3.20 (0.31) &   3.24 (0.15) & $10^5$ \\ 
    8 & LS & one, pricesKO, payoff &   3.16 (0.2) &   3.05 (0.25) &   3.13 (0.21) & $10^5$ \\ 
    8 & LS & payoff, KOind, pricesKO &   3.76 (0.63) &   3.29 (0.58) &   3.05 (0.44) & $10^5$ \\ 
    8 & LS & pricesKO, payoff &   3.03 (0.14) &   2.89 (0.23) &   2.87 (0.12) & $10^5$ \\ 
    8 & LS & one, prices, payoff &   3.31 (0.56) &   2.97 (0.29) &   2.90 (0.24) & $10^5$ \\ 
    8 & LS & one &   0.89 (0.07) &   0.93 (0.06) &   0.94 (0.08) & $10^5$ \\ 
    8 & LS & one, KOind, prices &   3.39 (0.55) &   2.99 (0.26) &   2.98 (0.21) & $10^5$ \\ 
    8 & LS & one, prices &   2.63 (0.19) &   2.71 (0.22) &   2.72 (0.19) & $10^5$ \\ 
    8 & LS & one, pricesKO &   2.78 (0.35) &   2.62 (0.18) &   2.53 (0.21) & $10^5$ \\ 
    8 & LS & maxprice, KOind, pricesKO &   3.01 (0.13) &   2.85 (0.26) &   2.87 (0.08) & $10^5$ \\ 
    8 & PO & payoff, KOind, pricesKO &  18.75 (0.71) &  18.12 (0.41) &  16.23 (0.71) & $2 \times 10^3$ outer, $500$ inner \\ 
    8 & PO & prices &  23.31 (0.45) &  23.57 (0.36) &  22.69 (0.29) & $2 \times 10^3$ outer, $500$ inner \\ 
    8 & Tree & payoff, time &  13.80 (0.28) &  15.72 (5.51) &  57.70 (10.81) & $10^5$ \\ 
    8 & Tree & maxprice, time &  13.47 (0.19) &  15.04 (5.42) &  55.45 (10.47) & $10^5$ \\ 
   \hline
 16 & RO & maxprice &   6.80 (1) &  10.15 (1.4) &  51.58 (9.06) & $10^3$ training, $10^3$ validation \\ 
   16 & LS & one, pricesKO, KOind, payoff &   5.55 (0.61) &   5.30 (0.34) &   5.01 (0.42) & $10^5$ \\ 
   16 & LS & one, pricesKO, payoff &   5.50 (0.78) &   5.23 (0.33) &   5.24 (0.33) & $10^5$ \\ 
   16 & LS & payoff, KOind, pricesKO &   4.74 (0.72) &   4.62 (0.54) &   4.55 (0.41) & $10^5$ \\ 
   16 & LS & pricesKO, payoff &   5.29 (0.37) &   5.18 (0.38) &   4.87 (0.26) & $10^5$ \\ 
   16 & LS & one, prices, payoff &   5.57 (0.55) &   4.93 (0.29) &   5.10 (0.32) & $10^5$ \\ 
   16 & LS & one &   1.17 (0.11) &   1.26 (0.08) &   1.24 (0.05) & $10^5$ \\ 
   16 & LS & one, KOind, prices &   6.21 (1.46) &   5.72 (0.76) &   5.16 (0.51) & $10^5$ \\ 
   16 & LS & one, prices &   6.02 (1.63) &   5.11 (1.25) &   4.86 (0.73) & $10^5$ \\ 
   16 & LS & one, pricesKO &   4.61 (0.62) &   4.58 (0.52) &   4.67 (0.58) & $10^5$ \\ 
   16 & LS & maxprice, KOind, pricesKO &   5.11 (0.64) &   4.90 (0.6) &   4.73 (0.27) & $10^5$ \\ 
   16 & PO & payoff, KOind, pricesKO &  32.13 (1.23) &  28.51 (0.96) &  25.56 (0.48) & $2 \times 10^3$ outer, $500$ inner \\ 
   16 & PO & prices &  43.73 (0.61) &  43.75 (0.67) &  41.84 (0.65) & $2 \times 10^3$ outer, $500$ inner \\ 
   16 & Tree & payoff, time &  14.24 (0.4) &  48.39 (0.94) &   7.89 (0.09) & $10^5$ \\ 
   16 & Tree & maxprice, time &  13.35 (0.3) &  47.49 (0.74) &   8.31 (0.19) & $10^5$ \\ 
   \hline
 32 & RO & maxprice &   7.88 (1.21) &  22.32 (7.25) & 154.12 (29.91) & $10^3$ training, $10^3$ validation \\ 
   32 & LS & one, pricesKO, KOind, payoff &  15.07 (2.97) &  11.55 (1.37) &  13.68 (2.69) & $10^5$ \\ 
   32 & LS & one, pricesKO, payoff &  15.29 (2.15) &  12.20 (1.68) &  13.04 (3.25) & $10^5$ \\ 
   32 & LS & payoff, KOind, pricesKO &  13.13 (2.59) &  10.96 (1.81) &  11.30 (1.81) & $10^5$ \\ 
   32 & LS & pricesKO, payoff &  15.11 (3.27) &  11.39 (1.91) &  11.62 (1.44) & $10^5$ \\ 
   32 & LS & one, prices, payoff &  16.78 (4.89) &  10.68 (1.52) &  11.92 (3.11) & $10^5$ \\ 
   32 & LS & one &   1.62 (0.6) &   1.51 (0.16) &   1.69 (0.74) & $10^5$ \\ 
   32 & LS & one, KOind, prices &  14.61 (2.44) &  10.77 (0.79) &  11.48 (1.94) & $10^5$ \\ 
   32 & LS & one, prices &  12.49 (3.23) &  10.42 (0.57) &  10.47 (0.84) & $10^5$ \\ 
   32 & LS & one, pricesKO &  13.61 (3.66) &  10.55 (0.43) &  11.02 (1.31) & $10^5$ \\ 
   32 & LS & maxprice, KOind, pricesKO &  12.63 (0.86) &  10.82 (1.36) &  11.60 (1.09) & $10^5$ \\ 
   32 & PO & payoff, KOind, pricesKO &  61.15 (1.43) &  52.57 (4.38) &  52.56 (1.89) & $2 \times 10^3$ outer, $500$ inner \\ 
   32 & PO & prices & 108.40 (2.23) & 108.40 (3.02) &  97.29 (2.01) & $2 \times 10^3$ outer, $500$ inner \\ 
   32 & Tree & payoff, time &  44.24 (19.71) &  15.35 (0.31) &   9.41 (0.32) & $10^5$ \\ 
   32 & Tree & maxprice, time &  43.92 (20.08) &  16.22 (0.62) &  10.92 (0.4) & $10^5$ \\ 
   \hline
\end{tabular}

}
{Problem parameters are $T = 54$, $Y = 3$, $r = 0.05$, $K = 100$, $B_0 = 150$, $\delta = 0.25$, $\sigma_a = 0.1 + s\frac{a \times d}{5}$, $\rho_{a,a'} = 0$ for all $a \neq a'$.  }
\end{table}
\null
\vfill
\clearpage
}

    \afterpage{%
\null
\vfill
\begin{table}[H]
\TABLE{Barrier Option (Asymmetric) - Best Choice of Robustness Parameter. \label{tbl:barrier_epsilon_asymmetric}}
{\centering \small
 % latex table generated in R 4.1.0 by xtable 1.8-4 package
% Wed Jun 15 12:09:27 2022
\begin{tabular}{cccc}
  \hline
  & \multicolumn{3}{c}{Initial Price} \\
 $d$ & $\bar{x} = 90$ & $\bar{x} = 100$ & $\bar{x} = 110$\\
 \hline
  8 & 8.2 (1.75) & 5.9 (1.73) & 4.0 (1.33) \\ 
   16 & 6.2 (1.69) & 4.6 (1.26) & 3.4 (1.17) \\ 
   32 & 5.5 (1.27) & 4.0 (1.05) & 2.3 (0.95) \\ 
   \hline
\end{tabular}
}
%\subfloat{%
%  \includegraphics[width=0.95\textwidth]{figures/barrier_rob_vs_ls_history_asymmetric.png}%
%}
%
%\medskip
%
%
%\caption{ The plots show the expected rewards (left) and computation times (right) for various methods as a function of the number of simulated training sample paths.    The problem parameters are $\bar{x} = 100$ and $d = 16$, and the remaining parameters are the same as those shown in 
{ {\color{black}Best choice of robustness parameter $\epsilon$ found using the validation method from \S\ref{sec:params} in the robust optimization problems constructed from a training dataset of  size $N = 10^3$ and validation set of size $\bar{N} = 10^3$.  The remaining parameters are the same as those shown in Table~\ref{tbl:barrier_asymmetric}.}  }\label{tbl:barrier_asymmetric_epsilon_single}

\end{table}
\null
\vfill
\clearpage
}

\afterpage{%
\null
\vfill
\begin{figure}[H]
\centering
\FIGURE{\includegraphics[width=0.85\linewidth]{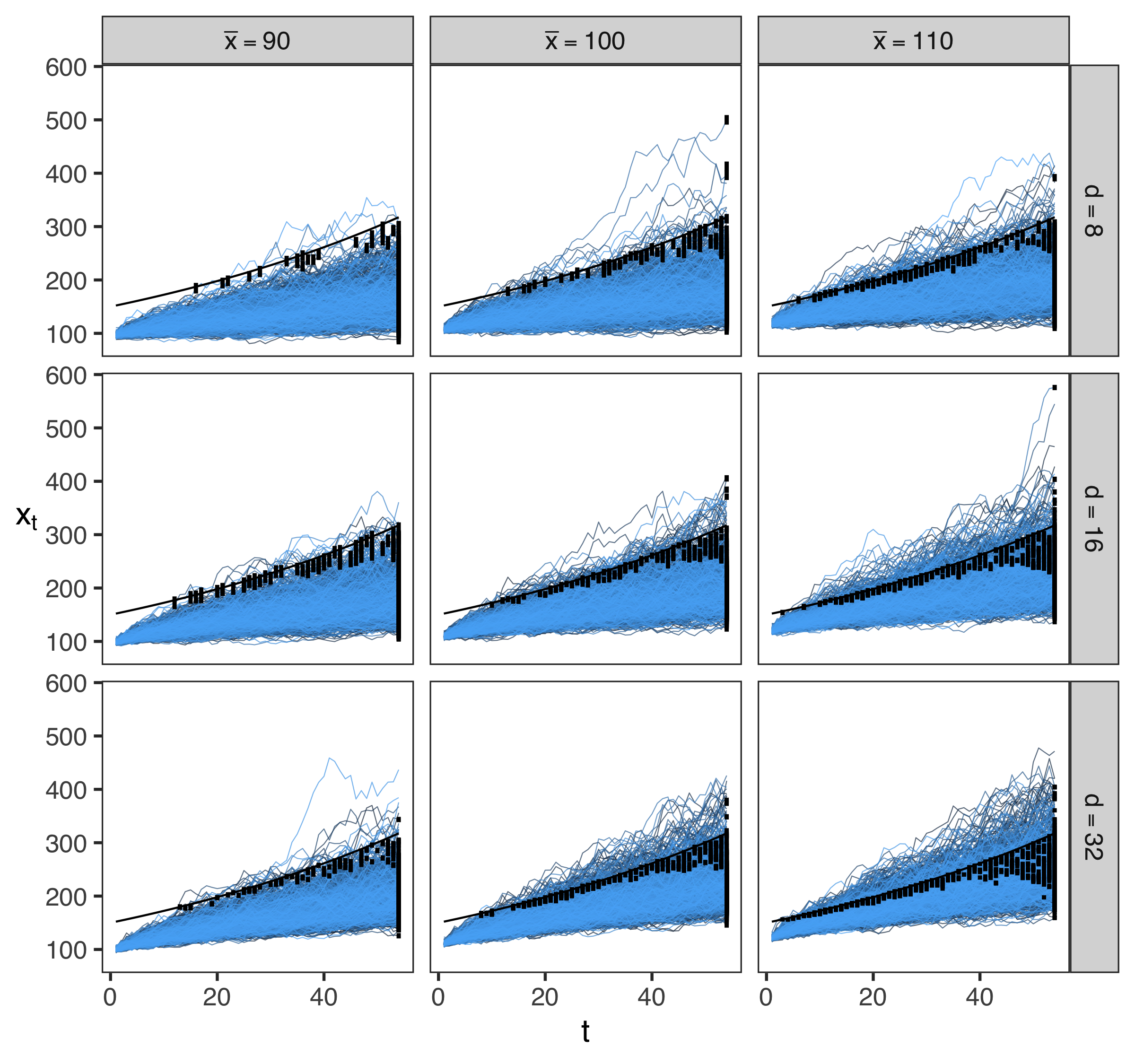}}{Barrier Option (Symmetric) - Visualization of Robust Optimization Stopping Rules.\label{fig:barrier_policy_symmetric}}
{ Each plot shows the exercise policies obtained from solving the {\color{black}heuristic~\eqref{prob:h_bar}} constructed from a training dataset of  size $N = 10^3$ and with the robustness parameter selected using a validation set of size $\bar{N} = 10^3$. The problem parameters are the same as those shown in Table~\ref{tbl:barrier_symmetric}.  }%\label{fig:barrier_policy_symmetric}
\end{figure}

\null
\vfill

\clearpage
}

\afterpage{%
\null
\vfill
\begin{figure}[H]
\centering
\FIGURE{\includegraphics[width=0.85\linewidth]{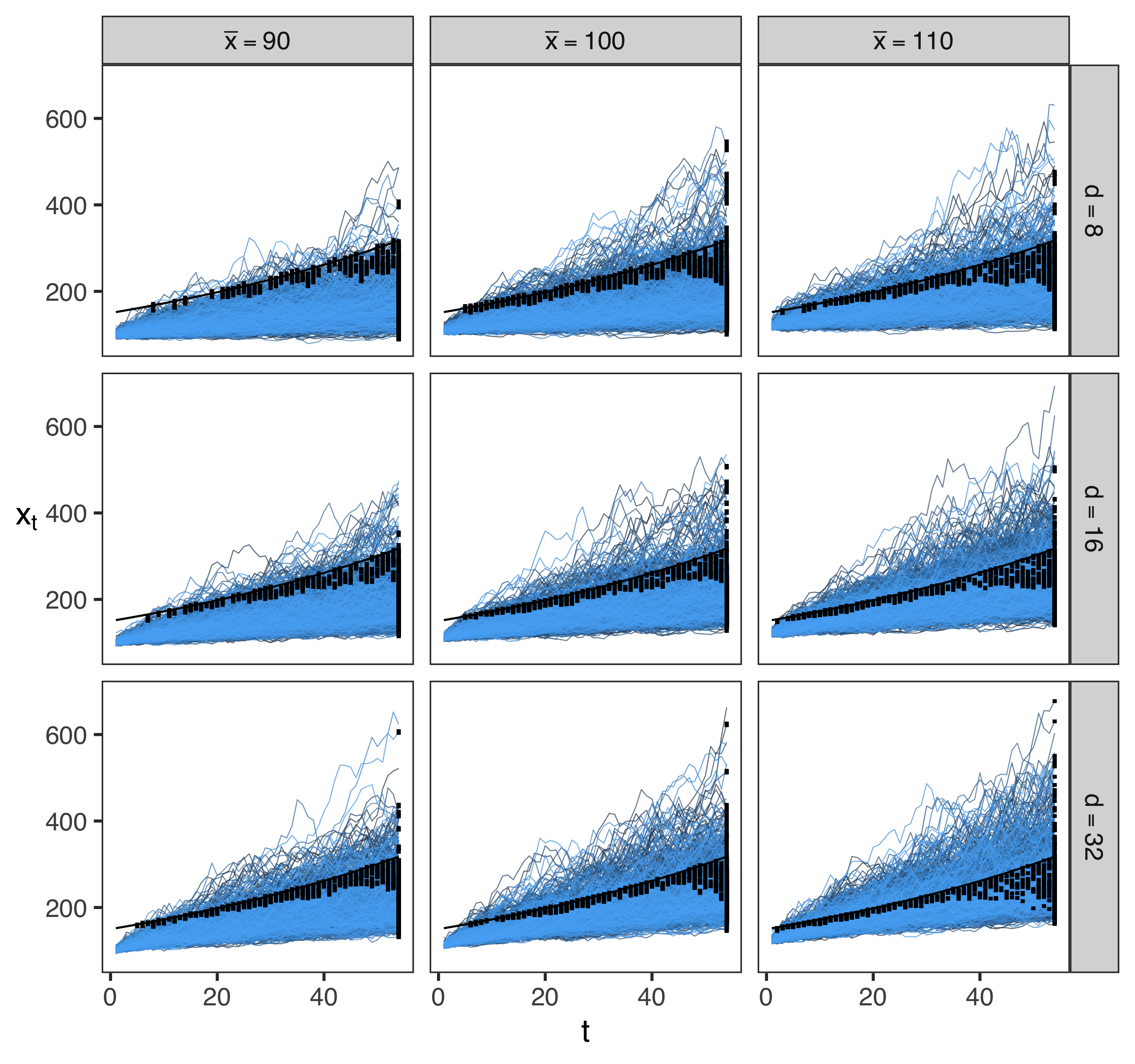}}{Barrier Option (Asymmetric) - Visualization of Robust Optimization Stopping Rules.\label{fig:barrier_policy_asymmetric}}
{ Each plot shows the exercise policies obtained from solving the {\color{black}heuristic~\eqref{prob:h_bar}} constructed from a training dataset of  size $N = 10^3$ and with the robustness parameter selected using a validation set of size $\bar{N} = 10^3$. The problem parameters are the same as those shown in Table~\ref{tbl:barrier_asymmetric}.  }%\label{fig:barrier_policy_symmetric}
\end{figure}

\null
\vfill

\clearpage
}

\afterpage{%
\null
\vfill
\begin{figure}[H]
\centering
\FIGURE{\includegraphics[width=0.85\linewidth]{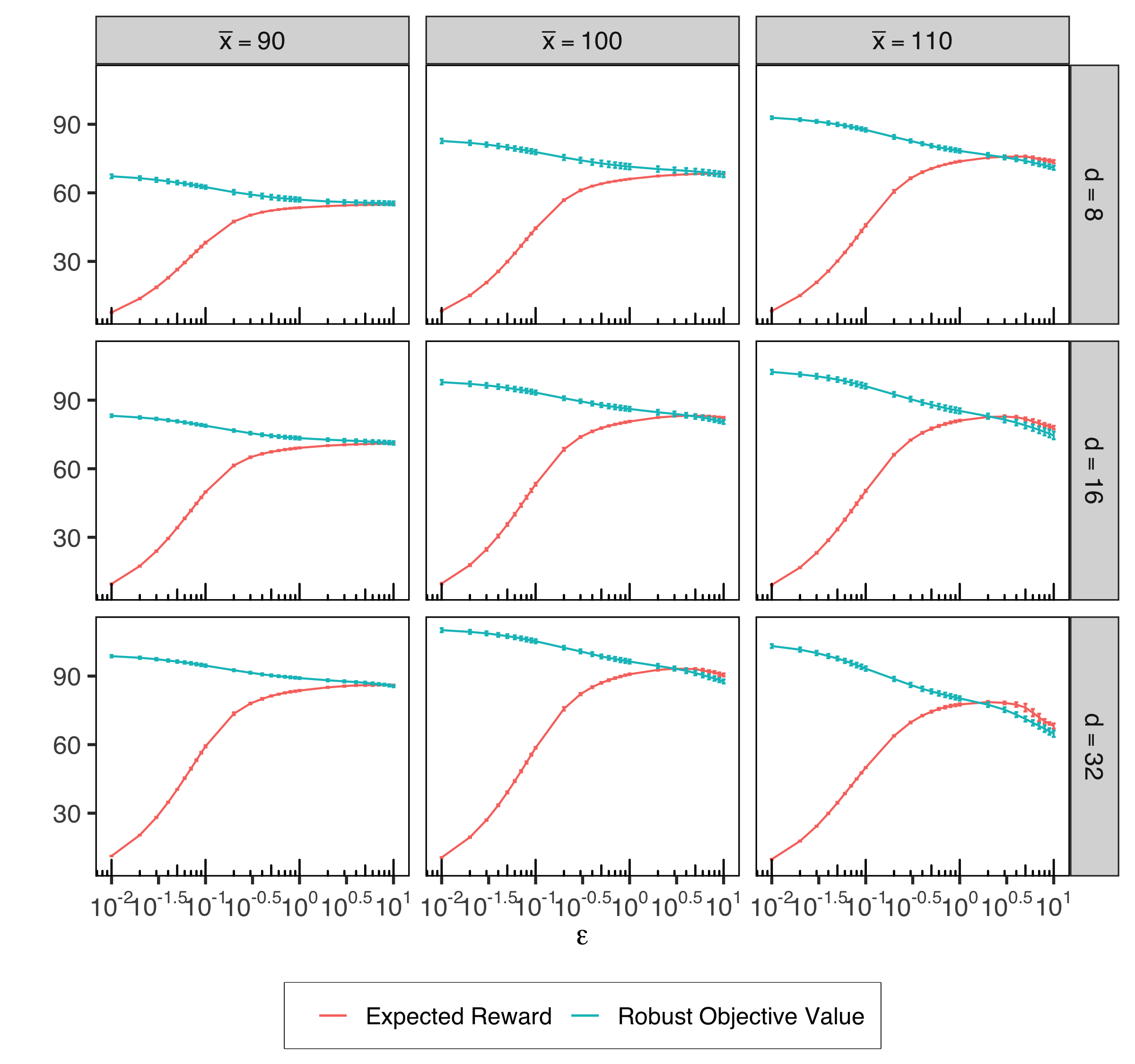}}{Barrier Option (Symmetric) - Impact of Robustness Parameter on Reward.\label{fig:barrier_epsilon_symmetric}}
{  Each plot shows  the robust objective value and expected reward of policies obtained by solving the {\color{black}heuristic~\eqref{prob:h_bar}} constructed from training datasets of size $N = 10^3$.  The problem parameters are the same as those shown in Table~\ref{tbl:barrier_symmetric}. 
 }%\label{fig:barrier_policy_symmetric}
\end{figure}

\null
\vfill

\clearpage
}

\afterpage{%
\null
\vfill
\begin{figure}[H]
\centering
\FIGURE{\includegraphics[width=0.85\linewidth]{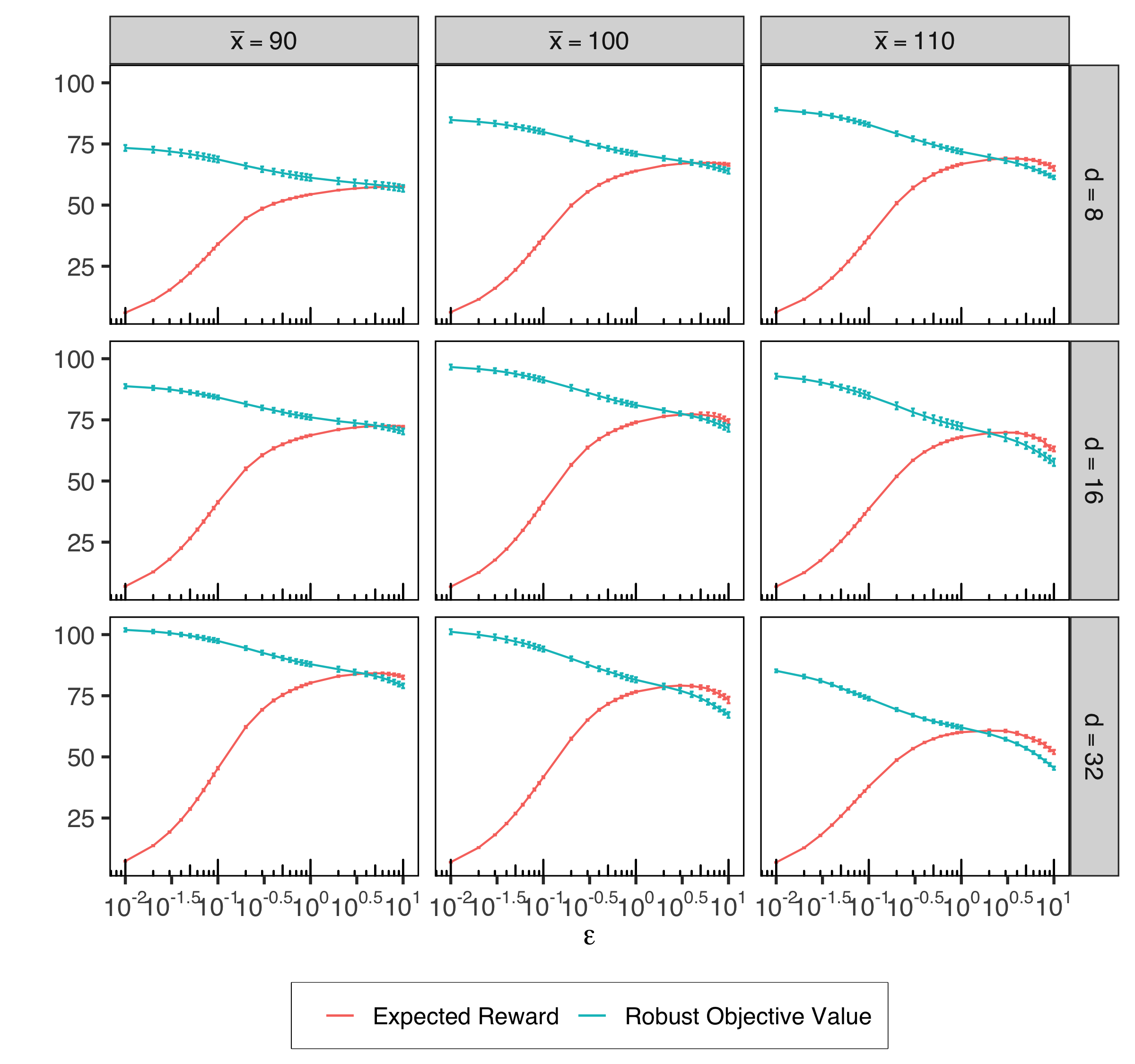}}{Barrier Option (Asymmetric) - Impact of Robustness Parameter on Reward.\label{fig:barrier_epsilon_asymmetric}}
{ Each plot shows  the robust objective value and expected reward of policies obtained by solving the {\color{black}heuristic~\eqref{prob:h_bar}} constructed from training datasets of size $N = 10^3$.  The problem parameters are the same as those shown in Table~\ref{tbl:barrier_asymmetric}.  }%\label{fig:barrier_policy_symmetric}
\end{figure}

\null
\vfill

\clearpage
}

\afterpage{%
\null
\vfill
\begin{figure}[H]
\centering
\FIGURE{\includegraphics[width=0.85\linewidth]{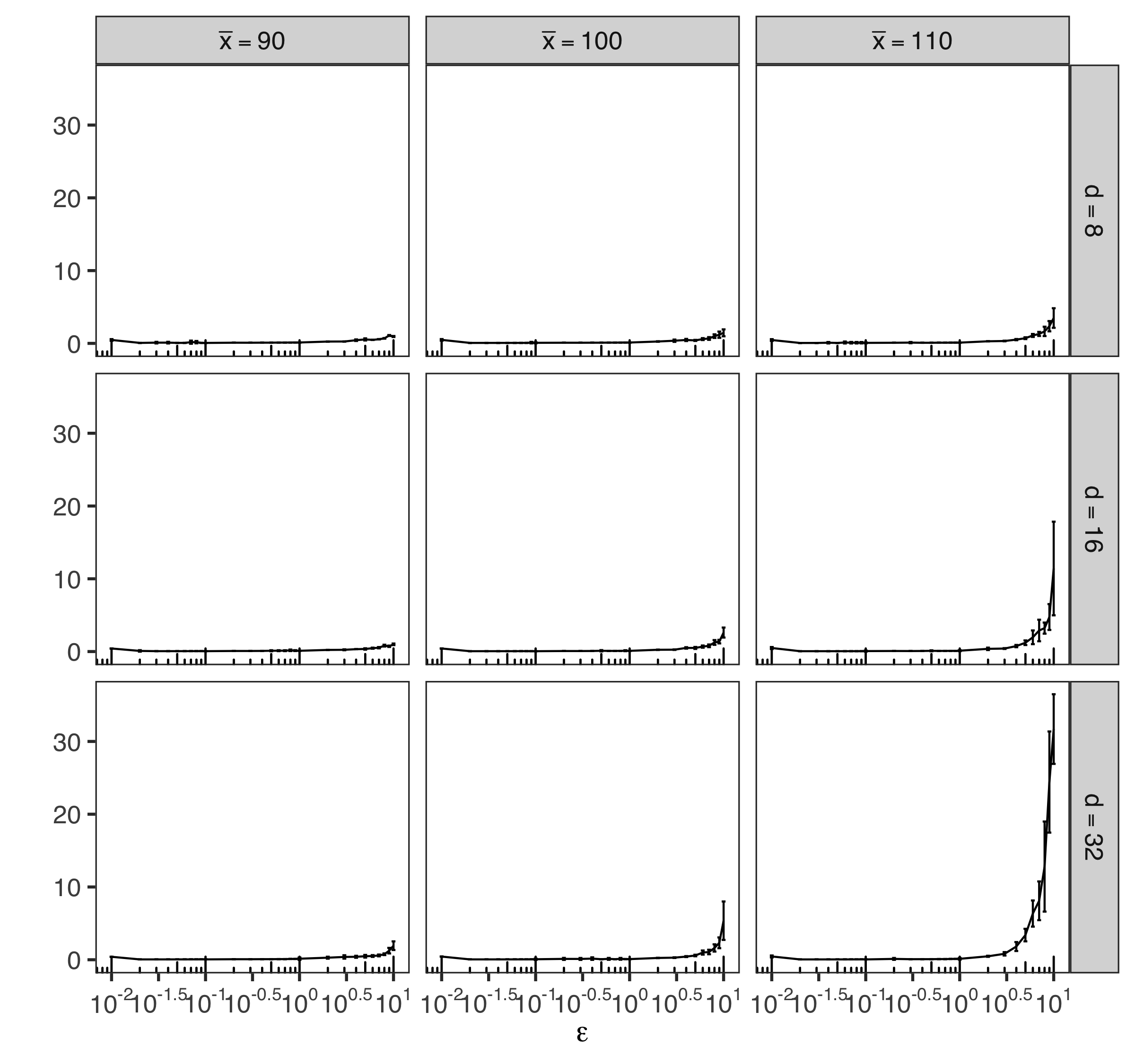}}{Barrier Option (Symmetric) - Impact of Robustness Parameter on Computation Time.\label{fig:barrier_epsilon_symmetric_time}}
{  Each plot shows the computation times from solving the {\color{black}heuristic~\eqref{prob:h_bar}} constructed from training datasets of  size $N = 10^3$. The problem parameters are the same as those shown in Table~\ref{tbl:barrier_symmetric}. } %\label{fig:barrier_policy_symmetric}
\end{figure}

%The right plot shows the computation time for solving the robust optimization problem. In both plots, the robust optimization problem is solved approximately using the algorithm from Theorem~\ref{thm:lb}. The problem parameters are $\bar{x} = 100$ and $d = 16$, and the remaining parameters are the same as those shown in Table~\ref{tbl:barrier_asymmetric}.

\null
\vfill

\clearpage
}

\afterpage{%
\null
\vfill
\begin{figure}[H]
\centering
\FIGURE{\includegraphics[width=0.85\linewidth]{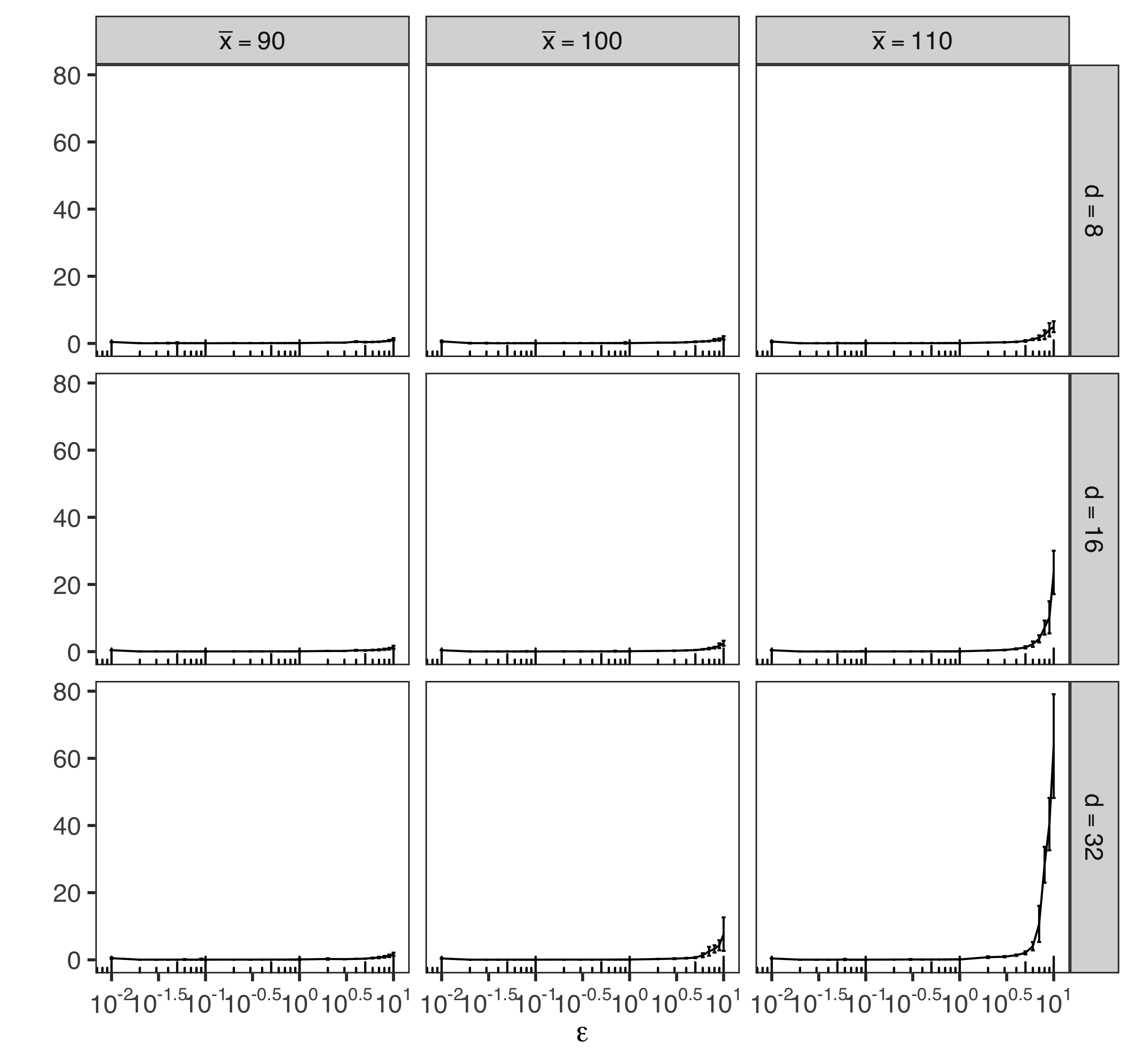}}{Barrier Option (Asymmetric) - Impact of Robustness Parameter on Computation Time.\label{fig:barrier_epsilon_asymmetric_time}}
{ Each plot shows the computation times from solving the {\color{black}heuristic~\eqref{prob:h_bar}} constructed from training datasets of  size $N = 10^3$. The problem parameters are the same as those shown in Table~\ref{tbl:barrier_asymmetric}. }%\label{fig:barrier_policy_symmetric}
\end{figure}

\null
\vfill

\clearpage
}

\end{APPENDICES}
%%%%%%%%%%%%%%%%%
\end{document}